\documentclass[12pt,openany,centertags]{amsbook}
\usepackage{amsmath} 
\usepackage{amssymb}
\usepackage{mathtools}
\mathtoolsset{centercolon}
\usepackage{enumerate}
\usepackage{here}
\usepackage{braket}
\usepackage{textcase}
\usepackage{centernot}
\usepackage[matrix,arrow]{xy}
\usepackage{xspace}
\usepackage{tikz}
\usepackage{pgf}
\usepackage{calc}
\usepackage[mathscr]{eucal}
\usepackage[normalem]{ulem}

\usepackage{hyperref}
\usepackage[figure]{hypcap}

\setlength{\textheight}{23cm} 
\setlength{\textwidth}{16.2cm}
\setlength{\topmargin}{-0.8cm}
\setlength{\parskip}{0.3\baselineskip}
\hoffset=-1.4cm

\newtheorem{theorem}{Theorem}[chapter]
\newtheorem*{theorem*}{Theorem}
\newtheorem*{theoremI}{Theorem I}
\newtheorem*{theoremII}{Theorem II}
\newtheorem*{theoremIII}{Theorem III}
\newtheorem*{theoremIV}{Theorem IV}
\newtheorem*{theorem1.1}{Theorem 1.1}
\newtheorem*{theorem1.2}{Theorem 1.2}
\newtheorem*{theorem1.3}{Theorem 1.3}
\newtheorem*{theorem7.1}{Theorem 7.1}
\newtheorem*{theorem8.1}{Theorem 8.1}
\newtheorem*{assumptionI}{Assumption I}
\newtheorem*{assumptionII}{Assumption II}

\newtheorem{lemma}{Lemma}[chapter]
\newtheorem{auxlemma}{Aux-Lemma}[chapter]
\newtheorem{proposition}{Proposition}[chapter]
\newtheorem{corollary}{Corollary}[chapter] 
\newtheorem*{corollary*}{Corollary}

\theoremstyle{definition}
\newtheorem{definition}{Definition}
\newtheorem*{definition*}{Definition}

\theoremstyle{remark} 
\newtheorem{conclusion}{Conclusion}[chapter]
\newtheorem{remark}{Remark}[chapter]
\newtheorem*{remark*}{Remark}

\newcommand{\Red}{black}
\newcommand{\Black}{red}

\newcommand{\mP}{\mathbb{P}}
\newcommand{\A}{\mathbb{A}}
\newcommand{\C}{\mathbb{C}}
\newcommand{\G}{\mathbb{G}}
\newcommand{\N}{\mathbb{N}}
\renewcommand{\P}{\mathbb{P}}
\newcommand{\Q}{\mathbb{Q}}
\newcommand{\R}{\mathbb{R}}
\newcommand{\Z}{\mathbb{Z}}
\newcommand{\FF}{\mathbf{F}}
\newcommand{\HH}{\mathbf{H}}
\newcommand{\HHCM}{\mathbf{H}^{\mathrm{CM}}}
\newcommand{\HHcm}{\mathbf{H}^{\mathrm{cm}}}
\newcommand{\GG}{\mathbf{G}}
\newcommand{\CC}{\mathbf{C}}

\newcommand{\cA}{\mathcal{A}}
\newcommand{\cB}{\mathcal{B}}
\newcommand{\cC}{\mathcal{C}}
\newcommand{\cD}{\mathcal{D}}
\newcommand{\cE}{\mathcal{E}}
\newcommand{\cF}{\mathcal{F}}
\newcommand{\sF}{\mathscr{F}}
\newcommand{\cG}{\mathcal{G}}
\newcommand{\sG}{\mathscr{G}}
\newcommand{\cH}{\mathcal{H}}
\newcommand{\cI}{\mathcal{I}}
\newcommand{\cJ}{\mathcal{J}}
\newcommand{\sJ}{\mathscr{J}}
\newcommand{\cK}{\mathcal{K}}
\newcommand{\cL}{\mathcal{L}}
\newcommand{\cM}{\mathcal{M}}
\newcommand{\cN}{\mathcal{N}}
\newcommand{\cO}{\mathcal{O}}
\newcommand{\cp}{\mathfrak{p}}
\newcommand{\cq}{\mathfrak{q}}
\newcommand{\cP}{\mathcal{P}}
\newcommand{\cR}{\mathcal{R}}
\newcommand{\sR}{\mathscr{R}}
\newcommand{\cS}{\mathcal{S}}
\newcommand{\cY}{\mathcal{Y}}

\newcommand{\cZ}{\mathcal{Z}}
\newcommand{\fp}{\mathfrak{p}}
\newcommand{\fq}{\mathfrak{q}}
\newcommand{\fm}{\mathfrak{m}}
\newcommand{\fX}{\mathfrak{X}}

\renewcommand{\phi}{\varphi}
\renewcommand{\Im}{\mathrm{Im}}
\newcommand{\oring}[1]{\stackrel{\circ}{#1}}
\newcommand{\dcup}{\,\dot{\cup}\,}
\newcommand{\id}{\mathrm{id}}
\newcommand{\CM}{\operatorname{CM}}
\newcommand{\Grass}{\operatorname{Grass}}
\newcommand{\Hilb}{\operatorname{Hilb}}
\newcommand{\Quot}{\operatorname{Quot}}

\newcommand{\Char}{\mathrm{char}}
\newcommand{\red}{\mathrm{red}}
\newcommand{\reg}{\mathrm{reg}}
\newcommand{\Ann}{\operatorname{Ann}}
\newcommand{\Ass}{\operatorname{Ass}}
\newcommand{\Aut}{\operatorname{Aut}}
\newcommand{\Div}{\operatorname{Div}}
\newcommand{\Ext}{\operatorname{Ext}}
\newcommand{\cExt}{\operatorname{\mathscr{E}xt}}
\newcommand{\Hom}{\operatorname{Hom}}
\newcommand{\cHom}{\operatorname{\mathscr{H}om}}
\newcommand{\Flag}{\operatorname{Flag}}
\newcommand{\GL}{\operatorname{GL}}
\newcommand{\HP}{\operatorname{HP}}
\newcommand{\Inv}{\operatorname{Inv}}
\newcommand{\Ker}{\operatorname{Ker}}
\newcommand{\NNT}{\operatorname{NNT}}
\newcommand{\NS}{\operatorname{NS}}
\newcommand{\PGL}{\operatorname{PGL}}
\newcommand{\Pic}{\operatorname{Pic}}
\newcommand{\Proj}{\operatorname{Proj}}

\newcommand{\Spec}{\operatorname{Spec}}
\newcommand{\Symm}{\operatorname{Symm}}
\newcommand{\Tor}{\operatorname{Tor}}
\newcommand{\depth}{\operatorname{depth}}
\newcommand{\length}{\operatorname{length}}
\newcommand{\colength}{\mathrm{colength}}

\newcommand{\supp}{\mathrm{supp}}
\newcommand{\ord}{\mathrm{ord}}
\newcommand{\lambdadeg}{\lambda\mbox{-}\deg}
\newcommand{\alphadeg}{\alpha\mbox{-}\deg}

\newcommand{\grad}{\mathrm{grad}}

\newcommand{\proj}{\mathrm{proj}}
\newcommand{\inn}{\mathit{in}}

\newcommand{\bigsum}{\sum}

\numberwithin{figure}{chapter}
\newcommand{\An}{$A(0)$\xspace}
\newcommand{\Aln}{$A(\lambda_0)$\xspace}

\numberwithin{equation}{chapter}
\makeindex
\begin{document}

\baselineskip=16pt
\frontmatter
\thispagestyle{empty}

  \begin{center}
    \vspace{4.5cm} { \fontsize{20}{20} \textbf{
        Some geometric properties of\\[3mm] Hilbert schemes of space curves}
       }\\
      \vspace{2cm} {\Large
        Gerd Gotzmann\\[5mm] \large
   } 
\centerline{20-6-2014}
\vspace{1.5cm}
      { \textsc{Abstract}}\\[5mm]
      \begin{minipage}{0.9\linewidth}\small
 Let $H$ be the Hilbert scheme of curves in complex
  projective $3$-space, with degree $d\geq 3$ and genus $g \leq (d-2)^2/4$. A
  complete, explicit description of the cone of curves and the ample cone of
  $H$ is given. From this, partial results on the group $\Aut(H)$ are
  deduced.
\end{minipage}
  \end{center}
\vspace{0.5cm}
\begin{center} {\Large \bf Introduction}
\end{center}
\vspace{0.5cm} 
 The ground field is $k=\C$ and we always assume $d\geq 3$, $g\leq
  g(d):=(d-2)^2/4$, $\HH = \Hilb^P(\P^3_k)$, $P(n) = dn-g+1$. \\

\textsc{Chapter 1}. We prove:
\begin{theorem1.1}
 Rational equivalence = numerical equivalence.
\end{theorem1.1}

  The complementary Hilbert polynomial $Q(n) = \binom{n+3}{3} -P(n)$ has the
  form $Q(n) = \binom{n-1+3}{3}+\binom{n-a+2}{2} + (n-b+1)$, where $a=d+1$
  and $g= (a^2-3a+4)/2 -b$.

\begin{theorem1.2}
  The cone of (effective) curves is freely generated by (the equivalence classes of) the
  following curves:
  \begin{align*}
    C_0 & = \Set{ (x^2, xy, xz, y^a,y^{a-1}z^{b-a+1},
      xt^{b-2}+\alpha y^{a-1}z^{b-a}) | \alpha \in k }^- \\
    C_1 & = \Set{ (x,y^a,y^{a-1}z^{b-a}(\alpha z+t))| \alpha\in k }^- \\
    C_2 & = \Set{(x,y^{a-1}(\alpha y+z),y^{a-2}z^{b-a+1}(\alpha y+z)) |
      \alpha\in k}^- .
  \end{align*}
\end{theorem1.2}

Let $\CC \subset \HH\times \P^3$ be the universal curve with Hilbert
polynomial $P$ over $\HH$. Let $\cF$ be the structure sheaf of $\CC$, let
$\pi$ be the projection from $\HH \times \P^3$ onto $\HH$ and $\cF_n:=\pi_*\cF(n)$. Then
$\cF_n$ is locally free of rank $P(n)$ on $\HH$ for all $n\geq d-2$ and 
$\cM_n:=\dot{\bigwedge}\cF_n$ is called tautological line bundle on $\HH$,
where the dot denotes the exterior power of highest degree. Put
$\rho:=(b-a)(b-a+1)/2$, $\cL_2:= \cM_{n-1}\otimes
\cM_n^{-2}\otimes\cM_{n+1}$ if $n\geq d-1$ is any integer,
$\cL_1:=\cM_{d-2}^{-1}\otimes \cM_{d-1}$ and $\cL_0:=\cM_{b-1} \otimes
\cL_{2}^{-\rho}$.

\begin{theorem1.3}
  The ample cone of $\HH$ is freely generated by (the classes of) $\cL_0$,
  $\cL_1$, $\cL_2$.
\end{theorem1.3}
\index{cone}
In a simple direct way it is proved that $\cM_{n-1}^{-1}\otimes \cM_n$ is
globally generated, if $n\geq d-1$, especially $\cL_1$ is globally
generated.  As for $\cL_2$, one has to use the method of Fogarty (see~
\cite[Section 3]{F1}) to show that $\cL_2$ is globally generated.  In spite
of every effort, I could not decide, if $\cL_0$ is globally generated or
not, even in the case of $H_{4,1}$. So the only example I know is the case
$H_{3,0}$, where $\cL_0=\cM_3$ is globally generated.

\textsc{Chapter 2}. We will determine those curves, which lie on some of
the subcones of the cone of curves. One cannot expect to obtain complete
results, but at least one can show that curves, which are rationally equivalent
to multiples of $[C_1]$ or $[C_2]$, lie on a special subscheme $H_m$
respectively $\cG$ of $\HH$, which we will have to use later on (Corollary
2.1 and Proposition 2.2).

\textsc{Chapter 3}. This is an attempt to understand Fogarty's general
construction of certain morphisms $\omega_t^P(m)$ from $\Hilb^P(\P^N_k)$ to projective
spaces, at least in the case $N=3$, $t=1$, $P(n)=dn-g+1$. For this reason it
is shown in a direct way that the fibres of the morphism $f_n$, which is
defined by the globally generated line bundle $\cM_{n-1}^{-1}\otimes\cM_n$,
have the same description as the fibres of $\omega_1^P(m)$. The difference
is that in \cite[Theorem 10.4, p. 84]{F1} one has to choose $m\gg 0$,
whereas now one only has to suppose $n\geq d$. Moreover, it is shown (by
means of the method Fogarty used in the proof of \cite[Proposition 2.2]{F2})
that each two closed points in a fibre of $f_n$ can be connected by a curve
rationally equivalent to a multiple of $[C_0]$, a result, which one has to 
use in Chapter 6.

We now try to approach the group $\Aut(\HH)$ of $k$-automorphisms of $\HH$,
which we will do in several steps:

\textsc{Chapter 4}. We show that $\Aut(\HH)$ trivially acts on the first
Chow groups $A_1(\HH)$ and $A_1(\CC)$, if $d\geq 5$ is supposed. (Probably
this is true if $d\geq 3$, but I cannot prove it.) Moreover, it is shown that
the subschemes $H_m$ and $\cG$ of $\HH$, which are mentioned above (and are
constructed in Appendix~\ref{cha:C}), are invariant under $\Aut(\HH)$.

\textsc{Chapter 5}.
If $\phi\in \Aut(\HH)$ and $d\geq 6$ is supposed, we show that there is a
$\gamma \in G:=\PGL(3;k)$ such that $\phi|H_m = \gamma|H_m$.  Replacing
$\phi$ by $\phi\circ \gamma^{-1}$, one obtains a so called \emph{normed}
automorphism of $\HH$. The set of all such normed automorphism is a subgroup
$N$ of $\Aut(\HH)$, which is normalized by $G$.
Moreover,  we prove that $\phi|\cG=\mathrm{id}$ for all $\phi \in N$. 
(Here one has to use $\Aut(\Hilb^d(\P^2)) =\PGL(2;k)$, which is proved in
Appendix~\ref{cha:D} under the assumption $d\geq 6$.)

\textsc{Chapter 6}. A very nice result would be to show that
$\phi(\xi)=\xi$ for all $\xi \in \HH(k)$ and all $\phi \in N$, but with
regard to the methods used here, this seems to be impossible.  If however
the ideal corresponding to $\xi$ has a special shape, similar or weaker
results hold true and are used in
\textsc{Chapter 7}. Here the result is:

\begin{theorem7.1}
  Suppose $d\geq 6$. If $h$ is the Hilbert--Chow morphism, then
  $h(\phi(\xi))=h(\xi)$ for all $\xi \in \HH(k)$ and all $\phi\in N$.
\end{theorem7.1}

\textsc{Chapter 8}. Recall from Chapter 3 that the tautological morphism
$f$ is defined by the globally generated line bundle $\cL_1\otimes\cL_2$ (or
$\cM_{n-1}^{-1}\otimes\cM_n$, if $n\geq d$).

\begin{theorem8.1}
  Suppose $d\geq 6$. Then $f(\phi(\xi)) = f(\xi)$ for all $\xi \in \HH(k)$
  and all $\phi\in N$.
\end{theorem8.1}
\begin{remark*}
In more concrete terms one can express this as follows. Let $\xi \in \HH(k)$
correspond to the ideal $\cI = \cJ \bigcap \cR$, where $\cJ$ is the
$\CM$-part \index{CM-part} and $\cR$ is the punctual part
\index{punctual part} which are defined in the following way: The curve defined by the ideal $\cJ$ in
$\P^3$ has no embedded or isolated points (we call such an ideal a $\CM$-ideal \index{CM - ideal} in order to avoid the correct but awkward notation "locally Cohen - Macaulay") and $\cR = \bigcap Q_i$,
where the $Q_i$ are primary to ideals $P_i$, which corresponds to
different closed points in $\P^3$.

Then $\phi(\xi)$ corresponds to the ideal $\cJ \cap \cR'$, where $\cR'=
\bigcap Q'_i$, the $Q'_i$ are $P_i$-primary and $\length(\cJ/\cJ\cap Q_i) =
\length(\cJ/\cJ\cap Q'_i)$ for all $i$.
\end{remark*}

\begin{corollary*}
 Assume as before that $d\geq 6 $ and $g\leq g(d) : = (d-2)^2/4 $. Let $\HHCM$, respectively $\HHcm$, denote the open subscheme of $\HH$, whose closed points correspond to curves without embedded or isolated points, respectively to curves without embedded points. Then the restriction of a $k$-automorphism  of $\HH$ to $\HHCM $, respectively to $\HHcm$, is induced by a linear transformation of $\P^3_k$.
\end{corollary*}

\noindent
\textit{Acknowledgment}: This article could not have been written in \LaTeX\ without the help of Christian Gorzel.

\tableofcontents
\mainmatter

\chapter[The cone of curves and the ample cone]
{The cone of curves and the ample cone of a\\  Hilbert scheme of space curves}
\label{cha:1}

\section{Notations and summary of earlier results}
\label{sec:1.1}

The ground field is $k=\C$ and $\HH = H_{d,g} = \Hilb^P(\P^3_k)$ is the
Hilbert scheme, which parametrizes the curves of degree $d$ and genus $g$ in
$\P^3_k$, i.e.~the closed subschemes of $\P^3_k$ with Hilbert polynomial
$P(n) = dn-g+1$. We also write $\HH_Q$ instead of $H_{d,g}$ in order to
express this Hilbert scheme likewise parametrizes the ideals $\cI\subset
\cO_{\P^3}$ with Hilbert polynomial $Q(n) = \binom{n+3}{3} - P(n)$.
According to F.~S.~Macaulay $\HH_Q$ is not empty if and only if $Q(n)$ has
the form $Q(n) = \binom{n-1+3}{3}+\binom{n-a+2}{2}$ or $Q(n) =
\binom{n-1+3}{3}+\binom{n-a+2}{2} + \binom{n-b+1}{1}$, where $a$ is an
integer $\geq 1$, respectively $a$ and $b$ are integers (Macaulay
coefficients) such that $2\leq a \leq b$. Between the degree and the genus
on the one hand and the Macaulay coefficients on the other hand, one has the
following relations:
\begin{alignat*}{3}
  d&=a,\quad &  g&=(d-1)(d-2)/2, \quad & \text{if } Q(n) &= \tbinom{n-1+3}{3}+\tbinom{n-a+2}{2}\\[-1ex]
  \intertext{and}
  d& =a-1,\quad & g&=(a^2-3a+4)/2 -b, \quad & \text{if }  Q(n) &= \tbinom{n-1+3}{3}+\tbinom{n-a+2}{2} + \tbinom{n-b+1}{1},
\end{alignat*}
respectively. One sees that the first case occurs if and only if one is
dealing with plane curves. Therefore in the following we always suppose
$d\geq 3$ and $g<(d-1)(d-2)/2$.

In the following we furthermore assume that $g\leq g(d):=(d-2)^2/4$, because
in this case we have ``rational equivalence = numerical equivalence'' (see
below Theorem~\ref{thm:1.1}). But as, according to a theorem of Castelnuovo 
(see~\cite[Thm. 6.4, p. 351]{H}), $d\geq 3$ and $g\leq (d-2)^2/4$ is a
necessary condition for $H_{d,g}$ to contain a point, which corresponds to a
smooth curve, this does not seem to be an artificial assumption.

If $\CC$ is the universal curve over $\HH$, one has the diagram 
\[
 \xymatrix{\CC \ar@{^{(}->}[rr] \ar[dr]_f & & \HH\times_k \P^3 \ar[dl]^\pi 
                        \ar[dr]^\kappa &  \\
                                         & \HH &  & \P^3}   
\]
where $\pi$ and $\kappa$ are the projections and $f$ is a surjective flat
morphism, such that for all $\xi \in \HH$ the fiber $f^{-1}(\xi)$ is a
closed curve $C_\xi \subset \P^3 \otimes k(\xi)$ with Hilbert polynomial
$P$. If $\cI$ is the universal ideal sheaf on $X:=\HH\times \P^3_k$, which
defines $\CC$, then $\cF:=\cO_X/\cI$ is the structure sheaf of $\CC$. If one
puts $\cI(n):=\cI \otimes \kappa^*\cO_{\P^3}(n)$ and 
$\cF(n):=\cF \otimes \kappa^*\cO_{\P^3}(n)$ and if $n\geq b-1$, then one
has exact sequences
\[
0 \longrightarrow \pi_*\cI(n) \longrightarrow \cO_\HH\otimes S_n
\longrightarrow \pi_*\cF(n) \longrightarrow 0,
\]
where $S=k[x,y,z,t]$ and $\pi_*\cI(n)$ and $\pi_*\cF(n)$ are locally free on
$\HH$ of rank $Q(n)$ respectively $P(n)$.
$\cM_n:=\dot{\bigwedge}\pi_*\cF(n)$ is called tautological line bundle.
\index{tautological line bundle} This is valid for all $n\geq b-1$, because
each ideal in $\cO_{\P^3}$ with Hilbert polynomial $Q$ is $b$--regular
\cite[Lemma 2.9]{G78}. As we will show in Section~\ref{sec:1.5}, $\cM_n$ is
a line bundle for all $n\geq d-2$.

$A_1(-)$ denotes the first Chow group with coefficients in $\Q$ and
$\NS(-)=\Pic(-)/\Pic^0(-)$ is the N\'eron--Severi group. If one assumes that
$a\geq 4$ and $b\geq (a^2-1)/4$, i.e.~$d\geq 3$ and $g\leq g(d)$, then one
has the following results.

\begin{theoremI}
\label{thm:I}
  $A_1(\HH)$ is freely generated by the rational equivalence classes of the
  following curves:
\begin{align*}
  C_0 & = \Set{ (x^2,xy,xz,y^a,y^{a-1}z^{b-a+1} ,
    xt^{b-2}+\alpha y^{a-1}z^{b-a}) | \alpha \in k }^- \\
  D  &  = \Set{ (x^2,xy,y^{a-1},z^{b-2a+4}(y^{a-2}+\alpha xz^{a-3}))
   |\alpha\in k}^-\\
  C_2 & = \Set{(x,y^{a-1}(\alpha y+z),y^{a-2}z^{b-a+1}(\alpha y+z))
   |\alpha\in k}^- .
\end{align*}
\end{theoremI}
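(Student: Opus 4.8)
The plan is to exhibit $[C_0],[D],[C_2]$ as a $\Q$-basis of $A_1(\HH)$, which splits into two tasks: showing that $\dim_\Q A_1(\HH)=3$, and showing that these three classes are linearly independent. First the three classes are well defined: each of the displayed sets of ideals is flat over the parameter line $\A^1_\alpha$ (for all but finitely many $\alpha$ the listed generators form a Gröbner basis and the Hilbert function is visibly independent of $\alpha$), so one gets a morphism $\A^1\to\HH$ which, $\HH$ being projective, extends to $\P^1\to\HH$; the image curves are $C_0$, $D$, $C_2$, and the maps are nonconstant because the ideal genuinely varies in the single degree where its ``moving'' generator first appears.

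For the linear independence I would pair against the tautological line bundles. For $n\ge b-1$ the sheaf $\cF_n=\pi_*\cF(n)$ is locally free of rank $P(n)$ and its formation commutes with base change, so its pullback along $\P^1\cong C\hookrightarrow\HH$ --- $C$ any one of the three curves --- is the rank-$P(n)$ bundle with fibre $(S/\cI_\alpha)_n$. Trivialising this bundle on the two standard affine charts of $\P^1$ and reading off the transition matrix from the one varying generator identifies it with an explicit direct sum of line bundles on $\P^1$; hence $\deg_C\cM_n=\deg\det(\cF_n|_C)$ is an explicit integer. Carrying this out for three values $n\ge b-1$ --- equivalently, re-expressing $\cL_0,\cL_1,\cL_2$ through such $\cM_n$ and their successive differences --- produces a $3\times3$ matrix $\big(\deg_{C_i}\cL_j\big)$, and it remains to verify that its determinant is nonzero. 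By Theorem~1.2 this reduces to a single non-vanishing: writing $[D]=\lambda_0[C_0]+\lambda_1[C_1]+\lambda_2[C_2]$ one needs only $\lambda_1\ne 0$, i.e.\ that $D$ does not sweep out a fibre of the extremal contraction attached to $[C_1]$.

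For $\dim_\Q A_1(\HH)=3$ the short route is to invoke the earlier theorems: by Theorem~1.1 rational and numerical equivalence agree, so $A_1(\HH)$ is dual to $\NS(\HH)\otimes\Q$, and by Theorem~1.3 the ample cone of $\HH$ is full-dimensional, whence $\NS(\HH)\otimes\Q$ --- and therefore $A_1(\HH)$ --- is three-dimensional; combined with the independence above this finishes the proof. If one instead wants Theorem~I to stand on its own, the bound $\dim_\Q A_1(\HH)\le 3$ has to be obtained directly. I would stratify $\HH$ by the type of the ideal --- first crudely by Hilbert function and generic initial ideal, then by the $\CM$-part plus punctual-part decomposition used in Chapter~8 --- and run the right-exact localisation sequences $A_1(\overline Z)\to A_1(\HH)\to A_1(\HH\setminus\overline Z)\to 0$. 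On the dense stratum one uses the Gröbner-basis normal form for the generic ideal (Gotzmann persistence) to present it as an iterated affine-space bundle over a base built from $\Hilb^d(\P^2_k)$ and one further line's worth of parameters; since the first Chow group of $\Hilb^d(\P^2_k)$ has rank two (Fogarty), this makes $A_1$ of the open stratum spanned by the restrictions of $[C_0]$, $[D]$, $[C_2]$. Each lower stratum is then treated inductively, identifying its curves as degenerations that are rationally equivalent to combinations of the three.

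The hard part is precisely this self-contained bound. Pinning down $A_1$ of the open stratum depends on having the explicit normal form for the generic ideal together with Fogarty's description of $\Pic\big(\Hilb^d(\P^2_k)\big)$, and one must in addition deal with $\HH$ being singular and, in general, reducible --- for instance $\HH=\Hilb^{3t+1}(\P^3_k)$ when $d=3$ --- checking that the extra components and the singular locus contribute no new classes to $A_1$. By contrast the intersection-number computation underlying the independence, although laborious, is mechanical once the three bundles $\cF_n|_C$ have been written down.
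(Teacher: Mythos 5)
There is a genuine gap, and it is worth first noting that the paper itself does not prove Theorem~I: it is imported verbatim from \cite[S\"atze 2--5]{T4} and \cite[p.~119--127]{T5} as part of a ``summary of earlier results,'' so the only honest comparison is between your sketch and that (very long) external argument. Your ``short route'' to $\dim_\Q A_1(\HH)=3$ is circular. In this paper Theorem~1.1 is \emph{deduced from} Theorem~I --- its proof begins ``According to Theorem~I one can write $Z=q_0[C_0]+q_1[D]+q_2[C_2]$'' --- and Theorems~1.2 and~1.3 sit further downstream still. So neither the identification of $A_1(\HH)$ with the dual of $\NS(\HH)\otimes\Q$ nor the full-dimensionality of the ample cone is available when proving Theorem~I; the same objection applies to your use of Theorem~1.2 inside the independence argument (which is in any case unnecessary: the three functions $n\mapsto(\cM_n\cdot C_0)=1$, $n\mapsto(\cM_n\cdot D)=n-b+a-1$, $n\mapsto(\cM_n\cdot C_2)=\binom{n-a+2}{2}+(n-b+1)$ are constant, linear and quadratic, hence linearly independent, exactly as in Section~1.2).

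The independence half of your proposal is therefore fine, but the generation half --- that \emph{every} $1$-cycle on $\HH$ is a $\Q$-combination of $[C_0],[D],[C_2]$ --- is the actual content of the theorem and is not supplied. The method used in \cite{T1}--\cite{T5} (and rehearsed in Section~1.3 for the cone $A_1^+$) is Hirschowitz's equivariant theorem: $A_1(\HH)$ is generated by $B(4;k)$-invariant $1$-prime cycles, which are then classified as combinatorial cycles of types $1,2,3$ or algebraic cycles and reduced one type at a time to the basis by explicit $\alpha$-degree computations. Your alternative plan --- stratifying by Hilbert function and running localisation sequences $A_1(\overline Z)\to A_1(\HH)\to A_1(\HH\setminus\overline Z)\to 0$ --- is a different and unexecuted strategy: you do not produce the ``iterated affine-bundle'' presentation of the open stratum, the passage from $\Pic(\Hilb^d(\P^2))\simeq\Z^2$ (Fogarty) to $A_1(\Hilb^d(\P^2))\simeq\Q^2$ uses smoothness and Poincar\'e duality that hold for $\Hilb^d(\P^2)$ but have no analogue on the singular, possibly reducible $\HH$, and ``each lower stratum is then treated inductively'' is precisely the several-hundred-page part of the argument. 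As it stands the proposal establishes that the three classes are independent but not that they generate.
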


\begin{theoremII}
\label{thm:II}
 The classes of $\cM_n$, $\cM_{n+1}$, $\cM_{n+2}$ freely generate $\NS(\HH)$
 over $\Z$, for all $n\geq b-1= d(d-1)/2-g$.
\end{theoremII}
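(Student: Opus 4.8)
The plan is to turn the statement into a single $3\times 3$ determinant, by combining Theorem~I with Theorem~1.1, and then to evaluate that determinant on the explicit one-parameter families of curves that are already available.

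\emph{Reduction to a determinant.} Since $\HH$ is projective, $\NS(\HH)$ is a finitely generated abelian group; I will use that it is torsion free (this is forced by the statement we want, and follows from the description of $\Pic(\HH)$ in terms of tautological classes). By Theorem~1.1 rational and numerical equivalence coincide, so $\NS(\HH)\otimes\Q$ is dual to $A_1(\HH)$, which by Theorem~I has dimension $3$; hence $\NS(\HH)$ has rank $3$. Pairing a line bundle with the three curves $C_0,D,C_2$ of Theorem~I defines a homomorphism $\Phi\colon \NS(\HH)\to\Z^3$, $\Phi([\cL])=(\cL\cdot C_0,\ \cL\cdot D,\ \cL\cdot C_2)$. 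It is injective: if all three numbers vanish, then, since the $[C_i]$ span $A_1(\HH)$, the class $\cL$ is numerically trivial, hence by Theorem~1.1 and torsion freeness $[\cL]=0$ in $\NS(\HH)$. Therefore it is enough to show that $\Phi(\cM_n),\Phi(\cM_{n+1}),\Phi(\cM_{n+2})$ form a $\Z$-basis of $\Z^3$: then $\Z^3=\Phi(\langle\cM_n,\cM_{n+1},\cM_{n+2}\rangle)\subseteq\Phi(\NS(\HH))\subseteq\Z^3$ forces equality everywhere, and the injectivity of $\Phi$ identifies $\cM_n,\cM_{n+1},\cM_{n+2}$ with a $\Z$-basis of $\NS(\HH)$.

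\emph{The intersection matrix.} Next I would compute the numbers $\cM_m\cdot C_i$ for $m\ge b-1$. In that range the sequence $0\to\pi_*\cI(m)\to\cO_\HH\otimes S_m\to\pi_*\cF(m)\to0$ has locally free terms, so $\cM_m=\det\pi_*\cF(m)=(\det\pi_*\cI(m))^{-1}$. Each of $C_0,D,C_2$ is, by its defining formula, the closure of a rational one-parameter family of ideals $I_\alpha$; normalizing the curve to $\P^1$ and pulling back the universal family turns $\cM_m\cdot C_i$ into the degree on $\P^1$ of an explicit locally free sheaf (equivalently, the degree of the induced map to the Pl\"ucker-embedded $\Grass$), which comes out as a closed expression in $m$ of degree at most two. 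These are essentially the computations needed to recognize the generators of the cone of curves and of the ample cone, so the nine values can be imported from there. One gets an integer $3\times 3$ matrix $M_n=\bigl(\cM_{n+j}\cdot C_i\bigr)_{0\le j\le2,\ i}$ whose entries are polynomials in $n$.

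\emph{The determinant and the main difficulty.} It remains to check $\det M_n=\pm 1$ for every $n\ge b-1$. The determinant is a polynomial in $n$, and one expects the $n$-dependence to cancel and leave the constant $\pm1$; confirming this is a bounded calculation. It shows at once that the three classes are $\Q$-independent (consistent with $\NS(\HH)$ having rank $3$) and that $M_n\in\GL_3(\Z)$, so its rows span $\Z^3$, which completes the reduction above. The real content of the proof, and the only place it can fail, is exactly this last step: the honest evaluation of the nine intersection numbers and of the determinant. Two points require care. First, the intersection formulas must be justified on the whole range $n\ge b-1$, where both the displayed exact sequence and the identification $\cM_m=(\det\pi_*\cI(m))^{-1}$ are valid (and where $\cM_m$ is genuinely a line bundle). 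Second, one must pair against the generators $C_0,D,C_2$ of Theorem~I, not against the cone generators $C_0,C_1,C_2$ of Theorem~1.2: the two triples differ by a base change of $A_1(\HH)$ whose determinant need not be $\pm1$, so the wrong choice would destroy the conclusion even though it is invisible to the injectivity step.
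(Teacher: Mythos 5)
First, a point of reference: the paper does not actually prove Theorem~II in the text you were given --- it is quoted as a known result from [T4, S\"atze 2--5] and [T5, pp.~119--127] --- so there is no in-paper proof to compare against line by line. That said, your strategy (pair against the basis $C_0$, $D$, $C_2$ of Theorem~I, form the $3\times 3$ intersection matrix of $\cM_n,\cM_{n+1},\cM_{n+2}$ against it, and check the determinant is $\pm 1$) is consistent with the machinery the paper does deploy nearby: the formulas $(\cM_m\cdot C_0)=1$, $(\cM_m\cdot D)=m-b+a-1$, $(\cM_m\cdot C_2)=\binom{m-a+2}{2}+(m-b+1)$ appear in Section~1.2 and are shown in Section~1.5 to hold for all $m\geq a-3$, hence throughout the range $m\geq b-1$. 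With them the determinant is indeed $1$ for every $n$: subtracting the first row from the others leaves $(0,1,n-a+3)$ and $(0,2,2n-2a+7)$, whose $2\times2$ determinant is $1$. Your worry about pairing with $C_0,D,C_2$ versus $C_0,C_1,C_2$ is moot, since by equation~(1.1) $[D]=(a-2)[C_0]+[C_1]$ is a unimodular change of basis.

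The genuine gap is the torsion-freeness of $\NS(\HH)$, which you introduce with ``this is forced by the statement we want'' --- that is circular --- and then attribute to ``the description of $\Pic(\HH)$ in terms of tautological classes,'' which is precisely the theorem under proof. Every step of your argument except this one only establishes that $\cM_n,\cM_{n+1},\cM_{n+2}$ freely generate $\NS(\HH)$ modulo its torsion subgroup $\Pic^\tau(\HH)/\Pic^0(\HH)$: the injectivity of your map $\Phi$ is exactly the assertion $\Pic^\tau(\HH)=\Pic^0(\HH)$, and that assertion is false for general smooth projective varieties (Enriques surfaces already carry $2$-torsion in $\NS$), so it cannot be waved through. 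The paper supplies this input where it needs it, in the proof of Theorem~1.3: $\HH$ is simply connected --- a nontrivial theorem quoted from [G5] --- and the argument of [T2, Section~4.2] (vanishing of $H^1_{\mathrm{sing}}$ with finite coefficients) then yields $\Pic^\tau(\HH)=\Pic^0(\HH)$. So the determinant evaluation is not ``the only place it can fail'': the torsion statement is the deeper half of Theorem~II, and your proposal contains no argument for it.
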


In order to formulate corresponding results for $\CC$, one defines curves on
$\CC$ by putting 
\[
  C_0^*:=C_0\times\{p_\infty\}, \quad D^*:=D\times\{p_\infty\}, \quad 
C_2^*:=C_2\times\{p_\infty\}, \quad L^*:=\{ \omega\} \times L, 
\]
where $p_\infty =(0:0:0:1)\in \P^3$, $L = V(x,y) \subset \P^3$ and $\omega$
is the closed point of $\HH$ corresponding to the lexicographic ideal with
Hilbert polynomial $Q$.

\begin{theoremIII}
\label{thm:III}
  The rational equivalence classes of $C_0^*$, $D^*$, $C_2^*$ and $L^*$ form
  a basis of $A_1(\CC)$.
\end{theoremIII}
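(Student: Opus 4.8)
The plan is to study the one-dimensional fibration $\pi\colon\CC\to\HH$ (the restriction of the projection $\pi$), together with the auxiliary map $\kappa\colon\CC\to\P^3$ coming from the embedding $\CC\hookrightarrow\HH\times\P^3$. First I would check that the ideals defining the one-parameter families $C_0$, $D$, $C_2$ all vanish at $p_\infty=(0{:}0{:}0{:}1)$ — they contain $x$, or $x^2$ and $xy$ together with the remaining generators, all of which vanish at $p_\infty$ — so that $p_\infty\in C_\xi$ for every $\xi$ in each of these base curves, and $C_0^*$, $D^*$, $C_2^*$ are sections of $\pi$ over $C_0$, $D$, $C_2$. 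Hence $\pi_*[C_i^*]$ is exactly the generator of $A_1(\HH)$ produced by Theorem~I, so $\pi_*\colon A_1(\CC)\to A_1(\HH)$ is surjective. On the other hand $\kappa(C_i^*)=\{p_\infty\}$, so $\kappa_*[C_i^*]=0$ in $A_1(\P^3)\cong\Q$, while $\kappa_*[L^*]$ is the class of the line $L$, which is nonzero, and $\pi_*[L^*]=0$ since $L^*$ lies in the single fibre over $\omega$. Therefore the four classes are linearly independent, and since every class of $A_1(\CC)$ differs from a $\Q$-combination of the $[C_i^*]$ by an element of $\ker\pi_*$, the theorem reduces to the single assertion $\ker\pi_*=\Q\cdot[L^*]$.

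To prove that I would first use the localization sequence for the open subset of $\CC$ lying over the locus of $\HH$ where $\pi$ has integral fibres, together with the fact recalled in Chapter~3 that two points of a fibre of the tautological morphism $f_n$ are joined by a curve rationally equivalent to a multiple of $[C_0]$: combined with Theorem~I this absorbs the contribution of multisections over a fixed curve of $\HH$ and reduces the claim to showing that the class of every reduced irreducible component $Y$ of a fibre $C_\xi$ lies in $\Q\cdot[L^*]$. For such a $Y$, of degree $e$ in $\P^3$, I would then use connectedness of $\HH$ to degenerate $\xi$ to the lexicographic point $\omega$: its scheme $C_\omega$ is set-theoretically the line $L=V(x,y)$, which one reads off from the Macaulay form of $Q$. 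Carrying $Y$ along a suitable one-parameter degeneration inside $\CC$ shows that $[Y]$ specializes to an effective cycle supported on the single curve $L^*$; since the $\P^3$-degree is preserved under flat degeneration and is read off by $\kappa_*$, this limit cycle must equal $e\,[L^*]$, whence $[Y]=e\,[L^*]$ in $A_1(\CC)$. This gives $\ker\pi_*\subseteq\Q\cdot[L^*]$, and with $[L^*]\neq 0$ from the first paragraph the equality follows.

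The main obstacle is making the degeneration step precise, and more fundamentally explaining why $A_1(\CC)$ carries no classes beyond these four. Two sources of potential extra classes must be killed: the positive genus of a general fibre of $\pi$, which a priori contributes $\Pic^0$- or Jacobian-type cycles, and the components of reducible fibres lying over divisors of $\HH$. Both are controlled by the degeneration to $\omega$, where the fibre collapses onto a line and all such classes become rationally trivial — this is precisely why one needs the explicit shape of $C_\omega$ and the connectivity results of Chapters~1--3, and the clean reformulation of ``carrying $Y$ along'' as a statement about a flat family of curves over a curve in $\HH$ is where most of the effort goes. Here I would also invoke ``rational equivalence $=$ numerical equivalence'' (Theorem~1.1, extended to $\CC$) to reduce the intersection-theoretic bookkeeping to comparing intersection numbers of the four curves against pullbacks from $\HH$ and against $\kappa^*\cO_{\P^3}(1)$.
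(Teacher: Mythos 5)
First, note that this paper does not actually prove Theorem III: it is quoted in Section~1.1 as a known result, with the proof referred to \cite[S\"atze 2--5]{T4} and \cite[p. 119--127]{T5}, where it is obtained by the same machinery as Theorem~I (Hirschowitz-type deformation of any $1$-cycle on the incidence variety $\CC$ to a $B(4;k)$-invariant one, followed by explicit enumeration of the invariant cycles). Your linear-independence argument via $\pi_*$ and $\kappa_*$ is correct and is essentially what the paper itself does in Section~1.2 (there phrased as intersection numbers with $\pi^*\cM_n$ and $\kappa^*\cO_{\P^3}(n)$); and your observation that $p_\infty$ lies on $C_\xi$ for all $\xi$ in $C_0$, $D$, $C_2$, so that the $C_i^*$ are sections, is fine.

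The spanning half, however, has two genuine gaps. (1) The reduction to $\ker\pi_*=\Q\cdot[L^*]$ is not achieved by showing that every irreducible component of a fibre has class in $\Q\cdot[L^*]$: an element of $\ker\pi_*$ is in general a difference of multisection classes with equal pushforward and need not be representable by a cycle supported on fibres, and the connectedness fact you invoke concerns the fibres of the tautological morphism $f_n:\HH\to\P^N$, not the fibres of $\pi:\CC\to\HH$, so it does not ``absorb the contribution of multisections.'' Computing $A_1$ of the open locus with integral fibres via localization is exactly as hard as the original problem. (2) The degeneration step proves less than you need. Restricting $\CC$ over a curve $T\subset\HH$ joining $\xi$ to $\omega$ gives a flat family whose fibres are the \emph{entire} curves $C_{\xi'}$, so what you obtain is $[C_\xi]=[C_\omega]=d\,[L^*]$ (the local multiplicity of the lexicographic ideal $(x,y^a,y^{a-1}z^{b-a+1})$ along $L$ being $a-1=d$), not $[Y]=e\,[L^*]$ for an individual component $Y$ of degree $e<d$. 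To carry $Y$ alone you would have to produce a flat family of curves inside $\CC$ specializing $Y$ itself onto $L^*$, and the rational equivalence $[Y]\sim e[L]$ in $A_1(\P^3)$ takes place in the ambient $\P^3$, not inside the one-dimensional fibre, so it cannot be imported directly. These are precisely the points where the cited works fall back on the $B$-invariant-cycle classification rather than a fibration argument.
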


\begin{theoremIV}
\label{thm:IV}
  $\NS(\CC)$ is freely generated by the classes of $\pi^*\cM_n$,
  $\pi^*\cM_{n+1}$, $\pi^*\cM_{n+2}$ and $\kappa^*\cO_{\P^3}(1)$ for
  all $n\geq b-1$.
\end{theoremIV}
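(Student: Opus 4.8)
The plan is to pair $\NS(\CC)$ against the group $N_1(\CC)$ of $1$-cycles on $\CC$ modulo numerical equivalence and to read off a $\Z$-basis of $\NS(\CC)$ from an explicit, block-triangular intersection matrix — exactly the mechanism by which $\NS(\HH)$ is paired against $N_1(\HH)$ in Theorem~\ref{thm:II}. By the results of Chapter~\ref{cha:1}, on $\CC$ rational equivalence coincides with numerical equivalence, so $N_1(\CC)$ is a free $\Z$-module of rank $4$ with basis $[C_0^*],[D^*],[C_2^*],[L^*]$ (the integral form of Theorem~\ref{thm:III}), $\NS(\CC)$ is torsion-free of rank $4$, and the intersection pairing $\NS(\CC)\times N_1(\CC)\to\Z$ is non-degenerate. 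Hence it suffices to prove that the $4\times4$ matrix $M$ whose entries are the intersection numbers of $\pi^*\cM_n,\pi^*\cM_{n+1},\pi^*\cM_{n+2},\kappa^*\cO_{\P^3}(1)$ with $C_0^*,D^*,C_2^*,L^*$ satisfies $\det M=\pm1$; for then these four line bundles generate a subgroup $\Lambda\subseteq\NS(\CC)$ that the pairing carries isomorphically onto $\Hom_\Z(N_1(\CC),\Z)$, while $\NS(\CC)$ itself embeds into $\Hom_\Z(N_1(\CC),\Z)$ (non-degeneracy plus torsion-freeness), whence $\Lambda=\NS(\CC)$ and four generators of a free rank-$4$ group form a basis.

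To compute $M$ I would use the following elementary facts. The projection $\pi$ restricts to an isomorphism from each of $C_0^*=C_0\times\{p_\infty\}$, $D^*=D\times\{p_\infty\}$, $C_2^*=C_2\times\{p_\infty\}$ onto the respective curve $C_0$, $D$, $C_2$ in $\HH$, so by the projection formula $\pi^*\cM_{n+i}\cdot C_j^*=\cM_{n+i}\cdot C_j$ (an intersection number on $\HH$, with $C_j$ the image of $C_j^*$); whereas $\kappa$ contracts each of $C_0^*,D^*,C_2^*$ to the point $p_\infty$, so $\kappa^*\cO_{\P^3}(1)\cdot C_j^*=0$. Symmetrically, $\pi$ contracts $L^*=\{\omega\}\times L$ to the point $\omega$, giving $\pi^*\cM_{n+i}\cdot L^*=0$, while $\kappa$ restricts to an isomorphism $L^*\cong L$, so that $\kappa^*\cO_{\P^3}(1)\cdot L^*=\cO_{\P^3}(1)\cdot L=1$. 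Ordering the line bundles as above and the curves as $C_0^*,D^*,C_2^*,L^*$, this gives
\[
M=\begin{pmatrix}M_\HH&0\\0&1\end{pmatrix},
\]
where $M_\HH$ is the $3\times3$ matrix of the numbers $\cM_{n+i}\cdot C_j$ on $\HH$; hence $\det M=\det M_\HH$. But $M_\HH$ is precisely the matrix underlying Theorem~\ref{thm:II}: since $\{C_0,D,C_2\}$ is a $\Z$-basis of $N_1(\HH)$ (integral form of Theorem~\ref{thm:I}) and rational equivalence equals numerical equivalence on $\HH$, the statement of Theorem~\ref{thm:II} that $\{\cM_n,\cM_{n+1},\cM_{n+2}\}$ is a $\Z$-basis of $\NS(\HH)$ is equivalent to $\det M_\HH=\pm1$. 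Thus $\det M=\pm1$, and the argument runs for every $n\ge b-1$ (note $b-1\ge d-2$, so all the $\cM$ occurring are line bundles).

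The real content of the proof therefore lies in Chapter~\ref{cha:1}, not in the deduction above: one needs the integral refinements of Theorems~\ref{thm:I} and~\ref{thm:III} — that $C_0,D,C_2$, respectively $C_0^*,D^*,C_2^*,L^*$, span the numerical lattices $N_1(\HH)$, $N_1(\CC)$ over $\Z$ and not just their $\Q$-spans $A_1(\HH)$, $A_1(\CC)$ — together with the torsion-freeness of $\NS(\HH)$ and $\NS(\CC)$, and, most of all, the explicit evaluation showing $\det M_\HH=\pm1$ rather than merely $\det M_\HH\ne0$. Granting these inputs the present argument is formal. It remains only to note the trivialities that $C_0^*,D^*,C_2^*\subset\CC$ (each homogeneous ideal defining a point of $C_0$, $D$, or $C_2$ is contained in $(x,y,z)$, so the corresponding curve passes through $p_\infty$) and $L^*\subset\CC$ (the lexicographic curve $C_\omega$ contains the line $L=V(x,y)$).
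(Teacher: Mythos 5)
The paper does not actually prove Theorem IV: it is quoted in Section~\ref{sec:1.1} as a result of \cite{T4} (S\"atze 2--5) and \cite{T5}, so there is no in-paper proof to compare yours against. Your duality argument is a reasonable reconstruction of how the statement follows from Theorems I--III together with the intersection formulas, and the block-triangular shape of your $4\times 4$ matrix --- the corner entry $(\kappa^*\cO_{\P^3}(1)\cdot L^*)=1$ and the vanishing of the two off-diagonal blocks --- is exactly the computation the paper carries out in Section~\ref{sec:1.2}.

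Two corrections to the logic, however. First, you do not need integral refinements of Theorems I and III: since the four curve classes span $A_1(\CC)$ over $\Q$, any line bundle orthogonal to all four is orthogonal to every curve class, hence lies in $\Pic^\tau(\CC)$, which equals $\Pic^0(\CC)$ (this is established in the proof of Theorem~\ref{thm:1.3}(ii) via $H^1_{\mathrm{sing}}(\CC,\Z/n)=0$); so the pairing of $\NS(\CC)$ against the free $\Z$-module on the four curves is already injective, and your unimodularity argument runs with only the rational statements. Second --- and this is the genuine soft spot --- Theorem II is \emph{not} equivalent to $\det M_\HH=\pm1$ in the direction you need. Free generation of $\NS(\HH)$ by the three $\cM$'s gives only that the rows of $M_\HH$ are independent, i.e.\ $\det M_\HH\neq 0$; unimodularity would in addition require knowing that $\NS(\HH)$ maps \emph{onto} the dual lattice of $\langle C_0,D,C_2\rangle$, which is part of what is being proved. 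Instead, compute $\det M_\HH$ directly from \eqref{eq:1.13}: the row of $\cM_m$ is
\[
\Bigl(1,\ m-b+a-1,\ \tbinom{m-a+2}{2}+(m-b+1)\Bigr),
\]
three polynomials in $m$ of degrees $0,1,2$ with leading coefficients $1,1,\tfrac12$, so evaluating at $m=n,n+1,n+2$ and passing to finite differences gives $\det M_\HH=0!\cdot 1!\cdot 2!\cdot 1\cdot 1\cdot\tfrac12=1$. With that substitution your argument is complete.
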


If $d\geq 5$ is uneven or $d\geq 8$ is even, these are the results of
\cite[S\"atze 2--5]{T4}, and the remaining cases are treated in
\cite[p. 119--127]{T5}. At this point I would like to mention that the
largest part of \cite{T5} serves to prove that $g\leq (d-2)^2/4$ is a
necessary condition (see \cite[last line of page 2 and Chap.~17,
p.~129]{T5}).

\section{Rational and numerical equivalence}
\label{sec:1.2}

Let be
\[
  Z\in A_1^\tau(\HH) = \Set{ Z\in A_1(\HH) | (\cL\cdot Z) = 0 \text{ for all
    } \cL \in \Pic(\HH) }.
\]
According to Theorem I one can write
\[
  Z = q_0[C_0] + q_1[D] + q_2[C_2], \quad  q_i \in \Q.
\]
Using the formulas of \cite[pp.134 - 135]{T2} it follows that 
\[
   (\cM_n\cdot Z) = q_0 + q_1 (n-b+a-1) + q_2\left[ \tbinom{n-a+2}{2} +
     (n-b+1)\right] =0,
\]
hence $q_0 = q_1 = q_2 =0$.

As the restriction of $\pi$ to $D^*$ induces an isomorphism of $D^*$ onto
$D$, one has
\[
  (\pi^*\cM_n\cdot D^*) = (\cM_n\cdot D).
\]
In the same way one obtains
  $(\kappa^*\cO_{\P^3}(n)\cdot L^*) = (\cO_{\P^3}(n)\cdot L) = n$
and
$(\kappa^*\cO_{\P^3}(n)\cdot D^*) = 0$
and finally 
  $(\pi^*\cM_n\cdot L^*)  =0$.
Clearly one has corresponding results for the intersection numbers with
$C^*_2$, etc. According to Theorem~III
one can write $Z\in
A_1^\tau(\CC)$ as
\[
  Z = q_0[C^*_0] + q_1[D^*] + q_2[C^*_2] + q_3[L^*].
\]
If one forms the intersection numbers with $\pi^*\cM_n$ and
$\kappa^*\cO_{\P^3}(n)$, then one gets $q_0 = \dots = q_3 = 0$ and hence

\begin{theorem}
  \label{thm:1.1}
  Rational equivalence and numerical equivalence in $A_1(\HH)$ (resp.\
  $A_1(\CC))$ agree. \qed
\end{theorem}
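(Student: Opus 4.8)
The plan is to reduce the statement to the vanishing of the group of numerically trivial $1$-cycles. In the present set-up $A_1(\HH)$ (with $\Q$-coefficients) is by definition the group of rational equivalence classes, and numerical equivalence is the quotient of this group by $A_1^\tau(\HH)$; hence the assertion ``rational equivalence $=$ numerical equivalence'' for $\HH$ is exactly the statement $A_1^\tau(\HH)=0$, and the same applies verbatim to $\CC$. So I would fix $Z\in A_1^\tau(\HH)$, prove $Z=0$, and then argue analogously on $\CC$.

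For $\HH$: by Theorem~I one may write $Z=q_0[C_0]+q_1[D]+q_2[C_2]$ with $q_i\in\Q$. Since every tautological bundle $\cM_n$ (for $n\geq b-1$) lies in $\Pic(\HH)$, the hypothesis $Z\in A_1^\tau(\HH)$ already forces $(\cM_n\cdot Z)=0$ for all such $n$ — no passage through $\NS(\HH)$ or any positivity input is needed. I would then quote the intersection-number formulas of \cite[p.~134/135]{T2}, which give
\[
 (\cM_n\cdot Z)=q_0+q_1(n-b+a-1)+q_2\Bigl[\tbinom{n-a+2}{2}+(n-b+1)\Bigr];
\]
that is, $(\cM_n\cdot C_0)$, $(\cM_n\cdot D)$, $(\cM_n\cdot C_2)$ are a constant, a linear and a quadratic polynomial in $n$, respectively. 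The three functions $1$, $\ n-b+a-1$, $\ \tbinom{n-a+2}{2}+(n-b+1)$ are linearly independent over $\Q$ because they have pairwise distinct degrees in $n$, so the vanishing of the displayed expression for all $n\geq b-1$ (indeed for any three values of $n$ in that range) forces $q_0=q_1=q_2=0$, whence $Z=0$.

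For $\CC$: by Theorem~III write $Z=q_0[C_0^*]+q_1[D^*]+q_2[C_2^*]+q_3[L^*]$. The bundles $\pi^*\cM_n$ ($n\geq b-1$) and $\kappa^*\cO_{\P^3}(1)$ lie in $\Pic(\CC)$, so $Z\in A_1^\tau(\CC)$ makes all their intersection numbers with $Z$ vanish. Here the projection formula does the bookkeeping: $\pi$ maps $D^*$ isomorphically onto $D$ (and likewise $C_0^*$ onto $C_0$ and $C_2^*$ onto $C_2$), so $(\pi^*\cM_n\cdot D^*)=(\cM_n\cdot D)$ and correspondingly for $C_0^*,C_2^*$; on the other hand $L^*$ is contracted by $\pi$ while $C_0^*,D^*,C_2^*$ are contracted by $\kappa$, whence $(\pi^*\cM_n\cdot L^*)=0$, $(\kappa^*\cO_{\P^3}(1)\cdot C_0^*)=(\kappa^*\cO_{\P^3}(1)\cdot D^*)=(\kappa^*\cO_{\P^3}(1)\cdot C_2^*)=0$, and $(\kappa^*\cO_{\P^3}(1)\cdot L^*)=(\cO_{\P^3}(1)\cdot L)=1$. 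Pairing $Z$ with $\pi^*\cM_n$ thus reduces to the $\HH$-computation and gives $q_0=q_1=q_2=0$, and then pairing with $\kappa^*\cO_{\P^3}(1)$ gives $q_3=0$, so $Z=0$.

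The genuine content of this theorem sits entirely in the earlier results — Theorems~I and III for the explicit bases of $A_1(\HH)$ and $A_1(\CC)$, and the intersection-number formulas of \cite{T2} — and, granting those, what remains is the linear-algebra observation that the resulting pairing matrix is nondegenerate (block triangular in the $\CC$-case). Accordingly I do not expect a real obstacle; the only point requiring a little care is checking which of the curves $C_0^*,D^*,C_2^*,L^*$ are contracted by $\pi$ and by $\kappa$, so that the projection formula may be applied cleanly.
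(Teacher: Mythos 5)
Your proposal is correct and follows essentially the same route as the paper: write a numerically trivial cycle in the basis of Theorem~I (resp.\ Theorem~III), pair with the tautological bundles $\cM_n$ (resp.\ $\pi^*\cM_n$ and $\kappa^*\cO_{\P^3}(n)$) using the intersection formulas of \cite{T2}, and conclude all coefficients vanish since $1$, $n-b+a-1$, and $\tbinom{n-a+2}{2}+(n-b+1)$ are polynomials of distinct degrees. The bookkeeping of which curves are contracted by $\pi$ and $\kappa$ is exactly what the paper does as well.
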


As an application we replace $D$ in the above basis of $A_1(\HH)$ by the
cycle
\[
    C_1 = \Set{ (x,y^a,y^{a-1}z^{b-a}(\alpha z+t))| \alpha\in k }^-
\]
(see \cite[p. 91]{T1}). Writing
\[
  [C_1] = q_0[C_0] + q_1[D] + q_2[C_2]
\]
and forming the intersection number with $\cM_n$ gives $q_2 =0$ and
$(n-b+1) = q_0 +q_1(n-b+a-1)$ (cf. \cite[p. 134]{T2}). As this is equivalent
to $q_1=1$, $q_0=2-a$ one obtains
\begin{equation}
  \label{eq:1.1}
  [D] = (a-2)[C_0] + [C_1]\,.
\end{equation}
The same argumentation gives
\begin{equation}
  \label{eq:1.2}
  [D^*] = (a-2)[C^*_0] + [C^*_1]\,.
\end{equation}

$A^+_1(\HH)$ denotes the cone of curves on $\HH$, i.e.~the set of
$1$--cycles $\sum q_i[C_i]$, where $q_i\geq 0$ are rational numbers and the
$C_i$ are closed, reduced, irreducible curves on $\HH$. The cone
$A^+_1(\CC)$ is analogously defined. The determination of both cones will be
given in the next section and Theorem~\ref{thm:1.1} is the main tool.

\textbf{Convention:} If not otherwise stated, in the remaining sections of
Chapter 1 we write $P=k[x,y,z,t]$ and $S=k[x,y,z]$.

\pagebreak

\section{The cone of curves on $\HH$ and on $\CC$}
\label{sec:1.3}

\subsection{Combinatorial cycles}

\label{sec:1.3.1}

\subsubsection{Combinatorial cycles of type $1$}
\label{sec:1.3.1.1}

Let $\cJ\in \HH(k)$ be an ideal of type $1$ (cf.~\cite[p. 7]{T1}). We have
to take up the notations of \cite[1.4.6]{T1} and \cite[Anhang 2,
1.2]{T3} (cf. Appendix~\ref{cha:H}).

We consider a decomposition of $H^0(\cJ(b))$ into $y$-layers
(cf.~\cite[p. 51]{T3}) and we want to show that such a $y$-layer has
convex shape. To do so, we have to show that the following case cannot
occur:
\begin{figure}[H]
\centering
\begin{center}
  \begin{minipage}{10.5cm*\real{0.7}} 
    \tikzstyle{help lines}=[gray,very thin]
    \begin{tikzpicture}[scale=0.7]
      \draw[style=help lines] grid (10,8);
      \draw[very thick,->] (0,0) -- (0,8); 
      \draw[very thick,->] (0,0) -- (10,0);
     { \pgftransformxshift{0.5cm} 
       \draw (1,6) node[above=2pt] {$w$}; 
       \draw (2,5) node[above=2pt] {$u$}; 
       \draw (6,1) node[above=2pt] {$v$}; 
       \draw (-1,7) node[above=2pt] {$t$}; 
       \draw (10,0) node[above=1pt] {$z$}; 
     }
      \draw[\Red,ultra thick] (2,7) -- (2,5); 
      \draw[\Red,ultra thick,dotted] (2,5) -- (3,5) -- (3,4)  -- (4,4)  --
      (4,3)  -- (5,3) -- (5,2);
      \draw[\Red,ultra thick] (5,2) -- (7,2);
          \end{tikzpicture}
  \end{minipage}
\end{center}
\caption{}
 \label{fig:1.1}
\end{figure}

Otherwise we would have $tu = zw$ and $z^\delta u= t^\delta v$, with
$\delta:=\ord_t u - \ord_t v = \ord_z v -\ord_z u$, hence $tu \in
H^0(\cJ(b+1))$ and $z^\delta u \in H^0(\cJ(b+\delta))$. Suppose $u= t^ru'$,
$r\geq 0$, $u'$ a monomial without $t$. As we just have obtained $tu \in
H^0(\cJ(b+1))$, from the $G_1$-invariance of $H^0(\cJ(b+1))$ it follows
that: $(\alpha y+t)^{r+1}u'\in H^0(\cJ(b+1))$ for all $\alpha \in k
\Rightarrow [t^{r+1} + (r+1)\alpha y t^r + \cdots ]u' \in
H^0(\cJ(b+1))$.
As $\Char(k)=0$, because the Vandermonde--determinant is not equal to zero,
$yt^ru' = yu \in H^0(\cJ(b+1))$. In the same way it follows that $(\alpha x
+t)^{r+1} u' \in H^0(\cJ(b+1))$ and hence $xt^ru' = xu\in
H^0(\cJ(b+1))$. But $(x,y,z,t)^\delta u \subset H^0(\cJ(b+\delta))$ implies
$u\in H^0(\cJ(b))$, contradiction. From this we deduce that each $y$-layer
of $H^0(\cJ(b))$ has the shape as in Figure~\ref{fig:1.2}.

\begin{figure}
\begin{center}
  \begin{minipage}{16cm*\real{0.7}} 
    \tikzstyle{help lines}=[gray,very thin]
    \begin{tikzpicture}[scale=0.7]
      \draw[style=help lines] grid (16,14);
      \draw[very thick,->] (0,0) -- (16,0); 
      \draw[very thick,->] (0,0) -- (0,14);
      { \pgftransformxshift{0.5cm} 
        \draw (-1,0) node[above=2pt] {$0$}; 
       \draw (-1,13) node[above=2pt] {$t$}; 
       \draw (15,0) node[above=2pt] {$z$}; 
      }
      \draw[\Red,ultra thick,dashed] (1,13) -- (3.5,11.33); 
      \draw[\Red,ultra thick,dotted] (4,11) -- (6,11) -- (6,10); 
      \draw[\Red,ultra thick] (6,10) -- (8,10) -- (8,9) -- (9,9) -- (9,8)
         -- (10,8) -- (10,7) -- (11,7) -- (11,5); 
      \draw[\Red,ultra thick,dotted] (11,5) -- (12,5) -- (12,4)
        -- (13,4) -- (13,2) -- (14,2) -- (14,0);
    \end{tikzpicture}
  \end{minipage}
\end{center}
\caption{}
 \label{fig:1.2}
\end{figure}

This shows that
\[
  E(\cJ(b)) = \text{ set of monomial in } H^0(\cJ(b))
\]
is a disjoint union of the monomials in so called ``tracks'', i.e.~sets of
the form $B(u)=u\cdot k[z,t]_{b-r}$, where $u\in P_r$ is a monomial. We
write $u = v\cdot t^s$, $v\in S$ a monomial without $t$. Then 
$0 \leq s \leq r \leq b$. Put
\[
   C:=\Set{ \psi^1_\alpha(\cJ) | \alpha \in k}^-\,.
\]
Then the contribution delivered by the track $B(u)$ to the intersection
number $(\cM_b\cdot C)$ is equal to the highest power with which $\alpha$
appears in $\bigwedge\limits^{b-r+1} v(\alpha z+t)^s\cdot k[z,t]_{b-r}$,
i.e.~is equal to $s(b-r+1)$. But only tracks with $\grad_t u >0$ yield a
contribution to $(\cM_b\cdot C)$. Let $B(u)$ be such a track. Multiplication
of $B(u)$ with $\langle z,t \rangle$ gives the track $B'(u)=u\cdot
k[z,t]_{b+1-r} \subset H^0(\cJ(b+1))$. On the other hand one has $xu = tv$
and $yu=tw$ where $v$ and $w$ are monomials in $E(\cJ(b))$, as follows from
the invariance of $E(\cJ(b))$ under the maps $m \mapsto (y/t)\cdot m$ and $m
\mapsto (x/t)\cdot m$, if the monomial $m\in E(\cJ(b))$ is divisible by $t$.
It follows that $x\cdot B(u)$ (resp.~$y\cdot B(u)$) is contained in the
track $B'(v)$ (resp.~$B'(w)$), which arises from the track $B(v)$
(resp.~$B(w)$) in the same way as has been described before. By going over
from $E(\cJ(b))$ to $E(\cJ(b+1))$, each track $B(u)$ thus is transformed
into a track $B'(u)$ of $E(\cJ(b+1))$. On the other hand, as $P_1
H^0(\cJ(b)) = H^0(\cJ(b+1))$, each track $B(v) \subset H^0(\cJ(b+1))$ has
the form $B'(u)$, where $B(u) \subset H^0(\cJ(b))$ is a track. Thus one has
the situation described in Fig.~\ref{fig:1.3}. (As $\cJ$ is not contained in
$x\cO_{\P^3}(-1)$, from the $G_1$-invariance of $H^0(\cJ(b))$ it follows
that $y^b \in H^0(\cJ(b))$.) The same is true, if one goes over from
$E(\cJ(b+1))$ to $E(\cJ(b+2))$, etc. All in all one can say that, going over
from $E(\cJ(b))$ to $E(\cJ(n))$, each track $B(u) = u\cdot k[z,t]_{b-r}$
with $\grad_t u> 0$ is transformed into the track $B'(u) = u\cdot
k[z,t]_{n-r}$ and that each track $B(v) \subset E(\cJ(n))$ with $\grad_t v>
0$ is obtained in this way. $B'(u)$ then delivers the contribution
$s(n-r+1)$ to
\[
\alphadeg \bigwedge\limits^{Q(n)}
\psi^1_\alpha(H^0(\cJ(n))) = (\cM_n\cdot C)\,.
\]
It follows that 
\[
  (\cM_n\cdot C) = \sum s_i(n-r_i+1) = \sum s_i(n-b+1) + \sum s_i(b-r_i) = q_1
  (n-b+1) + q_2\,
\]
where $q_1:=\sum s_i \in \N$ and $q_2:=\sum s_i(b-r_i) \in \N$, i.e.~one has 
\[
  (\cM_n\cdot C) = q_1 (\cM_n\cdot C_1) + q_0 (\cM_n\cdot C_0)\,.
\]
If all $s_i=0$, then $H^0(\cJ(b))$ would be invariant under $\Delta =
U(4;k)$. By Theorem~\ref{thm:1.1} we get:
\begin{conclusion}
  \label{concl:1.1}
  If $C$ is a combinatorial cycle of type $1$, then $[C] = q_1 [C_1] +
  q_0[C_0]$
where $q_0, q_1 \in \N$ and $q_1 >0$. $\qed$
\end{conclusion}

\begin{figure}
   \begin{center}
     \begin{tikzpicture}
      \draw [->] (0,0,13) -- (0,0,14.5) node [below left] {$y$}; 
      \draw [->] (8,0,0) -- (9.5,0,0) node [right] {$z$}; 
      \draw [->] (0,7,0) -- (0,8,0) node [above] {$t$};
\draw
(8,0,0) -- (8,2,0) -- (7,2,0) -- (7,3,0) -- (6,3,0) -- (6,4,0) -- (5,4,0) -- (5,5,0) -- (3,5,0) -- (3,6,0) -- (2,6,0) -- (2,7,0) -- (0,7,0)
        (8,0,1) -- (8,2,1) -- (7,2,1) -- (7,3,1) -- (6,3,1) -- (6,4,1) -- (5,4,1) -- (5,5,1) -- (3,5,1) -- (3,6,1) -- (2,6,1) -- (2,7,1) -- (0,7,1)
       (8,1,1) -- (7,1,1) -- (7,2,1)
       (8,2,0) -- (8,2,1)
        (8,0,3) -- (8,1,3)
        (8,0,2) -- (8,1,2) -- (7,1,2) -- (7,3,2) -- (6,3,2) -- (6,4,2) -- (5,4,2) -- (5,5,2) -- (3,5,2) -- (3,6,2) -- (2,6,2) -- (2,7,2) -- (0,7,2)
        (7,2,2) -- (6,2,2) -- (6,3,2) -- (5,3,2) -- (5,4,2) -- (4,4,2) -- (4,5,2)
        (3,5,2) -- (2,5,2) -- (2,6,2) -- (1,6,2) -- (1,7,2)
       (7,0,3) -- (7,2,3) -- (5,2,3) -- (5,4,3) -- (3,4,3) -- (3,5,3)
        (6,1,3) -- (6,3,3) -- (4,3,3) -- (4,5,3) -- (2,5,3) -- (2,6,3) -- (0,6,3)
        (1,7,3) -- (0,7,3)
       (7,0,4) -- (7,1,4) -- (6,1,4) -- (6,2,4) -- (5,2,4) -- (5,3,4) -- (4,3,4) -- (4,4,4) -- (3,4,4) -- (3,5,4) -- (1,5,4) -- (1,6,4)
        (2,5,4) -- (2,6,4) -- (0,6,4)
        (7,0,5) -- (7,1,5) 
        (6,0,5) -- (6,2,5) -- (5,2,5) -- (5,3,5) -- (4,3,5) -- (4,4,5) -- (2,4,5) -- (2,5,5) -- (1,5,5) -- (1,6,5) -- (0,6,5)
       (5,0,8) -- (5,1,8) -- (4,1,8) -- (4,2,8) -- (3,2,8) -- (3,3,8) -- (2,3,8) -- (2,4,8) -- (0,4,8)
        (6,0,6) -- (6,2,6) -- (4,2,6) -- (4,4,6) -- (1,4,6) -- (1,5,6)
        (5,1,6) -- (5,3,6) -- (3,3,6) -- (3,4,6)
       (2,4,6) -- (2,5,6) -- (0,5,6)
 (1,6,6) -- (0,6,6)
 (6,0,7) --  (6,1,7)
 (1,5,7) -- (0,5,7)
        (4,1,7) -- (4,3,7) -- (2,3,7) -- (2,4,7)
        (5,0,7) -- (5,2,7) -- (3,2,7) -- (3,4,7) -- (0,4,7)
 (5,0,9) --  (5,1,9)
        (4,0,9) -- (4,2,9) -- (2,2,9) -- (2,4,9) -- (0,4,9)
  (4,0,10) -- (4,1,10)  (1,4,10) -- (0,4,10)
        (3,1,9) -- (3,3,9) -- (1,3,9) -- (1,4,9)
        (3,4,5) -- (3,5,5) -- (2,5,5)
        (3,0,10) -- (3,2,10) -- (1,2,10) -- (1,3,10)
      (2,1,10) -- (2,3,10) -- (0,3,10)
        (3,0,11) -- (3,1,11)
         (1,1,11) -- (1,2,11)
        (2,0,11) -- (2,2,11) -- (0,2,11)
         (1,2,12) -- (0,2,12)
         (1,3,9) -- (1,3,10)
        (1,3,11) -- (0,3,11)
         (1,2,10) -- (1,2,11)
         (2,3,7) -- (2,3,10)
         (3,3,6) -- (3,3,9)       
        (8,0,0) -- (8,0,3) -- (7,0,3) -- (7,0,5) -- (6,0,5) -- (6,0,7) -- (5,0,7) -- (5,0,9) -- (4,0,9) -- (4,0,10) -- (3,0,10) -- (3,0,11) -- (2,0,11) -- (2,0,12) -- (1,0,12) -- (1,0,13) -- (0,0,13)
        (8,1,0) -- (8,1,3) -- (7,1,3) -- (7,1,5) -- (6,1,5) -- (6,1,7) -- (5,1,7) -- (5,1,9) -- (4,1,9) -- (4,1,10) -- (3,1,10) -- (3,1,11) -- (2,1,11) -- (2,1,12) -- (0,1,12)
 (1,0,12) -- (1,1,12)  (2,0,12) -- (2,1,12)
         (7,1,1) -- (7,1,3) -- (6,1,3) -- (6,1,5)
        (6,1,6) -- (5,1,6) -- (5,1,7) -- (4,1,7) -- (4,1,9) -- (3,1,9) -- (3,1,10) -- (2,1,10) -- (2,1,11) -- (1,1,11) -- (1,1,12)
     (7,2,0) -- (7,2,3)
        (7,3,0) -- (7,3,2)
       (6,4,0) -- (6,4,2)
      (6,2,2) -- (6,2,6)
       (6,3,0) -- (6,3,3)
       (5,5,0) -- (5,5,2)
        (5,4,0) -- (5,4,3)
        (4,5,0) -- (4,5,3)
          (3,5,0) -- (3,5,5)
      (3,6,0) -- (3,6,2)
        (2,6,0) -- (2,6,4)
   (2,7,0) -- (2,7,2)
    (2,4,5) -- (2,4,9)
        (0,7,0) -- (0,7,3) -- (0,6,3) -- (0,6,6) -- (0,5,6) -- (0,5,7) -- (0,4,7) -- (0,4,10) -- (0,3,10) -- (0,3,11) -- (0,2,11) -- (0,2,12) -- (0,1,12) -- (0,1,13) -- (0,0,13)
        (1,7,0) -- (1,7,3) -- (1,6,3) -- (1,6,6) -- (1,5,6) -- (1,5,7) -- (1,4,7) -- (1,4,10) -- (1,3,10) -- (1,3,11) -- (1,2,11) -- (1,2,12) -- (1,1,12) -- (1,1,13) -- (1,0,13)
  (1,1,13) -- (0,1,13)
   (1,6,2) -- (1,6,3)
   (1,5,4) -- (1,5,6)
   (3,4,3) -- (3,4,7)
   (2,5,2) -- (2,5,6)
   (1,4,6) -- (1,4,7)
  (5,3,2) -- (5,3,6)
  (4,4,2) -- (4,4,6)
   (4,3,3) -- (4,3,7)
   (5,2,3) -- (5,2,7)
   (4,2,6) -- (4,2,9)
  (3,2,7) -- (3,2,10)
  (2,2,9) -- (2,2,11)
;
     \end{tikzpicture}
   \end{center}
\caption{}
 \label{fig:1.3}
 \end{figure}
 
\subsubsection{Combinatorial cycles of type $2$}
\label{sec:1.3.1.2}

We first want to recall the computation of degree in \cite[1.3, p. 12
]{T1}: $\G_a$ operates on $R=k[y,z]$ by $\psi_\alpha:y\mapsto y, z\mapsto
\alpha y+z$. Let $V\subset y^r z^s R_n$ be a $(\mu+1)$-dimensional subspace,
which is generated by monomials.

Then $\bigwedge\limits^{\mu+1} \psi_\alpha(V)$ contains a non-zero term
without $\alpha$, respectively with a power of $\alpha$, which is $\geq
(\mu+1)\cdot s$. In order to see this, we write 
\[
   V = y^r z^s \langle y^{n-a_0}z^{a_0},\dots,y^{n-a_\mu}z^{a_\mu} \rangle \,,
\]
where $0\leq a_0 < \dots < a_\mu\leq n$. Then $\bigwedge\limits^{\mu+1}
\psi_\alpha(V)$ contains the term 
\[
  y^rz^sy^{n-a_0}z^{a_0} \wedge \dots \wedge y^rz^sy^{n-a_\mu}z^{a_\mu}
\]
without $\alpha$. The highest power of $\alpha$ in $\bigwedge\limits^{\mu+1}
\psi_\alpha(V)$ is equal to
\[
   (s+a_0) + \cdots + (s+a_\mu)-(1+ \cdots + \mu)
\] 
(see~\cite[p. 13/14]{T1}). As $0\leq a_0 < \dots < a_\mu\leq n$, this sum is
$\geq s(\mu+1)$.

Let be $Q(n) = \tbinom{n-1+3}{3}+\tbinom{n-a+2}{2}+ \tbinom{n-b+1}{1}$ where
{\begin{equation}
\label{eq:1.3}
  f_0(n):=1, \quad f_1(n):= (n-b+1), \quad f_2(n):= \tbinom{n-a+2}{2} + (n-b+1)\,.
\end{equation}
Let $\cJ\in H_Q(k)$ have the type $2$, i.e.$\cJ$ is invariant under
$T(4;k)\cdot G_2$ (cf.~\cite[p. III]{T1} and Appendix~\ref{cha:H}). \\
Let $\cJ \leftrightarrow \xi \in H_Q(k)$ and $C:=\Set{ \psi^2_\alpha(\xi) |
  \alpha \in k}^-$.
\begin{auxlemma}
  \label{auxlem:0}
 If $g(n):=\alphadeg \bigwedge\limits^{Q(n)}
 \psi^2_\alpha(H^0(\cJ(n)))$, then there are $n_i \in \N$ such that for all
 $n\gg 0$
\[
   g(n) = n_2 f_2(n) + n_1 f_1(n) + n_0 f_0(n)\,.
\]
\end{auxlemma}
\begin{proof}
  By induction on the colength of  
$\cJ':= \cJ + t\cO_{\P^3}(-1)/t\cO_{\P^3}(-1) \subset \cO_{\P^2}$.
If this colength is equal to zero, then $a=1$ and $\cJ \subset \cO_{\P^3}$ has the
colength $b$. Then $S_n\subset H^0(\cJ(n))$ if $n\geq b$ and $\alphadeg
\bigwedge\limits^{Q(n)} \psi^2_\alpha(H^0(\cJ(n)))$ is independent of $n\geq
b$, hence $g(n) = cf_0(n)$. Thus we can assume, without restriction of
generality, that $a>1$. If $\cJ'$ is invariant under $U(3;k)$, then the same 
argumentation shows that $g(n)$ is constant. Hence we can assume without
restriction that $\cJ'$ is not $U(3;k)$-invariant and we have the situation
described in~\cite[2.2]{T1}.

\paragraph{$1^\circ$}
\label{sec:1.3.1.2.1}
We consider the outer shell of the pyramid $E(\cJ(n))$, $n\geq b$. We
imagine the outer shell completed (in Figure~\ref{fig:1.4} by the dotted
monomials, but this figure is very special, because in general it is not
true that ``old pyramid $\cup$ new monomials'' is a pyramid in the usual
sense, see Appendix~\ref{cha:H}). The completed outer shell has the form
$u\cdot k[y,z](-\alpha)$, where $u=y^r z^s$, and $\alpha:=r+s = \reg(\cJ')$
(see~\cite[p. 55]{T1}).

Now one has $1\leq \alpha < a$ as $\reg(\cJ') \leq a-1$, and on the other
hand from $\alpha=0$, because of $\alpha=r+s$, it would follow that
$\reg(\cJ')=0$, hence $\cJ'=\cO_{\P^2}$.

It follows that in the complete outer shell one has $n-\alpha+1$ monomials
of degree $n$, if $n\geq\alpha$. We define the number $\beta$ by the
condition that the outer shell is complete in degree $\beta$ but not in
degree $\beta-1$. Hence $\beta\leq\reg(\cJ) \leq b$.

\paragraph{$2^\circ$}
\label{sec:1.3.1.2.2}
Let $r_i$, $\alpha\leq i\leq\beta-1$, be the number of monomials in
$E(\cJ(n))$ of degree $i$ in the outer shell. Then $r_i\leq i-\alpha+1$. As
the outer complete shell is equal to $uk[y,z](-\alpha)$, where $u=y^rz^s$,
$r+s=\alpha$, the $i$-th layer of the outer shell of the pyramid
$E(\cJ(n))$ contributes to the $\alphadeg$ of $H^0(\cJ(n))$ the term
$s(i-\alpha+1)$, if $i\geq \beta$, respectively  a term $\geq s\cdot r_i$,
if $\alpha \leq i \leq \beta-1$ (see above). Hence the outer
shell of the pyramid $E(\cJ(n))$ contributes to the $\alphadeg$ of
$H^0(\cJ(n))$ the term 
\begin{align*}
  r(n)&:= s\cdot \sum_{i=\beta}^n (i-\alpha+1) + s\cdot \sum_{i=1}^{\beta-1}
  r_i + \delta \\
      & = s\left[\tbinom{n-\alpha+2}{2}+ \tbinom{\beta-\alpha+1}{2} +
        \rho\right] + \delta\;,
\end{align*}
where $\rho:=\sum_{i=\alpha}^{\beta-1} r_i$ and $\delta\in \N$. 
The numbers $s,\rho,\delta$ are independent of $n\geq b$. \\

\textbf{N.B.}
$\delta=0$ iff in the $i$-th layer of the outer shell of $E(\cJ(n))$ there
are only monomials with the smallest possible $z$-degree; especially there
are no holes in the $i$-th layer, for all $\alpha\leq i \leq \beta-1$.

\paragraph{$3^\circ$}
\label{sec:1.3.1.2.3}
 We put $f(n):= \tbinom{n-\alpha+2}{2}-\gamma$,
 $\gamma:=\tbinom{\beta-\alpha+1}{2} -\rho$ and want to find $q_i\in \N$
 such that
 \begin{equation}
   \label{eq:1.4}
    f(n) = q_2 f_2(n) + q_1 f_1(n) + q_0f_0(n)\,
 \end{equation}
(see ~\eqref{eq:1.3}). First one sees that $q_2=1$ and we get the equivalent equations
\begin{alignat*}{2}
 & &   \tbinom{n-\alpha+2}{2}- \gamma & = \tbinom{n-a+2}{2} + (n-b+1) +.
   q_1(n-b+1) + q_0 \\
\iff {} & & (n-\alpha+2)(n-\alpha+1) -2\gamma & = (n-a+2)(n-a+1)
+2(n-b+1)\\
& & & \qquad \quad +2q_1(n-b+1)+2q_0 \\
 \iff {} & & n^2 -(2\alpha-3)n+(\alpha-1)(\alpha-2) -2\gamma & = n^2
 -(2a-3)n+(a-1)(a-2)  \\
&&  & \qquad \quad 
+(2q_1+2)(n-b+1)+2q_0 \\
\iff {} &&   -(2\alpha-3) & = 2q_1 +2 -(2a-3) \quad \text{ and } \\
    &&  (\alpha-1)(\alpha-2) -2\gamma & = (2q_1+2)(-b+1) +(a-1)(a-2) +2q_0 \,.
\end{alignat*}
This is equivalent to
\begin{equation}
  \label{eq:1.4}
  q_1 = a-\alpha -1
\end{equation}
and 
\begin{equation}
  \label{eq:1.5}
(\alpha-1)(\alpha-2) -2\gamma = 2(a-\alpha)(-b+1)+(a-1)(a-2) +2q_0
\end{equation}
\begin{align*}
  \iff 2q_0 & = \alpha^2-3\alpha+2-(a^2-3a+2)-2\gamma+2(a-\alpha)(b-1) \\
            & = \alpha^2-3\alpha-a^2+3a-2\gamma+2(a-\alpha)(b-1) \\
            & = -[a^2-\alpha^2-3(a-\alpha)]-2\gamma+2(a-\alpha)(b-1) \\
            & = -(a-\alpha)[a+\alpha-3]-2\gamma+2(a-\alpha)(b-1) \\
\Rightarrow\quad   q_0 & = (a-\alpha)[b-1 -\tfrac{1}{2}(a+\alpha-3)]-\gamma \\
              q_0 & = (a-\alpha)\left[\tfrac{1}{2}(b-a+1)+
    \tfrac{1}{2}(b-\alpha)\right] -\gamma
\end{align*}
where 
\[
\gamma= \tbinom{\beta-\alpha+1}{2}-\rho =
\sum_{i=\alpha}^{\beta-1}(i-\alpha+1-r_i)
\]
is the number of monomials, which are missing in the layer of degree $i$ in the
outer shell of $E(\cJ(b))$, $\alpha\leq i\leq\beta-1$. Hence $\gamma\geq0$
and on the other hand $\gamma\leq$ colength of $\cJ$ in $\cI^*$, where
$\cI^*$ is the ideal in $\cO_{\P^3}$, which is generated by $\cI:=\cJ'\subset
\cO_{\P^2}$. As this colength is $\leq b-a+1$, one has
\[
  0\leq\gamma\leq b-a+1\,.
\]
As $\alpha< a$, one has $\tfrac{1}{2}(b-\alpha) \geq \tfrac{1}{2}(b-a+1)$
    and we get
    \begin{equation}
    \label{eq:1.6}
      q_0\geq (a-\alpha)(b-a+1)-\gamma\geq (a-\alpha-1)(b-a+1)-\gamma\geq 0\,.
  \end{equation}
Now from equation~\eqref{eq:1.5}  it follows that $q_0\in \Z$ and hence
$q_0\in \N$. As $q_1 = a-\alpha-1$ and $\alpha < a$ (see above) we have
solved~\eqref{eq:1.3}  with natural numbers $q_0,q_1,q_2$, and hence it
follows that there are $q'_i\in \N$ such that 
\begin{equation}
  \label{eq:1.7}
  r(n) = s\cdot f(n) + \delta = q'_2 f_2(n) +q'_1 f_1(n) +q'_0 f_0(n)\,.
\end{equation}

\paragraph{$4^\circ$}
\label{sec:1.3.1.2.4}
If one takes away from $E(\cJ(n))$ the outer shell, one gets a pyramid
$E(\cK(n))$, where $\cK = x \tilde{\cJ}(-1)$. Here $\tilde{\cJ}\subset
\cO_{\P^3}$ is an ideal of type $2$ with Hilbert polynomial $\tilde{Q}$ such
that $\tilde{\cJ}'\subset \cO_{\P^2}$ has a smaller colength than
$\cJ'\subset \cO_{\P^2}$ (see~\cite[Fig. 2.5, p. 55]{T1}). One sees that
\[
\tilde{Q}(n-1) + \sum_{i=\beta}^n (i-\alpha+1) + \sum_{i=\alpha}^{\beta-1}
r_i = Q(n)\,.
\]
We write
\[
\tilde{Q}(n) = \tbinom{n-1+3}{3}+\tbinom{n-\tilde{a}+2}{2}
+\tbinom{n-\tilde{b}+1}{1}, 
\]
where $\tilde{b} = \tilde{a}-1$ is possible, in which case 
$ \tilde{Q}(n) = \tbinom{n-1+3}{3}+\tbinom{n-(\tilde{a}-1)+2}{2}$. Also
$\tilde{a} =1$ is possible, i.e.~$\tilde{Q}(n) = \tbinom{n+3}{3}$. In any
case, one has
\[
\tilde{Q}(n-1)+ \tbinom{n-\alpha+2}{2}-\tbinom{\beta-\alpha+1}{2} + \rho =
\tilde{Q}(n-1)+ \tbinom{n-\alpha+2}{2} -\gamma,
\]
where $\rho$ and $\gamma$ have been introduced in $2^\circ$
respectively in $3^\circ$. We get 
\begin{align*}
 & \tbinom{n-1-1+3}{3}+\tbinom{n-1-\tilde{a}+2}{2} +
  \tbinom{n-1-\tilde{b}+1}{1} +\tbinom{n-\alpha+2}{2} -\gamma \\
 = {} &\tbinom{n-1+3}{3}+\tbinom{n-a+2}{2} +
  \tbinom{n-b+1}{1}\,.
\end{align*}
Hence
\[
\tbinom{n-\tilde{a}+1}{2} + (n-\tilde{b}) +\tbinom{n-\alpha+2}{2} -\gamma
= \tbinom{n-1+2}{2}  +\tbinom{n-a+2}{2}+ (n-b+1)\,.
\]
\begin{alignat*}{2}
  \Rightarrow & \quad  &
  & (n-\tilde{a}+1)(n-\tilde{a})+2(n-\tilde{b})+(n-\alpha+2)(n-\alpha+1)-2\gamma \\
  & & = {} &(n+1)n+(n-a+2)(n-a+1)+2(n-b+1) \\
   \Rightarrow & \quad & & n^2 -2\tilde{a}n + \tilde{a}^2 +n -\tilde{a} +2n
  -2\tilde{b}+n^2 -2\alpha n +\alpha^2 +3n -3\alpha+2 -2\gamma \\
   & & = {} & n^2+n+n^2-2an+a^2+3n-3a+2+2n-2b+2\,.
\end{alignat*}
Comparing the coefficients of $n$ gives
\begin{equation}
  \label{eq:1.8}
  a = \tilde{a} + \alpha\;.
\end{equation}
Moreover,
 it follows
\[
\tilde{a}^2-\tilde{a}-2\tilde{b} +\alpha^2-3\alpha+2 -2\gamma =
  a^2-3a-2b+4 \qquad\]
\begin{alignat*}{2}
    \Rightarrow & \quad & 2b-2\tilde{b}  &= a^2 -3a+4
    -\tilde{a}^2+\tilde{a}-\alpha^2+3\alpha -2
    +2\gamma \\
      & &  & = 2\tilde{a}\alpha-2\tilde{a}+2+2\gamma \\
    \Rightarrow & \quad & b-\tilde{b} & = \tilde{a}(\alpha-1) +1 + \gamma\,.
\end{alignat*}
As $\gamma\geq 0$ and $\alpha \geq 1$ (see p.9, line 10 resp. p.7, line 5 from bottom), we get
\begin{equation}
  \label{eq:1.9}
  b > \tilde{b}\,.
\end{equation}

Now by the induction-assumption one can write 
\[
  \tilde{g}(n):=\alphadeg
\bigwedge\limits^{\tilde{Q}(n)} \psi^2_\alpha(H^0(\tilde{\cJ}(n))) 
\]
in the form $\tilde{g}(n) = \tilde{q}_2 \tilde{f}_2(n) + \tilde{q}_1
\tilde{f}_1(n) + \tilde{q}_0 \tilde{f}_0(n)$, where $\tilde{f}_2(n) =
\tbinom{n-\tilde{a}+2}{2} +(n-\tilde{b}+1)$, $\tilde{f}_1(n) =
(n-\tilde{b}+1)$, $\tilde{f}_0(n) =1$, and $\tilde{q}_0$, $\tilde{q}_1$ and
$\tilde{q}_2$ are natural numbers. 

Now by direct computation one gets 
$\tilde{f}_2(n-1) =  f_2(n) + c_1 f_1(n) + c_0f_0(n)$, where
$c_1:=a-\tilde{a}-1$ and
$c_0:=(\alpha-1)(b-a)+\tfrac{1}{2}\alpha(\alpha-1) + b -\tilde{b}-1$ are
natural numbers, because of $\alpha\geq 1$, $a\leq b$ and~\eqref{eq:1.8}
and~\eqref{eq:1.9}. We write $\tilde{f}_1(n-1) = (n-1-\tilde{b} +1) = f_1(n)
+c_2$, where $c_2:=b-\tilde{b}-1\in \N$ because
of~\eqref{eq:1.9}.

Using~\eqref{eq:1.7} we get
\begin{equation}
  \label{eq:1.10}
  \begin{aligned}
    g(n) & = r(n) + \tilde{g}(n-1) \\
         &  = q'_2 f_2(n) + q'_1 f_1(n) + q'_0 f_0(n) \\
         & \quad + \tilde{q}_2[ f_2(n) + c_1 f_1(n) + c_0f_0(n)] + 
             \tilde{q}_1(f_1(n) + c_2) + \tilde{q}_0 f_0(n)\,.
  \end{aligned}
\end{equation}
\qedhere
\end{proof}
Hence the Aux-Lemma~\ref{auxlem:0} follows and the same argumentation as
in~\ref{sec:1.3.1.1} gives:
\begin{conclusion}
  \label{concl:1.2}
If $C$ is a combinatorial cycles of type $2$, there are natural numbers
$q_i$ such that 
$[C] = q_2[C_2] +  q_1[C_1] + q_0[C_0]$. \hfill $\qed$
\end{conclusion}
\begin{corollary}
  \label{cor:1.1}
If $C$ is a combinatorial cycles of type $2$, then $(\cM_n\cdot C)$, as
function of $n$, is either a quadratic function or a constant function.
\end{corollary}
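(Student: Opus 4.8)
The plan is to refine the induction that proves Aux-Lemma~\ref{auxlem:0}, this time tracking the \emph{degree} of $g(n)$ rather than merely its shape. First note that, exactly as in~\ref{sec:1.3.1.1}, $(\cM_n\cdot C)=g(n):=\alphadeg\bigwedge\limits^{Q(n)}\psi^2_\alpha(H^0(\cJ(n)))$ for all $n\gg 0$; moreover $n\mapsto(\cM_n\cdot C)$ is a polynomial of degree $\le 2$ in $n$, e.g.\ because $\cM_{n-1}\otimes\cM_n^{-2}\otimes\cM_{n+1}=\cL_2$ is independent of $n$, so $\cM_n$ is a quadratic polynomial in $n$ with coefficients in $\NS(\HH)_\Q$. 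By Aux-Lemma~\ref{auxlem:0} this polynomial equals $n_2f_2(n)+n_1f_1(n)+n_0f_0(n)$, with $f_2$ of degree $2$, $f_1$ of degree $1$ and $f_0=1$; since degree $\le 2$ is automatic, the content of the corollary is precisely that the linear case cannot occur, i.e.\ that $n_2=0$ forces $n_1=0$.

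I would prove this by induction on the colength of $\cJ'=\cJ+t\cO_{\P^3}(-1)/t\cO_{\P^3}(-1)\subset\cO_{\P^2}$, along the very reduction already used in the Aux-Lemma. In the base cases (colength $0$, or $\cJ'$ invariant under $U(3;k)$) one has $g(n)=cf_0(n)$, a constant, and there is nothing to prove. For the inductive step one uses the decomposition $g(n)=r(n)+\tilde g(n-1)$ of~\eqref{eq:1.10}, where $\tilde g$ is the corresponding function attached to the ideal $\tilde{\cJ}$ of strictly smaller colength; by the inductive hypothesis $\tilde g(n-1)$ is either a genuine quadratic, with positive leading coefficient $\tilde q_2/2$, or a constant. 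The decisive point is that the outer-shell contribution $r(n)=s\cdot f(n)+\delta$, with $f(n)=\tbinom{n-\alpha+2}{2}-\gamma$ and $s,\delta,\gamma$ independent of $n$, is \emph{never linear}: it is a quadratic with positive leading coefficient $s/2$ when $s>0$, and equals the constant $\delta$ when $s=0$.

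Putting the two together: if $s>0$, then $r(n)$ already has a strictly positive $n^2$-coefficient and, since the $n^2$-coefficient of $\tilde g(n-1)$ is nonnegative, no cancellation can occur and $g$ has degree $2$; if $s=0$, then $g(n)=\delta+\tilde g(n-1)$, which by the inductive hypothesis is quadratic or constant. In neither case does $g$ have degree exactly $1$, which is what had to be shown. Equivalently, in the notation of Conclusion~\ref{concl:1.2}, one obtains that $q_2=0$ implies $q_1=0$.

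The one thing requiring attention is the bookkeeping of leading coefficients across the induction: one must observe that $f$, $f_2$ and $\tilde f_2$ all have the same strictly positive leading coefficient $\tfrac12$, and that $s$ and $\tilde q_2$ are nonnegative integers, so that the degree-$2$ parts can never cancel against one another. This is visible directly in the formulas already set up in the proof of Aux-Lemma~\ref{auxlem:0}, so no new computation is needed, and Corollary~\ref{cor:1.1} follows.
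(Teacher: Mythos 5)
Your proof is correct and follows essentially the same route as the paper: both arguments rest on the decomposition $g(n)=r(n)+\tilde g(n-1)$ from the proof of Aux-Lemma~\ref{auxlem:0}, the observation that the degree-two contributions $s\cdot f(n)$ and $\tilde q_2\tilde f_2(n-1)$ have nonnegative leading coefficients and hence cannot cancel, and the induction on the colength of $\cJ'$ to handle the case $s=0$. The paper phrases it contrapositively (if $g$ is not quadratic then $s=0$ and $\tilde q_2=0$, whence constancy by induction), but this is only a cosmetic difference from your case split on $s$.
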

\begin{proof}
  If $(\cM_n\cdot C)$ is not a quadratic function, then the above formula
  for $g(n)$ shows that $r(n)$ is not a quadratic function and
  $\tilde{q}_2=0$. It follows that $s=0$, hence $r(n) = \mathrm{constant}$
  and by using an induction argument again, one can assume that
  $\tilde{q}_1=0$. But then $g(n)$ is a constant function.
\end{proof}
\begin{corollary}
  \label{cor:1.2}
If $C$ is a combinatorial cycles of type $2$ such that $(\cM_n\cdot C) =
n_1f_1(n) + n_2f_2(n)$ with natural numbers $n_1$ and $n_2$, then $n_1=0$.
\end{corollary}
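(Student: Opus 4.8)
The plan is to run the induction of Aux-Lemma~\ref{auxlem:0} once more --- induction on the colength of $\cJ' := (\cJ + t\cO_{\P^3}(-1))/t\cO_{\P^3}(-1) \subset \cO_{\P^2}$ --- but this time reading off, from the explicit coefficient formulas obtained there, that the $f_0$-coefficient of $(\cM_n\cdot C)$ cannot vanish while the $f_1$-coefficient survives. Throughout I use that $f_0,f_1,f_2$ are linearly independent as polynomials in $n$, so the hypothesis $(\cM_n\cdot C) = n_1 f_1(n) + n_2 f_2(n)$ is exactly the assertion that the representation of $g(n):=(\cM_n\cdot C)$ furnished by Aux-Lemma~\ref{auxlem:0} has $f_0$-coefficient $0$.

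First I would dispose of the base cases: if $\cJ'$ has colength $0$, or if $\cJ'$ is $U(3;k)$-invariant, then $g(n)$ is constant (this is exactly what is shown at the start of the proof of Aux-Lemma~\ref{auxlem:0}), and since $f_1$ is non-constant, comparing $g$ with $c\,f_0$ forces $n_1 = n_2 = 0$; in particular $n_1 = 0$. For the inductive step I may then assume $\cJ'$ is not $U(3;k)$-invariant and has positive colength, so that the reduction of Step~$4^\circ$ applies and \eqref{eq:1.10} is available. Substituting $q'_2 = s$, $q'_1 = s(a-\alpha-1)$, $q'_0 = sq_0+\delta$, $c_1 = \alpha-1$ and $f_0\equiv 1$ into \eqref{eq:1.10} and comparing coefficients, I obtain
\[
   n_1 = s(a-\alpha-1) + \tilde q_2(\alpha-1) + \tilde q_1 , \qquad
   n_0 = (sq_0+\delta) + \tilde q_2 c_0 + \tilde q_1 c_2 + \tilde q_0 = 0 .
\]

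Since every summand on the right of the $n_0$-identity is a natural number, each of them vanishes; in particular $\tilde q_0 = 0$, so $\tilde g(n) = \tilde q_2\tilde f_2(n)+\tilde q_1\tilde f_1(n)$, and the induction hypothesis applied to $\tilde{\cJ}$ (whose associated ideal in $\cO_{\P^2}$ has strictly smaller colength) yields $\tilde q_1 = 0$. It then remains to show $s(a-\alpha-1) = 0$ and $\tilde q_2(\alpha-1) = 0$. For the second: if $\alpha = 1$ it is clear, while if $\alpha\geq 2$ then, using $b-\tilde b-1 = c_2 = \tilde a(\alpha-1)+\gamma\geq 0$, $a\leq b$ and $\tfrac{1}{2}\alpha(\alpha-1)\geq 1$, one has $c_0\geq 1$, so $\tilde q_2 c_0 = 0$ forces $\tilde q_2 = 0$. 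For the first: if $\alpha = a-1$ it is clear, while if $\alpha\leq a-2$ then $a-\alpha\geq 2$, so by \eqref{eq:1.6} together with $\gamma\leq b-a+1$ I get $q_0\geq (a-\alpha)(b-a+1)-\gamma\geq 2(b-a+1)-(b-a+1) = b-a+1\geq 1$, whence $sq_0 = 0$ forces $s = 0$. Therefore $n_1 = 0$, which closes the induction.

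The hard part --- really the only non-bookkeeping point --- is this final dichotomy, whose content is that the coefficient $q_1 = a-\alpha-1$ of $f_1$ produced by the outer shell is nonzero only when $\alpha\leq a-2$, and in that range \eqref{eq:1.6} already forces the $f_0$-weight $q_0$ to be at least $b-a+1\geq 1$; likewise $c_1 = \alpha-1$ is nonzero only when $\alpha\geq 2$, and then $c_0\geq 1$. So each of the two possible sources of an $f_1$-term in $(\cM_n\cdot C)$ drags along a strictly positive $f_0$-term, which is incompatible with $n_0 = 0$. I expect verifying these two inequalities ($q_0\geq 1$ for $\alpha\leq a-2$ from \eqref{eq:1.6}, and $c_0\geq 1$ for $\alpha\geq 2$) to be the crux; everything else is a rerun of the computations already carried out for Aux-Lemma~\ref{auxlem:0}.
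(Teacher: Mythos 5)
Your proof is correct, and it starts from the same identity as the paper: reading off the $f_0$-coefficient of \eqref{eq:1.10} and noting that, since each summand $q'_0$, $\tilde{q}_2 c_0$, $\tilde{q}_1 c_2$, $\tilde{q}_0$ is a natural number, all of them must vanish (this is exactly the paper's equation \eqref{eq:1.11}). Where you genuinely diverge is in how the three contributions to $n_1 = sq_1 + \tilde{q}_2 c_1 + \tilde{q}_1$ are killed. The paper splits on whether $\tilde{q}_2$ vanishes: for $\tilde{q}_2=0$ it invokes Corollary~\ref{cor:1.1} to make $\tilde{g}$ constant, and for $\tilde{q}_2\neq 0$ it deduces $\alpha=1$ from $c_0=0$ and then needs two ad hoc arguments --- a geometric one ($s=0$, $u=y$ forces $\cJ'$ to be $B(3;k)$-invariant, so $(\cM_n\cdot C)$ is constant) and an explicit computation with $\cJ'=(x,z)$, $a=2$ for $s=1$ --- because in that branch $n_1$ reduces to $\tilde{q}_1$ and the relation $\tilde{q}_1 c_2=0$ alone cannot rule out $c_2=0$ with $\tilde{q}_1\neq 0$. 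You instead make the Corollary itself the object of the colength induction (exactly as the paper does for Corollary~\ref{cor:1.1}), so $\tilde{q}_0=0$ hands you $\tilde{q}_1=0$ from the induction hypothesis, and the remaining two terms die from the elementary inequalities $c_0\geq 1$ for $\alpha\geq 2$ and $q_0\geq 1$ for $\alpha\leq a-2$; this removes the paper's Case~2 computations entirely and is the more uniform argument. The one point to make explicit if you write this up: the induction hypothesis must be available for every Hilbert polynomial $\tilde{Q}$ that the descent of Step~$4^\circ$ can produce, including the degenerate ones with $\tilde{b}=\tilde{a}-1$ or $\tilde{a}=1$; your inequalities use $a\leq b$, so those degenerate instances have to be absorbed into the base cases (where $\tilde{g}$ is constant) or checked separately --- the same care the paper's own induction for Aux-Lemma~\ref{auxlem:0} silently requires.
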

\begin{proof}
 From the formula~\eqref{eq:1.10} we deduce
 \begin{equation}
   \label{eq:1.11}
   q'_0 + \tilde{q}_2\cdot c_0 + \tilde{q}_1\cdot c_2 + \tilde{q}_0  =0\,.
 \end{equation}
\textsc{Case 1}: $\tilde{q}_2=0$. By Corollary~\ref{cor:1.1} it follows that
 $\tilde{g}(n) = \tilde{q}_2 f_2(n) + \tilde{q}_1 f_1(n) +\tilde{q}_0
 f_0(n)$ is a constant function. Now from equation~\eqref{eq:1.11} follows
 $\tilde{q}_0=0$, hence $\tilde{g}(n)=0$ and~\eqref{eq:1.7} implies
 \begin{equation}
   \label{eq:1.12}
   g(n) = r(n) = s[q_2f_2(n)+q_1f_1(n) + q_0f_0(n)] + \delta\,.
 \end{equation}
 \textsc{Subcase 1}: $s=0$, hence $g(n)=0$. \\
 \textsc{Subcase 2}: $s\neq 0$, hence $q_0=0$. Then~\eqref{eq:1.6}
 implies $a=\alpha+1$, and then equation~\eqref{eq:1.4} gives $q_1=0$ and
 hence $n_1 = s\cdot q_1 =0$. \\
\noindent
\textsc{Case 2}: $\tilde{q}_2\neq 0$.  From~\eqref{eq:1.11} it follows that
$c_0=0$, hence, because of~\eqref{eq:1.9}, $\alpha=1$ follows.
Besides~\eqref{eq:1.11} gives $q'_0=sq_0+\delta=0$. As
$\alpha=r+s$ (see above), we get 2 cases. \\
\textsc{Subcase 1}: $s=0$, hence $r=1$ and $u=y$. As $E(\cJ'(n))$ has a
convex shape, it follows that $\cJ'$ is $B(3;k)$ invariant
(see~\cite[Section 2.2]{T1} and e.g.~Fig.~\ref{fig:1.5}). But then
$(\cM_n\cdot
C)$ is a constant.\\
\textsc{Subcase 2}: $s=1$. Hence $r=0$ and $u=z$. Besides $q_0=0$, hence
$a=\alpha+1=2$ by equation~\eqref{eq:1.6}. As $\cJ$ is invariant under
$G_2$, it follows that $\cJ'$ is invariant under
\[
 G'_2:=\Set{ \left(\begin{smallmatrix} 1 & * & * \\ 0 & 1 & 0 \\ 0 & 0 & 1
     \end{smallmatrix} \right)
 } < U(3;k)\,,
\]
hence $\cJ' =(x,z)$. It follows that 
$\alphadeg \dot\bigwedge \psi^2_\alpha(H^0(\cJ'(n)))=n $, if $n\geq 0$.
But this number is equal to 
\[
(\cM_n\cdot C) -(\cM_{n-1}\cdot C) = n_1 + n_2 \left[\tbinom{n-a+1}{1} +1
\right] = n_1 +n_2(-a+2)+ n_2\cdot n\,,
\]
and because of $a=2$ it follows that $n_1=0$.
\end{proof}
\begin{corollary}
  \label{cor:1.3}
  Let $C:=\Set{ \psi^2_\alpha(\xi) | \alpha \in k}^-$ be a combinatorial
  cycle of type $2$, and let $\xi \leftrightarrow \cJ$.  If $[C] = n_0[C_0]
  + n_2[C_2]$ and $n_2\neq 0$, then the ideal $\cJ' \leftrightarrow r(\xi)$
  has maximal Hilbert function.
\end{corollary}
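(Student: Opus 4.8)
The plan is to extract from the hypothesis the equality $\reg(\cJ')=\alpha=a-1$, and then to invoke the classical fact that a zero-dimensional subscheme of $\P^2$ whose ideal has regularity equal to its degree is contained in a line. Throughout I keep the notation of the proof of Aux-Lemma~\ref{auxlem:0}: $\cJ'\subset\cO_{\P^2}$ is zero-dimensional of colength $a-1$, $\reg(\cJ')=\alpha=r+s$, $g(n)=(\cM_n\cdot C)$, and the natural numbers $q_i,q'_i,\tilde q_i,c_i$ together with the identity~\eqref{eq:1.10} are as defined there. Recall in particular that $q_2=1$, so that~\eqref{eq:1.7} and~\eqref{eq:1.4} give $q'_2=s$ and $q'_1=sq_1=s(a-\alpha-1)$.

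First I would read off the numerical content of the assumption. Since $[C]=n_0[C_0]+n_2[C_2]$ we have $g(n)=n_0f_0(n)+n_2f_2(n)$; as $f_0,f_1,f_2$ are linearly independent, comparing this with~\eqref{eq:1.10} and using that $q'_1,\tilde q_2,c_1,\tilde q_1\in\N$ yields $q'_1=0$, $\tilde q_2c_1=0$, $\tilde q_1=0$ (from the $f_1$-coefficient) and $q'_2+\tilde q_2=n_2\neq 0$ (from the $f_2$-coefficient).

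Now I would distinguish two cases. If $s\geq 1$, then $q'_1=s(a-\alpha-1)=0$ forces $\alpha=a-1$. If $s=0$, then $q'_2=s=0$, hence $\tilde q_2=n_2\neq 0$, and therefore $c_1=a-\tilde a-1=0$, i.e.\ $\tilde a=a-1$; then~\eqref{eq:1.8} gives $\alpha=a-\tilde a=1$, so that $\reg(\cJ')=1$ and $\cJ'$ is the ideal of a single reduced point of $\P^2$. In that case $a-1=\colength(\cJ')=1$, so again $\alpha=a-1$. Thus in all cases $\reg(\cJ')=\alpha=a-1$, and this is precisely the colength of $\cJ'$.

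It remains to conclude. I would use the standard fact that for a zero-dimensional $Z\subset\P^2$ of degree $d'$ one has $\reg(\cI_Z)\leq d'$, with equality if and only if $Z$ is contained in a line (indeed $h^1(\cI_Z(n))=0$ once the Hilbert function of $\cO_Z$ has reached $d'$, which, since that function starts at $1$ and strictly increases until then, happens in degree $\leq d'-1$, the value $d'-1$ being forced only when $h_{\cO_Z}(1)=2$). Applied with $d'=a-1$ to the scheme defined by $\cJ'$, the equality $\reg(\cJ')=a-1$ shows that $\cJ'$ is the ideal of a length-$(a-1)$ subscheme of a line; hence $\cO_{\P^2}/\cJ'$ has Hilbert function $n\mapsto\min(n+1,\,a-1)$, the smallest Hilbert function possible for colength $a-1$, which is to say that $\cJ'$ has maximal Hilbert function. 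The only genuine work is the coefficient bookkeeping above and the disposal of the degenerate branch $s=0$; the regularity statement invoked at the end is elementary.
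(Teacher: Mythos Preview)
Your argument is correct and follows the paper's route: read off from~\eqref{eq:1.10} that the $f_1$-coefficient vanishes, deduce $q_1=0$ and hence $\alpha=a-1$ via~\eqref{eq:1.4}, then pass from maximal regularity to maximal Hilbert function. Two small points are worth flagging. First, \eqref{eq:1.10} is only established in the proof of Aux-Lemma~\ref{auxlem:0} under the hypothesis that $\cJ'$ is not $U(3;k)$-invariant; you should remark that this is automatic here, since otherwise $g(n)$ is constant and $n_2=0$. Second, the paper disposes of the branch $s=0$ geometrically (if $s=0$ then $u=y^r$, and the convex shape of $E(\cJ')$ forces $\cJ'$ to be $B(3;k)$-invariant, hence $g(n)$ is constant and $n_2=0$), whereas your coefficient analysis leads to $a=2$; under the standing assumption $a\geq 4$ that is already a contradiction, so the branch is empty and you could simply have said so rather than carrying it forward as a live case. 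For the final implication (maximal regularity $\Rightarrow$ maximal Hilbert function) the paper just points to Appendix~\ref{cha:C}; your self-contained argument via the collinearity criterion for $\reg(\cI_Z)=\deg Z$ is a perfectly good substitute.
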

\begin{proof}
  If $s=0$, then $u=y^r$ and the convex shape of $E(\cJ')$ shows that $\cJ'$
  is $B(3;k)$-invariant (see~\cite[Section 2.2]{T1}). But then $(\cM_n\cdot
  C)$ is a constant function, contrary to the assumption. Hence we have
  $s>0$ and from~\eqref{eq:1.10},~\eqref{eq:1.7} and~\eqref{eq:1.3} follows
  that $q_1=0$. Then~\eqref{eq:1.4} gives $\alpha=\reg(\cJ') = a-1$. It
  follows that $\cJ'$ has maximal regularity, hence maximal Hilbert function
  (see Appendix~\ref{cha:C}).
\end{proof}

\begin{figure}
  \begin{center}
    \begin{tikzpicture}
      \draw [->] (0,0,9) -- (0,0,10.5) node [below left] {$y$}; 
      \draw [->] (10,0,0) -- (11.5,0,0) node [right] {$z$}; 
      \draw [->] (0,8,0) -- (0,9,0) node [above] {$t$};

      \draw (0,0,9) -- (0,1,9) -- (0,1,8) -- (0,2,8) -- (0,2,7) -- (0,3,7)
      -- (0,3,6) -- (0,4,6) -- (0,4,5) -- (0,4,5) -- (0,5,5) -- (0,5,4) --
      (0,6,4) -- (0,6,3) -- (0,7,3) -- (0,7,2) -- (0,8,2) -- (0,8,0);
 
      \draw (1,0,9) -- (1,1,9) -- (1,1,8) -- (1,2,8) -- (1,2,7) -- (1,3,7)
      -- (1,3,6) -- (1,4,6) -- (1,4,5) -- (1,4,5) -- (1,5,5) -- (1,5,4) --
      (1,6,4) -- (1,6,3) -- (1,7,3) -- (1,7,2) -- (1,8,2) -- (1,8,0);
 
      \draw (2,0,9) -- (2,1,9) -- (2,1,8) -- (2,2,8) -- (2,2,7) -- (2,3,7)
      -- (2,3,6) -- (2,4,6) -- (2,4,5) -- (2,4,5) -- (2,5,5) -- (2,5,4) --
      (2,6,4) -- (2,6,3) -- (2,7,3) -- (2,7,2) -- (2,8,2) -- (2,8,0);

      \draw (3,0,9) -- (3,1,9) -- (3,1,8) -- (3,2,8) -- (3,2,7) -- (3,3,7)
      -- (3,3,6) -- (3,4,6) -- (3,4,4) -- (3,5,4) -- (3,5,3) -- (3,6,3) --
      (3,6,2) -- (3,7,2) -- (3,7,0);

      \draw (3,6,2) -- (3,5,2) -- (4,5,2) -- (4,4,2) -- (5,4,2) -- (5,3,2)
      -- (6,3,2) -- (6,2,2) -- (7,2,2) -- (7,1,2);

      \draw (2,6,3) -- (2,5,3) -- (2,5,4) -- (2,4,4) -- (2,4,5);

      \draw (3,5,3) -- (4,5,3) -- (4,4,3) -- (5,4,3) -- (5,3,3) -- (6,3,3)
      -- (6,2,3) -- (7,2,3) -- (7,1,3) -- (9,1,3) -- (9,0,3)

      (0,6,4) -- (2,6,4) (2,5,3) -- (3,5,3) (8,1,3) -- (8,1,2) (8,2,1) --
      (8,2,0) (8,3,1) -- (8,3,0) (7,3,2) --(7,3,0);

      \draw (4,0,8) -- (4,1,8) -- (4,1,7) -- (4,2,7) -- (4,2,6);

      \draw (0,8,0) -- (2,8,0) -- (2,7,0) -- (4,7,0) -- (4,6,0) -- (5,6,0)
      -- (5,5,0) -- (6,5,0) -- (6,4,0) -- (7,4,0) -- (7,3,0) -- (8,3,0) --
      (8,2,0) -- (9,2,0) -- (9,1,0) -- (10,1,0) -- (10,0,0);

      \draw (0,8,1) -- (2,8,1) -- (2,7,1) -- (4,7,1) -- (4,6,1) -- (5,6,1)
      -- (5,5,1) -- (6,5,1) -- (6,4,1) -- (7,4,1) -- (7,3,1) -- (8,3,1) --
      (8,2,1) -- (9,2,1) -- (9,1,1) -- (10,1,1) -- (10,0,1);

      \draw (3,7,1) -- (3,6,1) -- (4,6,1) -- (4,5,1) -- (5,5,1) -- (5,4,1)
      -- (6,4,1) -- (6,3,1) -- (7,3,1) -- (7,2,1) -- (8,2,1) -- (8,1,1) --
      (9,1,1) -- (9,0,1);

      \draw (0,8,2) -- (2,8,2) -- (2,7,2) -- (3,7,2) -- (3,6,2) -- (4,6,2)
      -- (4,5,2) -- (5,5,2) -- (5,4,2) -- (6,4,2) -- (6,3,2) -- (7,3,2) --
      (7,2,2) -- (8,2,2) -- (8,1,2) -- (9,1,2) -- (9,0,2);

      \draw (0,0,9) -- (3,0,9) -- (3,0,8) -- (5,0,8) -- (5,0,7) -- (6,0,7)
      -- (6,0,6) -- (7,0,6) -- (7,0,5) -- (8,0,5) -- (8,0,3) -- (9,0,3) --
      (9,0,1) -- (10,0,1) -- (10,0,0);

      \draw (3,0,8)  -- (3,1,8)  (3,1,7) -- (3,2,7) -- (3,2,6) -- (3,3,6)
      -- (3,3,5) -- (3,4,5); 

 
      \draw (0,1,9) -- (3,1,9) -- (3,1,8) -- (5,1,8) -- (5,1,7) -- (6,1,7)
      -- (6,1,6) -- (7,1,6) -- (7,1,5) -- (8,1,5) -- (8,1,3) -- (9,1,3) --
      (9,1,1) -- (10,1,1) -- (10,1,0);

      \draw (0,1,8) -- (3,1,8) -- (3,1,7) -- (5,1,7) -- (5,1,6) -- (6,1,6)
      -- (6,1,5) -- (7,1,5) -- (7,1,2) -- (8,1,2) -- (8,1,1)
      (9,1,1) -- (9,1,0);

      \draw (0,2,8) -- (3,2,8) -- (3,2,7) -- (5,2,7) -- (5,2,6) -- (6,2,6)
      -- (6,2,5) -- (7,2,5) -- (7,2,2) -- (8,2,2) -- (8,2,1)
      (9,2,1) -- (9,2,0);

      \draw (0,7,2) -- (2,7,2) -- (2,7,0) (0,7,3) -- (2,7,3) (2,6,2) --
      (3,6,2) (2,6,2) -- (2,7,2) (2,6,2) -- (2,6,3) (0,6,3) -- (3,6,3)
      (3,6,2) -- (3,6,1) (4,7,1) -- (4,7,0) (4,6,2) -- (4,6,0)

      (5,6,1) -- (5,6,0) (6,5,1) -- (6,5,0) (5,5,2) -- (5,5,0) (0,5,5) --
      (2,5,5) (0,5,4) -- (4,5,4) -- (4,5,1) (0,4,5) -- (4,4,5) (0,4,6) --
      (3,4,6) (2,4,4) -- (4,4,4) (0,3,6) -- (4,3,6) (0,3,7) -- (3,3,7)
      (0,2,7) -- (3,2,7) (5,0,8) -- (5,1,8) (5,0,7) -- (5,2,7) (6,4,2) --
      (6,4,0) (7,4,1) -- (7,4,0) (6,0,7) -- (6,1,7) (6,0,6) -- (6,2,6)
      (7,0,6) -- (7,1,6) (7,2,2) -- (7,2,1) (6,2,4) -- (6,2,2) (6,3,4) --
      (6,3,1) (5,3,4) -- (5,3,2) (5,4,4) -- (5,4,1) (5,2,6) -- (5,1,6)
      (4,4,5) -- (4,4,2) (7,1,5) -- (7,0,5) (8,1,5) -- (8,0,5) (8,1,3) --
      (8,0,3) (7,1,4) -- (8,1,4) -- (8,0,4) (4,5,4) -- (4,4,4) (3,3,5) --
      (4,3,5) (3,5,3) -- (3,5,2) (3,2,6) -- (5,2,6) -- (5,2,5) -- (6,2,5) ;

      \draw(4,4,5) -- (5,4,5) -- (5,3,5) -- (6,3,5) -- (6,2,5) -- (7,2,5) --
      (7,1,5) (4,4,4) -- (5,4,4) -- (5,3,4) -- (6,3,4) -- (6,2,4) -- (7,2,4)
      -- (7,1,4) (5,3,6) -- (5,3,4) (4,2,6) -- (4,3,6) -- (4,3,5) -- (4,4,5)
      (4,3,5) -- (5,3,5) -- (5,2,5) (4,3,6) -- (5,3,6) -- (5,2,6) (6,2,5)--
      (6,2,4) (6,2,5)-- (6,1,5) (6,3,5)-- (6,3,4) (5,4,5)-- (5,4,4) (7,2,5)
      -- (7,2,4);

      \draw[very thick,dashed] (4,4,5) -- (5,4,5) -- (5,3,5) -- (6,3,5) --
      (6,2,5) -- (7,2,5) -- (7,1,5) (4,4,4) -- (5,4,4) -- (5,3,4) -- (6,3,4)
      -- (6,2,4) -- (7,2,4) -- (7,1,4) (5,3,6) -- (5,3,4) (4,2,6) -- (4,3,6)
      -- (4,3,5) -- (4,4,5) (4,3,5) -- (5,3,5) -- (5,2,5) (4,3,6) -- (5,3,6)
      -- (5,2,6) (6,2,5)-- (6,2,4) (6,2,5)-- (6,1,5) (6,3,5)-- (6,3,4)
      (5,4,5)-- (5,4,4) (7,2,5) -- (7,2,4);

    \end{tikzpicture}
  \end{center}
\caption{}
 \label{fig:1.4}
\end{figure}

\begin{figure}
\begin{center}
  \begin{minipage}{11cm*\real{0.7}}
    \tikzstyle{help lines}=[gray,very thin]
    \begin{tikzpicture}[scale=0.7]
      \draw[style=help lines] grid (11,11);
      \draw[very thick,->] (0,0) -- (0,11); 
      \draw[very thick,->] (0,0) -- (11,0);
      { \pgftransformxshift{0.5cm} 
          \pgftransformyshift{-0.55cm} 
         \foreach \x in {0,1,2,3,4,5,6,7,8,9} 
           \draw[anchor=base] (\x,0) node {$\x$}; 
       \foreach \y in {0,1,2,3,4,5,6,7,8,9} 
          \draw[anchor=base] (-1,1+\y) node {$\y$}; 
      }
      { \pgftransformxshift{0.5cm} 
        \draw (-1,10) node[above=2pt] {$z$}; 
        \draw (10,0) node[above=2pt] {$y$}; 
      }
      \draw[\Red,ultra thick] (0,7) -- (1,7) -- (1,6) -- (2,6) -- (2,5) --
       (4,5) -- (4,4) -- (5,4) -- (5,3) -- (6,3) --(6,0);
      \draw[\Red,ultra thick,dotted] (4,5)-- (5,5) -- (5,4);
      \filldraw[fill=gray,opacity=0.5] 
        (4,5)-- (5,5) -- (5,4) -- (4,4) -- cycle;
       \draw[\Black, ultra thick] (0,10) -- (10,0);
    \end{tikzpicture}
  \end{minipage}
\end{center}
\caption{}
 \label{fig:1.5}
\end{figure}

\subsubsection{Combinatorial cycles of type $3$}
\label{sec:1.3.1.3}

Let $\cJ \in H(k)$ be an ideal of type $3$. Then 
\[
   H^0(\cJ(n)) = \bigoplus_{i=0}^n t^{n-i} U_i, 
\]
$U_i\subset S_i$ monomial subspace such that $S_1 U_i \subset U_{i+1}$ ($t$
is a $\NNT$ on $P/\bigoplus\limits_{n\geq 0} H^0(\cJ(n))$.
Put $\cI:=\cJ':=\cJ + t\cO_{\P^3}(-1)/t\cO_{\P^3}(-1)$.
Then $b\geq
\reg(\cJ) \geq \reg(\cI)$ and hence for all $n\geq b$,
\[
  0 \longrightarrow H^0(\cJ(n-1)) \stackrel{t}{\longrightarrow}
  H^0(\cJ(n)) \longrightarrow H^0(\cI(n)) \longrightarrow 0
\]
is an exact sequence. It follows that $U_n = H^0(\cI(n))$, $n\geq b$. Now
$\cI$ is invariant under $T(3;k)$ and $\Set{\left(\begin{smallmatrix} 1 & 0
      & * \\ 0 & 1 & * \\ 0 & 0 & * \end{smallmatrix}\right) }< U(3;k)$,
hence $z$ is a $\NNT$ on $S/\bigoplus\limits_{n\geq 0} H^0(\cI(n))$. \\
If $R = k[x,y]$ then, for all $n\geq b$, the sequence
\[
  0 \longrightarrow H^0(\cI(n-1)) \stackrel{z}{\longrightarrow}
  H^0(\cI(n)) \longrightarrow R_n \longrightarrow 0
\]
is exact. As $H^0(\cI(n))$ is monomial, one gets $R_n \subset H^0(\cI(n))$,
$n\geq b$. As $\cI$ is $b$-regular, one has $S_1 H^0(\cI(n)) =
H^0(\cI(n+1))$, $n\geq b$.

\begin{figure}
  \centering
  \begin{center}
    \begin{minipage}{23cm*\real{0.7}}
      \tikzstyle{help lines}=[gray,very thin]
      \begin{tikzpicture}[scale=0.7]
        \draw[style=help lines] grid (22,13);
        \draw[very thick,->] (0,0) -- (22,0); \draw[very thick,->] (0,0) --
        (0,13); { \pgftransformxshift{0.5cm} \draw (-1,12) node[above=2pt]
          {$z$}; \draw (21,0) node[above=2pt] {$y$}; \draw (19.5,10)
          node[above=1pt]{$H^0(\cJ(b+1))$}; \draw (19,11)
          node[above=1pt]{$H^0(\cJ(b))$}; } \draw[\Red,ultra thick,dotted]
        (16,10.5) -- (18,10.5); \draw[\Red,ultra thick] (16,11.5) --
        (18,11.5);
        \draw[\Red,ultra thick] (0,3) -- (1,3) -- (1,6) -- (3,6) -- (3,5) --
        (4,5) -- (4,4) -- (5,4) -- (5,3) -- (6,3) -- (6,2) -- (8,2) -- (8,8)
        -- (9,8) -- (9,7) -- (10,7) -- (10,6) -- (11,6) -- (11,5) -- (12,5)
        -- (12,6) -- (13,6) -- (13,5) -- (14,5) -- (14,4) -- (15,4) --
        (15,3) -- (16,3) -- (16,2) -- (17,2) -- (17,1) -- (18,1) -- (18,0);
        \draw[\Red,ultra thick,dotted] (0,4) -- (1,4); \draw[\Red,ultra
        thick,dotted] (1,6) -- (1,7) -- (3,7) -- (3,6) -- (4,6) -- (4,5) --
        (5,5) -- (5,4) -- (6,4) -- (6,3) -- (8,3); \draw[\Red,ultra
        thick,dotted] (8,8) -- (8,9) -- (9,9) -- (9,8) -- (10,8) -- (10,7)
        -- (11,7) -- (11,6) -- (12,6) -- (12,7) -- (13,7) -- (13,6) --
        (14,6) -- (14,5) -- (15,5) -- (15,4) -- (16,4) -- (16,3) -- (17,3)
        -- (17,2) -- (18,2) -- (18,1) -- (19,1) -- (19,0);
      \end{tikzpicture}
    \end{minipage}
  \end{center} 
  \caption{}
 \label{fig:1.6}
\end{figure}

We write $H^0(\cI(b))= \bigoplus_{i=0}^bz^{b-i}V_i$, $V_i\subset R_i$
monomial, and one has $V_b=R_b$. As $H^0(\cI(b-1)) =
\bigoplus_{i=0}^bz^{b-1-i}V_i$, it follows that $R_1V_i \subset V_{i+1}$,
$0\leq i \leq b-1$. Let $0\leq c \leq b$ be the natural number such that
$V_i=R_i$, if $c\leq i\leq b$ and $V_i\subsetneq R_i$, if $i<c$. We choose a
natural number $m < c$ and we write $V_m = \langle x^{m-c_0}y^{c_0},\dots,
x^{m-c_r}y^{c_r} \rangle$ where $0 \leq c_0 < \dots < c_r \leq m$ are
natural numbers. We let $\G_a$ act by $ \psi^3_\alpha: x \mapsto x, \quad y
\mapsto \alpha x+y, \quad z \mapsto z, \quad t \mapsto t$.  Then
$\bigwedge\limits^{r+1}\psi_\alpha^3(V_m)$ has the $\alpha$-degree
$D_m:=(c_0 + \cdots + c_r) -(1+2+\cdots +r)$ (see~\cite[p.~13/14]{T1}).
It follows that $\bigwedge\limits^{Q'(b)} \psi^3_\alpha(H^0(\cI(b)))$ has the
$\alpha$-degree $D:=\sum_{m<c}D_m$.  By considering Fig.~\ref{fig:1.6} one
sees that
\[
\alphadeg \bigwedge\limits^{Q'(n)}
\psi^3_\alpha(H^0(\cI(n))) = D \quad\text{ for all } n\geq b\,.
\]
Now from 
\[
H^0(\cJ(n)) = \bigoplus_{i=0}^n t^{n-i}U_i \quad \text{and } 
U_i = H^0(\cI(n)), \text{ if } i\geq b,
\]
it follows that
\begin{align*}
  & \phantom{=} \alphadeg \bigl(\bigwedge\limits^{Q(n)} \psi^3_\alpha(H^0(\cJ(n)))\bigr) \\
  & = \alphadeg \bigl(\bigwedge\limits^{Q(b-1)}
  \psi^3_\alpha(H^0(\cJ(b-1))\bigr) + \sum^n_{i=b} D \\
  & = q_1(n-b+1) +q_0,
\end{align*}
where $q_1 = D$ and $q_0 = \alphadeg \bigwedge\limits^{Q(b-1)}
  \psi^3_\alpha(H^0(\cJ(n)))$.
The same argumentation as in ~\ref{sec:1.3.1.1} gives 
\begin{conclusion}
  \label{concl:1.3} 
  If $C$ is a combinatorial cycle of type $3$, there are natural numbers
  $q_i$ such that $[C] = q_1[C_1] + q_0[C_0]$. \hfill $\qed$
\end{conclusion}

\begin{remark}
  \label{rem:1.1}
\[
 q_1 =0 \iff D=0 \iff \cI = \cJ' \text{ invariant under } \psi^3_\alpha \iff
\cI \text{ is } B(3;k) \text{-invariant}.
\]
\hfill $\qed$
\end{remark}

From Conclusions~\ref{concl:1.1}--\ref{concl:1.3} follows:
\begin{proposition}
  \label{prop:1.1}
  If $C$ is a combinatorial cycle on $\HH$, then there are $q_i\in \N$ such
  that $[C] = q_2[C_2] +q_1[C_1] + q_0[C_0]$. \hfill $\qed$
\end{proposition}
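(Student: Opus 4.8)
The plan is to dispose of the statement by a case distinction on the \emph{type} of the combinatorial cycle. By definition a combinatorial cycle on $\HH$ has the form $C=\Set{\psi^i_\alpha(\xi)\mid\alpha\in k}^-$ for a point $\xi\leftrightarrow\cJ$ of $\HH(k)$ whose ideal is invariant under the relevant unipotent subgroup, and the index $i\in\{1,2,3\}$ is exactly the type of $C$; so every combinatorial cycle falls into one of the three classes treated in Sections~\ref{sec:1.3.1.1}--\ref{sec:1.3.1.3}, and it suffices to verify the asserted expansion in each of these three classes.

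\textbf{The three cases.} If $C$ has type $2$, the claim is precisely Conclusion~\ref{concl:1.2}: there exist $q_0,q_1,q_2\in\N$ with $[C]=q_2[C_2]+q_1[C_1]+q_0[C_0]$. If $C$ has type $1$, Conclusion~\ref{concl:1.1} gives $[C]=q_1[C_1]+q_0[C_0]$ with $q_0,q_1\in\N$ (indeed $q_1>0$), and if $C$ has type $3$, Conclusion~\ref{concl:1.3} gives the same shape $[C]=q_1[C_1]+q_0[C_0]$ with $q_0,q_1\in\N$. In these last two cases one simply takes $q_2:=0\in\N$, which puts the expansion into the required form. Combining the three subcases yields the proposition.

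\textbf{Where the work sits.} All the substance has already been absorbed into the three Conclusions, and this is also where the only real obstacle lies. Their common engine is to compute the intersection numbers $(\cM_n\cdot C)$, for $n$ in the appropriate range, as the $\alpha$-degree of a top exterior power of $\psi^i_\alpha\bigl(H^0(\cJ(n))\bigr)$, and then to read off from the combinatorial description of $H^0(\cJ(n))$ --- decomposition into $y$-layers and ``tracks'' (type $1$), the outer-shell/pyramid recursion of Aux-Lemma~\ref{auxlem:0} (type $2$), the iterated hyperplane-section filtration (type $3$) --- that $\alphadeg\bigwedge\psi^i_\alpha(H^0(\cJ(n)))$ is an $\N$-linear combination of $f_0(n)=(\cM_n\cdot C_0)$, $f_1(n)=(\cM_n\cdot C_1)$ and, in type $2$, $f_2(n)=(\cM_n\cdot C_2)$, with coefficients independent of $n$. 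The genuinely delicate point is the integrality and, above all, the \emph{nonnegativity} of those coefficients (not merely rationality): this is forced by the convexity of the layers together with the inductive estimates~\eqref{eq:1.4}--\eqref{eq:1.9} in the proof of Aux-Lemma~\ref{auxlem:0}. Once coincidence of all intersection numbers with $\cM_n$, $\cM_{n+1}$, $\cM_{n+2}$ is established, Theorem~\ref{thm:1.1} (rational $=$ numerical equivalence) and Theorem~II (these classes span $\NS(\HH)$) promote it to the claimed identity of rational equivalence classes, and the proof is complete.
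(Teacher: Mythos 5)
Your proof is correct and follows exactly the paper's route: Proposition~\ref{prop:1.1} is obtained there by simply combining Conclusions~\ref{concl:1.1}--\ref{concl:1.3} for the three types of combinatorial cycles, with $q_2=0$ in types $1$ and $3$. Your closing paragraph correctly locates where the substance lies (in the $\alphadeg$ computations and the nonnegativity of the coefficients inside those Conclusions), but that is already established there and nothing further is needed here.
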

\subsection{Algebraic cycles}
\label{sec:1.3.2}

\index{algebraic cycle}
According to~\cite[Korollar 1, p. 8]{T1} $A_1(\HH)$ is generated over $\Z$
by the combinatorial cycles and the image of
$A_1(\HH^\Delta)\stackrel{\mathrm{can.}}{\longrightarrow} A_1(\HH)$,
$\Delta:=U(4;k)$. $A_1(\HH^\Delta)$ is generated over $\Z$ by the so called 
algebraic cycles $C\subset \HH^\Delta$, where $C= \overline{\G_m\cdot \xi}$, $\xi
\leftrightarrow \cJ \subset \cO_{\P^3}$ an ideal with Hilbert polynomial $Q$,
which is invariant under $\Delta\cdot T(\rho)$. Here $\G_m$ operates by
$\sigma(\lambda): x\mapsto x$, $y\mapsto y$, $z\mapsto z$, $t\mapsto \lambda
t$, if $\rho_3 \neq 0$, respectively by $\sigma(\lambda): x\mapsto x$,
$y\mapsto y$, $z\mapsto \lambda z$, $t\mapsto t$, if $\rho_3 =0$
(see~\cite[Abschnitt 1.1 and 1.2]{T1}). If $\rho_3\neq 0$, then $[C] =
q\cdot [C_0]$, $q \in \N$ (\cite[Prop. 1, p. 25 and Prop. 3, p.
58]{T2}; ~\cite[Prop. 1, p. 6]{T4}). \\
The case $\rho_3 =0$ remains. In this case $\cJ$ is invariant under
$t\mapsto \lambda t$, $\lambda\in k^*$, and we can write: $H^0(\cJ(n)) =
\bigoplus_{i=0}^n t^{n-i}V_i$, $V_i \subset S_i$ invariant under $U(3;k)$
and $T(\rho)$, $S_1 V_i \subset V_{i+1}$ for all $i\geq 0$
and $V_i = H^0(\cI(i))$, if $i\geq b-1$. Here $\cI :=\cJ' = \cJ +
t\cO_{\P^3}(-1)/t \cO_{\P^3}(-1)$ is invariant under $U(3;k)\cdot T(\rho)$,
which follows from $b\geq \reg(\cJ) \geq \reg(\cI)$.

Now $\rho_0+\rho_1 +\rho_2=0$ (cf.~\cite[Bemerkung 1, p. 2]{T1} and
Appendix~\ref{cha:H}) and we show that $\rho_2\neq 0$. Otherwise $V_i =
\bigoplus_{j=0}^i z^{i-j}U_j$,
$U_j\subset k[x,y]_j$ invariant under $U(2;k)$. As $\Char(k)=0$, it follows
that $U_j$ is monomial for all $j$, hence $\cJ$ is monomial, too. But then
$C$ would not be a curve. Thus one has $C = \Set{ \sigma(\lambda)\xi |
  \lambda\in k^* }^-$, where $\sigma(\lambda): x\mapsto x$, $y\mapsto y$,
$z\mapsto \lambda z$, $t\mapsto t$.  Now
\begin{align*}
  (\cM_n \cdot C) & = \lambda\mbox{-}\mathrm{red}\deg \dot\bigwedge H^0(\cJ(n)) \\
  & := \frac{1}{\ell}
  \lambdadeg\bigl(\stackrel{Q(n)}{\bigwedge}\sigma(\lambda)H^0(\cJ(n))\bigr) \\
  & = \frac{1}{\ell} \sum_{i=0}^n
  \lambdadeg\bigl(\stackrel{\phi(n)}{\bigwedge}\sigma(\lambda)V_i\bigr)
\end{align*}
with $\phi(i) = \dim V_i$. If $V\subset P_n$ is any subspace with dimension
$m$, the reduced $\lambda$-degree of $\stackrel{m}{\bigwedge}V$ is defined
as in the proof of~\cite[Hilfssatz 5, pp. 8]{T2}
as $\sum(e_i-d_i)/\ell$, where $\ell$ is the order of the inertia group of
$V$ in $\G_m$ (loc.~cit, p. 9 lines 11 and 12). Now the above sum is equal
to 
\[
 c + \frac{1}{\ell} \sum_{i=b}^n
\lambdadeg\bigl(\stackrel{\phi(n)}{\bigwedge} H^0(\cI_\lambda(i))\bigr)
\]
where 
$c := \tfrac{1}{\ell} \sum_{i=0}^{b-1}
\lambdadeg(\sigma(\lambda)V_i)$  and $\cI_\lambda:=\sigma(\lambda)\cI$, hence 
$H^0(\cI_\lambda(i))= \sigma(\lambda)(H^0(\cI(i)))$. As $U(3;k)$ is
normalized by $\sigma(\lambda)$, $\cI_\lambda$ is invariant under
$U(3;k)$. Hence the sequence
\[
  0 \longrightarrow \cI_\lambda(-1) \stackrel{\cdot z}{\longrightarrow}
  \cI_\lambda \longrightarrow \cO_{\P^1} \longrightarrow 0
\]
is exact, and it follows that the sequence
\[
  0 \longrightarrow H^0(\cI_\lambda(i-1)) \stackrel{\cdot z}{\longrightarrow}
  H^0(\cI_\lambda(i)) \longrightarrow R_i \longrightarrow 0
\]
is exact for all $i\geq b$. Now $R_i \subset H^0(\cI(i))$ for all
$i\geq b$ (see Appendix~\ref{cha:D}, Lemma 1), 
hence  $R_i \subset H^0(\cI_\lambda(i))$. But then 
\[
 \stackrel{\phi(i)}{\bigwedge} H^0(\cI_\lambda(i)) \simeq
 \stackrel{\phi(i-1)}{\bigwedge} H^0(\cI_\lambda(i-1)) \otimes 
 \stackrel{i+1}{\bigwedge} R_i
\]
and hence for all $i\geq b-1$
\[
\frac{1}{\ell}\cdot  
\lambdadeg\bigl(\stackrel{\phi(i)}{\bigwedge} H^0(\cI_\lambda(i))\bigr)
= \frac{1}{\ell} 
\lambdadeg\bigl(\stackrel{\phi(b-1)}{\bigwedge} H^0(\cI_\lambda(b-1))\bigr)=: \gamma
\]
is independent of $i$.
As the inertia group $T_i$ of $H^0(\cI(i))$ in $\G_m$ contains the inertia
group of $\cJ$ in $\G_m$, the number $\ell$ divides $\#T_i$, and this number
divides the $\lambda$-degree of
$\stackrel{\phi(i)}{\bigwedge}H^0(\cI_\lambda(i))$ (cf.~\cite[eq. (2), p.
9]{T2}).  Hence $\gamma$ is a natural number. The same argumentation also
shows that $c\in \N$. But then $(\cM\cdot C) = c + \gamma(n-b+1)$ and the
same argumentation as in~\ref{sec:1.3.1} gives
\begin{proposition}
\label{prop:1.2}
If $C$ is an algebraic cycle on $\HH$, then there are $q_i\in \N$ such that $[C] =
 q_1[C_1] + q_0[C_0]$. \hfill $\qed$
\end{proposition}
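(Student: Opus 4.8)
The plan is to reduce to a small set of generators of $A_1(\HH)$ and, for each generator, to determine its rational equivalence class by computing intersection numbers with the tautological bundles $\cM_n$, invoking Theorem~\ref{thm:1.1} to pass from numerical to rational equivalence. By \cite[Korollar 1, p. 8]{T1}, $A_1(\HH)$ is generated over $\Z$ by the combinatorial cycles, already handled in Proposition~\ref{prop:1.1}, together with the image of $A_1(\HH^\Delta)\to A_1(\HH)$, $\Delta=U(4;k)$; and $A_1(\HH^\Delta)$ is generated by prime cycles $C=\overline{\G_m\cdot\xi}$, where $\xi\leftrightarrow\cJ\subset\cO_{\P^3}$ has Hilbert polynomial $Q$ and is invariant under $\Delta\cdot T(\rho)$. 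So it suffices to treat such a $C$. By Theorem~\ref{thm:1.1} together with Theorem~II, an algebraic cycle is determined by its pairings with all the $\cM_n$; and since $(\cM_n\cdot C_0)$ is constant, $(\cM_n\cdot C_1)=n-b+1$ and $(\cM_n\cdot C_2)=\binom{n-a+2}{2}+(n-b+1)$ is quadratic, it is enough to prove that $(\cM_n\cdot C)$ is an affine-linear function of $n$: that kills the $[C_2]$-coordinate and leaves $[C]=q_1[C_1]+q_0[C_0]$, with the integrality of $q_0,q_1$ to be read off from the computation.

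I then split according to the $\G_m$-action. If $\rho_3\neq 0$ the torus $\sigma(\lambda)$ rescales only the variable $t$, and one already knows $[C]=q[C_0]$ with $q\in\N$ from \cite[Prop. 1, p. 25 and Prop. 3, p. 58]{T2} and \cite[Prop. 1, p. 6]{T4}, so there is nothing to do. The case $\rho_3=0$, where $\sigma(\lambda)\colon z\mapsto\lambda z$ and the other variables are fixed, is the real one. Here I write $H^0(\cJ(n))=\bigoplus_{i=0}^n t^{n-i}V_i$ with each $V_i\subset S_i$ invariant under $U(3;k)$ and $T(\rho)$, $S_1V_i\subset V_{i+1}$, and $V_i=H^0(\cI(i))$ for $i\geq b-1$, where $\cI=\cJ'$ is the image of $\cJ$ in $\cO_{\P^2}$; this uses $b\geq\reg(\cJ)\geq\reg(\cI)$. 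A preliminary point is $\rho_2\neq 0$: otherwise each $V_i$ would split as $\bigoplus_j z^{i-j}U_j$ with $U_j\subset k[x,y]_j$ invariant under $U(2;k)$, hence monomial since $\Char(k)=0$, forcing $\cJ$ monomial and $C$ not a curve.

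Next I evaluate $(\cM_n\cdot C)$ as a reduced $\lambda$-degree, $(\cM_n\cdot C)=\tfrac{1}{\ell}\sum_{i=0}^n\lambdadeg\bigl(\stackrel{\phi(i)}{\bigwedge}\sigma(\lambda)V_i\bigr)$ with $\phi(i)=\dim V_i$ and $\ell$ the order of the inertia group of $H^0(\cJ(n))$ in $\G_m$, following \cite[Hilfssatz 5]{T2}. The summands with $i<b$ give a constant $c$. For $i\geq b$ I use the exact sequence $0\to\cI_\lambda(-1)\stackrel{z}{\longrightarrow}\cI_\lambda\to\cO_{\P^1}\to 0$, valid because $\cI_\lambda=\sigma(\lambda)\cI$ is still $U(3;k)$-invariant ($U(3;k)$ being normalized by $\sigma(\lambda)$); on sections this yields $0\to H^0(\cI_\lambda(i-1))\stackrel{z}{\longrightarrow}H^0(\cI_\lambda(i))\to R_i\to 0$ with $R_i\subset H^0(\cI(i))$ for $i\geq b$, so $\stackrel{\phi(i)}{\bigwedge}H^0(\cI_\lambda(i))\simeq\stackrel{\phi(i-1)}{\bigwedge}H^0(\cI_\lambda(i-1))\otimes\stackrel{i+1}{\bigwedge}R_i$ and, as $\sigma(\lambda)$ acts trivially on $R_i\subset k[x,y]_i$, the $i$-th reduced $\lambda$-degree equals a constant $\gamma$ independent of $i\geq b-1$. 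Thus $(\cM_n\cdot C)=c+\gamma(n-b+1)$, which is affine-linear as required; comparison with the profiles above gives $q_1=\gamma$, $q_0=c$. The integrality $c,\gamma\in\N$ comes from the standard divisibility: $\ell$ divides $\#T_i$, the order of the inertia group of $H^0(\cI(i))$, and $\#T_i$ divides the relevant $\lambda$-degree, cf.\ \cite[eq. (2), p. 9]{T2}. Finally Theorem~\ref{thm:1.1} turns this equality of intersection numbers into the asserted equality of classes.

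I expect the main obstacle to be not conceptual but the bookkeeping around reduced $\lambda$-degrees and inertia groups: one must confirm that dividing every summand by the single integer $\ell$ is the right normalization, that the wedge-tensor identity for $\stackrel{\phi(i)}{\bigwedge}H^0(\cI_\lambda(i))$ respects the $\G_m$-weights, and that the divisibility statements genuinely force $c$ and $\gamma$ into $\N$ rather than merely into $\Q$. All the geometry is carried by the single $z$-multiplication sequence together with the regularity bounds for plane curves already recorded, so once Theorem~\ref{thm:1.1} is in hand the remainder is routine.
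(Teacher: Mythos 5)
Your proposal is correct and follows essentially the same route as the paper: the reduction via \cite[Korollar 1, p.~8]{T1} to $\Delta\cdot T(\rho)$-invariant $\G_m$-orbit closures, the case split on $\rho_3$, the observation that $\rho_2\neq 0$, the computation of $(\cM_n\cdot C)$ as a reduced $\lambda$-degree made affine-linear by the $z$-multiplication sequence $0\to H^0(\cI_\lambda(i-1))\to H^0(\cI_\lambda(i))\to R_i\to 0$, and the integrality of $c,\gamma$ via the inertia-group divisibility from \cite[eq.~(2), p.~9]{T2}. This is the argument given in Section~\ref{sec:1.3.2}.
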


\begin{remark}
  \label{rem:1.2}
  As in the proof of Proposition~\ref{prop:1.1} we consider the case $\rho_3
  =0$. Then 
  \begin{align*}
    q_1 = 0 & \iff \lambdadeg(\dot \bigwedge H^0(\cI_\lambda(i)) = 0 \text{
      for all }
    i\geq b \\
    & \iff \cI \text{ invariant under } \sigma(\lambda): x\mapsto x,
    y\mapsto y, z\mapsto \lambda z, t\mapsto t \\
    & \iff \cI \text{ is } T(3;k) \text{-invariant} \\
    & \iff \cI \text{ is } B(3;k) \text{-invariant}. 
  \end{align*}
\hfill $\qed$
\end{remark}
\subsection{Computation of $A^+_1(\HH)$ and $A^+_1(\CC)$ }
\label{sec:1.3.3}

We now prove the result mentioned in the Introduction.
\begin{theorem}
 \label{thm:1.2}
  Let be $\HH = \Hilb^P(\P^3)$, $P(n) = dn-g+1$, $d\geq 3$ and $g\leq
  g(d):=(d-2)^2/4$.

  If $Z \in A_1(\HH)$ (resp.~$Z\in A_1(\CC)$) is an effective $1$-cycle
  with integer coefficients, then there are uniquely determined natural
  numbers $q_i$ such that $Z= q_0[C_0] + q_1[C_1] + q_2[C_2]$
  (resp.~$Z= q_0[C^*_0] + q_1[C^*_1] + q_2[C^*_2] + q_3[L^*]$). \\
Hence the
  cone $A^+_1(\HH)$ (resp.~$A^+_1(\CC)$) of effective $1$-cycles on $\HH$
  (resp.~on $\CC$) is freely generated by (the classes of) $C_0, C_1, C_2$
  (resp.~$C^*_0, C^*_1, C^*_2, L^*)$.
\end{theorem}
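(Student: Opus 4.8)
The plan is to reduce the statement for $\HH$ to Propositions~\ref{prop:1.1} and~\ref{prop:1.2} by degenerating an arbitrary effective curve, and then to deduce the statement for $\CC$ from the one for $\HH$ by pushing forward along $\pi$. First, by Theorem~I the classes $[C_0],[D],[C_2]$ form a $\Q$-basis of $A_1(\HH)$, and by~\eqref{eq:1.1} one has $[D]=(a-2)[C_0]+[C_1]$, so $[C_0],[C_1],[C_2]$ is again a $\Z$-basis of $A_1(\HH)$ (the change of basis is unimodular). Hence for every $Z\in A_1(\HH)$ the coefficients $q_i$ in $Z=q_0[C_0]+q_1[C_1]+q_2[C_2]$ are uniquely determined, and they lie in $\Z$ as soon as $Z$ is an integral cycle. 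The only remaining point is that $q_i\geq 0$ when $Z$ is effective; since $Z$ is a non-negative integral combination of reduced irreducible curves and the class map is additive, it suffices to treat a single reduced irreducible curve $C\subset\HH$.

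For such a $C$ I would invoke the degeneration method of \cite[Korollar~1, p.~8]{T1}: using a suitable one-parameter subgroup $\G_m\hookrightarrow\PGL(4;k)$ acting on $\HH$, together with the unipotent operations $\psi^1_\alpha,\psi^2_\alpha,\psi^3_\alpha$ and the action on $\HH^\Delta$ ($\Delta=U(4;k)$), one specialises $C$ in a flat family over $\A^1$ to an \emph{effective} $1$-cycle $C'=\sum_j m_j C'_j$ with $m_j\in\N$ in which every $C'_j$ is a combinatorial cycle (of type $1$, $2$ or $3$) or an algebraic cycle. Since all fibres of a flat family over $\A^1$ are rationally equivalent, $[C]=[C']$ in $A_1(\HH)$, and by Theorem~\ref{thm:1.1} it is irrelevant whether we read this as rational or numerical equivalence. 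By Propositions~\ref{prop:1.1} and~\ref{prop:1.2} each $[C'_j]$ equals $q_{0,j}[C_0]+q_{1,j}[C_1]+q_{2,j}[C_2]$ with $q_{i,j}\in\N$; forming the non-negative combination $\sum_j m_j[C'_j]$ and using the uniqueness noted above shows $[C]=q_0[C_0]+q_1[C_1]+q_2[C_2]$ with $q_i\in\N$. Together with the linear independence, this is exactly the assertion that $A^+_1(\HH)$ is the simplicial cone spanned by $[C_0],[C_1],[C_2]$.

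It is worth observing that $q_1\geq 0$ and $q_2\geq 0$ can also be obtained directly: a short computation with the intersection numbers $(\cM_n\cdot C_i)=f_i(n)$ (recall $a=d+1$) gives $(\cL_1\cdot C_j)=\delta_{1j}$ and $(\cL_2\cdot C_j)=\delta_{2j}$, so, $\cL_1$ and $\cL_2$ being globally generated, $q_1=(\cL_1\cdot C)\geq 0$ and $q_2=(\cL_2\cdot C)\geq 0$ for every effective $C$. One similarly finds $(\cL_0\cdot C_j)=\delta_{0j}$, but $\cL_0$ is not known to be nef, so the real content of the theorem is the inequality $q_0\geq 0$, equivalently the nefness of $\cL_0$. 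This is the main obstacle: it is precisely here that the degeneration argument — and with it the standing hypothesis $g\leq g(d)$, under which Theorem~I (hence the basis property) is available — cannot be avoided; the rest is bookkeeping with Propositions~\ref{prop:1.1}, \ref{prop:1.2} and Theorem~\ref{thm:1.1}.

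Finally, for $\CC$: by Theorem~III together with~\eqref{eq:1.2} the classes $[C^*_0],[C^*_1],[C^*_2],[L^*]$ form a $\Z$-basis of $A_1(\CC)$, so uniqueness and integrality of the $q_i$ are again automatic, and it suffices to prove $q_i\geq 0$ for a reduced irreducible curve $C\subset\CC$. Write $[C]=q_0[C^*_0]+q_1[C^*_1]+q_2[C^*_2]+q_3[L^*]$. Since $\pi|_{C^*_i}$ is an isomorphism onto $C_i$ while $\pi$ contracts $L^*$, one gets $\pi_*[C]=q_0[C_0]+q_1[C_1]+q_2[C_2]$; as the pushforward of an effective cycle is effective, the $\HH$-case just established gives $q_0,q_1,q_2\in\N$. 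For $q_3$, the relations $(\kappa^*\cO_{\P^3}(n)\cdot C^*_i)=0$ and $(\kappa^*\cO_{\P^3}(n)\cdot L^*)=n$ yield $(\kappa^*\cO_{\P^3}(1)\cdot C)=q_3$, which is $\geq 0$ because $\kappa^*\cO_{\P^3}(1)$ is globally generated. Hence all $q_i\in\N$, and $A^+_1(\CC)$ is the simplicial cone spanned by $[C^*_0],[C^*_1],[C^*_2],[L^*]$.
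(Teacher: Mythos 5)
Your proposal is correct and follows essentially the same route as the paper: degenerate an irreducible curve to an effective cycle with $B(4;k)$-invariant components, apply Propositions~\ref{prop:1.1} and~\ref{prop:1.2} to each component, and deduce the $\CC$-case by pushing forward along $\pi$ (for $q_0,q_1,q_2$) and $\kappa$ (for $q_3$). Your aside that $q_1,q_2\geq 0$ also follows from the nefness of $\cL_1,\cL_2$ while $q_0\geq 0$ is the genuine content is a reasonable observation, though note that those intersection computations and global-generation results appear only later in the paper (Sections~\ref{sec:1.4}--\ref{sec:1.5}), so they cannot replace the degeneration argument within the paper's logical order.
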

\begin{proof}
  Let $C\subset \HH$ be a closed curve. By applying~\cite[Lemma 1, p.
  6]{T1} several times, one constructs a cycle $C_* = \sum n_j C_j \in
  Z_1(\HH)$ such that $[C] = [C_*]$, $n_j\in \N$, and the irreducible
  components $C_j$ are $B(4;k)$-invariant. From~\cite[Proposition 0, p.
  3]{T1} it follows that one of the following cases can occur:

\textsc{Case 1:} $C_j$ is a combinatorial cycle of type $i \in \Set{1,2,3}$,
i.e.~$C_j= \overline{\G_a\cdot \xi}$, where $\xi\in \HH(k)$ corresponds to
an ideal $\cJ$ of type $i$ with Hilbert polynomial $Q(n) = \tbinom{n+3}{3}
-P(n)$. 

\textsc{Case 2:} $C_j$ is an algebraic cycle, 
i.e.~$C_j= \overline{\G_m\cdot \xi}$, $\xi\in \HH(k)$ invariant under
$\Delta\cdot T(\rho)$ (see~\cite[Section 1.1 and 1.2]{T1}). As to the cone 
$A^+_1(\HH)$, the assertion follows from Propositions~\ref{prop:1.1}
and~\ref{prop:1.2}.

Now let $\cC\subset \CC$ a closed curve. From Theorem III
in Section~\ref{sec:1.1} and equation~\eqref{eq:1.2} in Section~\ref{sec:1.2}
it follows that 
\[
  [\cC] =  q_0[C^*_0] + q_1[C^*_1] + q_2[C^*_2] + q_3[L^*]
\]
with $q_i\in \Q$. If $\pi:\CC \to \HH$ is the projection, the restrictions
$\pi|C_i:C^*_i \to C_i$ are isomorphisms, $0\leq i \leq 2$, 
it follows that 
\[
\pi_*[\cC] =  q_0[C_0] + q_1[C_1] + q_2[C_2],
\]
hence, by what just has been shown, $q_i\in\N$, $0\leq i \leq 2$.  If
$\kappa$ is the projection $\CC\to \P^3$, the 
 of $\kappa_*$
gives 
\[
 \deg(\kappa|\cC) \cdot[\kappa(\cC)] = q_3\cdot[L]\,.
\]
As $A_1(\P^3) = \Z[L]$ one has $q_3\in \N$.
\end{proof}

\section{The ample cone of $\HH$ and of $\CC$}
\label{sec:1.4}

If $Q(n) = \tbinom{n-1+3}{3}+\tbinom{n-a+2}{2} + \tbinom{n-b+1}{1}$ is the
given Hilbert polynomial we always make the assumptions $a\geq 4$ and $b\geq
(a^2-1)/4$ (equivalently $d\geq 3$ and $g\leq (d-2)^2/4$). We put $r=b-a$
and $\rho=r(r+1)/2$ and define three line bundles on $\HH$:
\begin{align*}
  \cL_0 &:= \cM^{1-\rho}_{b-1} \otimes \cM^{2\rho}_{b} \otimes
  \cM^{-\rho}_{b+1} \\ 
  \cL_1 &:= \cM^{-r-3}_{b-1} \otimes \cM^{2r+5}_{b} \otimes
  \cM^{-r-2}_{b+1} \\ 
  \cL_2 &:= \cM_{b-1} \otimes \cM_{b}^{-2} \otimes \cM_{b+1}\,.
\end{align*}
Now the formulas of \cite[pp. 134-135]{T2}
\begin{equation}
  \label{eq:1.13}
  (\cM_n\cdot E) = 1, \quad (\cM_n\cdot C_1) = (n-b+1), \quad (\cM_n\cdot
  C_2) = \tbinom{n-a+2}{2} + (n-b+1)
\end{equation}
are derived under the assumption $n\geq b$, because then $\cM_n$ defines an
embedding of $\HH$ in a suitable projective space. But the expressions for
$H^0(\cJ(n))$ in (loc.~cit.) shows that the formulas are also true if
$n=b-1$, because $H^0(\cJ(n))$ is a subbundle of $P_n\otimes \cO_C$ of
degree $Q(n)$ for all $n\geq b-1$, if $C\subset \HH$ is any curve.

\textbf{Intersection numbers of $\cL_0$}
\begin{align*}
  (\cL_0\cdot E)   & = (1-\rho) + 2\rho-\rho =1 \\
  (\cL_0\cdot C_1) & = 0 + 2\rho-2\rho = 0 \\
  (\cL_0\cdot C_2) & = (1-\rho)\tbinom{b-a+1}{2}
                     + 2\rho\left[\tbinom{b-a+2}{2}+1\right]
                     -\rho\left[\tbinom{b-a+3}{2}+2\right]\\
                 & =  \rho + \rho\left[\tbinom{b-a+2}{2}-
                   \tbinom{b-a+1}{2}\right] - \rho\left[\tbinom{b-a+3}{2}-
                   \tbinom{b-a+2}{2}\right] \\
                 & = \rho + \rho(b-a+1) -\rho(b-a+2) =0
\end{align*}

\textbf{Intersection numbers of $\cL_1$}
\begin{align*}
  (\cL_1\cdot E)   & = -(r+3)+(2r+5)-(r+2) = 0 \\
  (\cL_1\cdot C_1) & = 0 + (2r+5) -(r+2)\cdot 2 = 1 \\
  (\cL_1\cdot C_2) & = -(r+3)\tbinom{b-a+1}{2} 
                     + (2r+5) \left[\tbinom{b-a+2}{2}+1\right] 
                     -(r+2)\left[\tbinom{b-a+3}{2}+2\right]\\
                   & = -(r+3)\tbinom{b-a+1}{2} + (r+3)\tbinom{b-a+2}{2}
                       + (r+2)\tbinom{b-a+2}{2} - (r+2)\tbinom{b-a+3}{2} + 1
                       \\
                   & = (r+3)(b-a+1) -(r+2)(b-a+2)+1 \\
                   & = (b-a)+(r+3)-2(r+2)+1 =0
\end{align*}

\textbf{Intersection numbers of $\cL_2$}
\begin{align*}
  (\cL_2\cdot E)   & = 0 \\
  (\cL_2\cdot C_1) & = 0 -2 +2 = 0 \\
  (\cL_2\cdot C_2) & = \tbinom{b-a+1}{2}  - 2\left[\tbinom{b-a+2}{2}+1\right]
                     + \left[\tbinom{b-a+3}{2}+2\right] \\
                   & =  -(b-a+1) + (b-a+2) = 1\,.
\end{align*}

\begin{conclusion}
 \label{concl:1.4}
  From now on we write $C_0$ instead of $E$ and hence have the formula
  \begin{equation}
    \label{eq:1.14}
    (\cL_i\cdot C_j) = \delta_{ij}\,.  \qedhere 
  \end{equation}
 \qed
\end{conclusion}
                 
If $\cL$ is any line bundle on $\HH$ and $\nu_i = (\cL \cdot C_i)$ we put
$\cM:=\cL_0^{\nu_0} \otimes \cL_1^{\nu_1} \otimes \cL_2^{\nu_2}$ and 
$\cN:=\cL \otimes \cM^{-1}$. From Theorem~\ref{thm:1.2} follows $(\cN\cdot
Z)=0$ for all $Z\in A^+_1(\HH)$, hence $\cN\in \Pic^\tau(\HH)$. Now $\HH$ is
simply connected and the argumentation in \cite[Section 4.2]{T2} shows
that $\Pic^\tau(\HH) = \Pic^0(\HH)$. 

\begin{theorem}
  \label{thm:1.3}
  \begin{enumerate}[(i)]
  \item $\Pic(\HH)/\Pic^0(\HH)$ is freely generated by $\cL_0$, $\cL_1$,
    $\cL_2$ and a line bundle $\cL$ on $\HH$ is ample iff it has the form
    $\cL=\cL_0^{\nu_0} \otimes \cL_1^{\nu_1} \otimes \cL_2^{\nu_2}\otimes
    \cN$, where the $\nu_i$ are positive natural numbers and $\cN\in
    \Pic^0(\HH)$.
  \item A line bundle $\cL$ on the universal curve $\CC$ is ample iff it has
    the form $\cL=\pi^*\cL_0^{\nu_0} \otimes \pi^*\cL_1^{\nu_1} \otimes
    \pi^*\cL_2^{\nu_2}\otimes \kappa^*\cO_{\P^3}(\nu_3) \otimes \cN$ where
    $\nu_i$ are positive natural numbers, $\pi$ and $\kappa$ are the
    projection of $\CC$ to $\HH$, respectively to $\P^3$, and $\cN\in
    \Pic^0(\CC)$.
  \end{enumerate}
\end{theorem}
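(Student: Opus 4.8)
The plan is to reduce both assertions to Kleiman's ampleness criterion, using Theorem~\ref{thm:1.2} to identify the cone of curves with an explicit simplicial cone and Conclusion~\ref{concl:1.4} (together with the intersection computations of Section~\ref{sec:1.2}) to read off the required positivity conditions. I would treat $\HH$ first and then transport everything to $\CC$ along the projection $\pi$.

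\textbf{Part (i).} Free generation of $\Pic(\HH)/\Pic^0(\HH)$ is essentially already contained in the paragraph preceding the statement: for any $\cL\in\Pic(\HH)$ the line bundle $\cN=\cL\otimes(\cL_0^{\nu_0}\otimes\cL_1^{\nu_1}\otimes\cL_2^{\nu_2})^{-1}$, with $\nu_i=(\cL\cdot C_i)\in\Z$, lies in $\Pic^\tau(\HH)=\Pic^0(\HH)$, so $\cL_0,\cL_1,\cL_2$ generate $\Pic(\HH)/\Pic^0(\HH)$; and they are $\Z$-independent modulo $\Pic^0(\HH)$, since $\cL_0^{\nu_0}\otimes\cL_1^{\nu_1}\otimes\cL_2^{\nu_2}\in\Pic^\tau(\HH)$ forces, on intersecting with $C_j$ and using $(\cL_i\cdot C_j)=\delta_{ij}$ from~\eqref{eq:1.14}, that all $\nu_j=0$. (Equivalently, by Theorem~\ref{thm:II} the transition matrix from $(\cM_{b-1},\cM_b,\cM_{b+1})$ to $(\cL_0,\cL_1,\cL_2)$ lies in $\GL_3(\Z)$, as~\eqref{eq:1.13} shows.) For ampleness I would use that $\HH$ is projective and that, by Theorem~\ref{thm:1.2}, the closed cone generated by the effective $1$-cycle classes is the simplicial cone $\R_{\geq0}[C_0]+\R_{\geq0}[C_1]+\R_{\geq0}[C_2]$. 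Hence, by Kleiman's criterion, $\cL$ is ample iff $(\cL\cdot Z)>0$ for every nonzero $Z$ in this cone, and by linearity iff $(\cL\cdot C_i)>0$ for $i=0,1,2$. Writing $\cL=\cL_0^{\nu_0}\otimes\cL_1^{\nu_1}\otimes\cL_2^{\nu_2}\otimes\cN$ as above, $(\cN\cdot C_i)=0$ by definition of $\Pic^\tau(\HH)$, so $(\cL\cdot C_i)=\nu_i$, and $\cL$ is ample exactly when all $\nu_i$ are positive. This is (i).

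\textbf{Part (ii).} The first step is to show $\pi^*\cL_0,\pi^*\cL_1,\pi^*\cL_2,\kappa^*\cO_{\P^3}(1)$ freely generate $\NS(\CC)=\Pic(\CC)/\Pic^0(\CC)$. By Theorem~\ref{thm:IV} the classes $\pi^*\cM_{b-1},\pi^*\cM_b,\pi^*\cM_{b+1},\kappa^*\cO_{\P^3}(1)$ form a $\Z$-basis of $\NS(\CC)$, so $\pi^*$ identifies $\NS(\HH)$ with the direct summand spanned by the first three; since $\cL_0,\cL_1,\cL_2$ is a $\Z$-basis of $\NS(\HH)$ by (i), its $\pi^*$-image is a $\Z$-basis of that summand, and adjoining $\kappa^*\cO_{\P^3}(1)$ gives a $\Z$-basis of $\NS(\CC)$. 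Next, the computations of Section~\ref{sec:1.2} show these four classes are dual to $[C^*_0],[C^*_1],[C^*_2],[L^*]$: $\pi$ restricts to an isomorphism $C^*_j\to C_j$ for $j=0,2$ (and for $j=1$ one argues with cycle classes via~\eqref{eq:1.1} and~\eqref{eq:1.2}), whence $(\pi^*\cL_i\cdot C^*_j)=(\cL_i\cdot C_j)=\delta_{ij}$ by~\eqref{eq:1.14}; moreover $\pi$ collapses $L^*$ and $\kappa$ collapses each $C^*_j$, giving $(\pi^*\cL_i\cdot L^*)=0$, $(\kappa^*\cO_{\P^3}(1)\cdot C^*_j)=0$ and $(\kappa^*\cO_{\P^3}(1)\cdot L^*)=(\cO_{\P^3}(1)\cdot L)=1$. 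Consequently every $\cL\in\Pic(\CC)$ is uniquely of the form $\cL=\pi^*\cL_0^{\nu_0}\otimes\pi^*\cL_1^{\nu_1}\otimes\pi^*\cL_2^{\nu_2}\otimes\kappa^*\cO_{\P^3}(\nu_3)\otimes\cN$ with $\cN\in\Pic^0(\CC)$, and pairing against the dual curves identifies $\nu_0,\nu_1,\nu_2,\nu_3$ with $(\cL\cdot C^*_0),(\cL\cdot C^*_1),(\cL\cdot C^*_2),(\cL\cdot L^*)$. Finally $\CC\subset\HH\times\P^3$ is projective and, by Theorem~\ref{thm:1.2}, the closed cone of effective $1$-cycle classes on $\CC$ is the simplicial cone on $[C^*_0],[C^*_1],[C^*_2],[L^*]$, so Kleiman's criterion gives that $\cL$ is ample iff it meets each of these four curves positively, i.e.\ iff $\nu_0,\nu_1,\nu_2,\nu_3$ are all positive. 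This is (ii).

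The proof is largely assembly, with no genuine obstacle, but two points deserve care. First, one should record that the relevant cone of curves is honestly closed: this is immediate from Theorem~\ref{thm:1.2}, a finitely generated (here simplicial) cone being closed, so it already equals the closure figuring in Kleiman's criterion and no limiting argument is required. Second, in (ii) I deliberately avoid invoking an identity ``$\Pic^\tau(\CC)=\Pic^0(\CC)$'', which the excerpt does not record: the free generation of $\NS(\CC)$ is instead obtained through Theorem~\ref{thm:IV} and $\pi^*$, and the exponents $\nu_i$ are pinned down only afterwards via the duality with $[C^*_0],[C^*_1],[C^*_2],[L^*]$.
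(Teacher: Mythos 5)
Your proof is correct and, for part (i), is exactly the paper's argument: the dual basis relation $(\cL_i\cdot C_j)=\delta_{ij}$ from Conclusion~\ref{concl:1.4}, the identification $\Pic^\tau(\HH)=\Pic^0(\HH)$ from the paragraph preceding the theorem, and Kleiman's criterion applied to the (automatically closed, because simplicial) cone of Theorem~\ref{thm:1.2}. The only point where you genuinely diverge is the bookkeeping for $\Pic^0(\CC)$ in part (ii). The paper decomposes an arbitrary $\cL\in\Pic(\CC)$ by first proving $\Pic^\tau(\CC)=\Pic^0(\CC)$ — it argues $H^1_{\mathrm{sing}}(\CC,\Z/n)=0$ as in [T3] — so that subtracting the numerical part $\pi^*\cL_0^{\nu_0}\otimes\pi^*\cL_1^{\nu_1}\otimes\pi^*\cL_2^{\nu_2}\otimes\kappa^*\cO_{\P^3}(\nu_3)$ with $\nu_i$ the intersection numbers leaves something in $\Pic^0(\CC)$. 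You instead read the free generation of $\NS(\CC)=\Pic(\CC)/\Pic^0(\CC)$ directly off Theorem~IV and pass to the basis $\pi^*\cL_0,\pi^*\cL_1,\pi^*\cL_2,\kappa^*\cO_{\P^3}(1)$ by a unimodular change of basis, only afterwards identifying the exponents with intersection numbers. Your route avoids the topological input on $\CC$ at the cost of using Theorem~IV as a black box (whose proof in [T3]/[T5] contains essentially that input); both are legitimate, and the remainder — the duality with $[C^*_0],[C^*_1],[C^*_2],[L^*]$ and Kleiman — coincides with the paper.
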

\begin{proof}
  \begin{enumerate}[(i)]
  \item follows from the foregoing computations and the theorem of Kleiman.
  \item Using the same notations as in Theorem~\ref{thm:1.2} one sees that the
    restriction of $\pi$ to $C^*_i$ and of $\kappa$ to $L^*$ gives an
    isomorphism $C^*_i\to C_i$ respectively $L^* \to L$. Hence one has
\[
(\pi^* \cL\cdot C^*_j) = \delta_{ij}, \quad (\pi^* \cL\cdot L)=0 \quad
\text{ and } \quad (\kappa^* \cO_{\P^3}(1) \cdot L^*) = 1\,.
\]
 The argumentation in the
    proof of \cite[Satz 3, p. 40]{T3} shows that
    $H^1_{\mathrm{sing}}(\CC,\Z/n)=0$ and hence $\Pic^\tau(\CC) =
      \Pic^0(\CC)$ (see the argumentation in \cite[Section 4.2]{T2}).
  \qedhere
  \end{enumerate}
\end{proof}
\begin{remark}
  We will show (cf.~Lemma~\ref{lem:1.1} and ~\ref{lem:1.5}) that $\cM_n$ is a
  line bundle on $\HH$ for all $n\geq a-3$ and that the
  formulas~\eqref{eq:1.13} are true for all $n\geq a-3$, too.
\end{remark}

\section{Some globally generated line bundles}
\label{sec:1.5}

\subsection{Regularity of sheaves}
\label{sec:1.5.1}

\begin{auxlemma}
  \label{auxlem:1}
 Let be $Y/k$ a noetherian scheme, $X = \P^r\times_kY$ and $\cF$ a coherent
 $\cO_X$-module such that $H^i(X,\cF(n-i))=(0)$ for all $i>0$ and all
 $n\geq m$ (i.e.~$\cF$ is $m$-regular).
 \begin{enumerate}[(a)]
 \item There are Zariski-many linear forms $\ell\in S:=k[X_0,\dots,X_r]$
   such that
   \begin{equation}
     \label{eq:1.15}
     0 \longrightarrow  \cF(-1) \stackrel{\ell}{\longrightarrow}  \cF \longrightarrow \cF' \longrightarrow  0
   \end{equation}
is exact, where $\cF':= \cF/\ell\cF(-1)$.
\item If~\eqref{eq:1.15} is exact for any linear form $\ell\in S$, then one
  has 
  \begin{enumerate}[(i)]
  \item $H^i(\cF'(n-i))=(0)$ for all $i>0$ and $n\geq m$.
  \item If $\pi:X\to Y$ is the projection and $\cF_n:=\pi_*\cF(n)$,
    $\cF'_n:=\pi_*\cF'(n)$, then 
    \begin{equation}
      \label{eq:1.16}
      0 \longrightarrow \cF_{n-1} \longrightarrow  \cF_n \longrightarrow \cF'_n \longrightarrow 0
    \end{equation}
is exact and $\cF'_n\simeq \cF_n/\ell\cF_{n-1}$ for all $n\geq m$.
  \end{enumerate}
 \end{enumerate}
\end{auxlemma}
\begin{proof}
  As $Y$ can be covered by finitely many open affine subsets, without
  restriction one can assume $Y = \Spec A$, $\cF=\tilde{M}$, $M$ a graded
  $S\otimes A$-module of finite type. Let $P_1,\dots,P_s$ be the finitely
  many associated primes of $M$, which are different from $S_+ \otimes A$.
  If $\ell \in S_1 - \cup P_i$, then~\eqref{eq:1.15} is exact. By means of
  the exact sequence
\[
    H^i(\cF(n-i)) \longrightarrow H^i(\cF'(n-i))\longrightarrow H^{i+1}(\cF(n-i-1))
\]
the assertion (i) in part (b) follows.
Because of $H^1(\cF(n-1))=0$, if $n\geq m$, the  exactness
of~\eqref{eq:1.16} follows. From the diagram
\[
\xymatrix{
            &  \cF_{n-1} \ar[r]\ar@{=}[d] &  \cF_n \ar[r]\ar@{=}[d]  &
            \cF_n/\ell \cF_{n-1}\ar[r]\ar[d]^{\mathrm{can.}} &  0 \\
   0 \ar[r] &  \cF_{n-1} \ar[r] &   \cF_n \ar[r] &  \cF'_n \ar[r] &  0
}
\]
which is commutative and has exact rows, it follows that
$\cF_n/\ell\cF_{n-1} \simeq \cF'_n$ if $n\geq m$.
\end{proof}
\textbf{The case of curves}

\subsubsection{}
\label{sec:1.5.1.1}
Let $Q(n) = \tbinom{n-1+3}{3}+\tbinom{n-a+2}{2} + \tbinom{n-b+1}{1}$ be the
Hilbert polynomial, $K/k$ an extension field, $X=\P^3_K$, $\cI \subset
\cO_X$ an ideal with Hilbert polynomial $Q$, $\cF = \cO_X/\cI$ the structure
sheaf of the curve defined by $\cI$, $P=k[x,y,z,t]$. If $\ell\in P_1$ is
sufficiently general, the sequences
\begin{equation}
  \label{eq:1.17}
      0 \longrightarrow  \cF(n-1) \stackrel{\ell}{\longrightarrow}  \cF(n) 
\longrightarrow \cF'(n) \longrightarrow  0  
\end{equation}
\begin{equation}
  \label{eq:1.18}
  0 \longrightarrow  \cI(n-1) \stackrel{\ell}{\longrightarrow}  \cI(n) \longrightarrow \cI'(n) \longrightarrow  0  
\end{equation}
are exact, where $\cI':=\cI +\ell\cO_X(-1)/\ell\cO_X(-1)$ is an ideal on
$\Proj(P/\ell P(-1))\simeq \P^2_K$ with Hilbert polynomial $Q'(n) =
\tbinom{n-1+2}{2}+\tbinom{n-(a-1)+1}{1}$. Hence $\cI'$ is $(a-1)$-regular,
i.e.~$H^i(\cI'(n))=0$ for all $n\geq a-1-i$ and $i\geq 1$ \cite[Lemma
2.9]{G78}. From~\eqref{eq:1.18} one gets the exact sequence
\[
H^{i-1}(\cI'(n-1)) \longrightarrow H^i(\cI(n-1)) \longrightarrow H^i(\cI(n))
\longrightarrow H^i(\cI'(n))
\]
where the first term and the last term vanish, if $n\geq (a-1)-(i-1)$ and
$i-1\geq 1$. It follows that $H^i(\cI(n-1)) \simeq H^i(\cI(n))$, if $n\geq
a-i$ and $i\geq 2$. As $H^i(\cI(n))=(0)$, if $i\geq 1$ and $n\gg 0$
\begin{equation}
  \label{eq:1.19}
  H^i(\cI(n))=(0), \quad \text{ if } n\geq a-i-1 \text{ and } i\geq 2 
\end{equation}
follows. On the other hand one has the exact sequence
\[
 H^1(\cO_X(n)) \longrightarrow H^1(\cF(n)) \longrightarrow H^2(\cI(n)) 
\]
where the first term vanishes if $n\geq 0$ and the last term vanishes if
$n\geq a -3$. As $\dim C =1$, one has $ H^i(\cF(n)) =(0)$ if $i\geq 2$,
hence
\begin{equation}
  \label{eq:1.20}
  \reg(\cF) \leq a-2\,.
\end{equation}

\subsubsection{}
\label{sec:1.5.1.2}
 Let $Y/k$ be a noetherian scheme, $\mathfrak{X} = Y\times_k \P^3_k$,
 $\cI\subset \cO_\fX$ an ideal such that $\cF = \cO_\fX/\cI$ is flat over
 $Y$ with Hilbert polynomial $P(n) = \tbinom{n+3}{3}-Q(n)$ in each fibre.
 According to~\eqref{eq:1.20} $\cF\otimes k(y)$ is $(a-2)$-regular and hence
 $H^1(\cF(n)\otimes k(y)) =(0)$ for all $n\geq a-3$, $y\in Y$.
 From~\cite[Cor. 1, p. 51]{M2} it follows that $\pi_*(\cF(n))\otimes k(y)
 \stackrel{\sim}{\to} H^0(\cF(n)\otimes k(y))$ for all $n\geq a-3$, $y\in
 Y$, where $\pi:\fX \to Y$ is the projection. 
 From~\cite[Cor.~2, p.~52]{M2} it follows that
 \begin{equation}
   \label{eq:1.21}
  \pi_* \cF(n) \text{ is locally free of rank } P(n) \text{ on } Y, \text{
    for all } n\geq a-3\,.
 \end{equation}
 The assertion concerning the rank follows from $H^i(\cF(n)\otimes k(y))
 =(0)$, for all $i\geq 2$, $y\in Y$.  From~\cite[Cor.~1, p.~51]{M2}
 and~\eqref{eq:1.20} it follows that
\[
  R^1\pi_*(\cF(n))\otimes k(y) \simeq H^1(\cF(n)\otimes k(y)) =(0)
\]
if $n\geq a-3$ and $y\in Y$. By Nakayama,  this implies $R^1\pi_*\cF(n)
=(0)$ for all $n\geq a-3$. The same argumentation shows that $R^i\pi_*\cF(n)
=(0)$ for all $i \geq 2$ and all $n$.  All in all, we get
\begin{lemma}
  \label{lem:1.1} 
 $\cF$ is $(a-2)$-regular, $\cF_n:=\pi_*\cF(n)$ is locally free on $Y$ of
 rank $P(n)$ and hence $\cM_n:=\bigwedge\limits^{P(n)} \pi_*\cF(n)$
is a line bundle on $Y$ for all $n\geq a-3$. \hfill $\qed$
\end{lemma}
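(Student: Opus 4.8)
The plan is to deduce the relative statement from a sharp fibrewise regularity bound together with the cohomology-and-base-change machinery of Aux-Lemma~\ref{auxlem:1} and of \cite{M2}. First I would work over a single fibre: let $K/k$ be an extension field, $\cI\subset\cO_{\P^3_K}$ an ideal with Hilbert polynomial $Q$, and $\cF=\cO_{\P^3_K}/\cI$. Cutting with a sufficiently general linear form $\ell\in P_1$ produces the exact sequences \eqref{eq:1.17} and \eqref{eq:1.18}, in which $\cI'$ is an ideal on $\P^2_K$ with Hilbert polynomial $Q'(n)=\tbinom{n-1+2}{2}+\tbinom{n-(a-1)+1}{1}$. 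The essential input is Gotzmann's regularity bound \cite[Lemma 2.9]{G78}: such an ideal on $\P^2$ is $(a-1)$-regular. Feeding this into the long exact cohomology sequence attached to \eqref{eq:1.18} gives $H^i(\cI(n-1))\simeq H^i(\cI(n))$ for $n\geq a-i$ and $i\geq 2$; since these groups vanish for $n\gg 0$, one obtains the vanishing \eqref{eq:1.19}, i.e.\ $H^i(\cI(n))=0$ for $n\geq a-i-1$ and $i\geq 2$. Combining this with the sequence $0\to\cI(n)\to\cO_{\P^3_K}(n)\to\cF(n)\to 0$, the vanishing of $H^1(\cO_{\P^3_K}(n))$ for $n\geq 0$, and $H^i(\cF(n))=0$ for $i\geq 2$ (because $\dim C=1$), yields $\reg(\cF)\leq a-2$, which is \eqref{eq:1.20}.

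Next I would globalise. Let $Y$ be noetherian, $\fX=Y\times_k\P^3_k$, and $\cF=\cO_\fX/\cI$ flat over $Y$ with Hilbert polynomial $P$ in every fibre. By the step above, each $\cF\otimes k(y)$ is $(a-2)$-regular, so $H^1(\cF(n)\otimes k(y))=0$ for all $n\geq a-3$ and all $y\in Y$. Mumford's base-change results \cite[Cor.~1, p.~51]{M2} and \cite[Cor.~2, p.~52]{M2} then give, for $n\geq a-3$, first that $\pi_*\cF(n)\otimes k(y)\stackrel{\sim}{\to}H^0(\cF(n)\otimes k(y))$ and then that $\pi_*\cF(n)$ is locally free on $Y$; its rank equals $\dim_{k(y)}H^0(\cF(n)\otimes k(y))=P(n)$, because $H^i(\cF(n)\otimes k(y))=0$ for all $i\geq 1$ in this range. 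The same argument applied to $R^1\pi_*$, together with Nakayama, shows $R^1\pi_*\cF(n)=0$ for $n\geq a-3$, while $R^i\pi_*\cF(n)=0$ for $i\geq 2$ and all $n$ since the fibres are curves; hence $\cF$ itself is $(a-2)$-regular on $\fX$. Finally, $\cM_n=\bigwedge^{P(n)}\pi_*\cF(n)$ is the top exterior power of a locally free $\cO_Y$-module of rank $P(n)$, hence a line bundle, for every $n\geq a-3$.

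The main obstacle is concentrated entirely in the fibrewise bound $\reg(\cF)\leq a-2$; everything after that is routine descent of cohomology. The delicate point is that a general hyperplane section of $\cI$ yields an ideal on $\P^2$ whose Hilbert polynomial is precisely $Q'$, so that one may apply Gotzmann's theorem and obtain the sharp value $a-1$ for its regularity; it is exactly this value (rather than the cruder bound $b-1$ coming from the $b$-regularity of $\cI$ on $\P^3$) that pushes the whole conclusion down to the range $n\geq a-3$. One must also be a little careful that the general-position choices used to split off \eqref{eq:1.17} and \eqref{eq:1.18} are available over the possibly non-closed field $K$, which is why Aux-Lemma~\ref{auxlem:1}(a) is formulated in terms of avoiding the finitely many associated primes of the relevant graded module.
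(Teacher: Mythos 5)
Your proof is correct and follows essentially the same route as the paper: the sharp fibrewise bound $\reg(\cF)\leq a-2$ via a general hyperplane section, the $(a-1)$-regularity of $\cI'$ from \cite[Lemma 2.9]{G78}, and the long exact sequence giving \eqref{eq:1.19} and \eqref{eq:1.20}, followed by Mumford's cohomology-and-base-change results and Nakayama to get local freeness of $\pi_*\cF(n)$ of rank $P(n)$ for $n\geq a-3$. Nothing essential is missing.
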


\subsection{$\cM^{-1}_n\otimes \cM_n$ globally generated}
\label{sec:1.5.2}
We consider the curve case of the last section: $Y/k$ noetherian scheme,
$P=k[x,y,z,t]$, $S=k[x,y,z]$ and $\cC\in \HH(Y)$, i.e.~a diagram
\[
 \xymatrix{
 \cC \ar@{^{(}->}[rr] \ar[dr] &  & Y \times \P^3 = \fX \ar[dl]^p \\
                              & Y &  
 }
\]
where $\cC$ is a flat curve over $Y$ with Hilbert polynomial $P(n) =
\tbinom{n+3}{3}-Q(n)$, defined by the ideal $\cI \subset \cO_\fX$, $\cF =
\cO_\fX / \cI$ the structure sheaf of $\cC$, $\cI_n:=p_* \cI(n)$, $\cF_n:=p_*
\cF(n)$, $\cP:= \cO_Y \otimes_k P$, $\cS:=\cO_Y \otimes_k S$. 

We consider linear forms $\ell \in P_1$ of the form $\ell = \alpha x +\beta
y +\gamma z +t$, $\alpha,\beta,\gamma \in k$. For each such $\ell$
\[
   U(\ell):=\Set{ y \in Y | \cF(-1) \otimes k(y)  
\stackrel{\cdot \ell}{\longrightarrow} \cF \otimes k(y) \text{ is injective}
}
\]
is an open subset of $Y$ (possibly empty). The openness follows
from~\cite[Lemma 1 and 2]{G88}, for example.  If $\cF(-1) \stackrel{\cdot
  \ell}{\longrightarrow} \cF$ is injective and $\ell \in P_1$, then we write
$\ell \in \NNT(\cF)$.
\begin{auxlemma}
  \label{auxlem:2}
For each $y\in Y(k)$ there are Zariski-many $\ell = \alpha x +\beta
y +\gamma z +t \in \NNT(\cF)$ such that $y\in U(\ell)$.
\end{auxlemma}
\begin{proof}
  Let $U_i= \Spec A_i$, $1\leq i \leq m$, be an open, affine covering of
  $Y$.  Put $\cI:= \bigoplus_{n\geq 0} p_* \cI(n)$, $I^i:=\cI\otimes A_i$,
  $I:=\cI\otimes k(y)$. Let $\cP^i_j$ be the associated prime ideals of
  $I^i$, which are different from $P_+ \otimes A_i$, $1\leq j \leq r(i)$;
  let $\cP^0_j$ be the associated prime ideals of $I^0:=I$, which are
  different from $P_+ \otimes k(y)$, $1\leq j \leq r(0)$.

 For the moment, we fix the index $i$. Then
\[
 V^i_j:=\Set{ (\alpha, \beta, \gamma) \in k^3 | \ell := \alpha x +\beta
y +\gamma z +t \in \cP^i_j \cap P_1 }
\]
is a closed subset of $\A^3_k$. It is different from $\A^3_k$, because
otherwise $\cP^i_j=P_+ \otimes A_i$, respectively $\cP^0_j=P_+ \otimes k(y)$
would follow. Hence $W^i_j:=\A^3_k - V^i_j$ is open and non-empty in
$\A^3_k$, hence the same is true for 
\[
    \bigcap_{j=1}^{r(0)} W^0_j \bigcap_i \bigcap_{j=1}^{r(i)} W_j^i\,.
\]
If one puts 
\[
  L_i(y) := 
 \Set{ \ell = \alpha x +\beta y +\gamma z +t | y  \in  U(\ell) \text{ and } 
  \cF(-1) \otimes \cO_{U_i}  
\stackrel{\ell}{\longrightarrow} \cF \otimes \cO_{U_i}  \text{ is injective}
},
\]
then it follows that $L_i(y) \subset \A^3_k $ is non-empty and Zariski-open,
hence the same is true for $L(y):= \bigcap\limits_1^m L_i(y)$.
\end{proof}

\begin{auxlemma}
  \label{auxlem:3}
  There are finitely many $\ell_i = \alpha_i x +\beta_i y +\gamma_i z +t \in
  P_1$, $1\leq i\leq m$, such that $ Y= \bigcup\limits_1^m U(\ell_i)$, and
  for each $\ell \in \{\ell_1,\dots,\ell_m\}$ one has:
 \begin{enumerate}[(a)]
 \item\mbox{}\par
{\centering \centerline{$0 \longrightarrow \cF_{n-1} \stackrel{\ell}{\longrightarrow} \cF_n
\longrightarrow \cF'_n \longrightarrow 0$}}

 is exact on $Y$ and  $\cF'_n:=p_*(\cF(n) /\ell\cF(n-1))$ is isomorphic to
 $\cF_n/\ell \cF_{n-1}$ for all $n\geq a-2$.
\item If $U:=U(\ell)$, then $\cF'_n\otimes \cO_U$ is locally free on $U$ of
  rank $(a-1)=d$ for all $n$.
 \end{enumerate}
\end{auxlemma}
\begin{proof}
  To each $y\in Y(k)$ choose a linear form $\ell_y\in L(y)$. Then
  $U(\ell_y)$ is an open neighborhood of $y$ and finitely many such
  neighborhoods cover $Y$. The assertion (a) is true for each $\ell\in
  \NNT(\cF)$ according to Aux-Lemma~\ref{auxlem:1}b, because $\cF$ is
  $(a-2)$-regular (cf.~Lemma~\ref{lem:1.1}).

As to assertion (b), let $y \in U(\ell)$. Then
\[
0 \longrightarrow H^0(\cF(n-1) \otimes k(y))
\stackrel{\ell}{\longrightarrow} H^0(\cF(n) \otimes k(y)) \longrightarrow
H^0(\cF'(n) \otimes k(y)) \longrightarrow 0
\]
is exact for $n\geq a-3$, as $\cF \otimes k(y)$ is
  $(a-2)$-regular (cf.~\eqref{eq:1.20}). As has been noted in
  Section~\ref{sec:1.5.1.2}, it follows that $\cF_n \otimes k(y)
 \simeq H^0(\cF(n) \otimes k(y))$, if $n\geq a-3$. It follows that for $n\geq
 a-2$
\[
0 \longrightarrow \cF_{n-1}\otimes k(y) \stackrel{\ell}{\longrightarrow}
\cF_n \otimes k(y) \longrightarrow (\cF_n /\ell\cF_{n-1}) \otimes
k(y)\longrightarrow 0
\]
is exact. The local flatness criterion then shows that $\cF'_n \otimes
\cO_{Y,y}$ is flat over $\cO_{Y,y}$ of rank $P'(n)$, if $n\geq a-2$. It
follows that $\cF' \otimes \cO_{Y,y}$ is flat over $\cO_{Y,y}$.  As $\cF'$
is $0$-regular, it follows that $\cF'_n \otimes \cO_U$ is locally free.
\end{proof}

\textbf{N.B.} Whereas $\cF'_n$ is locally free over $U(\ell)$ of rank
$d=(a-1)$ for all $n$, this is true for $\cF_n /\ell\cF_{n-1}$ only if
$n\geq a-2$, as $\cF'_n\simeq \cF_n /\ell\cF_{n-1}$ is, in general, only
true for $n\geq a-2$. \\

Now let be $\ell = \alpha x +\beta y +\gamma z +t \in \NNT(\cF)$
such that $U:=U(\ell)\neq\emptyset$ (e.g., one can choose $\ell \in
\{\ell_1,\dots,\ell_m\}$ as in Aux-Lemma~\ref{auxlem:3}).  
We consider the diagram
\[
\xymatrix{
            &                      &                     &
 \cS_n   \ar[dl]_\kappa \ar[d]^\pi &  \\
   0 \ar[r] &  \cF_{n-1} \ar[r]^\phi &  \cF_n \ar[r]^\psi &  \cF'_n \ar[r] &  0
\;.} 
\]
Here $n\geq a-2$, $\phi$ is the multiplication with $\ell$, $\psi$ is the
canonical map and $\pi$ is the composed map
\[
 \cS_n = \cO_Y \otimes_k S_n \stackrel{\mathrm{can.}}{\longrightarrow}
\cP_n/\ell\cP_{n-1} \stackrel{\mathrm{can.}}{\longrightarrow}
\cF_n/\ell\cF_{n-1} \stackrel{\sim}{\longrightarrow} \cF'_n\,.
\]
$\kappa$ is the composed map $\cS_n \hookrightarrow \cP_n
\stackrel{\mathrm{can.}}{\longrightarrow} \cF_n$. \\
(\textbf{N.B.} $\cP_n \longrightarrow \cF_n$ is not necessarily surjective,
if $n\leq b-1$.) Now $\cF' = \cF/\ell \cF(-1) \simeq \cS/\cI'$, where $\cI'
\simeq \cI + \ell \cP(-1)/\ell \cP(-1)$ is an ideal in $\cS$. $\cF'$ is flat
over $U$ with rank $d$, hence $\cI'$ is flat over $U$ with Hilbert
polynomial $Q'(n) = \tbinom{n-1+2}{2}+\tbinom{n-(a-1)+1}{1}$.Thus
$R^1\pi_*(\cI'(n))\otimes \cO_U = (0)$, if $n\geq a-2$, and it follows that:
\begin{equation}
  \label{eq:1.22}
  \cS_n \rightarrow \cF'_n \quad \text{ is surjective on } U \text{ for } n\geq a-2\,.
\end{equation}

We consider for $y\in U$ a sufficiently small open, affine neighborhood
$V\subset U$ and we argue as follows: $z \in \cF_n(V) \Rightarrow \psi(z) =
\pi(y) = \psi \kappa(y)$ for an element $y\in \cS_n \Rightarrow z-\kappa(y)
\in \Ker(\psi|V) = \Im(\phi|V) \Rightarrow z-\kappa(y) = \phi(x)$ with $x\in
\cF_{n-1}(V)$. It follows that
\begin{equation}
  \label{eq:1.23}
  \begin{aligned}
   \cF_{n-1} \oplus  \cS_n & \rightarrow \cF'_n  \qquad \quad \text{with}\\
   (x,y)& \mapsto \phi(x)+\kappa(y) 
  \end{aligned}
\end{equation}
is a globally defined homomorphism, which is surjective on $U$. 

Put $p= P(n-1)$, $d=P'(n)$. One defines $\phi_\ell$ by means of the diagram
\[
\xymatrix{
 \bigwedge\limits^{p+d} (\cF_{n-1} \oplus \cS_n) \ar[rr] \ar@{-}[d]^\simeq
 &   & \bigwedge\limits^{p+d} \cF_{n} \\
\bigoplus\limits_{\stackrel{i+j}{=p+d}} \bigwedge\limits^{i} \cF_{n-1} \otimes
\bigwedge\limits^{j} \cS_n &  & \\
\bigwedge\limits^{p} \cF_{n-1} \otimes
\bigwedge\limits^{d} \cS_n \ar[u] \ar[uurr]_{\phi_\ell} & & 
}
\]
$\phi_\ell$ is a homomorphism of $\cO_Y$-modules and the horizontal map is
surjective on $U=U(\ell)$. Then Aux-Lemma~\ref{auxlem:3} says that
\[
0 \longrightarrow \cF_{n-1}\otimes \cO_U \stackrel{\ell}{\longrightarrow}
\cF_n \otimes \cO_U \longrightarrow \cF'_n \otimes \cO_U\longrightarrow 0
\]
is exact and $\cF'_n\otimes \cO_U$ is locally free over $U$ of rank $d$, if
$n\geq a-2$. Let $y\in U$ and let $V =\Spec A$ be a sufficiently small open
neighborhood of $y$ in $U$. Put $F_n = \cF_n\otimes A$, $F'_n =
\cF'_n\otimes A$. Then one has a commutative diagram, with lower row an
exact sequence of free $A$-modules, if $n\geq a-2$:
\[
\xymatrix{
            &                      &                     &
 S_n \otimes A  \ar[dl]_\kappa \ar@{->>}[d]^\pi &  \\
   0 \ar[r] &  F_{n-1} \ar[r]^\ell &  F_n \ar[r]^\psi &  F'_n \ar[r] &  0\;.
}
\]
$F'_n\otimes k(y)$ has a basis over $k(y)$, which consists of monomials 
$m_1,\dots,m_d \in S_n$. Then they form a basis of the free
$A_y$-module $F'_n\otimes A_y$. It follows that there is an element $f\in A$
such that $y\in D(f)$ and the images of $m_1,\dots,m_d$ generate the free 
$A_f$-module $F'_n\otimes A_f$, and hence form a basis of this
module. Replacing $V$ by $D(f)$ one can achieve that $m_1,\dots,m_d$ are
monomials of $S_n$, such that $\pi(m_1),\dots,\pi(m_d)$ is a basis of the free
$A$-module $F'_n$. 

We now describe the homomorphism $\phi_\ell$. Let $L$ and $M$ be free 
$A$-modules of rank $p$, resp.~$d$, and $\alpha:L\to N$, $\beta:M\to N$ 
homomorphisms of $A$-modules.\\
Define $\gamma: L\oplus M \to N$ by $(x,y) \mapsto
\alpha(x)+\beta(y)$. Then 
\[
  \wedge \gamma:\bigwedge^{p+d} L\oplus M \rightarrow \bigwedge^{p+d} N
\]
operates by
\[
   \gamma(z_1\wedge \cdots \wedge z_{p+d}) = \gamma(z_1) \wedge \cdots
   \wedge \gamma(z_{p+d})\,.
\] 
Hence one has a homomorphism 
\[
\bigwedge^p L \otimes \bigwedge^d M \rightarrowtail
\bigoplus_{\substack{i+j}{=p+d}} \bigwedge^i L \otimes \bigwedge^j M
\stackrel{\sim}{\rightarrow} 
\bigwedge^{p+d} L\oplus M \rightarrow \bigwedge^{p+d} N
\]
which can be described by
\begin{multline*}
  x_1 \wedge \cdots \wedge x_p \otimes y_1 \wedge \cdots \wedge y_d \mapsto
  x_1 \wedge \cdots \wedge y_d \\
 \mapsto \gamma(x_1) \wedge \cdots \wedge
  \gamma(y_d) = \alpha(x_1) \wedge \cdots \wedge \alpha(x_p) \wedge
  \beta(y_1) \wedge \cdots \wedge \beta(y_d)\,.
\end{multline*}
Applying this to $\phi_\ell$ gives:
\[
\phi_\ell(x_1 \wedge \cdots \wedge y_d ) =  \ell x_1 \wedge \cdots
\wedge\ell  x_p \wedge \kappa(y_1) \wedge \cdots \wedge \kappa(y_d)\,.
\]

We now choose $V =\Spec A$ so small that $F_{n-1}$ has an $A$-basis
$n_1,\dots,n_p$ and we define $s:F'_n\to F_n$ by $s(\pi(m_i)):=\kappa(m_i)$,
which is possible as $\pi(m_i), 1\leq i\leq d$, is a basis of $F'_n$. It
follows that $\psi \circ s \circ \pi(m_i) = \psi \circ \kappa(m_i)= \pi(m_i)$, 
which means that $s$ is a section of $\psi$ over $A$. Hence $\{ \ell
n_1,\dots,\ell n_p, \kappa(m_1),\dots, \kappa(m_d)\}$ is an $A$-basis of
$F_n$ and 
\[
\phi_\ell(n_1 \wedge \cdots \wedge n_p \wedge m_1 \wedge \cdots \wedge m_d)
 =  \ell n_1 \wedge \cdots \wedge\ell  n_p \wedge \kappa(m_1) \wedge \cdots \wedge \kappa(m_d)\,.
\]
As this element is a basis of $\bigwedge^{p+d} F_n$, it follows that
$\phi_\ell$ is surjective in a neighborhood of $y\in U(\ell)$, hence is
surjective on $U(\ell)$. 

Now let $\ell_i$, $1\leq i \leq m$, be as in the Aux-Lemma~\ref{auxlem:3}.
We define a homomorphism
\begin{align*}
  \sigma:\bigoplus_1^m  \bigwedge^p \cF_{n-1} \otimes \bigwedge^d \cS_n &
\longrightarrow
  \bigwedge^{p+d} \cF_n \quad \text{by} \\
(x_1,\dots,x_m) & \mapsto \sum_1^m\phi_{\ell_i} (x_i), \quad \text{where }
 x_i \in \bigwedge^p \cF_{n-1} \otimes \bigwedge^d \cS_n\,.
\end{align*}
As $\phi_{\ell_i}$ is surjective on $U(\ell_i)$ and the $U(\ell_i)$ cover
$Y$, $\sigma$ is surjective. Now
\[
\bigoplus_1^m  \bigwedge^p \cF_{n-1} \otimes \bigwedge^d \cS_n \simeq 
\bigwedge^p \cF_{n-1} \otimes_Y \cE\,,
\]
where $\cE:=\bigoplus_1^m  \bigwedge^d \cS_n \simeq E \otimes_k \cO_Y$ and 
$E:= \bigoplus_1^m  \bigwedge^d S_n$. One obtains a surjective 
homomorphism of $\cO_Y$-modules 
\[
\bigwedge^p \cF_{n-1} \otimes_Y E \otimes \cO_Y \rightarrow 
\bigwedge^{p+d} \cF_n\,.
\]
If it is tensored with $\bigl(\bigwedge^p \cF_{n-1}\bigr)^{-1}$, one gets a
surjective homomorphism of $\cO_Y$-modules
\begin{equation}
  \label{eq:1.24}
  E \otimes \cO_Y \longrightarrow \bigl(\bigwedge^p
  \cF_{n-1}\bigr)^{-1}\otimes_Y \bigl(\bigwedge^{p+d} \cF_n\bigr)\,.
\end{equation}

\begin{lemma}
  \label{lem:1.2}
  $\cM^{-1}_{n-1} \otimes \cM_n$ is a globally generated line bundle on
  $Y$ if $n\geq a-2$. \hfill $\qed$
\end{lemma}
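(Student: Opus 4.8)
The plan is to deduce the lemma directly from the surjection \eqref{eq:1.24} constructed just before the statement, since essentially all the work is already contained there; what remains is only to recognize \eqref{eq:1.24} as an assertion of global generation.

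First I would identify the target of \eqref{eq:1.24} with $\cM_{n-1}^{-1}\otimes\cM_n$. By definition $\bigwedge^{p}\cF_{n-1}=\bigwedge^{P(n-1)}\cF_{n-1}=\cM_{n-1}$, so the left tensor factor is $\cM_{n-1}^{-1}$. For the right factor, recall that $d=P'(n)=a-1$ is simultaneously the rank of $\cF'_n$ and the degree of the curve; hence $p+d=P(n-1)+(a-1)=P(n)$ and $\bigwedge^{p+d}\cF_n=\bigwedge^{P(n)}\cF_n=\cM_n$. Thus \eqref{eq:1.24} reads as a surjection $E\otimes_k\cO_Y\to\cM_{n-1}^{-1}\otimes\cM_n$. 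Moreover, for $n\geq a-2$ one has $n-1\geq a-3$ and $n\geq a-3$, so by Lemma~\ref{lem:1.1} both $\cF_{n-1}$ and $\cF_n$ are locally free and $\cM_{n-1}$, $\cM_n$ are line bundles; therefore $\cM_{n-1}^{-1}\otimes\cM_n$ is a line bundle.

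Next I would observe that the source $E\otimes_k\cO_Y$ of \eqref{eq:1.24} is a \emph{free} $\cO_Y$-module of finite rank, since $E=\bigoplus_1^m\bigwedge^d S_n$ is a finite-dimensional $k$-vector space; thus $E\otimes_k\cO_Y\simeq\cO_Y^{\oplus N}$ with $N=\dim_k E$. A surjection $\cO_Y^{\oplus N}\to\cL$ onto an invertible sheaf $\cL$ exhibits $N$ global sections of $\cL$ that generate it at every point of $Y$, i.e.\ shows that $\cL$ is globally generated. Applying this with $\cL=\cM_{n-1}^{-1}\otimes\cM_n$ proves the lemma.

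The only substantive input — already supplied by the paragraphs preceding the statement — is the surjectivity of \eqref{eq:1.24}. That was obtained by checking, for each of the finitely many linear forms $\ell_i$ provided by Aux-Lemma~\ref{auxlem:3}, that $\phi_{\ell_i}$ is surjective over the open set $U(\ell_i)$ — using a local section $s$ of $\psi$ to produce the $A$-basis $\{\ell_i n_1,\dots,\ell_i n_p,\kappa(m_1),\dots,\kappa(m_d)\}$ of $\cF_n$, on which $\phi_{\ell_i}$ acts as the nonvanishing wedge $\ell_i n_1\wedge\dots\wedge\ell_i n_p\wedge\kappa(m_1)\wedge\dots\wedge\kappa(m_d)$ — together with the fact that the $U(\ell_i)$ cover $Y$. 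The main obstacle, were one to prove this from scratch, would be precisely that surjectivity verification rather than the exterior-algebra bookkeeping; given the construction already in place, nothing further is needed.
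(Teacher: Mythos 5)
Your proposal is correct and is essentially the paper's own argument: Lemma~\ref{lem:1.2} is stated with an immediate $\qed$ precisely because the surjection \eqref{eq:1.24} from the trivial bundle $E\otimes_k\cO_Y$ onto $\bigl(\bigwedge^p\cF_{n-1}\bigr)^{-1}\otimes\bigwedge^{p+d}\cF_n=\cM_{n-1}^{-1}\otimes\cM_n$ (with $p=P(n-1)$, $p+d=P(n)$, and invertibility from Lemma~\ref{lem:1.1}) is exactly the definition of global generation. Your identification of the target and the rank bookkeeping match the paper's construction, so nothing further is needed.
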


\subsection{Some proporties of determinants (after Fogarty and Mumford)}
\label{sec:1.5.3}

\subsubsection{}
\label{sec:1.5.3.1}

Let $A$ be a noetherian ring, $M$ an $A$-module of finite type.  $M$ has
finite $\Tor$-dimension, if there is a finite projective resolution of
$M$. With somewhat different terminology, this is denoted as $\proj \dim_A
M < \infty$. It is known that 
\[
  \proj \dim_A M = \min \Set{ n\in N | \Ext^{n+1}(M,N) =(0) \text{ for all }
    A\text{-modules } N }.
\]
\begin{remark} \label{rem:1.4}
  If $0\to M'\to M \to M''\to 0$ is an exact sequence of $A$-modules of
  finite type and any two of the modules have finite projective dimension,
  then the third module has finite projective dimension, too. This follows
  from the exact sequence
\[
  \cdots \to \Ext^i_A(M'',N) \to \Ext^i_A(M,N) \to \Ext^i_A(M',N) \to \cdots
\]
\end{remark}

\begin{remark}(see~\cite[p. 66]{F1})
 \label{rem:1.5}
If 
  $0 \to \cF_1 \stackrel{\phi_1}{\to} \cF_2 \stackrel{\phi_2}{\to} \cF_3
  \stackrel{\phi_3}{\to} \cF_4 \to 0$
is an exact sequence of coherent $\cO_X$-modules with finite projective
dimension, then
\[
 \Inv(\cF_1)\otimes \Inv(\cF_2)^{-1}\otimes \Inv(\cF_3)\otimes
 \Inv(\cF_4)^{-1} \stackrel{\sim}{\longrightarrow} \cO_X\,.
\]
To prove this, one splits the exact sequence into the exact sequences
\[
0 \longrightarrow \cF_1 \longrightarrow \cF_2 \longrightarrow \Im(\phi_1)
\longrightarrow 0
\]
and
\[
  0 \longrightarrow \Ker(\phi_3) \longrightarrow \cF_3 \longrightarrow \cF_4 \longrightarrow 0\,.
\]
According to Remark~\ref{rem:1.4}, all modules occurring in the exact
sequences have finite projective dimension and then formula (i) in \cite[p.
67]{F1} gives the above assertion.
\end{remark}

\begin{lemma} \label{lem:1.3}
If 
\[
  0 \to \cF_1 \to \cF_2 \to \cF_3 \to \cG \to 0
\]
is an exact sequence of coherent $\cO_X$-modules, $\cF_i$ locally free on
$X$ of rank $r_i$ and $\cG_x=(0)$ for all $x\in \Ass(\cO_X)$, then there is an
injective homomorphism
\[
 \sigma : \cO_X \longrightarrow \stackrel{r_1}{\bigwedge}\cF_1 \otimes 
            \bigl(\stackrel{r_2}{\bigwedge}\cF_2 \bigr)^{-1} \otimes
            \stackrel{r_3}{\bigwedge}\cF_3=:\cL
\]
which has the following property: If $y\in X$ is any point such that
$\cG_y=(0)$, then the section $s=\sigma(1)$ generates the fiber $\cL_y$.
\end{lemma}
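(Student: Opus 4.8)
The plan is to split the four-term exact sequence into two short exact sequences, use Fogarty's invertible sheaf $\Inv(-)$ (as in Remark~\ref{rem:1.5}) to identify $\cL$ with $\Inv(\cG)$, and then invoke the canonical section attached to $\Inv$ of a torsion sheaf. Put $\cK:=\operatorname{im}(\cF_2\to\cF_3)=\Ker(\cF_3\to\cG)$, so that one has short exact sequences
\[
  0\to\cF_1\to\cF_2\to\cK\to 0 \qquad(\mathrm A)
\]
\[
  0\to\cK\to\cF_3\to\cG\to 0.\qquad(\mathrm B)
\]
Sequence $(\mathrm A)$ exhibits $0\to\cF_1\to\cF_2\to 0$ as a length-one resolution of $\cK$ by locally free sheaves, so $\cK$ has finite Tor-dimension; then Remark~\ref{rem:1.4}, applied on stalks to $(\mathrm B)$ with $\cF_3$ locally free and $\cK$ of finite Tor-dimension, shows that $\cG$ has finite Tor-dimension as well. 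Hence $\Inv(\cK)$ and $\Inv(\cG)$ are defined, and $\Inv(\cF_i)=\bigwedge^{r_i}\cF_i$ because the $\cF_i$ are locally free.

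Next I apply the multiplicativity of $\Inv$ along a short exact sequence — formula (i) of \cite[p.~67]{F1}, used exactly as in Remark~\ref{rem:1.5} — to $(\mathrm A)$ and $(\mathrm B)$, obtaining canonical isomorphisms
\[
  \Inv(\cF_1)\otimes\Inv(\cF_2)^{-1}\otimes\Inv(\cK)\simeq\cO_X,\qquad
  \Inv(\cK)\otimes\Inv(\cF_3)^{-1}\otimes\Inv(\cG)\simeq\cO_X .
\]
Eliminating $\Inv(\cK)$ gives $\Inv(\cG)\simeq\bigwedge^{r_1}\cF_1\otimes\bigl(\bigwedge^{r_2}\cF_2\bigr)^{-1}\otimes\bigwedge^{r_3}\cF_3=\cL$.

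Now the hypothesis $\cG_x=(0)$ for all $x\in\Ass(\cO_X)$ says exactly that $\cG$ is a torsion sheaf, and for a torsion sheaf of finite Tor-dimension $\Inv(\cG)$ carries a canonical homomorphism $\sigma\colon\cO_X\to\Inv(\cG)=\cL$, obtained locally as the determinant of a finite free presentation of $\cG$ (which is legitimate since such a presentation becomes acyclic after localizing at any point where $\cG$ vanishes). By construction $\sigma$ is an isomorphism over $U:=X\setminus\supp(\cG)$: there the sequences $(\mathrm A)$, $(\mathrm B)$ restrict to the short exact sequence $0\to\cF_1|_U\to\cF_2|_U\to\cF_3|_U\to 0$ of locally free sheaves, and $\sigma|_U$ is the induced trivialization of $\cL|_U$. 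In particular, for any $y$ with $\cG_y=(0)$ one has $y\in U$, so $s=\sigma(1)$ generates $\cL_y$. Finally $\sigma$ is injective: its kernel is a subsheaf of $\cO_X$ vanishing on $U$, hence at every $x\in\Ass(\cO_X)$ (each such $x$ lies in $U$), and a nonzero local section of $\cO_X$ would have an associated prime lying in $\Ass(\cO_X)$; so $\Ker\sigma=(0)$.

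The one step that needs care is the construction of the canonical section of $\Inv(\cG)$: one must check that the local presentation-determinants glue to a global $\sigma$ and that this $\sigma$ restricts on $U$ to the evident isomorphism coming from the split short exact sequence there. This is precisely the torsion case of Fogarty's determinant construction, and the only input it uses is the torsion hypothesis on $\cG$; the remaining steps are formal bookkeeping of $\Inv$ along $(\mathrm A)$ and $(\mathrm B)$.
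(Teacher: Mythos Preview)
Your proof is correct and follows essentially the same route as the paper's: split into two short exact sequences, identify $\cL\simeq\Inv(\cG)$ via multiplicativity of $\Inv$ (this is exactly the content of Remark~\ref{rem:1.5}, which the paper also invokes), observe $\cG$ is torsion, and use Fogarty's canonical section $\sigma:\cO_X\to\Inv(\cG)$. The only cosmetic difference is that the paper cites Fogarty \cite[Theorem~2.2]{F1} directly for the injectivity of $\sigma$ and then passes through the effective Cartier divisor $\Div(\cG)$ and Mumford's inclusion $\supp(\Div(\cG))\subset\supp(\cG)$ \cite[Sec.~5.3]{M1} to get the generating property, whereas you argue both points directly from the fact that $\sigma|_{X\setminus\supp(\cG)}$ is an isomorphism; your version is slightly more self-contained but relies on the same underlying Fogarty machinery.
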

\begin{proof}
  Let $x\in X$ be an associated point, i.e.~a point $x\in X$ such that
  $\depth(x)= \depth(\cO_{X,x}) =0$. By assumption $\cG_x=(0)$, and hence
$\Hom_{\cO_{X,x}}(k(x),\cG_x) =(0)$ and $x$ is not an associated point of
$\cG$. Using the terminology of Fogarty, $\cG$ is a torsion module and hence
defines a canonical injective homomorphism $\sigma : \cO_X \to \Inv(\cG)$
(see \cite[Theorem 2.2 and property 1, p. 69]{F1}). The same argumentation
as in Remark~\ref{rem:1.5} shows that  $\proj \dim \cG < \infty$ and, at the
same time, the properties (i) and (ii) in~\cite[p. 67]{F1} show that
$\Inv(\cG) \stackrel{\sim}{\longrightarrow} \cL$.

Let $s:=\sigma(1)\in \Gamma(X,\cL)$. Then $s_x\neq 0$ for all $x\in X$. Let
$U\subset X$ be an open and affine subset and $\phi:\cL|U\simeq\cO_U$ an
isomorphism of $\cO_U$-modules. Then $f:=\phi(s)$ is a non-zero divisor of
$\cO_U$. If $U_i$ is a covering of $X$ by open affine subsets and
$\phi_i:\cL|U_i\to U_i$ are isomorphisms, then the $f_i:=\phi_i(s)$ define an
effective Cartier divisor. It is denoted by $\Div(\cG)$ and one has that
\[
\supp(\Div(\cG)):= \Set{ x\in X | (f_i)_x \text{ is not a unit in } \cO_{X,x}}
\]
is contained in $\supp(\cG)$ (\cite[Sec. 5.3]{M1}).

Now one has the following simple fact:

\emph{
Let $A$ be a local ring, $\phi:L\simeq A$ an isomorphism of $A$-modules and
$s\in L$. Then $\phi(s)$ is not a unit in $A$ iff $s\cdot A \subsetneq L$.}

From this it follows that 
\[
  \supp(\Div(\cG))= \Set{ x\in X | s_x \text{ does not generate the fiber }
    \Div(\cG)_x }.
\]
As mentioned above, this set is contained in $\supp(\cG)$ and because of
$\Inv(\cG) \simeq \cL$ the assertion of the lemma follows.
\end{proof}

\subsection{Utilization of determinants}
\label{sec:1.5.4}

 We assume the case of curves as in Section~\ref{sec:1.5.1}. From
 Aux-Lemma~\ref{auxlem:1} and Lemma~\ref{lem:1.1} it follows that 
if $\ell, h
 \in P_1$ are sufficiently general linear forms one has a commutative diagram
 \begin{equation}
   \label{eq:ast} \tag{$*$}
   \begin{aligned}
     \xymatrix{
       & 0 \ar[d]  & 0 \ar[d]   & 0 \ar[d]   & \\
       0 \ar[r] & \cF_{n-1} \ar[r]^{\ell} \ar[d]^{h} & \cF_{n} \ar[d]^{h}
       \ar[r] & \cF_{n}/\ell\cF_{n-1} \ar[r] \ar[d]^{h} & 0 \\
       0 \ar[r] & \cF_n \ar[r]^{\ell} \ar[d] & \cF_{n+1} \ar[d]
       \ar[r] & \cF_{n+1}/\ell\cF_{n} \ar[r] \ar[d] & 0 \\
       0 \ar[r] & \cF_{n}/\cF_{n-1} \ar[r]^{\ell} \ar[d] &
       \cF_{n+1}/h\cF_{n} \ar[d] \ar[r]
       & \cF_{n+1}/\ell\cF_{n}+ h\cF_{n} \ar[r] \ar[d] & 0 \\
       & 0 & 0 & 0 & }
   \end{aligned}
 \end{equation}
with exact rows and columns, for all $n\geq a-2$. Besides this, we consider
finitely many given points $y_1,\dots,y_r\in Y$. At first, there are
Zariski-many $\ell\in P_1$ such that
\begin{equation}
  \label{eq:sharp}
  0 \longrightarrow  \cF_{n-1} \stackrel{\ell}{\longrightarrow}  
  \cF_{n} \longrightarrow \cF_{n}/\ell\cF_{n-1} \longrightarrow  0
\tag{$\#$}
\end{equation}
is an exact sequence and~\eqref{eq:sharp}$\otimes k(y_i)$ is an exact sequence, too, for
all $1\leq i \leq r$ and $n\geq a-2$. This goes as in the proof of
Aux-Lemma~\ref{auxlem:2}, because one can avoid not only the associated
prime ideals of $\cI\otimes A_i$ but also the associated prime ideals of
$\cI\otimes k(y_i)$. In any case, there are Zariski-many $\ell\in \NNT(\cF)$
such that $y_i\in U(\ell)$, $1\leq i\leq r$. From Aux-Lemma~\ref{auxlem:1}
we conclude that $\cF'_n\simeq \cF_n/ \ell \cF_{n-1}$, if $n\geq a-2$. From
Aux-Lemma~\ref{auxlem:3}, respectively from its proof, it follows that
$(\cF_n/ \ell \cF_{n-1})\otimes \cO_U$ is flat on $U = U(\ell)$, if $n\geq
a-2$.

Now fix such a linear form $\ell$. Then $\cF' \otimes \cO_U$ is flat over
$U$ with Hilbert polynomial $P'(n) =d$. The same argumentation as before,
with $\cF' \otimes \cO_U$ instead of $\cF$, shows that there are
Zariski-many $h\in \NNT(\cF' \otimes \cO_U)$ such that $y_i \in U(h)$. Hence
there are Zariski-many $h\in \NNT(\cF) \cap \NNT(\cF' \otimes \cO_U)$ such
that $y_i \in U(h)$, where $h$ operates on $\cF$ and on $\cF' \otimes
\cO_U$ by multiplication. From Aux-Lemma~\ref{auxlem:1} it follows that 
$\pi_* ((\cF/h\cF(-1))(n)) \simeq \cF_n/h\cF_{n-1}$ over $Y$, and 
\[
   \pi_* ((\cF'/h\cF'(-1))(n+1)) \simeq \cF'_n/h\cF'_{n-1} \simeq
   (\cF_{n+1}/\ell\cF_n)/h(\cF_{n}/\ell\cF_{n-1}) \simeq
   \cF_{n+1}/\ell\cF_n +h\cF_{n}
\]
over $U$, if $n\geq a-2$. It follows that $\cF_{n+1}/\ell\cF_n +h\cF_{n}$ is
flat over $V= U\cap U(h)$, if $n\geq a-2$. All in all we get:
\begin{auxlemma}
 \label{auxlem:4}
 Let be $y_i$, $1\leq i\leq r$, finitely many points in $Y$,
 not necessarily closed. Then there are Zariski-many linear forms $\ell,
 h\in P_1$ such that \eqref{eq:ast} is a commutative diagram with exact rows
 and columns, for all $n\geq a-2$ and the same is true for the diagrams~\eqref{eq:ast}$\otimes k(y_i)$, $1\leq i\leq r$. \hfill $\qed$
\end{auxlemma}

For simplification, we put $K=k(y_i)$. Then $\dim (\cF_n /\ell
\cF_{n-1})\otimes K = P(n) -P(n-1)=d$ for all $n\geq a-2$ (see the proof of
Aux-Lemma~\ref{auxlem:3}, e.g.), hence $\dim (\cF_{n+1}/\ell\cF_n +h\cF_{n})
\otimes K = 0$. It follows that $(\cF_{n+1}/\ell\cF_n +h\cF_{n}) \otimes
\cO_{Y,y_i}=(0)$ for all $n\geq a-2$ and $1\leq i\leq r$.  If one especially
chooses the associated points of $Y$ among the $y_i$'s, it follows that 
\begin{equation}
  \label{eq:1.25}
  \cG_{n+1}:=\cF_{n+1}/\ell\cF_n +h\cF_{n} \text{ is a torsion module, if }
 n\geq a-2\,.
\end{equation}

Now from the diagram~\eqref{eq:ast} one gets an exact sequence
\[
0 \longrightarrow \cF_{n-1} \stackrel{\alpha}{\longrightarrow} \cF_{n}\oplus \cF_{n}
\stackrel{\beta}{\longrightarrow}
\cF_{n+1}\stackrel{\gamma}{\longrightarrow} \cG_{n+1} \longrightarrow 0
\]
where $\alpha(f):=(\ell f,-hf)$, $\beta(f,g):=hf+\ell g$ and
$\gamma(f):=\bar{f}$. It remains exact, if it is tensored with $k(y_i)$. Now
if $y\in Y$ is any point, which we count  among the $y_i$'s, then $\cG_y
=(0)$ follows, i.e.~$y\centernot\in \supp(\cG)$. But then
Lemma~\ref{lem:1.3} says that there is an injective morphism $\sigma:
\cO_Y\to  \cM_{n-1} \otimes \cM^{-2}_n \otimes \cM_{n+1}$ such that
$s=\sigma(1)$ generates the fiber of this line bundle at the point $y$,
hence generates the line bundle itself in an open neighborhood of $y$. We
have proven:
\begin{lemma}
  \label{lem:1.4} 
 If $n\geq a-2$ then the line bundle $\cM_{n-1} \otimes \cM^{-2}_n \otimes
 \cM_{n+1}$ is globally generated. \mbox{} \hfill $\qed$
\end{lemma}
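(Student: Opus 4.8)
The strategy is to extract a four-term exact sequence of locally free sheaves from the $3\times3$ diagram~\eqref{eq:ast} and feed it into the determinant criterion of Lemma~\ref{lem:1.3}. First I would fix an arbitrary point $y\in Y$ and invoke Aux-Lemma~\ref{auxlem:4} with the finite set $\{y_1,\dots,y_r\}$ taken to consist of $y$ together with all the associated points of $Y$. This furnishes linear forms $\ell,h\in P_1$ for which~\eqref{eq:ast} is a commutative diagram with exact rows and columns for every $n\geq a-2$, and for which the same holds after tensoring with each $k(y_i)$.

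Next I would run the diagram chase in~\eqref{eq:ast}: combining the two injections $\cF_{n-1}\xrightarrow{\ell}\cF_n$ (top row) and $\cF_{n-1}\xrightarrow{h}\cF_n$ (left column) with the maps $\cF_n\xrightarrow{h}\cF_{n+1}$ and $\cF_n\xrightarrow{\ell}\cF_{n+1}$ yields the exact sequence $0\to\cF_{n-1}\xrightarrow{\alpha}\cF_n\oplus\cF_n\xrightarrow{\beta}\cF_{n+1}\xrightarrow{\gamma}\cG_{n+1}\to0$, where $\alpha(f)=(\ell f,-hf)$, $\beta(f,g)=hf+\ell g$, and $\gamma$ is the quotient onto $\cG_{n+1}:=\cF_{n+1}/(\ell\cF_n+h\cF_n)$. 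Exactness at $\cF_{n-1}$ and at $\cF_{n+1}$ is immediate; exactness at the middle term is a routine chase using the injectivity statements encoded in~\eqref{eq:ast} (that $\ell$ is injective on $\cF_n$ and that $h$ is injective on $\cF_n/\ell\cF_{n-1}$), and the sequence remains exact after $\otimes\,k(y_i)$ for the same reason. By Lemma~\ref{lem:1.1} the sheaves $\cF_{n-1},\cF_n,\cF_{n+1}$ are locally free for $n\geq a-3$; and by~\eqref{eq:1.25}, because $\Ass(\cO_Y)\subset\{y_1,\dots,y_r\}$, the module $\cG_{n+1}$ is torsion, while $(\cG_{n+1})_y=(0)$ by our choice of the $y_i$.

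Then I would apply Lemma~\ref{lem:1.3} with $\cF_1=\cF_{n-1}$, $\cF_2=\cF_n\oplus\cF_n$, $\cF_3=\cF_{n+1}$, $\cG=\cG_{n+1}$, of ranks $r_1=P(n-1)$, $r_2=2P(n)$, $r_3=P(n+1)$. It produces an injection $\sigma\colon\cO_Y\to\cL$ with $\cL=\bigwedge^{r_1}\cF_{n-1}\otimes(\bigwedge^{r_2}(\cF_n\oplus\cF_n))^{-1}\otimes\bigwedge^{r_3}\cF_{n+1}$ such that $s=\sigma(1)$ generates the fiber $\cL_{y'}$ at every point $y'$ with $(\cG_{n+1})_{y'}=(0)$. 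Since $\bigwedge^{2P(n)}(\cF_n\oplus\cF_n)\cong(\bigwedge^{P(n)}\cF_n)^{\otimes 2}=\cM_n^{\otimes 2}$, one has $\cL=\cM_{n-1}\otimes\cM_n^{-2}\otimes\cM_{n+1}$, and $s$ generates its fiber at $y$. As $y\in Y$ was arbitrary, $\cM_{n-1}\otimes\cM_n^{-2}\otimes\cM_{n+1}$ is globally generated.

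I do not expect a genuine obstacle here, since the substantive work — the hyperplane-section sequences, the flatness of the quotients $\cF_{n+1}/(\ell\cF_n+h\cF_n)$, and the Fogarty--Mumford determinant formalism of Lemma~\ref{lem:1.3} — has already been assembled. The one point demanding care is that Lemma~\ref{lem:1.3} requires $\cG_{n+1}$ to vanish at the associated points of $Y$, which is precisely why those points must be thrown into the list in Aux-Lemma~\ref{auxlem:4}; and the exponent $-2$ in the statement is nothing but the rank-doubling reflected in $\bigwedge^{2P(n)}(\cF_n\oplus\cF_n)\cong\cM_n^{\otimes 2}$.
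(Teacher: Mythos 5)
Your proposal is correct and follows essentially the same route as the paper: Aux-Lemma~\ref{auxlem:4} applied to the given point together with the associated points of $Y$, the four-term exact sequence $0\to\cF_{n-1}\to\cF_n\oplus\cF_n\to\cF_{n+1}\to\cG_{n+1}\to0$ extracted from the diagram~\eqref{eq:ast}, the torsion statement~\eqref{eq:1.25}, and the determinant criterion of Lemma~\ref{lem:1.3}. The only cosmetic difference is that you make explicit the identification $\bigwedge^{2P(n)}(\cF_n\oplus\cF_n)\simeq\cM_n^{\otimes 2}$, which the paper leaves implicit.
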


\subsection{Computation of intersection numbers}
\label{sec:1.5.5}

\subsubsection{}
\label{sec:1.5.5.1}
 We consider the universal curve $\CC$ over $\HH=\HH_Q$, i.e., one has the
 same situation  as in Section~\ref{sec:1.5.1.2} with $Y=\HH$.

 Let $C\stackrel{i}{\hookrightarrow} \HH$ be a closed curve, which is
 contained in the open set $U(t)$ (cf.~Section~\ref{sec:1.5.2}). Then one has
 a cartesian diagram
\[
 \xymatrix{X:=C \times \P^3\ar@{^{(}->}[r]^-j \ar[d]_p & \HH\times_k \P^3
   \ar[d]^\pi \\ 
    C \ar@{^{(}->}[r]^i         & \HH }   
\]
with $j=i\times \mathrm{id}$.  If $\cF$ denotes the structure sheaf of
$\CC$, then for $\sF:=j^*(\cF)$ the same statements as in~\ref{sec:1.5.1.2}
for $\sF$ are true. As 
\[
H^1(\P^3\otimes k(y), \cF(n)\otimes k(y)) =(0)\quad \text{ if } n\geq a-3,
\]
one has $p_*\sF(n) \stackrel{\sim}{\longrightarrow} i^*\pi_* \cF(n)$ for all
$n \geq a-3$ (see the formula~\eqref{eq:1.20} and~\cite[Cor. 1, p. 51]{M2}).
The intersection number $(\cM_n\cdot C)$ is equal to the power of $\nu$ in
the polynomial $\chi(M_n^{\otimes \nu})$, where
$M_n:=\bigwedge\limits^{P(n)} p_*(\sF(n))$, if $n \geq a-3$,
i.e.~$(\cM_n\cdot C)$ can be computed as the intersection number $(M_n \cdot
C)$.  As the sequence
\[
      0 \longrightarrow  \sF(n-1) \stackrel{t}{\longrightarrow}  \sF(n) 
\longrightarrow \sF'(n) \longrightarrow  0  
\]
is exact for all $n\in \N$ and as $R^1 p_*\sF(n-1)=(0)$ for all $n \geq
a-2$, one has the exact sequence 
\[
      0 \longrightarrow  p_*\sF(n-1) \longrightarrow  p_*\sF(n) 
\longrightarrow p_*\sF'(n) \longrightarrow  0  
\]
for $n \geq a-2$.  It follows that $M_n \simeq M_{n-1}\otimes M'_n$ where
$M'_n :=\bigwedge\limits^d p_*\sF(n-1)$ is line bundle as $\sF'_n$ is
locally free on $C$ (cf.~Aux-Lemma~\ref{auxlem:3}).  It follows that
\begin{equation}
  \label{eq:1.26}
  (M_{n-1}\cdot C) = (M_n\cdot C) - (M'_n\cdot C), \quad n\geq a-2  
\end{equation}
which also can be written as 
\begin{equation}
  \label{eq:1.27}
  (\cM_{n-1}\cdot C) = (\cM_n\cdot C) - (\cM'_n\cdot C), \quad n\geq a-2\,.
\end{equation}
Now $\sF'=\cO_{\P^2\times C} / \cI'$, where $\cI'$ is an ideal such that
$\cI'\otimes k(y)$ has the Hilbert polynomial $Q'(n) =
\tbinom{n-1+2}{2}+\tbinom{n-(a-1)+1}{1}$ for all $y\in C$. Hence $\cI'$ is
$(a-1)$-regular and $R^1 p_*\cI'(n)=(0)$ for all $n \geq
a-2$.
It follows that
\[
      0 \longrightarrow  p_*\cI'(n) \longrightarrow  S_n\otimes \cO_C 
\longrightarrow p_*\sF'(n) \longrightarrow  0
\]
is exact and $\cI'_n:= p_*\cI'(n) \subset S_n\otimes \cO_C$ is a subbundle of
rank $Q'(n)$, if $n \geq a-2$. It follows that 
\[
  \bigwedge\limits^{Q'(n)} \cI'_n \otimes M'_n \simeq \cO_C\,.
\]
$L'_n:=\bigwedge\limits^{Q'(n)} \cI'_n$ is a line bundle on $C$ and because
of~\eqref{eq:1.27} it follows that
\begin{equation}
  \label{eq:1.28}
  (\cM_{n-1}\cdot C) = (\cM_n\cdot C) + (L'_n\cdot C), \quad \text{ if } 
  n\geq a-2\,.  
\end{equation}

\subsubsection{}
\label{sec:1.5.5.2}

We now consider the curves $C \in \Set{C_2, D, E=C_0 }$, which all are
contained in $U(t)$. Then $C\simeq \Spec k[\alpha]\cup \{\infty\}$.\\

\textsc{Case 1:}
\[
C = C_2 = \Set{(x, y^{a-1}(\alpha y+z), y^{a-2}z^{b-a+1}(\alpha y+z)) |
  \alpha\in k}^- 
\]
In order to see that $\cI'_n\otimes k[\alpha] = (x, y^{a-2}(\alpha y+z))_n$ if
$n\geq a-2$, it suffices to note that the vector space on the right hand side
is isomorphic to $(x,y^{a-2}z)_n$, which vector space has the dimension
$Q'(n)$. 
It follows that 
\[
  \cI'_n\otimes k[\alpha] = xS_{n-1}\oplus y^{a-2}(\alpha y+z)\cdot
  k[y,z]_{n-a+1} \quad \text{ for all } n \geq a-2.
\]
If $n >a-2$, the map $\alpha\mapsto \cI'\otimes
k[\alpha]$ is injective and hence $(L'_n\cdot C_2) = -(n-a+2)$ follows in
this case (see~\cite[Bemerkung 3, p. 11]{T1}). If $n=a-2$, the argumentation
is as follows:
\[
      0 \longrightarrow \cI'_{a-2} \longrightarrow  S_{a-2} \otimes 
 \cO_{\P^1} \longrightarrow \cF'_{a-2} \longrightarrow  0  
\]
is exact on $\P^1$, as $\cI'$ is $(a-1)$-regular. Now $h^0(\cI'(a-2)\otimes
k(y)) = Q'(a-2) = \tbinom{a-1}{2}$ for all $y\in C$, because of the
$(a-1)$-regularity. It follows that $\cI'_{a-2}\otimes k(y) = x S_{a-3}
\otimes k(y)$ for all $y\in C$,
hence $\cI'_{a-2} = x S_{a-3}\otimes_k \cO_C$. But then from the exact
sequence it follows that $\cF'_{a-2}$ is also a constant sheaf on $C$, hence
\begin{equation}
  \label{eq:1.29}
   (\cM'_n \cdot C_2) = (n-a+2)\quad \text{ for all } n \geq a-2.   
\end{equation}

\textsc{Case 2:}
\[
C =   D  = \Set{ (x^2, xy, y^{a-1}, z^{b-2a+4}(y^{a-2}+\alpha xz^{a-3}))
   |\alpha\in k}^-
\]
One sees that 
\[
\cI'\otimes k[\alpha] = (x^2, xy, y^{a-1}, y^{a-2}+\alpha xz^{a-3}).
\]
To see that for all $n \geq a-2$ one has
\[
\cI_n'\otimes k[\alpha] = x^2S_{n-2} \oplus xy k[y,z]_{n-2} \oplus y^{a-1}
k[y,z]_{n-a+1} \oplus (y^{a-2}+\alpha xz^{a-3}) k[z]_{n-a+2}\,   
\]
we compute the dimension of the vector space on the right
as 
\[
   \tbinom{n-2+2}{2}+ (n-2+1) + (n-a+2) + 1 = Q'(n)
\]
for all $n \geq a-2$. As $\cI'$ is $(a-1)$-regular, the equality follows.

As $\alpha\mapsto \cI'_n\otimes
k[\alpha]$ is injective for all $n \geq a-2$ from~\cite[loc. cit.]{T1} it
follows that
\begin{equation}
  \label{eq:1.30}
   (\cM'_n \cdot D) = 1 \quad \text{for all }n \geq a-2\,. 
\end{equation}

\textsc{Case 3:}
\[
C = E = \Set{ (x^2, xy, xz, y^a, y^{a-1}z^{b-a+1},
      xt^{b-2}+\alpha y^{a-1}z^{b-a}) | \alpha \in k }^-
\]
One sees that $\cI'_n \otimes k[\alpha]$ is a constant sheaf and that 
\begin{equation}
  \label{eq:1.31}
  (\cM'_n\cdot E) = 0, \quad \text{ for all  } n\geq a-2\,.
\end{equation}

\textbf{Intersection numbers of $E$}\\
From~\eqref{eq:1.28} and~\eqref{eq:1.31} we get 
\[
    (\cM_n\cdot E) = 1, \quad \text{ if } n\geq a-3\,.
\]

\textbf{Intersection numbers of $C_1$}  \\
In Section~\ref{sec:1.2} we had obtained that $[D] = (a-2)[E] + [C_1]$. It
follows that $(\cM_n\cdot C_1) = (\cM_n\cdot D_2) - (a-2)$ if $n\geq a-3$,
hence
\begin{align*}
  (\cM_{n-1}\cdot C_1) &= (\cM_{n-1}\cdot D) - (a-2),\quad n\geq a-2 \\
                       &= (\cM_n\cdot D) + (L'_n\cdot D) -
                          (a-2)\quad \text{ (cf.~\eqref{eq:1.28})} \\
                       &= (\cM_n\cdot D) -1 -
                           (a-2)\quad \text{ (cf.~\eqref{eq:1.30})} \\
                       &= (\cM_n\cdot C_1) -1\quad n\geq a-2\,.
\end{align*}

As $(\cM_n\cdot C_1) = (n-b+1)$ for $n\geq b$ (see~\cite[p. 134]{T2}), we
get
\[
  (\cM_n\cdot C_1) = (n-b+1) \quad \text{ for all } n\geq a-3\,.
\]

\textbf{Intersection numbers of $C_2$} \\
From~\eqref{eq:1.27} and~\eqref{eq:1.29} we obtain 
\begin{align*}
  (\cM_{b-1}\cdot C_2) &= (\cM_{b}\cdot C_2) - (b-a+2) \\
  (\cM_{b-2}\cdot C_2) &= (\cM_{b-1}\cdot C_2) - (b-1-a+2) \\
 \cdots\cdots\cdots\cdots & \cdots\cdots\cdots\cdots\cdots\cdots\cdots\cdots\cdots\cdots\cdots\\
  (\cM_{a-3}\cdot C_2) &= (\cM_{a-2}\cdot C_2) - (a-2-a+2)
\end{align*}
Summing up gives
\begin{align*}
  (\cM_{a-3}\cdot C_2) & = (\cM_{b}\cdot C_2) - \sum_{i=1}^{b-a+2} i \\
                       & = \tbinom{b-a+2}{2} +(b-b+1) - \tbinom{b-a+3}{2} \\
                       & = -\left[\tbinom{b+1-a+2}{2}-\tbinom{b-a+2}{2}\right]
                       +1 \\
                       & = -(b+1-a+1) +1 = (a-b-1)\,.
\end{align*}

In the same way, using~\eqref{eq:1.27} and~\eqref{eq:1.29}:
\begin{align*}
  (\cM_{n-1}\cdot C_2) &= (\cM_{n}\cdot C_2) - (n-a+2) \\
  (\cM_{n-2}\cdot C_2) &= (\cM_{n-1}\cdot C_2) - (n-1-a+2) \\
 \cdots\cdots\cdots\cdots & \cdots\cdots\cdots\cdots\cdots\cdots\cdots\cdots\cdots\cdots\cdots\\
  (\cM_{a-3}\cdot C_2) &= (\cM_{a-2}\cdot C_2) - (a-2-a+2)
\end{align*}

Summation gives
\begin{align*}
  (\cM_{a-3}\cdot C_2) & = (\cM_{n}\cdot C_2) - \sum_{i=1}^{n-a+2} i, \quad
  \text{hence} \\
 (\cM_n\cdot C_2)      & = \tbinom{n-a+3}{2} +(a-b+1) \\
                       & = \tbinom{n+1-a+2}{2} +(a-b+1) \\
                       & = \tbinom{n-a+2}{2} + \tbinom{n+1-a+1}{1}+(a-b+1)
                       \\
                       & = \tbinom{n-a+2}{2} + (n-b+1)\,.
\end{align*}

All in all we have
\begin{lemma}
 \label{lem:1.5}
  For all $n\geq a-3$ one has 
\[
(\cM_n\cdot E) = 1, \quad (\cM_n\cdot C_1) = (n-b+1) \text{ and }
(\cM_n\cdot C_2) = \tbinom{n-a+2}{2} + (n-b+1)\,. 
\]
\qed
\end{lemma}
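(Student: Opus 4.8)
The strategy is to bootstrap the known formulas \eqref{eq:1.13} for $n\geq b$ down to $n\geq a-3$ by repeatedly applying the descent relation \eqref{eq:1.28}, which reads $(\cM_{n-1}\cdot C) = (\cM_n\cdot C) + (L'_n\cdot C)$ for $n\geq a-2$. Here $L'_n = \bigwedge^{Q'(n)}\cI'_n$ is the determinant of the ideal sheaf obtained by cutting $\cF$ with the generic linear form $t$, and the whole point is that one can compute $(L'_n\cdot C)$ — equivalently $(\cM'_n\cdot C)$ — very explicitly for each of the three generating curves $C_0=E$, $D$, $C_2$, because each of these lies in the open set $U(t)$ and is isomorphic to $\Spec k[\alpha]\cup\{\infty\}$.

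First I would treat the three curves $E$, $D$, $C_2$ separately, exactly as in \ref{sec:1.5.5.2}: for each I identify $\cI'_n\otimes k[\alpha]$ explicitly as a direct sum of monomial pieces plus one $\alpha$-dependent piece, verify by a dimension count that this subspace really has rank $Q'(n)$ (so that $(a-1)$-regularity of $\cI'$ forces equality), and then read off the degree of the resulting family of subbundles over $C$. This gives $(\cM'_n\cdot E)=0$, $(\cM'_n\cdot D)=1$, $(\cM'_n\cdot C_2)=n-a+2$ for all $n\geq a-2$ (the borderline case $n=a-2$ for $C_2$ needing the separate argument via the exact sequence on $\P^1$ that shows $\cF'_{a-2}$ is a constant sheaf on $C$). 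Next, for $E$ I sum \eqref{eq:1.28} with $(\cM'_n\cdot E)=0$, which immediately propagates $(\cM_n\cdot E)=1$ from $n\geq b$ down to $n\geq a-3$. For $C_1$ I do not work with $\cI'$ directly but instead use the rational-equivalence relation $[D]=(a-2)[E]+[C_1]$ from \eqref{eq:1.1} together with the just-established formula for $E$ and the relation \eqref{eq:1.28} for $D$; this yields $(\cM_{n-1}\cdot C_1)=(\cM_n\cdot C_1)-1$ for $n\geq a-2$, and since $(\cM_n\cdot C_1)=n-b+1$ for $n\geq b$ the formula extends to all $n\geq a-3$. Finally for $C_2$ I telescope \eqref{eq:1.28} from $n=b$ down to $n=a-3$, summing the arithmetic progression $\sum_{i=1}^{n-a+2} i$; the binomial identities collapse to $(\cM_n\cdot C_2)=\binom{n-a+2}{2}+(n-b+1)$ for all $n\geq a-3$.

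The only genuinely delicate point is the boundary value $n=a-2$ in the computation of $(\cM'_n\cdot C_2)$: for $n>a-2$ the assignment $\alpha\mapsto\cI'_n\otimes k[\alpha]$ is injective and one reads the degree off \cite[Bemerkung 3, p. 11]{T1} directly, but at $n=a-2$ one must instead argue that $\cI'_{a-2}\otimes k(y)=xS_{a-3}\otimes k(y)$ for \emph{all} $y\in C$ — which follows from $h^0(\cI'(a-2)\otimes k(y))=Q'(a-2)=\binom{a-1}{2}$, forced by $(a-1)$-regularity — so that $\cI'_{a-2}$, and hence $\cF'_{a-2}$, is a constant sheaf and contributes $(\cM'_{a-2}\cdot C_2)=a-2-a+2=0$ consistently with the general formula $n-a+2$. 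Everything else is a routine unwinding of the monomial descriptions in \cite{T1} plus the binomial bookkeeping already laid out above, and the three partial conclusions assemble directly into the statement of the lemma.
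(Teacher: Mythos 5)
Your proposal is correct and follows essentially the same route as the paper: establish the descent relation \eqref{eq:1.28} via the hyperplane section by $t$, compute $(\cM'_n\cdot C)$ explicitly for $E$, $D$, $C_2$ (with the separate constancy argument at $n=a-2$ for $C_2$), then propagate the $n\geq b$ formulas downward, using $[D]=(a-2)[E]+[C_1]$ for the $C_1$ case and telescoping the arithmetic progression for $C_2$. No gaps.
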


\subsection{Globally generated line bundles on $\HH$}
\label{sec:1.5.6}

By Lemma~\ref{lem:1.2} the line bundle $\cL:=\cM^{-1}_{a-3} \otimes \cM_{a-2}$
is g.g.~(globally generated). From Lemma~\ref{lem:1.5} we deduce that
$(\cL\cdot E)=0$, $(\cL\cdot C_1)=1$ and $(\cL\cdot C_2)=0$. It follows that
$\cL \equiv \cL_1$ in $\NS(\HH)$, where $\cL_1$ is the line bundle introduced
in Section~\ref{sec:1.4}.

By Lemma~\ref{lem:1.4} the line bundle $\cL =  \cM_{n-1} \otimes \cM^{-2}_n
\otimes \cM_{n+1}$ is g.g.\
for all $n\geq a-3$. Using the
formulas of Lemma~\ref{lem:1.5} we get 
\[
(\cL\cdot E)=0,\quad (\cL\cdot C_1)=0\quad \text{ and } \quad (\cL\cdot
C_2)=1\,,
\]
from which we deduce that $\cL \equiv \cL_2$ in $\NS(\HH)$, where $\cL_2$ 
is defined in Section~\ref{sec:1.4}.

Finally we compute 
\[
(\cM_{b-1}\cdot E) = 1,\quad (\cM_{b-1}\cdot C_1) = 0, \quad \text{ and }
\quad (\cM_{b-1}\cdot C_2) = \tbinom{b-a+1}{2}\,,
\]
 and we deduce from this that Theorem~\ref{thm:1.3} implies Theorem~\ref{thm:1.2}. Moreover, we get 
\begin{proposition}
  \label{prop:1.3}
 The (residue classes in $\NS(\HH)$ of the) line bundles $\cL_1, \cL_2$ and 
$\cL_0 \otimes \cL^\rho_2 $, $\rho:=\tbinom{b-a+1}{2}$ are globally
generated. \hfill $\qed$
\end{proposition}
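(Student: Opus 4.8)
The statement to prove is Proposition~\ref{prop:1.3}: the classes of $\cL_1$, $\cL_2$, and $\cL_0\otimes\cL_2^\rho$ (with $\rho=\tbinom{b-a+1}{2}$) are globally generated. The idea is to exhibit each of these as (the class of) a line bundle of the form $\cM_{n-1}^{-1}\otimes\cM_n$ or $\cM_{n-1}\otimes\cM_n^{-2}\otimes\cM_{n+1}$, or a product thereof, which we already know to be globally generated by Lemmas~\ref{lem:1.2} and~\ref{lem:1.4}, and then to pin down the numerical class using the intersection formulas of Lemma~\ref{lem:1.5}. Since global generation depends only on the line bundle (not its numerical class), the real content is to verify that the explicit bundles from Section~\ref{sec:1.4} coincide, in $\Pic(\HH)$ — not merely in $\NS(\HH)$ — with globally generated ones.

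\textbf{Step 1: $\cL_1$.} By Lemma~\ref{lem:1.2}, $\cM_{a-3}^{-1}\otimes\cM_{a-2}$ is globally generated (the lemma applies with $n=a-2\geq a-2$). Using Lemma~\ref{lem:1.5}, compute $(\cM_{a-3}^{-1}\otimes\cM_{a-2}\cdot E)=0$, $(\cM_{a-3}^{-1}\otimes\cM_{a-2}\cdot C_1)=1$, $(\cM_{a-3}^{-1}\otimes\cM_{a-2}\cdot C_2)=0$, which is precisely $(\cL_1\cdot C_j)=\delta_{1j}$ from Conclusion~\ref{concl:1.4}. Hence the two bundles agree in $\NS(\HH)$; since $\Pic^\tau(\HH)=\Pic^0(\HH)$ (used in Section~\ref{sec:1.4}) and the relevant comparisons take place modulo $\Pic^0$, we conclude $\cL_1$ is globally generated as an element of $\NS(\HH)$.

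\textbf{Step 2: $\cL_2$.} By Lemma~\ref{lem:1.4}, $\cM_{n-1}\otimes\cM_n^{-2}\otimes\cM_{n+1}$ is globally generated for every $n\geq a-2$; in particular take $n=b$, giving exactly the bundle $\cL_2=\cM_{b-1}\otimes\cM_b^{-2}\otimes\cM_{b+1}$ of Section~\ref{sec:1.4}. (Alternatively, for any $n\geq a-2$ one checks via Lemma~\ref{lem:1.5} that $(\cM_{n-1}\otimes\cM_n^{-2}\otimes\cM_{n+1}\cdot C_j)=\delta_{2j}$, so all these bundles have the class of $\cL_2$ in $\NS(\HH)$.) Thus $\cL_2$ is globally generated.

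\textbf{Step 3: $\cL_0\otimes\cL_2^\rho$.} Here one needs an auxiliary globally generated bundle whose class matches. Consider $\cM_{b-1}$ itself and compute its intersection numbers from Lemma~\ref{lem:1.5}: $(\cM_{b-1}\cdot E)=1$, $(\cM_{b-1}\cdot C_1)=0$, $(\cM_{b-1}\cdot C_2)=\tbinom{b-a+1}{2}=\rho$. On the other hand $(\cL_0\cdot C_j)=\delta_{0j}$ and $(\cL_2\cdot C_j)=\delta_{2j}$, so $(\cL_0\otimes\cL_2^\rho\cdot E)=1$, $(\cdot C_1)=0$, $(\cdot C_2)=\rho$. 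Hence $\cM_{b-1}$ and $\cL_0\otimes\cL_2^\rho$ have the same class in $\NS(\HH)$. Now $\cM_{b-1}$ is globally generated: one has the surjection $\cO_\HH\otimes S_{b-1}\twoheadrightarrow \pi_*\cF(b-1)$ from the exact sequence in Section~\ref{sec:1.1} (valid for $n\geq b-1$), and taking the top exterior power $\bigwedge^{P(b-1)}$ of a surjection of vector bundles yields a surjection $\bigwedge^{P(b-1)}(\cO_\HH\otimes S_{b-1})\twoheadrightarrow\cM_{b-1}$ from a trivial bundle, so $\cM_{b-1}$ is globally generated. Therefore $\cL_0\otimes\cL_2^\rho$ is globally generated.

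\textbf{Main obstacle.} The delicate point is not any single calculation — those are routine given Lemma~\ref{lem:1.5} — but making sure the identifications are legitimate. Specifically: (i) global generation is not a numerical property, so one must either produce the exact bundle (as in Steps 1 and 2) or invoke that $\Pic^\tau=\Pic^0$ and that adding an element of $\Pic^0$ does not affect the statement as phrased (``residue classes in $\NS(\HH)$''), which is exactly how the proposition is stated; (ii) in Step 3 one must confirm that $\cM_{b-1}$ genuinely surjects from a trivial bundle, which requires the surjectivity $\cO_\HH\otimes S_{b-1}\to\pi_*\cF(b-1)$ — this holds precisely because every ideal with Hilbert polynomial $Q$ is $b$-regular, hence $(b-1)$-normal is the boundary case handled by the exact sequence quoted in Section~\ref{sec:1.1}. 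Once these points are in place, the proof is a short assembly of earlier lemmas.
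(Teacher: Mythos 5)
Your proposal is correct and follows essentially the same route as the paper: identify $\cL_1$ with $\cM_{a-3}^{-1}\otimes\cM_{a-2}$ (Lemma~\ref{lem:1.2}), $\cL_2$ with $\cM_{n-1}\otimes\cM_n^{-2}\otimes\cM_{n+1}$ (Lemma~\ref{lem:1.4}), and $\cL_0\otimes\cL_2^\rho$ with $\cM_{b-1}$, all via the intersection numbers of Lemma~\ref{lem:1.5}. Your two added observations — that $n=b$ recovers $\cL_2$ on the nose, and that global generation of $\cM_{b-1}$ comes from the surjection $\cO_\HH\otimes S_{b-1}\twoheadrightarrow\pi_*\cF(b-1)$ of Section~\ref{sec:1.1} — are correct details the paper leaves implicit.
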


Now according to Theorem~\ref{thm:1.3}(i) each line bundle $\cL$ on $\HH$
can be written (modulo $\Pic^0(\HH)$) in the form 
\[
\cL_0^{\nu_0} \otimes \cL_1^{\nu_1} \otimes \cL_2^{\nu_2} = 
\cL_0^{\nu_0} \otimes \cL_2^{\nu_0\rho}\otimes \cL_1^{\nu_1} \otimes
\cL_2^{\nu_2-\nu_0\rho}
\] with $\nu_i \in \Z$.

\begin{corollary}
  \label{cor:1.4}
 If all $\nu_i \in \N$ and $\nu_2\geq\nu_0 \rho$, then (the residue class in
 $\NS(\HH)$ of) $\cL_0^{\nu_0} \otimes \cL_1^{\nu_1} \otimes \cL_2^{\nu_2}$
 is globally generated. \hfill $\qed$
\end{corollary}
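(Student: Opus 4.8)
The plan is to deduce the corollary immediately from Proposition~\ref{prop:1.3}, using the two elementary facts that (i) a tensor product of globally generated line bundles is globally generated --- for $\cL,\cM$ globally generated, the composite $H^0(\cL)\otimes H^0(\cM)\to H^0(\cL\otimes\cM)\to(\cL\otimes\cM)\otimes k(y)$ is onto at every point $y$ --- and hence (ii) $\cL^{\otimes\nu}$ is globally generated for every $\nu\in\N$ (with $\cL^{\otimes 0}=\cO_\HH$). Together these say that the classes in $\NS(\HH)$ admitting a globally generated representative form a submonoid containing $0$.

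First I would record the formal identity in $\Pic(\HH)$
\[
\cL_0^{\nu_0}\otimes\cL_1^{\nu_1}\otimes\cL_2^{\nu_2}
= \bigl(\cL_0\otimes\cL_2^{\rho}\bigr)^{\nu_0}\otimes\cL_1^{\nu_1}\otimes\cL_2^{\,\nu_2-\nu_0\rho},
\qquad \rho=\tbinom{b-a+1}{2},
\]
obtained simply by collecting the $\cL_2$-factors (this is the display preceding the corollary). By Proposition~\ref{prop:1.3} I choose globally generated line bundles $\cN_0,\cN_1,\cN_2$ on $\HH$ whose classes in $\NS(\HH)$ are those of $\cL_0\otimes\cL_2^{\rho}$, $\cL_1$ and $\cL_2$ respectively. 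The hypothesis guarantees $\nu_0,\nu_1\in\N$ and $\nu_2-\nu_0\rho\in\N$; this is exactly the role of the inequality $\nu_2\geq\nu_0\rho$, since otherwise the $\cL_2$-exponent above would be negative and no globally generated representative would be at hand. By (i) and (ii) the line bundle $\cN_0^{\otimes\nu_0}\otimes\cN_1^{\otimes\nu_1}\otimes\cN_2^{\otimes(\nu_2-\nu_0\rho)}$ is globally generated, and by the displayed identity its class in $\NS(\HH)$ equals that of $\cL_0^{\nu_0}\otimes\cL_1^{\nu_1}\otimes\cL_2^{\nu_2}$. Hence that residue class has a globally generated representative, which is the assertion.

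The only subtlety to watch is the passage between $\Pic(\HH)$ and $\NS(\HH)$: Proposition~\ref{prop:1.3} is stated for $\NS$-classes, so one must multiply the globally generated representatives $\cN_i$ (not ``the'' $\cL_i$) and then invoke the regrouping identity --- valid already in $\Pic(\HH)$ --- to identify the $\NS$-class of the resulting bundle. Beyond that there is no real obstacle. One may also note that only $\nu_i\geq 0$ (together with $\nu_2\geq\nu_0\rho$) is needed here; requiring all $\nu_i>0$ would instead be the hypothesis under which Theorem~\ref{thm:1.3}(i) promotes global generation to ampleness.
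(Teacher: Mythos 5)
Your proof is correct and is essentially the argument the paper intends: the displayed regrouping identity $\cL_0^{\nu_0}\otimes\cL_1^{\nu_1}\otimes\cL_2^{\nu_2}=(\cL_0\otimes\cL_2^{\rho})^{\nu_0}\otimes\cL_1^{\nu_1}\otimes\cL_2^{\nu_2-\nu_0\rho}$ together with Proposition~\ref{prop:1.3} and the closure of global generation under tensor products is exactly why the corollary carries a bare $\qed$. Your extra care about working with globally generated representatives of the $\NS$-classes rather than the $\cL_i$ themselves is a sound (if routine) clarification and does not change the argument.
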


\begin{corollary}
  \label{cor:1.5}
 If $\HH = H_{3,0}$, then $\cL_0, \cL_1$ and $\cL_2$ are globally generated
 and $\Pic(\HH)$ is freely generated by $\cL_0, \cL_1, \cL_2$.
\end{corollary}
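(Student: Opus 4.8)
The plan is to prove Corollary~\ref{cor:1.5} by specializing the general results to the case $d=3$, $g=0$, where the Macaulay coefficients become $a=4$ and $b=5$, so that $r=b-a=1$ and $\rho=\binom{b-a+1}{2}=\binom{2}{2}=1$. First I would unwind the definitions from Section~\ref{sec:1.4} in this case: with $\rho=1$ one gets $\cL_0=\cM_{b-1}^{1-\rho}\otimes\cM_b^{2\rho}\otimes\cM_{b+1}^{-\rho}=\cM_4^{0}\otimes\cM_5^{2}\otimes\cM_6^{-1}=\cM_5^2\otimes\cM_6^{-1}$. However, the cleaner route is to observe that $\cL_0\otimes\cL_2^{\rho}=\cL_0\otimes\cL_2$ is already known to be globally generated by Proposition~\ref{prop:1.3}, and separately to identify $\cL_0$ with the honest tautological bundle $\cM_b=\cM_5$ — or rather, since the statement in the Introduction asserts $\cL_0=\cM_3$ when $\HH=H_{3,0}$, I would check which tautological bundle this is. Note $b-1=4$, and the line bundle $\cM_{b-1-\rho\cdot(\text{something})}$; the safest approach is to compute the intersection numbers $(\cL_0\cdot C_0)=1$, $(\cL_0\cdot C_1)=0$, $(\cL_0\cdot C_2)=0$ from Conclusion~\ref{concl:1.4}, and independently compute the intersection numbers of candidate globally generated bundles using Lemma~\ref{lem:1.5}, then match them up using Theorem~\ref{thm:1.1} (numerical $=$ rational equivalence) together with $\Pic^\tau(\HH)=\Pic^0(\HH)$ and the simple-connectedness of $\HH$.

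The key steps, in order, would be: (1) From Corollary~\ref{cor:1.4}, with the freedom $\nu_2\geq\nu_0\rho$ and here $\rho=1$, deduce that $\cL_0^{\nu_0}\otimes\cL_1^{\nu_1}\otimes\cL_2^{\nu_2}$ is g.g.\ whenever $\nu_0,\nu_1,\nu_2\in\N$ with $\nu_2\geq\nu_0$; in particular $\cL_1$ ($\nu_0=\nu_1=0$ fails — take $\nu_1=1$, $\nu_0=\nu_2=0$), $\cL_2$ ($\nu_2=1$, rest $0$), and $\cL_0\otimes\cL_2$ ($\nu_0=\nu_2=1$) are all g.g. (2) Show $\cL_0$ itself is g.g.: the point is that in $H_{3,0}$ one has $\rho=1$ and I would argue that $\cL_0$ is actually one of the $\cM_n$. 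Concretely, I expect $\cL_0\equiv\cM_{b-1}\otimes\cL_2^{-\rho}=\cM_4\otimes\cL_2^{-1}$; but from the displayed definition $\cL_0=\cM_5^2\otimes\cM_6^{-1}$ and $\cL_2=\cM_4\otimes\cM_5^{-2}\otimes\cM_6$, a short computation gives $\cL_0\otimes\cL_2^{-1}=\cM_5^2\otimes\cM_6^{-1}\otimes\cM_4^{-1}\otimes\cM_5^{2}\otimes\cM_6^{-1}$—so rather than chase this I would instead verify directly that $\cM_3$ (which by Lemma~\ref{lem:1.5}, valid for $n\geq a-3=1$, satisfies $(\cM_3\cdot C_0)=1$, $(\cM_3\cdot C_1)=3-b+1=-1$… which is negative) is \emph{not} the right bundle, and instead locate the $\cM_n$ with the correct intersection pattern $(1,0,0)$. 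Here one checks $(\cM_n\cdot C_1)=n-b+1=n-4$ and $(\cM_n\cdot C_2)=\binom{n-2}{2}+(n-4)$; these both vanish at no common integer $n$, so $\cL_0$ is genuinely a product, and I would instead prove $\cL_0$ is g.g.\ by the method of Section~\ref{sec:1.5}: exhibit it via a determinant-of-cohomology construction as in Lemma~\ref{lem:1.4}, using that in the $H_{3,0}$ case the punctual and CM structure is trivial enough that the relevant sheaf $\cG_{n+1}$ from \eqref{eq:1.25} can be controlled. (3) Once $\cL_0,\cL_1,\cL_2$ are all g.g., conclude $\Pic(\HH)$ is freely generated by them: by Theorem~\ref{thm:1.3}(i) they freely generate $\Pic(\HH)/\Pic^0(\HH)=\NS(\HH)$, and by the simple-connectedness argument of \cite[Section 4.2]{T2} cited there, $\Pic^0(\HH)=\Pic^\tau(\HH)=0$, so $\Pic(\HH)=\NS(\HH)$ is free on $\cL_0,\cL_1,\cL_2$.

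The main obstacle will be step (2), the global generation of $\cL_0$ in the case $H_{3,0}$. The author's Introduction explicitly flags this: "I could not decide if $\cL_0$ is globally generated or not... the only example I know is the case $H_{3,0}$, where $\cL_0=\cM_3$." So the real content is to show that in this single case $\cL_0$ is numerically equivalent to (hence, after twisting by $\Pic^0=0$, equal to) $\cM_3$, and that $\cM_3$ is globally generated. I would prove $\cM_3$ is g.g.\ directly: $\HH=H_{3,0}$ is the Hilbert scheme of twisted cubics, a smooth projective variety, and $\cM_3=\det\pi_*\cF(3)$ where $\cF(3)$ is already globally generated fiberwise with $H^1$ vanishing (since $\reg(\cF)\leq a-2=2\leq 3$), so $\pi_*\cF(3)$ is a globally generated vector bundle on $\HH$, whence its top exterior power $\cM_3$ is globally generated. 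For the numerical identification $\cL_0\equiv\cM_3$, I would compute both sides against $C_0,C_1,C_2$ using Lemma~\ref{lem:1.5}: $(\cM_3\cdot C_0)=1$ but one must recheck the intersection numbers against the \emph{free} generators $C_0,C_1,C_2$ (not $D$), getting a vector in $\Z^3$, and match it to $(\cL_0\cdot C_j)=(1,0,0)$; any discrepancy would be absorbed by adjusting with copies of $\cL_1,\cL_2$, but since the claim is the clean equality $\cL_0=\cM_3$ I expect the arithmetic with $a=4,b=5,\rho=1$ to work out exactly, and the verification is the routine-but-essential core of the argument.
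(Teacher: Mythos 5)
There is a genuine gap, and it is purely arithmetic: you have miscomputed the Macaulay coefficient $b$. For $d=3$, $g=0$ one has $a=d+1=4$ and $g=(a^2-3a+4)/2-b=(16-12+4)/2-b=4-b$, so $b=4$, not $5$. Hence $r=b-a=0$ and $\rho=\binom{b-a+1}{2}=\binom{1}{2}=0$. This is exactly what the paper's proof uses: with $\rho=0$ the definition collapses to $\cL_0=\cM_{b-1}^{1-\rho}\otimes\cM_b^{2\rho}\otimes\cM_{b+1}^{-\rho}=\cM_{b-1}=\cM_3$, and Proposition~\ref{prop:1.3} already asserts that $\cL_1$, $\cL_2$ and $\cL_0\otimes\cL_2^{\rho}=\cL_0$ are globally generated (the last because $\cM_{b-1}$ is a quotient determinant of $\cO_\HH\otimes S_{b-1}$, cf.\ the exact sequence in Section~\ref{sec:1.1}). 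The second assertion follows since $H^1(\HH,\cO_\HH)=0$ in this case, so $\Pic(\HH)=\NS(\HH)\cong\Z^3$ by Theorem~II. There is no step~(2) to carry out.

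The error cascades through your whole plan. With your value $b=5$ you get $(\cM_3\cdot C_1)=3-b+1=-1<0$, which would mean $\cM_3$ is not even nef — flatly contradicting the claim $\cL_0=\cM_3$ from the Introduction that you are trying to verify; this should have been the signal to recheck $b$. With the correct $b=4$ one gets $(\cM_3\cdot C_1)=0$ and $(\cM_3\cdot C_2)=\binom{1}{2}+0=0$, so $\cM_3$ has intersection vector $(1,0,0)$ against $(C_0,C_1,C_2)$ and coincides with $\cL_0$ on the nose. Your proposed fallback — a determinant-of-cohomology argument à la Lemma~\ref{lem:1.4} to prove $\cL_0$ globally generated — is therefore both unnecessary and, as set up, doomed, since you would be trying to prove global generation of a line bundle that your own (erroneous) computation shows has a negative intersection number. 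One smaller point: in your step~(3), simple-connectedness only gives $\Pic^\tau=\Pic^0$; to conclude $\Pic^0=0$ you need $H^1(\HH,\cO_\HH)=0$, which is the fact the paper actually cites from~\cite{T2}.
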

\begin{proof}
  If $d=3$, $g=0$ one has $Q(n) = \tbinom{n-1+3}{3} + \tbinom{n-4+2}{2} +
  \tbinom{n-4+1}{1}$ hence $\rho=0$ and $\cL_0, \cL_1, \cL_2$ are  g.g.\
by the Proposition. Moreover, in this case $H^1(\HH,\cO_\HH)
  =(0)$, hence $\Pic(\HH) = \NS(\HH) \cong \Z^3$ (see~\cite[last line on
  p. 137]{T2} and Theorem II
in Section~\ref{sec:1.1}).
\end{proof}


\chapter{Subcones of the cone of curves}
\label{cha:2}

The aim is the description of those curves, which lie on subcones of
$A^+_1(\HH)$. This is not possible for all subcones and ``description'' is
to be understood in a weak sense, only. In this chapter $P=k[x,y,z,t]$ and
$S=k[x,y,z]$.

\section{Limits of $1$--cycles}
\label{sec:2.1}

\subsection{Limits of points}
\label{sec:2.1.1}
 
We first introduce some notations: $\sigma(\lambda)$ (resp.~$\tau(\lambda)$)
denotes the $\G_m$-operation $x\mapsto x$, $y\mapsto y$, $z\mapsto z$,
$t\mapsto \lambda t$ (resp.~$x\mapsto x$, $y\mapsto y$, $z\mapsto \lambda z$,
$t\mapsto t$). $\delta^i_\alpha$, $1 \leq i \leq 6$, 
denotes the $\G_a$-operations, which are defined by the following matrices
\[
\delta^1_\alpha \leftrightarrow 
\begin{pmatrix}
        1&0&0&\alpha \\ 0&1&0&0\\ 0&0&1&0\\ 0&0&0&1
\end{pmatrix},
\quad
\delta^2_\alpha \leftrightarrow 
\begin{pmatrix}
        1&0&0&0\\ 0&1&0&\alpha \\ 0&0&1&0\\ 0&0&0&1
\end{pmatrix},
\quad
\delta^3_\alpha \leftrightarrow 
\begin{pmatrix}
        1&0&\alpha &0\\ 0&1&0&0\\ 0&0&1&0\\ 0&0&0&1
\end{pmatrix},
\]
\[
\delta^4_\alpha \leftrightarrow 
\begin{pmatrix}
        1&\alpha &0&0\\0&1&0&0\\0&0&1&0\\0&0&0&1
\end{pmatrix},
\quad
\delta^5_\alpha \leftrightarrow 
\begin{pmatrix}
        1&0&0&0\\0&1&\alpha &0\\0&0&1&0\\0&0&0&1
\end{pmatrix}, 
\quad
\delta^6_\alpha \leftrightarrow 
\begin{pmatrix}
        1&0&0&0\\0&1&0&0\\0&0&1&\alpha \\0&0&0&1
\end{pmatrix}.
\]
Let be $\xi\in U(t) \subset \HH$, and $\xi \leftrightarrow \cJ \in
\HH_\Phi(k)$, i.e. the Hilbert function of $\cJ$ is $\Phi$. Then
\[
\xi_0 = \lim_{\lambda \to 0} \sigma(\lambda) \xi \leftrightarrow \cJ_0 \in
G_\Phi(k)
 \]
(see Appendix~\ref{cha:G} for definition and notation). One can write
 $H^0(\cJ_0(n)) = \bigoplus^n_0 t^{n-i} U_i$, $U_i \subset S_i$ subspaces
 with $S_1 U_i \subset U_{i+1}$ for all $i$.  As $\xi\in U(t)$ one has
 $\sigma(\lambda) \xi \in U(t)$. Let $r:U(t) \rightarrow
 H^d:=\Hilb^d(\P^2_k)$ be the restriction morphism with respect to the
 variable $t$. Define $\xi':=r(\xi) \leftrightarrow \cI \in H^d(k)$.  Then
 $\xi' = r(\sigma(\lambda)\xi)$ for all $\lambda \in k^*$, hence $r(\xi_0) =
 \xi'$ and $U_n = H^0(\cI(n))$ for all $n\geq \reg(\cJ)$. By applying a
 suitable linear transformation $g \in \GL(4;k)$, which leaves $t$
 invariant, one can achieve that $r(g(\xi)) \in U(z) \subset H^d$. In the
 statement of the lemma (see below) it will become clear that one can assume
 without restriction $\cI \in U(z)$. Let $\cI^* \subset \cO_{\P^3}$ be the
 ideal, which is generated by $\cI$, i.e.~$H^0(\P^3,\cI^*(n)) = \bigoplus^n_0
 t^{n-i}H^0(\P^2,\cI(i))$. Because of $U_i \subset H^0(\cI(i))$, one has
 $\cJ_0 \subset \cI^*$. Put $\cI_0:= \lim_{\lambda \to 0}
 \tau(\lambda)\cI$. Then $\cI_0$ has the same Hilbert function $\phi$ as
 $\cI = \cJ'$ (see Appendix~\ref{cha:G}).

Let $\cL:=\lim_{\lambda \to 0}\tau(\lambda)\cJ_0$. Then $H^0(\cL(n)) = 
\bigoplus^n_0 t^{n-i}L_i$, where $L_i:= \lim_{\lambda \to
  0}\tau(\lambda)U_i$, and $\cL \subset \cI^*_0$ has finite colength.

$\cJ_0$ and $\cL$ are invariant under the subgroup $\Gamma = \Set{
  \left(\begin{smallmatrix} 1&0&0& *\\ 0&1&0&*\\ 0&0&1&*\\ 0&0&0&1
 \end{smallmatrix}\right)}$ of $U(4;k)$. As $U_i$ and $L_i$ have the same
dimension, $\cL\in G_\Phi(k)$ follows. As $\cI_0$ is invariant under the 
subgroup $\gamma = \Set{
  \left(\begin{smallmatrix} 1&0& *\\ 0&1&*\\ 0&0&1
 \end{smallmatrix}\right)}$ of $U(3;k)$, one has $\cI_0 \in G_\phi(k)$. (As
to the notation, see Appendix~\ref{cha:G}.)

Let $c$ be the colength of $\cL \subset \cI^*_0$ and $\fX =
\Quot^c(\cI^*_0)$. Define $\cL^1:=\lim\limits_{\alpha \to \infty}
\delta^3_\alpha(\cL)$ and $\cL^2:=\lim\limits_{\alpha \to \infty}
\delta^5_\alpha(\cL^1)$. As $\cI_0$ is fixed by $\gamma$, $\cL^1$ and
$\cL^2$ are in $\fX(k)$. Noted in a somewhat more explicit way, one has 
\[
H^0(\cL^1(n)):= \bigoplus^n_0 t^{n-i} L^1_i\,, \qquad \text{where} \quad
L^1_i:=\lim_{\alpha \to \infty} \delta^3_\alpha(L_i)\,.
\]
As $\dim L^1_i = \dim L_i$, one has $\cL^1 \in G_\Phi(k)$. In the same way 
\[
H^0(\cL^2(n)):= \bigoplus^n_0 t^{n-i} L^2_i\,, \qquad 
L^2_i:=\lim_{\alpha \to \infty} \delta^5_\alpha(L^1_i)\,.
\]
As $\delta^3_\alpha$ and $\delta^5_\alpha$ commute, $\cL^2$ is invariant
under $\Gamma$ and $\gamma$. 

Put $\cN:=\lim\limits_{\alpha \to \infty} \delta^4_\alpha(\cI_0)$. As $\cI_0
\in U(z)$ (loc.~cit.) we can write $H^0(\cI_0(n))= \bigoplus^n_0 z^{n-i}
V_i$, where $V_i \subset R_i$, $R = k[x,y]$, are subspaces such that $R_1 V_i
\subset V_{i+1}$ for all $i$.  It follows that $H^0(\cN(n))= \bigoplus^n_0
z^{n-i} W_i$, where $W_i:= \lim\limits_{\alpha \to \infty}
\delta^4_\alpha(V_i)$.  As $W_i \subset R_i$ is invariant under $U(2;k)$ and
$\Char(k) =0$, it follows that $W_i$ is $B(2;k)$-invariant, especially is
generated by monomials. As $\cI_0$ is fixed by $\gamma$ and $\gamma$ is
normalized by $\delta^4_\alpha$, the ideal $\delta^4_\alpha(\cI_0)$ is fixed
by $\gamma$ for all $\alpha\in k$, hence $\cN$ is fixed by $\gamma$. It follows
that $\cN$ is fixed by $B(3;k)$ and hence $\cN^*$ is fixed by $B(4;k)$. As
$\cL^2$ is fixed by $\gamma$, the ideal $\delta^4_\alpha(\cL^2)$ is fixed by
$\gamma$ for all $\alpha \in k$, hence $\cK := \lim\limits_{\alpha \to \infty}
\delta^4_\alpha(\cL^2)$ is fixed by $\gamma$ and $\delta^4_\alpha$, hence
fixed by $U(3;k)$. By construction, $\cK$ is invariant under
$\sigma(\lambda)$ and lies in $U(t)$, hence $\cK$ is $\Gamma$-invariant,
hence $\cK$ is $U(4;k)$-invariant. Moreover, by construction, the Hilbert
functions of $\cL$, $\cL^1$, $\cL^2$ and $\cK$ still are equal to $\Phi$.

Besides the $\G_m$-operations $\sigma$ and $\tau$ introduced above, we
consider the $\G_m$-operations 
\[
\sigma_1: x\mapsto \lambda x, y\mapsto y, z\mapsto z, t\mapsto t
\quad \text{and} \quad 
\sigma_2: x\mapsto x, y\mapsto \lambda y, z\mapsto z, t\mapsto t.
\]
We form
\[
\cK^1 = \lim_{\lambda \to 0} \sigma_1(\lambda)\cK, \quad \cK^2 =
\lim_{\lambda \to 0} \sigma_2(\lambda)\cK^1 \quad \text{and} \quad  \cM = \lim_{\lambda
  \to 0} \tau(\lambda)\cK^2\,.
\] 
By construction, $\cL$, $\cL^1$, $\cL^2$, $\cK$ are invariant under
$\sigma(\lambda)$ and $\cM$ is invariant under $T(4;k)$. As $\cK$ is
$U(4;k)$-invariant and $U(4;k)$ is normalized by $T(4;k)$, $\cK^1$, $\cK^2$,
$\cM$ are $U(4;k)$-invariant, hence $\cM$ is $B(4;k)$-invariant. Now by
construction, $\cK \subset \cN^*$, hence $\cK^1$, $\cK^2$,
$\cM$ are contained in $\cN^*$, too. And again by construction, all the
Hilbert functions of $\cL$, $\cL^1$, $\cL^2$, $\cK$, $\cK^1$,
$\cK^2$, $\cM$ are equal to the Hilbert function $\Phi$ of $\cJ$. Hence $\cM
\subset \cN^*$ again has the colength $c$ and $r(\cM) = \cN$. Finally the
Hilbert function $\phi$ of $\cJ' = \cI$ is equal to the Hilbert function of
$\cN$, as $\dim W_i = \dim V_i$.

\subsection{Limits of integral curves}
\label{sec:2.1.3n}

We first recall the construction in the proof of~\cite[Lemma 1, p. 6]{T1}.

Let $X/k$ be a projective scheme. If $\psi: \G_a \to \Aut_k(X)$ is a
homomorphism and $A$ is a $k$-algebra, then we denote the image of $\alpha
\in A$ in $\Aut_k(X \otimes A)$ by $\psi_\alpha$.

Let $C \subset X$ be an integral (i.e.~a closed, irreducible, reduced) curve
and $p$ the Hilbert polynomial of $C$ with regard to any closed embedding of
$X$ into a projective space. Then $\alpha \mapsto \psi_\alpha(C)$ defines a
morphism $\G_a \to \cH:=\Hilb^p(X)$, which has a unique extension $f:\P^1
\to \cH$. This gives a family $\cC/\P^1$ and a cartesian diagram
\[
      \xymatrix{
        \cC \ar[r] \ar[d]_\pi & X \times_k \cH \ar[d]^{p_2} \\
        \P^1 \ar[r]^f & \cH }
\]
such that $\pi$ is flat and surjective. As $C \otimes k[t]$ is irreducible
and reduced, the generic fibre $\psi_t(C)$, where $t \in K$ and $K$ denotes
the quotient field of $k[t]$, is reduced and irreducible. From general
properties of flat morphisms it follows that $\cC \subset X \times_k \P^1$
is reduced and irreducible, too.  One has $\cC_\alpha:=\pi^{-1}(\alpha) =
\psi_\alpha(C)$, if $\alpha \neq \infty$; $\cC_\infty:=\pi^{-1}(\infty) =:
C_\infty =: \lim_{\alpha\to \infty}\psi_\alpha(C)$. As $\cC$ has the
dimension two, it follows that $[C] = [C_\infty]$ in $A_1(X)$.

If $x \in C(k)$, then $(C,x) \in I:=\mathrm{Incidence}(\cH \times_k X)$ is
an integral curve in $X \times_k X$ with Hilbert polynomial $p$. $\G_a$
operates by $(C,x) \mapsto (\psi_\alpha(C),\psi_\alpha(x))$ on $I$. The
limit curve $\lim_{\alpha\to \infty} \psi_\alpha(C,x)$ is contained in $I$
and is equal to $(C_\infty,x_\infty)$, where $x_\infty:= \lim_{\alpha\to
  \infty}\psi_\alpha(x)$ is formed in $\Hilb^1(X) =X$ (see~\cite[Bemerkung
  3, p. 127]{T2}).

If one has a homomorphism $\sigma:\G_m \to \Aut_k(X)$, an analogous
construction gives two limit curves $C_{0/\infty} = \lim_{\lambda\to
  0/\infty}\sigma(\lambda) C \subset X$, such that $[C] = [C_0] =
[C_\infty]$.

\begin{conclusion}
  \label{concl:2.1n}
  The incidence ``curve, point'' is preserved, if one performs the limit
  curves under the $\G_a$- or $\G_m$-action. \hfill $\qed$
\end{conclusion}

\subsection{Limits of connected cycles}
\label{sec:2.1.4n}

If $C \in Z_1(X)$, then one can write $C = \sum m_i C_i$, where $C_i \subset
X$ are the prime components, i.e.~different integral curves in $X$, and $m_i
\in \Q$. We say that $C$ is a connected $1$-cycle, if $|C|:= \bigcup C_i$ is a
connected curve in $X$. If one puts $C_\infty := \sum m_i (C_i)_\infty$,
then $[C] = [C_\infty]$, and if $x$ is a closed point in $C_i \cap C_j$,
then Conlcusion~\ref{concl:2.1n} shows that $x_\infty \in (C_i)_\infty \cap
(C_j)_\infty$. Now the limits of the prime components are not necessarily
integral curves, but they still are connected curves by ``the principle
of connectedness'' (cf.~\cite[Ex. III 11.4]{H}).

\begin{conclusion}
  \label{concl:2.2n}
  The limit of a connected cycle under the $\G_a$- or $\G_m$-action is a
  connected cycle. \hfill $\qed$
\end{conclusion}

\subsection{}
\label{sec:2.1.2}

 Let $C\subset \HH$ be a connected curve and $\xi\in C(k)$ a point, which
 fulfills the above assumptions, i.e.~$\xi \in U(t)$ and $r(\xi)\in
 U(z)$.
We form the limit under the operations $\sigma$, $\tau$,
$\delta^3_\alpha$, $\delta^5_\alpha$, $\delta^4_\alpha$, $\sigma_1$,
$\sigma_2$, $\tau$, one after the other in this order, and arrive at the
limit $D$, 
where $D$ is a connected $1$-cycle in $\HH$, $[C] =[D]$, $\xi_* \in D(k)$ is
$B(4;k)$-invariant and the Hilbert functions of the ideals belonging to
$\xi$ and $\xi_*$ are equal.  Then one carries out with $D$ the
usual construction, i.e.\ one forms the limits under $\delta^i_\alpha$,
$1\leq i \leq 6$, and $\sigma$, $\tau$, $\sigma_1$, $\sigma_2$, and finally
arrives at a $B(4;k)$-invariant connected cycle $C_*$ such that $[C] =
[C_*]$ and $\xi_* \in C_*$.

\subsection{}
\label{sec:2.1.3}

\begin{lemma}
  \label{lem:2.1}
Let $C\subset \HH$ be a connected $1$-cycle and $\xi \in C(k)$. Then
there is a $B(4;k)$-invariant  connected $1$-cycle  $C_*\subset \HH$ and
a $B(4;k)$-invariant point $\xi_* \in C_*(k)$ such that:

\begin{enumerate}[(i)]
\item $[C] =[C_*]$
\item If $h(\xi)$ and $h(\xi_*)$ are the Hilbert functions of the ideals,
  which correspond to the points $\xi$ and $\xi_*$, then $h(\xi)
  =h(\xi_*)$.
\item $\xi$ and $\xi_*$ can be connected by a sequence of rational curves in
  $\HH_\Phi$, where $\Phi=h(\xi) = h(\xi_*)$.
\item There are Zariski-many $g\in \GL(4;k)$ such that the pair
  $(g(C),g(\xi))$ fulfills the statements (i)-(iii) and in addition one has
  $g(\xi) \in U(t)$ and $r(g(\xi))\in U(z)$. In this case the Hilbert
  functions of the ideals belonging to $r(\xi)$ and $r(\xi_*)$ are equal,
  too.
\end{enumerate}
\end{lemma}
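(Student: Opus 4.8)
The proof of Lemma~\ref{lem:2.1} is really just a matter of assembling the constructions carried out in Sections~\ref{sec:2.1.1}--\ref{sec:2.1.2}, so I would organize it as a bookkeeping argument that tracks four invariants through a finite chain of limit operations: the rational equivalence class, the Hilbert function of the underlying ideal, connectedness, and the incidence ``point lies on cycle''. First I would reduce to the case where $\xi \in U(t)$ and $r(\xi) \in U(z)$: by Aux-Lemma~\ref{auxlem:2} (or the elementary argument that a linear form avoiding finitely many associated primes is generic) there are Zariski-many $g \in \GL(4;k)$ with $g(\xi) \in U(t)$, and then a further generic $g' \in \GL(4;k)$ fixing $t$ brings $r(g(\xi))$ into $U(z)$; replacing $\xi$ by $g'g(\xi)$ is harmless since $\GL(4;k)$ acts on $\HH$ preserving all the data in (i)--(iii), and the final clause of (iv) records exactly that the Hilbert function of $r(\xi)$ is unchanged under such a $g$ (the restriction morphism $r$ is $\GL$-equivariant for the subgroup fixing $t$).

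Next, assuming $\xi \in U(t)$ with $r(\xi) \in U(z)$, I would invoke the explicit sequence of one-parameter limits from Section~\ref{sec:2.1.2}, applied to the connected $1$-cycle $C$ rather than to a point: form the limits under $\sigma$, $\tau$, $\delta^3_\alpha$, $\delta^5_\alpha$, $\delta^4_\alpha$, $\sigma_1$, $\sigma_2$, $\tau$ in that order. Each operation is a $\G_a$- or $\G_m$-action on $\HH$, and Conclusions~\ref{concl:2.1n} and~\ref{concl:2.2n} guarantee that at each stage (a) the class in $A_1(\HH)$ is preserved, (b) connectedness of the cycle is preserved, and (c) the incidence between the moving point $\xi$ (replaced by its limit $\xi_\alpha$ or $\xi_\lambda$) and the moving cycle is preserved. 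The computations in Section~\ref{sec:2.1.1} show that the Hilbert function $\Phi = h(\xi)$ of the ideal attached to the tracked point is unchanged through $\cL, \cL^1, \cL^2, \cK, \cK^1, \cK^2, \cM$, because at every step the dimension of each graded piece is preserved. After the first eight limits one reaches a connected $1$-cycle $D$ with $[C]=[D]$, containing a point $\xi_* \in D(k)$ whose ideal is $B(4;k)$-invariant and has Hilbert function $\Phi$; iterating the ``usual construction'' (limits under $\delta^i_\alpha$, $1\le i\le 6$, and the four $\G_m$'s, following \cite[Lemma 1, p.~6]{T1}) on $D$ then produces a $B(4;k)$-invariant connected cycle $C_*$ with $[C_*]=[D]=[C]$ and $\xi_* \in C_*(k)$. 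This gives (i) and (ii).

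For (iii), the point is that at every one-parameter limit step the closure of the orbit $\overline{\G_a\cdot\xi}$ (resp.~$\overline{\G_m\cdot\xi}$) is a rational curve joining $\xi$ to the new limit point, and by the Hilbert-function bookkeeping above this entire orbit closure lies in the locally closed stratum $\HH_\Phi$; concatenating the finitely many orbit closures produces the required chain of rational curves in $\HH_\Phi$ connecting $\xi$ to $\xi_*$. Finally, for (iv) one repeats the genericity argument of the first paragraph but now applied simultaneously to the pair $(C,\xi)$: the set of $g \in \GL(4;k)$ with $g(\xi)\in U(t)$ and $r(g(\xi))\in U(z)$ is Zariski-open and nonempty, and since the whole construction is $\GL(4;k)$-equivariant, applying it to $(g(C),g(\xi))$ instead of $(C,\xi)$ yields statements (i)--(iii) for the translated pair; equivariance of $r$ under the $t$-fixing subgroup then gives the statement about $r(\xi)$ and $r(\xi_*)$.

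\textbf{Main obstacle.} The genuine content is not in any single step but in verifying that \emph{connectedness} survives every limit operation when $C$ is an arbitrary connected cycle rather than an integral curve --- this is where Conclusion~\ref{concl:2.2n} and ``the principle of connectedness'' (\cite[Ex.~III.11.4]{H}) do the real work, and one must be careful that the limits of distinct prime components can acquire common points but cannot disconnect. The second delicate point is checking that the Hilbert function is \emph{exactly} preserved (not merely semicontinuous in the wrong direction) at each of the many limit steps; this rests entirely on the dimension-counting identities $\dim L_i = \dim U_i$, $\dim L^1_i = \dim L_i$, etc., already established in Section~\ref{sec:2.1.1}, so the task here is just to cite them in the right order rather than to prove anything new.
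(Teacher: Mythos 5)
Your proposal is correct and follows essentially the same route as the paper: the paper's own proof is a two-line reduction ($\GL(4;k)$-equivariance of (i)--(iii) plus a generic choice of $g$ to arrange $g(\xi)\in U(t)$ and $r(g(\xi))\in U(z)$, then a citation of Sections~\ref{sec:2.1.1}--\ref{sec:2.1.2}), and your write-up simply unpacks what those sections supply. The bookkeeping you emphasize --- preservation of the class, of connectedness via Conclusion~\ref{concl:2.2n}, of the incidence via Conclusion~\ref{concl:2.1n}, and of the Hilbert function via the dimension counts $\dim L_i=\dim U_i$ etc. --- is exactly the content the paper relies on.
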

\begin{proof}
  If $g\in \GL(4;k)$, then $[C]=[g(C)]$, $h(\xi) = h(g(\xi))$ and $\xi$ and $g(\xi)$ 
can be connected by a sequence of rational curves in
  $\HH_\Phi$. Then, for a general choice of $g$, one has $g(\xi)\in U(t)$ and
  $r(g(\xi))\in U(z)$. Then the assertions follow from what has been proved
  in~\ref{sec:2.1.1} --~\ref{sec:2.1.2}.
\end{proof}

We begin to determine the geometric properties of the subschemes $H_m$ and
$\cG$ of $\HH$ (see Appendix~\ref{cha:C} for definitions).

\section{Cycles without $C_0$-component}
\label{sec:2.2}

\subsection{Algebraic cycles}
\label{sec:2.2.1}

If $C$ is an algebraic cycle then $C=\overline{\G_m\cdot \xi}$, where
  $\xi\in \HH^\Delta(k)$. By Proposition~\ref{prop:1.2}, one can write 
$[C]= q_0[C_0] +q_1[C_1]$ with $q_i\in \N$.

\begin{auxlemma}
\label{auxlem:2.1}
    If $q_0=0$, then $C\subset H_m$.
  \end{auxlemma}
  \begin{proof}
    From the argumentation in Section~\ref{sec:1.3.2} it follows that
    without restriction $\rho_3=0$,
\[
(\cM_n \cdot C) = c + \gamma(n-b+1)\,,\quad  c = 
\frac{1}{\ell}
  \lambdadeg\bigl(\dot\bigwedge \sigma(\lambda)H^0(\cJ(b-1))\bigr) 
\]
and $\gamma\in \N$, $\gamma > 0$, where $\G_m$ operates by $\sigma(\lambda):
x\mapsto x$, $y\mapsto y$, $z\mapsto \lambda z$, $t\mapsto t$. By
assumption, $c=0$, hence $H^0(\cJ(b-1))$ is invariant under $\G_m$. If
$\reg(\cJ) <b$, then $\cJ$ would be $\G_m$-invariant and $C$ would not be a
curve. Hence $\reg(\cJ) =b$ and by the Corollary in Appendix~\ref{cha:C}
follows that $\xi \in H_m(k)$, hence $C \subset H_m$.
  \end{proof}

\subsection{Combinatorial cycles}
\label{sec:2.2.2}
\par
$1^\circ$ 
Let $C$ be a combinatorial cycle of type $1$.  Then
Conclusion~\ref{concl:1.1} in Section~\ref{sec:1.3.1} gives $[C]=q_1[C_1]+
q_0[C_0]$. 

\begin{auxlemma}
    \label{auxlem:2.2} 
    If $q_0=0$, then $C\subset H_m$.
  \end{auxlemma}
  \begin{proof}
    One has $C = \Set{ \psi^1_\alpha(\xi) | \alpha \in k}^-$, $\xi
    \leftrightarrow \cJ \subset{ \cO_{\P^3}}$ with Hilbert polynomial $Q$
    and of type $1$. Using the notations of~\ref{sec:1.3.1.1} we had got 
\[
  (\cM_n \cdot C) = \sum s_i (b-r_i) + \sum s_i (n-b+1)
\]
with $s_i >0$. From the assumption it follows that $b-r_i = 0$ for all
$i$. Hence there is at least one element $u \in H^0(\cJ(b))$ such that
$u\centernot\in P_1 \cdot H^0(\cJ(b-1))$ ( see Figure~\ref{fig:1.3}). It follows that $\reg(\cJ) =b$ is
maximal and hence $\xi \in H_m(k)$ ( see the Corollary in Appendix~\ref{cha:C}).
  \end{proof}

$2^\circ$ 
Now let $C$ be a combinatorial cycle of type $2$.  Then one can write
\[
[C]= q_0[C_0] +q_1[C_1]+ q_2[C_2]\,,
\]
where each $q_i$ is a natural number (Prop.~\ref{prop:1.1}).

  \begin{auxlemma}
    \label{auxlem:2.3} 
    If $q_0=0$, then $q_1=0$, too.
  \end{auxlemma}
  \begin{proof}
    This follows from Corollary~\ref{cor:1.2}.
  \end{proof}

$3^\circ$ 
Let $C$ be a combinatorial cycle of type $3$. 
Then one can write $[C]= q_0[C_0] +q_1[C_1]$, $q_i\in \N$ (Conclusion~\ref{concl:1.3}). 

  \begin{auxlemma}
    \label{auxlem:2.4} 
  If $q_0=0$, then $C\subset H_m$.
  \end{auxlemma}
  \begin{proof}
    Write $C = \Set{ \psi^3_\alpha(\xi) | \alpha \in k}^-$, $\xi
    \leftrightarrow \cJ$ of type $3$. In the proof of
    Conclusion~\ref{concl:1.3} we got $(\cM_n \cdot C) = c + D(n-b+1)$,
    where $D\in \N$ and $c = \alphadeg \dot\bigwedge
    \psi^3_\alpha (H^0(\cJ(b-1)))$.  By assumption $c=0$, hence
    $H^0(\cJ(b-1))$ is $B(4;k)$-invariant.  If $\reg(\cJ) <b$, then $\cJ$
    would be $B(4;k)$-invariant, too.  Hence $\reg(\cJ) =b$ and $\xi \in
    H_m(k)$.
  \end{proof}

  \begin{proposition}
\label{prop:2.1} 
    Let $C\subset \HH$ be a closed connected curve such that $[C]=
    q_1[C_1] + q_2[C_2]$, and $q_1\neq 0$. Then $C\subset H_m$.
\end{proposition}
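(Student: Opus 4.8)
The plan is to reduce to a $B(4;k)$-invariant situation by means of Lemma~\ref{lem:2.1} and then to classify the integral constituents with the Aux-Lemmas of this section. First I would fix an arbitrary closed point $\xi\in C(k)$ and reduce the proposition to the assertion $\xi\in H_m(k)$. Since $[g(C)]=[C]$ for every $g\in\GL(4;k)$ and $H_m$ is $\GL(4;k)$-invariant (it is cut out by conditions on Hilbert functions, cf.~Appendix~\ref{cha:C}), Lemma~\ref{lem:2.1}(iv) lets me assume moreover that $\xi\in U(t)$ and $r(\xi)\in U(z)$, and it produces a $B(4;k)$-invariant connected $1$-cycle $C_*$ with $[C_*]=[C]=q_1[C_1]+q_2[C_2]$, together with a $B(4;k)$-invariant point $\xi_*\in C_*(k)$ such that $h(\xi_*)=h(\xi)=:\Phi$ and $\xi$, $\xi_*$ are joined by a chain of rational curves inside $\HH_\Phi$. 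Because $H_m\cap\HH_\Phi$ is closed in $\HH_\Phi$ and membership in $H_m$ is, by the construction of $H_m$ in Appendix~\ref{cha:C}, read off from the Hilbert function alone, it then suffices to prove $\xi_*\in H_m(k)$.

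Next I would decompose $C_*=\sum_j n_jC_j$ into its $B(4;k)$-invariant integral components, $n_j\in\N$ with $n_j>0$; by \cite[Proposition 0, p.~3]{T1} each $C_j$ is a combinatorial cycle of type $1$, $2$ or $3$, or an algebraic cycle. Writing $[C_j]=q_0^{(j)}[C_0]+q_1^{(j)}[C_1]+q_2^{(j)}[C_2]$ with $q_i^{(j)}\in\N$ (Propositions~\ref{prop:1.1} and \ref{prop:1.2}) and comparing the $C_0$-coefficient in $[C_*]=\sum_j n_j[C_j]=q_1[C_1]+q_2[C_2]$, which is legitimate by the uniqueness in Theorem~\ref{thm:1.2}, gives $\sum_j n_jq_0^{(j)}=0$, hence $q_0^{(j)}=0$ for all $j$. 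I then apply the Aux-Lemmas componentwise: if $C_j$ is algebraic or combinatorial of type $1$ or $3$, then $C_j\subseteq H_m$ by Aux-Lemmas~\ref{auxlem:2.1}, \ref{auxlem:2.2}, \ref{auxlem:2.4}; if $C_j$ is of type $2$, then Aux-Lemma~\ref{auxlem:2.3} yields $q_1^{(j)}=0$, i.e.\ $[C_j]=q_2^{(j)}[C_2]$.

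Finally, since $q_1=\sum_j n_jq_1^{(j)}\neq0$, there is at least one component $C_{j_0}$ with $q_1^{(j_0)}>0$; by the previous step it is not of type $2$, so $C_{j_0}\subseteq H_m$. It remains to propagate membership in $H_m$ along the connected curve $|C_*|=\bigcup_jC_j$ until a component through $\xi_*$ is reached. For each $j$ the set $H_m\cap C_j$ is closed and invariant under the one-parameter subgroup ($\G_a$ or $\G_m$) generating $C_j$, which sits inside $\GL(4;k)$; that subgroup acts on $C_j$ with only finitely many fixed points, all of them limit points of $C_j$, and the relevant one equals $\xi_*$ whenever $\xi_*\in C_j$ (a $B(4;k)$-fixed point of $C_j$ must be such a limit point, else $C_j$ would be a point). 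Hence $H_m\cap C_j$ is either empty, a set of limit points, or all of $C_j$; and if a component already known to lie in $H_m$ meets $C_j$ in a non-limit point, the dense-orbit argument forces $C_j\subseteq H_m$. Pushing this through the chain from $C_{j_0}$ to a component containing $\xi_*$ gives $\xi_*\in H_m(k)$, hence $\xi\in H_m(k)$, and the proposition follows.

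I expect the genuine difficulty to be precisely this last propagation step: one must exclude the degenerate possibility that a type-$2$ constituent $C_j$ with $[C_j]=q_2^{(j)}[C_2]$ is glued to the rest of $C_*$ only at a limit point, in which case $H_m\cap C_j$ could be just that point and the propagation would stop. I would handle this either by a closer analysis of how $C_*$ is assembled by the iterated limit construction underlying Lemma~\ref{lem:2.1}, or by invoking Corollary~\ref{cor:1.3}, which for such a component already places the hyperplane-section ideal $r(\xi)$ at maximal Hilbert function; combined with the description of $H_m$ in Appendix~\ref{cha:C} this should force the limit points of $C_j$, and thus $\xi_*$, into $H_m$ in any case. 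A secondary point needing care is the claim used at the start that $H_m$-membership depends only on the Hilbert function.
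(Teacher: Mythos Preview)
Your overall architecture matches the paper's proof: reduce via Lemma~\ref{lem:2.1} to a $B(4;k)$-invariant connected cycle $C_*=\sum n_jD_j$, observe that no $D_j$ has a $C_0$-component, apply the Aux-Lemmas to conclude that each $D_j$ is either contained in $H_m$ or is a type-$2$ combinatorial cycle with $[D_j]\in\N[C_2]$, and use $q_1\neq 0$ to guarantee at least one component lies in $H_m$. Your reduction ``$\xi_*\in H_m\Rightarrow \xi\in H_m$'' via the Hilbert-function characterization (Corollary in Appendix~\ref{cha:C}) is also exactly what the paper uses.

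The gap is precisely where you locate it, but your proposed propagation argument and your fallback to Corollary~\ref{cor:1.3} both miss the clean observation that resolves it. The point is that on a combinatorial cycle $D_j=\{\psi^2_\alpha(\eta)\}^-$ of type~$2$ \emph{every} closed point has the same Hilbert function: for $\alpha\in k$ this is clear since $\psi^2_\alpha\in\GL(4;k)$, and for the limit point at $\infty$ one uses that $\eta$ is $T(4;k)$-invariant, whence $\psi^2_\alpha(\eta)=\tau_\alpha\psi^2_1(\eta)$ for the torus action $\tau_\lambda:y\mapsto\lambda y$, so the limit is a $\G_m$-limit and therefore preserves the Hilbert function. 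Since membership in $H_m$ is determined by the Hilbert function, a type-$2$ component $D_j$ with $D_j\not\subset H_m$ satisfies $D_j\cap H_m=\emptyset$. Thus $|C_*|$ decomposes as the disjoint union of two closed sets, the union of the $D_j\subset H_m$ and the union of the $D_j\subset\HH\setminus H_m$; connectedness of $C_*$ together with $q_1\neq 0$ forces the second set to be empty, so $C_*\subset H_m$ and in particular $\xi_*\in H_m$. No orbit-by-orbit propagation is needed.

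Your alternative via Corollary~\ref{cor:1.3} does not work as stated: that corollary only controls the Hilbert function of the hyperplane section $\cJ'\subset\cO_{\P^2}$, not the Hilbert function of $\cJ\subset\cO_{\P^3}$, which is what $H_m$-membership requires.
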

\begin{proof}
  Clearly one can assume that $C$ is reduced and irreducible.  Applying a
  suitable linear transformation, one can further assume without restriction
  that $C \cap U(t) \neq \emptyset$. Suppose that $C \centernot\subset H_m$.
  If $U:=\HH - H_m$, then $C \cap U(t) \cap U$ is not empty and we take a
  closed point $\xi$ of this set.  By Lemma~\ref{lem:2.1} there is a
  connected curve $C_* \sim \sum^s_1 n_i D_i \sim C$ and a point $\xi_* \in
  C_*(k)$ such that each $D_i$ is a $B(4;k)$-invariant $1$-prime cycle,
  $n_i\in \N$ and $h(\xi) = h(\xi_*)$.  As $[D_i]$ has no $C_0$-component,
  from the Auxiliary Lemmas~\ref{auxlem:2.1}--\ref{auxlem:2.4} it follows
  that either $D_i\subset H_m$ or $D_i$ is a combinatorial cycle of type $2$
  and in this case $[D_i] = n [C_2]$.  Suppose that $D_i \subset H_m$ if $1
  \leq i \leq r$ and $D_i \centernot\subset H_m$ if $r+1 \leq i \leq s$. As
  $q_1 \neq 0$ by assumption, one has $r \geq 1$.  If $r+1 \leq i \leq s$
  and $D_i \centernot\subset H_m$, then $D_i \subset \HH - H_m$ as all
  points in $D_i$ have the same Hilbert function, because $D_i$ is a
  combinatorial cycle of type~$2$. It follows that $C_*$ is disjoint union
  of $D_1 \cup \cdots \cup D_r$ and $D_{r+1} \cup \cdots \cup D_s$. As $C_*$
  is connected, $D_i \subset H_m$ for all $i$ follows.  But then $h(\xi) =
  h(\xi_*)$ is maximal, hence $\xi \in H_m$ by the corollary in
  Appendix~\ref{cha:C}.
\end{proof}

\begin{corollary}
  \label{cor:2.1}
  If $C\subset \HH$ is a connected curve such that $[C]= q_1[C_1]$, then
  $C\subset H_m$. \hfill $\qed$
\end{corollary}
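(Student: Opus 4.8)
The plan is to obtain this as an immediate consequence of Proposition~\ref{prop:2.1}, whose only hypothesis beyond $[C]=q_1[C_1]+q_2[C_2]$ is $q_1\neq 0$; so the only thing to check is that $q_1\neq 0$ in the present situation. First I would record that $\HH$, being projective, carries an ample line bundle — for instance $\cA:=\cL_0\otimes\cL_1\otimes\cL_2$, which is ample by Theorem~\ref{thm:1.3}(i) (take $\nu_0=\nu_1=\nu_2=1$ and $\cN=\cO_\HH$). Since $C$ is a nonempty connected curve, it represents a nonzero effective class in $A_1(\HH)$, whence $(\cA\cdot C)>0$. Intersecting the relation $[C]=q_1[C_1]$ with $\cA$ and using $(\cL_i\cdot C_j)=\delta_{ij}$ from Conclusion~\ref{concl:1.4} (formula~\eqref{eq:1.14}), one gets $(\cA\cdot C)=q_1\bigl[(\cL_0\cdot C_1)+(\cL_1\cdot C_1)+(\cL_2\cdot C_1)\bigr]=q_1$, hence $q_1>0$. (Alternatively, by Theorem~\ref{thm:1.2} the classes $[C_0],[C_1],[C_2]$ are linearly independent and $[C]$ is a nonzero combination of them with natural coefficients, so $[C]=q_1[C_1]$ already forces $q_1>0$.)

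With $q_1\neq 0$ in hand, I would simply write $[C]=q_1[C_1]+q_2[C_2]$ with $q_2=0$ and invoke Proposition~\ref{prop:2.1} to conclude $C\subset H_m$. I do not expect any genuine obstacle: the whole content is the positivity of $(\cA\cdot C)$ for an effective curve $C$, which excludes $q_1=0$ and makes Proposition~\ref{prop:2.1} directly applicable.
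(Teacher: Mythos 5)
Your proposal is correct and matches the paper's (implicit) argument: the corollary is stated with an immediate $\qed$ precisely because, once one notes that a nonempty connected curve has nonzero class (e.g.\ positive degree against an ample bundle such as $\cL_0\otimes\cL_1\otimes\cL_2$, or against $\cM_n$ for $n\geq b$), the hypothesis $q_1\neq 0$ of Proposition~\ref{prop:2.1} is automatic and that proposition applies with $q_2=0$. Your explicit verification of $q_1>0$ via Conclusion~\ref{concl:1.4} is exactly the intended justification.
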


\section{Cycles without $C_0$ and $C_1$ component}
\label{sec:2.3}

\subsection{}
\label{sec:2.3.1}

\begin{auxlemma}
      \label{auxlem:2.5} 
 Suppose that $C$ is combinatorial cycle of type $2$ such that $[C] = n \cdot
 [C_2]$, $n$ a natural number. Then $C \subset \cG$. 
\end{auxlemma}
\begin{proof}
  We have to take up the notation and the argumentation as in
  Section~\ref{sec:1.3.1.2}, especially the proof of Corollary~\ref{cor:1.2}.
  As now $a\geq 4$ is supposed, only Case~1 can occur and one has
  $\tilde{g}(n) =0$. It follows that $\tilde{\cJ}$ is $B(4;k)$-invariant and
  $g(n) =r(n)$. Then~\eqref{eq:1.12} shows that $s>0$, as $g(n)$ is not
  constant. It follows that $q_1=0$, hence~\eqref{eq:1.4} gives $a=
  \alpha+1$. Now $\reg(\cJ') = \alpha$ (see~\cite[p. 55]{T1}). As
  $\cJ'\subset \cO_{\P^2}$ has the colength $a-1$, it follows that $\cJ'$
  has the maximal possible Hilbert function (cf.~Appendix~\ref{cha:C}). From
  the $G_2$-invariance of $\cJ$ we deduce that $\cJ' = (x,y^rz^s)$, hence
  $(\tilde{\cJ})' = \cO_{\P^2}$.

  Suppose that $\tilde{\cJ}\subsetneq \cO_{\P^3}$. Then we take away in the
  topmost layer of the outer shell of the pyramid $E(\cJ)$ the monomial $M$,
  which has maximal $z$-degree. Then we add a monomial $m \in E(x
  \cO_{\P^3}(-1))- E(x\tilde{\cJ}(-1))$ such that $E(x\tilde{\cJ}(-1)) \cup
  m$ generates a $B(4;k)$-invariant ideal $x\tilde{\cJ}_1(-1)$. Then $E(\cJ)
  - M \cup m$ generates an ideal $\cK$ of type~$2$ and 
\[
\alphadeg\bigl(\dot\bigwedge \psi^2_\alpha H^0(\cK(n))\bigr) = 
\alphadeg\bigl(\dot\bigwedge \psi^2_\alpha H^0(\cJ(n))\bigr) -\nu\,, 
\]
where $\nu$ is a positive natural number (e.g.~see~\cite[p.~13, last
line]{T1}).  If $D$ is the combinatorial cycles of type~$2$, which is defined
by $\cK$, then $(\cM_n \cdot D) = (\cM_n \cdot C) - \nu$, hence $[D] =
-\nu[C_0] + n[C_2]$ where $n\in \N$, contrary to Theorem~\ref{thm:1.2}.  It
follows that $\tilde{\cJ} = \cO_{\P^3}$.

From this we deduce that $\cJ = (x,\cL)$, where $\cL$ is an ideal on
$\Proj(k[y,z,t])=:\P$ with Hilbert polynomial
$\tbinom{n-a+2}{2}+\tbinom{n-b+1}{1}$.  It follows that $\cJ \in \cG (k)$,
whence $C \subset \cG$ (cf.~Appendix~\ref{cha:C}).
\end{proof}

\subsection{}
\label{sec:2.3.2}

Let $C\subset \HH$ be an irreducible curve such that $[C]= n\cdot [C_2]$,
$n\in \N$, $n>0$. Assume $C \cap U(t) \neq \emptyset$. Take $\xi\in C \cap
U(t)$. Then from Lemma~\ref{lem:2.1} we deduce that 
\[
   [C_*] = \sum n_i [D_i] = [C]\,,
\]
where the $D_i$ are the $B(4;k)$-invariant prime components of $C_*$.
Moreover, there is a $B(4;k)$-invariant point $\xi_* \in C_*$ such that
$h(\xi) = h(\xi_*)$. No $D_i$ can have a $C_0$- or $C_1$-component, hence
by~\ref{sec:2.3.1} we get $D_i \subset \cG$, hence $\xi_*\in \cG$. If $\cJ
\leftrightarrow \xi$, from $h(\xi) = h(\xi_*)$ it follows that
$h^0(\cJ(1))=1$, hence $\cJ$ contains a linear form $\ell \in P_1$ from
which we deduce that $\xi \in \cG(k)$. As $C \cap U(t)$ is dense in $C$ and
$\cG \subset \HH$ is closed, we get $C \subset \cG$.  If $C \cap U(t)=
\emptyset$, we replace $C$ by $g(C)$ such that $g(C) \cap U(t) \neq
\emptyset$, $g \in \GL(4;k)$.  Then $g(C) \subset \cG$, hence $C \subset
\cG$.
We get
\begin{lemma}
 \label{lem:2.2}
 If $C \subset \HH$ is connected and $[C]\in \Z\cdot[C_2]$, then $C\subset
 \cG$. \hfill $\qed$
\end{lemma}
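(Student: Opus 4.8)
The plan is to combine the degeneration construction of Lemma~\ref{lem:2.1} with Aux-Lemma~\ref{auxlem:2.5}. First I would reduce to the case that $C$ is irreducible: decomposing an effective $1$-cycle as $C=\sum m_iC_i$ into prime components, Theorem~\ref{thm:1.2} applied to each $C_i$ gives $[C_i]\in\N[C_0]+\N[C_1]+\N[C_2]$, and reading off the $C_0$- and $C_1$-coefficients of the relation $\sum m_i[C_i]=n[C_2]$ (with all $m_i>0$) forces $[C_i]\in\Z_{\geq0}[C_2]$ for every $i$; so it is enough to prove $C_i\subset\cG$. Moreover $\cG\subset\HH$ is closed and $\GL(4;k)$-invariant (by its construction in Appendix~\ref{cha:C}), and $\GL(4;k)$ acts trivially on $A_1(\HH)$ (it is connected, so $g(C)$ and $C$ are algebraically, hence numerically, hence by Theorem~\ref{thm:1.1} rationally equivalent); thus, given any closed point $\xi\in C(k)$, I may replace $C$ by a general translate $g(C)$ and use Lemma~\ref{lem:2.1}(iv) to arrange $\xi\in U(t)$ and $r(\xi)\in U(z)$ while keeping $[C]=n[C_2]$.

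Now I would apply Lemma~\ref{lem:2.1} to $(C,\xi)$, obtaining a $B(4;k)$-invariant connected $1$-cycle $C_*=\sum n_iD_i$ with $[C_*]=[C]=n[C_2]$, $n_i\in\N$, the $D_i$ being $B(4;k)$-invariant prime cycles, and a $B(4;k)$-invariant point $\xi_*\in C_*(k)$ with $h(\xi)=h(\xi_*)$. Each $D_i$ is either an algebraic cycle or a combinatorial cycle of type $1$, $2$ or $3$ (the classification underlying the proof of Theorem~\ref{thm:1.2}), so Propositions~\ref{prop:1.1} and~\ref{prop:1.2} give $[D_i]\in\N[C_0]+\N[C_1]+\N[C_2]$; comparing $C_0$- and $C_1$-coefficients in $\sum n_i[D_i]=n[C_2]$ and using $n_i>0$ shows every $[D_i]$ has vanishing $C_0$- and $C_1$-component. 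Since $D_i$ is a genuine (reduced, irreducible) curve in the projective scheme $\HH$ its class is nonzero, so by Conclusions~\ref{concl:1.1} and~\ref{concl:1.3} (types $1$ and $3$ carry no $C_2$-component) and the algebraic-cycle case only one possibility remains: $D_i$ is a combinatorial cycle of type $2$ with $[D_i]\in\Z_{>0}[C_2]$. Aux-Lemma~\ref{auxlem:2.5} then gives $D_i\subset\cG$ for all $i$, hence $\xi_*\in|C_*|=\bigcup_iD_i\subset\cG$.

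Finally I would transfer this back to $\xi$. Since $\xi_*\in\cG$, the ideal $\cJ_*$ corresponding to $\xi_*$ contains a linear form (by the description of $\cG$ in Appendix~\ref{cha:C}), so $h^0(\cJ_*(1))\geq1$; as $h(\xi)=h(\xi_*)$, the ideal $\cJ$ of $\xi$ also contains a linear form, whence $\xi\in\cG(k)$. Running this argument for every closed point of $C$ (each covered after a suitable general $g$ as above) and using that closed points are dense in $C$ while $\cG$ is closed yields $C\subset\cG$ in the irreducible case; the reduction of the first paragraph then gives the general statement. The step I expect to be hardest is the middle one: identifying the possible types of the prime components $D_i$ precisely enough that Aux-Lemma~\ref{auxlem:2.5} applies to each of them — i.e. genuinely excluding algebraic cycles and combinatorial cycles of types $1$ and $3$ — and the fact that membership of $\xi$ in $\cG$ is recovered only through the equality $h(\xi)=h(\xi_*)$, so one really needs $\cG$ to be characterized (at least on $U(t)$) by a condition on the Hilbert function.
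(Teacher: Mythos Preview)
Your proposal is correct and follows essentially the same route as the paper's proof in Section~\ref{sec:2.3.2}: reduce to an irreducible curve, apply Lemma~\ref{lem:2.1} to pass to a $B(4;k)$-invariant cycle $C_*=\sum n_iD_i$, force each $D_i$ to be a combinatorial cycle of type~$2$ with class a multiple of $[C_2]$, invoke Aux-Lemma~\ref{auxlem:2.5} to get $D_i\subset\cG$, and then read off $h^0(\cJ(1))=1$ from $h(\xi)=h(\xi_*)$ to conclude $\xi\in\cG(k)$. You are actually more explicit than the paper about the type exclusion (the paper just writes ``hence by~\ref{sec:2.3.1} we get $D_i\subset\cG$'', leaving the elimination of algebraic cycles and of types~$1$ and~$3$ implicit), and your observation that the final step relies on Fogarty's description of $\Hilb_p(\P^2)$ in Appendix~\ref{cha:C} is on point: that is exactly why ``$\cJ$ contains a linear form'' suffices for $\xi\in\cG(k)$.
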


\subsection{}
\label{sec:2.3.3}

Suppose $C\subset \HH$ is an irreducible curve such that $[C]= s [C_2]$,
$s\in \N$, $s>0$.  Then $C \subset \cG$ by Lemma~\ref{lem:2.2}. Now we
consider the morphism
\[
  p: \cG \longrightarrow \Hilb^c(\P^3), \quad c = b-a +1\,,
\]
defined by $(\ell, f\cdot \cK) \longmapsto (\ell, \cK)$ (see
Appendix~\ref{cha:C}). 
Now 
\[
   C_2 = \Set{ (x,y^{a-2}(\alpha y+z)(y,z^{b-a+1})) | \alpha \in k }^-
   \subset \cG
\]
and $p(C_2) = 1\ \mathrm{point}$. It follows that 
\[
  p_*([C]) = \deg (p| C)\bigl(\cO_{\P^3}(1) \cdot p(C)\bigr) = (0)\,,
\]
hence $p(C) =1\ \mathrm{point}$, too. 

Besides one has the morphism $\kappa: \cG \rightarrow \Hilb_q(\P^3)$, $q(n)
= \tbinom{n-1+3}{3}+\tbinom{n-d+2}{2}$, $d=a-1$, defined by $(\ell, f\cK)
\mapsto (\ell, f)$. As 
\[
 \kappa(C_2) = \Set{(x,y^{a-2}(\alpha y + z)) | \alpha \in k }^- \,,
\]
one sees that $\kappa| C_2 : C_2 \rightarrow \kappa(C_2)$ is an isomorphism.
If $\xi \in C(k)$ is any point, then we have shown that $\xi \leftrightarrow
(\ell, f(\xi) \cdot \cK)$, where $f(\xi) \in [P/\ell P(-1)]_d$ depends on
$\xi$, whereas $\ell$ and $\cK$ are independent of $\xi$.  But then it
follows that the restriction $\kappa |C$ is injective, too. It follows that
$[\kappa(C)] = s [\kappa(C_2)]$.

Choose any $g \in \GL(4;k)$ such that $g(\ell) = x$. Then $[\kappa(g(C))] = s
[\kappa(C_2)]$. Now $\kappa(C_2)$ is a curve in $\Proj
k[y,z,t]=:\P$ 
of degree $1$, hence $\kappa(g(C)) \subset \mathbf{\P}$
is a curve of degree $s$, and it follows that $\kappa(C) \subset
\Proj(P/\ell P(-1))$ is a curve of degree $s$, too.
We have proved one direction of the following:

\begin{proposition}
  \label{prop:2.2}
  If $C\subset \HH$ is an irreducible curve such that $[C]= s [C_2]$, then
  there is a linear form $\ell \in P_1$, an ideal $\cK \subset
  \cO_{\mathbf{\P}}$, $\P:= \Proj(P/\ell P(-1))$ of colength $c = b-a+1$,
  and a curve $\cC \subset \mathbf{\P}$ of degree $s$, such that
\[
  C = \Set{(\ell, f\cdot \cK) | \langle f \rangle \in \cC }\,.
\]
If conversely $C$ is defined in this way, then $[C] = s[C_2]$. 
\end{proposition}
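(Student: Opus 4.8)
The plan is to prove the two implications of Proposition~\ref{prop:2.2} separately; the first is essentially contained in Section~\ref{sec:2.3.3} and needs only to be assembled. So let $C\subset\HH$ be irreducible with $[C]=s[C_2]$. By Lemma~\ref{lem:2.2} we have $C\subset\cG$, so every point of $C$ is of the form $(\ell_\xi,f_\xi\cdot\cK_\xi)$. The computations of Section~\ref{sec:2.3.3} show that $p|C$ is constant — say $p(C)=(\ell,\cK)$ with $\cK$ of colength $c=b-a+1$ — and that $\kappa|C$ is injective onto a curve $\cC:=\kappa(C)$ of degree $s$ (using $[\kappa(C)]=s[\kappa(C_2)]$, that $\kappa(C_2)$ is a line, and that $\kappa|C_2$ is an isomorphism). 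Since a point of $\cG$ is recovered from its images under $p$ and $\kappa$, the point over $\langle f\rangle\in\cC$ is precisely $(\ell,f\cdot\cK)$, whence $C=\Set{(\ell,f\cdot\cK)\mid\langle f\rangle\in\cC}$.

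For the converse, fix $\ell\in P_1$, an ideal $\cK\subset\cO_{\mathbf{\P}}$ of colength $c=b-a+1$ on $\mathbf{\P}=\Proj(P/\ell P(-1))$, and an irreducible curve $\cC$ of degree $s$ whose points are classes $\langle f\rangle$ of forms $f$ of degree $d=a-1$ on $\mathbf{\P}$; put $\cJ_f:=(\ell,f\cdot\cK)$, the ideal of $\cO_{\P^3}$ determined by this data. Since $f$ is a nonzerodivisor, $f\cdot\cK\cong\cK(-d)$, and since $H^1(\cO_{\P^3}(n-1))=0$ one gets $h^0(\cJ_f(n))=\binom{n+2}{3}+h^0(\cK(n-d))$; for $n\gg0$ this equals $\binom{n+2}{3}+\binom{n-a+3}{2}-(b-a+1)=Q(n)$, so $\cJ_f$ has Hilbert polynomial $Q$. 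Thus the universal form over $\cC$ yields a flat family of such ideals, i.e.\ a morphism $\cC\to\HH$ with image $C$; composing with $\kappa$ recovers the closed immersion $\cC\hookrightarrow\Hilb_q(\P^3)$ of Section~\ref{sec:2.3.3}, so $C$ is a closed curve isomorphic to $\cC$.

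It remains to show $[C]=s[C_2]$, and I compute $(\cM_n\cdot C)$ for $n\gg0$. By Section~\ref{sec:1.1}, $\cM_n|_C$ is the inverse of the determinant of the rank-$Q(n)$ bundle on $C\simeq\cC$ with fibre $H^0(\cJ_f(n))$. That bundle contains the trivial subbundle $\ell\cdot P_{n-1}\otimes\cO_\cC$ (as $\ell$ is constant along $C$); by the remarks above its quotient $\cR_n$ is a rank-$r_n$ subbundle of $H^0(\cO_{\mathbf{\P}}(n))\otimes\cO_\cC$ with fibre $f\cdot H^0(\cK(n-d))$, where $r_n:=h^0(\cK(n-d))$, and multiplication by the universal $f$ identifies $\cR_n$ with $\cO_\cC(-1)\otimes H^0(\cK(n-d))$. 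Hence $\det\cR_n\cong\cO_\cC(-r_n)$ and $(\cM_n\cdot C)=r_n\deg\cO_\cC(1)=r_n s$. For $n\gg0$, $r_n=\binom{n-a+3}{2}-(b-a+1)=\binom{n-a+2}{2}+(n-b+1)=(\cM_n\cdot C_2)$ by Lemma~\ref{lem:1.5}, so $(\cM_n\cdot C)=s\,(\cM_n\cdot C_2)$ for all large $n$. Writing $[C]=q_0[C_0]+q_1[C_1]+q_2[C_2]$ by Theorem~\ref{thm:1.2} and using Lemma~\ref{lem:1.5}, both $(\cM_n\cdot C)$ and $s\,(\cM_n\cdot C_2)$ are polynomials in $n$ agreeing for $n\gg0$, hence everywhere; so $q_0=q_1=0$, $q_2=s$, and $[C]=s[C_2]$ by Theorem~\ref{thm:1.1}.

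The main obstacle is the bundle bookkeeping in the last paragraph: one must check that the bundle with fibre $H^0(\cJ_f(n))$ arises by base change (valid for $n\geq a-3$ by Section~\ref{sec:1.5}), that $\cR_n$ is a subbundle of constant rank $r_n$ — which uses that $f$ never vanishes along $\cC$ and that $f\cdot\cK\cong\cK(-d)$ — and that multiplication by the universal form genuinely realizes $\cO_\cC(-1)\otimes H^0(\cK(n-d))\cong\cR_n$. Once this is in place, the conclusion is formal, via Lemma~\ref{lem:1.5} and Theorem~\ref{thm:1.1}.
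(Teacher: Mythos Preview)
Your proof is correct and follows essentially the same route as the paper. For the converse you split the rank-$Q(n)$ bundle with fibre $H^0(\cJ_f(n))$ into the trivial piece $\ell P_{n-1}\otimes\cO_\cC$ and a piece isomorphic to $\cO_\cC(-1)\otimes H^0(\cK(n-d))$, then take determinants; the paper does exactly this, phrased via the explicit embeddings $i:\P(S_d)\hookrightarrow\HH$ and the Pl\"ucker map, writing $\cL_n|i(\cC)\simeq i_*\cF^{\otimes h(n)}$ with $\cF\cong\cO_V(-1)$ the tautological subbundle and $h(n)=\dim K_{n-d}=(\cM_n\cdot C_2)$.
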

\begin{proof}
  Without restriction $\ell = x$. Let $n\geq b$ be a fixed natural
  number. We consider the embedding 
\[
    j: \HH \rightarrow W:= \Grass_{Q(n)}(P_n)
\]
defined by $\cI \mapsto H^0(\cI(n))$, and the Pl\"ucker-embedding 
\[
   p: W \rightarrow \P = \P^N
\]
defined by $E \mapsto \bigwedge^{Q(n)} E$, where $E$ is a subbundle of rank
$Q(n)$.

Put $S = P/x P(-1)$. One has a closed embedding $i: V:= \P(S_d) \rightarrow
\HH$ defined by $\langle f \rangle \mapsto (x, f \cdot \cK)$. Then the
closed embedding
$j \circ i : V  \rightarrow W$
is defined by
\[
      \langle f \rangle  \mapsto x P_{n-1} \oplus f \cdot K_{n-d}\,,
\]
where $K_n:= H^0(\P^2,\cK(n))$, $\P^2 = \Proj(S)$. 

Let $\cI$ be the universal ideal sheaf on $\P^3 \times \HH$ and 
$\cL_n := \bigwedge^{Q(n)} (p_2)_* \cI(n)$.

Now $\cO_W(-1)= p^* (\cO_\P(-1))$ and $\cL_n = j^* \cO_W(-1)$. Let 
\[
\cF \stackrel{\sim}{\longrightarrow} \cO_V(-1) \subset S_1 \otimes \cO_V
\]
be the universal rank-$1$ subbundle. Then 
\[
  \cL_n | i(\cC) = \bigl(\bigwedge^{Q(n)}(x P_{n-1} \oplus i_*(\cF)
  \otimes_k K_{n-d} \bigr)| i(\cC) = 
\bigl(\bigwedge^N x P_{n-1} \otimes \bigwedge^{h(n)} i_*(\cF) \otimes_k 
K_{n-d} \bigr)| i(\cC)\,,
\]
where $N = \tbinom{n-1+3}{3}$ and $h(n) = \tbinom{n-a+2}{2} +
\tbinom{n-b+1}{1}= \dim K_{n-d}$. It follows that 
\[
  \cL_n | i(\cC) \simeq i_* \cF^{\otimes h(n)}| i(\cC)
\]
and, because of $i^*i_* \cF = \cF$, one obtains 
\[
 \bigl(i_* \cF^{\otimes h(n)} \cdot i(\cC) \bigr) = 
 \bigl(i^* i_* \cF^{\otimes h(n)} \cdot \cC \bigr) = h(n) (\cF \cdot \cC)\,.
\]
As $h(n) = \tbinom{n-a+2}{2} +\tbinom{n-b+1}{1} = (\cM_n\cdot C_2)$ it follows
that 
\[
 (\cM_n\cdot C) = - (\cL_n \cdot i(\cC)) = - (\cM_n\cdot C_2) \cdot (\cF \cdot
 \cC) 
  = (\cM_n\cdot C_2) \cdot ( \cO_V(1) \cdot \cC) = (\cM_n\cdot C_2)\cdot s\,.
\]
But then $[C] = s \cdot [C_2]$ follows.
\end{proof}

\section{Cycles without  $C_1$-component}
\label{sec:2.4}

\subsection{Notations}
\label{sec:2.4.1}
 
 In the following $C\subset \HH$ is an irreducible curve such that $C \sim
 q_0C_0 + q_2 C_2$ and $q_2\neq 0$. If $\xi\in C(k)$, there is an open,
 non-empty set $U\subset \GL(4,k)$ such that $g(\xi)$ is Borel-normed for
 all $g\in U$, i.e.~$g(\xi)\in W_H(b)$, where $b\in \HH(k)$ corresponds to a
 Borel-ideal $\cB$ (as for the notation, cf.~Appendix~\ref{cha:H}). If $\xi$
 and $C$ are replaced by $g(\xi)$ and $g(C)$, then $\xi$ and $g(\xi)$ have
 the same Hilbert function, which is denoted by $\Phi$, and $[C]=[g(C)]$. To
 simplify the notation, we write $X=\P^3_k$, $B=B(4;k)$. 

 From~\cite[Lemma 4]{G88} it follows that $b\leftrightarrow \cB$ has the
 Hilbert function $\Phi$, too. (According to Appendix~\ref{cha:H}, the
 Hilbert function $h(\xi)$ of a point $\xi\in \HH$ is defined as the Hilbert
 function of the ideal $\cJ \leftrightarrow \xi$.) Replacing $U$ by a
 smaller open subset $V$, we can assume that $g(\xi) \in U(t)$ and
 $[g(\xi)]' = r(g(\xi))\in U(z)$, and we write again $U$ instead of $V$. In
 order to simplify the notation we write again $\xi$ and $C$ instead of
 $g(\xi)$ and $g(C)$. If $f\in P_n$, and $f'\in S_n$ is the image under the
 canonical map $P \longrightarrow P/t P(-1)$, then $[\inn(f)]'= \inn(f')$,
 where $\inn(0)=0$ and the order of the monomials in $S$ is the induced 
order of the monomials in $P$ (cf.\ Appendix~\ref{cha:H}). It follows that
$\xi'\in W_H(b')$, where now $H = \Hilb^d(\P^2)$ and the prime as usual
denotes the image under the restriction map $r:U(t) \longrightarrow
\Hilb^d(\P^2)$, which is induced by $P \rightarrow S$. Hence $\xi'$ and $b'$
have the same Hilbert function, which is denoted by $\phi$.
\subsection{}
\label{sec:2.4.2}

\begin{auxlemma}
  \label{auxlem:2.6}
  The point $b'$ corresponds to $(x,y^d)$, hence $[g(\xi)]'$ has maximal
  Hilbert function for all $g\in U$.
\end{auxlemma}
\begin{proof}
  Let $C_*$ and $\xi_*$ be as in Lemma~\ref{lem:2.1}. $C_*$ is a connected
  union of curves of the form
\begin{figure}[H]
\centering
\begin{center}
  \begin{minipage}{15cm*\real{0.7}} 
    \begin{tikzpicture}[scale=0.7]
   \filldraw (4,3) circle (1mm);
   \draw (4,3) node[below=2pt] {$b_1$}; 
   \filldraw (11,3) circle (1mm);
   \draw[below] (11,3) node {$b_2$};

   \draw[\Red,thick] (4,3) to [out=110, in = 180] (4,6);
   \draw[\Red,thick] (4,6) to [out=0, in = 80] (4,3);
   \draw (5,6) node[right] {$D_1$};

   \draw[\Red,thick] (4,3) to [out=160, in = 210] (1.5,4);
   \draw[\Red,thick] (1.5,4) to [out=30, in = 130] (4,3);
   \draw (2,5) node[right] {$D_2$}; 
   \draw[dotted] (2,3) arc (220:300:2cm);

   \draw[\Red,thick] (4,3) to [bend left]  (11,3);
   \draw[] (7.5,4.5) node {$A$}; 

   \draw[\Red,thick] (4,3) to [bend right=10]  (11,3);

   \draw[\Red,thick] (11,3) to [out=100, in = 190] (12,5.5);
   \draw[\Red,thick] (12,5.5) to [out=10, in = 70] (11,3);
   \draw (13,6) node[] {$E_1$}; 

   \draw[\Red,thick] (11,3) to [out=20, in = 100] (13,4);
   \draw[\Red,thick] (13,4) to [out=-80, in = -10] (11,3);
   \draw (13,2.5) node[right] {$E_2$}; 

   \draw[dotted] (9,3) arc (220:300:2cm);
    \end{tikzpicture}
  \end{minipage}
\end{center}
\caption{}
 \label{fig:2.3}
 \end{figure}
where $D_i$, $E_i$ are combinatorial cycles, $A$ is an algebraic cycle, and
$b_1$ and $b_2$ are $B$-fixed points. From Conclusion~\ref{concl:1.1}
follows that only combinatorial cycles of type~$2$ or $3$ can occur. If
$D\in \Set{ D_i }$ is such a cycle of type $3$,
i.e.~$D=\Set{\psi^3_\alpha(\eta) }^-$, then by~\ref{sec:1.3.1}
Remark~\ref{rem:1.1} it follows that $\eta'$ is $B(3;k)$-invariant, hence
$\eta'= b_1'$. If $A=\Set{ \sigma(\lambda)\zeta}^-$ two cases can occur:
$\sigma(\lambda)$ operates by $x \mapsto x, \; y \mapsto y,
\; z \mapsto z, \; t \mapsto \lambda t$, if $\rho_3 \neq 0$ or
$\sigma(\lambda)$ operates by $x \mapsto x, \quad y \mapsto y,
\quad z \mapsto \lambda z, \quad t \mapsto t$, if $\rho_3 = 0$
(see~\ref{sec:1.3.2}). If $\rho_3 \neq 0$, then $\zeta'$ is invariant under
$T(3;k)$ and under $U(3;k)$, as $\zeta$ is invariant under $T(\rho)$ and
$U(4;k)$. If $\rho_3=0$, then by~\ref{sec:1.3.2} Remark~\ref{rem:1.2}
follows that $\zeta'$ is $B(3;k)$-invariant. In both cases we get $r(A)$ is
the point $\zeta'$, hence $\zeta' = b'_1 = b'_2$.  If $D\in \Set{ D_i }$,
$D=\Set{ \psi^2_\alpha(\eta) }^-$ is of type $2$, then again two cases are
possible: If $(\cM_n\cdot D)$ is constant, then $\eta'$ is invariant under
$\psi^2_\alpha$, hence $\eta'$ is $B(3;k)$-invariant and $\eta'=b'_1$. If
$(\cM_n\cdot D)$ is not constant, by Corollary~\ref{cor:1.3} in
Section~\ref{sec:1.3} $\eta'$ has maximal Hilbert function. As $b'_1 =
\lim_{\alpha\to \infty} \psi^2_\alpha (\eta')$, $b'_1$ has maximal Hilbert
function, too. Now $C_*$ is contained in the fixed point scheme $\HH^\Gamma
\subset U(t)$, hence each point of $r(C_*)$ has maximal Hilbert function and
from Lemma~\ref{lem:2.1} the statement of the Aux-Lemma~\ref{auxlem:2.6}
follows.
\end{proof}
\subsection{}
\label{sec:2.4.3}
 We take up the initial situation of~\ref{sec:2.4.2}.
 Then 
\[
g(\xi)' \leftrightarrow g(\cJ) + t\cO_X(-1)/t\cO_X(-1)
\]
 has maximal
 Hilbert function hence equals an ideal $(h,f) \subset P/tP(-1)$, where
 $h\in P_1/t\cdot k$, $f \in P_d/tP_{d-1}$ not divisible by $h$.

 If we apply $u=g^{-1}$ to this ideal, we obtain $ \ell = u(t)$, $\cJ\in
 U(\ell)$ and
\[
\cJ + \ell\cO_X(-1)/\ell\cO_X(-1) = \bigl(u(h),u(f)\bigr)
\subset P/\ell P(-1)
\]
has maximal Hilbert function.

\begin{conclusion}
   \label{concl:2.1}
   Let $Z\subset X = \P^3$ be the curve, which belongs to $\xi\in C(k)$. If $L =
   V(\ell)\subset X$ is a general hyperplane, then the ideal in $\cO_L$,
   which defines the subscheme $L\cap Z \subset L$, has maximal Hilbert
   function and all points of $Z \cap L$ lie on the same line. \hfill $\qed$
\end{conclusion}
 
We take $Z$ as in Conclusion~\ref{concl:2.1} and have $Z_\red = \bigcup Z_i$
the decomposition in irreducible components.  Assume $\dim Z_1 =1$ and $Z_1$
non-degenerate. Then from~\cite[Prop. 18.10]{Har} it follows that $Z_1 \cap
L$ is non-degenerate, contradiction. It follows that all $Z_i$ such that
$\dim Z_i =1$ are degenerate, i.e.~$Z_i\subset V(\ell_i)$, where $\ell_i \in
P_1$, hence $Z_i \cap L \subset V(\ell,\ell_i)$. Assume $\dim Z_1 = \dim Z_2
= 1$ and $\ell_1 \neq \ell_2$. Then 
\[
Z_1 \cap Z_2 \cap L \subset
V(\ell,\ell_1) \cap V(\ell,\ell_2) = \emptyset,
\]
if $\ell$ is general, contrary to Conclusion~\ref{concl:2.1}.

 \begin{conclusion}
   \label{concl:2.2}
  Let $Z$ be as in Conclusion~\ref{concl:2.1}. Each $1$-dimensional
  irreducible component $Z_i$ of $Z_\red$ has the form $V(\cP_i)$, where
  $\cP_i = (\ell,F_i) \subset \cO_X$ is a prime ideal, $\ell \in P_1 - (0)$
  is independent of $i$, and $F_i \in P/\ell P(-1)$ is an irreducible form
  of degree $d_i$. If one replaces $\xi \leftrightarrow Z$ by $g(\xi)
  \leftrightarrow g(Z)$, $g$ a suitable linear transformation of $X$, one
  can achieve that $\ell = z$ (independent of $i$). \hfill $\qed$
 \end{conclusion}

\subsection{Assumption $\mathbf{I}$}
\label{sec:2.4.4}

Let $Q\subset \cO_X$ be primary to the prime ideal $\cP = (z,F)$, $F\in
k[x,y,t]$ irreducible of degree $d$, and $e$ is the multiplicity of $Q$. If
$\ell$ is a linear form in $P$, the images under the canonical map 
$P \rightarrow P/\ell P(-1)=:S$ are denoted by ${}'$.

\begin{assumptionI}
\index{assumption! I}
  If $\ell$ is a general linear form, the image $Q'$ of $Q$ in $S$ contains
the variable $z$, i.e.~$z\in Q':=Q+\ell P(-1)/\ell P(-1)$.
\end{assumptionI}

There is a filtration $Q = Q_0 \subset \cdots \subset Q_r = P$ such that
$Q_i/Q_{i-1} \simeq (P/\cP_i) (-\ell_i)$ modulo $\equiv$.
Here $\equiv$
denotes ``equality of the components of sufficient high degree''. 
$Q_i$ is a graded $P$-module, $\cP_i \subset P$ a graded prime ideal,
$Q\subset \cP_i$ for all $i$ (\cite[Prop. 7.4, p. 50]{H}), and either $\cP_i
= \cP$ or $\cP_i$ is a maximal ideal in $\Proj(P)$. As the multiplicity is
$e$, one has $\cP_i= \cP$ for $e$ indices $i$. Let $\ell$ be general for
$Q$, e.g.\ $\ell = \alpha x + \beta y + \gamma z +t$. Applying the
transformation $x \mapsto x, \; y \mapsto y,
\; z \mapsto z, \; t \mapsto t -(\alpha x + \beta y + \gamma
z)$, one can assume without restriction $\ell = t$.  We consider the images
under the map $P \longrightarrow P/t(P(-1) = S = k[x,y,z]$ and we obtain a
filtration $Q' = Q'_0 \subset \cdots \subset Q'_r = S$ and surjective morphisms
(modulo $\equiv$) 
\[
(S/\cp_i)(-\ell_i) \longrightarrow Q'_i/Q'_{i-1},\quad
\cp_i:=\cP'_i\,.
\]
If $\cP_i = (\ell_1, \ell_2, \ell_3)$, $\ell_i \in P_1$ are linearly
independent, then $\cp_i = S_+$ and $Q'_i/Q'_{i-1} \equiv (0)$. Changing the
numeration, one can achieve that $Q' = Q'_0 \subset \cdots \subset Q'_s= S$,
$(S/\cp)(-\ell_i) \longrightarrow Q'_i/Q'_{i-1}$ surjective mod $\equiv$,
$\cp = \cP'$, hence $s\leq e$.  The Hilbert polynomial of $(P/\cP)$ has
the form $dn+\mathrm{const.}$, hence $\chi(P/Q) = de\cdot n +c$.  It follows
that
\[
  de = \chi(S/Q') = \sum^s_1 \chi(Q'_i/Q'_{i-1}) \leq  \sum^s_1 \chi(S/\cp)\,.
\]
Now $S/\cp = S/(z,f) = R/fR(-d)$, where $f = F'$ and $R= k[x,y]$. Hence
$\chi(S/\cp) =d$, $s=e$, $\chi(Q'_i/Q'_{i-1}) = \chi(S/\cp)$ and
$(S/\cp)(-\ell_i) \longrightarrow Q'_i/Q'_{i-1}$ is an isomorphism modulo
$\equiv$, $1 \leq i \leq e$.  As $z \in Q'$, one also has $z\in Q'_i$ and
putting $\cq = Q'/zS(-1)$ and $\cq_i = Q_i'/zS(-1)$ we get a filtration $\cq
= \cq_0 \subset \cdots \subset \cq_e = R$ and isomorphisms modulo $\equiv$
\[
  [R/fR(-d)](-\ell_i) \longrightarrow \cq_i/\cq_{i-1}\,,
\]
which are defined by multiplication with a form $g_i\in R_{\ell_i}$. We want
to show that then $\cq_i = f^{e-i}R(i-e)$ for all $0 \leq i \leq e$. This is
done by descending induction. The assertion is true if $i=e$, and we  assume
$\cq_i
\equiv f^{e-i} R(e-i)$. 
Changing the notation, we then have to show:

 \begin{auxlemma}
\label{auxlem:2.7}
   If $g \in R_\ell$, $I \subset f^n R(-dn)$ is a graded ideal and 
\[
  [R/fR(-d)](-\ell) \stackrel{\cdot g}{\longrightarrow}  f^n R(n-d)/I
\]
is an isomorphism mod $\equiv$, then $I \equiv f^{n+1} R(-(n+1)d)$.
 \end{auxlemma}
 \begin{proof}
   As $f^n R(-dn)\equiv g R(-\ell) +I$,  it follows that $gR_i \subset
   (f^n)$ if $i\gg 0$, hence $g = f^n \cdot h$, $h\in R_m$, $m:=\ell
   -dn$. As $I_i \subset (f^n)$, if $i\gg 0$, $I\equiv f^n J(-nd)$, $J
   \subset R$ a graded ideal. As $R(-dn) \equiv h R(-\ell) + J(-nd)$, we
   have $R\equiv h\cdot R(-m)+J$, hence $R\equiv hR(-m)+pR(-\mu)$, where
   $p\in R_\mu$. As by assumption
\[ 
[R/fR(-d)](-\ell) \xrightarrow{\cdot f^n \cdot h} f^n
   R(-nd)/f^npR(-\mu-nd) 
\]
is an isomorphism mod $\equiv$, this is also true for
$[R/fR(-d)](-m) \stackrel{\cdot h}{\longrightarrow}  R/pR(-\mu)$,
hence $h\cdot fR(-d-m) \subset p R(-\mu)$ mod $\equiv$.  As $(h,p) \equiv
R$, $h$ and $p$ have no common divisor, hence $f = pu$, $u \in R_{d-\mu}$.
Now $pR(-\mu-m)\cdot h \subset pR(-\mu)$, and as multiplication by $h$ is
injective, $pR(-\mu-m) \subset f R(-d-m)$, hence $f$ is a divisor of $p$ and
$f=pu$, $u\in k^*$.
 \end{proof}

 \begin{conclusion}
   \label{concl:2.3}
 Suppose $\cP = (z,F)$, $F\in k[x,y,t]_d$ irreducible, hence $\cP$ is a
 prime ideal. Let $Q$ be  primary  to $\cP$ with multiplicity $e\geq 1$. If
 for a general $\ell \in P_1$ the image $Q'$ of $Q$ in $P/\ell P(-1)$
 contains the variable $z$, then $Q' \equiv (z,f^e)$ where $f$ is the image
 of $F$ in $P/\ell P(-1)$.\hfill $\qed$
 \end{conclusion}
 
\subsection{Assumption $\mathbf{II}$}
\label{sec:2.4.5}

Let now be $C \subset \HH$ an \emph{irreducible} curve without
$C_1$-component, but with non-vanishing $C_2$-component. Let be $\xi \in
C(k)$, $\cI \leftrightarrow \xi$ and $Z\subset X$ the corresponding
subscheme. Then Conclusions~\ref{concl:2.2} and~\ref{concl:2.1} give:
\begin{enumerate}[(i)]
\item The $1$-dimensional irreducible components of $Z_\red$ are defined by
  prime ideals $\cP_i = (z,F_i)$, $F_i\in P/zP(-1)$ irreducible of degree
  $d_i>0$.
\item If $\ell \in P_1$ is general, $Y:=\Proj(P/\ell(P(-1))$, then
  $\cI':=\cI + \ell \cO_X(-1)/\ell \cO_X(-1) \subset \cO_Y$ has maximal
  Hilbert function, hence $\cI' = (h,g)$, where $h\in P/\ell P(-1)$ is a
  linear form and $g\in P/(\ell,h) P(-1)$ has degree $d$.
\end{enumerate}

We deduce that $Z\cap V(\ell) = V(\cI')$ is contained in the line $V(h) \cap
V(\ell)$. But as, with exception of finitely many isolated points, $Z$ is
contained in $V(z)$, $Z\cap V(\ell)$ is contained in the line $V(z) \cap
V(\ell)$, too.

\begin{assumptionII}
 \index{assumption! II}
  If $\ell$ is general, $Z\cap V(\ell)$ consists of more than one point.
\end{assumptionII}

 \begin{conclusion}
   \label{concl:2.4}
  If this assumption is fulfilled, then for general $\ell$ the lines
  $V(h,\ell)$ and $V(z,\ell)$ are equal, hence
  \begin{equation}
    \label{eq:2.1}
    \cI' = (z,g)\,.
  \end{equation}
\hfill $\qed$
\end{conclusion}

Now let $\cI = \bigcap\limits^r_1 Q_i \cap R$ be a reduced primary
decomposition, where $Q_i$ is primary to $\cP_i = (z, F_i)$ with
multiplicity $e_i$, $F_i\in k[x,y,t]$ irreducible of degree $d_i$ and $R$
the punctual part.  Then $d= d_1 e_1 + \cdots + d_r e_r$ is the degree of
$Z$. From~\eqref{eq:2.1} it follows that $z\in Q'_i$, hence each $Q'_i$
fulfills the Assumption I. Conclusion~\ref{concl:2.3} then gives
$Q'_i = (z,f_i^{e_i})$, $f_i$ the image of $F_i$ in $P/\ell P(-1)$, and we
note that for a general choice of $\ell$, no two of the $f_i$ have a common
divisor.

Put $\sR_i =(z,F_i^{e_i})$, $1\leq i \leq r$, \quad $\sR = \bigcap^r_1 \sR_i$,
\quad $\cL = \cI + \sR$.

Then for general $\ell$ one has: 
\[
\sR' \subset \bigcap^r_1 \sR'_i = \bigcap^r_1 (z,f_i^{e_i}) = (z,f) \,,
\]
where $f = f_1^{e_1} \cdots f_r^{e_r}$. It follows that $\bigcap^r_1 Q'_i =
(z,f) \subset \cO_{\P^2}$ has the colength  $d$, $\P^2 = V(\ell)$. On the
other hand, 
\[
\cI' = \bigl(\bigcap Q_i\bigr)' \subset \bigcap Q'_i
\]
 also 
has  colength  $d$ in  $\P^2$, hence 
\[
\cI' = \bigcap^r_1 Q'_i \supset \sR'\,,
\]
and we get $\cL' = \cI' + \sR' = \cI'$.

If $F:=F^{e_1}_1 \cdots F^{e_r}_r$, then $\sR = (z,F)$ has the Hilbert
polynomial $Q^*(n) = \tbinom{n-1+3}{3}+\tbinom{n-d+2}{2}$. As $\cL' = \cI'$,
the ideal $\cL$ also has the Hilbert polynomial $Q^*$, hence $\sR=\cL
\supset \cI$ and $\cI \subset \sR$ has the colength $c=b-a+1$.

In order to formulate a preliminary result, we have to introduce some
notations:

As always $Q(n) = \tbinom{n-1+3}{3}+\tbinom{n-a+3}{2}+\tbinom{n-b+1}{1}$, 
$Q^*(n) = \tbinom{n-1+3}{3}+\tbinom{n-d+2}{2}$, $d=a-1$. Let $\FF$ be the
Flag--Hilbert scheme
\[
 \FF = \Set{ (\cI,\cJ) \in \HH_Q \times \HH_{Q^*} | \cI \subset \cJ }\,.
\]
This is a closed subscheme of $\HH_Q \times \HH_{Q^*}$. Let $\pi:\FF
   \to \HH_Q$ be the projection and put $\cZ:=(\pi(\FF))_\red$. We have
   obtained so far:

 \begin{conclusion}
   \label{concl:2.5}
  If $\xi\in C(k)$ corresponds to the subscheme $Z\subset X$, and if $Z$
  fulfills the Assumption II, then $\xi\in \cZ$. \hfill $\qed$
 \end{conclusion}

 \subsection{}
 \label{sec:2.4.6}

 We now consider the case that Assumption II is \emph{not}
 fulfilled. Let $M_1$ be the set of points $\xi\in C(k)$ such that the curve
 $\xi \leftrightarrow C_\xi\subset X$ is completely degenerate. This means
 the following: $\xi \leftrightarrow \cI=Q\cap R$, $Q$ is primary to a prime
 ideal of the form $(F_1,F_2)$, where $F_1,F_2\in P_1$ are linearly
 independent, and $R$ is the punctual part. In other words, $C_\xi$ is a
 line of multiplicity $\geq 1$ and possibly some more points, isolated or
 not. If $\xi\in M_2:=C(k) - M_1$, the $C_\xi$ looks like Fig.~\ref{fig:2.1}
 or Fig.~\ref{fig:2.2} plus some points, which are irrelevant:

\begin{figure}[H]
\centering
  \begin{minipage}[b]{7cm*\real{0.7}} 
    \begin{tikzpicture}[scale=0.7]
      \draw[\Red,thick] (0,1) -- (0,5) (7,1) -- (7,5);
      \draw[\Red,thick, dotted] (4,1) -- (4,5);
    \end{tikzpicture}
\caption{}
 \label{fig:2.1}
  \end{minipage}
\qquad \qquad
  \begin{minipage}[b]{7cm*\real{0.7}} 
    \begin{tikzpicture}[scale=0.7]
 \draw[\Red,thick] (0,1) .. controls (1,4) .. (4,3) .. controls (5,2.66) .. 
  (6,5);
    \end{tikzpicture}
\caption{}
 \label{fig:2.2}
  \end{minipage}
\end{figure}

From Conclusion~\ref{concl:2.5} then $\xi\in \cZ$ follows.

Now a family of completely degenerate curves cannot have in its closure (in
$\HH$) a curve as in Fig.~\ref{fig:2.1}, hence $M_1$ is closed in $C(k)$,
hence $M_1=C(k)$ or $M_1$ is a finite set of points. In the last case it
follows that $C\subset\cZ$, as $\cZ$ is closed. It remains the case that
$M_1= C(k)$, i.e.~all $C_\xi$ are completely degenerate.

Let $\phi$ be the smallest Hilbert function of ideals $\cI_\xi
\leftrightarrow \xi \in C(k)$. Then $C\cap \HH_\phi$ is open and non-empty in
$C$. Applying a suitable linear transformation, which leaves $z$ invariant,
we can achieve, without restriction, that $\tilde{U}(t):=C\cap \HH_\phi \neq
\emptyset$.

Let $r: U(t) \rightarrow H^d = \Hilb^d(\P^2)$ be the restriction morphism
defined by $P\to P/tP(-1)=S$. If $r(\tilde{U}(t))$ would be a single point,
all $C_\xi$ would run through a fixed point on $V(t)$, if $\xi\in
\tilde{U}(t)$.

Now take a general $\ell$ and consider the restriction morphism 
\[
   r:U(\ell)\rightarrow H^d = \Hilb^d(\Proj P/\ell P(-1))
\]
which is defined by restriction 
modulo $\ell$. If, for general $\ell$, the set $\tilde{U}(\ell):=C \cap
\HH_\phi \cap U(\ell)$ would be mapped by $r$ to a point in $H^d$, all curves
$C_\xi$ would run to a fixed point in $V(\ell)$, if $\xi \in
\tilde{U}(\ell)$. But then $C_\xi$ would be the same line with the same
multiplicity (possibly with different scheme structure) for all $\xi \in
\tilde{U}(t) \cap \tilde{U}(\ell) \neq \emptyset$, hence the Hilbert-Chow
morphism would map $C$ to a single point, which is not possible, as $C$ has
a $C_2$-component (see Chapter~\ref{cha:3}, Section~\ref{sec:3.5.3}).

By applying once more again a suitable linear transformation if necessary,
we can assume without restriction, that the closure of $r(\tilde{U}(t))$
is a curve $C' \subset H^d = \Hilb^d(\P^2)$.

We now let $\G_m$ operate by $\sigma(\lambda): x \mapsto x, \; y \mapsto
y, \; z \mapsto z, \; t \mapsto \lambda t$.  Then let $\sigma$ be
the morphism $U(t) \cap \HH_\phi \rightarrow G_\phi$, which is defined by
\[
   \xi \mapsto\xi_0 :=\lim_{\lambda\to 0} \sigma(\lambda) \xi
\]
where $\HH_\phi$ is taken as a reduced subscheme of $\HH$
(see Appendix~\ref{cha:G}). One has a commutative diagram
\[
 \xymatrix{
   U(t) \cap \HH_\phi \ar[rr]^\sigma \ar[dr]_r & & G_\phi\ar[dl]^\rho \\
                                         & H^d &  
}   
\]
where $\rho$ is the restriction morphism defined by $t$. Then
$\sigma(\tilde{U}(t)) \subset G_\phi$ is an irreducible curve, its closure
in $G_\phi$ is denoted by $D$, hence $\rho(D)=C'$.

Let $L:=\lim\limits_{\lambda\to 0} \sigma(\lambda) C$. Then $L$ contains the
irreducible curve $D$ with multiplicity $\geq 1$. Because of $[C] = [L]$,
$L$ has no $C_1$-component, too, hence 
 $[D] = \mu[C_0] + \nu [C_2]$
and
\[
 (\cM_n \cdot D) - (\cM_{n-1} \cdot D)= \nu (n-d+1)\,.
\]
Let be $\cI$ the universal ideal sheaf on $\P^3 \times \HH$ and let $\cF =
\cO_{\P^3 \times \HH}/ \cI$. Let $\sJ:=\cI |\, \P^3 \times G_\phi$ and $\sG:=
\cF |\, \P^3 \times G_\phi$.  Then
\[
  0 \longrightarrow \sG(-1)\otimes k(y)
  \stackrel{t}{\longrightarrow} \sG \otimes k(y)
\longrightarrow \sG' \otimes k(y) \longrightarrow 0
\]
is exact for all $y\in G_\phi$, where $\sG' = (\cF/t\cF(-1)) |\, \P^3 \times
G_\phi$, hence $\sG'$ is flat over $G_\phi$.  If $\cM'_n$ is the tautological
line bundle on $H^d$, then $(\cM'_n \cdot C') = \nu (n-d+1)$, hence 
$[C'] = \nu \cdot [F]$, where 
\[
 F:= \Set{ (x,y^{d-1}(\alpha y+z)) | \alpha \in k}^-
\] 
(see Appendix~\ref{cha:D}).
 As in the case of $\P^3$ (see Prop.~\ref{prop:2.2}) it follows that $C' =
 \Set{(h,f) | f \in \cC}$ where $h$ is a fixed linear form in $S$ and $\cC$
 is a curve of degree $\nu$ in $\P(R_d)$, $R = P/(\ell,h)P(-1)$.  It follows
 that $\cI'_\xi = (h,f)$, $h$ independent of $\xi \in \tilde{U}(t)$, $f\in
 R_d$ and $V(\cI'_\xi)$ is a point with multiplicity $d$ on the line $V(h)$.
 It follows that $V(t,\cI_\xi) \subset V(h,t)$ for all $\xi \in
 \tilde{U}(t)$.

 If one replaces the operation $\sigma(\lambda)$ by the $\G_m$-operation
 $\tau(\lambda)$, which is defined by the projection onto the general plane
 $V(\ell)$ from a point $P_0\centernot\in V(\ell)$ (see
 Appendix~\ref{cha:A}), then the analogous argumentation shows:

 \begin{conclusion}
   \label{conc:2.6} 
  Let $\ell$ be a general linear form and $\cI'_\xi = \cI_\xi +
  \ell\cO_X(-1)/\ell\cO_X(-1)$. Then
  \begin{enumerate}[(i)]
  \item $\cI'_\xi = (h,f)$ and $V(\cI'_\xi) = V(\ell,\cI_\xi) \subset
    V(\ell,h)$ for all $\xi \in \tilde{U}(\ell)$, where the linear form
    $h\in P/tP(-1)$ does not depend on $\xi \in \tilde{U}(\ell)$ and $f \in
    P/(\ell,h)P(-1)$ has degree $d$.
  \item $\bigcup V(\cI'_\xi)$, where $\xi$ runs through $\tilde{U}(\ell)$, 
   is an infinite set.
\hfill $\qed$
  \end{enumerate}
\end{conclusion}

Now $C_\xi \leftrightarrow \cI_\xi$ is completely degenerate, i.e.~the
unique irreducible, reduced, $1$-dimen\-sio\-nal component of $(C_\xi)_\red$
is a line $L_\xi\subset \P^3$ and, according to Conclusion~\ref{concl:2.6}, 
$V(\ell)\cap L_\xi$ is contained in a line in $V(\ell)$ for almost all 
$\xi \in \tilde{U}(\ell)$. Varying $\ell$ one sees this is possible only if
almost all $L_\xi$ are contained in one and the same plane $E$ for almost
all $\xi\in C(k)$. By applying a suitable linear transformation one can
achieve that $E=V(z)$, hence 
\[
  V(\cI_\xi) \cap  V(\ell) \subset V(z) \cap V(\ell)
\]
for general $\ell$ and almost all $\xi\in C(k)$. By
Conclusion~\ref{concl:2.6} $V(\cI_\xi) \cap  V(\ell) \subset V(\ell,h)$ for
almost all $\xi\in C(k)$ and hence the set $V(\ell,z) \cap V(\ell,h)$
contains more than 1 point.
It follows that $V(\ell, z) = V(\ell,h)$ hence $\langle \ell, z\rangle = 
\langle \ell, h\rangle$ for general $\ell$.  But then one has $h=\alpha z,
\alpha\in k$, and we get the equation
 \begin{equation}
   \label{eq:2.2}
   \cI'_\xi = (z,f) 
 \end{equation}
which is valid for general $\ell$ and for all $\xi\in U'(\ell)$, where
$U'(\ell)$ is an open non-empty subset of $\tilde{U}(\ell)$, and the form 
 $f\in P/(\ell,z)P(-1)$ of degree $d$ possibly depends on $\xi$.

We can write $\cI_\xi = Q\cap R$, where $Q$ is primary to $\cP=(z,F)$, $F\in
k[x,y,t]$ a linear form, $R$ the punctual part, both depending on $\xi\in
U'(\ell)$. Now $\ell$ is general for $\cI_\xi$ if $\xi\in
\tilde{U}(\ell)$ by definition, hence $\ell\centernot\in \cP$ and $V(R) \cap
V(\ell) =\emptyset$. With equation~\eqref{eq:2.2} it follows that 
$\cI'_\xi = Q' = (z,f)$, hence Assumption~I is fulfilled and the
same reasoning as in Section~\ref{sec:2.4.5} shows that  $\xi\in \cZ$ for
almost all $\xi\in C(k)$, hence for all $\xi\in C(k)$.

\begin{proposition}
  \label{prop:2.3}
  If $C\subset \HH$ is an irreducible curve such that $[C]= q_0[C_0] + q_2
  [C_2]$ and $q_2\neq 0$, then $C\subset \cZ$. \hfill $\qed$
\end{proposition}


\chapter{Tautological morphisms}
\label{cha:3}

The headline means morphisms \index{tautological morphism}
from $\HH$ to a projective space, which are
defined by means of the tautological line bundles $\cM_n$.  Let $f$
(respectively $f_n$) be the morphism defined by the globally generated line
bundle $\cL_1 \otimes \cL_2 = \cM^{-1}_{d-1} \otimes \cM_d$ (resp.\
$\cM^{-1}_{n-1} \otimes \cM_n$, if $n \geq d$). In this chapter $P =
k[x,y,z,t]$ and $S = k[x,y,z]$ as usual.

\section{Connection with a general hyperplane section}
\label{sec:3.1}

If $\ell = \alpha x +\beta y +\gamma z +t$, $\alpha,\beta,\gamma \in k$, is
a linear form, $U = U(\ell)$ denotes the non-empty open subset of $y \in
\HH$, such that
\[
0 \longrightarrow \cF(-1) \otimes_\HH k(y) \stackrel{\cdot
  \ell}{\longrightarrow} \cF \otimes_\HH k(y) \longrightarrow \cF'
\otimes_\HH k(y) \longrightarrow 0
\]
is exact. For abbreviation, we temporarily write $X = \HH \times_k \P^3$.
Let $\cF$ be the quotient of $\cO_X$ by the universal ideal sheaf $\cI$ on
$X$, and $\cF':=\cO_X /\cI + \ell \cO_X(-1)$. Then
\[
 0 \longrightarrow  \cF(-1) \otimes \cO_U  \stackrel{\cdot
   \ell}{\longrightarrow} \cF \otimes \cO_U 
  \longrightarrow  \cF' \otimes \cO_U   \longrightarrow 0
\]
is exact and $\cF' \otimes \cO_U$ is flat over $U$ with constant Hilbert
polynomial $d$.

Now let $A$ be a noetherian $k$-algebra and $\cI_i \leftrightarrow \xi_i\in
\HH(\Spec A)$ two points, which are mapped by $f$ to the same point of
$\P^N_k(A)$, if $f:\HH \to \P^N$ is defined by $\cL_1 \otimes \cL_2$. The
assumption is that the morphisms $\xi_i: \Spec A \to \HH$ factorize through
$U(\ell)$. An equivalent condition is that $\cI_i \otimes k(y) \in U(\ell)$,
$i=1, 2$, for all closed points $y \in \Spec A$. It follows that the ideals
$\cI'_i := \cI_i + \ell \cO_{\P^3} (-1) \otimes A / \ell \cO_{\P^3}
(-1)\otimes A$ are elements of $\Hilb^d(\P^2)(A)$, $i=1, 2$, $\P^2 \simeq
\Proj(P/\ell P(-1))$.

\begin{lemma}
   \label{lem:3.1}
  Under these assumptions one has $\cI'_1 = \cI'_2$.
\end{lemma}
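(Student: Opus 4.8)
The idea is to express the morphism $f$ on the locus $U(\ell)$ in terms of the restriction morphism $r:U(\ell)\to\Hilb^d(\P^2)$ and then reduce the claim $\cI_1'=\cI_2'$ to the injectivity of a Pl\"ucker-type map on $\Hilb^d(\P^2)$. First I would recall from Section~\ref{sec:1.5.2} that $\cM_{n-1}^{-1}\otimes\cM_n$ is globally generated with the explicit surjection \eqref{eq:1.24}, $E\otimes\cO_\HH\twoheadrightarrow(\bigwedge^{P(n-1)}\cF_{n-1})^{-1}\otimes\bigwedge^{P(n)}\cF_n=\cM_{n-1}^{-1}\otimes\cM_n$, built from the maps $\phi_{\ell_i}$ and from the splitting $\cF_n=\ell\cF_{n-1}\oplus\kappa(\text{monomials})$ on $U(\ell)$. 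The crucial point is that over $U(\ell)$ the ``new'' part $\cF_n/\ell\cF_{n-1}\simeq\cF_n'$ is locally free of rank $d$ and is canonically the pushforward of the structure sheaf of the hyperplane section $\cF'=\cO_X/\cI'$; so on $U(\ell)$ the line bundle $\cM_{n-1}^{-1}\otimes\cM_n$ is isomorphic to $\bigwedge^d\cF_n'=r^*\cM_n'$, where $\cM_n'$ is the tautological line bundle on $\Hilb^d(\P^2)$, and the globally generated sections matching $f_n$ correspond, under this isomorphism, to the pullback along $r$ of the Pl\"ucker coordinates that embed $\Hilb^d(\P^2)$ (for $n\geq d$, so that $\cM_n'$ is very ample on $\Hilb^d(\P^2)$, which parametrises length-$d$ subschemes of $\P^2$, i.e.\ $\Hilb^d(\P^2)=\Hilb^{P'}(\P^2)$ with $P'\equiv d$ and $d$-regular ideals).

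\textbf{Execution.} So the plan is: (1) Over $U=U(\ell)$ identify $\cF_n\otimes\cO_U\simeq\cF_{n-1}\otimes\cO_U\ \oplus\ \cF_n'\otimes\cO_U$ as $\cO_U$-modules via $\ell$ and a section $s$ of $\psi$ (constructed locally in Section~\ref{sec:1.5.2}), hence $\bigwedge^{P(n)}\cF_n\otimes\cO_U\simeq\bigwedge^{P(n-1)}\cF_{n-1}\otimes\cO_U\otimes\bigwedge^d\cF_n'\otimes\cO_U$; therefore $(\cM_{n-1}^{-1}\otimes\cM_n)|_U\simeq(\bigwedge^d\cF_n')|_U$. (2) Check that under this isomorphism the map \eqref{eq:1.24} restricted to $U$ becomes, up to the chosen trivialisations, the natural surjection $E\otimes\cO_U\twoheadrightarrow\bigwedge^d\cF_n'$ coming from the universal quotient $S_n\otimes\cO_U\twoheadrightarrow\cF_n'$ on $U$ (this is exactly \eqref{eq:1.22}, $\cS_n\to\cF_n'$ surjective on $U$ for $n\geq a-2$), i.e.\ the composite of $\bigwedge^d$ of this quotient with the inclusion $\bigwedge^d S_n\hookrightarrow E=\bigoplus_1^m\bigwedge^d S_n$. (3) Note $\cF_n'=p_*\cF'(n)$ and $\cF'=\cO_{\P^2\times\cdot}/\cI'$, so on $U$ the quotient $S_n\otimes\cO_U\twoheadrightarrow\cF_n'$ is precisely the data classifying the family $\cI'\in\Hilb^d(\P^2)(U)$; hence $\bigwedge^d$ of it, i.e.\ the collection of sections $f_n$ restricted to $U$, is $r^*$ of the Pl\"ucker embedding $\Hilb^d(\P^2)\hookrightarrow\P(\bigwedge^d S_n^{\vee})$. (4) Now apply this to the two points $\xi_1,\xi_2\in\HH(\Spec A)$ factoring through $U(\ell)$: $f(\xi_1)=f(\xi_2)$ (and $f=f_d$, or $f_n$ for any $n\geq d$ since these agree on the g.g.\ locus) forces $r(\xi_1)=r(\xi_2)$ in $\Hilb^d(\P^2)(A)$ because the Pl\"ucker embedding is a closed immersion, hence a monomorphism of functors; by definition of $r$ this says $\cI_1'=\cI_2'$.

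\textbf{Main obstacle.} The delicate step is (2)–(3): one must make the identification of $(\cM_{n-1}^{-1}\otimes\cM_n)|_{U(\ell)}$ with $\bigwedge^d\cF_n'$ \emph{natural} enough that the distinguished sections $f_n$ (images of a basis of $E$) go over exactly to the Pl\"ucker coordinates $\bigwedge^d(\text{monomial bases of }S_n)$ pulled back along $r$, with no stray twist. The content here is essentially the computation of $\phi_\ell$ carried out at the end of Section~\ref{sec:1.5.2}: on a small $V=\Spec A\subset U$ with $\cF_{n-1}$ free on a basis $n_1,\dots,n_p$ and $\cF_n'$ free on monomials $m_1,\dots,m_d$, one has $\phi_\ell(n_1\wedge\cdots\wedge n_p\wedge m_1\wedge\cdots\wedge m_d)=\ell n_1\wedge\cdots\wedge\ell n_p\wedge\kappa(m_1)\wedge\cdots\wedge\kappa(m_d)$, and after dividing by the generator $\ell n_1\wedge\cdots\wedge\ell n_p$ of $\bigwedge^{p}\cF_{n-1}$ this is exactly the Pl\"ucker coordinate of the subspace $H^0(\cI'(n))\subset S_n$ spanned by the complementary monomials. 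Once this book-keeping is in place, the descent from $f(\xi_1)=f(\xi_2)$ to $r(\xi_1)=r(\xi_2)$ is formal, using only that a closed immersion of $k$-schemes is a monomorphism and that $\Hilb^d(\P^2)\hookrightarrow\P^N$ via $\cM_n'$, $n\geq d$, is a closed immersion. Note one should also remark that the conclusion is independent of the auxiliary linear forms $\ell_1,\dots,\ell_m$ used to globally generate $\cM_{n-1}^{-1}\otimes\cM_n$: on $U(\ell)$ one may take $\ell$ itself among them, and the components of $f_n$ indexed by the other $\ell_i$ give, over $U(\ell)$, multiples of the same Pl\"ucker data (or vanish), so they impose no weaker condition.
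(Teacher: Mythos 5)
Your proposal is correct and follows essentially the same route as the paper: identify $(\cM_{n-1}^{-1}\otimes\cM_n)|_{U(\ell)}$ with $\bigwedge^d\cF'_n$ via the splitting $F_n=\ell F_{n-1}\oplus sF'_n$, check (this is the paper's commutative diagram built from the identity $(s\circ\pi)(y)-\kappa(y)=\mu(z)$, which dies upon wedging with $\ell x_1\wedge\cdots\wedge\ell x_p$) that the distinguished sections become the Pl\"ucker coordinates of $H^0(\cI'(n))\subset S_n$, and conclude from the fact that the Pl\"ucker map on the Grassmannian is a closed immersion together with $d$-regularity. The point you flag as the main obstacle is exactly the computation the paper carries out in step $1^\circ$ of its proof, and your closing remark about the auxiliary forms $\ell_1,\dots,\ell_m$ is handled in the paper by enlarging the covering to include $U(\ell)$ and factoring through $\P(E)\hookrightarrow\P(E\oplus\bigwedge^dS_d)$.
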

\begin{proof}
  $1^\circ$ We first recall the construction of the surjective homomorphism
\[
\bigoplus^m_1 \bigwedge^d S_n \otimes \cO_\HH \longrightarrow \cM^{-1}_{n-1}
\otimes \cM_n
\]
in Chapter~\ref{cha:1}, Section~\ref{sec:1.5.2}.  In order to simplify the
notations we write $S_n$, $P_n$ etc.~instead of $S_n \otimes A$, $P_n
\otimes A$ etc.  Then one has the following diagram:
 \begin{equation}
   \label{eq:3.1}
   \xymatrix{
            &                      &                     &
 S_n   \ar[dl]_\kappa \ar[d]^\pi &  \\
   0 \ar[r] &  F_{n-1} \ar[r]^\ell &  F_n \ar[r]^\psi &  F'_n \ar[r] &  0
} 
 \end{equation}
 where $n \geq d-1$ and $\kappa$ is the composition of the canonical
 homomorphism $S_n \hookrightarrow P_n \twoheadrightarrow F_n$. $F'_n$ is
 free over $A$ with basis $\pi(m_i)$, $1 \leq i \leq d$, $m_i \in S_n$
 monomials. If $s\pi(m_i):= \kappa (m_i)$, $1 \leq i \leq d$, one has $\psi
 s \pi (m_i) = \pi(m_i)$, $1 \leq i \leq d$, hence $\psi \circ s = \id$,
 i.e.~$s$ is a section and $F_n = \ell F_{n-1} \oplus s F'_{n}$. If $\mu$
 denotes the multiplication with $\ell$, then the following diagram
 \begin{equation}
   \label{eq:3.2}
   \begin{aligned}
     \xymatrix{ F_{n-1} \oplus S_n \ar[rr]^{\mu \oplus \kappa}
       \ar[drr]_{\mathrm{id} \oplus \pi}
       &  &  F_n = \ell F_{n-1} \oplus s F'_n \\
       & & F_{n-1} \oplus F'_n \ar[u]_{\mu \oplus s} }
   \end{aligned}
 \end{equation}
is \emph{not} commutative.  But the diagram
 \begin{equation}
   \label{eq:3.3}
   \begin{aligned}
     \xymatrix{ \bigwedge\limits^p F_{n-1} \otimes \bigwedge\limits^d S_n
       \ar[rr]^{\phi_\ell}  \ar[rrd] &  & \bigwedge\limits^{p+d} F_n = \bigwedge\limits^p \ell F_{n-1} \otimes \bigwedge\limits^d s F'_n \\
       & & \bigwedge\limits^p F_{n-1} \otimes \bigwedge\limits^d F'_n
       \ar[u]_{\simeq} }
   \end{aligned}
 \end{equation}
 where $\phi_\ell (x_1 \wedge \cdots \wedge x_p \otimes y_1 \wedge \cdots
 \wedge y_d) = \ell x_1 \wedge \cdots \wedge \ell x_p \wedge \kappa(y_1)
 \wedge \cdots \wedge \kappa(y_d)$, the diagonal arrow is the homomorphism 
$\id \otimes \bigwedge^d \pi$ and the vertical arrow is the isomorphism 
$\bigwedge^p \mu \otimes \bigwedge^d s$, is commutative again.  In order to
prove this statement, we take $y\in S_n (= S_n \otimes A)$ and deduce:
 $\psi [ (s \circ \pi)(y) - \kappa (y)] = \pi(y) -\pi(y) =0 \Rightarrow 
 (s \circ \pi)(y) - \kappa(y) = \mu (z)$, 
where $z \in F_{n-1}$ depends on $y$. From this we get:
\begin{align*}
  & \phi_\ell (x_1 \wedge \cdots \wedge x_p \otimes y_1 \wedge \cdots
  \wedge y_d) \\
  = {} & \ell x_1 \wedge \cdots \wedge \ell x_p \wedge \kappa(y_1)
  \wedge \cdots \wedge \kappa(y_d) \\
  = {} & \ell x_1 \wedge \cdots \wedge \ell x_p \wedge (s \pi(y_1) + \ell z_1)
  \wedge \cdots \wedge (s \pi(y_d)+ \ell z_d) \\
  = {} & \ell x_1 \wedge \cdots \wedge \ell x_p \wedge s \pi(y_1)
  \wedge \cdots \wedge s \pi(y_d) \\
  = {} & (\bigwedge^p \mu) (x_1 \wedge \cdots \wedge x_p) \wedge (\bigwedge^d
  s) (\pi(y_1) \wedge \cdots \wedge \pi(y_d)) \\
  = {} & \bigwedge^p \mu (x_1 \wedge \cdots \wedge x_p) \otimes 
  (\bigwedge^d s \circ \bigwedge^d  \pi) (y_1 \wedge \cdots \wedge y_d)\,,
\end{align*}
hence~\eqref{eq:3.3} is commutative.
If we tensorize~\eqref{eq:3.3} with $\bigl(\bigwedge^p F_{n-1}\bigr)^{-1}$,
we get the commutative diagram:
 \begin{equation}
   \label{eq:3.4}
   \begin{aligned}
     \xymatrix{ \bigwedge\limits^d S_n \ar@{->>}[rr] \ar[drr]
       &  & \bigl(\bigwedge\limits^p  F_{n-1}\bigr)^{-1} \otimes \bigl(\bigwedge\limits^{p+d} F_n \bigr) \\
       & & \bigwedge\limits^d F'_n \ar[u]_{\sigma} }
   \end{aligned}
 \end{equation}
Here the diagonal arrow is equal to $\bigwedge^d \pi$ and the vertical
arrow is an isomorphism. (The letter $\sigma$ has not the meaning as in
Section~\ref{sec:1.5.2} but is simply used as an abbreviation.)

$2^\circ$ We continue with some general considerations: Let be $E =
k^{n+1}$, $X/k$ a scheme, $\cL$ a line bundle on $X$, which is generated by
the global sections $s_i$, $0 \leq i \leq n$. These define an epimorphism
$E\otimes \cO_X \xrightarrow{(s)} \cL$, hence an element of $\P(X)$, i.e.~a
morphism $f:X \to \P:= \P^n_k$ such that $\cL \simeq f^*(\cO_\P(1))$. 

Now let $u_i$, $i=1, 2$, be two morphisms $Y \to X$ such that $f \circ u_1 =
f \circ u_2$. This is equivalent to the condition that $u^*_i(\cL) = \cL
\otimes _X u^*_i(\cO_X)=: \cA_i$, $i = 1, 2$, give the same element in
$\P(Y)$.  According to~\cite[Prop. 4.2.3]{EGA} this means that one has a
commutative diagram:
 \begin{equation}
   \label{eq:3.5}
   \begin{aligned}
     \xymatrix{
       & E \otimes \cO_Y \ar[dl] \ar[dr] &    \\
       \cL \otimes_X u_2^*(\cO_X) & & \cL \otimes_X u_1^*(\cO_X)
       \ar[ll]_\tau }
   \end{aligned}
 \end{equation}
where the diagonal arrows are the morphisms $(s) \otimes \cO_Y$ and $\tau$ is
an isomorphism  of $\cO_Y$-modules.

$3^\circ$ We apply this to $X =\HH$, $Y =\Spec(A)$.  Then $u_i: Y \to X$ is
defined by $\cI_i \leftrightarrow \xi_i \in \HH(A)$ and $\cL =
\cM^{-1}_{d-1} \otimes \cM_d = (\cM^{-1}_{d-2} \otimes \cM_{d-1}) \otimes
(\cM_{d-2} \otimes \cM^{-2}_{d-1} \otimes \cM_d)= \cL_1 \otimes \cL_2$.
Then the diagrams~\eqref{eq:3.1},~\eqref{eq:3.4} and~\eqref{eq:3.5} give the
diagram:
 \begin{equation}
   \label{eq:3.6}
   \begin{aligned}
     \xymatrix{
       & \bigwedge\limits^d S_n \otimes \cO_Y  \ar[dl] \ar[dr] &  \\
       \bigwedge\limits^d H^0(\cF'_2(d)) \ar[d]_{\sigma_2} & &
       \bigwedge\limits^d H^0(\cF'_1(d)) \ar[d]^{\sigma_1}
       \ar@{..>}[ll]_{\tau'}  \\
       \cA_2 & & \cA_1 \ar[ll]_\tau }
   \end{aligned}
 \end{equation}
 where the diagonal arrow, respectively the vertical arrow, is the map
 $\bigwedge^d \pi$, respectively the isomorphism $\sigma$ as in
 diagram~\eqref{eq:3.4}. Then $\tau'$ can be defined as an isomorphism of
 $\cO_Y$-modules such that the upper triangle is commutative. Then from
 $2^\circ$ it follows that $\bigwedge^d H^0(\cF'_i(d))$, $i = 1, 2$ define
 the same point in $\P(\bigwedge^d S_d) (Y)$. As the Pl\"ucker-morphism
 $\Grass^d(S_d) \to \P(\bigwedge^d S_d)$ is a closed immersion, it follows
 that $H^0(\cF'_1(d)) = H^0(\cF'_2(d))$. Now 
\[
0 \longrightarrow H^0(\cI'_i(d)) \longrightarrow S_d \otimes A
\longrightarrow H^0(\cF'_i(d)) \longrightarrow 0
\]
is an exact sequence, as $\cI'_i$ is $d$-regular. It follows that
$H^0(\cI'_1(d)) = H^0(\cI'_2(d))$ and then the $d$-regularity implies
$\cI'_1 = \cI'_2$.
\end{proof}

\begin{remark}
  \label{rem:3.1}
  In the diagram~\eqref{eq:3.6} one can replace $\bigwedge^d S_d \otimes
  \cO_Y$ by $\bigwedge^d S_n \otimes \cO_Y$, if $n \geq d-1$ is any integer.
  As $H^1(\cI'_i(n)) = (0)$ if $n \geq d-1$, it follows that $H^0(\cI'_1(n))
  = H^0(\cI'_2(n))$. If $n \geq d$, from $n$-regularity one deduces $\cI'_1
  = \cI'_2$, again.  But if $n=d-1$, this is not the case, in general.
\end{remark}

\begin{corollary}
  \label{cor:3.1}
If one supposes that $\cI_i \leftrightarrow \xi_i \in \HH(k)$ are mapped to
the same point by the tautological morphism $f$, then $\cI'_1 = \cI'_2$ for
Zariski-many linear forms $\ell \in P_1$.
\end{corollary}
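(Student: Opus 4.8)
The plan is to derive Corollary~\ref{cor:3.1} as the case $A=k$ of Lemma~\ref{lem:3.1}. All of the substantive content (the determinant-bundle constructions from Section~\ref{sec:1.5.2}) is already packaged into that lemma; the only ingredient still needed is that, given the two fixed closed points $\xi_1,\xi_2\in\HH(k)$, there are Zariski-many linear forms of the prescribed shape $\ell=\alpha x+\beta y+\gamma z+t$ such that both $\xi_i$ lie in $U(\ell)$. For a closed point, the requirement in Lemma~\ref{lem:3.1} that ``$\xi_i\colon\Spec k\to\HH$ factor through $U(\ell)$'' is simply the condition $\xi_i\in U(\ell)$, i.e.\ that $\ell$ be a non-zero-divisor on the structure sheaf of the curve $C_{\xi_i}$.

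First I would invoke (the proof of) Aux-Lemma~\ref{auxlem:2}, used exactly as in the paragraph preceding Aux-Lemma~\ref{auxlem:4} in Section~\ref{sec:1.5.4}, applied to the finite set of points $\{\xi_1,\xi_2\}\subset\HH$: for each $i\in\{1,2\}$ the set
\[
 L_i:=\Set{ (\alpha,\beta,\gamma)\in\A^3_k | \ell=\alpha x+\beta y+\gamma z+t\in\NNT(\cF)\text{ and }\xi_i\in U(\ell) }
\]
is a non-empty Zariski-open subset of $\A^3_k$ (one avoids both the associated primes of the universal ideal $\cI$ on an affine cover of $\HH$ and the associated primes of the fibre ideal $\cI\otimes k(\xi_i)$). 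Since $\A^3_k$ is irreducible, $L_1\cap L_2$ is again non-empty and Zariski-open, so it parametrizes Zariski-many admissible linear forms $\ell$.

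Finally, for any $\ell$ with $(\alpha,\beta,\gamma)\in L_1\cap L_2$ the hypotheses of Lemma~\ref{lem:3.1} with $A=k$, $Y=\Spec k$ are satisfied: $f(\xi_1)=f(\xi_2)$ in $\P^N(k)$ is given, and both $\xi_i$ factor through $U(\ell)$; hence $\cI'_1=\cI'_2$ for all such $\ell$, which is the assertion. I do not anticipate a genuine obstacle here — the work is all in Lemma~\ref{lem:3.1}. The only point requiring attention is the \emph{simultaneous} genericity of $\ell$ with respect to the two, a priori different, curves $C_{\xi_1}$ and $C_{\xi_2}$; this is immediate because each condition ``$\xi_i\in U(\ell)$'' cuts out a non-empty open subset of the irreducible parameter space $\A^3_k$ of admissible linear forms, so the two open conditions can be imposed at once.
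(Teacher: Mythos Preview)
Your argument—choose $\ell$ generically so that both $\xi_i\in U(\ell)$, then invoke Lemma~\ref{lem:3.1} with $A=k$—captures the spirit of the paper's proof, but it skips a step that the paper carries out explicitly, and that step is not vacuous.

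The issue is that the morphism $f$ in Lemma~\ref{lem:3.1} is the one constructed in Section~\ref{sec:1.5.2} from a \emph{fixed} finite covering $\HH=\bigcup_{i=1}^m U(\ell_i)$, via the surjection $E\otimes\cO_\HH\to\cL$ with $E=\bigoplus_1^m\bigwedge^d S_d$. In the proof of Lemma~\ref{lem:3.1}, the passage from diagram~\eqref{eq:3.5} to diagram~\eqref{eq:3.6} uses that the summand $\bigwedge^d S_n$ attached to $\ell$ is literally one of the summands of $E$: the isomorphism $\tau$ in~\eqref{eq:3.5} is only known to be compatible with the sections coming from $E$, and the diagonal arrows in~\eqref{eq:3.6} are the maps $\bigwedge^d\pi$ built from $\phi_\ell$. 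If $\ell\notin\{\ell_1,\dots,\ell_m\}$, the commutativity of the upper triangle in~\eqref{eq:3.6} is not justified by~\eqref{eq:3.5}. Since the original covering contains only finitely many $\ell_i$, restricting to those would not give ``Zariski-many'' $\ell$.

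The paper's proof closes this gap as follows: for a given $\ell$ one enlarges the covering by adjoining $U(\ell)$, obtaining $D=E\oplus\bigwedge^d S_d$ and a morphism $f(D):\HH\to\P(D)$; via the projection $p:D\to E$ one gets a closed immersion $i:\P(E)\hookrightarrow\P(D)$ with $f(D)=i\circ f(E)$, so $f(E)(\xi_1)=f(E)(\xi_2)$ iff $f(D)(\xi_1)=f(D)(\xi_2)$. Now $\ell$ \emph{is} in the covering that defines $f(D)$, and Lemma~\ref{lem:3.1} applies with $f(D)$ in place of $f$. Your proof would be complete once you supply this independence-of-covering argument (or verify directly that $\tau$ in~\eqref{eq:3.5} is automatically compatible with the $\phi_\ell$-sections for arbitrary $\ell$).
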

\begin{proof}
  Let $\cL =\cM^{-1}_{d-1} \otimes \cM_d$.  We use the same notations as in
  Chapter~\ref{cha:1}, Section~\ref{sec:1.5.2}. There a surjective morphism
  $E \otimes \cO_\HH \to \cL$ had been constructed by means of an open
  covering $\HH = \bigcup^m_1 U(\ell_i)$.  Now we add $U(\ell)$ to this
  covering, where $\ell = \alpha x +\beta y +\gamma z +t$,
  $\alpha,\beta,\gamma \in k$, is a linear form. Then one has a surjective
  morphism $D \otimes \cO_\HH \to \cL$, where $D = E \oplus \bigwedge^d
  S_d$.  If $p:D\to E$ is the obvious projection, one gets a commutative
  diagram:
\[
   \xymatrix{
   D \otimes \cO_\HH \ar[dd]_p \ar@{->>}[dr] &    \\
                     & \cL \\
   E \otimes \cO_\HH \ar@{->>}[ur]& 
} 
\]
Then the natural mapping $i:\P(E) \to \P(D)$ defined by $p$ is a closed
immersion and one has a factorization:
\[
   \xymatrix{
 \HH \ar[r]^{f(D)} \ar[dr]_{f(E)} & \P(D) \\
                                  & \P(E) \ar[u]_i
} 
\]
Therefore $\cI_1$ and $\cI_2$ are mapped to the same point by $f(E)$ iff
they are mapped to the same point by $f(D)$. As the points $\xi_1$ and
$\xi_2$ are in $U(\ell)$ for Zariski-many $\ell$, the assertion follows from
Lemma~\ref{lem:3.1}.
\end{proof}
\section{The fibers of $f$}
\label{sec:3.2}

Let be $\xi \in \HH(k)$ and $F:=f^{-1} f(\xi)$ have the \emph{reduced} scheme
structure. Let be $\xi_1 \leftrightarrow \cI_1$ and $\xi_2 \leftrightarrow
\cI_2$ in $F(k)$ and put $\cI:= \cI_1 + \cI_2$. There are Zariski-many linear
forms $\ell$, such that $\ell$ is an $\NNT$ of $\cO_{\P^3}/\cI_1$,
$\cO_{\P^3}/\cI_2$ and $\cO_{\P^3}/\cI$.  From Corollary~\ref{cor:3.1} it
follows that $\cI'_1 = \cI'_2 = \cI'$ for Zariski-many $\ell$, where ${}'$
denotes restriction modulo $\ell$. Let be $\xi_3 \leftrightarrow \cI_3$ in
$F(k)$ and $\cI:= \cI_1 + \cI_2 + \cI_3$. In the same way it follows that
$\cI'= \cI'_1 = \cI'_2 = \cI'_3$. As the ascending chain of ideals $\cI_1
\subset \cI_1 + \cI_2 \subset \cI_1 + \cI_2 + \cI_3 \subset \cdots$ becomes
stationary, one deduces that there is an ideal $\cJ \subset \cO_{\P^3}$ with
the following property: If $\cI \leftrightarrow \xi \in F(k)$, then $\cI
\subset \cJ$ and $\cI' = \cJ'$ for Zariski-many linear forms. It follows
that $\cJ/\cI$ has constant Hilbert polynomial, independent of $\xi
\leftrightarrow \cI$. Moreover, one can assume without restriction that $\cJ$
has no embedded or isolated components, hence is a CM-ideal on $\P^3$. If one puts
$X = \P^3$, 
\begin{gather*}
  \cE:= \cJ / \cI, \quad \cF:=\cO_X/\cI, \quad \cG: = \cO_X/\cJ, \\
  \cE_n:= H^0(\cE(n)), \quad \cF_n:=H^0(\cF(n)), \text{ and } \quad
  \cG_n:=H^0(\cG(n))
\end{gather*}
then one gets the exact sequence
\[
 0 \longrightarrow \cE_n \longrightarrow \cF_n \longrightarrow \cG_n
 \longrightarrow 0
\]
for all $n\geq 0$, because the support of $\cE$ has the dimension $0$. We
get:
\[
\dot \bigwedge \cF_n \stackrel{\sim}{\longrightarrow}
\dot \bigwedge \cE_n \otimes \dot \bigwedge
\cG_n\,. 
\]
If  $\cI \leftrightarrow \xi \in \HH(k)$ then $\cM_n \otimes k(\xi) = \dot
\bigwedge \cF_n$.  As the CM-part $\cJ$ of $\cI$ is constant on $F(k)$,
$\cN_n:=\dot \bigwedge \cG_n$ is constant, too. It follows that 
\[
\bigl(\cM^{-1}_{n-1} \otimes \cM_n \bigr) \otimes k(\xi) 
\stackrel{\sim}{\longrightarrow}
\bigl(\dot \bigwedge \cE_{n-1} \bigr)^{-1} \otimes 
\bigl(\dot \bigwedge \cE_n \bigr) \otimes 
\cN^{-1}_{n-1} \otimes \cN_n\,.
\]

There is a filtration of $\cE$:
\[
 (0) = \cE^0 \subset \cE^1 \subset \cE^2 \subset \cdots \subset \cE^e  = \cE
\]
such that $\cE^i/\cE^{i-1} \simeq (\cO_X/ \cP_i) (-\ell_i)$, where the
isomorphism is defined by multiplication with a form $f_i \in P$ of degree
$\ell_i$ (see~\cite[Proposition 7.4, p. 50]{H}). 

Now $\cF$ is $(a-2)$-regular (Lemma~\ref{lem:1.1}), hence $\cG$ is
$(a-2)$-regular and $\dim_k \cF_n = P(n)$ and $\dim_k \cG_n = P(n)-e$, for
all $n\geq a-3$. We conclude that $\dim_k \cE_n = e$, if $n\geq a-3$, hence
$e = \sum^e_{i=1} h^0((\cO_X/ \cP_i) (n-\ell_i))$, if $n\geq a-3$.

From the exact sequences
\[
0 \longrightarrow \cE^{i-1}_n \longrightarrow \cE^i_n \longrightarrow
(\cE^i/\cE^{i-1})_n \longrightarrow 0
\]
it follows that $\det[(\cE^i/\cE^{i-1})_n] =
  \bigl(\det[\cE^{i-1}_n]\bigr)^{-1} \otimes \det[\cE^i_n]$ and
\[
 \det \cE_n = \bigotimes_{i=1}^e \det [ \cE^i/ \cE^{i-1}]_n 
= \bigotimes_1^e \det [ H^0((\cO_X/ \cP_i) (n-\ell_i))] \cdot f_i
\]
for all $n\geq a-3$.

\emph{Additional consideration:} Let $\fp \in \Proj(P \otimes A)$ be an ideal
such that $(P \otimes A/\fp)^\sim$ is flat over $A$ with Hilbert polynomial
equal to $1$. Then $\fp$ is generated by a subbundle $L \subset P_1 \otimes
A$ of rank $3$. By shrinking $\Spec A$, if necessary, we can suppose that 
$L \subset P_1 \otimes A$ is a direct summand of of rank $3$. Applying
a suitable $A$-linear transformation of $P \otimes A$, we can suppose that
$L=\langle x, y, z \rangle \otimes_k A$. We claim that for all $n \geq 1$
one has:
\[
\bigl(\det[P_{n-1}\otimes A / \fp_{n-1}]\bigr)^{-1} \otimes
\det[P_n\otimes A / \fp_n] \stackrel{\sim}{\longrightarrow}
\bigl(\det[P_n\otimes A / \fp_n]\bigr)^{-1} \otimes
\det[P_{n+1}\otimes A / \fp_{n+1}]
\]

One sees that this is equivalent to 
\[
[P_n\otimes A / \fp_n] \otimes [P_n\otimes A / \fp_n] 
\stackrel{\sim}{\longrightarrow}
[P_{n-1}\otimes A / \fp_{n-1}] \otimes [P_{n+1}\otimes A / \fp_{n+1}]
\]
or equivalent to 
$A t^n \otimes_A A t^n  \stackrel{\sim}{\longrightarrow}
A t^{n-1} \otimes_A A t^{n+1}$, which is true for all
$n \geq 1$. From this we conclude
\begin{align*}
      & \bigl(\cE_{n-1}\bigr)^{-1} \otimes \bigl( \det\cE_n \bigr) \\
 = {} & \bigl( \bigotimes\limits^e_1 
\det [ H^0((\cO_X/ \cP_i) (n-1-\ell_i))] \cdot f_i \bigr)^{-1} 
\bigotimes\limits^e_1 
\det [ H^0((\cO_X/ \cP_i) (n-\ell_i))] \cdot f_i \\
 = {} & \bigotimes^e_1 \det [ H^0((\cO_X/ \cP_i) (n-1-\ell_i))]^{-1} 
\otimes  \det [ H^0((\cO_X/ \cP_i) (n-\ell_i))] \\
 = {} & \bigotimes^e_1 \det [ H^0((\cO_X/ \cP_i) (0))]^{-1} 
\otimes \det [ H^0((\cO_X/ \cP_i) (1))] \\
 = {} &  \bigotimes^e_1(P_1 /L_i) \qquad \text{for all } n \geq a-3,
\end{align*}
where $L_i \subset P_1$ is the $3$-dimensional vector space, which generates
$\cP_i$. The prime ideals $\cP_i$ are uniquely determined by $\cE$ as the
associated primes, and the number of times which $\cP_i$ appears is equal to
the multiplicity of $\cE_{(\cP_i)}$ as an $\cO_{(\cP_i)}$-module
(see~\cite[loc. cit.]{H}). Following~\cite[p. 82]{F1} we denote the
$0$-cycle $\sum_1^e V(\cP_i) \in \Symm^e (\P^3)$ by $\langle \cE
\rangle$.
\begin{proposition}
  \label{prop:3.1}
 Let $\xi_i \leftrightarrow \cI_i$, $i = 1, 2$ be two closed
 points in $\HH$.  We write $\cI_i = \cJ_i \cap \cR_i$, where $\cJ_i$ is the
 CM-part and $\cR_i$ is the punctual part of $\cI_i$. Let $f$ be the
 tautological morphism defined by the globally generated line bundle $\cL_1
 \otimes \cL_2$ on $\HH$. $\xi_1$ and $\xi_2$ are mapped by $f$ to the same
 point iff $\cJ_1 = \cJ_2$ and $\langle \cJ_1 / \cI_1 \rangle = \langle
 \cJ_2 / \cI_2 \rangle$.
\end{proposition}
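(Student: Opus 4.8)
The plan is to reduce the statement to the infinitesimal/functorial computation of the line bundle $\cL_1 \otimes \cL_2$ that was set up in the preceding pages. First I would treat the ``only if'' direction. Suppose $\xi_1, \xi_2$ are mapped to the same point by $f$. Choosing Zariski-many general linear forms $\ell$, Corollary~\ref{cor:3.1} gives $\cI_1' = \cI_2'$ for such $\ell$; running through a finite ascending chain $\cI_1 \subset \cI_1 + \cI_2 \subset \cdots$ as in Section~\ref{sec:3.2} produces a CM-ideal $\cJ$ with $\cI_i \subset \cJ$ and $\cI_i' = \cJ'$ for general $\ell$. Applying this to each of $\xi_1, \xi_2$ separately, and using the uniqueness of the CM-part in a primary decomposition $\cI_i = \cJ_i \cap \cR_i$, I would identify $\cJ = \cJ_1 = \cJ_2$: indeed $\cJ_i$ is the largest ideal containing $\cI_i$ with the same Hilbert polynomial modulo a general hyperplane and no embedded/isolated components, so it is intrinsic to $\cI_i$. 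This gives $\cJ_1 = \cJ_2$. For the $0$-cycles, I would invoke the determinant computation of Section~\ref{sec:3.2}: with $\cE_i := \cJ_i/\cI_i$, one has
\[
(\cM_{n-1}^{-1}\otimes \cM_n)\otimes k(\xi_i) \simeq \left(\textstyle\dot\bigwedge \cE_{i,n-1}\right)^{-1}\otimes \left(\textstyle\dot\bigwedge \cE_{i,n}\right)\otimes \cN_{n-1}^{-1}\otimes \cN_n,
\]
where $\cN_\bullet = \dot\bigwedge H^0((\cO_X/\cJ)(\bullet))$ depends only on $\cJ$, which is now the same for both $i$. Since $f$ is defined by $\cL_1 \otimes \cL_2 = \cM_{d-1}^{-1}\otimes \cM_d$, the equality $f(\xi_1)=f(\xi_2)$ forces the stalks of $\cL_1\otimes\cL_2$ at $\xi_1$ and $\xi_2$ to agree compatibly, hence $\dot\bigwedge \cE_{1,n-1}^{-1}\otimes\dot\bigwedge\cE_{1,n} \simeq \dot\bigwedge\cE_{2,n-1}^{-1}\otimes\dot\bigwedge\cE_{2,n}$; by the ``additional consideration'' of Section~\ref{sec:3.2} the right-hand side is $\bigotimes_{j=1}^e (P_1/L_{i,j})$ — i.e.\ a vector depending precisely on the $0$-cycle $\langle \cE_i \rangle = \sum V(\cP_{i,j})$. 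Matching these I would conclude $\langle \cJ_1/\cI_1\rangle = \langle \cJ_2/\cI_2\rangle$.

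For the ``if'' direction, assume $\cJ_1 = \cJ_2 =: \cJ$ and $\langle \cJ/\cI_1\rangle = \langle \cJ/\cI_2\rangle$. I want to show $f(\xi_1) = f(\xi_2)$, i.e.\ that the surjection $E\otimes\cO_\HH \to \cL_1\otimes\cL_2$ of Section~\ref{sec:1.5.2}, pulled back along $\xi_1$ and $\xi_2$, defines the same point of $\P(E)(k)$. Here I would run the determinant computation in reverse: $\cL_1\otimes\cL_2$ restricted to $\xi_i$ is, up to the common factor $\cN_{d-2}^{-1}\otimes\cN_{d-1}$, the line $\bigotimes_j (P_1/L_{i,j})$, which by hypothesis on the $0$-cycles is literally the same line, canonically, for $i=1,2$; and one has to check that the generating sections (the images of the $\bigwedge^d S_n$) are also identified under this isomorphism. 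This is where I would use Lemma~\ref{lem:3.1}/Corollary~\ref{cor:3.1} again but read as a sufficient condition: for general $\ell$ one has $\cI_1' = \cI_2'$ because both equal $\cJ'$ (the $0$-dimensional part $\cE_i$ restricts to zero modulo a general hyperplane through none of its support points), so the diagram~\eqref{eq:3.6} — whose terms $\bigwedge^d H^0(\cF_i'(d))$ now coincide — shows the two sections agree; that the identification is compatible with varying $\ell$ follows from the factorization through $\P(E)$ in the proof of Corollary~\ref{cor:3.1}.

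The main obstacle I expect is pinning down the naturality in the ``if'' direction: showing not merely that the fibers $\cL_1\otimes\cL_2 \otimes k(\xi_1)$ and $\otimes k(\xi_2)$ are abstractly isomorphic, but that under the canonical identification coming from equality of CM-parts and of $0$-cycles the distinguished global generators line up, so that the resulting points of $\P(E)$ genuinely coincide. Abstract isomorphism of one-dimensional fibers is automatic and says nothing; one must track the section $\phi_\ell$ of~\eqref{eq:3.3}–\eqref{eq:3.4} through the identification and verify it is $\ell$-independent. I would handle this by working with a fixed general $\ell$, using Lemma~\ref{lem:3.1} over $A = k$ to match sections for that $\ell$, and then transporting the conclusion to all $\ell$ via the closed immersion $i:\P(E)\hookrightarrow\P(D)$ exactly as in Corollary~\ref{cor:3.1}; the determinant bookkeeping of Section~\ref{sec:3.2} guarantees the $\cN$-factor and the $0$-cycle factor are globally (not just fiberwise) identified once $\cJ$ and $\langle\cJ/\cI\rangle$ are fixed.
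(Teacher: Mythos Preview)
Your plan matches the paper's: the ascending-chain argument for $\cJ_1=\cJ_2$ is exactly Section~\ref{sec:3.2}, and for the converse the paper does just what you describe---it says the determinant computation makes $(\cM_{n-1}^{-1}\otimes\cM_n)\otimes k(\xi_i)$ ``equal'' for $i=1,2$ and concludes $f(\xi_1)=f(\xi_2)$ in one line. You are right that this last step needs more, and your instinct to track sections via diagram~\eqref{eq:3.6} is sound. But your fix leaves a residual gap: for a single general $\ell$, diagram~\eqref{eq:3.6} shows that $\phi_\ell^{(1)}$ and $\phi_\ell^{(2)}$ have the same kernel in $\bigwedge^d S_d$, hence differ by a scalar $c_\ell$; to get $f(\xi_1)=f(\xi_2)$ you need \emph{one} isomorphism $\cA_1\simeq\cA_2$ intertwining all the $\phi_\ell$ at once, i.e.\ $c_\ell$ independent of $\ell$. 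The factorization $\P(E)\hookrightarrow\P(D)$ of Corollary~\ref{cor:3.1} lets you enlarge or swap the covering but does not by itself compare the $c_\ell$ across different $\ell$. A symmetric gap sits in your ``only if'': ``matching'' the one-dimensional spaces $\bigotimes_j(P_1/L_{i,j})$ for $i=1,2$ is not yet an argument that the multisets $\{L_{i,j}\}$ agree.

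The clean fix---and arguably what the paper's bookkeeping is set up for---is to compute the section $\phi_\ell$ itself under the determinant identification, not just the fiber. Splitting via $0\to\cE^{(i)}_n\to\cF^{(i)}_n\to\cG_n\to 0$, the $\cE$-factor of $\phi_\ell(y_1\wedge\cdots\wedge y_d)$ is $\det\bigl(\ell:\cE_{n-1}^{(i)}\to\cE_n^{(i)}\bigr)$; computed on each filtration quotient $(\cO_X/\cP_k)(-m_k)$ this is multiplication by the class $[\ell\bmod L_k]\in P_1/L_k$, with the shifts $m_k$ and the forms $f_k$ cancelling in the ratio. Hence under the identification $\cA_i\simeq\bigotimes_k(P_1/L_k)\otimes\cN_{n-1}^{-1}\otimes\cN_n$ one finds $\phi_\ell(y)=\bigl[\bigotimes_k(\ell\bmod L_k)\bigr]\otimes(\text{a term depending only on }\cJ,\ell,y)$, visibly the same for $i=1,2$ once the $0$-cycle and $\cJ$ are fixed. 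This closes ``if'' with a single canonical $\tau$. Read backwards, the ratios $\phi_\ell(y)/\phi_{\ell'}(y')$---which are preserved by $f(\xi_1)=f(\xi_2)$---carry the factor $\prod_k\ell(P_k)/\ell'(P_k)$; as $\ell$ varies over general linear forms this determines the degree-$e$ form $\prod_k\ell(P_k)$ up to scalar, hence the multiset $\{P_k\}$, giving the $0$-cycle equality for ``only if''.
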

\begin{proof}
  Suppose $f(\xi_1) = f(\xi_2)$. Then $\cJ_1 = \cJ_2$ and $\langle \cJ_1 /
  \cI_1 \rangle = \langle \cJ_2 / \cI_2 \rangle$ follow from the forgoing
  discussion. Conversely, suppose $\cI_i = \cJ \cap \cR_i$
and $\langle \cJ / \cI_1 \rangle = \langle \cJ / \cI_2 \rangle$.
 From the exact sequences
\[
 0 \longrightarrow \cE^i \longrightarrow \cF^i \longrightarrow \cG \longrightarrow 0
\]
\[
  \cE^i:= \cJ / \cI_i, \quad \cF^i:=\cO_X/\cI_i, \quad i =1, 2, \quad \cG = 
\cO_X/\cJ
\]
one deduces in the same way as before that 
\[
\bigl( \det \cE^i_{n-1} \bigr)^{-1} \otimes \bigl( \det \cE^i_n \bigr)
\stackrel{\sim}{\longrightarrow} \bigotimes^e_1 (P_1/L_j)
\]
are equal for $i=1$ and $i=2$, which then implies $\cM^{-1}_{n-1} \otimes
\cM_n \otimes k(\xi_i)$ are equal for $i=1$ and $i=2$, and all $n\geq a-2$.
If $n=a-1=d$ one deduces that $f(\xi_1) = f(\xi_2)$.
\end{proof}

As already mentioned in Remark~\ref{rem:3.1},  in the case of the morphism
defined by the globally generated line bundle $\cL_1 = \cM^{-1}_{d-2}
\otimes \cM_{d-1}$, I could not find a similar description of the fibers.

\begin{corollary}
  \label{cor:3.2}
Let $f_n$ be the morphism $\HH \to \P^{N(n)}$ defined by the globally
generated line bundle $\cM^{-1}_{n-1} \otimes \cM_n$ for $n\geq d$. Then the
fibers of $f_n$, as sets of closed points, are independent of $n \geq d$.
\end{corollary}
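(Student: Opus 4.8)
The plan is to show that the description of the fibres given by Proposition~\ref{prop:3.1} is valid for \emph{every} $n\ge d$, not just for $n=d$; since $f=f_d$ (because $\cL_1\otimes\cL_2=\cM^{-1}_{d-1}\otimes\cM_d$), the case $n=d$ is exactly Proposition~\ref{prop:3.1}, and the point is simply that Section~\ref{sec:3.2} and the proof of Proposition~\ref{prop:3.1} carry over verbatim with an arbitrary $n\ge d$ in place of $d$. Concretely I would prove: for $\xi_i\leftrightarrow\cI_i=\cJ_i\cap\cR_i$ (CM-part $\cap$ punctual part), $f_n(\xi_1)=f_n(\xi_2)$ holds iff $\cJ_1=\cJ_2$ and $\langle\cJ_1/\cI_1\rangle=\langle\cJ_2/\cI_2\rangle$. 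As this condition does not involve $n$, Corollary~\ref{cor:3.2} follows at once.

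For the ``only if'' direction: the morphism $f_n$ is the one attached, as in Section~\ref{sec:1.5.2}, to the surjection $\bigoplus_1^m\bigwedge^d S_n\otimes\cO_\HH\to\cM^{-1}_{n-1}\otimes\cM_n$, so Lemma~\ref{lem:3.1} applies with this $n$, and by Remark~\ref{rem:3.1} — together with the covering trick of Corollary~\ref{cor:3.1}, which adds $U(\ell)$ to the covering for Zariski-many $\ell$ — one gets $H^0(\cI'_1(n))=H^0(\cI'_2(n))$ for Zariski-many linear forms $\ell$; since $\cI'_i$ is $(a-1)$-regular, hence $n$-regular as $n\ge a-1$, this forces $\cI'_1=\cI'_2$ for Zariski-many $\ell$. (This is the one place where $n\ge d$, and not merely $n\ge d-1$, is used.) The construction of Section~\ref{sec:3.2} — passing to the fibre $F=f_n^{-1}f_n(\xi_1)$, extracting the stationary ideal $\cJ$ of the chain $\cI_1\subset\cI_1+\cI_2\subset\cdots$ over points of $F(k)$, and observing that $\cJ/\cI_\eta$ is supported in dimension $0$ with constant length — uses only this generic coincidence of the $\cI'_\eta$, so it again yields a common CM-part $\cJ=\cJ_1=\cJ_2$. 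Then $\cN_m:=\dot{\bigwedge}H^0\bigl((\cO_X/\cJ)(m)\bigr)$ is the same for $\xi_1$ and $\xi_2$, the determinant identity
\[
\bigl(\dot{\bigwedge}(\cJ/\cI_\eta)_{m-1}\bigr)^{-1}\otimes\dot{\bigwedge}(\cJ/\cI_\eta)_m\;\xrightarrow{\ \sim\ }\;\bigotimes_{i=1}^{e}\bigl(P_1/L_i\bigr)
\]
is valid for all $m\ge a-3$, in particular for $m=n$ and $m=n-1$, and feeding this into $f_n(\xi_1)=f_n(\xi_2)$ exactly as in the proof of Proposition~\ref{prop:3.1} gives $\langle\cJ/\cI_1\rangle=\langle\cJ/\cI_2\rangle$.

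For the ``if'' direction there is even less to do: starting from $\cJ_1=\cJ_2=\cJ$ and $\langle\cJ/\cI_1\rangle=\langle\cJ/\cI_2\rangle$, the computation already carried out in the proof of Proposition~\ref{prop:3.1} shows that $(\cM^{-1}_{m-1}\otimes\cM_m)\otimes k(\xi_1)$ and $(\cM^{-1}_{m-1}\otimes\cM_m)\otimes k(\xi_2)$ agree for \emph{all} $m\ge a-2$, the identification being compatible with the generating sections of Section~\ref{sec:1.5.2}; specialising to $m=n$, the construction of $f_n$ then gives $f_n(\xi_1)=f_n(\xi_2)$. I expect the only delicate point to be the bookkeeping in the ``only if'' direction: checking that Remark~\ref{rem:3.1} and the covering trick of Corollary~\ref{cor:3.1} really do provide $\cI'_1=\cI'_2$ for Zariski-many $\ell$ for \emph{every} $n\ge d$ (this is precisely what breaks down at $n=d-1$); once that is granted, everything else is a purely formal rerun of Section~\ref{sec:3.2} and of the proof of Proposition~\ref{prop:3.1} with $n$ replacing $d$, and the resulting criterion is manifestly $n$-independent.
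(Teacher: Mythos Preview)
Your proposal is correct and is precisely the paper's approach: the paper's proof is the single line ``Replace $d$ by $n\ge d$ in the proof of Proposition~\ref{prop:3.1}'', and you have simply unpacked what that entails. The key observations you isolate---that Remark~\ref{rem:3.1} and the covering trick of Corollary~\ref{cor:3.1} give $\cI'_1=\cI'_2$ for any $n\ge d$, and that the determinant computation in the converse direction already works for all $m\ge a-2$---are exactly the points one must check when rerunning the argument.
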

\begin{proof}
  Replace $d$ by $n\geq d$ in the proof of Proposition~\ref{prop:3.1}.
\end{proof}
\section{Connectedness of the fibers of $f$}
\label{sec:3.3}

From Proposition~\ref{prop:3.1} it follows, with the method invented by
Fogarty (see~\cite[Section 2]{F2}), that the fibers of $f_n$ are connected
for $n \geq d$.  For later use we need a slightly more precise statement (see
below Lemma~\ref{lem:3.2}). The proof imitates Fogarty's method (probably in
a too complicated way\dots). For the sake of simplicity we write $f$ instead of $f_n$.
\subsection{}
\label{sec:3.3.1}

Let $U$ be a unipotent group, which acts on a projective space $\P =
\P^r_k$. Let $X \subset \P$ be a closed subscheme, invariant under $U$. Let
$\ell_1, \dots, \ell_d$ be different lines in $\P$, all contained in
$X$. Then $\ell_i = V(\cP_i)$, $\cP_i\subset S:=k[x_0,\dots,x_r]$ is a prime
ideal, which is generated by a linear subspace of dimension $r-1$ of
$S_1$. Let be $\cI:=\bigcap^d_1\cP_i$ and $Z \subset X$ the closed subscheme
defined by $\cI$. Let $h$ be the Hilbert polynomial of $Z$, i.e. the Hilbert
polynomial of $\cO_\P/\cI$, and put $\cZ:=\Hilb^h(X)$. 

\begin{auxlemma}
  \label{auxlem:3.1}
Suppose that $Z:=\bigcup^d_1 \ell_i$ is a connected curve in $X$, which
connects the two $U$-invariant points $x_1$ and $x_2$ in $X(k)$. Then there
is a connected curve $C = \bigcup^e_1 L_i \subset X$, which connects $x_1$
and $x_2$, such that each $L_i \subset X$ is a pointwise $U$-invariant line
and $e\leq d$.
\end{auxlemma}
\begin{proof}
  $U$ has a composition series with quotients isomorphic to $\G_a$, hence we
  may suppose $U = \G_a$ and $U$ operates via a homomorphism $\psi_\alpha: U
  \to \Aut(X)$. Let $z \in \cZ(k)$ be the point, which belongs to $Z$.  Then
  $z_0:=\lim_{\alpha \to \infty} \psi_\alpha(z) \in \cZ(k)$ corresponds to a
  $U$-invariant subscheme $Z_0 \subset X$ with Hilbert polynomial $h$. 
 We need an additional
\begin{auxlemma}
  \label{auxlem:3.2}
 The support of $Z_0$, i.e. the underlying set of closed points, consists of
 at most $d$ lines plus finitely many closed points.
\end{auxlemma}
\begin{proof}
  If $d=1$, then $h(n) = n+1$, and as a subscheme of $\P$, $Z_0$ also has the
  Hilbert polynomial $h$, hence $Z_0$ is a line.  Suppose the
  Aux-lemma~\ref{auxlem:3.2} is proved in the case of $d-1$ lines. We put
  $Y = \bigcup^{d-1}_1 \ell_i$ and denote by $g$ the Hilbert polynomial of $Y
  \subset X$. Put $\cY:=\Hilb^g(X)$ and $\FF:=\Set{ (Y,Z) \in \cY \times \cZ
    | Y \subset Z }$. $U$ operates on $\FF$, and if $Y \leftrightarrow y \in
  \cY(k)$, then $\lim_{\alpha \to \infty} \psi_\alpha(y,z) = (y_0,z_0)$ and
  $y_0$ corresponds to a $U$-invariant subscheme $Y_0 \subset Z_0$. Now
  $h(n) = dn +a$, $g(n) = (d-1) n +b$, $a, b \in \Z$. By induction
  hypothesis, $\supp(Y_0)$ consists of $e \leq d-1$ lines plus any suitable
  points. In other words, $Y_0$ is defined by an ideal $\cJ = \bigcap^e_1
  \fq_i \cap Q_1 \dots \cap Q_s \subset \cO_\P$, where $V(Q_i)$ is a closed
  point in $X$, $\fq_i \subset S$ is a $\fp_i$-primary ideal of
  multiplicity $e_i$, $V(\fp_i) \subset X$ is a line and $\sum^e_1 e_i =
  d-1$. Because of $Y_0 \subset Z_0$, either $\supp(Y_0) = \supp(Z_0)$, or
  $\supp(Z_0)$ contains a further irreducible component, which is a point
  or a line.  Hence the Aux-lemma~\ref{auxlem:3.2} is proved.
\end{proof}

We continue the proof of Aux-lemma~\ref{auxlem:3.1}. By assumption $Z$ is
connected, hence $Z_0$ is connected, too (see~\cite[Chap. III, Ex.
11.4]{H}). Clearly $x_1, x_2 \in Z_0$, and according to a theorem of
Fogarty~\cite[Prop. 2.1, p. 515]{F2}, the fixed point scheme $Z^G_0$ is a
connected curve, which contains $x_1$ and $x_2$.  Then the proof of
(loc.~cit.) implies Aux-lemma~\ref{auxlem:3.1}.
\end{proof}

\subsection{}
\label{sec:3.3.2}

Let $\Lambda$ be local Artinian $k$-algebra with maximal ideal $\fm$,
$\Lambda/\fm \simeq k$, $\fm^n \neq (0)$, but $\fm^{n+1} =(0)$. Let $E$ be a
finitely generated $\Lambda$-module, $\dim_k E = e$.  Then
$\Grass^c(E)$ represents the functor 
\[
\GG(A):= \Set{ V \subset E \otimes_k A \text{ is a submodule such that } E
  \otimes_k A/V \text{ is flat of rank $c$ over } A }\,.
\]
One also has $ \GG(A)= \Set{ V \subset E \otimes_k A \text{ is a subbundle
    of rank } d}$, where $c+d = e$ and $A$ is a $k$-algebra.

If $m\in\fm$, then multiplication with $1+m$ is a $k$-automorphism of $E$
(because of $(1-m)(1+m+ \cdots + m^n) = 1$), hence $U:=1+\fm$ operates as a
unipotent group on $E$ and $\GG$. If one puts
\[
 X(A) := \Set{ V \in \GG(A) | V \text{ is invariant under } U}
\]
then one gets a closed subscheme $X= \Quot^c(E)$ of $\GG$ (see~\cite[Prop.2.2, p.516]{F2}). If $\GG \to \P$ is the
Pl\"ucker-embedding, then $U$ operates in an equivariant manner on $\GG$ and
$\P$, and as a subscheme of $\P$, $X$ remains invariant under $U$.

Let $v_1,\dots,v_d$ be a basis of $V \in \GG(k)$. Let $u \in E - V$. Then
\[
\Set{ v_1 \wedge \dots \wedge v_{d-1} \wedge (\lambda v_d + \mu u) |
  (\lambda : \mu) \in \P^1 }
\]
is a line in $\P$, i.e.~$\Set{ \langle v_1, \dots, v_{d-1}, \lambda v_d +
  \mu u \rangle | (\lambda : \mu) \in \P^1 }$ is a line in $\GG$. It follows
that any two points in $\GG(k)$ can be connected by a chain of lines.  From
Aux-lemma~\ref{auxlem:3.1} follows:

\begin{auxlemma}
  \label{auxlem:3.3}
 Any two points $x_1, x_2 \in X(k)$ can be connected by a chain of lines in
 $X$. \hfill $\qed$
\end{auxlemma}

\subsection{}
\label{sec:3.3.3}

\begin{auxlemma}
  \label{auxlem:3.4}
Let be $X = \P^r$ and $\cM$ a coherent $\cO_X$-module. Let $\cN \subset \cM$
be a submodule of colength $c$, such that $\supp(\cM/\cN)$ consists of a
single closed point $p$. If $P$ is the corresponding prime ideal in
$S=k[x_0,\dots, x_r]$, then $P^c \cM \subset \cN \subset \cM$. 
\end{auxlemma}
\begin{proof}
 Put $M:=\bigoplus_{n\geq 0}H^0(X,\cM(n))$, $N:=\bigoplus_{n\geq
   0}H^0(X,\cN(n))$. One has a sequence of $S_{(P)}$-modules:
\[
 N_{(P)} \subset (N + P^c M)_{(P)} \subset \cdots \subset (N + P M)_{(P)}
 \subset M_{(P)}\,.
\]
If all the inclusions are strict, then one would get a sequence of strict
inclusions $\cN \subset  \cN + P^c \cM \subset \cdots \subset \cN + P
\cM  \subset \cM$ and the colength would be $\geq c+1$. It follows that
either $N_{(P)} = ( N + P^c M)_{(P)}$ or there is an index $0\leq i \leq
c-1$ such that $(N + P^{i+1} M)_{(P)} = ( N + P^i M)_{(P)}$. It follows that
either $P^c(M/N)_{(P)} = (0)$ or $P^{i+1}(M/N)_{(P)} = P^i(M/N)_{(P)}$. By
Nakayama it follows that $P^c_{(P)}(M/N)_{(P)} = (0)$ hence $P^c_{(P)} M
\subset N_{(P)}$. Thus there is a form $f\in S- P$ such that $f\cdot P^c M
\subset N$. The associated primes of $M/N$ are contained in $\supp(M/N) =
\Set{P}$, hence multiplication with $f$ is an injective mapping $M/N \to
M/N$. It follows that $P^c M \subset N$, hence $P^c \cM \subset \cN$.
\end{proof}
\subsection{}
\label{sec:3.3.4}
 We now can give a somewhat more geometric description of the fibers of $f$.

 \begin{lemma}
   \label{lem:3.2} 
   Two points $\xi$ and $\zeta\in \HH(k)$ lie in the same fiber of $f$ iff
   they can be joined by a connected curve $C$ in the fiber, such that $[C]
   = \nu \cdot [C_0]$ for a natural number $\nu$.
 \end{lemma}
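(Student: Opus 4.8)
The "if" direction is immediate: if $\xi$ and $\zeta$ are joined by a connected curve $C$ lying in a fiber of $f$, then in particular $f(\xi)=f(\zeta)$, so $\xi$ and $\zeta$ lie in the same fiber. (No restriction on $[C]$ is needed here.) So the entire content is the "only if" direction: given $\xi,\zeta$ in the same fiber $F=f^{-1}f(\xi)$, produce a connected curve $C\subset F$ joining them with $[C]=\nu[C_0]$.

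The plan is to pass to a hyperplane section and invoke the Grassmannian picture of Section~\ref{sec:3.3.2}. By Proposition~\ref{prop:3.1}, $\xi\leftrightarrow\cI_1=\cJ\cap\cR_1$ and $\zeta\leftrightarrow\cI_2=\cJ\cap\cR_2$ with the \emph{same} CM-part $\cJ$ and $\langle\cJ/\cI_1\rangle=\langle\cJ/\cI_2\rangle=:\sum_1^e V(\cP_j)\in\Symm^e(\P^3)$. Here $\cJ$ is a fixed CM-ideal and $\cE^i:=\cJ/\cI_i$ is a coherent sheaf of finite length $e$ supported on the points $\supp=\{V(\cP_j)\}$. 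Write $\cG:=\cO_X/\cJ$ and $\cF^i:=\cO_X/\cI_i$, so $0\to\cE^i\to\cF^i\to\cG\to 0$ is exact and, taking sections in degree $n\geq a-3$, $\cE^i_n$ is a codimension-$(P(n)-e)$ subspace... more precisely $\cF^i_n\hookrightarrow S_n$ has $\cE^i_n=H^0(\cE^i(n))\subset\cF^i_n$ of dimension $e$ for $n\gg 0$, with $\cG_n=\cF^i_n/\cE^i_n$ independent of $i$. Thus the datum of a point of $F$ near $\xi$ is, for $n$ large, the datum of the $e$-dimensional subspace $\cE^i_n\subset M_n$, where $M:=\bigoplus_n H^0(\cG(n)/\text{something})$—precisely, it is a subbundle of $\cG_n$-complement; the relevant module is $M:=\bigoplus_{n}H^0\bigl((\cJ/\cJ^{\mathrm{sat}})\ldots\bigr)$. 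The clean statement: let $\mathcal{Q}$ be the finite $\cO_X$-module $\cJ/(\text{intersection of all }\cI\text{ in }F)$, concentrated at the points $V(\cP_j)$; by Aux-Lemma~\ref{auxlem:3.4}, $\cP_j^{\,c}$ annihilates $\mathcal Q$, so $\mathcal Q$ is naturally a module over the local Artinian ring $\Lambda:=\prod_j \cO_{X,V(\cP_j)}/\cP_j^{\,c}$ (a product of local Artinian $k$-algebras with residue field $k$). Each $\cI\in F$ then corresponds to a $\Lambda$-submodule $V\subset \mathcal Q$ of fixed colength, i.e.\ a point of the scheme $X(A)$ of $U$-invariant points of a Grassmannian as in Section~\ref{sec:3.3.2}, with $U=1+\mathrm{rad}(\Lambda)$. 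The precise matching of $F$ with this $X$ is the technical heart: one checks that $\cE^i\leftrightarrow$ its degree-$n$ piece gives, for all large $n$, the \emph{same} $\Lambda$-submodule of $\mathcal Q$ (independent of $n$), using the Additional Consideration computation in Section~\ref{sec:3.2} that identifies $(\det\cE^i_{n-1})^{-1}\otimes\det\cE^i_n$ with $\bigotimes_j(P_1/L_j)$, a fixed line.

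With that identification in place, Aux-Lemma~\ref{auxlem:3.3} says any two points of $X(k)$ — hence $\xi$ and $\zeta$ — are connected by a chain of \emph{lines} lying in $X$, i.e.\ in $F$. It remains to identify the class of such a line. A line $L$ in $X\subset\Grass$ of the form $\{\langle v_1,\dots,v_{d-1},\lambda v_d+\mu u\rangle\}$ corresponds, back in $\HH$, to a pencil of ideals $\cI_\alpha$ all with the same CM-part $\cJ$ and all with the same punctual $0$-cycle $\langle\cJ/\cI_\alpha\rangle$; only the "direction" of the length-$e$ scheme at the points varies linearly. This is exactly (an affine chart of) a combinatorial cycle built out of the $\delta^1_\alpha$-type operation, and the intersection-number computation $(\cM_n\cdot L)=(\cM_n\cdot C_0)=1$ for all $n\geq a-3$ follows from the $\det\cE$ formula above: $(\cM_n\cdot L)-(\cM_{n-1}\cdot L)=\deg\bigl((\det\cE_{n-1})^{-1}\otimes\det\cE_n\bigr|_L\bigr)=\deg(\text{fixed line bundle on }L)$, which is $0$ after the reduction showing the line sits in a single fiber of every $f_{n'}$, $n'\geq d$ (so $(\cM_{n}^{-1}\otimes\cM_{n+1}\cdot L)=0$ for all such $n$, forcing $(\cM_n\cdot L)$ to be constant in $n$, and the constant value on $C_0$ is $1$). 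Hence each $L$ in the chain has $[L]=[C_0]$; concatenating the chain of $e'\leq e$ such lines gives a connected curve $C\subset F$ with $[C]=\nu[C_0]$, $\nu\in\N$, joining $\xi$ and $\zeta$.

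The main obstacle I expect is the precise dictionary between the fiber $F\subset\HH$ and the "$U$-invariant subschemes of a Grassmannian" scheme $X$ of Section~\ref{sec:3.3.2}: one must show that the length-$e$ torsion sheaf $\cE^i=\cJ/\cI_i$ is recovered functorially from a single $\Lambda$-submodule of the fixed module $\mathcal Q$ — not merely degree-by-degree — and that this correspondence is a morphism of schemes in both directions (so that "chain of lines in $X$" pulls back to "chain of curves in $F$", with classes preserved). The $b$-regularity / $(a-2)$-regularity results of Lemma~\ref{lem:1.1} guarantee that "large $n$" can be taken uniformly (any $n\geq a-3$ works for the section functors, and $n\geq d$ for reconstructing $\cI$ from $f_n$), which is what makes the Grassmannian description of $F$ well-defined rather than asymptotic; Aux-Lemma~\ref{auxlem:3.4} is what lets one replace the whole local ring by the Artinian truncation $\Lambda$. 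Once the dictionary is set up, everything else is bookkeeping with determinants and the already-proved Aux-Lemmas~\ref{auxlem:3.1}--\ref{auxlem:3.3}.
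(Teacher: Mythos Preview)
Your proposal follows essentially the same route as the paper --- Proposition~\ref{prop:3.1} to fix the CM-part and the $0$-cycle, Aux-Lemma~\ref{auxlem:3.4} to pass to an Artinian quotient, Aux-Lemma~\ref{auxlem:3.3} to connect by a chain of lines in a Grassmannian, and then a determinant computation to see the class is a multiple of $[C_0]$. The overall architecture is correct.

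Where your write-up is looser than the paper, and where the obstacle you name dissolves: the paper does \emph{not} try to build a global dictionary between the whole fiber $F$ and a single scheme $X$ over a product of Artinian rings. Instead it changes one primary component at a time. Write $\cI=\cN\cap Q_1\cap\cdots\cap Q_r$ and $\cJ=\cN\cap R_1\cap\cdots\cap R_r$ with $Q_i,R_i$ both $\cP_i$-primary and $\length(\cN/\cN\cap Q_i)=\length(\cN/\cN\cap R_i)=:c_i$. For $i=1$, Aux-Lemma~\ref{auxlem:3.4} gives $\cP_1^{c_1}\cN\subset\cN\cap Q_1\subset\cN$, so both $\cN/\cN\cap Q_1$ and $\cN/\cN\cap R_1$ are points of $\Quot^{c_1}(E)$ with $E=\cN/\cP_1^{c_1}\cN$, a module over the single local Artinian ring $\Lambda=\cO_{\P^3,\cP_1}/\cP_1^{c_1}$. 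Aux-Lemma~\ref{auxlem:3.3} connects them by a curve $T$; the corresponding universal quotient is $\cN\otimes\cO_T/\cL$, and the lift to $\HH$ is simply $\cL\mapsto \cN\cap\cL\cap Q_2\cap\cdots\cap Q_r$. This is visibly a flat family over $T$, so there is nothing to check about ``morphism of schemes in both directions''. The curve so obtained connects $\xi$ to the intermediate point $\xi_1\leftrightarrow\cN\cap R_1\cap Q_2\cap\cdots\cap Q_r$; then one repeats with $\cP_2$, etc. Your module ``$\mathcal Q=\cJ/\bigcap_{\cI\in F}\cI$'' is not the right object (and that intersection is not obviously well-behaved); the correct module at each step is $\cN/\cP_i^{c_i}\cN$.

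For the class computation the paper's argument is simpler than yours. With $\cE:=\cN\otimes\cO_T/\cI$ supported on the finitely many points $\cP_i$, choose any linear form $\ell\notin\bigcup\cP_i$; then multiplication by $\ell$ gives an \emph{isomorphism} $\cE(n-1)\to\cE(n)$, hence $\cE_{n-1}\cong\cE_n$ on $T$ for all $n$, hence $\det\cE_{n-1}\cong\det\cE_n$. Since $\cM_n|_T\cong\det\cE_n\otimes(\det\cG_n\otimes\cO_T)$ with $\cG_n$ constant, $(\cM_n\cdot T)$ is independent of $n$, forcing $q_1=q_2=0$ in $[T]=q_0[C_0]+q_1[C_1]+q_2[C_2]$. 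No line-by-line computation that each Grassmannian line has class exactly $[C_0]$ is needed.

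Finally, the paper's converse is slightly stronger than what you state: it shows that any curve $C\subset\HH$ with $[C]=\nu[C_0]$ satisfies $(\cM_{n-1}^{-1}\otimes\cM_n\cdot C)=0$, hence $f(C)$ is a single point --- so the hypothesis ``$C$ in the fiber'' in the statement is automatic once $[C]\in\Z[C_0]$.
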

 \begin{proof}
Suppose that $\cI \leftrightarrow \xi \in \HH(k)$ and $\cJ \leftrightarrow
\zeta \in \HH(k)$ lie in the same fiber. Then by Proposition~\ref{prop:3.1}
we can write $\cI = \cN \cap Q_1 \cap \dots \cap Q_r$, $\cJ = \cN \cap R_1
\cap \dots \cap R_r$, $\cN$ is the CM-part, $Q_i$ and $R_i$ both
$\cP_i$-primary, where $\cP_i$ corresponds to a closed point of $\P^3$ and
for all $i$ $\length(\cN/\cN \cap Q_i) = \length(\cN/\cN \cap R_i) =:c_i$. 
In the exact sequence 
\[
 0 \longrightarrow \cN/\cI \longrightarrow \cO_{\P^3}/\cI \longrightarrow
 \cO_{\P^3}/\cN \longrightarrow 0
\]
one has $\cN/\cI \simeq \bigoplus^r_1 \cN/\cN \cap Q_i$. If $P$ and $p$ is
the Hilbert polynomial of $\cO_{\P^3}/\cI$ respectively of $\cO_{\P^3}/\cN$,
then $P(n) = p(n) +s$, $s:= \sum^r_1 c_i$. Ditto with $\cJ$. To simplify the
notation, put $\cP_1 = \cP$ and $c_1 = c$. One sees that from
Aux-lemma~\ref{auxlem:3.4} it follows that $\cN \cdot \cP^c \subset \cN \cap
Q_1 \subset \cN$ and therefore $\cN/\cN \cap Q_1$ and $\cN/\cN \cap R_1$
correspond to closed points in $X:=\Quot^c(E)$, where $E:=\cN /\cP^c \cN$ is
a finitely generated module over the Artinian $k$-algebra $\Lambda =
\cO_{\P^3,\cP} /\cP^c \cO_{\P^3,\cP}$. Without restriction one can assume that
$\cP = (x, y, z)$.  Putting $U = D_+(t)$, one can write $\Lambda =
\cO_{U,\cP} /\cP^c \cO_{U,\cP} = k[X, Y, Z]/\fm^c$ where $X = x/t$, $Y =
y/t$, $Z = z/t$ and $\fm = (X, Y, Z)$. By Aux-lemma~\ref{auxlem:3.3}, the
two points $\cN/\cN \cap Q_1$ and $\cN/\cN \cap R_1$ can be connected by a
curve $T \subset X$. In other words: There is a coherent $\cO_{\P^3 \times
  T}$-module $\cL$, $\cP^c \cN \otimes \cO_T \subset \cL \subset \cN \otimes
\cO_T$ such that $\cN \otimes \cO_T/ \cL$ is flat over $T$ of rank $c$ and
there are $\tau_1, \tau_2 \in T(k)$ such that $\cL \otimes k(\tau_1) = \cN
\cap Q_1$ and $\cL \otimes k(\tau_2) = \cN \cap R_1$.

If one puts $\cK:= \cN \cap \cL \cap Q_2 \cap \cdots \cap Q_r$, then
\begin{equation}
  \label{eq:3.7}
  0 \longrightarrow \cN \otimes \cO_T/ \cK \longrightarrow \cO_{\P^3 \times
    T} / \cK \longrightarrow \cO_{\P^3 \times T} / \cN \otimes \cO_T  
\longrightarrow 0
\end{equation}
is exact and 
\[
  \cN \otimes \cO_T/ \cK = ( \cN \otimes \cO_T/\cL)
     \bigoplus^r_2 (\cN/\cN \cap Q_i) \otimes \cO_T  =:\cE\,.
\]
Let $\pi$ be the projection $\P^3 \times T \to T$. 
Applying $\pi_*$ to the last sequence gives an exact sequence again, hence
an exact sequence
\[
 0 \longrightarrow \cE_n \longrightarrow \cF_n \otimes \cO_T  \longrightarrow
 \cG_n \otimes \cO_T \longrightarrow 0
\] 
where $\cE_n:= \pi_*(\cE(n))$ is locally free of rank $s$, $\cF_n$ is the
universal locally free sheaf of rank $P(n)$ on $\HH$ and $\cG_n$ is the
$k$-vector space $P_n/H^0(\P^3,\cN(n))$ of rank $p(n)$, $n$ sufficiently
large. Hence 
\[
 \cM_n \otimes \cO_T \simeq \bigwedge^s \cE_n \otimes_T (\dot\bigwedge
 \cG_n) \otimes \cO_T
\]
where $\cM_n = \dot\bigwedge \cF_n$ is a tautological line bundle on $\HH$.

If $\ell\in k[x, y, z, t]_1 - \bigcup^r_1 \cP_i$ and $\mu$ is the
multiplication with $\ell$, then
\[
 0 \longrightarrow \cE(n-1) \stackrel{\mu}{\longrightarrow} \cE(n) \longrightarrow \cE'(n) \longrightarrow 0
\]
is an exact sequence, $\cE':=\cE / \ell \cE(-1)$. Tensoring with $k(\tau)$,
$\tau \in T$, gives an exact sequence again (because of $\Ass(\cN/\cL
\otimes k(\tau)) = \{\cP_1\}$ etc.).
Applying $\pi_*$ gives exact sequences on $T$ 
\[
 0 \longrightarrow \cE_{n-1}  \longrightarrow  \cE_n \longrightarrow 0\,,
\]
as $\cE_n$ is locally free of rank $s$ on $T$, for all $n$. Hence the
intersection number $\bigl(\dot\bigwedge \cE_n\cdot T\bigr)$ is independent
of $n$ and the same is true for $(\cM_n \otimes \cO_T \cdot T)$. Now the
sequence~\eqref{eq:3.7} shows that one can take $T$ as a curve in $\HH$ and
can write 
\[
  [T] = q_0 [C_0] + q_1 [C_1] + q_2 [C_2]\,.
\]
But  then $q_1 = q_2 = 0$.  This means, one has connected the point $\xi
\leftrightarrow \cN \cap Q_1 \cap \dots \cap Q_r$ with the point 
$\xi_1 \leftrightarrow \cN \cap R_1 \cap Q_2 \cap \dots \cap Q_r$
by a curve $T \sim q C_0$. In the same way one can connect $\xi_1$ with the
point $\cN \cap R_1 \cap R_2 \cap Q_3 \cap \dots \cap Q_r$, etc.
Conversely, suppose that $\xi$ and $\zeta\in \HH(k)$ can be connected by a
curve $C \subset \HH$ such that $C \sim q_0 C_0$. Then from $(\cM^{-1}_{n-1}
\otimes \cM_n \cdot C)=0$ it follows that $f(C) = 1 \text{ point }$.    
 \end{proof}

\section{The morphism $g$ defined by $\cM_{b-1}$}
\label{sec:3.4}

Let $Y/k$ be a scheme. $\pi:\P^3 \times Y \to Y$ the projection. If $\cI \in
\HH(Y)$, $\cF:=\cO_{\P^3\times Y} / \cI$ then $\cI_n = \pi_* \cI(n) \subset
P_n \otimes \cO_Y$ is a subbundle of rank $Q(n)$ for all $n \geq b-1$, and
if $\cF_n := \pi_* \cF(n)$, the sequence
\[
0 \longrightarrow \cI_n \longrightarrow P_n \otimes \cO_Y \longrightarrow \cF_n
\longrightarrow 0
\]
is exact for $n \geq b-1$ and thus $\bigwedge^{P(n)}\cF_n$ is a globally
generated line bundle, which is nothing else but the line bundle $\cM_n
\otimes_\HH \cO_Y$ if $n \geq b-1$. (The $b$-regularity of $\cI \otimes
k(y)$ for all $y\in Y$ implies that the formation of $\pi_* \cI(n)$ and
$\pi_* \cF(n)$ commutes with base change (see~\cite{G78} and~\cite[Lecture
14]{M2}).  This gives a morphism $\gamma:\HH \to V:= \Grass^p(P_{b-1})$
defined by $\cF \mapsto H^0(\cF(b-1))$, $p:=P(b-1)$. If $q:=Q(b-1) =
\binom{b-1+3}{3} -p$, then $V$ is isomorphic in a natural way to
$W:=\Grass_q(P_{b-1})$ and $\gamma$ can be identified with the morphism $\HH
\to W$ defined by $\cI \mapsto H^0(\cI(b-1))$. Composing these maps with the
Pl\"ucker-embedding $V\to \P^n$ (or $W\to \P^n$) defined by $L \mapsto
\dot\bigwedge L$, 
$n = \left(\binom{b-1+3}{3} \atop {p}\right) - 1 
= \left(\binom{b-1+3}{3} \atop {q}\right) - 1$, 
we obtain a morphism $g:\HH \to \P^n$. 
Now suppose $\xi_i \leftrightarrow \cI_i$, $i=1, 2$ are two elements in
$\HH(Y)$ such that $\reg(\cI_i \otimes k(y)) \leq b-1$, for all $y
\in Y$ and $i=1, 2$. If $g(\xi_1) = g(\xi_2)$ then
$\gamma(\xi_1) = \gamma(\xi_2)$ and thus $\pi_*(\cI_1(b-1)) =
\pi_*(\cI_2(b-1))$. From the $(b-1)$-regularity we conclude that $\cI_1 =
\cI_2$ (see~\cite[p. 99]{M2}). 

Let $U\subset \HH$ be the open subset consisting of ideals with regularity
$\leq b-1$.  Then $H_m:= \HH - U$ has a \emph{natural} structure as a smooth
subscheme of $\HH$ (see Appendix~\ref{cha:C}). If $Q(n)= \binom{n-1+3}{3} +
\binom{n-a+2}{2} + \binom{n-b+1}{1}$ (as always), then $H_m(k)$ consists of
the ideals of the form $(\ell, f(h,g))$, $\ell \in P_1-(0)$, $f \in [P/\ell
P(-1)]_{a-1}$, $h \in P_1/\ell \cdot k -(0)$, $g \in [P/(\ell,h) \cdot
P(-1)]_{b-a+1}$.

Suppose that $(\ell_i, f_i (h_i, g_i)) \leftrightarrow \xi_i \in H_m(k)$,
$i=0, 1$ have the same image in $W$ under $\gamma$, hence $\ell_i P_{b-2} +
f_i h_i \cdot k$ are equal subspaces in $P_{b-1}$ for $i=0, 1$. It follows
that they generate the same ideal in $P$, i.e.~one has $(\ell_0, f_0 h_0) = 
(\ell_1, f_1 h_1)$.
From this it follows that we can assume $\ell_0 = \ell_1 =:\ell$ and $f_0
h_0 = f_1 h_1$ in $P/\ell P(-1)$. Two cases can occur:
\begin{enumerate}[(i)]
\item $h_1 \in h_0 \cdot k$, hence $f_1 \in f_0 \cdot k$.
\item $h_1 \centernot\in h_0 \cdot k$.  Then $h_1$ divides $f_0$ and $f_1 =
  h_0 \cdot (f_0/h_1)$.
\end{enumerate}
If $h_1,\dots, h_r$ are the essentially different linear forms in $P/\ell
P(-1)$, which divide $f_0$, then define $f_i:=h_0 \cdot (f_0/h_i)$ and put
$L_i:=[P/(\ell, h_i) P(-1)]_{b-a+1}$.  Then  $\gamma$ maps $W_i:= \Set{
  (\ell, f_i (h_i, g)) | g \in L_i } \subset H_m$ to $\gamma(\xi_0)$. As $W_i
\simeq \P(L_i) \simeq \P^{b-a}_k$, we get 

\begin{proposition}
  \label{prop:3.2}
Let $g$ be the morphism $\HH \to \P^n$ defined by $\cM_{b-1} = \cL_0 \otimes
\cL^\rho_2$. Then one has:
\begin{enumerate}[(i)]
\item $g|\HH -H_m$ is an isomorphism.
\item If $(\ell, f(h,g)) \leftrightarrow \xi \in H_m(k)$ and
  $F:=g^{-1}g(\xi)$, then $F(k)$ is a disjoint union of $r$ projective
  spaces $\P^{b-a}_k$, where $r$ is the number of essentially different
  linear forms in $P/\ell P(-1)$, which divide $f \in [P/\ell P(-1)]_d$.
\hfill $\qed$   
\end{enumerate}
\end{proposition}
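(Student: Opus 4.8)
The plan is to obtain the proposition from the discussion that precedes its statement, supplying the two points that still require proof: the scheme-theoretic content of~(i), and the inclusion $F(k)\subseteq H_m(k)$ used in~(ii).

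For~(i), I would first record that $\gamma\colon\HH\to W=\Grass_q(P_{b-1})$, $\cI\mapsto H^0(\cI(b-1))$, is a morphism and that $g$ is its composite with the Pl\"ucker embedding: by the $b$-regularity of every ideal with Hilbert polynomial $Q$ (\cite[Lemma 2.9]{G78}) one has $H^1(\cI(b-1))=0$ for all members, so $\pi_*\cI(n)$ is a subbundle of rank $Q(n)$ for $n\ge b-1$ whose formation commutes with base change, and $\gamma$ is defined on all of $\HH$. On the open subset $U=\HH-H_m$ every member is $(b-1)$-regular, so the monomorphism already proved in the text---equality of $\pi_*\cI_1(b-1)$ and $\pi_*\cI_2(b-1)$ forces $\cI_1=\cI_2$---is in fact a locally closed immersion: this is the construction of the Hilbert scheme inside the Grassmannian of the $m$th graded piece with $m=b-1\ge\reg$ on $U$ (\cite{M2}), and one checks unramifiedness along $U$ directly from the fact that a homomorphism out of a $(b-1)$-regular ideal is determined by its restriction to the degree-$(b-1)$ piece. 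Since the Pl\"ucker embedding is a closed immersion, $g|_U$ is a locally closed immersion, i.e.\ an isomorphism of $U$ onto a locally closed subscheme of $\P^n$; this is~(i).

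For~(ii), fix $\xi\leftrightarrow(\ell,f(h,g_0))\in H_m(k)$. A short computation identifies $V:=H^0(\cI_\xi(b-1))$ with the degree-$(b-1)$ part of the complete intersection ideal $(\ell,fh)\subset P$, and $\zeta\leftrightarrow\cI_\zeta$ lies in $F=g^{-1}g(\xi)$ exactly when $H^0(\cI_\zeta(b-1))=V$. The key new step is $F(k)\subseteq H_m(k)$: if $\zeta\in U(k)$, then $\cI_\zeta$ is $(b-1)$-regular, so its saturated homogeneous ideal is the saturation of its truncation in degrees $\ge b-1$; that truncation is generated by $V$, hence has the same saturation as $(\ell,fh)$, which---being a complete intersection of a linear form and a form of degree $a$, with $a\le b-1$ in the range under consideration---is already saturated. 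Thus $\cI_\zeta$ would be the ideal sheaf of $(\ell,fh)$, whose Hilbert polynomial $\binom{n-1+3}{3}+\binom{n-a+2}{2}$ differs from $Q(n)$ by $\binom{n-b+1}{1}$, contradicting $\zeta\in\HH$. Hence $F(k)\subseteq H_m(k)$.

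It then remains to invoke the pre-statement discussion. For $\zeta\leftrightarrow(\ell_1,f_1(h_1,g_1))\in F(k)$ the same saturation argument gives $(\ell_1,f_1h_1)=(\ell,fh)$, so after scaling $\ell_1=\ell$ and $f_1h_1=fh$ in $S=P/\ell P(-1)$; by unique factorization in $S$ the linear form $h_1$ is one of the essentially distinct linear factors $h_i$ of $fh$, whence $f_1=fh/h_i$ and $g_1$ is an arbitrary element of $[P/(\ell,h_i)P(-1)]_{b-a+1}$, so $\zeta$ lies on $W_i=\Set{(\ell,(fh/h_i)(h_i,g)) | g\in[P/(\ell,h_i)P(-1)]_{b-a+1}}$. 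Conversely $W_i\subseteq F$ since $(fh/h_i)\cdot h_i=fh$ makes the degree-$(b-1)$ piece equal to $V$ regardless of $g$. The $W_i$ are pairwise disjoint because an $H_m$-ideal determines $\ell$ (the only linear form it contains) and then $h_i$ up to scalar as $(f_1h_1)/\gcd(f_1h_1,f_1g_1)$, and each $W_i\simeq\P([P/(\ell,h_i)P(-1)]_{b-a+1})$ is the projective space described in the discussion. I expect the main obstacle to be the saturation argument establishing $F(k)\subseteq H_m(k)$ (and, within it, the bookkeeping that the complete intersection $(\ell,fh)$ is $(b-1)$-regular), together with the passage in~(i) from the $Y$-point injectivity already in the text to the statement that $g|_U$ is genuinely an immersion; once these are secured the remainder is the combinatorics carried out before the statement.
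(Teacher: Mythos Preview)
Your argument tracks the paper's discussion in Section~3.4 closely: both part~(i) (via $(b-1)$-regularity on $U$ and the Hilbert-scheme-in-Grassmannian construction of \cite{M2}) and the case analysis $(\ell_0,f_0h_0)=(\ell_1,f_1h_1)$ in~(ii) are already carried out there verbatim. Where you genuinely add something is the step $F(k)\subseteq H_m(k)$: the paper's discussion only compares pairs $\xi_0,\xi_1$ both assumed to lie in $H_m(k)$, so it never excludes a point of $U=\HH-H_m$ from the fibre, and your saturation argument (that a $(b-1)$-regular ideal with $H^0(\cI(b-1))=(\ell,fh)_{b-1}$ is forced to equal the sheaf of $(\ell,fh)$, which has the wrong Hilbert polynomial) is exactly what closes that gap. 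One minor cleanup: your disjointness formula $h_i=(f_1h_1)/\gcd(f_1h_1,f_1g_1)$ depends on which lift of $g_1\in P/(\ell,h_1)P$ you pick; it is cleaner simply to invoke the injectivity of $H_m\hookrightarrow\HH$ established in Appendix~C (via Fogarty's splitting $f\cdot\cK\mapsto(f,\cK)$), from which distinct $h_i$ immediately give disjoint $W_i$.
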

\section{Connection with the results of Fogarty}
\label{sec:3.5}

In~\cite{F1} Fogarty constructed morphisms $\omega^P_t(m): \Hilb^P(\P^N_k)
\to \P^n_k$, where $m\gg 0$ is a natural number and $n$ depends on $m$, and
he gave a description of the fibers (loc.~cit.~Theorem 10.4., p.~84).
\subsection{}
\label{sec:3.5.1}
If one chooses $N=3$, $t=1$, $P(n) = dn-g +1$ in (loc.~cit.), then one sees
that the fibers of $\omega^P_1(m)$ coincide with the fibers of $f_n$, at
least as sets of points. From $f_n(C_0) = \Set{ 1 \text{ point}}$ it follows
that $\omega^P_1(m)(C_0) = \Set{1 \text{ point}}$. If then $\cL_{1,m}$ is
the line bundle belonging to $\omega^P_1(m)$ (loc.~cit.~p.~88), from
$(\cL_{1,m}\cdot C_0) = 0$ it follows that $\cL_{1,m} = \cL_1^{\nu_1}
\otimes \cL_2^{\nu_2} \otimes L$, where $L \in \Pic^0(\HH)$ and $\nu_1$ and
$\nu_2$ are natural numbers depending on $m$. But I cannot describe this
dependence more concretely.

\subsection{}
\label{sec:3.5.2}
If $N=3$, $t=2$, $P(n) = dn-g +1$, then  $\omega^P_2(m)$ is the
Hilbert-Chow morphism (loc.~cit.~p.~84). If $U = U(4;k) \subset G:=\GL(4;k)$
is the subgroup of all upper unitriangular matrices, than any integer
closed curve in $X = \P^3_k$, which is invariant under $U$, is equal to the
line $\ell = V(x,y)$, hence the fixed point scheme $\HH^U$ is mapped by 
 $\omega^P_2(m)$ to a single point. If $\cL_{2,m}$ is the line bundle
 belonging to  $\omega^P_2(m)$, then $(\cL_{2,m}\cdot C_0) = 0$ follows. Now
 the $1$-cycle $D= \Set{ (x^2,xy,y^{a-1},
   z^{b-2a+4}(y^{a-2}+\alpha xz^{a-3})) | \alpha \in k }^-$ is contained in 
$\HH^U$ and $[D] = (d-1) [C_0]+[C_1]$ (eq.~\eqref{eq:1.1} in Chapter 1). It
follows that $(\cL_{2,m}\cdot C_1) = 0$, hence $\cL_{2,m} = \cL^\nu_2
\otimes L$ where $\nu > 0$ and $L \in \Pic^0(\HH)$.  Certainly $\cL_{2,m}$
has to be equal to $\cL_2$, but I cannot prove this in a simple way.
\subsection{}
\label{sec:3.5.3}
It is for this reason that I have to use the morphism $\Phi$, which was
constructed by Mumford in~\cite[Section 5.4]{M1}.

Let be $\xi\in \HH(k)$, $\xi \leftrightarrow C$ the corresponding closed
curve in $X = \P^3_k$.  The cycle $\langle C \rangle$ of $C$ is defined as  
\[
\langle C \rangle = \sum \nu_i (C_i)_\red
\]
where the $C_i$ are the $1$-dimensional, irreducible components of $C$ and
$\nu_i$ their multiplicities.  The Hilbert-Chow morphism \index{Hilbert-Chow
  morphism} is a morphism $\Phi: \HH \to \Div^{d,d}(X\times X)$, where
$\Div^{d,d}(X\times X)$ is a projective scheme, hence a closed subscheme of
a projective space $\P$. If $\xi \in \HH(k)$, one has $\Phi(\xi) =
\text{Chow form of }\langle C \rangle$. Now Fogarty showed that
\[
\langle C \rangle \mapsto \text{Chow form of }\langle C \rangle
\]
is an injective map~\cite[proof of Lemma 10.3]{F1}. As we will make
statements on the Hilbert-Chow morphism, which only concern the fibers, we
write $h$ instead of $\Phi$, i.e.~$h(\xi) = \langle C \rangle$. As $\Phi$ is
$\PGL(3;k)$-equivariant (cf.~\cite[p. 109]{M1}), one has $h(g\xi) = g
h(\xi)$ if $g\in \GL(4;k)$. 


\chapter{The action of $\Aut(\HH)$ on the first Chow group}
\label{chap:4}

We recall the convention that $A_1(\HH)$ and $A_1(\CC)$ denote  the Chow
groups with coefficients in $\Q$, and put $S = k[x, y, z, t]$.

\section{The action of $\Aut(\HH)$ on $A_1(\HH)$}
\label{sec:4.1}

In Chapter~\ref{cha:1} it had been shown that the cone $A^+_1(\HH)$ is
freely generated by the classes of $C_0, C_1, C_2$ (cf.~Theorem~\ref{thm:1.2}).
It follows that each $\phi\in \Aut(\HH)$ permutes the set $\Set{[C_0],
  [C_1],[C_2]}$.

\textsc{Case 1}: $[\phi (C_2)] = [C_1]$.\\
Let be $(\ell, f\cK) \leftrightarrow \xi \in \cG(k)$ and $g \in S_d / \ell
S_{d-1}$ such that $f$ and $ßg$ are linearly independent modulo $\ell
S_{d-1}$. Then $D:=\Set{ \langle \alpha \bar f + \beta \bar g \rangle |
  (\alpha : \beta ) \in \P^1}$ is a curve of degree $1$ in $\P(S_d / \ell
S_{d-1})$ and $C:= \Set{ (\ell, (\alpha f + \beta g)\cK | (\alpha : \beta )
  \in \P^1} \subset \cG$ is a curve such that $[C] = [C_2]$
(cf.~Proposition~\ref{prop:2.2}), from which it follows that $[\phi (C)]
=[\phi (C_2)] = [C_1]$.  By Corollary~\ref{cor:2.1} it follows that $\phi(C)
\subset H_m$ hence $\phi(\cG) \subset H_m$. Comparing the dimensions of
$H_m$ and $\cG$ it follows that $a=b$ or $a+1=b$
(cf.~Appendix~\ref{cha:C}).

\textsc{Case 2}: $[\phi (C_1)] = [C_2]$.\\
Applying $\phi^{-1}$ one gets $[\phi^{-1}(C_2)] = [C_1]$ and as in the first
case $a=b$ or $a+1=b$ follows.

\textsc{Case 3}: $[\phi (C_0)] = [C_1]$.\\
Let be $\cI=(\ell, f) \cap \cP_1 \cap \dots \cap \cP_{b-a-1} \cap Q$, where
$ f\in (S_d / \ell S_{d-1}) - (0)$, $\cP_i \in \P^3 - V(\ell,f)$ are closed
points, different from each other, $Q$ an ideal in $S$, which is primary to a
point $\cP$ with multiplicity $2$, and $\cP \centernot\in V(\ell,f)$ and
$\cP \neq \cP_i$ for all $i$. Let $M \subset \HH(k)$ be the set of all such
ideals. Fixing $\ell, f, \cP_1, \dots, \cP_{b-a-1}$ and $\cP$, then $M$ is isomorphic to
the closed points of $V:=\Quot^2(\cO_{\P^3}/\cP^2) \simeq \P^2$.
 Take a point $\xi_0 \leftrightarrow (\ell, f)
\cap \cP_1 \cap \dots \cap \cP_{b-a-1} \cap Q_0 \in M$ and a different point
$\xi_1 \leftrightarrow (\ell, f)
\cap \cP_1 \cap \dots \cap \cP_{b-a-1} \cap Q_1$ such that $Q_0$ and $Q_1
\in V(k)$. Then $f(\xi_0) = f(\xi_1)$ if $f$ is the morphism defined by $\cL_1
\otimes \cL_2$ (see Chapter~\ref{cha:3}). By Lemma~\ref{lem:3.2}, $\xi_0$
and $\xi_1$ can be joined by a connected curve $D \subset \HH$ such that
$[D] = \nu \cdot [C_0]$. It follows that $[\phi(D)] = [\nu \phi(C_0)] =
\nu[C_1]$, and this implies $\phi(D) \subset H_m$
(Corollary~\ref{cor:2.1}), hence $\phi(M) \subset H_m$. But clearly one has
$\dim M \geq 3 + \binom{d+2}{2} -1 + 3(b-a) +1  = \binom{d+2}{2} + 3(b-a)
+3$ and $\dim H_m =  \binom{d+2}{2} + (b-a) +5$; this implies $a=b$ or
$a+1=b$.

\textsc{Case 4}: $[\phi (C_0)] = [C_2]$.\\
Using the same argumentation as in Case 3 and Lemma~\ref{lem:2.2}, it
follows that $\phi(M) \subset \cG$. As $\dim \cG = \binom{d+2}{2} + 2(b-a)
+4$, this again implies $a=b$ or $a+1=b$.

Now the general assumption was $d\geq 3$ and $g \leq g(d) =
(d-2)^2/4$. Using the formulas from~\cite[p. 92]{T1} one sees that this
amounts to $a^2 -1 \leq 4b$ and we obtain:

\begin{proposition}
 \label{prop:4.1}  
 Let $\HH = H_{d,g}$ be the Hilbert scheme, which parametrizes curves in
 $\P^3$ with degree $d \geq 3$ and genus $g \leq g(d) = (d-2)^2/4$.  If
 $(d,g) \centernot\in \Set{(3, 0),\, (3,-1),\, (4,1)}$, then $\Aut(\HH)$
 operates trivially on $A_1(\HH)$.  \hfill $\qed$
\end{proposition}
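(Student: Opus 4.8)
The plan is to exploit the case analysis already set up before the statement. By Theorem~\ref{thm:1.2} the cone $A^+_1(\HH)$ is freely generated by $[C_0]$, $[C_1]$, $[C_2]$, so any $\phi\in\Aut(\HH)$ permutes the three extremal rays, hence permutes $\{[C_0],[C_1],[C_2]\}$. The goal is to show that, under the hypothesis $(d,g)\notin\{(3,0),(3,-1),(4,1)\}$, the only admissible permutation is the identity. First I would record the translation of the numerical hypothesis into Macaulay coefficients: by the formulas of \cite[p.~92]{T1} the condition $d\geq 3$, $g\leq g(d)=(d-2)^2/4$ is equivalent to $a^2-1\leq 4b$ (equivalently $b\geq(a^2-1)/4$), and the three exceptional pairs $(3,0),(3,-1),(4,1)$ correspond exactly to the cases $a=b$ or $a+1=b$. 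So it suffices to prove: if $\phi$ moves one of the rays, then $a=b$ or $a=b-1$.

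Next I would go through the four possible non-trivial behaviours, which are precisely Cases 1--4 laid out in Section~\ref{sec:4.1} just above the proposition, and in each case derive $a=b$ or $a+1=b$ by a dimension comparison. In Case~1 ($[\phi(C_2)]=[C_1]$): pick a curve $C\subset\cG$ with $[C]=[C_2]$ coming from Proposition~\ref{prop:2.2}; then $[\phi(C)]=[C_1]$, so by Corollary~\ref{cor:2.1} $\phi(C)\subset H_m$, and letting $\xi$ vary one gets $\phi(\cG)\subset H_m$; comparing $\dim\cG$ and $\dim H_m$ from Appendix~\ref{cha:C} forces $a=b$ or $a+1=b$. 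Case~2 ($[\phi(C_1)]=[C_2]$) reduces to Case~1 by applying $\phi^{-1}$. For Cases~3 and~4 ($[\phi(C_0)]=[C_1]$ resp.\ $[C_2]$) I would use the family $M\subset\HH(k)$ of ideals $(\ell,f)\cap\cP_1\cap\dots\cap\cP_{b-a-1}\cap Q$ with $Q$ a length-$2$ punctual piece: two such points with the same CM-part and the same $0$-cycle are joined, by Lemma~\ref{lem:3.2}, by a connected curve $D$ with $[D]=\nu[C_0]$; hence $[\phi(D)]=\nu[C_1]$ (resp.\ $\nu[C_2]$), so $\phi(D)\subset H_m$ (resp.\ $\subset\cG$) by Corollary~\ref{cor:2.1} (resp.\ Lemma~\ref{lem:2.2}), whence $\phi(M)\subset H_m$ (resp.\ $\subset\cG$); the dimension estimate $\dim M\geq \binom{d+2}{2}+3(b-a)+3$ against $\dim H_m=\binom{d+2}{2}+(b-a)+5$ (resp.\ $\dim\cG=\binom{d+2}{2}+2(b-a)+4$) again yields $a=b$ or $a+1=b$.

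Finally, assembling: if $(d,g)$ is not one of the three exceptional pairs then $a<b-1$, so none of Cases~1--4 can occur, i.e.\ $\phi$ fixes each of $[C_0],[C_1],[C_2]$; since these span $A_1(\HH)$ over $\Q$ (Theorem~I together with equation~\eqref{eq:1.1}), $\phi$ acts trivially on $A_1(\HH)$. I expect the main obstacle to be purely bookkeeping: getting the dimension counts for $H_m$, $\cG$, and the auxiliary family $M$ exactly right (these rest on the explicit descriptions in Appendix~\ref{cha:C}), and making sure the curves $C$ and $D$ produced in each case genuinely lie in the relevant fibre or subscheme so that Corollary~\ref{cor:2.1}, Lemma~\ref{lem:2.2} and Lemma~\ref{lem:3.2} apply. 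The conceptual content — permutation of extremal rays plus "moving a ray forces a numerical coincidence" — is already in place; no new idea beyond the cited results should be needed.
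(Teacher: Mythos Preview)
Your proposal is correct and follows the paper's argument essentially line for line: the same four cases, the same curves through $\cG$ and the auxiliary family $M$, the same appeals to Corollary~\ref{cor:2.1}, Lemma~\ref{lem:2.2}, Lemma~\ref{lem:3.2}, and the same dimension counts from Appendix~\ref{cha:C}. You even make explicit the step the paper leaves implicit, namely that within the range $a^2-1\leq 4b$ the conditions $a=b$ or $a+1=b$ single out exactly the three pairs $(3,0),(3,-1),(4,1)$.
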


\begin{corollary}
  \label{cor:4.1} 
 If $d\geq 5$ and $g \leq g(d)$, then the subschemes $H_m$ and $\cG$ are
 invariant under $\Aut(\HH)$. 
\end{corollary}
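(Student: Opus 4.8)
Looking at Corollary~\ref{cor:4.1}, the statement to prove is: if $d\geq 5$ and $g\leq g(d)$, then the subschemes $H_m$ and $\cG$ of $\HH$ are invariant under $\Aut(\HH)$.

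The plan is to combine Proposition~\ref{prop:4.1} with the structural results of Chapter 2 (Corollary~\ref{cor:2.1}, Lemma~\ref{lem:2.2}, Proposition~\ref{prop:2.2}). First I would observe that for $d\geq 5$ and $g\leq g(d)$ the exceptional pairs $(3,0)$, $(3,-1)$, $(4,1)$ are excluded, so Proposition~\ref{prop:4.1} applies and every $\phi\in\Aut(\HH)$ acts trivially on $A_1(\HH)$; in particular $[\phi(C_i)]=[C_i]$ for $i=0,1,2$. The key point is then that $H_m$ and $\cG$ admit \emph{numerical} characterizations via the curve classes: by Corollary~\ref{cor:2.1}, a connected curve $C$ with $[C]\in\Z\cdot[C_1]$ lies in $H_m$; by Lemma~\ref{lem:2.2}, a connected curve with $[C]\in\Z\cdot[C_2]$ lies in $\cG$. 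Conversely $H_m$ and $\cG$ are swept out by such curves — $H_m$ contains curves of class a multiple of $[C_1]$ through each of its points (this is the content of the subscheme $H_m$ carrying the fibers of the morphism $g$, cf.~Proposition~\ref{prop:3.2}, whose generic fiber structure gives rational curves), and $\cG$ is swept by the curves of Proposition~\ref{prop:2.2} of class $s[C_2]$.

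Concretely, for $\cG$: take any $\xi\in\cG(k)$, write $\xi\leftrightarrow(\ell,f\cK)$ as in Proposition~\ref{prop:2.2}, and choose $g\in S_d/\ell S_{d-1}$ linearly independent from $f$; then $C=\Set{(\ell,(\alpha f+\beta g)\cK)\mid(\alpha:\beta)\in\P^1}\subset\cG$ has $[C]=[C_2]$. Since $\phi$ acts trivially on $A_1(\HH)$, $[\phi(C)]=[C_2]$, so by Lemma~\ref{lem:2.2} $\phi(C)\subset\cG$, in particular $\phi(\xi)\in\cG$. As $\xi$ was arbitrary, $\phi(\cG)\subseteq\cG$; applying the same to $\phi^{-1}$ gives equality. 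For $H_m$: given $\xi\in H_m(k)$, one uses Proposition~\ref{prop:3.2}(ii) — the fiber of $g$ through $\xi$ contains a projective space $\P^{b-a}$, hence (provided $b>a$) a line $D$ through $\xi$; one checks $[D]$ is a multiple of $[C_1]$ (the fiber of $g=$ morphism by $\cM_{b-1}=\cL_0\otimes\cL_2^\rho$ contracts classes pairing to zero with $\cL_0$ and $\cL_2$, i.e.~multiples of $[C_1]$). Then $[\phi(D)]$ is the same multiple of $[C_1]$, so $\phi(D)\subset H_m$ by Corollary~\ref{cor:2.1}, giving $\phi(\xi)\in H_m$; the degenerate case $a=b$ (so $\rho$ and $b-a$ may force a separate argument) must be handled by noting $H_m$ is still the image locus $\cZ$ of Chapter 2 or by an analogous sweeping using the curves $C_1\subset H_m$ directly, and again one symmetrizes in $\phi^{\pm1}$.

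The main obstacle I anticipate is the sweeping direction — verifying that \emph{every} point of $H_m$ (resp.~$\cG$) lies on a connected curve whose class is a pure multiple of $[C_1]$ (resp.~$[C_2]$), with no spurious $[C_0]$-component. For $\cG$ this is clean from Proposition~\ref{prop:2.2}. For $H_m$ one must be slightly careful when $b-a$ is small: if $b=a$ the fibers of $g$ over $H_m$ are single points and one cannot move inside $H_m$ along a fiber, so instead one should argue that $H_m$ itself is the closure of the union of the combinatorial cycles $C_1$ and their $\GL(4;k)$-translates, each of which has class $[C_1]$ and lies in $H_m$ by Corollary~\ref{cor:2.1}; since the translates $\{g(C_1):g\in\GL(4;k)\}$ cover a dense open (indeed all, after also translating) subset of $H_m$, invariance of the class $[C_1]$ under $\phi$ plus Corollary~\ref{cor:2.1} forces $\phi$ to map this covering family into $H_m$, and closedness of $H_m$ finishes. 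Once these covering statements are in place the deduction is immediate and purely formal.
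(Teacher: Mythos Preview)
Your proposal is correct and follows essentially the same approach as the paper: for each point of $H_m$ (resp.~$\cG$) one exhibits a connected curve through it whose class is a multiple of $[C_1]$ (resp.~$[C_2]$), and then Proposition~\ref{prop:4.1} together with Corollary~\ref{cor:2.1} (resp.~Lemma~\ref{lem:2.2}) forces $\phi$ to send that curve back into $H_m$ (resp.~$\cG$). The only cosmetic differences are that the paper constructs the $H_m$-curve explicitly by varying $g$ in $(\ell,f(h,g))$ and computes $(\cM_n\cdot C)=n-b+1$ directly, whereas you locate the same curve via Proposition~\ref{prop:3.2}(ii) and read off its class from the pairing with $\cM_{b-1}=\cL_0\otimes\cL_2^\rho$; also, your worry about $b=a$ is unnecessary, since $d\ge 5$ and $g\le g(d)$ force $b-a\ge 3$.
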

\begin{proof}
  $1^\circ$. Let $\cI = (\ell, f(h,g)) \leftrightarrow \xi \in H_m(k)$.
  Take any $g' \in S_c$, $c = b-a+1$, such that $g$ and $g'$ are 
linearly independent  modulo $(\ell, h) S(-1)$. Put $\cI_\alpha:=(\ell, f(h,
g + \alpha \cdot g'))$. In order to compute the degree of
$C:=\Set{\cI_\alpha | \alpha \in k }^-$ one can suppose that $\ell =x$, $h =
y$ and $g, g' \in k[z, t]_c$. Then one can write:
\[
  H^0(\P^3, \cI_\alpha(n)) = x P_{n-1} \oplus f \cdot y \cdot k[y, z,
  t]_{n-a} \oplus f \cdot (g + \alpha g')  k[z,  t]_{n-b}\,.
\]
 Then  
\[
(\cM_n\cdot C) = \alphadeg\bigl(\dot\bigwedge H^0(\P^3, \cI_\alpha(n))\bigr) =
 n-b +1 \,.
\]
As \emph{numerical equivalence} $=$ \emph{rational equivalence} on $\HH$, we
have $[C] = [C_1]$.  From Proposition~\ref{prop:4.1} it follows $[\phi(C)] =
[C_1]$ and by Corollary~\ref{cor:2.1} in Chapter~\ref{cha:2} it follows that
$\phi(C) \subset H_m$.

$2^\circ$. If $\xi \in \HH(k)$, in the proof of Proposition~\ref{prop:4.1}
it was shown that there is a connected curve $C \subset \cG$ with $\xi \in
C$ and $[C] =[C_2]$. From $[\phi(C)] = [C_2]$ and Lemma~\ref{lem:2.2} in
Chapter~\ref{cha:2} it follows that $\phi(C) \subset \cG$. 
\end{proof}

\section{The action of $\Aut(\HH)$ on $A_1(H_m)$ and on $A_1(\cG)$ }
\label{sec:4.2}

\subsection{}
\label{sec:4.2.1}

Let be $\phi\in \Aut_k(H_m)$. By Appendix~\ref{cha:C}, 
Proposition~\ref{prop:C.4} the cone $A^+_1(H_m)$ is freely generated by
$[Z_i]$, $0\leq i \leq 3$, hence $\phi_*$ permutes these classes. If
$\phi_*[Z_i] = [Z_j]$ in $A_1(H_m)$, this equation is true in $A_1(\HH)$,
too. As $\phi_*$ acts trivially on $A_1(\HH)$ if $d \geq 5$ and $g \leq
g(d)$ (cf.~Proposition~\ref{prop:4.1}), it follows $[Z_i] = [Z_j]$.  Forming
the intersection numbers with $\cM_n$ shows that $i=j$, i.e.~$\phi_*$ acts
trivially on $A_1(H_m)$.
\subsection{}
\label{sec:4.2.2}

Let be $ Z = q_0 Z_0 + \cdots + q_3 Z_3$, $q_i \in \Q$, and suppose $[Z] =0$
in $A_1(\cG)$. As usual $p: \cG \to X=\P(S_1)$ is the projection $(\ell,
f\cdot \cK) \mapsto \langle \ell \rangle$, hence the restriction of $\cL_3$ to
$H_m$ agrees with the line bundle introduced in Appendix~\ref{cha:C},
Section~\ref{sec:C.7}.  Using Lemma~\ref{lem:C.1} in that section gives $q_i
=0$. As $A_1(\cG) \simeq \Z^4$ by Corollary~\ref{cor:C.2}, it follows that
$[Z_i]$ is a basis of $A_1(\cG)\otimes \Q$. If $[Z]\in A_1(\cG)$, it follows
that there are integers $n_i$ and $n\neq 0$ such that $n[Z] = \sum
n_i[Z_i]$. But then $n\phi_*[Z] = n [Z]$, hence $\phi_*[Z] = [Z]$.

\begin{proposition}
 \label{prop:4.2}
 If $d \geq 5$ and and $g \leq g(d)$, then $\Aut(\HH)$ acts
trivially on $A_1(H_m)$ and $A_1(\cG)$.
\hfill $\qed$  
\end{proposition}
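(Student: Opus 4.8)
The plan is to derive the proposition from the triviality of the $\Aut(\HH)$-action on $A_1(\HH)$, which is Proposition~\ref{prop:4.1} (valid since $d\geq 5$ forces $(d,g)\notin\{(3,0),(3,-1),(4,1)\}$), combined with the explicit cone computations for $H_m$ and $\cG$ carried out in Appendix~\ref{cha:C}. The first step is to observe that, again because $d\geq 5$, Corollary~\ref{cor:4.1} makes $H_m$ and $\cG$ invariant under every $\phi\in\Aut(\HH)$; hence $\phi$ restricts to automorphisms $\phi|H_m$ and $\phi|\cG$, inducing $\phi_*$ on $A_1(H_m)$ and $A_1(\cG)$ compatibly with push-forward along the closed immersions $H_m\hookrightarrow\HH$ and $\cG\hookrightarrow\HH$.

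For $A_1(H_m)$ I would argue as follows. By Proposition~\ref{prop:C.4} the cone $A^+_1(H_m)$ is freely generated by the classes $[Z_0],\dots,[Z_3]$, so $\phi_*$ must permute the set $\{[Z_0],\dots,[Z_3]\}$, say $\phi_*[Z_i]=[Z_{\sigma(i)}]$. Pushing this equality forward to $A_1(\HH)$ and using that $\phi_*$ is the identity there (Proposition~\ref{prop:4.1}), one gets $[Z_i]=[Z_{\sigma(i)}]$ in $A_1(\HH)$. Since rational and numerical equivalence agree on $\HH$ (Theorem~\ref{thm:1.1}), it then suffices to observe that the intersection numbers $(\cM_n\cdot Z_i)$, as functions of $n$, are pairwise distinct; this is read off from the computations in Appendix~\ref{cha:C}, and forces $\sigma(i)=i$, hence $\phi_*=\id$ on $A_1(H_m)$.

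For $A_1(\cG)$ the same mechanism applies, using the analogous cone $A^+_1(\cG)$ (freely generated by four classes $[Z_i]$, Appendix~\ref{cha:C}): the permutation induced by $\phi_*$ is pinned down to the identity after push-forward to $A_1(\HH)$ and comparison of the $(\cM_n\cdot Z_i)$, so $\phi_*[Z_i]=[Z_i]$ in $A_1(\cG)$ for each $i$. On the other hand, using the projection $p:\cG\to\P(S_1)$ and Lemma~\ref{lem:C.1} one checks that a relation $\sum q_i[Z_i]=0$ in $A_1(\cG)$ forces all $q_i=0$, so by Corollary~\ref{cor:C.2} the $[Z_i]$ form a $\Q$-basis of $A_1(\cG)$. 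Thus for arbitrary $[Z]\in A_1(\cG)$ one may write $[Z]=\sum q_i[Z_i]$ with $q_i\in\Q$, whence $\phi_*[Z]=\sum q_i\phi_*[Z_i]=\sum q_i[Z_i]=[Z]$, which completes the proof.

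The step I expect to be the real obstacle is ruling out a non-trivial permutation of the extremal rays of $A^+_1(H_m)$ (resp.\ $A^+_1(\cG)$): an automorphism of $H_m$ could a priori swap two of the $[Z_i]$, and nothing internal to $H_m$ excludes this. It is precisely here that one must leave $H_m$, transport the relation into $A_1(\HH)$ where the rigidity of Proposition~\ref{prop:4.1} is available, and then separate the generators by their intersection profiles against the tautological bundles $\cM_n$ — which is why the hypothesis $d\geq 5$, needed both for Corollary~\ref{cor:4.1} and for Proposition~\ref{prop:4.1}, enters in an essential way.
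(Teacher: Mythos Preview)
Your argument for $A_1(H_m)$ matches the paper's exactly: the cone $A^+_1(H_m)$ is freely generated by $[Z_0],\dots,[Z_3]$ (Proposition~\ref{prop:C.4}), $\phi_*$ permutes these, push-forward to $A_1(\HH)$ where $\phi_*=\id$, and separate by intersection numbers with $\cM_n$.

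For $A_1(\cG)$ there is a genuine gap. You invoke ``the analogous cone $A^+_1(\cG)$ (freely generated by four classes $[Z_i]$, Appendix~\ref{cha:C})'', but Appendix~\ref{cha:C} does \emph{not} compute the cone of curves of $\cG$. Section~\ref{sec:C.6} only lists the four $1$-dimensional B--B cells and concludes (Corollary~\ref{cor:C.2}) that $A_1(\cG)\simeq\Z^4$; this is a statement about the Chow group, not about $A^+_1(\cG)$. Without knowing the extremal rays you cannot assert that $\phi_*$ \emph{permutes} the $[Z_i]$, so your mechanism for pinning down $\phi_*[Z_i]=[Z_i]$ in $A_1(\cG)$ does not get off the ground.

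The paper circumvents this by reusing the $H_m$ result rather than running a parallel cone argument. Since $H_m\subset\cG$ is $\phi$-invariant, the $H_m$-cycles $Z_0,\dots,Z_3$ of Proposition~\ref{prop:C.2} also sit in $\cG$, and Section~\ref{sec:4.2.1} already gives $\phi_*[Z_i]=[Z_i]$ in $A_1(H_m)$, hence in $A_1(\cG)$ by push-forward along the inclusion. One then checks (via the line bundles $\cL_0,\cL_1,\cL_2$ on $\HH$ together with $\cL_3=p^*\cO_X(1)$ on $\cG$, i.e.\ the intersection matrix of Lemma~\ref{lem:C.3}) that these four classes are linearly independent in $A_1(\cG)$; since $A_1(\cG)\simeq\Z^4$, they form a $\Q$-basis, and triviality of $\phi_*$ on all of $A_1(\cG)$ follows exactly as in your last sentence. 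So your basis step is right; what is missing is a legitimate source for $\phi_*[Z_i]=[Z_i]$ in $A_1(\cG)$, and the clean way to get it is to import it from $H_m$ rather than from a cone computation you do not have.
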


\section{The action of $\Aut(\HH)$ on $A_1(\CC)$ }
\label{sec:4.3}
\subsection{}
\label{sec:4.3.1}
Each $\phi \in \Aut(\HH)$ induces an automorphism $\phi \times \id$ of
$\HH \times \P^3$ such that $(\phi \times \id)^*\CC = \CC$, hence induces an
automorphism $\psi$ of the universal curve via the cartesian diagram:
\[
 \xymatrix{\CC \ar[r]^\psi \ar[d]_f & \CC \ar@{^{(}->}[r] \ar[d]^f &
   \HH\times\P^3 \ar[dl]_\pi \ar[d]^\kappa   \\
            \HH \ar[r]^\phi & \HH & \P^3}
\]					

If $(\xi,p)$ is any element of $\CC$, then $ f(\psi(\xi, p)) = \phi(f(\xi,p))= \phi(\xi)$,
i.e. one can write $\psi(\xi,p) = (\phi(\xi), q) $, where $q $ is an element of
$\CC_{\phi(\xi)}$. In order to express that $q$ depends on $p$, $\xi$ and $\phi$, in what
follows we write $ q = \phi_{\xi}(p)$. 

\subsection{}
\label{sec:4.3.2}
As had been shown (Theorem~\ref{thm:1.2}) that $A^+_1(\CC)$ is freely
generated by the classes of $C^*_i:= C_i \times \Set{ P_0 }$ and $L^* =
\Set{ \omega } \times L$, where $P_0 = (0 : 0: 0: 1)$, $\omega \in \HH(k)$
is the point corresponding to the lexicographic ideal, and $L = V(x,y)
\simeq \P^1 \subset \P^3$, it follows that $\psi_*$ permutes the set 
$\Set{[C^*_0], [C^*_1], [C^*_2], [L^*]}$. Suppose $\psi_* [L^*] =\sum q_i
[C^*_i] + q [L^*]$. It follows that 
\[
  \pi_* \psi_* [L^*] = \sum q_i
[\pi(C^*_i)] + q\pi_* [L^*] = \sum q_i [C_i]
\]
as $\pi| C_i$ is injective and $\pi(L^*) = \Set{\omega}$. 
From the diagram above it follows that $\pi_* \psi_* [L^*] = \phi_*
\pi_*[L^*] = 0$, hence $q_i=0$, $0 \leq i \leq 2$. But then $\psi_* [L^*] = 
[L^*]$ follows and $\psi_*$ permutes the $[C^*_i]$. 
If $\psi_*[C^*_i] = [C^*_j]$, then application of $\pi_*$ and using
Proposition~\ref{prop:4.1} gives $i=j$.

\begin{proposition}
  \label{prop:4.3}
  If $d \geq 5$, $g \leq g(d) = (d-2)^2/4$, then $\Aut(\HH)$ acts
  trivially on $A_1(\CC)$. \hfill $\qed$
\end{proposition}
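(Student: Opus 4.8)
The plan is to reduce everything to the explicit cone description of Theorem~\ref{thm:1.2} and to the triviality of the $\Aut(\HH)$-action on $A_1(\HH)$, which is Proposition~\ref{prop:4.1}. First I would note that any $\phi\in\Aut(\HH)$ extends to the automorphism $\phi\times\id$ of $\HH\times\P^3$, and since this carries the universal curve $\CC$ to itself it induces an automorphism $\psi$ of $\CC$ sitting in the cartesian diagram displayed just before the statement, in particular with $\pi\circ\psi=\phi\circ\pi$, where $\pi\colon\CC\to\HH$ is the projection. Because $\psi_*$ is an automorphism of $A_1(\CC)$ which preserves effectivity, it permutes the extremal rays of the cone $A^+_1(\CC)$; by Theorem~\ref{thm:1.2} these are exactly the rays spanned by $[C^*_0]$, $[C^*_1]$, $[C^*_2]$, $[L^*]$, so $\psi_*$ permutes the set $\{[C^*_0],[C^*_1],[C^*_2],[L^*]\}$.

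Next I would pin down the class $[L^*]$. Writing $\psi_*[L^*]=\sum_{i=0}^{2}q_i[C^*_i]+q[L^*]$ and applying $\pi_*$, one gets $\pi_*\psi_*[L^*]=\sum_i q_i[C_i]$, since $\pi$ restricts to an isomorphism $C^*_i\xrightarrow{\sim}C_i$ while $\pi(L^*)$ is the single point $\omega$, so that $\pi_*[L^*]=0$. On the other hand the commuting square gives $\pi_*\psi_*[L^*]=\phi_*\pi_*[L^*]=\phi_*(0)=0$. As the classes $[C_i]$ are linearly independent in $A_1(\HH)$ (Theorem~\ref{thm:1.2}), all $q_i$ vanish and hence $\psi_*[L^*]=[L^*]$. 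Consequently $\psi_*$ restricts to a permutation of $\{[C^*_0],[C^*_1],[C^*_2]\}$.

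Finally, suppose $\psi_*[C^*_i]=[C^*_j]$. Applying $\pi_*$ and using that $\pi|C^*_i$ and $\pi|C^*_j$ are isomorphisms onto $C_i$ and $C_j$ yields $\phi_*[C_i]=[C_j]$ in $A_1(\HH)$; here the hypotheses $d\ge 5$, $g\le g(d)$ enter through Proposition~\ref{prop:4.1}, which gives $\phi_*=\id$ on $A_1(\HH)$, so $[C_i]=[C_j]$. Forming intersection numbers with $\cM_n$ and invoking the formulas of Lemma~\ref{lem:1.5} then forces $i=j$. Therefore $\psi_*$ fixes each of the four generators $[C^*_0],[C^*_1],[C^*_2],[L^*]$, and since these span $A_1(\CC)$ over $\Q$ we conclude $\psi_*=\id$ on $A_1(\CC)$.

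The two steps that are purely formal are the pushforward bookkeeping and the appeal to the cone description. The genuine difficulty lies entirely upstream, in Proposition~\ref{prop:4.1}: its proof requires the case analysis showing that a nontrivial permutation of $[C_0],[C_1],[C_2]$ by $\phi_*$ would force $\phi$ to carry $\cG$ or $M$ into $H_m$, contradicting the dimension comparison under the numerical constraint $a^2-1\le 4b$ (i.e.\ $g\le g(d)$) once $(d,g)$ avoids the three small exceptions. Once Proposition~\ref{prop:4.1} is granted, the passage from $\HH$ to $\CC$ carried out above is essentially automatic.
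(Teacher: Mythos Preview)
Your argument is correct and follows essentially the same route as the paper: extend $\phi$ to $\psi=\phi\times\id$ on $\CC$, use Theorem~\ref{thm:1.2} to see that $\psi_*$ permutes the four extremal generators, isolate $[L^*]$ via $\pi_*\psi_*=\phi_*\pi_*$ and $\pi_*[L^*]=0$, and then reduce the permutation of the $[C^*_i]$ to that of the $[C_i]$ by $\pi_*$, where Proposition~\ref{prop:4.1} finishes the job. Your explicit appeal to Lemma~\ref{lem:1.5} to separate the $[C_i]$ is a small elaboration of what the paper leaves implicit.
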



\chapter{Automorphisms of some special schemes}
\label{cha:5}

\section{Description of the starting situation}
\label{sec:5.1}

We write $S = k[x, y, z, t]$ or $S = k[X_0,\dots,X_3]$, $X= \P(S_1) =
\Proj(S) = \P^3$, $d=a-1$, $c = b-a+1$, where $a$ and $b$ are the Macaulay
coefficients of the Hilbert polynomial $Q(n) = \binom{n+3}{3}-P(n)$ (see
Section~\ref{sec:1.1}).

If $Y, Z, \dots$ are the schemes of Appendix~\ref{cha:C}, then
\begin{align*}
  Y(k) & =\Set{ (\ell, h) | \ell \in S_1, \; h \in S_1/\ell\cdot k } \\
  Z(k) & =\Set{ (\ell, h,g ) | \ell \in S_1,\; h \in S_1/\ell\cdot k,\; g \in S_d / \langle \ell, h\rangle S_{d-1}} \\
  \cH(k) & =\Set{ (\ell, f) | \ell \in S_1, \; f \in S_d/\ell S_{d-1}} \\
  \fX(k) & =\Set{ (\ell, \cK) | \ell \in S_1, \; \cK \in \Hilb^c(\Proj S/\ell S(-1))} \\
  H_m(k) & =\Set{ (\ell, f(h,g)) | \ell \in S_1, \; h \in S_1/\ell\cdot k,
    \; f \in S_d/\ell S_{d-1}, \; g \in S_c / \langle \ell,  h\rangle S_{c-1} } \\
  \cG(k) & = \Set{ (\ell, f \cdot \cK) | \ell \in S_1, \; f \in S_d/\ell S_{d-1}, \;
    \cK \in \Hilb^c(\Proj S/\ell S(-1))}
\end{align*}
where $\ell, h, f, g$ are all different from zero.

In Appendix~\ref{cha:C} it is shown that all these schemes are projective and
smooth. $Z$ is a closed subscheme of $\fX$, hence 
$H_m \stackrel{\sim}{\longrightarrow} \cH \times_X Z$ is a closed subscheme of
$\cG \stackrel{\sim}{\longrightarrow} \cH \times_X \fX$. One has a commutative
diagram
\begin{equation}
  \label{eq:**}
  \tag{$**$}
  \begin{aligned}
    \xymatrix{
      &  H_m \ar[dd]^{p_2} \ar[dl]_{p_1} \ar@{^{(}->}[rr] & & \cG \ar[dlll]_{p_1} \ar[dd]^{p_2} \\
      \cH \ar[ddr]_\pi  &          & &      \\
      & Z \ar[d]^-\pi \ar@{^{(}->}[rr] &  & \fX \ar[dll]^\pi \\
      & X & & }
  \end{aligned}
\end{equation}
where $\pi:Z \longrightarrow X$ factorizes in $Z
\stackrel{q}{\longrightarrow} Y \stackrel{p}{\longrightarrow} X$.
Let be $R=k[x,y,z]$, $\P^2 =\Proj(R)$. Then $U = \Set{(\ell, f) | \ell = ax
  + by +cz +t, f \in R_d \otimes A}$ is an open set in $\cH(A)$ and
$p^{-1}_1(U) = U \times_A \Hilb^c(\P^2)(A)$ respectively $p^{-1}_1(U) = U
\times_A F(A)$, where $F \subset \Hilb^c(\P^2)$ is the closed subscheme
of ideals $(h, g)$, $h\in R_1$, $g\in R_c/h R_{c-1}$. It follows that in
both cases $p_1$ defines a locally trivial fiber bundle and the other
morphisms define projective bundles.

Each $\phi \in \Aut(\HH)$ induces $k$-automorphisms of $H_m$ and $\cG$
(cf.~Corollary~\ref{cor:4.1}). The aim in this Chapter 5 is to show: \\
{\itshape There is a $\gamma\in \PGL(3,k)$, which is uniquely determined by
  $\phi$, such that $\phi|H_m$ and $\phi|\cG$ are induced by $\gamma$}
(cf.~Proposition~\ref{prop:5.3}).  

The proof uses the aformentioned properties of the different morphisms in
diagram~\eqref{eq:**}, the fact that $\phi_*$ operates as the identity on
$A_1(H_m)$ and $A_1(\cG)$ (cf. ~Proposition~\ref{prop:4.2}) and a formalism,
which is explained in the next sections.  

\section{Relative automorphisms of $\cH$}
\label{sec:5.2}

It seems rather difficult to determine the group $\Aut_k(\cH)$. But if
$\pi:\cH \to X = \P(S_1)$ is the projection, the fiber of $\pi$ over $\ell
\cdot A \in X(A)$ is $\P(S_d \otimes A / \ell S_{d-1} \otimes A)$,
i.e.~$\pi:\cH \to X$ is a projective bundle.

\begin{proposition}
   \label{prop:5.1}
  $\Aut_X(\cH) = \Set{\id}$. 
\end{proposition}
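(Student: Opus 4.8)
The plan is to use that $\pi\colon\cH\to X=\P(S_1)$ is a projective bundle and to reduce the assertion to a simplicity statement about the underlying vector bundle. Multiplication of forms gives a morphism $S_{d-1}\otimes\cO_X(-1)\to S_d\otimes\cO_X$ which is injective on fibres (multiplication by a nonzero linear form is injective on polynomials), so its cokernel is a locally free sheaf $\cE$ on $X=\P^3$ of rank $r=\binom{d+2}{2}$, with fibre $S_d/\ell S_{d-1}$ over $[\ell]$; thus $\cH\cong\P(\cE)$ over $X$. I would then invoke the standard description of relative automorphisms of a projective bundle: from the exact sequence of sheaves of groups $1\to\G_m\to\underline{\GL}(\cE)\to\underline{\PGL}(\cE)\to1$ on $X$ one obtains an exact sequence of global sections
\[
1\longrightarrow\Aut_{\cO_X}(\cE)/k^*\longrightarrow\Aut_X(\cH)\stackrel{\partial}{\longrightarrow}\Pic(X),
\]
where $\partial(\phi)$ is the class of a line bundle $L$ admitting an isomorphism $\cE\otimes L\cong\cE$.

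The first step is to see that $\partial$ is trivial. Taking determinants in $\cE\otimes L\cong\cE$ gives $L^{\otimes r}\cong\cO_X$; since $\Pic(X)=\Pic(\P^3)=\Z$ is torsion free and $r>0$, this forces $L=\cO_X$. Hence $\partial=0$ and $\Aut_X(\cH)=\Aut_{\cO_X}(\cE)/k^*$, so it remains to prove that $\cE$ is simple, i.e. $H^0(X,\cHom(\cE,\cE))=k$ (then $\Aut_{\cO_X}(\cE)=k^*$ and $\Aut_X(\cH)=\{\id\}$).

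For the simplicity I would argue with the two--term resolution. Dualizing, $0\to\cE^{\vee}\to S_d^*\otimes\cO_X\to S_{d-1}^*\otimes\cO_X(1)\to0$ is exact (everything locally free), and tensoring with $\cE$ gives $0\to\cHom(\cE,\cE)\to S_d^*\otimes\cE\to S_{d-1}^*\otimes\cE(1)\to0$; taking $H^0$ exhibits $H^0(\cHom(\cE,\cE))$ as the kernel of $S_d^*\otimes H^0(\cE)\to S_{d-1}^*\otimes H^0(\cE(1))$. The $\P^3$--vanishings $H^0(\cO_X(-1))=H^1(\cO_X(-1))=H^1(\cO_X)=0$, applied to the defining resolution of $\cE$ and to its twist by $\cO_X(1)$, give $H^0(\cE)=S_d$ and an explicit description of $H^0(\cE(1))$ as a quotient of $S_d\otimes H^0(\cO_X(1))$ by the image of multiplication by the tautological section. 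Unwinding the resolution one identifies the map above, and a kernel element becomes a linear endomorphism $t$ of $S_d$ with the property that for every $g\in S_{d-1}$ there is $h\in S_{d-1}$ with $t(\ell g)=\ell h$ for all $\ell\in S_1$; an elementary descending induction on the degree, using that $S=k[x,y,z,t]$ is a UFD and that $S_1\cdot S_{d-1}=S_d$, forces $t$ to be a scalar. Hence $H^0(\cHom(\cE,\cE))=k$, which finishes the proof.

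The main obstacle is precisely this last cohomological computation, $H^0(\cHom(\cE,\cE))=k$: one has to pin down the connecting map coming from the resolution and then carry out the divisibility argument cleanly. One could alternatively observe that $\cE\cong\mathrm{Sym}^d Q$ for $Q$ the universal rank--$3$ quotient bundle on $\P^3$ and quote simplicity (even stability) of that homogeneous bundle, but the direct computation above keeps the argument self--contained.
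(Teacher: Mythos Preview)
Your approach is correct and closely parallels the paper's. Both identify $\cH=\P(\cG)$ for the bundle $\cG=\mathrm{coker}\bigl(S_{d-1}\otimes\cO_X(-1)\to S_d\otimes\cO_X\bigr)$, show that every $X$-automorphism of $\cH$ is induced by an $\cO_X$-automorphism of $\cG$, and then prove that any such automorphism comes from a $k$-linear endomorphism $\psi\colon S_d\to S_d$ satisfying $\psi(\ell S_{d-1})\subset\ell S_{d-1}$ for all $\ell\in S_1$, which forces $\psi$ to be a scalar (this is exactly the lemma of Appendix~B in the paper). The main packaging differences are: (i) to rule out a nontrivial twist $L$ with $\cG\otimes L\cong\cG$, the paper iterates to $\cG\cong\cG\otimes L^{\nu}$ and reads off a contradiction from the Hilbert polynomial, whereas your determinant argument ($L^{\otimes r}\cong\cO_X$, $\Pic(\P^3)$ torsion-free) is slicker; (ii) the paper computes the vanishing $H^1(X,\cHom(S_d\otimes\cO_X,\cF))=0$ directly to show that every endomorphism of $\cG$ lifts to $\End_k(S_d)$, while you dualize the resolution and chase the kernel---same content. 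Your alternative remark that $\cG\cong\mathrm{Sym}^dQ$ is a homogeneous (hence simple) bundle is a clean shortcut the paper does not take. One small imprecision: the condition you extract, ``for every $g$ there is $h$ with $t(\ell g)=\ell h$ for all $\ell$'', is stronger than what the cohomology actually gives (namely $t(\ell S_{d-1})\subset\ell S_{d-1}$ for each $\ell$, with the auxiliary element allowed to depend on $\ell$); but the weaker condition already suffices by the Appendix~B lemma, so this does not affect correctness.
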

  \begin{proof}
    To simplify the notations, in this section we write $S =
    k[X_0,X_1,X_2,X_3]$. If we put $L:=x_0X_0+\cdots + x_3 X_3$, $X = \P(S_1)
    \stackrel{\sim}{\longrightarrow} \Proj k[x_0,\dots,x_3]$, then $\cL :=L
    \cdot \cO_X(-1)$ is the universal $1$-subbundle of $S_1 \otimes \cO_X$
    and we let $\cF$ be the subbundle of rank $\binom{d-1+3}{3}$ of $\cE
    :=S_d \otimes \cO_X$, which is generated by $\cL$, i.e.~we put $\cF :=
    \cL \otimes_k S_{d-1}$. As $\cL| D_+(x_i) = \bigl(\tfrac{x_0}{x_i} X_0+
    \cdots + \tfrac{x_3}{x_i} X_3\bigr)\cdot \Gamma(D_+(x_i), \cO_X)$, the
    quotient $\cG:=\cE/\cF$ is locally free over $X$ of rank
    $\binom{d+2}{2}$.  One sees that $\cH = \P(\cG)=:\P$.

 One has a commutative diagram
 \begin{equation}
   \label{eq:5.1}
   \begin{aligned}
     \xymatrix{
       \P \ar[rr]^\phi \ar[dr]_\pi & & \P\ar[dl]^\pi \\
       & X & }
   \end{aligned}
 \end{equation}
and $\phi^*$ defines an isomorphism 
 \[
 \Pic(\P) \simeq \cO_\P(1) \cdot \Z \times \cO_X(1) \cdot \Z
 \]
(see~\cite[Chapter II, Ex. 7.9]{H}).
It follows $\phi^*(\cO_\P(1)) \simeq \cO_\P(n) \otimes
\pi^*(\cO_X(m))$ with $n= \pm 1$. Now $\cO_X(1)$ is trivial over $U =
D_+(x_i)$ and $\pi^{-1}(U) \simeq \P^2_k \times U =:Y$. 
It follows
\[
  H^0(Y,\cO_Y(\nu)) \simeq H^0(Y,\phi^*(\cO_\P(\nu)|Y)) \simeq
H^0(Y,\cO_\P(\nu n)) \text{ for all } \nu \in \N\,.
\]
It follows that $n=1$ and we conclude:
\begin{alignat*}{3}
  & & \phi^*(\cO_\P(1)) & \simeq \cO_\P(1) \otimes \pi^*(\cO_X(m)) \text{ as
  }
  \cO_\P\text{-modules} \\
  \Rightarrow \quad & & \pi_*\phi^*(\cO_\P(1)) & \simeq \pi_*\cO_\P(1)
  \otimes_{\cO_X} \cO_X(m) \\
  \Rightarrow \quad & & \pi_*\phi_*\phi^*(\cO_\P(1)) & \simeq \pi_*\cO_\P(1)
  \otimes_{\cO_X} \cO_X(m) \\
  \Rightarrow \quad & & \pi_*\cO_\P(1) & \simeq \pi_*\cO_\P(1)
  \otimes_{\cO_X} \cO_X(m) \\
  \Rightarrow \quad& & \cG & \simeq \cG \otimes_{\cO_X} \cO_X(m) \text{ as }
  \cO_X\text{-modules} \\
  \Rightarrow \quad & & \cG & \simeq \cG \otimes_{\cO_X} \cO_X(\nu m) \text{
    for all } \nu \in \N
\end{alignat*}
If $m \neq 0$, then $\cG$ has a constant Hilbert polynomial and thus
$\dim(\supp(\cG)) = 0$, contradiction. It follows that $\phi$ induces an
isomorphism $\phi^*: \cG \simeq \cG$ of $\cO_X$-modules. Conversely, each
isomorphism of $\cO_X$-modules $\cG \simeq \cG$ induces an isomorphism
$\P(\cG) \simeq \P(\cG)$ over $X$~\cite[Chapter II, Ex. 7.9]{H}.

As $\cExt^1(\cE,\cF) =0$ one has an exact sequence of
$\cO_X$-modules
\[
 0 \longrightarrow \cHom_X(\cE,\cF)  \longrightarrow \cHom_X(\cE,\cE)  \longrightarrow \cHom_X(\cE,\cG)  \longrightarrow 0
\]
and one obtains an exact sequence
\begin{equation}
 \label{eq:5.2}
 0 \longrightarrow  \Gamma(X,\cHom(\cE,\cF)) \longrightarrow \Gamma(X,\cHom(\cE,\cE))  \longrightarrow \Gamma(X,\cHom(\cE,\cG))  \longrightarrow H^1(X,\cHom(\cE,\cF))\,.
\end{equation}

Now $\cF =L \cdot \cO_X(-1) \otimes_k S_{d-1}$ and putting $n = \dim_k
S_{d-1} \otimes S_d$ we obtain:
\[
 \cHom_X(\cE,\cF) = \cHom_X(\cO_X, L \cdot \cO_X(-1)) \otimes_k S_{d-1} \otimes
 S_d \simeq L \cdot \cO_X(-1) \otimes_k k^n \simeq \cO_X(-1) \otimes_k k^n
\]
which implies that the first and last term in the sequence~\eqref{eq:5.2}
are equal to $(0)$. Now $\cHom_X(\cE,\cE) \simeq \cO_X \otimes_k \Hom(S_d,
S_d)$ and thus $\Hom_k(S_d,S_d) \simeq \Hom(\cE, \cG)$.  Together with the
diagram~\eqref{eq:5.1} we deduce that $\cE
\stackrel{\mathrm{can.}}{\longrightarrow} \cG \xrightarrow[\phi^*]{\sim}
\cG$ is induced by a $k$-linear map $\psi: S_d \to S_d$, which, for all $\ell
\in S_1 -(0)$, induces a $k$-linear isomorphism $S_d/\ell S_{d-1} \to
S_d/\ell S_{d-1}$. It follows that $\psi(\ell S_{d-1}) \subset \ell S_{d-1}$
for all $\ell \in S_1$. The lemma in Appendix~\ref{cha:B} shows that $\psi$
is the multiplication by an element $\alpha \in k^*$ and thus $\phi$ is the
identical map on $\P$.
  \end{proof}

If one puts $d=1$, one obtains
\begin{corollary}
   \label{cor:5.1}
  $\Aut_X(Y) = \Set{\id}$. 
\hfill $\qed$
\end{corollary}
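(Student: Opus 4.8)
The final statement is Corollary~\ref{cor:5.1}: $\Aut_X(Y) = \{\id\}$, obtained by "putting $d=1$" in Proposition~\ref{prop:5.1}.

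The plan is to specialize the proof of Proposition~\ref{prop:5.1} to the case $d=1$, where $\cH$ becomes $Y$. When $d=1$, one has $Y(k) = \{(\ell,h) \mid \ell \in S_1,\ h \in S_1/\ell\cdot k\}$, the fiber of $\pi\colon Y \to X$ over $\ell\cdot A$ is $\P(S_1\otimes A/\ell S_0\otimes A)$, so $\pi\colon Y \to X$ is a projective bundle exactly as in the general case. Concretely, with $\cE := S_1 \otimes \cO_X$ and $\cF := \cL = L\cdot\cO_X(-1)$ the universal $1$-subbundle (the $S_{d-1} = S_0$ factor is trivial), the quotient $\cG := \cE/\cF$ is locally free of rank $3$ over $X$, and $Y = \P(\cG)$. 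Every step of the proof of Proposition~\ref{prop:5.1} then goes through verbatim: a relative automorphism $\phi$ of $\P(\cG)$ over $X$ gives $\phi^*(\cO_{\P}(1)) \simeq \cO_{\P}(1)\otimes\pi^*(\cO_X(m))$ (the sign $n=+1$ being forced by comparing global sections over a trivializing open $\pi^{-1}(U)\simeq\P^2_k\times U$), hence $\cG \simeq \cG\otimes\cO_X(\nu m)$ for all $\nu$, which forces $m=0$ since otherwise $\cG$ would have constant Hilbert polynomial and zero-dimensional support.

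Thus $\phi$ is induced by an $\cO_X$-module automorphism of $\cG$. The vanishing argument via the exact sequence obtained from $0 \to \cHom_X(\cE,\cF) \to \cHom_X(\cE,\cE) \to \cHom_X(\cE,\cG) \to 0$ again applies: here $\cHom_X(\cE,\cF) = \cHom_X(\cO_X, L\cdot\cO_X(-1)) \otimes_k (S_0 \otimes S_1) \simeq \cO_X(-1)\otimes_k k^4$, so $\Gamma(X,\cHom(\cE,\cF)) = 0$ and $H^1(X,\cHom(\cE,\cF)) = 0$ (both because $H^0(\P^3,\cO(-1)) = H^1(\P^3,\cO(-1)) = 0$), whence $\Hom_k(S_1,S_1) \xrightarrow{\sim} \Gamma(X,\cHom(\cE,\cG))$. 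Consequently $\phi$ is induced by a $k$-linear map $\psi\colon S_1 \to S_1$ which induces an isomorphism $S_1/\ell S_0 \to S_1/\ell S_0$ for every $\ell \in S_1 - (0)$; hence $\psi(\ell S_0) \subset \ell S_0$ for all $\ell \in S_1$. The lemma in Appendix~\ref{cha:B} then shows $\psi$ is scalar multiplication by some $\alpha \in k^*$, so $\phi = \id$ on $Y$.

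The only point that needs a remark rather than a routine copy is that the invocation of the Appendix~\ref{cha:B} lemma — which is stated for $\psi$ preserving $\ell S_{d-1}$ for general $d$ — includes $d=1$, in which case $S_{d-1} = S_0 = k$ and the condition "$\psi(\ell k) \subset \ell k$ for all $\ell \in S_1$, and $\psi$ invertible on each $S_1/\ell k$" already pins down $\psi$ up to a scalar; I expect this to be the only place where one should pause to check the degenerate case is covered, and it is. Everything else is literally the $d=1$ instance of the preceding proof, which is why the corollary follows "by putting $d=1$."
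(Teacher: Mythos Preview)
Your proposal is correct and matches the paper's approach exactly: the paper derives Corollary~\ref{cor:5.1} in one line by noting that putting $d=1$ in Proposition~\ref{prop:5.1} yields the result, and you have carefully verified that each step of that proof indeed specializes without issue, including the applicability of the Appendix~\ref{cha:B} lemma (which is stated for $d\geq 1$).
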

\section{Relative automorphisms of $Z$}
\label{sec:5.3}

If $A$ is a $k$-algebra, we defined $Z(A)$ by 
\[
  Z(A) =\Set{ (\ell, h,g ) | 
    \begin{aligned}
     & \ell \in S_1 \otimes A,\; h \in S_1 \otimes A/\ell A \text{ and } g
      \in S_d \otimes A / \langle \ell, h\rangle \cdot S_{d-1}\otimes A \\
     & \text{respectively, generate $1$-subbundles}.
    \end{aligned}
} 
\]
The aim is to show that $\Aut_Y(Z) = \Set{\id}$ and, as in the case of
$\cH$, we have to build up a more formal setting:

$S = k[X_0,\dots,X_3]$,\quad $X= \P^3 \simeq \P(S_1)$, \quad
$L_1:=x_0X_0+\cdots + x_3 X_3$, \quad $\cL_1 :=L_1 \cdot \cO_X(-1)\subset
S_1 \otimes \cO_X$ universal $1$-subbundle over $X$,\quad $\cG_1:= S_1
\otimes\cO_X/ \cL_1$ locally free over $X$ of rank $3$.

$Y:=\P(\cG_1)\simeq \Flag(1, 2, S_1) \stackrel{p}{\longrightarrow} X$ is a
projective bundle, where $p$ is defined by $(F_1, F_2) \mapsto F_1$.

Let $\cL_2 \subset \cG_1 \otimes_X \cO_Y$ be the universal $1$-subbundle. As
$\cG_1 \otimes_X \cO_Y = S_1 \otimes \cO_Y / p^*(\cL_1)$, it follows that
\[
\cG_1 \otimes\cO_Y \otimes_k S_{d-1} = S_1 \otimes_k S_{d-1} \otimes \cO_Y /
p^*(\cL_1) \otimes_k S_{d-1} \twoheadrightarrow S_d \otimes \cO_Y/ p^*(\cL_1)
\cdot S_{d-1}
\]
because, locally on $Y$, one has $p^*(\cL_1)|U =\ell_1 \cdot \cO_U$,\;
$\ell_1\in S_1 \otimes \cO_U$ not a zero-divisor. Thus $\cE:=S_d \otimes
\cO_Y/p^*(\cL_1)\cdot S_{d-1}$ is locally free over $Y$ of rank
$\binom{d+2}{2}$. As locally on $Y$ one has $\cL_2|U = \ell_2 \cdot \cO_U$
and $\ell_2 \in S_1 \otimes \cO_U/\ell_1 \cdot \cO_U$ not a zero-divisor of
$S \otimes \cO_U/\ell_1\cdot S(-1) \otimes \cO_U$, the canonical map $\cL_2
\otimes_k S_{d-1} \to \cE$ is injective and remains so if tensored with
$k(y)$, for all $y\in Y$.  It follows that the image $\cF:=\cL_2 \cdot
S_{d-1} \simeq \cL_2 \otimes_k S_{d-1}$ of this homomorphism is a subbundle,
$\cG:=\cE/\cF$ is locally free over $Y$ of rank $d+1$ and $Z = \P(\cG)$ by
construction. The canonical morphism $\cE \to \cG$ is denoted by $\kappa$.

\begin{remark}
  \label{rem:5.1}
 $H^1(Y,\cO_Y(-1)) = (0)$\,.
\end{remark}
\begin{proof}
$Y =\P(\cG_1)$, $\cG_1$ of rank $3$ $\Rightarrow R^i p_*\cO_Y(\nu)=(0)$, if
$0 < i < 2$, all $\nu$; $R^2 p_*\cO_Y(\nu)=(0)$, if $\nu > -3$
(see~\cite[III, Ex. 8.4]{H}) $\Rightarrow H^1(Y, \cO_Y(-1)) = H^1(X,
p_*\cO_Y(-1))$ (loc.\ cit.\ Ex.~8.1). As $p_*\cO_Y(-1)=(0)$
(loc.~cit.~Ex.~8.4a), the assertion follows.
\end{proof}

\begin{remark}
  \label{rem:5.2}
 $H^1(Y,p^*\cO_X(-1)) = H^1(Y,\cO_X(-1)\otimes \cO_Y) = (0)$\,.
\end{remark}
\begin{proof}
  $R^i p_*(p^*\cO_X(-1)\otimes \cO_Y) \simeq R^i p_*\cO_Y  \otimes
  \cO_X(-1)=(0)$, if $i> 0$ (loc.~cit.~Ex.~8.3, 8.4) $\Rightarrow
  H^1(Y,p^*\cO_X(-1)) = H^1(X,p_*p^*\cO_X(-1))$ (loc.~cit.~Ex.~8.1).  Now
  $p_*p^*\cO_X(-1) = p_*(p^*\cO_X(-1)\otimes_{\cO_X}\cO_Y) \simeq \cO_X(-1)
  \otimes p_* \cO_Y$ (loc.~cit.~Ex.~8.3). As $p_* \cO_Y = \Symm^0(\cG_1)=
  \cO_X$ (loc.~cit.~Ex.~8.4) one gets $H^1(Y,p^*\cO_X(-1)) =
  H^1(X,\cO_X(-1)) = (0)$.
\end{proof}

\begin{remark}
  \label{rem:5.3}
 $H^0(Y,p^*\cO_X(1)\otimes \cO_Y(-1)) = (0)$\,.
\end{remark}
\begin{proof}
  $R^i p_*(p^*\cO_X(1)\otimes \cO_Y(-1)) = R^ip_* \cO_Y(-1)\otimes \cO_X(1)
  = (0) \otimes \cO_X(1) = (0)$, if $i> 0$ (loc.~cit.) $\Rightarrow
  H^0(Y,p^*\cO_X(1)\otimes \cO_Y(-1)) \simeq H^0(X,p_*(p^*\cO_X(1)\otimes
  \cO_Y(-1))) \simeq H^0(X,\cO_X(1)\otimes p_*\cO_Y(-1)) =(0)$ as $p_*
  \cO_Y(-1) =(0)$ (loc.~cit.).
\end{proof}
\begin{remark}
  \label{rem:5.4}
 $H^1(Y, \cHom(\cE, \cF)) = (0)$\,.
\end{remark}
\begin{proof}
  $\cExt^1(\cE, \cF) = (0)$, as $\cE$ is locally free on $Y$, and there are
  two exact sequences:
\begin{equation}
  \label{eq:5.3}
 0 \longrightarrow \cHom(\cE, \cF)  \longrightarrow  \cHom(\cE, \cE) \longrightarrow  \cHom(\cE, \cG) \longrightarrow 0  
\end{equation}
\begin{equation}
  \label{eq:5.4}
 0 \longrightarrow p^*(\cL_1)\otimes S_{d-1} \longrightarrow S_d \otimes
 \cO_Y \longrightarrow  \cE \longrightarrow 0   
\end{equation}
From~\eqref{eq:5.4} we get the exact sequence
\[
 0 \longrightarrow \cHom(\cE, \cF)  \longrightarrow \cHom(S_d \otimes \cO_Y,
 \cF)  \longrightarrow  \cHom(p^*(\cL_1)\cdot S_{d-1}, \cF) \longrightarrow
\cExt^1(\cE, \cF) = 0
\]
which gives the exact sequence: 
\begin{equation}
  \label{eq:5.5}
  \begin{multlined}
    0 \longrightarrow \Gamma(Y,\cHom(\cE, \cF)) \longrightarrow
    \Gamma(Y,\cHom(S_d \otimes \cO_Y, \cF)) \longrightarrow \\
 \Gamma(Y,\cHom(p^*(\cL_1)\cdot S_{d-1}, \cF)) 
    \longrightarrow H^1(Y,\cHom(\cE, \cF)) \longrightarrow H^1(Y,\cHom(S_d
    \otimes \cO_Y, \cF))
  \end{multlined}
\end{equation}
Now from 
\[
\cHom(S_d \otimes \cO_Y, \cF) = \cHom(\cO_Y, \cF)\otimes S_d = \cF \otimes S_d
\]
it follows that $H^1(Y,\cHom(S_d \otimes \cO_Y, \cF)) \simeq H^1(Y,\cF)
\otimes S_d \simeq H^1(Y,\cL_2 \otimes S_{d-1})\otimes S_d \simeq
H^1(Y,\cL_2) \otimes S_{d-1}\otimes S_d \simeq H^1(Y,\cO_Y(-1)) \otimes
S_{d-1}\otimes S_d = (0)$ because of $\cL_2 \simeq \cO_Y(-1)$ as
$\cO_Y$-modules and Remark~\ref{rem:5.1}. We further compute:
\begin{align*}
  & \cHom(p^*(\cL_1)S_{d-1},\cF) = \cHom(p^*(\cL_1)\otimes S_{d-1},\cF) =
  \cHom(p^*(\cL_1),\cF) \otimes S_{d-1}\\ 
= {} & \cHom(p^*\cO_X(-1),\cF) \otimes
  S_{d-1} \simeq \cHom(\cO_Y, p^*\cO_X(1)\otimes \cL_2 \otimes
  S_{d-1})\otimes S_{d-1}\\ 
\simeq {} & p^*\cO_X(1)\otimes \cO_Y(-1)\otimes
  S_{d-1}\otimes S_{d-1}
\end{align*}
(see~\cite[Chap. III, Prop. 6.7 and 6.3a]{H} and use $\cL_2 \simeq
\cO_Y(-1)$).  By Remark~\ref{rem:5.3} we get $\Gamma(Y,\cHom(p^*(\cL_1) 
S_{d-1}, \cF))=(0)$ and from the exact sequence~\eqref{eq:5.5}
Remark~\ref{rem:5.4} follows.
\end{proof}
Applying $\Gamma(Y,-)$ to the sequence~\eqref{eq:5.3} gives
\begin{conclusion}
   \label{conc:5.1}
The canonical map
\[
\Gamma(Y, \cHom(\cE, \cE)) \simeq \Hom(\cE, \cE)
\stackrel{\kappa_*}{\longrightarrow} \Hom(\cE, \cG) \simeq \Gamma(Y,
\cHom(\cE, \cG))
\]
defined by $u \mapsto \kappa \circ u$ is surjective.
\hfill $\qed$
\end{conclusion}

Applying $\cHom(S_d \otimes \cO_Y, -)$ to the exact sequence~\eqref{eq:5.4}
gives the exact sequence
\[
 0 \longrightarrow \cHom(S_d \otimes \cO_Y, p^*(\cL_1)\otimes S_{d-1}) 
 \longrightarrow  \cHom(S_d \otimes \cO_Y, S_d \otimes \cO_Y) \longrightarrow 
 \cHom(S_d \otimes \cO_Y, \cE) \longrightarrow 0\,.
\]
Now one applies $\Gamma(Y,-)$ and, because of $\Gamma(Y,\cHom(\cA, \cB)) = \Hom(\cA, \cB)$, one obtains the exact sequence 
  \begin{align*}
    0 & \longrightarrow \Hom(S_d \otimes \cO_Y, p^*(\cL_1)\otimes S_{d-1})
    \longrightarrow \Hom(S_d \otimes \cO_Y, S_d \otimes \cO_Y)
    \longrightarrow
    \Hom(S_d \otimes \cO_Y, \cE)\\
    & \qquad \qquad \longrightarrow H^1(Y, \cHom(S_d \otimes \cO_Y,
    p^*(\cL_1))) \,.
  \end{align*}
As
\[
  \cHom(S_d \otimes \cO_Y, p^*(\cL_1))
  \simeq \cHom(\cO_Y,
  p^*(\cL_1)) \otimes S_d = p^*(\cL_1) \otimes S_d \simeq
  p^*(\cO_X(-1)) \otimes S_d\,,
\]
from Remark~\ref{rem:5.2} it follows that the last term in the
sequence is equal to $(0)$ and $\Hom(S_d \otimes \cO_Y, S_d \otimes \cO_Y)
\longrightarrow \Hom(S_d \otimes \cO_Y, \cE)$ is surjective. Now
\[
\Hom(S_d \otimes \cO_Y, S_d \otimes \cO_Y) \simeq \Hom(\cO_Y,\cO_Y) \otimes_k
 \Hom_k(S_d, S_d) = \Gamma(Y, \cO_Y) \otimes \Hom_k(S_d, S_d)\,.
\]
As $Y$ is a variety, $\Gamma(Y,\cO_Y) = k$  and one has
\begin{conclusion}
   \label{conc:5.2}
$\Hom_k(S_d, S_d) \twoheadrightarrow \Hom(S_d \otimes \cO_Y, \cE)$\;.
\hfill $\qed$
\end{conclusion}
As $\cExt^1(\cE,\cE) =0$, applying $\cHom(-,\cE)$ to \eqref{eq:5.4} gives the exact sequence 
\[
 0 \longrightarrow \cHom(\cE,\cE)  \longrightarrow \cHom(S_d \otimes \cO_Y,\cE)  \longrightarrow \cHom(p^*(\cL_1) \cdot  S_{d-1},\cE)  \longrightarrow 0\,.
\]
Applying $\Gamma(Y,-)$ to this sequence gives
\begin{conclusion}
   \label{conc:5.3}
$\Hom(\cE, \cE) \stackrel{\mathrm{can.}}{\rightarrowtail} \Hom(S_d \otimes
 \cO_Y, \cE)\,.
\hfill \qed$
\end{conclusion}
Finally, the exact sequence 
$0 \longrightarrow \cF \longrightarrow \cE
 \stackrel{\kappa}{\longrightarrow} \cG  \longrightarrow 0$
gives the exact sequence
\[
0 \longrightarrow \cHom(\cG, \cG) \longrightarrow \cHom(\cE, \cG)
\longrightarrow \cHom(\cF, \cG) \,.
\]
Application of $\Gamma(Y,-)$ gives
\begin{conclusion}
   \label{conc:5.4}
The canonical morphism
 $\Hom(\cG, \cG) \stackrel{\kappa^*}{\longrightarrow} \Hom(\cE, \cG)$
defined by $u \mapsto u \circ \kappa$ is injective.
\hfill $\qed$
\end{conclusion}
All in all one obtains a diagram of natural homomorphisms:
\[
   \xymatrix{
      \Hom(\cE, \cG) & \; \Hom(\cG, \cG)  \ar@{>->}[l] &  \\
      \Hom(\cE, \cE)\; \ar@{->>}[u] \ar@{>->}[r] & \Hom(S_d \otimes \cO_Y, \cE)              &  \Hom_k(S_d, S_d) \ar@{->>}[l]
}
\]
\begin{conclusion}
   \label{conc:5.5}
   Each $\cO_Y$-homomorphism $\cG \to \cG$ is induced by a $k$-linear
   homomorphism $S_d \to S_d$.  \hfill $\qed$
\end{conclusion}

{\itshape Geometrical consequences}\\
We recall that $p: Y \to X$ and $\pi:Z\to Y$ are defined by $(\ell, h) \mapsto
\langle \ell \rangle$ respectively $(\ell, h, g) \mapsto (\ell, h)$. The
fibers are $\P(S_1 \otimes A / \ell A)\simeq \P^2_k \otimes A$ respectively
$\P(S_d \otimes A / \langle \ell, h\rangle S_{d-1} \otimes A)\simeq \P^d_k
\otimes A$, which shows that $p$ and $\pi$ are projective bundles.

If we take any $\phi \in \Aut_Y(Z)$, the diagram 
\begin{equation}
  \label{eq:5.6}
   \begin{aligned}
     \xymatrix{
           Z \ar[rr]^\phi \ar[dr]_\pi & & Z \ar[dl]^\pi \\
                     & Y & 
      }
   \end{aligned}  
\end{equation}
is commutative. We want to compute $\Pic(Z)$ and again use the
results~\cite[Chap II, Prop. 7.11; Ex. II 7.8, 7.9, 7.10; III 8.1, 8.3,
8.4]{H}.
\begin{align*}
  \Pic(Z) & \simeq \Pic(Y) \times \cO_Z(1) \cdot \Z \\
  & \simeq \cO_X(1) \cdot \Z \times \cO_Y(1) \cdot \Z \times \cO_Z(1) \cdot
  \Z\,.
\end{align*}
From the diagram~\eqref{eq:5.6}  it follows that 
\[
 \phi^*(\cO_Z(1)) \simeq \pi^* p^* \cO_X(m)  \otimes \pi^* \cO_Y(n) \otimes
 \cO_Z(\mu)\,,
\]
where $m, n \in \Set{ 0, \pm 1}$ and $\mu \in \Set{\pm 1}$, as $\phi^*$
induces an isomorphism of $\Pic(Z)$.

If $y \in Y(k)$ and $F:= \pi^{-1}(y) \hookrightarrow Z$ is the fiber, then
$\phi$ induces an isomorphism $\phi' = \phi | F$ and one has a commutative
diagram:
 \begin{equation}
   \label{eq:5.7}
   \begin{aligned}
     \xymatrix{
           F \ar[rr]^{\phi'} \ar[dr]_\pi & & F \ar[dl]^\pi \\
                     & \Spec k(y) & 
      }
   \end{aligned}
 \end{equation}
As $\pi^*p^* \cO_X(m)|F$ and $\pi^*\cO_Y(m)|F$ are trivial,
$\phi^*(\cO_Z(1)| F) \simeq \cO_Z(\mu)|F$, hence $\phi^*(\cO_Z(\nu)| F)
\simeq \cO_Z(\nu \cdot \mu)|F$ for all $\nu \in \N$. Now $\cG$ is a
$d+1$-bundle, therefore $F \simeq \P^d$ and $(\phi')^* \cO_F(\nu) \simeq
\cO_F(\mu \cdot \nu)$, which implies 
\[
H^0(\cO_F(\nu)) \simeq H^0((\phi')^*(\cO_F(\nu)) \simeq H^0(\cO_F(\nu \cdot
\mu))
\]
for all $\nu\in \N$. It follows that $\mu=1$ and 
\begin{align*}
\phi^* \cO_Z(1) & \simeq  \pi^* p^*\cO_X(m)\otimes \pi^*\cO_Y(n) \otimes
\cO_Z(1) \\
\pi_*\phi^* \cO_Z(1) & \simeq  p^*\cO_X(m)\otimes \cO_Y(n) \otimes \pi_* \cO_Z(1)\,.
\end{align*}
Putting $\cL:=p^*\cO_X(m)\otimes \cO_Y(n)$ and using $\pi_* = \pi_* \circ
\phi_*$ gives
\[
  \pi_* \cO_Z(1) \simeq \pi_* \cO_Z(1) \otimes \cL
\]
and hence $\cG \simeq \cG \otimes \cL$.  It follows that $\cG \simeq \cG
\otimes \cL^\nu$ for all $\nu\in \N$. Let $x \in X(k)$ and $F:=p^{-1}(x)
\hookrightarrow Y$. Then $F\simeq \P^2$ and $p^*\cO_X(m)|F$ is trivial. It
follows $\cG \otimes_{\cO_Y} \cO_F \simeq \cG \otimes_{\cO_Y}\cO_F(\nu n)$
for all $\nu\in \N$. If $n\neq 0$, then the dimension of $\supp(\cG \otimes
\cO_F)$ would be equal to $0$, contradiction, as $\cG$ is a locally free
$\cO_Y$-module of rank $d+1$. Thus we obtain $\cG \simeq \cG \otimes p^*
\cO_X(m)$ and
\[
\phi^* \cO_Z(1)\simeq  \pi^* p^*\cO_X(m) \otimes \cO_Z(1)\,. 
\]

 of $(p \circ \pi)_* = p_* \circ \pi_*$ gives: 
\begin{align*}
 p_* \pi_* \phi^* \cO_Z(1)   & \simeq \cO_X(m) \otimes p_* \pi_* \cO_Z(1) \\
 p_* \pi_* \phi_* \phi^* \cO_Z(1)   & \simeq \cO_X(m) \otimes p_*(\cG) \\
 p_* \pi_* \cO_Z(1)   & \simeq \cO_X(m) \otimes p_*(\cG) \\
 p_*(\cG)  & \simeq \cO_X(m) \otimes p_*(\cG)
\end{align*}
Hence $p_*(\cG) \simeq \cO_X(\nu m) \otimes p_*(\cG)$ for all $\nu \in \N$.
We now compute $p_*\cG$.  The sequence  
\[
 0 \longrightarrow \cL_2 \otimes S_{d-1}  \longrightarrow  \cE
 \longrightarrow  \cG  \longrightarrow 0
\]
is exact and defines $\cG$. Now $\cL_2 \simeq \cO_Y(-1)$ as
$\cO_Y$-modules. Application of $p_*$ gives 
\[
 0 \longrightarrow   p_*(\cO_Y(-1))\otimes S_{d-1} \longrightarrow p_*(\cE)
 \longrightarrow p_*(\cG)  \longrightarrow R^1 p_*(\cO_Y(-1))\otimes S_{d-1}
\]
and the first and last term in this exact sequence are zero~\cite[Ex. III
8.4]{H}. Hence $p_*(\cG) \simeq p_*(\cE)$ and we get $p_*(\cE) \simeq
\cO_X(\nu m)\otimes p_*(\cE)$ for all $\nu \in \N$. We will show again that
this is possible only  if $m=0$: $\cE$ is defined by the exact sequence 
\[
 0 \longrightarrow p^*(\cL_1) \otimes S_{d-1}  \longrightarrow S_d \otimes
 \cO_Y  \longrightarrow \cE \longrightarrow 0
\]
and $\cL_1 \simeq \cO_X(-1)$ as $\cO_X$-modules. It follows the exact
sequence
\[
0 \longrightarrow p_*(p^* \cO_X(-1) \otimes \cO_Y) \otimes S_{d-1}
\longrightarrow S_d \otimes p_*(\cO_Y) \longrightarrow p_*(\cE)
\longrightarrow R^1 p_*(p^* \cO_X(-1) \otimes \cO_Y)\,.
\]
Now 
\[
  p_*(p^* \cO_X(-1) \otimes \cO_Y) \simeq \cO_X(-1) \otimes p_*(\cO_Y)
  \simeq \cO_X(-1) \otimes \cO_X
\]
and 
\[
  R^1 p_*(p^* \cO_X(-1) \otimes \cO_Y) \simeq \cO_X(-1) \otimes
  R^1 p_*(\cO_Y) = (0)
\]
again by~\cite[Ex. III 8.4]{H}. We get an exact sequence 
\[
 0 \longrightarrow \cO_X(-1) \otimes S_{d-1}  \longrightarrow \cO_X \otimes
 S_d  \longrightarrow p_*(\cE)  \longrightarrow 0   \;,
\]
where $\cO_X(-1) \otimes S_{d-1} \simeq \cL_1 \otimes S_{d-1}$ as
$\cO_X$-modules. It follows that $p_*(\cE) \simeq S_d \otimes \cO_X / \cL_1
\otimes S_{d-1}$ is locally free of rank $\binom{d+2}{2}$, hence $m=0$
follows. Now from $\phi^*\cO_Z(1) \simeq \cO_Z(1)$ and $\pi_* = \pi_*
\phi_*$ it follows that $\phi$ induces an isomorphism of $\cO_Y$-modules
\[
  \cG \stackrel{\sim}{\longrightarrow} \pi_* \cO_Z(1) \simeq  \pi_* \cO_Z(1)
  \stackrel{\sim}{\longleftarrow} \cG\,.
\]

\begin{conclusion}
   \label{conc:5.6}
   Each $Y$-automorphism of $Z=\P(\cG)$ induces an $\cO_Y$-isomorphism of
   $\cG$, and conversely.
\end{conclusion}
\begin{proof}
  One direction follows from the preceding considerations and
  (loc.~cit.). Using~\cite[4.2.3]{EGA} gives the other direction.
\end{proof}

From Conclusions~\ref{conc:5.5} and~\ref{conc:5.6} follows that any $\phi\in
\Aut_Y(Z)$ is induced by a $k$-linear map $\psi:S_d \to S_d$ and from the
commutative diagram~\eqref{eq:5.7} follows that
\begin{equation}
  \label{eq:5.8}
  \psi(\langle \ell, h \rangle \cdot S_{d-1}) \subset \langle \ell, h \rangle \cdot S_{d-1}
\end{equation}
for all $\ell \in S_1 -(0)$ and all $h\in (S_1/\ell \cdot k) -(0)$. In order
to draw further conclusions from~\eqref{eq:5.8}, we need two simple
statements.

\begin{remark}
  \label{rem:5.5}  
 If $\ell \in S_1 -(0)$ and $I= (\ell, f)$ and $J = (\ell, g)$ are two
 ideals in $S$ such that $f \in \bar S_d$ and $g \in \bar S_\ell$ are
 relatively prime in the ring $\bar S = S /\ell S(-1))$, then $I \cap J =
 (\ell, \bar f \bar g)$.
\hfill $\qed$
\end{remark}

\begin{remark}
  \label{rem:5.6}  
Suppose that $\ell \in S_1 -(0)$, $\bar S = S /\ell S(-1)$, $I_{(i)}:= (\ell,
h_i)$, $h_i\in \bar{S}_1$, $1\leq i \leq m$, are relatively prime to
each other and $I:=\bigcap^m_1 I_{(i)}$.  Then $I_n = \ell S_{n-1}$, if $n <
m$, and $I_n = \ell S_{n-1}+ h_1\cdots h_m \cdot S_{n-m}$, if $n\geq m$.
\hfill $\qed$
\end{remark}

Now choose $h_1,\dots,h_{d+1} \in S_1$, which modulo $\ell$ are relatively
prime to each other and put $L_i:=\langle \ell, h_i\rangle S_{d-1}$, $1\leq i
\leq d+1$. Then from Remark~\ref{rem:5.6} and~\eqref{eq:5.8} it follows that
\[
\psi(\cap L_i) \subset \bigcap \psi(L_i) \subset \bigcap L_i = \ell \cdot
S_{d-1}\,.
\]
\begin{conclusion}
   \label{conc:5.7}
 $\psi(\ell S_{d-1}) \subset \ell S_{d-1}$ for all $\ell \in S_1$.
\hfill $\qed$
\end{conclusion}
From the lemma in Appendix~\ref{cha:B} follows

\begin{proposition}
   \label{prop:5.2}
  $\Aut_Y(Z) = \Set{\id}$. \hfill $\qed$
\end{proposition}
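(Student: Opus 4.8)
The plan is to combine the abstract machinery developed in Section~\ref{sec:5.3} with the elementary lemma of Appendix~\ref{cha:B} about linear maps $\psi:S_d\to S_d$ which preserve all the subspaces $\ell S_{d-1}$. Concretely, given $\phi\in\Aut_Y(Z)$, the first step is to quote Conclusion~\ref{conc:5.6}: every $Y$-automorphism of $Z=\P(\cG)$ induces an $\cO_Y$-isomorphism $\cG\xrightarrow{\sim}\cG$, and conversely. Then Conclusion~\ref{conc:5.5} says every $\cO_Y$-homomorphism $\cG\to\cG$ is induced by a $k$-linear map $\psi:S_d\to S_d$. So I would begin by fixing such a $\psi$ attached to $\phi$.

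Second, I would read off from the commutative fiber diagram~\eqref{eq:5.7} (over a closed point $(\ell,h)\in Y(k)$, whose fiber is $\P(S_d/\langle\ell,h\rangle S_{d-1})$) that $\psi$ must carry $\langle\ell,h\rangle S_{d-1}$ into itself for every nonzero $\ell\in S_1$ and every $h\in(S_1/\ell\cdot k)-(0)$; this is exactly the displayed condition~\eqref{eq:5.8}. The third step is the purely algebraic reduction already packaged in the text: choosing $d+1$ linear forms $h_1,\dots,h_{d+1}$ that are pairwise coprime modulo $\ell$, Remark~\ref{rem:5.6} gives $\bigcap_i\langle\ell,h_i\rangle S_{d-1}=\ell S_{d-1}$, and since $\psi$ preserves each factor it preserves the intersection, so $\psi(\ell S_{d-1})\subset\ell S_{d-1}$ for every $\ell\in S_1$ — this is Conclusion~\ref{conc:5.7}. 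Finally, the lemma of Appendix~\ref{cha:B} forces $\psi$ to be multiplication by a scalar $\alpha\in k^*$, so the induced $\cO_Y$-automorphism of $\cG$ is scalar, hence (by the converse half of Conclusion~\ref{conc:5.6}, i.e.\ the $\P$-functoriality via \cite[Proposition 4.2.3]{EGA}) $\phi$ is the identity on $Z=\P(\cG)$.

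In short, I would write: "Let $\phi\in\Aut_Y(Z)$. By Conclusions~\ref{conc:5.5} and~\ref{conc:5.6}, $\phi$ is induced by a $k$-linear map $\psi:S_d\to S_d$, and the commutative diagram~\eqref{eq:5.7} shows~\eqref{eq:5.8} holds. Choosing $h_1,\dots,h_{d+1}\in S_1$ pairwise coprime modulo a given $\ell\in S_1-(0)$, Remark~\ref{rem:5.6} gives $\psi(\ell S_{d-1})\subset\bigcap_i\psi(\langle\ell,h_i\rangle S_{d-1})\subset\bigcap_i\langle\ell,h_i\rangle S_{d-1}=\ell S_{d-1}$, i.e.\ Conclusion~\ref{conc:5.7}. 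The Lemma in Appendix~\ref{cha:B} then yields $\psi=\alpha\cdot\id$, $\alpha\in k^*$, so the induced endomorphism of $\cG$ is the identity of $\P(\cG)=Z$." There is essentially no new obstacle here: all the cohomological vanishing and the $\Pic$-computations that produce Conclusions~\ref{conc:5.5}–\ref{conc:5.6} are already proved in the excerpt, and the combinatorial preservation argument is the same one used in Proposition~\ref{prop:5.1} for $\cH$. The only point requiring mild care is checking that the pairwise-coprime-modulo-$\ell$ forms $h_1,\dots,h_{d+1}$ actually exist in $S_1/\ell\cdot k$ and give the stated intersection — but this is immediate from Remark~\ref{rem:5.6} since $\dim(S_1/\ell\cdot k)=3\ge 2$ and $k$ is infinite, so the argument goes through verbatim for all $d=a-1\ge 3$.
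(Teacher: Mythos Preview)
Your proposal is correct and follows essentially the same approach as the paper: the text immediately preceding Proposition~\ref{prop:5.2} invokes Conclusions~\ref{conc:5.5} and~\ref{conc:5.6} to produce the linear map $\psi:S_d\to S_d$, derives~\eqref{eq:5.8} from diagram~\eqref{eq:5.7}, intersects over $d+1$ pairwise-coprime $h_i$ via Remark~\ref{rem:5.6} to obtain Conclusion~\ref{conc:5.7}, and then appeals to the lemma in Appendix~\ref{cha:B}. One minor slip: in your closing remark the symbol $d$ in Section~\ref{sec:5.3} plays the role of the degree of $g$ (i.e.\ $c=b-a+1$ in the application to $H_m$), not $a-1$; this does not affect the argument, which works for any degree $\geq 1$.
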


\pagebreak
\section{Normed automorphisms of $\HH$}
\label{sec:5.4}

\subsection{}
\label{sec:5.4.1}
 We start with a general situation:\\
Suppose one has a commutative diagram 
\begin{equation}
  \label{eq:5.9}
  \begin{aligned}
    \xymatrix{
      Z \ar[r]^\psi \ar[d]_q & Z \ar[d]^q \\
      Y \ar[r]^\phi \ar[d]_p & Y\ar[d]^p \\
      X & X }
  \end{aligned}
\end{equation}
where all schemes are reduced and projective over $k$, $\phi$ and $\psi$ are
automorphisms and the following conditions are fulfilled:
\begin{enumerate}[a)]
\item $\psi_*$ acts trivially on $A_1(Z)$.
\item $p$ locally has sections and $q$ is surjective.
\item If $x\in X(k)$ and $y_1, y_2 \in p^{-1}(x)$ are closed points, then
  there is a curve $C \subset p^{-1}(x)$ and a connected curve $D \subset Z$
  such that $y_1, y_2 \in C$ and $q(D) =C$.
\end{enumerate}

\begin{lemma}
  \label{lem:5.1}
 Suppose these conditions are fulfilled. Then one has:
\begin{enumerate}[(i)]
\item There is a morphism $\phi'$ such that
  \begin{equation}
    \label{eq:5.10}
    \begin{aligned}
      \xymatrix{
        Y \ar[r]^\phi \ar[d]_p & Y\ar[d]^p \\
        X \ar@{-->}[r]^{\phi'} & X }
    \end{aligned}
  \end{equation}
is commutative.
\item $\phi'$ is uniquely determined by $\psi$ and $\phi$ (Notation:
  $\phi'=(\psi, \phi))$.
\item $\phi'$ is an automorphism.
\item If one has two diagrams fulfilling the aforementioned conditions
  \begin{equation}
    \label{eq:5.11}
    \begin{aligned}
      \xymatrix{
        Z \ar[r]^{\psi_i} \ar[d]_q & Z \ar[d]^q \\
        Y \ar[r]^{\phi_i} \ar[d]_p & Y\ar[d]^p \\
        X \ar@{-->}[r]^{\phi'_i} & X }
    \end{aligned}
  \end{equation}
then $(\phi_1 \circ \phi_2)' = \phi'_1 \circ \phi'_2$.
\end{enumerate}
\end{lemma}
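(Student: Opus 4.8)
The plan is to construct $\phi'$ fiberwise and then check it is a morphism, uniquely determined, invertible, and functorial. First I would fix a closed point $x \in X(k)$ and show that $\phi$ maps the fiber $p^{-1}(x)$ into a single fiber $p^{-1}(x')$ for a well-defined $x' \in X(k)$; this forces the definition $\phi'(x) := x'$. To see this, take two closed points $y_1, y_2 \in p^{-1}(x)$ and apply condition (c): there is a curve $C \subset p^{-1}(x)$ with $y_1, y_2 \in C$ and a connected curve $D \subset Z$ with $q(D) = C$. Then $\psi(D)$ is a connected curve in $Z$ (automorphisms preserve connectedness), and by condition (a) the class $[\psi(D)] = [D]$ in $A_1(Z)$. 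Pushing forward along $p \circ q : Z \to X$ gives $[(p\circ q)(\psi(D))] = [(p \circ q)(D)]$ in $A_1(X)$; but $(p \circ q)(D) = p(C)$ is a point, so $(p\circ q)(\psi(D))$ is a cycle of degree $0$, hence $q(\psi(D)) = q(\psi(D))$ maps under $p$ to a point. Using $q \circ \psi = \phi \circ q$ (the top square of~\eqref{eq:5.9}), $q(\psi(D)) = \phi(q(D)) = \phi(C)$, so $p(\phi(C))$ is a point. In particular $p(\phi(y_1)) = p(\phi(y_2))$. Since $y_1, y_2$ were arbitrary closed points of $p^{-1}(x)$ and $X$ is reduced (so closed points are dense and separate), $\phi(p^{-1}(x))$ lies in one fiber, giving $\phi'$ on $k$-points.

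Next I would promote this pointwise map to an actual morphism $\phi' : X \to X$. The clean way is to use condition (b): since $p$ has sections locally, choose an open cover $\{V_i\}$ of $X$ with sections $s_i : V_i \to Y$ of $p$. Then on $V_i$ the composite $p \circ \phi \circ s_i : V_i \to X$ is a morphism, and by the $k$-point computation just made it agrees (as a map on closed points, hence as a morphism since $X$ is reduced and separated) with the would-be $\phi'|_{V_i}$; in particular the various $p \circ \phi \circ s_i$ agree on overlaps, because they agree on closed points. These glue to a morphism $\phi' : X \to X$ making~\eqref{eq:5.10} commute. This establishes (i), and along the way (ii): $\phi'$ is forced, since the square~\eqref{eq:5.10} commutes and $p$ is surjective (each $x \in X$ is $p(y)$ for some $y$, and $\phi'(x) = \phi'(p(y)) = p(\phi(y))$ is determined). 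So the notation $\phi' = (\psi, \phi)$ is justified.

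For (iii), I would run the same construction with $\psi^{-1}$ and $\phi^{-1}$ in place of $\psi$ and $\phi$: the diagram obtained by inverting all horizontal arrows in~\eqref{eq:5.9} still satisfies (a) (since $\psi^{-1}_* = (\psi_*)^{-1} = \mathrm{id}$ on $A_1(Z)$), (b) (unchanged), and (c) (unchanged, as it is a statement about $p$ and $q$ alone, not the automorphisms). Hence there is a morphism $\psi^{-1}, \phi^{-1}) =: \mu : X \to X$ with $p \circ \phi^{-1} = \mu \circ p$. Composing, $p = p \circ \phi^{-1} \circ \phi = \mu \circ p \circ \phi = \mu \circ \phi' \circ p$, and since $p$ is surjective this gives $\mu \circ \phi' = \mathrm{id}_X$; symmetrically $\phi' \circ \mu = \mathrm{id}_X$. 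So $\phi'$ is an automorphism with inverse $(\psi^{-1}, \phi^{-1})$. For (iv), given two diagrams~\eqref{eq:5.11}, the vertical composite $\phi_1 \circ \phi_2$ sits in a diagram of the same shape (with top automorphism $\psi_1 \circ \psi_2$, which still acts trivially on $A_1(Z)$), so $(\phi_1 \circ \phi_2)'$ is defined; and from $p \circ \phi_i = \phi'_i \circ p$ one computes $p \circ (\phi_1 \circ \phi_2) = \phi'_1 \circ p \circ \phi_2 = \phi'_1 \circ \phi'_2 \circ p$, whence by surjectivity of $p$ and uniqueness (ii), $(\phi_1 \circ \phi_2)' = \phi'_1 \circ \phi'_2$.

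The main obstacle I anticipate is the gluing step in (i): one must be careful that the pointwise map $\phi'$ genuinely underlies a morphism and does not merely exist set-theoretically. Condition (b) — that $p$ admits local sections — is exactly what is needed to avoid this being a descent problem, but one should verify that the local formulas $p \circ \phi \circ s_i$ are independent of the chosen section (again reducing to the $k$-point statement via reducedness of $X$) so that they patch. Everything else is formal diagram-chasing once the fiberwise behavior of $\phi$ is pinned down by the triviality of the $\Aut$-action on $A_1(Z)$, which is the one genuinely nontrivial input and is supplied by hypothesis (a).
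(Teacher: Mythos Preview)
Your proof is correct and follows essentially the same approach as the paper's: define $\phi'(x) := p\phi(y)$ for any $y \in p^{-1}(x)$, verify well-definedness via the cycle computation $p_*q_*[\psi(D)] = p_*q_*[D] = 0$ together with connectedness of $\psi(D)$, and then use local sections of $p$ to upgrade to a morphism. The only cosmetic difference is that the paper first deduces (iv) from (ii) and then obtains (iii) by specializing (iv) to $\psi_2 = \psi^{-1}$, $\phi_2 = \phi^{-1}$, whereas you prove (iii) directly and (iv) afterward; the content is identical.
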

\begin{proof}
  (All points are closed points.)
\begin{enumerate}[(i)]
\item If $x\in X$, $y \in p^{-1}(x)$, then $\phi'(x):= p \phi(y)$ is well
  defined: If $y_1, y_2 \in C \subset p^{-1}(x)$ and $q(D) = C$, then
  $\psi_*[D] = [\psi(D)] = [D]$, hence 
\[
  \deg(pq|\psi(D)) \cdot [pq \psi(D)] = p_* q_* [\psi(D)] = p_* q_* [D] =0 \,.
\]
It follows that $pq\psi(D) = p\phi q(D) = p\phi(C)$ is a single point. If
$U\subset X$ is an open set and $s:U\to Y$ is a section of $p$, then
$\phi'|U = p\circ \phi \circ s$ is a morphism.
\item follows from the surjectivity of $p$.
\item[(iv)] follows from (ii), and (iii) follows from (iv) if one puts
  $\psi_1 = \psi$, $\psi_2 = \psi^{-1}$, $\phi_1 = \phi$, $\phi_2 =
  \phi^{-1}$.  \hfill $\qedhere$
\end{enumerate}
\end{proof}

\subsection{}
\label{sec:5.4.2}
Let be $\phi \in G:=\Aut_k(\HH)$. By Proposition~\ref{prop:4.2}, $\phi_*$
trivially acts on $A_1(\cG)$. Then Lemma~\ref{lem:5.1} should give a
commutative diagram:
  \begin{equation}
    \label{eq:5.12}
    \begin{aligned}
      \xymatrix{
        \cG \ar[r]^\phi \ar@{=}[d] & \cG \ar@{=}[d] \\
        \cG \ar[r]^\phi \ar[d]_{p_2} & \cG  \ar[d]^{p_2} \\
        \fX \ar[r]^{\phi_2} \ar[d]_\pi & \fX \ar[d]^\pi \\
        X \ar[r]^{\phi'_2} & X }
    \end{aligned}
  \end{equation}
  Here $p_2$ is a projective bundle and $\pi$ is a locally trivial fiber
  bundle (see Section~\ref{sec:5.1}). If $x = \langle \ell \rangle \in X$,
  $y_i = (\ell, \cK_i) \in \pi^{-1}(x)$, then there is a connected curve $B
  \subset \Hilb^c(\P^2)$, which contains $\cK_1$ and $\cK_2$, hence $C =
  \langle \ell \rangle \times B \subset \fX$ connects $y_1$ and $y_2$ and if
  $f$ is any suitable form, $D=(\ell, f) \times C \subset \cG$ is a
  connected curve such that $p_2(D) = C$. It follows that $\phi_2$ and
  $\phi'_2$ exist. In a similar way one gets
  \begin{equation}
    \label{eq:5.13}
    \begin{aligned}
         \xymatrix{
        \cG \ar[r]^\phi \ar@{=}[d] & \cG \ar@{=}[d] \\
        \cG \ar[r]^\phi \ar[d]_{p_1} & \cG  \ar[d]^{p_1} \\
        \cH \ar[r]^{\phi_1} \ar[d]_\pi & \cH \ar[d]^\pi \\
        X \ar[r]^{\phi'_1} & X }
    \end{aligned}
  \end{equation}
and running through the diagrams gives: 
\[
\phi'_1 = \phi'_2\,.
\]
\subsection{}
\label{sec:5.4.3}

As to $H_m$, one has the diagram:
  \begin{equation}
    \label{eq:5.14}
    \begin{aligned}
      \xymatrix{
        H_m \ar[r]^\phi \ar[d]_{p_2} & H_m\ar[d]^{p_2} \\
        Z \ar[r]^{\phi_2} \ar[d]_q & Z \ar[d]^q \\
        Y \ar[r]^{\phi_3} \ar[d]_p & Y\ar[d]^p \\
        X \ar[r]^{\phi'_2, \phi'_3} & X }
    \end{aligned}
  \end{equation}
The construction of $\phi_2$ and $\phi'_2= (\phi, \phi_2)$ goes as
in~\ref{sec:5.4.2} if one takes the irreducible subscheme $F$ instead of
$\Hilb^c(\P^2)$ (see Section~\ref{sec:5.1}).  If $y = (\ell, h) \in Y$ and
$z_i = (\ell, h, g_i) \in q^{-1}(y)$, then $C = \Set{ (\ell, h, \alpha g_1 +
  \beta g_2) }^- \simeq \P^1$ connects $z_1$ and $z_2$ and $p_2$ maps $D =
(\ell, f) \times C$ onto $C$. Hence $\phi_3 =(\phi, \phi_2)$ exists. 

If $x = \langle \ell \rangle \in X$, $y_1 = (\ell, h_1)$, $y_2 = (\ell,
h_2)$, take $z_i =(\ell, h_i, g_i) \in Z$ and a connected curve $B \subset
F$, which contains $(h_i, g_i)$. Then $D = (\ell, f) \times (\langle \ell
\rangle \times B)$ is a connected curve in $H_m$ such that $C:= q p_2(D)$ contains
$y_i$. It follows that $\phi'_3 =(\phi, \phi_3)$ exists and one checks that 
\[
  \phi'_2 = \phi'_3\,.
\]
If in the diagram~\eqref{eq:5.13} $\cG$ is replaced by $H_m$, one gets an
automorphism of $\cH$ and one checks again that it agrees with the $\phi_1$
of~\eqref{eq:5.13}.

\begin{conclusion}
   \label{conc:5.8}
$\phi \in \Aut(\HH)$ induces the same $\gamma \in \Aut_k(X)$ in all
diagrams. \hfill $\qed$
\end{conclusion}
\subsection{}
\label{sec:5.4.4}

Now we take this $\gamma$ and form all diagrams with $\gamma^{-1} \in
\Aut(\HH)$ instead of $\phi$. One obtains
diagrams~\eqref{eq:5.12}--\eqref{eq:5.14} such that all horizontal arrows
are equal to $\gamma^{-1}$.  Putting $\tilde{\phi} = \gamma^{-1}\phi$ (or
$\tilde{\phi} = \phi \circ \gamma^{-1}$) from Lemma~\ref{lem:5.1} Part (iv), it
follows that all diagrams, with $\tilde{\phi}$ instead of $\phi$, induce the
identical map of $X$. But then Proposition~\ref{prop:5.1} and
Corollary~\ref{cor:5.1} show that $\phi_1 = \id$ and $\phi_3 = \id$ in the
diagram~\eqref{eq:5.14}, hence $\phi_2=\id$ in the same diagram by
Proposition~\ref{prop:5.2}. As $H_m \stackrel{\sim}{\longrightarrow} \cH
\times_X Z$ one obtains

\begin{conclusion}
   \label{conc:5.9}
 $\tilde{\phi}| H_m =\id$. \hfill $\qed$
\end{conclusion}
\subsection{}
\label{sec:5.4.5}

From the commutative diagram 
\[
 \xymatrix{
 \cG \ar[r]^-\sim & \cH \times_X \fX \ar[d]_{p_1} \ar[r]^{\tilde{\phi}}  & \cH \times_X \fX \ar[d]^{p_1}\\
                 & \cH \ar[r]^{\id} & \cH 
 }
\]
it follows that $\tilde{\phi}$ induces an element of $\Aut_X(\fX)$, hence
for each $\langle \ell \rangle \in X$ an automorphism of $\Hilb^c(\Proj
S/\ell \cdot S(-1))$. By Theorem~\ref{thm:D.2} in Appendix~\ref{cha:D}, 
Section~\ref{sec:D.10}, it is induced by a linear map $\tau \in \Aut_k(S_1/
\ell \cdot k)$, if $c = b-a+1 \geq 6$. But as $\tilde{\phi}|H_m = \id$,
$\tau$ leaves fixed each ideal $(h,g) \subset S/\ell S(-1)$, hence $\tau$
acts as the identity on $\Proj(S/\ell S(-1))$ (cf. the Lemma in
Appendix~\ref{cha:B}).  One verifies that $b-a+1 \geq 6$, if $d \geq 6$ and
$g \leq g(d)$ is supposed.
\begin{proposition}
   \label{prop:5.3}
 Suppose that $d \geq 6$ and $g \leq g(d)$. Let be $\phi \in
 \Aut(\HH)$. Then there is a uniquely determined $\gamma \in \PGL(4;k)$
 such that $\phi| H_m$ and $\phi| \cG$ are induced by $\gamma$.
\hfill $\qed$
 \end{proposition}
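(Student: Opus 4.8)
\textbf{Proof proposal for Proposition~\ref{prop:5.3}.}
The plan is to assemble the three "relative automorphism" rigidity results (Propositions~\ref{prop:5.1} and~\ref{prop:5.2}, Corollary~\ref{cor:5.1}) together with the formalism of Lemma~\ref{lem:5.1} and the triviality of the $\Aut(\HH)$-action on $A_1(H_m)$ and $A_1(\cG)$ (Proposition~\ref{prop:4.2}). First I would verify that the hypotheses (a)--(c) of Lemma~\ref{lem:5.1} are met for each of the towers $\cG \to \fX \to X$, $\cG \to \cH \to X$, $H_m \to Z \to Y \to X$: condition (a) is Proposition~\ref{prop:4.2}, condition (b) holds because $\pi$ and $p$ are (locally trivial, resp.\ projective) bundles by the discussion in Section~\ref{sec:5.1}, and condition (c) is checked by the explicit connecting curves written down in Sections~\ref{sec:5.4.2}--\ref{sec:5.4.3} (a pencil of forms, resp.\ a connected curve in $\Hilb^c(\P^2)$ or in $F$, pulled back along a section of the relevant bundle). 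This yields the automorphisms $\phi_1, \phi_2, \phi_3$ of $\cH$, $Z$, $\fX$, $Y$ together with an induced automorphism of $X$; by chasing the commutative diagrams~\eqref{eq:5.12}--\eqref{eq:5.14} and using the uniqueness clause (ii) of Lemma~\ref{lem:5.1}, all of them induce \emph{one and the same} $\gamma \in \Aut_k(X) = \PGL(3;k)$ (Conclusion~\ref{conc:5.8}).

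Next I would replace $\phi$ by $\tilde\phi := \gamma^{-1}\circ\phi$. Running the whole construction again with $\gamma^{-1}$ in place of $\phi$, and invoking the multiplicativity statement Lemma~\ref{lem:5.1}(iv), shows that all the induced automorphisms attached to $\tilde\phi$ cover the identity of $X$. Then Proposition~\ref{prop:5.1} (resp.\ Corollary~\ref{cor:5.1}) forces the $X$-automorphism of $\cH$ (resp.\ of $Y$) induced by $\tilde\phi$ to be the identity, i.e.\ $\phi_1 = \id$ and $\phi_3 = \id$; feeding $\phi_3 = \id$ into the tower over $Y$ and applying Proposition~\ref{prop:5.2} gives $\phi_2 = \id$ on $Z$. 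Since $H_m \xrightarrow{\sim} \cH \times_X Z$, the pair $(\phi_1,\phi_2) = (\id,\id)$ forces $\tilde\phi|H_m = \id$ (Conclusion~\ref{conc:5.9}), which is exactly the assertion that $\phi|H_m$ is induced by $\gamma$.

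It remains to treat $\cG$. From $\tilde\phi|H_m=\id$ and $\cG \xrightarrow{\sim} \cH \times_X \fX$ with $\tilde\phi$ covering $\id$ on $\cH$, one gets that $\tilde\phi$ induces an element of $\Aut_X(\fX)$, hence for each $\langle\ell\rangle\in X$ an automorphism of $\Hilb^c(\Proj S/\ell S(-1))$. Here I would invoke Theorem~\ref{thm:D.2}: since $g\leq g(d)$ and $d\geq 6$ translate into $c=b-a+1\geq 6$ (a short numerical check using the relation $g=(a^2-3a+4)/2-b$, $a=d+1$), that automorphism is induced by some $\tau\in\PGL(2;k)$ acting on $\Proj(S/\ell S(-1))$. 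But $\tilde\phi|H_m=\id$ pins down every ideal $(h,g)$ in $S/\ell S(-1)$, so $\tau$ fixes every point of $\P^2$, whence $\tau=\id$ by the Lemma of Appendix~\ref{cha:B}; therefore $\tilde\phi|\cG=\id$ and $\phi|\cG$ is induced by $\gamma$. Uniqueness of $\gamma$ follows from the uniqueness in Lemma~\ref{lem:5.1}(ii) (or simply because $\gamma$ is recovered as the $X$-automorphism induced by $\phi$). I expect the main obstacle to be bookkeeping rather than conceptual: one must be scrupulous that the three towers produce \emph{mutually compatible} data — that the $\phi_i$ glue correctly across $H_m \hookrightarrow \cG$, that the connecting curves in condition (c) actually lie in the correct fibers, and that the passage from $\phi$ to $\tilde\phi = \gamma^{-1}\phi$ is legitimate on the level of all diagrams simultaneously (this is where Lemma~\ref{lem:5.1}(iv) does the real work). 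The numerical verification $c\geq 6$ is the one place where the precise hypothesis $d\geq 6$ (rather than $d\geq 5$) is consumed.
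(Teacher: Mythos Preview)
Your proposal is correct and follows essentially the same route as the paper: assemble the induced automorphisms via Lemma~\ref{lem:5.1} on the towers $\cG\to\fX\to X$, $\cG\to\cH\to X$, $H_m\to Z\to Y\to X$ to extract a common $\gamma$ (Conclusion~\ref{conc:5.8}), normalize to $\tilde\phi=\gamma^{-1}\phi$, kill the induced maps on $\cH$, $Y$, $Z$ via Propositions~\ref{prop:5.1},~\ref{prop:5.2}, Corollary~\ref{cor:5.1} to get $\tilde\phi|H_m=\id$, and then finish $\tilde\phi|\cG=\id$ fiberwise via Theorem~\ref{thm:D.2} together with the numerical check $c\geq 6$. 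The paper's argument in Sections~\ref{sec:5.4.2}--\ref{sec:5.4.5} is exactly this, with the same bookkeeping caveats you flag.
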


From this result one easily gets:
 \begin{corollary}
 \label{cor:5.2}
  For each $\phi \in \Aut(\HH)$ the following conditions are equivalent:
     \begin{enumerate}[(i)]
     \item $\phi| H_m =\id$.
     \item $\phi|\cG =\id$.
     \item If $\gamma$ is the element of $\Aut_k(\P^3_k)$, which is determined by
       $\phi$ in the sense of Proposition~\ref{prop:5.3}, then $\gamma=\id$.
     \end{enumerate}
		 \end{corollary}
			\begin{definition}
			\label{def:5.1}
			\index{normed automorphism}
			We say $\phi\in\Aut(\HH)$ is \emph{normed}, if these conditions are fulfilled. 
			\end{definition}
			\begin{corollary}
			\label{cor:5.3}
      The set $N$ of all normed automorphisms is a normal subgroup of $\Aut(\HH)$
	    and $\Aut(\HH)$ is the semi-direct product of $\PGL(4;k)$ and $N$.
   \end{corollary}
  \begin{proof} It remains to show that $\PGL(4;k)\cap N = \Set{\id}$. A closed point $\xi\in
	H_m$ corresponds to an ideal of the form $(\ell,f(p,q))$ (see Appendix~\ref{cha:C}).
	If $g\in\GL(4;k)$ leaves all such $\xi$ fixed, it follows that $g(\ell\cdot k)=\ell\cdot k$
	for all non-zero linear forms $\ell$, i.e. all such forms are eigenvectors of $g$.
	But then $g$ has to be the identity in $\PGL(4;k)$.
	\end{proof}
	

\chapter{The action of $\Aut(\HH)$ on linear configuration ideals}
\label{cha:6}

\section{The case of simple lines}
\label{sec:6.1}

\subsection{Notations and assumptions}
\label{sec:6.1.1}

We recall from earlier chapters that $f:\HH \to \P$ is the so called
tautological morphism, which is defined by the globally generated line
bundle $\cL_1 \otimes \cL_2$ (respectively $\cM^{-1}_{n-1}\otimes \cM_n$, if
$n\geq d$ is any integer). In order to simplify the notation, if $\xi_1, \xi_2
\in \HH(k)$, then we write $\xi_1 \equiv \xi_2$ iff $f(\xi_1)=f(\xi_2)$.

$\phi$ is any \emph{normed} automorphism of $\HH$ and $\psi:\CC \to \CC$ is
the induced automorphism of the universal curve. As we will use the results
of Chapter~\ref{cha:5}, we have to assume $d\geq 6$.

\subsection{}
\label{sec:6.1.2}

\begin{lemma}
 \label{lem:6.1}
  Let be $g\in \GL(4,k)$ and $\phi\in \Aut(\HH)$ a normed
  automorphism. Then one has:
  \begin{enumerate}[(i)]
  \item $\xi_1 \equiv \xi_2 \iff g(\xi_1)\equiv g(\xi_2)$.
  \item $\xi_1 \equiv \xi_2 \iff \phi(\xi_1)\equiv \phi(\xi_2)$.
  \end{enumerate}
\end{lemma}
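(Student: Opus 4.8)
The plan is to exploit the fact that $f$ is defined by an $\Aut(\HH)$-equivariant construction (for part (ii)) and by a $\GL(4,k)$-equivariant construction (for part (i)), so that in both cases the equivalence relation $\equiv$ is preserved. For part (i): the line bundle $\cL_1\otimes\cL_2 \simeq \cM_{d-1}^{-1}\otimes\cM_d$ is constructed from the universal ideal sheaf $\cI$ on $\HH\times\P^3$, and the natural $\GL(4,k)$-action on $\P^3$ lifts to an action on $\HH\times\P^3$ that carries $\cI$ to itself up to the automorphism $g\times\id$. Concretely, if $g\in\GL(4,k)$ acts on $\HH$ and correspondingly on $\HH\times\P^3$, then $(g\times\id)^*\cF \simeq \cF$ as sheaves on $\HH\times\P^3$ twisted by the linear action on $S=k[x,y,z,t]$; pushing forward gives an isomorphism $g^*\cM_n \simeq \cM_n$ (the exterior power $\dot\bigwedge\pi_*\cF(n)$ is intrinsic, and $g$ only permutes the coordinates in $S_n$, which does not change the determinant line bundle up to the scalar $\det$ of the $\GL$-action on $S_n$). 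Hence $g^*(\cL_1\otimes\cL_2)\simeq \cL_1\otimes\cL_2$, so the morphism $f\circ g$ is defined by the same line bundle as $f$, up to a linear automorphism of the target $\P$; therefore $f(g(\xi_1))=f(g(\xi_2))\iff f(\xi_1)=f(\xi_2)$, which is exactly (i).

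For part (ii): here one uses that $\phi$ is a \emph{normed} automorphism, hence (by Proposition~\ref{prop:4.1}, since $d\geq 6$) acts trivially on $A_1(\HH) = \NS(\HH)^\vee$, so $\phi^*$ fixes the class of every line bundle in $\NS(\HH)$. In particular $\phi^*(\cL_1\otimes\cL_2) \equiv \cL_1\otimes\cL_2$ in $\NS(\HH)$, i.e.\ they differ by an element of $\Pic^0(\HH)$, which is trivial since $\HH$ is simply connected (as recorded in Section~\ref{sec:1.4}, $\Pic^\tau(\HH)=\Pic^0(\HH)$ and in fact $\Pic^0(\HH)=0$ in the relevant range). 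Thus $\phi^*(\cL_1\otimes\cL_2)\simeq \cL_1\otimes\cL_2$ as line bundles. Now the morphism $f$ is determined by this line bundle together with the chosen generating sections $E\otimes\cO_\HH \to \cL_1\otimes\cL_2$ constructed in Section~\ref{sec:1.5.2}; pulling back along $\phi$ gives another set of global generators of the same line bundle, hence $f\circ\phi$ and $f$ factor through the same closed subvariety of a Grassmannian/projective space via a linear isomorphism of the ambient space. Consequently $f(\phi(\xi_1))=f(\phi(\xi_2))\iff f(\xi_1)=f(\xi_2)$, giving (ii). Both directions follow because $\phi$ is an automorphism, so one may replace $\phi$ by $\phi^{-1}$.

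The main obstacle, and the place where care is needed, is the passage from "$\phi^*$ of the line bundle is isomorphic to the line bundle" to "$f\circ\phi$ and $f$ have literally the same fibers". An isomorphism of line bundles does not by itself force the two morphisms to a projective space to have equal fibers; one must check that the specific system of sections used to define $f$ is carried by $\phi^*$ to a system of sections with the same span, or equivalently invoke the description of the fiber of $f$ given in Proposition~\ref{prop:3.1} (fibers are characterized by equality of the CM-part $\cJ$ and of the $0$-cycle $\langle\cJ/\cI\rangle$) and argue that this characterization is intrinsic. In fact the cleanest route is: by Proposition~\ref{prop:3.1}, $\xi_1\equiv\xi_2$ is equivalent to a purely sheaf-theoretic condition on the ideals $\cI_i$ that is manifestly preserved by any linear automorphism $g$ of $\P^3$ (for (i)), and — once one knows $\phi^*(\cM_{n-1}^{-1}\otimes\cM_n)$ is trivial in $\Pic$ for $n\geq d$ — the equality of $\cM_n\otimes k(\phi(\xi))$ up to a constant common factor across the fiber forces the Proposition~\ref{prop:3.1} invariants of $\cI_{\phi(\xi_1)}$ and $\cI_{\phi(\xi_2)}$ to agree whenever those of $\cI_{\xi_1}$ and $\cI_{\xi_2}$ do. I would organize the write-up around Proposition~\ref{prop:3.1} to sidestep the section-chasing entirely, treating part (i) as immediate equivariance and part (ii) as the statement that the (birational-geometry) invariants entering Proposition~\ref{prop:3.1} are preserved because $\phi$ acts trivially on $\NS(\HH)$.
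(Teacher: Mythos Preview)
Your approach to (i) is sound and differs from the paper's: the $\GL(4,k)$-equivariance of the tautological construction (or, equivalently, the manifest $\GL(4,k)$-equivariance of the CM-part/$0$-cycle description in Proposition~\ref{prop:3.1}) gives (i) directly. The paper instead treats (i) and (ii) uniformly by a curve argument.

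For (ii), your route does not close. You correctly flag the gap: knowing $\phi^*\cL\simeq\cL$ (even on the nose, not only in $\NS$) does not force $f\circ\phi$ and $f$ to have the same fibres, since $f$ is defined by a particular subspace of sections rather than the complete linear system. Your fallback---invoke Proposition~\ref{prop:3.1} and assert that its invariants (the CM-part $\cJ$ and the $0$-cycle $\langle\cJ/\cI\rangle$) are preserved ``because $\phi$ acts trivially on $\NS(\HH)$''---is a non sequitur. Those invariants live on $\P^3$, not on $\HH$, and an abstract $\phi\in\Aut(\HH)$ gives no a priori relation between $\cI_{\phi(\xi)}$ and $\cI_\xi$. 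Triviality of $\phi_*$ on the rank-$3$ group $A_1(\HH)$ is far too coarse to pin down ideal-level data. In fact, establishing that normed $\phi$ preserve the CM-part and the punctual $0$-cycle is exactly the content of Theorems~\ref{thm:7.1} and~\ref{thm:8.1}, whose proofs \emph{use} Lemma~\ref{lem:6.1}; so this direction would be circular. (A side issue: your claim that $\Pic^0(\HH)=0$ is not established in the paper---simple connectedness alone does not suffice without smoothness, and $\HH$ is not asserted to be smooth.)

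The paper's argument is short and bypasses all of this. By Lemma~\ref{lem:3.2}, $\xi_1\equiv\xi_2$ holds iff $\xi_1,\xi_2$ lie on a connected curve $C\subset\HH$ with $[C]=\nu[C_0]$. For any curve $C$ one has $[g(C)]=[C]$ (since $\GL(4,k)$ is connected) and $[\phi(C)]=[C]$ (Proposition~\ref{prop:4.1}). Thus the image curve again has class $\nu[C_0]$, and since $(\cL_1\otimes\cL_2\cdot C_0)=0$ the push-forward $f_*[\,\cdot\,]$ vanishes, so $f$ contracts it to a point. The converse follows by applying $g^{-1}$, respectively $\phi^{-1}$. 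The ingredient your plan lacks is precisely this curve-theoretic characterisation of $\equiv$ from Lemma~\ref{lem:3.2}; with it, both parts are one line and no section-chasing is needed.
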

\begin{proof}
  Suppose $\xi_1 \equiv \xi_2$. By Lemma~\ref{lem:3.2} in
  Chapter~\ref{cha:3} there is a connected curve $C \subset \HH$ such that
  $\xi_i \in C, i=1,2$, and $C \sim \nu \cdot C_0$. For all curves $C
  \subset \HH$ one has $[g(C)]=[C]$ respectively $[\phi(C)]=[C]$ if $g\in
  \GL(4;k)$ respectively $\phi \in \Aut(\HH)$
  (cf.~Proposition~\ref{prop:4.1}). As $\deg(f|C) \cdot [f(C)] = f_*[C] =
  \nu \cdot \deg(f|C_0)\cdot [f(C_0)] = 0$, $f(C)$ is a single point, hence
  ``$\Rightarrow$'' is proved. Applying $g^{-1}$ respectively $\phi^{-1}$
  gives ``$\Leftarrow$''.
\end{proof}

\emph{Standard assumption} (\textbf{A}): \index{assumption! standard
  \textbf{A}} Given $d$ distinct, simple lines $\ell_i$ in $X:=\P^3_k$,
which are perpendicular to the plane $E=V(t)$, i.e. they run through the point $P_0= (0:0:0:1)$.
(The term "perpendicular" is used to give a somewhat geometric impression.) 
  Moreover, let $P_j$,$1\leq j \leq c$, be different simple points in $X$, such that no $P_j$ lies
on any $\ell_i$. It is assumed that this configuration defines a point $\xi
\leftrightarrow \ell_1 \cup \cdots \cup \ell_d \cup P_1 \cup \cdots \cup P_c
\in \HH(k)$.

Let $L$ be a plane ``perpendicular'' to $E$, i.e. not equal or parallel to $E$.The ``perpendicular''
projection $\pi_L= (Z,L)$ from a point $Z$ not in $L$ onto $L$ is defined by a suitable $\G_m$-operation $\tau$,
such that 
\[
   \pi_L(P) = \lim_{\lambda \to \infty} \tau(\lambda) P = \lim_{\lambda \to 0} \tau(\lambda^{-1}) P
\]
(see Appendix~\ref{cha:A}). 

\emph{Additional assumption} (\textbf{A1}): \index{assumption! \textbf{A1}}
Under the projection $\pi_L$
onto $L$, the images $\ell'_i = \pi_L(\ell_i)$, respectively $P'_i =
\pi_L(P_i)$, are different from each other and $P'_j \centernot\in \ell'_i$
for all $i$ and $j$.

We put $\xi(\lambda):= \tau(\lambda)\xi$ and get a curve $\cC :=\Set{
  \xi(\lambda) | \lambda \in k^*}^- \subset \HH$, which connects $\xi$ and
$\xi_\infty:= \lim_{\lambda \to \infty} \tau(\lambda)\xi$. Hence $\cD :=
\phi(\cC)$ connects the points $\phi(\xi)$ and $\phi(\xi_\infty)$.

\subsection{}
\label{sec:6.1.3}

\begin{auxlemma}
 \label{auxlem:6.1}
 Under the assumptions \emph{\textbf{(A)}} and \emph{\textbf{(A1)}} one has 
 $\phi(\xi_\infty) \equiv \xi_\infty$.
\end{auxlemma}
\begin{proof}
  If $\xi_\infty \in \cG$ (notation as in the last chapter), this would follow from
  Corollary~\ref{cor:5.2}. Now definitely $\xi_\infty \centernot\in \cG$,
  as is to be shown by the following consideration:

  The lines $\ell_1,\dots, \ell_d$ intersect in $P_0 \leftrightarrow
  (x,y,z)$ and they intersect $E = V(t)$ in the closed points $p_i
  \leftrightarrow \fp_i \subset k[x,y,z]$. Then $\xi \leftrightarrow \cI =
  \fp^*_1 \cap\dots\cap \fp^*_d \cap P_1 \cap\dots\cap P_c = \cN \cap \cR$,
  where $\fp^*_i$ is the ideal generated by $\fp_i$ in $k[x,y,z,t]$, $\cN =
  \fp^*_1 \cap\dots\cap \fp^*_d$ is the $\CM$-part of $\cI$ and $\cR = P_1
  \cap\dots\cap P_c$ is the punctual part of $\cI$. (We identify a closed
  point $P \in X$ with the corresponding ideal.)

 As $\xi\in \HH(k)$, $P(n) = \HP(\cO_X/\cN)-c$. Let $p'_i$ be the projection
 of $p_i$ on $L$, $\fp'_i \leftrightarrow p'_i$ the corresponding prime
 ideal. Then $\cN':=(\fp'_1)^* \cap\dots\cap (\fp'_d)^*$ is the $\CM$-part of
 $\cI_\infty \leftrightarrow \xi_\infty$ and one can write $\cI_\infty =
 \cN' \cap \cR'$, $\cR'$ the punctual part of $\cI_\infty$. Put
 $P'_i:=\pi(P_i) = \lim_{\lambda \to \infty}\tau(\lambda)P_i$. From
 $\tau(\lambda)\cI \subset \tau(\lambda)\cN \cap \tau(\lambda) P_1
 \cap\dots\cap \tau(\lambda)P_c$ follows $\cI_\infty \subset \cN' \cap P'_1
 \cap\dots\cap P'_c$.

Let be $\chi$ the Hilbert function of $\fp_1 \cap\dots\cap \fp_d$ and
$\chi'$ the Hilbert function of $\fp'_1 \cap\cdots\cap \fp'_d$. As the $p'_i$
lie on the line $L \cap E$, one has $\chi'(n)\geq \chi(n)$, hence the
Hilbert polynomial $\sum^n_{0}\chi'(i)$ of $\cN'$ is greater or equal the
Hilbert polynomial $\sum^n_{0}\chi(i)$ of $\cN$. It follows that 
\[
 \HP(\cO_X/\cN' \cap P'_1 \cap\dots\cap P'_c) = \HP(\cO_X/\cN') +c \leq 
\HP(\cO_X/\cN) +c = P(n)\,.
\]
From this we deduce that the punctual part of $\cI_\infty$ has the form $Q
\cap P'_1 \cap\dots\cap P'_c$, where $Q$ is primary to $P_0 =
(x,y,z)$. \\
(\textbf{N.B.} $P'_i \centernot\in \ell'_i$ by the choice of
$\pi$ and $P_0\in \ell'_i$ for all $i$ gives $P'_i \neq P_0$ for all $i$.)

Put $\mu:= \HP(\cN')-\HP(\cN) = \HP(\cO_X/\cN) - \HP(\cO_X/\cN')$. From
\[
  \cO_X/\cI_\infty \cong \cO_X/\cN' \cap Q \bigoplus^c_1 \cO_X/P'_i
\]
it follows that $P(n)= \HP(\cO_X/\cN' \cap Q) +c = \HP(\cO_X/\cN)+c$.
The exact sequence 
\[
 0 \longrightarrow \cN'/\cN' \cap Q  \longrightarrow \cO_X/\cN' \cap Q  \longrightarrow \cO_X/\cN' \longrightarrow 0
\]
gives $\length(\cN'/\cN' \cap Q)= \mu$.

Choose $\mu$ simple points $R_i \in L$, which do not lie on any line
$\ell'_j$ and are not equal to any of the points $P_0,P'_1,\dots,P'_c$. Put
$R_i(\lambda) = R_i + \lambda(P_0-R_i)$ and $\zeta(\lambda) \leftrightarrow
\cN' \cap R_1(\lambda) \cap\dots\cap R_\mu(\lambda) \cap P'_1 \cap\dots\cap
P'_c$. If $\lambda \in k$, $\lambda\neq 1$, this is a point of $\cG(k)$,
hence $\zeta(\lambda)$ is invariant under $\phi$. As $\phi$ is continuous,
it follows that $\zeta_1 := \lim_{\lambda \to 1}\zeta(\lambda)
\leftrightarrow \cI_1 = \cN' \cap Q_1 \cap P'_1 \cap\dots\cap P'_c$ is fixed
under $\phi$.  As $\supp(\cO_X/Q_1) = \{ P_0\}$, $\cN' \cap P'_1 \cap\dots\cap
P'_c/\cI_1 \simeq \cN'/\cN' \cap Q_1$ has the support $\{P_0\}$ and the
length $\mu$. Hence the $\CM$-parts of $\cI_\infty$ and $\cI_1$ are equal to
$\cN'$ and 
\[
\langle \cN'/ \cI_\infty\rangle = \mu \cdot P_0 + \sum^c_1 P'_i = \langle \cN'/ \cI_1\rangle\,.
\]
From Proposition~\ref{prop:3.1} it follows that $\xi_\infty \equiv \zeta_1$,
hence $\phi(\xi_\infty) \equiv \phi(\zeta_1) = \zeta_1 \equiv \xi_\infty$ by
Lemma~\ref{lem:6.1} and Proposition~\ref{prop:5.3}.  
\end{proof}

\subsection{}
\label{sec:6.1.4}
\textsc{Case 1}. 
 Suppose that $\xi$ fulfills \textbf{(A)}. Let $\cL$ be the set of planes
 $L \subset X$ such that $\pi_L$ fulfills \textbf{(A1)} and in addition:
 $\ell_1 \subset L$, $\ell_i \centernot\subset L$, if $i\geq 2$, $P_j
 \centernot\in L$ for all $1 \leq j \leq  c$.

 Let be $\xi(\lambda)\leftrightarrow \tau(\lambda)\ell_1 \cup\dots\cup
 \tau(\lambda)\ell_d \cup \tau(\lambda)P_1 \dcup\cdots\dcup
 \tau(\lambda)P_c$, where the $\G_m$-operation $\tau$ is defined by a
  ``perpendicular'' projection $\pi_L$ onto $L\in\cL$. Let be
 $\cC:=\Set{\xi(\lambda)}^-$ and $\cD=\phi(\cC)=\Set{\phi\xi(\lambda)}^-$.
 Let $p \in \ell_1$ be any point. Then $\cC^*:=\Set{(\xi(\lambda),p)}^-
 \subset \CC$ is a curve without $L^*$-component, hence
 $\psi(\cC^*):=\Set{\psi(\xi(\lambda),p)}^- =
 \Set{(\phi\xi(\lambda),\phi_{\xi(\lambda)}(p))}^-$ has no
 $L^*$-component, too (Prop.~\ref{prop:4.3} ) and according to the
notation introduced in Section~\ref{sec:4.3.1} we write:
\begin{equation}
  \label{eq:6.1}
  \phi_{\xi(\lambda)}(p) = \phi_\xi(p) \quad \text{ for all } 
   \lambda \in \P^1\,.
\end{equation}
Now $|\cC_\infty| = \ell'_1 \cup\dots\cup \ell'_d \dcup P'_1 \dcup\dots\dcup
P'_d$ and $\ell'_1 = \ell_1$ by construction, and $|\cC_\infty| =
|\cD_\infty|$ by Aux-Lemma~\ref{auxlem:6.1}. As $\phi_{\xi_\infty}$ induces an
isomorphism $|\cC_\infty| \simeq |\cD_\infty|$, from \eqref{eq:6.1} it follows 
$\phi_\xi(p) = \phi_{\xi_\infty}(p) \in \phi_{\xi_\infty}(\ell_1) =
\ell'_i$, where the index $i \in \{1,\dots,d\}$ does not depend on $p$. As
$\psi$ induces an isomorphism $|\cC_\lambda| \simeq |\cD_\lambda|$, for
$\lambda=1$ it follows that  $\phi_\xi(\ell_1) \subset \ell'_i$,
hence $\phi_\xi(\ell_1) =\ell'_i$. If $\cJ \leftrightarrow \phi(\xi)$, then 
\[
  |V(\cJ)| = |\cD_1| = \phi_\xi(\ell_1) \cup\dots\cup \phi_\xi(\ell_d) \cup
\phi_\xi(P_1)\cup\dots\cup \phi_\xi(P_d)\,,
\]
hence $\ell'_i \subset V(\cJ)$, where $i$ still depends on the projection
 $\pi_L$ on $L$. Hence there is an index $i \in \Set{1,\dots,d}$ such that 
\[
  \pi_L(\ell_i) \subset V(\cJ) \quad \text{for Zariski-many } L \in \cL\,. 
\]
 It follows that $i=1$, i.e. $\phi_\xi(\ell_1)=\ell_1$.

 If one chooses $\ell_2$ instead of $\ell_1$, the same argumentation shows
 $\phi_\xi(\ell_2) = \ell_2$, etc. As $\psi$ induces an isomorphism, one gets
\begin{conclusion}
  \label{conc:6.1}
  If $\xi$ fulfills \textbf{(A)} it follows that $\phi(\xi) \leftrightarrow
  \ell_1 \cup \cdots \cup \ell_d \dcup R_1 \dcup \cdots \dcup R_c$,
  where the $R_i$ are different, isolated simple points. \hfill $\qed$
\end{conclusion}

\subsection{}
\label{sec:6.1.5}
\textsc{Case 2}. Suppose $\xi$ fulfills \textbf{(A)} and in addition the
following assumption \textbf{(A2)}: \index{assumption!\textbf{A2}}
{\itshape 
 If one perpendicularly projects $P_i$ to the plane $E =V(t)$, one obtains
 $c$ different image points.
}\\  
\textbf{N.B.} One should mention here that condition \textbf{(A1)} refers to
the projection $\pi_L$, whereas condition \textbf{(A2)} refers to the point
$\xi$.

Let $a$ be the line through $P_1$ and $P_0 =(x,y,z)$. Let $\cL$ be the set
of planes $L$, which contain $a$ and fulfill the condition \textbf{(A1)}.

Let $\tau$ be again the $\G_m$-operation defined by the projection $\pi_L$,
$L \in \cL$. Because of $\tau(\lambda)P_1 = P_1$ one has
\[
 \xi(\lambda) = \tau(\lambda)\xi \leftrightarrow \tau(\lambda)\ell_1
 \cup\dots\cup \tau(\lambda)\ell_d \cup P_1 \cup \tau(\lambda)P_2
 \cup\dots\cup \tau(\lambda)P_c\,.
\]
From Conclusion~\ref{conc:6.1} follows
\[
 \phi\xi(\lambda) \leftrightarrow \tau(\lambda)\ell_1
 \cup\dots\cup \tau(\lambda)\ell_d \dcup \cP_1(\lambda) \dcup \cdots \dcup \cP_c(\lambda)\,,
\] 
where $\cP_i(\lambda):=\phi_{\xi(\lambda)}(\tau(\lambda)P_i)$ are $c$
distinct simple points.

If $\cC := \Set{\xi(\lambda)}^-$ and $\cC^*_i := \Set{(\xi(\lambda),\tau(\lambda)P_i)}^-$, then one can write $\cC \sim q_2C_2 + q_1 C_1 +q_0 C_0$ and
$\cC^*_i \sim q^*_2 C^*_2 + q^*_1 C^*_1 +q^*_0 C^*_0 + q \cdot L^*$ (cf.~Theorem~\ref{thm:1.2}). Applying $\pi_*$ and $\kappa_*$ (see Section~\ref{sec:1.1})
shows that $q_j = q^*_j$, $0 \leq j \leq 2$, and $q=0$, if $i=1$ respectively $q=1$, if $i\geq 2$.
If $\cD^*_i:=\psi(\cC^*_i) = \Set{ ( \phi\xi(\lambda), \cP_i(\lambda))}^-$,
then $[\cD^*_i] = [\cC^*_i]$ by Proposition~\ref{prop:4.3}.  It follows that
\[
 \kappa_*[\cD^*_i] = \deg(\kappa|\cD^*_i) \cdot [\{ \cP_i(\lambda)\}^-] =
 \kappa_*[L^*] = [L]\,,
\] 
hence $\deg(\kappa|\cD^*_i)=1$ and $\Set{ \cP_i(\lambda)}^-\subset X$ is a
line, if $i\geq 2$.  As $\tau(\lambda)P_1 = P_1$, in $[\cD^*_1]$ the term
$[L^*]$ is missing, hence $\kappa_*[\cD^*_1]=0$.  From this one deduces that
$\cP_1(\lambda)=:\cP_1$ is independent of $\lambda$ and hence $\cP_1=
\phi_\xi(P_1)$. As one has $\phi\tau(\lambda) \leftrightarrow : \cJ_\lambda
\subset \cP_1(\lambda)$ for all $\lambda$, it follows that $\cJ_\infty =
\lim_{\lambda\to \infty} \cJ_\lambda\leftrightarrow \phi(\xi_\infty)$ is
contained in $\cP_1$. Now by Aux-Lemma~\ref{auxlem:6.1}
\[
 \phi(\xi_\infty) \equiv \xi_\infty \leftrightarrow \cI_\infty = (\fp'_1)^*
 \cap\dots\cap  (\fp'_d)^* \cap P'_1 \cap\dots\cap P'_c \cap Q\,,
\]
where $Q$ is primary to $P_0 = (x,y,z)$ (see Section~\ref{sec:6.1.3}). It
follows that $\cP_1 \in V(\cI_\infty)\subset L$. This holds true for the
Zariski-many planes $L\in \cL$, hence $\cP_1$ is contained in the
intersection of these planes and it follows that $\cP_1 \in a$. Now $P_1 =
P'_1\in a$ and $P'_1 \centernot\in \ell'_i = V((\fp'_i)^*)$ by
Assumption~\textbf{(A1)}, hence $a \neq \ell'_i$.
If one assumes $P_0 = \cP_1 = \phi_\xi(P_1)$, this gives a contradiction of
Conclusion~\ref{conc:6.1}. It follows that $\cP_1$ does not lie on any line
$\ell'_i$, hence $(\fp'_i)^*\centernot\subset \cP_1$. From $\cI_\infty \subset
\cP_1$ it follows that $\cP_1 \in \Set{P'_1,P'_2,\dots,P'_c,P_0}$. As $P'_i
\centernot\in a$ if $i\geq 2$ by Assumption \textbf{(A2)}, it follows that
$\phi_\xi(P_1)= \cP_1 = P'_1 = P_1$.  The same argumentation with
$P_2,\dots,P_c$ gives $\phi_\xi(P_i)= \cP_i$ for all $i$.

\begin{conclusion}
    \label{conc:6.2}
 If $\xi$ fulfills \textbf{(A)} and \textbf{(A2)}, then $\phi(\xi) = \xi$.
\hfill $\qed$
\end{conclusion}
\subsection{}
\label{sec:6.1.6}
If $P_j$ are any different simple points such that $P_j \centernot\in
\ell_i$ for all $i$ and $j$, then one chooses points $R_i\in X$ in general
position and puts $P_i(\lambda) = P_i + \lambda (R_i-P_i)$. Then for almost
all $\lambda \in k$ one has a point $\xi(\lambda) \leftrightarrow \ell_1
\cup \cdots \cup \ell_d \dcup P_1(\lambda) \dcup\cdots \dcup P_c(\lambda)
\in \HH(k)$, which fulfills \textbf{(A)} and \textbf{(A2)}. Then from
Conclusion~\ref{conc:6.2} follows:
\[
  \phi(\xi) = \phi(\xi(0)) := \phi\bigl(\lim_{\lambda\to
    0}\xi(\lambda)\bigr) = \lim_{\lambda\to 0}\phi\xi(\lambda) = 
\lim_{\lambda\to
    0}\xi(\lambda) = \xi\,.
\] 

\begin{conclusion}
    \label{conc:6.3}
 If $\xi$ fulfills the assumption \textbf{(A)}, then $\phi(\xi) = \xi$.
\hfill $\qed$
\end{conclusion}

\begin{lemma}
  \label{lem:6.2}
  Let be $\ell_i$, $1 \leq i \leq d$, different simple lines, all running
  through one and the same point $P$. Let $P_j$, $1 \leq i \leq c$, be
  simple points, different from each other and none of them lying on a line
  $\ell_i$. Assume that $\xi \leftrightarrow \ell_1 \cup \cdots \cup \ell_d
  \dcup P_1 \dcup\cdots \dcup P_c$ is in $\HH(k)$. Then $\phi(\xi) =
  \xi$ for each normed automorphism $\phi$.
\end{lemma}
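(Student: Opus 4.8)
The statement of Lemma~\ref{lem:6.2} differs from Conclusion~\ref{conc:6.3} only in that the configuration assumption \textbf{(A)} additionally required the lines $\ell_i$ to be ``perpendicular'' to $E=V(t)$, i.e.\ in a very special position, while here the $d$ lines are allowed to be any $d$ distinct lines through a common point $P$. So the plan is simply to reduce the general concurrent-lines configuration to the special configuration \textbf{(A)} by a suitable linear transformation together with a degeneration argument, and then invoke Conclusion~\ref{conc:6.3} via the fact that $\phi$ is normed (hence commutes with the relevant constructions up to Lemma~\ref{lem:6.1}).

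First I would apply a linear transformation $g\in\GL(4;k)$ so that the common point $P$ is moved to $P_0=(x,y,z)$; since $[g(C)]=[C]$ for all curves and $\xi\equiv\zeta\iff g(\xi)\equiv g(\zeta)$ (Lemma~\ref{lem:6.1}(i)), and since $\phi$ being normed does not in general commute with $g$, I must be a little careful: rather than conjugating $\phi$, I keep $\phi$ fixed and instead show directly that $\phi(\xi)=\xi$ using only that $\phi$ fixes $H_m$, $\cG$ pointwise and acts trivially on all the relevant Chow groups. Concretely, given $d$ distinct lines $\ell_1,\dots,\ell_d$ through $P_0$ and simple points $P_1,\dots,P_c$ off all the lines, I would choose an auxiliary configuration of $d$ distinct lines $m_1,\dots,m_d$ through $P_0$ which \emph{are} perpendicular to $E$, and connect the two line configurations by a one-parameter family: pick for each $i$ a line $\ell_i(\lambda)$ through $P_0$ with $\ell_i(0)=\ell_i$, $\ell_i(1)=m_i$, varying inside the pencil of lines through $P_0$ in the plane spanned by $\ell_i$ and $m_i$ (shrinking $\lambda$ to avoid the finitely many bad $\lambda$ where the lines collide or a $P_j$ falls on a line). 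Together with fixed points $P_1,\dots,P_c$ this gives a rational curve $\lambda\mapsto\xi(\lambda)\in\HH(k)$ with $\xi(0)=\xi$ and $\xi(1)$ of type \textbf{(A)}.

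Next, for $\lambda$ generic the configuration $\xi(\lambda)$ still has $d$ distinct concurrent simple lines plus $c$ isolated simple points, and one can arrange (again after a further linear change that does not disturb perpendicularity of the target configuration, or by choosing $m_i$ appropriately) that $\xi(1)$ satisfies \textbf{(A)} exactly. By Conclusion~\ref{conc:6.3}, $\phi(\xi(1))=\xi(1)$. Now I would like to propagate this along the family. The obstacle is that Conclusion~\ref{conc:6.3} only tells me $\phi$ fixes the \emph{endpoint} $\xi(1)$, not the intermediate $\xi(\lambda)$; to conclude $\phi(\xi)=\xi$ I want the whole family to consist of fixed points, or at least to control $\phi(\xi(\lambda))$. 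The clean way is to note that the argument of Section~\ref{sec:6.1.4}--\ref{sec:6.1.6} never really used that the perpendicular direction was $t$: what was used is (a) the existence of enough planes $L$ for which the perpendicular projection satisfies \textbf{(A1)}, and (b) the incidence-preservation for $\psi$ on $\CC$ (Proposition~\ref{prop:4.3}) together with Aux-Lemma~\ref{auxlem:6.1}. Both (a) and (b) are invariant under $\GL(4;k)$ by Lemma~\ref{lem:6.1} and the $\PGL(3;k)$-equivariance noted in Section~\ref{sec:3.5.3}. So the honest plan is: transform by $g$ to put the concurrency point at $P_0$, observe that \textbf{(A1)} and \textbf{(A2)}-type genericity can still be achieved for generic planes $L$ through a chosen line, rerun verbatim the arguments of Aux-Lemma~\ref{auxlem:6.1} and Conclusions~\ref{conc:6.1}--\ref{conc:6.3} with ``perpendicular to $E$'' replaced by ``in the pencil of directions through $P_0$ determined by $g$'', and deduce $\phi(g^{-1}\xi_{\mathrm{std}})=g^{-1}\xi_{\mathrm{std}}$; since every concurrent-lines configuration is $g^{-1}$ of a standard one, this gives the lemma.

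\textbf{Main obstacle.} The delicate point is that $\phi\in N$ is normed only with respect to the \emph{fixed} coordinate data entering the definition of $H_m,\cG$ (i.e.\ the splitting off of the variable $t$), so one cannot simply conjugate $\phi$ by $g$ and stay inside $N$. Thus one genuinely has to check that the chain of auxiliary arguments in Sections~\ref{sec:6.1.3}--\ref{sec:6.1.6} is robust under replacing the distinguished $\G_m$-actions $\tau$ (perpendicular projections onto planes $\perp E$) by the $\G_m$-actions obtained from projections onto planes through the arbitrary concurrency point $P$ --- the only inputs being $\PGL(3;k)$-equivariance of the Hilbert--Chow and tautological morphisms, Lemma~\ref{lem:6.1}, Proposition~\ref{prop:4.3}, Proposition~\ref{prop:5.3}, and the limit computations of Section~\ref{sec:2.1}. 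I expect this to go through without new difficulties, the bookkeeping of which planes $L$ are ``good'' being the only mildly technical part, exactly as in Aux-Lemma~\ref{auxlem:6.1}.
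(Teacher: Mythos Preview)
Your ``main obstacle'' is not an obstacle: the subgroup $N$ \emph{is} normalized by $\GL(4;k)$. The subschemes $H_m$ and $\cG$ are defined by coordinate-free conditions (ideals of the form $(\ell,f(h,g))$ resp.\ $(\ell,f\cdot\cK)$ with $\ell,h$ linear, etc.), so they are $\GL(4;k)$-stable. Hence if $\phi|H_m=\id$ and $g\in\GL(4;k)$, then for $\xi\in H_m$ one has $g(\xi)\in H_m$, so $(g^{-1}\phi g)(\xi)=g^{-1}\phi(g(\xi))=g^{-1}g(\xi)=\xi$; thus $g^{-1}\phi g\in N$ and $g^{-1}Ng=N$.

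Once you grant this, the paper's proof is two lines and bypasses everything you propose. Pick a linear form $\ell$ with $P\notin V(\ell)$ and no $\ell_i\subset V(\ell)$, and choose $g\in\GL(4;k)$ with $g(\ell)=t$, $g(P)=P_0=(0:0:0:1)$. Then $g(\xi)$ satisfies assumption \textbf{(A)}, so Conclusion~\ref{conc:6.3} gives $\psi(g(\xi))=g(\xi)$ for every $\psi\in N$. Given $\phi\in N$, apply this to $\psi=g\phi g^{-1}\in N$: one gets $g\phi(\xi)=g(\xi)$, hence $\phi(\xi)=\xi$.

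So your elaborate plan---deforming the configuration, or re-running Sections~\ref{sec:6.1.3}--\ref{sec:6.1.6} with projections adapted to the new concurrency point---is unnecessary. The conjugation you thought forbidden is exactly the move the paper makes.
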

\begin{proof}
  Take a linear form $\ell$ such that $P \centernot\in V(\ell)$ and $\ell_i$
  is not contained in $V(\ell)$ for all $i$. Choose $g \in \GL(4,k)$ such
  that $g(\ell) = t$ and $g(P)= P_0 = (0:0:0:1)$. Then $g(\xi)$ fulfills the
  assumption \textbf{(A)}, hence $\phi g(\xi) = g(\xi)$ for all $\phi\in N$
  by Conclusion~\ref{conc:6.3}. As $g^{-1}N g = N$ for all $g \in \GL(4,k)$ 
	(cf. Definition in Chapter~\ref{cha:5}) the assertion follows.
\end{proof}

\section{The case of multiple lines}
\label{sec:6.2}

In order to simplify the notation, we put $X = \P^3_k= Proj(k[x,y,z,t]$, $Y= \P^2_k= Proj(k[x,y,z]$, $H^d =
\Hilb^d(Y)$. Let $\cJ \subset \cO_Y$ be an ideal of colength $d$ and Hilbert
function $\psi$. If $\cJ^* \subset \cO_X$ is the ideal generated by $\cJ$,
then $H^0(\cJ^*(n)) = \bigoplus^n_{i=0} t^{n-i} H^0(\cJ(i))$, hence $\cJ^*$
has the Hilbert polynomial $\Psi(n) = \sum^n_{i=0}\psi(i)$ and $\cO_X/\cJ^*$
has the Hilbert polynomial $p(n) = \binom{n+3}{3} -\Psi(n)$.
\begin{lemma}
  \label{lem:6.3}
  If $p(n)-P(n)=:s \geq 0$ and $P_i \in X - V(\cJ^*)$ are $s$ distinct simple
  points, then $\cJ^* \cap P_1 \cap \cdots \cap P_s$ defines a point $\xi
  \in \HH(k)$ and $\phi(\xi)=\xi$ for all normed $\phi \in \Aut(\HH)$.
\end{lemma}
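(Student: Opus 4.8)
The plan is to reduce the case of a multiple (non-reduced) planar configuration to the case of simple lines treated in Lemma~\ref{lem:6.2}, using the limit/degeneration machinery of Chapter~\ref{cha:2} together with the continuity of $\phi$ and the description of the fibers of the tautological morphism $f$ from Proposition~\ref{prop:3.1}. First I would check that $\xi\leftrightarrow\cI:=\cJ^*\cap P_1\cap\cdots\cap P_s$ really lies in $\HH(k)$: since $\cJ^*$ is flat over $k$ with Hilbert polynomial $\Psi$ (it is generated by a planar ideal, so $H^0(\cJ^*(n))=\bigoplus_{i=0}^n t^{n-i}H^0(\cJ(i))$ gives the Hilbert polynomial $p(n)=\binom{n+3}{3}-\Psi(n)$ directly), and the $P_i$ are $s$ distinct reduced points disjoint from $V(\cJ^*)$, the intersection has Hilbert polynomial $p(n)-s=P(n)$; here the CM-part of $\cI$ is exactly $\cJ^*$ and $\cR=P_1\cap\cdots\cap P_s$ is the punctual part.

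The key step is to deform $\cJ$ inside $H^d=\Hilb^d(\P^2)$ to a radical ideal $\cJ_0$ of $d$ distinct points, say all lying in the plane $E=V(t)$ but not (after a linear change) on a common line. Concretely, pick a one-parameter family $\cJ_\lambda\in H^d(k)$, $\lambda\in\A^1$, with $\cJ_1=\cJ$ and $\cJ_0$ the radical ideal of $d$ general points; by upper semicontinuity of the Hilbert function and generality, $\HP(\cO_Y/\cJ_\lambda)$ is constant $=d$ for all $\lambda$, so the corresponding $\cJ_\lambda^*\subset\cO_X$ all have the same Hilbert polynomial $\Psi$. Then one extends to a family $\xi_\lambda\leftrightarrow\cJ_\lambda^*\cap P_1(\lambda)\cap\cdots\cap P_s(\lambda)$, where the $P_i(\lambda)$ are moved to stay disjoint from $V(\cJ_\lambda^*)$ and from each other (possible for $\lambda$ in a Zariski-dense open set, since the bad locus is a proper closed subset of the base). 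This gives a morphism $\A^1\dashrightarrow\HH$, defined on a dense open set and hence, by properness of $\HH$, extending to all of $\A^1$ (or $\P^1$), with $\xi_1=\xi$. For generic $\lambda\neq 0,1$ the curve $V(\xi_\lambda)$ consists of $d$ distinct simple lines perpendicular to $E$ (the $P_i(\lambda)$ chosen in general position) plus $s$ distinct simple points off those lines — exactly the configuration of Lemma~\ref{lem:6.2}, applied to the point $P_0=(0:0:0:1)$ after a further linear normalization. Therefore $\phi(\xi_\lambda)=\xi_\lambda$ for all such $\lambda$ and all normed $\phi$.

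Finally, since $\phi$ is a morphism, hence continuous, and since $\{\xi_\lambda:\lambda\in U\}$ is dense in the irreducible curve $\overline{\{\xi_\lambda\}}$, the identity $\phi(\xi_\lambda)=\xi_\lambda$ propagates by continuity to $\lambda=1$, i.e.\ $\phi(\xi)=\xi$. The main obstacle I expect is the bookkeeping in the degeneration step: one must ensure simultaneously (i) that the Hilbert function of $\cJ_\lambda$ stays constant along a path hitting a radical ideal of $d$ general points — this needs $H^d$ to be connected (classical) and the stratum-by-Hilbert-function structure, so one may have to first degenerate $\cJ$ to the generic (maximal-Hilbert-function) ideal inside its own component and then connect that to the radical one; and (ii) that the $s$ auxiliary points can be propagated along the family without ever colliding with each other or with the (moving) support of $\cJ_\lambda^*$, which is a codimension-one avoidance argument but must be stated cleanly. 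Once the family is constructed, the reduction to Lemma~\ref{lem:6.2} and the continuity argument are routine.
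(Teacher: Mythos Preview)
There is a real gap in the degeneration step. Your sentence ``$\HP(\cO_Y/\cJ_\lambda)$ is constant $=d$ for all $\lambda$, so the corresponding $\cJ_\lambda^*\subset\cO_X$ all have the same Hilbert polynomial $\Psi$'' is false. From $H^0(\cJ_\lambda^*(n))=\bigoplus_{i=0}^n t^{n-i}H^0(\cJ_\lambda(i))$ one gets
\[
\HP(\cO_X/\cJ_\lambda^*)(n)=\sum_{i=0}^n h^0(\cO_Y/\cJ_\lambda(i)),
\]
which for $n\ge d$ equals $dn+\bigl[\sum_{i<d}\chi_\lambda(i)-d(d-1)\bigr]$ with $\chi_\lambda$ the Hilbert function of $\cO_Y/\cJ_\lambda$. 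Thus the constant term of $p_\lambda(n)$, and hence $s=p_\lambda(n)-P(n)$, depends on the \emph{Hilbert function} of $\cJ_\lambda$, not merely on its colength $d$. In particular, deforming $\cJ$ to a radical ideal of $d$ \emph{general} points (which has the maximal Hilbert function, generically $\neq\psi$) moves you out of $\HH$: the number $s$ of auxiliary points would have to change along the family, and your curve $\{\xi_\lambda\}$ simply does not exist. Your ``fix'' of first passing to the maximal-Hilbert-function stratum and then to a radical ideal fails for the same reason.

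The paper's proof repairs exactly this point: it never leaves the stratum $H_\psi\subset H^d$ of ideals with the \emph{given} Hilbert function $\psi$. The nontrivial input is a theorem of Davis, which guarantees $H_\psi\cap H^{(d)}\neq\emptyset$, i.e.\ there exist \emph{reduced} length-$d$ subschemes with Hilbert function $\psi$; combined with the irreducibility of $H_\psi$, this makes the reduced locus dense in $H_\psi$. The $s$ points $P_1,\dots,P_s$ are kept fixed (one only excludes, via $\Hilb^d(E)$ with $E=Y-\{p_1,\dots,p_s\}$, the ideals whose support meets the projections $p_i=\pi(P_i)$), and for $\zeta$ in the dense reduced locus $U$ one has $\phi(\zeta^*\dcup\eta)=\zeta^*\dcup\eta$ by Lemma~\ref{lem:6.2}. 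The argument is then concluded by comparing the two morphisms $g_1,g_2:V\to\P$ given by $g_1(\zeta)=f(\zeta^*\dcup\eta)$ and $g_2(\zeta)=f(\phi(\zeta^*\dcup\eta))$, which agree on $U$ and hence on $V$; strictly speaking this yields $\phi(\xi)\equiv\xi$, which is what is actually used downstream in Lemma~\ref{lem:6.4}. Your continuity idea is fine once the family is built inside $H_\psi$; what you were missing is Davis's theorem as the existence input.
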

\begin{proof}
  Let $H_\psi$ be the subscheme (with the induced reduced structure) of
  $H^d$, whose closed points correspond to ideals $\cJ \subset \cO_Y$ 
  colength $d$ with Hilbert function $\psi$. By a theorem of Davis~\cite{D}
  one has: $H_\psi \neq \emptyset \Rightarrow H_\psi \cap H^{(d)} \neq
  \emptyset$. As $H_\psi$ is irreducible~\cite[p. 539]{G88}, $H^{(d)}\cap
  H_\psi$ is dense in $H_\psi$, where $H^{(d)}$ is the open subscheme of $H^d$ introduced in Appendix~\ref{cha:H}.

  We still have to take into account the points $p_i:=\pi(P_i)$ where $\pi:X
  - \Set{(0:0:0:1)} \to Y$ is the projection onto the plane $V(t)$. For this
  reason we replace $Y$ by $E:=Y - \Set{p_1, \dots, p_s}$ and obtain an open
  subscheme $\Hilb^d(E)$ of $H^d$ and an open subscheme $U = H^{(d)} \cap
  H_\psi \cap \Hilb^d(E)$ of $H_\psi$, which is dense in $V:= H_\psi \cap
  \Hilb^d(E)$.

  Let be $\zeta \in V(k)$, i.e.~$\zeta \leftrightarrow \cJ \subset \cO_Y$
  has the Hilbert function $\psi$. If one defines $\cJ^* \subset \cO_X$ by
  $H^0(\cJ^*(n)) = \bigoplus^n_{i=0} t^{n-i} H^0(\cJ(i))$, then
  $\cO_X/\cJ^*$ has the Hilbert polynomial $p(n)$. Let $\eta
  \leftrightarrow P_1 \dcup\cdots \dcup P_s$ and define $\zeta^*
  \dcup \eta \in \HH(k)$ in the obvious manner. Let $f$ be the
  tautological morphism of Chapter~\ref{cha:3} and define morphisms
  $g_i:V \to \P$ by $g_1:\zeta \mapsto f(\zeta^* \dcup \eta)$,
  respectively $g_2:V \to \P$ by $g_2:\zeta \mapsto f(\phi(\zeta^* \dcup
  \eta))$. $g_1$ and $g_2$ agree on the open dense subset $U \subset V$,
  because $\phi(\zeta^* \dcup \eta) = \zeta^* \dcup \eta$ by
  Lemma~\ref{lem:6.2}, hence they agree on $V$.
\end{proof}

\section{The case of multiple points}
\label{sec:6.3}

Set $\cN \subset \cO_X$ be an ideal such that $\HP(\cO_X/\cN) = P(n)-s$, $s$
a positive integer. If $P_i \in X - V(\cN)$, $1 \leq i \leq s$, are distinct
simple points, then $\eta \leftrightarrow \cN \cap P_1 \cap \cdots \cap P_s$
is a point of $\HH(k)$.

\begin{lemma}
  \label{lem:6.4}
Suppose there is an open, non-empty set $U \subset X - V(\cN)$ such that for
all different simple points $\cP_i\in U$, $1\leq i \leq s$, the point
$\zeta \leftrightarrow \cN \cap \cP_1  \cap \cdots \cap \cP_s \in \HH(k)$
fulfills the condition $\phi(\zeta)\equiv \zeta$. If $\cI = \cN \cap 
Q_1 \cap \cdots \cap Q_r$, $Q_i$ is $P_i$-primary, $P_i$ distinct points in
$X(k)$ such that $\cI$ defines a point $\xi \in \HH(k)$, then
$\phi(\xi)\equiv \xi$.
\end{lemma}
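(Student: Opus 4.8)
The plan is to degenerate the primary components $Q_1,\dots,Q_r$ of $\cI$ to configurations of simple points lying in the given open set $U$, while keeping the CM-part $\cN$ fixed, and to use Proposition~\ref{prop:3.1} together with Lemma~\ref{lem:6.1}(ii) to conclude. Concretely: each $Q_i$ is primary to a closed point $P_i$ with some multiplicity $e_i$, and $\langle \cN/\cI\rangle = \sum_i e_i P_i$ (counting with the multiplicities defined in Section~\ref{sec:3.2}). The total colength is $s = \sum_i e_i$. Since $\cN\cdot\cP_i^{e_i}\subset \cN\cap Q_i$ by Aux-Lemma~\ref{auxlem:3.4}, the point $\cN\cap Q_i$ lies in $X_i:=\Quot^{e_i}(E_i)$ where $E_i = \cN/\cP_i^{e_i}\cN$ is a module over the Artinian local ring $\cO_{X,P_i}/\cP_i^{e_i}$. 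Inside $\Quot^{e_i}$ the locus of ideals of the form $\cN\cap(\cP_{i,1}\cap\dots\cap\cP_{i,e_i})$ with $\cP_{i,j}$ distinct simple points clustering near $P_i$ is dense (or at least has $\cN\cap Q_i$ in its closure), because a fat point of multiplicity $e_i$ is a flat limit of $e_i$ simple points and the curvilinear/generic points are dense in the punctual Hilbert scheme; taking limits along curves in $X_i$ one gets, via the same determinant computation as in the proof of Lemma~\ref{lem:3.2}, a connected curve in $\HH$ joining $\xi$ to a point $\xi'\leftrightarrow \cN\cap\cR'$ with $\cR'$ a product of simple points, the curve being numerically a multiple of $[C_0]$.

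\textbf{Moving the points into $U$.} First I would replace $\xi$ by a nearby point whose punctual part consists of $s$ distinct simple points: choose $R_1,\dots,R_s\in U$ in general position, set $\cP_i(\lambda) = \cP_i + \lambda(R_i-\cP_i)$ (the line joining a chosen simple point near $P_i$ to $R_i$), and let $\zeta(\lambda)\leftrightarrow \cN\cap\bigcap_i \cP_i(\lambda)$; for $\lambda$ generic these are $s$ distinct simple points, and for $\lambda\to 0$ one recovers (a point equivalent to) $\xi$ by the $C_0$-curve argument above, while for $\lambda\to 1$ one lands at a $\zeta$ with all points $R_i\in U$. Each $\zeta(\lambda)$ with its points simple and in $U$ satisfies $\phi(\zeta(\lambda))\equiv\zeta(\lambda)$ by hypothesis. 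Now I would argue by continuity exactly as in Lemma~\ref{lem:6.3}: the family $\lambda\mapsto \zeta(\lambda)$ is a morphism from an open dense subset of an affine line (or a suitable parameter space) into $\HH$, and both $\lambda\mapsto f(\phi(\zeta(\lambda)))$ and $\lambda\mapsto f(\zeta(\lambda))$ are morphisms to the projective space $\P$ defined by $\cL_1\otimes\cL_2$; they agree on the dense set where the points are simple and in $U$, hence agree everywhere, in particular at the limit giving $\xi$. Combined with Lemma~\ref{lem:6.1}(ii) (equivalence is preserved under $\phi$ and under the $C_0$-curve connecting $\xi$ with $\zeta$), this yields $\phi(\xi)\equiv\xi$.

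\textbf{The main obstacle.} The delicate point is not the continuity/specialization bookkeeping but checking that the degeneration from a general simple-point configuration in $U$ back to the prescribed primary ideal $\cI$ can be done \emph{within a single fiber of $f$}, i.e.\ that along the connecting curve the class stays in $\Z\cdot[C_0]$ and the CM-part $\cN$ stays rigid. This is precisely where one re-runs the determinant argument of the proof of Lemma~\ref{lem:3.2}: one builds a coherent sheaf $\cE$ on $X\times T$ interpolating between $\cN\cap Q_i$ and $\cN\cap\bigcap_j\cP_{i,j}$, checks flatness of $\cN\otimes\cO_T/\cL$ over the parameter curve $T$ (using that the associated primes of the relevant quotients stay supported at the single point $P_i$, so that a general linear form remains a non-zero-divisor), and deduces that $(\cM_n^{-1}\otimes\cM_n\cdot T)=0$. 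The only genuinely new ingredient beyond Lemma~\ref{lem:3.2} is the density statement: that $\cN\cap Q_i$ lies in the closure of the locus of $\cN\cap(\text{distinct simple points})$ inside $\Quot^{e_i}$; for this one uses that the punctual Hilbert scheme of length $e_i$ in affine space is irreducible with the curvilinear locus dense (hence so is its image in $\Quot^{e_i}(E_i)$), which is where a theorem of the type invoked in Lemma~\ref{lem:6.3} (Davis/Hilbert scheme irreducibility, or the analogous statement for $\Quot$) must be cited. Once these pieces are assembled the conclusion $\phi(\xi)\equiv\xi$ is immediate.

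\begin{proof}[Proof sketch]
Write $\cI = \cN\cap Q_1\cap\dots\cap Q_r$ with $Q_i$ primary to the closed point $P_i\in X$ of multiplicity $e_i$, and put $s=\sum_i e_i$. By Aux-Lemma~\ref{auxlem:3.4}, $\cN\cdot\cP_i^{e_i}\subset \cN\cap Q_i$, so the quotient $\cN/\cN\cap Q_i$ is a point of $X_i:=\Quot^{e_i}(E_i)$, $E_i:=\cN/\cP_i^{e_i}\cN$ a module over the Artinian ring $\Lambda_i:=\cO_{X,P_i}/\cP_i^{e_i}$. The locus in $X_i$ of quotients of the form $\cN/\cN\cap(\cP_{i,1}\cap\dots\cap\cP_{i,e_i})$ with the $\cP_{i,j}$ distinct simple points near $P_i$ has $\cN/\cN\cap Q_i$ in its closure, since a fat point is a flat limit of simple points and the curvilinear locus is dense in the punctual Hilbert scheme. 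Choosing a connecting curve $T_i$ in $X_i$ and running the determinant argument of the proof of Lemma~\ref{lem:3.2} (flatness of $\cN\otimes\cO_{T_i}/\cL$ over $T_i$, constancy of $(\cM_n^{-1}\otimes\cM_n\cdot T_i)$), one obtains a connected curve in $\HH$ numerically equal to a multiple of $[C_0]$ joining $\xi$ to a point $\xi'\leftrightarrow \cN\cap\cR'$ whose punctual part $\cR'$ is a union of $s$ distinct simple points. By Lemma~\ref{lem:6.1}(ii) and Lemma~\ref{lem:3.2}, $\phi(\xi)\equiv\xi$ iff $\phi(\xi')\equiv\xi'$, so we may assume $\cI=\cN\cap \cP_1\cap\dots\cap \cP_s$ with $\cP_i$ distinct simple points.

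Now pick $R_1,\dots,R_s\in U$ in general position and set $\cP_i(\lambda)=\cP_i+\lambda(R_i-\cP_i)$; for $\lambda$ in a dense open subset $V$ of $\A^1$ the points $\cP_1(\lambda),\dots,\cP_s(\lambda)$ are distinct, simple, and (for $\lambda$ near $1$) lie in $U$, while $\zeta(\lambda)\leftrightarrow \cN\cap\bigcap_i\cP_i(\lambda)$ specializes at $\lambda=0$ to $\xi$. The assignments $\lambda\mapsto f(\zeta(\lambda))$ and $\lambda\mapsto f(\phi(\zeta(\lambda)))$ are morphisms $V\to\P$ agreeing on the dense subset where all $\cP_i(\lambda)\in U$ (there $\phi(\zeta(\lambda))\equiv\zeta(\lambda)$ by hypothesis), hence agree on all of $V$, and by properness of $\P$ also at $\lambda=0$. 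Thus $f(\phi(\xi))=f(\xi)$, i.e.\ $\phi(\xi)\equiv\xi$.
\end{proof}
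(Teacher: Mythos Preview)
Your argument is essentially correct, but it takes a detour that the paper avoids, and that detour carries a real risk.

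The paper's proof is a single continuity step. For each $i$ pick $\mu_i:=\length(\cN/\cN\cap Q_i)$ distinct simple points $P^j_i\in U$, set $P^j_i(\lambda)=P_i+\lambda(P^j_i-P_i)$, and form $\xi(\lambda)\leftrightarrow \cN\cap\bigcap_{i,j}P^j_i(\lambda)$. For almost all $\lambda$ the points lie in $U$, so the hypothesis gives $f(\phi\xi(\lambda))=f(\xi(\lambda))$; by continuity of $f\circ\phi$ and $f$ this persists at $\lambda=0$, where the limit is $\xi_0\leftrightarrow \cN\cap R_1\cap\dots\cap R_r$ with $R_i$ primary to $P_i$ of length $\mu_i$. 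The crucial point is that one does \emph{not} need $R_i=Q_i$: Proposition~3.1 already gives $f(\xi)=f(\xi_0)$ because the CM-part and the zero-cycle $\langle\cN/\cI\rangle=\sum_i\mu_iP_i$ coincide. Then Lemma~6.1(ii) yields $f(\phi(\xi))=f(\phi(\xi_0))=f(\xi_0)=f(\xi)$.

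Your two-phase scheme first tries to connect $\xi$ to a simple-point configuration $\xi'$ via density of the curvilinear/smoothable locus in a punctual $\Quot$ scheme, and only then moves into $U$. This Phase~1 is unnecessary, and the density claim is genuinely delicate in dimension three: the punctual Hilbert scheme $\Hilb^n_0(\A^3)$ is not irreducible for large $n$, so an arbitrary $P_i$-primary $Q_i$ need not be a flat limit of simple points. You partly anticipate the fix when you write that the limit recovers ``a point equivalent to $\xi$'', but then you do not use it: once you accept that only the $f$-fiber of the limit matters (Proposition~3.1), the whole $\Quot$/density step can be dropped, and your Phase~2 collapses to exactly the paper's argument with the lines run from the $P_i$ rather than from auxiliary simple points.
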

\begin{proof}
  If $Q_1 \cap \dots \cap Q_r=: \cR$, then $\cN/\cN \cap\cR =\bigoplus^r_1
  \cN/\cN \cap Q_i$. If $\cN/\cN \cap Q_i$ has the length $\mu_i$, then
  $\sum^r_1 \mu_i =s$. Choose distinct simple points $P^j_i \in U$, $1 \leq
  j \leq \mu_i$, $1 \leq i \leq r$. Then $P^j_i(\lambda):= P_i +
  \lambda(P^j_i-P_i)$ is in $U$ for almost all $\lambda\in k$ and
  $\xi(\lambda) \leftrightarrow \cN \bigcap_{i,j} P^j_i(\lambda) \in \HH(k)$
  for almost all $\lambda$.  Then $\xi_0:=\lim_{\lambda \to 0} \xi(\lambda)
  \in \HH(k)$ and the corresponding ideal is $\cN \cap R_1 \cap \dots \cap
  R_r$, where $R_i$ is $P_i$-primary and $\cN/\cN \cap R_i$ has the length
  $\mu_i$. By Proposition~\ref{prop:3.1} one has $f(\xi) = f(\xi_0)$. By
  assumption one has $f(\phi\xi(\lambda)) = f(\xi(\lambda))$ for almost all
  $\lambda$ and because $f$ and $\phi$ are continuous $f(\phi(\xi_0)) =
  f(\xi_0)$ follows. Using Lemma~\ref{lem:6.1} we get $f(\phi(\xi))
  =f(\phi(\xi_0)) = f(\xi_0) = f(\xi)$.
\end{proof}  

Let be $\cI \leftrightarrow \xi \in U(t)$, $\cI':= \cI + t \cO_X(-1)/ t
\cO_X(-1)$, $\cI_0 \leftrightarrow \xi_0 = \lim_{\lambda \to
  0}\sigma(\lambda)\xi$.  Let $(\cI')^* \subset \cO_X$ be the ideal
generated by $\cI'$. Then $\cI_0 =(\cI')^* \cap \cR_0$ and $\cR_0$ is
$(x,y,z)$-primary (cf.~Appendix~\ref{cha:G}, Lemma~\ref{lem:G.3}). The
$\CM$-part $(\cI')^*$ fulfills the assumption of Lemma~\ref{lem:6.3}, and by
Lemma~\ref{lem:6.4} we get:

\begin{proposition}
  \label{prop:6.1}
  If $\xi \in U(t)$, then $\phi(\xi_0)\equiv \xi_0$ for all $\phi \in N$.
\hfill $\qed$
\end{proposition}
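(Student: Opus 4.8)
The plan is to prove the statement by combining the explicit description of the limit $\xi_0=\lim_{\lambda\to 0}\sigma(\lambda)\xi$ (from Appendix~\ref{cha:G}) with the two lemmas just established: Lemma~\ref{lem:6.3} handles the $\CM$-part of $\xi_0$ and Lemma~\ref{lem:6.4} then absorbs the punctual part. This is exactly the ``$\CM$-part first, punctual part afterwards'' strategy already used for simple lines.

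First I would record the structure of $\xi_0$. Since $\xi\in U(t)$, the linear form $t$ is a non-zero-divisor for $\cO_X/\cI$, so the restriction $\cI':=\cI+t\cO_X(-1)/t\cO_X(-1)$ is an ideal of colength $d$ on the plane $\P^2=V(t)=\Proj k[x,y,z]$ (its Hilbert polynomial is $P(n)-P(n-1)=d$). By Appendix~\ref{cha:G}, Lemma~\ref{lem:G.3}, the ideal $\cI_0\leftrightarrow\xi_0$ has the form $\cI_0=(\cI')^{*}\cap\cR_0$, where $(\cI')^{*}$ is the ideal of $\cO_X$ generated by $\cI'$ and $\cR_0$ is primary to $(x,y,z)$, i.e.\ to the point $P_0=(0:0:0:1)$. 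Since $(\cI')^{*}$ defines the cone with vertex $P_0$ over the $0$-dimensional scheme $V(\cI')\subset V(t)$, it has no embedded or isolated components; hence $\cN:=(\cI')^{*}$ is the $\CM$-part and $\cR_0$ the punctual part of $\cI_0$.

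Next I would observe that $\cN$ is precisely of the type treated in Lemma~\ref{lem:6.3}: it is the ideal generated in $\cO_X$ by a colength-$d$ ideal $\cI'$ on $\P^2$. Let $s$ denote the integer occurring in that lemma; one checks $s=\length(\cN/\cI_0)=P(n)-\HP(\cO_X/\cN)$, which is $\geq 0$ because $\cI_0\subset\cN$ gives a surjection $\cO_X/\cI_0\twoheadrightarrow\cO_X/\cN$ with kernel of finite length. As $V(\cN)$ is a curve in $\P^3$, the set $U:=X-V(\cN)$ is open and non-empty; and for any $s$ distinct simple points $\cP_1,\dots,\cP_s\in U$, Lemma~\ref{lem:6.3} produces a point $\zeta\leftrightarrow\cN\cap\cP_1\cap\cdots\cap\cP_s\in\HH(k)$ with $\phi(\zeta)=\zeta$ for every $\phi\in N$, in particular $\phi(\zeta)\equiv\zeta$. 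This is exactly the standing hypothesis of Lemma~\ref{lem:6.4} for the $\CM$-ideal $\cN$ and the open set $U$.

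Finally I would apply Lemma~\ref{lem:6.4} to $\cI=\cI_0=\cN\cap\cR_0$, taking $r=1$ and $Q_1=\cR_0$ (primary to $P_0$); the fact that $P_0$ lies on $V(\cN)$ is harmless, since the proof of Lemma~\ref{lem:6.4} perturbs the supporting point into $U$. The lemma then yields $\phi(\xi_0)\equiv\xi_0$ for all $\phi\in N$, which is the assertion. I expect the only genuine work to be the bookkeeping concealed in the first paragraph: confirming that Appendix~\ref{cha:G} indeed delivers the displayed primary decomposition $\cI_0=(\cI')^{*}\cap\cR_0$ with $\cR_0$ supported at $P_0$, and that the colength and Hilbert-polynomial counts match, so that the $s$ furnished by Lemma~\ref{lem:6.3} is exactly the one making $\cN\cap\cP_1\cap\cdots\cap\cP_s$ lie in $\HH$. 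The conceptual point is merely the recognition that the $\CM$-part of this particular degeneration is a cone over a plane $0$-cycle, hence of the ``$\cJ^{*}$''-shape to which Lemma~\ref{lem:6.3} applies.
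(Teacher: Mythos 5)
Your proposal is correct and is essentially the paper's own argument: the paper likewise invokes Appendix G, Lemma \ref{lem:G.3} to write $\cI_0=(\cI')^*\cap\cR_0$ with $\cR_0$ primary to $(x,y,z)$, notes that the $\CM$-part $(\cI')^*$ satisfies the hypothesis of Lemma \ref{lem:6.3}, and concludes via Lemma \ref{lem:6.4}. Your added bookkeeping (the colength count for $s$, and the observation that Lemma \ref{lem:6.4} tolerates $P_0\in V(\cN)$ because its proof perturbs the supporting points into $U$) only makes explicit what the paper leaves implicit.
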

\section{Limits of image points}
\label{sec:6.4}
Let $\xi \in U(t)$ be a closed point and $C \subset X$ the corresponding
curve. Let be $P\in C(k)-V(t)$ and $P\neq P_0=(0:0:0:1)$. Then 
\begin{lemma}
  \begin{enumerate}[(a)]
    \item $\phi_{\sigma(\lambda)\xi}(\sigma(\lambda)P) \xrightarrow[\lambda\to
      0]{} P_0$\,.
    \item $\Set{\phi_{\sigma(\lambda)\xi}(\sigma(\lambda)P)}^-$ is a line in
      $X$ through $P_0$.
   \end{enumerate}
  \label{lem:6.5}
\end{lemma}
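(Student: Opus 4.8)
The key idea is to reduce statement (a) to Proposition~\ref{prop:6.1}, and statement (b) to the machinery of Chapter~\ref{cha:3} (the description of fibers of $f$ via $[C_0]$), together with the compatibility of $\psi$ with limits. First I would set up the notation carefully: write $\xi(\lambda) := \sigma(\lambda)\xi$, so $\xi_0 = \lim_{\lambda\to 0}\xi(\lambda)$, and recall that $\sigma(\lambda)$ fixes the plane $V(t)$ pointwise while $\sigma(\lambda)P = \lim$-contracts every point $P\notin V(t)$, $P\neq P_0$, to $P_0$ as $\lambda\to 0$ (this is the ``perpendicular projection from $P_0$'' picture of Appendix~\ref{cha:A}). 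Since $\xi\in U(t)$, one has $\xi(\lambda)\in U(t)$ for all $\lambda\in k^*$, and the incidence pair $(\xi(\lambda),\sigma(\lambda)P)\in\CC$ makes sense. The point $\phi_{\xi(\lambda)}(\sigma(\lambda)P)$ is by definition the image of $\sigma(\lambda)P$ under the isomorphism of curves $C_{\xi(\lambda)}\xrightarrow{\sim} C_{\phi(\xi(\lambda))}$ induced by $\psi$, i.e. the $\P^3$-component of $\psi(\xi(\lambda),\sigma(\lambda)P)$.

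For part (a): I would form the curve $\cC^* := \overline{\{(\xi(\lambda),\sigma(\lambda)P)\mid \lambda\in k^*\}}\subset\CC$ and apply $\psi$ to get $\psi(\cC^*) = \overline{\{(\phi(\xi(\lambda)),\phi_{\xi(\lambda)}(\sigma(\lambda)P))\mid\lambda\in k^*\}}$. By Conclusion~\ref{concl:2.1n} (the incidence ``curve, point'' is preserved under $\G_m$-limits) the limit point as $\lambda\to 0$ of $\cC^*$ is $(\xi_0, P_0)$, since $\sigma(\lambda)P\to P_0$ in $\Hilb^1(X)=X$ and $P_0\in V(\cI_0)$ (indeed $\cR_0$ is $(x,y,z)$-primary, so $P_0\in V(\cI_0)$). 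Applying $\psi$, which is a morphism, the limit of $\psi(\cC^*)$ is $\psi(\xi_0,P_0) = (\phi(\xi_0),\phi_{\xi_0}(P_0))$, so $\phi_{\xi(\lambda)}(\sigma(\lambda)P)\to\phi_{\xi_0}(P_0)$. It remains to show $\phi_{\xi_0}(P_0)=P_0$. Here I use that $\phi(\xi_0)\equiv\xi_0$ (Proposition~\ref{prop:6.1}), hence by Proposition~\ref{prop:3.1} the CM-parts of the ideals of $\phi(\xi_0)$ and $\xi_0$ coincide, and their punctual cycles coincide; since the ideal of $\xi_0$ is $(\cI')^*\cap\cR_0$ with $\cR_0$ supported only at $P_0$, the ideal of $\phi(\xi_0)$ has the same CM-part $(\cI')^*$ and punctual part supported at $P_0$ with the same length. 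As $\phi$ is normed, I would check $\phi$ fixes $(\cI')^*$ (its cycle $\langle(\cI')^*/\text{ideal}\rangle$ is concentrated at $P_0$), and conclude the unique isolated or embedded point $P_0$ is preserved, so $\phi_{\xi_0}(P_0)=P_0$.

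For part (b): write $D := \overline{\{\phi_{\xi(\lambda)}(\sigma(\lambda)P)\mid\lambda\in k^*\}}\subset X$, which is the $\P^3$-component $\kappa(\psi(\cC^*))$. I would compute its degree via intersection numbers. Since $[\psi(\cC^*)]=[\cC^*]$ in $A_1(\CC)$ (Proposition~\ref{prop:4.3}, $\Aut(\HH)$ acts trivially on $A_1(\CC)$), and $[\cC^*] = q_0[C^*_0]+q_1[C^*_1]+q_2[C^*_2]+q_3[L^*]$ by Theorem~\ref{thm:1.2}, applying $\kappa_*$ gives $\deg(\kappa|D)\cdot[D] = q_3\,[L]$ in $A_1(\P^3)=\Z[L]$. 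Now $\cC^* = \{(\xi(\lambda),\sigma(\lambda)P)\}^-$ is the incidence curve over the $\G_m$-orbit closure $\cC = \{\xi(\lambda)\}^-$, with $\kappa|\cC^*$ mapping onto the line $\overline{\{\sigma(\lambda)P\}}$ through $P_0$ and $P$; this line has degree $1$, so $q_3=1$ (more precisely, $\kappa_*[\cC^*]=[L]$), giving $\deg(\kappa|D)\cdot[D]=[L]$, so $\deg(\kappa|D)=1$ and $[D]=[L]$ — i.e. $D$ is a line. Finally $P_0\in D$ because by part (a) the limit point $\phi_{\xi_0}(P_0)=P_0$ lies on $D$ (being $\lim_{\lambda\to 0}$ of points of $D$, and $D$ closed). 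Hence $D$ is a line through $P_0$.

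\textbf{Main obstacle.} The delicate step is part (a)'s final claim $\phi_{\xi_0}(P_0)=P_0$: one must carefully unwind Proposition~\ref{prop:3.1} to see that $\phi(\xi_0)$ has its punctual part supported exactly at $P_0$ with the correct length, and that a normed automorphism cannot move this point — this requires knowing $\phi$ fixes the CM-ideal $(\cI')^*$, which one should deduce from normedness (e.g. via Lemma~\ref{lem:6.3}/Lemma~\ref{lem:6.4} applied to the CM-part, since $(\cI')^*$ is an ideal of the type handled there) rather than assuming it. A secondary technical point is the correct identification of $q_3$ in the class computation: one must verify $\kappa|\cC^*$ really has degree $1$ onto a line, which follows from the fact that the $\sigma(\lambda)$-orbit of $P$ traces out a line and the incidence curve maps isomorphically (or with degree $1$) onto it under $\kappa$.
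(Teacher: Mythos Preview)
Your overall architecture is right, and part (b) matches the paper's argument ($5^\circ$) essentially verbatim. The reduction of (a) to the claim $\phi_{\xi_0}(P_0)=P_0$ via continuity of $\psi$ is also exactly what the paper does in its $4^\circ$. The gap is in how you justify that claim.

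You argue: $\phi(\xi_0)\equiv\xi_0$ (Prop.~\ref{prop:6.1}), so by Prop.~\ref{prop:3.1} the CM-part and punctual cycle of $\phi(\xi_0)$ agree with those of $\xi_0$; hence ``the unique isolated or embedded point $P_0$ is preserved.'' But this tells you only about the \emph{ideal} $\cJ_0\leftrightarrow\phi(\xi_0)$---namely that $|V(\cJ_0)|=|V(\cI_0)|$ and its punctual locus sits at $P_0$. It says nothing directly about the \emph{map} $\phi_{\xi_0}:|C_{\xi_0}|\to|C_{\phi(\xi_0)}|$, which is an abstract scheme isomorphism with no a priori relation to the embedding in $\P^3$. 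Your implicit argument is that $P_0$ is scheme-theoretically distinguished in $C_{\xi_0}$ (as the unique embedded point, or as the apex where all components meet), so any isomorphism must fix it. This works if $\cR_0\neq\cO_X$ or if $V(\cI')$ has $\geq 2$ points. But if $\cR_0=\cO_X$ and $V(\cI')$ is a single point, then $C_{\xi_0}=V((\cI')^*)$ is a single non-reduced line with \emph{uniform} structure along it---e.g.\ $\cI'=(x,y^d)$ gives the ribbon $(x,y^d)$ along $V(x,y)$---and $P_0$ is not distinguished; an abstract isomorphism can move it. So the argument is incomplete, and you yourself flag this step as the main obstacle without closing it.

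The paper's route (its $1^\circ$--$3^\circ$) sidesteps this entirely. The device is: if a curve $\cB^*\subset\CC$ has no $L^*$-component, then neither does $\psi(\cB^*)$ (Prop.~\ref{prop:4.3}), so $\eta\mapsto\phi_\eta(P_0)$ is constant along $\cB$. First ($1^\circ$), for $\xi$ of the special form $\cJ^*\cap P_1\cap\cdots\cap P_s$, one connects $\xi$ by such a curve $\cB$ to generic $\eta$ where Lemma~\ref{lem:6.2} gives $\phi(\eta)=\eta$ \emph{exactly}; there $\phi_\eta$ permutes a configuration of $d$ distinct simple lines through $P_0$ plus isolated points, so it must fix the apex $P_0$, and constancy propagates $\phi_\xi(P_0)=P_0$. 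Then ($2^\circ$) a family of simple points degenerating to $P_0$ gives $\phi_{\xi_1}(P_0)=P_0$ for a specific $\xi_1\equiv\xi_0$. Finally ($3^\circ$) the $[C_0]$-curve from Lemma~\ref{lem:3.2} connecting $\xi_1$ to $\xi_0$ (again no $L^*$-component) transports this to $\phi_{\xi_0}(P_0)=P_0$. This never requires $P_0$ to be intrinsically distinguished in $C_{\xi_0}$.
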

\begin{proof}
  $1^\circ$ We modify the proof of Lemma~\ref{lem:6.3} and use the same
  notations. We first treat the case $\xi \leftrightarrow \cJ^* \cap P_1
  \cap \cdots \cap P_s$, where $\cJ \subset \cO_Y$ has the Hilbert function
  $\psi$.  Then $\cJ \in \bar U = H_\psi$ (closure in $H_\psi$). Now
  $H_\psi$ is a rational variety (see~\cite[proof of the theorem on page
  544]{G88}), hence there is a connected curve $A$, which connects the point
  in $H_\psi$, which corresponds to $\cJ$, to a point in $U$.  Hence $\cJ_b
  \leftrightarrow b \in U$ for all $b \in A - \Set{\text{finitely many
      points}}=:B$. Hence $F:= \bigcup \Set{ V(\cJ^*_b) | b \in B}^-$ is a
  surface in $X$.  Let be $P_i \in X - (C \cup F)$ $s$ distinct, simple
  points. Then $\cJ^*_b \cap P_1 \cap \cdots \cap P_s$ fulfills the
  assumptions of Lemma~\ref{lem:6.3}   for all $b\in B$. It follows that 
\[
    \cB:= \Set{\cJ^*_b \cap P_1 \cap \cdots \cap P_s | b \in B }^- \subset \HH
\]
is a connected curve, which contains $\xi$. If $b \in B$, then
$P_0=(0:0:0:1) \in V(\cJ^*_b \cap P_1 \cap \cdots \cap P_s)$, i.e.~$\cJ^*_b
\cap P_1 \cap \cdots \cap P_s \subset (x,y,z)$. As this is a closed
condition, $P_0 \in C_\eta \leftrightarrow \eta$ for all $\eta\in \cB$,
hence $\cB^*:=\Set{(\eta,P_0)| \eta \in \cB} \subset \CC$ is a closed
curve, such that $\cB^* \sim q_2 C^*_2 + q_1 C^*_1 + q_0C^*_0 + 0 \cdot
L^*$. It follows that $\psi(\cB^*)=\Set{(\phi(\eta),\phi_\eta(P_0))| \eta
  \in \cB} \subset \CC$ is a curve in $\CC$ without an $L^*$-component,
hence $\phi_\eta(P_0)$ is independent of $\eta\in \cB$. If $\eta
\leftrightarrow \cJ^*_b \cap P_1 \cap \cdots \cap P_s$ and $b\in B$, then
$\phi(\eta)=\eta$ by Lemma~\ref{lem:6.3}, hence $\psi$ induces an
automorphism of $|C_\eta|= (C_\eta)_\red$ as a set of closed points in $X$,
which is described by $p \mapsto \phi_\eta(p)$. If $b\in B$, then $|C_\eta|$
consists of $d$ distinct lines $\ell_1,\dots,\ell_d$, which all run through
$P_0$, and points $P_i \centernot\in\bigcup \ell_i$. It follows that
$\phi_\eta$ permutes the points and lines and hence $\phi_\eta(P_0)=P_0$. As
has been noted above, $\phi_\eta(P_0)$ is independent of $\eta \in \cB$,
hence $\phi_\eta(P_0)=P_0$ for all $\eta \in \cB$, hence
$\phi_\xi(P_0)=P_0$.

$2^\circ$ Let now be $\cI \leftrightarrow \xi \in U(t)$. Then
$\xi_0:=\lim_{\lambda \to 0}\sigma(\lambda)\xi \leftrightarrow \cJ \cap
\cR$, where $\cJ = (\cI')^*$ and $\cR$ is $P_0 = (x,y,z)$-primary with
$\length(\cJ/\cI)=:s$ (see Appendix~\ref{cha:G}). We take $s$ different
simple points $P_i\in X- V(\cJ)$ and put $P_i(\lambda) = P_0 + \lambda(P_i -
P_0)$. Then $\eta(\lambda) \leftrightarrow \cJ \cap P_1(\lambda)\cap\cdots
\cap P_s(\lambda) \in \HH(k)$ for almost all $\lambda \in k$. If $\cC =
\Set{\eta(\lambda)}^-$, then $\cC^* = \Set{(\eta(\lambda),P_0)}^- \subset
\CC$ is a curve without $L^*$-component, hence $\psi(\cC^*) =
\Set{(\phi(\eta), \phi_\eta(P_0)) | \eta \in \cC }$ also has no
$L^*$-component. This means that $\phi_\eta(P_0)$ is independent of $\eta
\in \cC$. But $\eta(\lambda)$ fulfills the assumption of Part $1^\circ$,
hence $\phi_\eta(P_0) = P_0$ for all $\eta \in \cC$. If $\xi_1 :=
\lim_{\lambda \to 0}\eta(\lambda)$, then $\xi_1 \in \cC$, hence
$\phi_{\xi_1}(P_0) = P_0$.

 $3^\circ$ Now by construction $\xi_1 \leftrightarrow \cJ \cap \cR_1$, where
 $\cR_1$ is $P_0$-primary and $\cJ/\cJ \cap \cR_1$ has length $s$, hence
 $\xi_0 \equiv \xi_1$ by Proposition~\ref{prop:3.1}. By Lemma~\ref{lem:3.2}
 the points $\xi_0$ and $\xi_1$ can be connected by a curve $D \sim \nu
 \cdot C_0$. It follows that $f(\eta) = f(\xi_0)$  for all $\eta \in D$,
 hence the ideal $\cI_\eta \leftrightarrow \eta$ has the same $\CM$-part
 $\cJ$ for all $\eta \in D$, hence $D^*:=\Set{ (\eta, P_0) | \eta \in D}
 \subset \CC$ is a curve without $L^*$-component. It follows that $\psi(D^*)
 = \Set{ (\phi(\eta), \phi_\eta(P_0)) | \eta \in D}$ has no $L^*$-component,
 too. It follows that $\phi_\eta(P_0)$ is independent of $\eta \in D$. Now
 $\phi_{\xi_1}(P_0)=P_0$ by Part  $2^\circ$, hence $\phi_\eta(P_0)=P_0$ for
 all $\eta \in D$, thus $\phi_{\xi_0}(P_0)=P_0$.

 $4^\circ$ If $P \in C - V(t)$ then 
\begin{align*}
 \psi(\sigma(\lambda)\xi,\sigma(\lambda)P) 
= {} & (\phi\sigma(\lambda)\xi, \phi_{\sigma(\lambda)\xi}(P)) \xrightarrow[\lambda\to 0]{}
  \psi\bigl(\lim_{\lambda\to 0}(\sigma(\lambda)\xi, \sigma(\lambda)P)\bigr) \\
 = {} & \psi(\xi_0, P_0) = (\phi(\xi_0),\phi_{\xi_0}(P_0)) = (\phi(\xi_0),P_0)
\end{align*}
 by Part $3^\circ$. Hence one gets (a).

 $5^\circ$ If $\cC :=\Set{\sigma(\lambda)\xi}^-$, $P \in C - V(t)$ and
 $P\neq P_0$, then $\cC ^*:=\Set{(\sigma(\lambda)\xi, \sigma(\lambda)P)}^-$
 has the $L^*$-component $1 \cdot L^*$, hence $\psi(\cC^*) \sim \cC^*$ has
 the same $L^*$-component. Applying $\kappa_*$ shows that
 $\Set{\phi_{\sigma(\lambda)\xi} (\sigma(\lambda)P)}^-\subset X$ is a line,
 which runs through $P_0$, because of Part~$(a)$.
\end{proof}  


\chapter{Automorphisms of $\HH$ and the Hilbert--Chow morphism}
\label{cha:7}

The aim of this chapter is to show:

\begin{theorem}
  \label{thm:7.1}
  If $h: \HH \to \P$ is the Hilbert--Chow morphism, then for all $\xi \in
  \HH(k)$ and all normed morphisms $\phi$ of $\HH$ one has $h(\phi(\xi))=
  h(\xi)$.
\end{theorem}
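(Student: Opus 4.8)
The plan is to reduce the statement about the Hilbert--Chow morphism $h$ to the results of Chapter~\ref{cha:6} on the behaviour of a normed automorphism $\phi$ on the universal curve. Recall that $h(\xi) = \langle C_\xi\rangle$ is the cycle of the curve $C_\xi$ associated to $\xi$, and that $\Phi$ (hence $h$) is $\PGL(3;k)$-equivariant, so it suffices to prove the statement after replacing $\xi$ by $g(\xi)$ for a generic $g\in\GL(4;k)$; in particular we may assume $\xi\in U(t)$. The key point is that $\langle C_\xi\rangle$ is determined by the underlying reduced $1$-dimensional components $(Z_i)_\red$ together with their multiplicities $\nu_i$, and from Chapter~\ref{cha:6} we have rather precise control of how $\psi$ (the automorphism of $\CC$ induced by $\phi$) acts on $|C_\xi|$.

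First I would treat the case where $\xi=\xi_0$ is a limit point of the form $\xi_0=\lim_{\lambda\to0}\sigma(\lambda)\xi'$ with $\xi'\in U(t)$, i.e.\ $\cI_0 = (\cI')^* \cap \cR_0$ with $\cR_0$ being $(x,y,z)$-primary (Appendix~\ref{cha:G}). For such a point the $1$-dimensional part of $C_{\xi_0}$ is a union of lines through $P_0=(0:0:0:1)$, each perpendicular to $V(t)$, together with their multiplicities, which are the values of the Hilbert function $\psi$ of $\cI'$. By Lemma~\ref{lem:6.3} (and Lemma~\ref{lem:6.2}), for a configuration of $d$ distinct lines through a point plus simple points one has $\phi(\xi)=\xi$; I would use this, together with the continuity of $\phi$ and $h$ and a degeneration argument moving a general ideal $\cJ\subset\cO_Y$ with Hilbert function $\psi$ to one lying in the ``good'' locus $H^{(d)}\cap H_\psi$ (which is dense in $H_\psi$ by Davis' theorem and the irreducibility of $H_\psi$), to conclude that $\psi$ fixes each irreducible component of $|C_{\xi_0}|$ and preserves multiplicities. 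More precisely, from Lemma~\ref{lem:6.5} the point $P_0$ is fixed by $\phi_{\xi_0}$, and for a general point $P$ of a component $\ell_i$ the curve $\{\phi_{\sigma(\lambda)\xi'}(\sigma(\lambda)P)\}^-$ is a line through $P_0$; combining with the fact that $\psi$ restricts to an isomorphism $|C_{\xi_0}|\simeq|C_{\phi(\xi_0)}|$ forces $\phi_{\xi_0}(\ell_i)=\ell_i$ for each $i$. Since $h(\phi(\xi_0))=\langle C_{\phi(\xi_0)}\rangle$ and the reduced components and multiplicities agree, we get $h(\phi(\xi_0))=h(\xi_0)$.

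Next I would pass from the special limit points to an arbitrary $\xi\in U(t)$ by connecting $\xi$ to $\xi_0=\lim_{\lambda\to0}\sigma(\lambda)\xi$ via the curve $\cC=\{\sigma(\lambda)\xi\}^-$. One has $h(\sigma(\lambda)\xi)=\sigma(\lambda)h(\xi)$ by equivariance, and as $\lambda$ varies the $1$-cycle $\langle C_{\sigma(\lambda)\xi}\rangle$ is the $\sigma(\lambda)$-translate of a \emph{fixed} configuration of lines and points, so knowing $h(\phi(\xi_0))=h(\xi_0)$ together with Conclusion~\ref{conc:2.1n} (incidence of curve and point is preserved under the $\G_m$-limit) and the analysis of the $L^*$-component of curves $\{(\sigma(\lambda)\xi,\sigma(\lambda)P)\}^-$ in Chapter~\ref{cha:6}, Section~\ref{sec:6.4}, pins down the images $\phi_\xi(\ell_i)$ of the $1$-dimensional components of $C_\xi$: each line-component of $|C_\xi|$ must be sent by $\psi$ to the corresponding line, with the same multiplicity. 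This yields $h(\phi(\xi))=h(\xi)$ for $\xi\in U(t)$, and then a generic linear change of coordinates plus $\PGL(3;k)$-equivariance of $h$ and the fact that $g^{-1}Ng=N$ for $g\in\GL(4;k)$ extend the equality to all $\xi\in\HH(k)$.

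The main obstacle I expect is the passage from a single (perpendicular) line-configuration to an \emph{arbitrary} curve $C_\xi$, i.e.\ controlling what $\psi$ does on the $1$-dimensional components that are \emph{not} perpendicular to $V(t)$ and on components occurring with high multiplicity; the reason this is delicate is that $\phi$ need \emph{not} fix $\xi$ itself (only $\phi(\xi)\equiv\xi$ under the tautological morphism $f$, by Proposition~\ref{prop:6.1}), so one is forced to extract geometric information about the reduced curve and its multiplicities purely from $f$-equivalence and the no-$L^*$-component arguments of Chapter~\ref{cha:6}. Here the key technical input is Lemma~\ref{lem:6.5}(b): the locus swept out by $\phi_{\sigma(\lambda)\xi}(\sigma(\lambda)P)$ as $\lambda\to0$ is a genuine line through $P_0$, which rigidifies the situation enough to identify $\phi_\xi$ on each component. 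Once that identification is in place the conclusion $h(\phi(\xi))=h(\xi)$ is immediate from the definition of $h$ as the cycle map.
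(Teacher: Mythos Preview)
Your overall architecture—reduce to $\xi\in U(t)$, use Proposition~\ref{prop:6.1} at the limit $\xi_0$, then invoke Lemma~\ref{lem:6.5} to control what happens at $\xi$—is the same as the paper's. But the second step, passing from $h(\phi(\xi_0))=h(\xi_0)$ back to $h(\phi(\xi))=h(\xi)$, has a real gap. You write that the knowledge at $\xi_0$ together with Lemma~\ref{lem:6.5}(b) ``pins down the images $\phi_\xi(\ell_i)$ of the $1$-dimensional components of $C_\xi$: each line-component of $|C_\xi|$ must be sent by $\psi$ to the corresponding line'', but the components of $|C_\xi|$ are \emph{not} lines in general, and since $\phi$ does not commute with $\sigma(\lambda)$ there is no direct relation between $C_{\phi(\xi)}$ and $C_{\phi(\xi_0)}$ to exploit. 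What is missing is the concrete geometric mechanism the paper uses in Section~\ref{sec:7.2}: argue by contradiction, assuming some component $C_1$ of $|C_\xi|$ is not contained in $|D|=|C_{\phi(\xi)}|$. One may then choose the projection center $P$ so that the cylinder $Z(P,D)$ misses the points $P_{1j}\in C_1\cap V(\ell)$. The point of Lemma~\ref{lem:6.5}(b) is that for every $p\in C$ the line $\{\phi_{\sigma(\lambda)\xi}(\sigma(\lambda)p)\}^-$ passes through both $P_0$ and $\phi_\xi(p)\in D$, hence is a generator of $Z(P_0,D)$; consequently \emph{every} $\cD_\lambda$ lies on this cylinder. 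But $|\cD_0|=|\cC_0|$ by Proposition~\ref{prop:6.1}, and $|\cC_0|$ contains the lines $\ell_{1j}$ through $P_0$ and $P_{1j}$, which by construction are not on the cylinder—contradiction. Without this cylinder argument your lifting step is only a hope, not a proof.

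There is also a second gap: you do not give an argument for the multiplicities. The paper's Section~\ref{sec:7.3} handles this separately, and it is not automatic. Once the reduced components agree, choose $\ell=t$ very general for $C$ and $D$ and consider the morphism $T\to\Hilb^d(\P^2)$, $\lambda\mapsto r(\phi\sigma(\lambda)\xi)$, on an open $T\ni 0$. Since the underlying support $\{P_{ij}\}$ of each fibre is constant, the family splits as $\coprod Y_{ij}$ with $Y_{ij}$ flat over $T$ of constant length $c_{ij}$. At $\lambda=0$ one has $r(\phi(\xi_0))=r(\xi_0)=r(\xi)$ by Proposition~\ref{prop:6.1} and Lemma~\ref{lem:3.1}, so the $c_{ij}$ coincide with the multiplicities $\mu_i$ of $C_i$; flatness then forces the multiplicity of $P_{ij}$ in $\cD_\lambda\cap V(t)$ to equal $\mu_i$ for all $\lambda$, giving $h(\phi(\xi))=h(\xi)$. (Incidentally, your first step is simpler than you make it: $\phi(\xi_0)\equiv\xi_0$ already implies equality of $\CM$-parts by Proposition~\ref{prop:3.1}, hence $h(\phi(\xi_0))=h(\xi_0)$ immediately.)
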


\section{Notations}
\label{sec:7.1}

$N$ denotes the group of all normed automorphisms of $\HH$; $N$ is
normalized by any $g\in G:=\GL(4,k)$; $|-|$ denotes the set of points, where
``point'' means ``closed point''; $S = k[x,y,z,t]$, $X =\Proj S$; and $\G_m$
acts by $\sigma(\lambda): x\mapsto x,\; y\mapsto y, \; z\mapsto z, \;
t\mapsto \lambda t$.

If $C\subset X$ is a curve and $P \in X - C$, then the cylinder $Z(P,C)$ is
defined to be the union of all lines in $X$, which join a point in $C$ with $P$.
Each $\xi\in \HH(k)$ corresponds to a curve $C\subset X$, and we write $\xi
\leftrightarrow C_1 \cup\dots\cup C_r \cup \{ \text{points} \}$, where $C_i$
are the irreducible components of dimension $1$, $(C_i)_\red = V(\fp_i)$,
$\fp_i \subset S$ graded prime ideal, $C_i$ has degree $d_i$ and
multiplicity $\mu_i$, $\{ \text{points} \}$ denotes the $0$-dimensional
components, embedded or not. A linear form $\ell \in S_1$ is \emph{very
  general} for $C$, if $V(\ell) \cap V(\fp_i)$ consists of $d_i$ simple
points $P_{ij}$ of multiplicity $\mu_i$, $P_{ij} \centernot\in C - C_i$, and
$P_{ij} \centernot\in \{ \text{points} \}$ for all $i$ and $j$. 

In the same way we write $\phi(\xi) \leftrightarrow D = D_1 \cup\dots\cup
D_s \cup \{ \text{points} \}$, $D_j$ the $1$-dimensional components of
multiplicity $\nu_j$.

\section{The irreducible components}
\label{sec:7.2}

\subsection{}
\label{sec:7.2.1}

We want to show that $\{| C_i|\} = \{| D_j|\}$ and assume that there is
an index $i$ such that $C_i \centernot\subset D$. (For simplicity we write
$C$, $D$, $C_i$, $D_j$ instead of $|C|$, $|D|$, $|C_i|$, $|D_j|$ etc.)
Without restriction we assume $C_1 \centernot\subset D$, hence $C'_1:=C_1 -
D$ is open in $C_1$. \\
Then there is a point $P \in X - (C \cup D)$ such that $Z(P,D)\cap C'_1 =
\emptyset$. Then one can find $\ell \in S_1$ such that $\ell$ is very
general for $C$ and $P_{1j}\in C'_1 \cap V(\ell)$ for $1\leq j \leq
d_1$. (The set of such $\ell \in S_1$ forms a Zariski-dense subset of $S_1$.)

Let $\pi$ be the projection from $P$ onto $V(\ell)$. According to
Appendix~\ref{cha:A}, $\pi$ is defined by a $\G_m$-operation $\tau(\lambda)$
and one can find a $g \in G$, such that $g(\ell)=t$, $g(P)= P_0 = (0:0:0:1)$
and $\tau(\lambda)= g^{-1}\sigma(\lambda)g$. From the assumptions follows
that $\xi$ is not invariant under the $\G_m$-operation $\tau(\lambda)$ and
one obtains the following curves in $\HH$: $\cC = \Set{\tau(\lambda)\xi}^-$
and $\cD = \Set{\phi\tau(\lambda)\xi}^-$. If one applies $g$, one obtains
the curves
\[
g(\cC) = \Set{\sigma(\lambda)g\xi}^- \quad \text{and} \quad g(\cD) = 
\Set{g \phi g^{-1}g\tau(\lambda)\xi}^- =
\Set{\tilde{\phi}\sigma(\lambda)g\xi}^-,
\]
where $\tilde{\phi}:= g \phi g^{-1} \in N$. Put $\tilde{\xi}:=
g(\xi)\leftrightarrow g(C)=:\tilde{C}$. \\
Then
\[
  g(D) = g \phi(C) = g \phi g^{-1}g(C) = \tilde{\phi}(\tilde{C})
  \leftrightarrow g \phi(\xi) = \tilde{\phi}(\tilde{\xi})\;.
\]
Now
\begin{align*}
  h(\tilde{\phi}(\tilde{\xi})) = h(\tilde{\xi}) & \Leftrightarrow h(g \phi
  g^{-1}g\xi) = h(g(\xi)) 
   \Leftrightarrow h(g\phi(\xi)) = h(g\xi) \\
  & \Leftrightarrow gh(\phi(\xi)) = gh(\xi)  \Leftrightarrow h(\phi(\xi)) =
  h(\xi) \qquad  \text{ for all } \phi \in N\,.
\end{align*}
Hence it suffices to show the assertion for
$\tilde{\xi}$ and all $\phi \in N$. Now clearly $t$ is very general for
$\tilde{C}$, and as $g(Z(P,D)) = Z(g(P),g(D))$, $g(C)$, $g(D)$, $t$ fulfill
all assumptions as before. Hence we can assume $P = P_0$, $\ell =t$ and the
projection is defined by the $\G_m$-operation $\sigma(\lambda)$. By
construction $P_{1j}\centernot\in Z:=Z(P_0,D)$, $1\leq j \leq d_1$.
\subsection{}
\label{sec:7.2.2}
 Let be $\xi \leftrightarrow \cI$. Then $\xi_0= \lim_{\lambda\to
  0}\sigma(\lambda)\xi \leftrightarrow \cI_0 = (\cI')^* \cap \cR_0$, where
$(\cI')^*$ is the $\CM$-part of $\cI_0$ and $\cR_0$ is $(x,y,z)$-primary
(see Appendix~\ref{cha:G}). Hence the curve $\cC_0 \leftrightarrow \xi_0$
contains the line $\ell_{1j}$, $1 \leq j \leq d_1$, which connects $P_{1j}$
and $P_0$ (at the moment the multiplicities are irrelevant). Let be
$\cD_\lambda \leftrightarrow \phi\sigma(\lambda)\xi$. Then $D = \cD_1$ and
$C = \cC_1$. Now $\phi$ defines an automorphism $\psi$ of $\CC$, which
induces an isomorphism $|C| \simeq |D|$ denoted by $p\mapsto
\phi_\xi(p)$. If $p$ runs through the points of $C$, then $\phi_\xi(p)$
runs through the points of $D$, and the same holds true for $\cC_\lambda$
and $\cD_\lambda$ for all $\lambda \in \P^1$. From Lemma~\ref{lem:6.5} it
follows that all curves $\cD_\lambda$ lie on the cylinder $Z = Z(P_0,D)$. By
Proposition~\ref{prop:6.1} one has $\phi(\xi_0)\equiv \xi_0$, hence $|\cC_0|
= |\cD_0|$. But by construction, the lines $\ell_{1j} \subset |\cC_0|$ do
not lie on $Z$, contradiction.

It follows that each $C_i(k)$ is equal to a $D_j(k)$. As $C(k)\simeq D(k)$,
one has: 

\begin{conclusion}
    \label{concl:7.1}
  $\Set{C_i(k)} = \Set{D_i(k)}$. \hfill $\qed$
\end{conclusion}

As the triple $(\sigma(\lambda)C,P_0,t)$ fulfills the same assumptions as
$(C,P_0,t)$ for all $\lambda \in k^*$, $\cC_\lambda(k) = \cD_\lambda(k)$
except isolated points. Hence the same is true for $\lambda = \infty$, and
because of $\phi(\xi_0) \equiv \xi_0$ one gets:
\begin{conclusion}
  \label{concl:7.2}
  With the exception of isolated points $\cC_\lambda(k) = \cD_\lambda(k)$
  for all $\lambda \in \P^1$, and all the curves lie on the cylinder
  $Z(P_0,C)=Z(P_0,D)$.  \hfill $\qed$
\end{conclusion}

\section{The multiplicities}
\label{sec:7.3}

Choose $\ell \in S_1$ very general for $C$ and $D$. Then again $\ell = t$
without restriction. As $\phi(\xi_0) \equiv \xi_0$
(Proposition~\ref{prop:6.1}) and $\xi_0 \in U(t)$, there is an open set $T
\subset \P^1$ such that $0 \in T$ and $\phi\sigma(\lambda)\xi \in U(t)$ for
all $\lambda \in T$. Let $r$ be the restriction morphism defined by $t$
(cf.~Appendix~\ref{cha:G}). Then $\lambda \mapsto r(\phi \sigma
(\lambda)\xi)$ defines a morphism $T \to \Hilb^d(\P^2)$, i.e.~ a closed
subscheme $Y \subset \P^2 \times_k T$ over $T$, such that for all $\lambda
\in T$
\[
| Y \otimes k(\lambda) | = | \cD_\lambda | \cap V(t) = | \cC_\lambda | \cap
V(t) = \Set{ P_{ij} | 1\leq i \leq r,\, 1\leq j \leq d_i}\,.
\]
It follows that $Y = \coprod Y_{ij}$, $Y_{ij}$ flat over $T$. 

Now the multiplicity of $P_{ij}$ in $\cD_\lambda\cap V(t)$ is equal to the Hilbert polynomial of $Y_{ij}$, and this is equal to a constant $c_{ij}$, independent of $\lambda$. 

From $\phi\sigma(\lambda)\xi \to \phi(\xi_0)$, it follows that
$r(\phi\sigma(\lambda)\xi) \to r(\phi(\xi_0))$. Now by
Lemma~\ref{lem:3.1}, from $\phi(\xi_0)\equiv \xi_0$ it follows that
$r(\phi(\xi_0)) = r(\xi_0)$.  But as $r(\xi_0)= \lim_{\lambda\to
  0}r(\sigma(\lambda)\xi)=r(\xi)$, it follows that $Y \otimes k(0)
\leftrightarrow r(\xi)$. By construction $r(\xi) \leftrightarrow \bigcap
Q_{ij}$, $Q_{ij}$ is $P_{ij}$-primary with multiplicity $\mu_i$. The points
$P_{ij}$, $1 \leq j \leq d_i$ lie on $|D_i| = | C_i|$, hence the
multiplicity of $D_i$ is equal to the multiplicity of $C_i$. It follows that
$h(\phi\sigma(\lambda)\xi)=h(\xi)$ for all $\lambda \in T$, hence for all
$\lambda \in \P^1$ and $h(\phi(\xi))=h(\xi)$ follows.

\chapter{Automorphisms of $\HH$ and the tautological morphism}
\label{cha:8}

\section{Preliminaries}
\label{sec:8.1}

The so called tautological morphism $f_n : \HH \to \P$ is defined by the
globally generated line bundle $\cM^{-1}_{n-1} \otimes \cM_n$, if $n\geq
d$.  If $n=d$, we wrote $f$ instead of $f_n$, but for simplification we now
write $f$ instead of $f_n$, if $n\geq d$ is any number.
We denote $h$ the Hilbert-Chow morphism $\HH \to \P$. The normed
automorphism $\phi$ of $\HH$ induces an automorphism $\psi$ of the universal
curve $\CC$.  We again suppose $d\geq 6$.

We write $S = k[x,y,z,t]$ and $X = \Proj S$. In order to avoid formulas of
too awkward size, 
we often write $S = A[x,y,z,t]$, $X=\Proj(S\otimes A)$ etc., if $A$ is a
$k$-algebra. The letter $T$ stands for $\P^1_k$ or an open subset of
$\P^1_k$. $\G_m$ operates by $\sigma(\lambda): x\mapsto x,\; y\mapsto y, \;
z\mapsto z, \; t\mapsto \lambda t$.

Let $N$ be the subgroup of all normed automorphisms of $\HH$.  An essential
property of $N$ is $g^{-1}Ng= N$ for all $g\in G:=\Aut_k(S_1)$. 

As usual the idea is to produce, by means of a suitable projection, for a
point $\xi \in \HH(k)$ a curve $\cC \subset \HH$ (respectively curves $\cC^*
\subset \CC$). If $\phi \in N$, then $[\cC] = [\phi(\cC)]$ (respectively
$[\cC^*] = [\psi(\cC^*)])$ by Proposition~\ref{prop:4.1} and
Proposition~\ref{prop:4.2}. (This is the reason why one has to require
$d\geq 6$.) More concretely, the procedure goes as follows: Let be $\xi \in
\HH(k)$ and $C \subset X$ the corresponding curve.  Choose a point $P
\centernot\in C$ and $\ell \in S_1$ such that $P \centernot\in V(\ell)$ and
$\ell$ is general for $C$. Take $g \in G$ such that $g(\ell) = t$ and $g(P)=
P_0 = (0:0:0:1)$ and put $g(\xi)=\tilde{\xi}$. By Lemma~\ref{lem:6.1} one
has $\phi(\tilde{\xi}) \equiv \tilde{\xi}$ for all $\phi \in N
\Leftrightarrow g^{-1}\phi g\xi\equiv \xi$ for all $\phi \in N
\Leftrightarrow \phi(\xi)\equiv \xi$ for all $\phi\in N$. Hence we can
assume that $\xi\in U(t)$ and $P_0 \centernot\in C \leftrightarrow \xi$.

Let be $\xi \leftrightarrow \cI \subset \cO_X$ and take any $\ell \in
k[x,y,z]_1$ such that $\ell$ is not a zero-divisor of $\cO_X/\cI$ (the set
of such linear forms is dense in $k[x,y,z]_1$). Then $t + \alpha \ell$ is
not a zero-divisor of $\cO_X/\cI$ for almost all $\alpha \in k$.  Define
$u_\alpha \in G$ by $x\mapsto x,\; y\mapsto y, \; z\mapsto z, \;
t\mapsto t+\alpha \ell$. Then one still has $P_0 \centernot\in u_\alpha(C)$
and $u_\alpha(\xi) \in U(t)$ for almost all $\alpha \in k$. By
Corollary~\ref{cor:A.2} of Appendix~\ref{cha:A} the $\G_m$-isotropy of
$h(u_\alpha(\xi))$ is trivial for almost all $\alpha \in k$, i.e.\
$\sigma(\lambda)h(u_\alpha(\xi)) = h(u_\alpha(\xi)) \Rightarrow \lambda =1$.

Take any such $\alpha$ and put $\tilde{\xi} = u_\alpha(\xi)$. If we can
prove $\phi(\tilde{\xi}) \equiv \tilde{\xi}$ for all $\phi \in N$, the same
argumentation as before shows $\phi(\xi)\equiv \xi$ for all $\phi\in N$.
Hence we can assume that $\xi \in \HH(k)$ is ``adapted'' in the following
sense:
\begin{definition}
  \label{def:8.1}
\index{adpated point}
 A point $\xi \in \HH(k)$ is adapted, if $\xi \in U(t)$, $P_0 = (0:0:0:1)
 \centernot\in C \leftrightarrow \xi$ and $h(\xi)$ has trivial $\G_m$-isotropy.
\end{definition}

So we assume from Section~\ref{sec:8.2} to Section~\ref{sec:8.6.2} that
$\xi$ is adapted, but from Section~\ref{sec:8.6.3} we do not need this
assumption. 

If $\xi \in \HH(k)$, then $\xi_0 :=\lim_{\lambda \to 0} \sigma(\lambda)\xi \in
\HH(k)$ and $\xi \in U(t) \Leftrightarrow \xi_0 \in U(t)$ (see
Appendix~\ref{cha:G}). If $M \subset X$ is any set, $| M |$ denotes the set
of its closed points. For example, if $C \subset X$ is a curve, in order to
simplify the notation, we write $|C|$ instead of $C(k)$, etc. If $\cC \subset
\HH$ is a curve, one can write $[\cC] = q_2(\cC) \cdot [C_2] + q_1(\cC)
\cdot [C_1] +q_0(\cC) \cdot [C_0]$ where $q_i(\cC) \in \N$
(cf.~Theorem~\ref{thm:1.2}).  If $\xi \in \HH(k)$ and $\cC:=\Set{
  \sigma(\lambda)\xi}^-$, then $q_i(\xi):=q_i(\cC)$ is called the
\emph{complexity} of $\xi$ with regard to $C_i$ (see Appendix~\ref{cha:F}).

\section{Composition series of ideal sheaves}
\label{sec:8.2}
\subsection{Preliminaries}
\label{sec:8.2.1}

Let be $T = \P^1_k$, $\xi \in \HH(k)$, $\alpha: T \to \HH$ the uniquely
determined extension of the morphism $\lambda \mapsto \sigma(\lambda)\xi$,
$\lambda \in k^*$. \\
 We put $\xi(\lambda) = \sigma(\lambda)\xi$,\; $\xi(0)=
\lim_{\lambda \to 0}\sigma(\lambda)\xi$,\; $\xi(\infty) =\lim_{\lambda \to
  \infty}\sigma(\lambda)\xi$. The image of $\alpha$ is the curve $\cC =\Set{
  \xi(\lambda) | \lambda \in T} = \Set{ \sigma(\lambda)\xi | \lambda \in
  k^*}^- \subset \HH$, which, at the same time, is a curve in $X \times T$,
flat over $T$. $\cC$ is defined by an ideal $\cI \subset \cO_{X \times T}$
and $\cI(\lambda):= \cI \otimes_T k(\lambda) \leftrightarrow \xi(\lambda)$.

There is a filtration
\begin{equation}
  \label{eq:8.1}
   0 = \cM^0 \subset \cdots \subset  \cM^\ell = \cO_X/\cI(1)
\end{equation}
such that, possibly after renumbering,
\begin{equation}
  \label{eq:8.2}
  \cM^i / \cM^{i-1} \simeq f_i(S/p_i)(-d_i), \quad 1 \leq i \leq  r,
\end{equation}
and for the remaining indices
\begin{equation}
  \label{eq:8.3}
  \cM^i / \cM^{i-1} \simeq g_i(S/P_i)(-e_i)\,.
\end{equation}
Here $f_i$ and $g_i$ are forms in $S$ of degree $d_i$ respectively $e_i$;
$p_i \subset S$ is a graded prime ideal, which defines a curve in $X$, and
$P_i \subset S$ is a prime ideal, which is generated by a $3$-dimensional
linear subspace of $S_1$, i.e.~$P_i$ is a point in $X(k)$. And in order 
to simplify the notation, we delete $\sim $ (sheafification).

If $\P^1_k = \Proj k[x,y]$, then we write $k[\lambda] = k[x/y]$,
i.e.~$k(\lambda) = k[x/y]/(x/y -\lambda)$, if we take $\lambda$ as a parameter
in $k$.

If we apply $\sigma(\lambda)$ to eq.~\eqref{eq:8.1}, then we get a
filtration of $\cO_{X \times T}/ \cI (\lambda)$ over $T = \Spec k[\lambda]$
with quotients
  \begin{align}
    \label{eq:8.4}
 & f_i(\lambda)(S \otimes T/\sigma(\lambda) p_i)(-d_i), \quad 1 \leq i \leq r,  \\
& g_i(\lambda)(S \otimes T/\sigma(\lambda) P_i)(-e_i)\,,
    \label{eq:8.5}
  \end{align}
where $f_i(\lambda) = \sigma(\lambda)f_i$,\; $g_i(\lambda) = \sigma(\lambda)g_i$
are forms in $S \otimes k[\lambda]$ of degree $d_i$ respectively $e_i$. 

\subsection{Applying an automorphism}
\label{sec:8.2.2}

In the following considerations $\phi$ is any normed automorphism of
$\HH$. $\beta: T \to \HH$ is defined by $\lambda \mapsto \phi
\xi(\lambda)$.  The image of $\beta$ is the curve $\cD := \phi(\cC) = \Set{
  \phi \sigma(\lambda)\xi | \lambda \in k^* }^-$. One can conceive $\cD$ as
a curve in $X \times T$, flat over $T$, which is defined by an ideal $\cJ
\subset \cO_{X \times T}$.

Now we replace $\P^1_k$ by a suitable small open affine subset $T = \Spec A
\subset \P^1_k - \Set{ 0, \infty }$, $A= k[\lambda]_f$, $f \in
k[\lambda]-(0)$ and for simplicity write $X$, $\cJ$, $S$, etc.~instead of
$X\times T$, $\cJ \otimes \cO_T$, $S \otimes A$, etc. Then by the lemma in
Appendix~\ref{cha:E} we get a filtration
\begin{equation}
  \label{eq:8.5second}
   0 = \cM^0 \subset \cdots \subset  \cM^\ell = \cO_X/\cJ
\end{equation}
such that the quotients have the form:
  \begin{equation}
    \label{eq:8.6}
  f_i(S/\fp_i)(-\ell_i), \quad 1 \leq i \leq s,   \\
 \end{equation}
and for the remaining indices
\begin{equation}
   \label{eq:8.7}
 g_i(S/\cP_i)(-m_i)\,.
  \end{equation}
Here all quotients are flat over $T= \Spec A$; \; $\fp_i \subset S$ is a
graded prime ideal, which defines a curve in $X$,\;  $\cP_i \subset S$ is 
a graded prime ideal, generated by a subbundle $\cL_i \subset S_1$ ($= S_1
\otimes A$\,!) of rank $3$; and $f_i$ and $g_i$ are forms in $S$ ($= S
\otimes A$\,!) of degree $\ell_i$ respectively $m_i$.

Let $\fq_1 \cap\dots\cap \fq_n \cap \cR$ be a reduced primary decomposition
of $\cJ$ ($= \cJ \otimes A$\,!), where $\fq_i$ is primary to $\fp_i$ with
multiplicity $\nu_i$, and $\cR$ is the punctual part, i.e.~$\cR$ is the
intersection of ideals, which are primary to associated primes of $\cO_X /
\cJ$ and occur among the $\cP_i$ in eq.~\eqref{eq:8.7}. If $\cP_i$ occurs in
eq.~\eqref{eq:8.7}, then $\cJ \subset \cP_i$ (cf.~\cite[Prop. 7.4,
p. 50]{H}).

As one can choose $T$ sufficiently small, all quotients are flat over $T$
and $f_i$ and $g_i$ generate subbundles of $S_n \otimes A$, and $\cO_X / \cR
$ is flat over $T$ with constant Hilbert polynomial. Hence $\dim_k | V(\cR)|
\leq 1$.
 
Put $D:=\cD \otimes \cO_T$. Then
\begin{equation}
  \label{eq:8.8}
  |D| = \bigcup | V(\fp_i)| \cup | V(\cR)| \cup M\,,
\end{equation}
where $M \subset | X \times T|$ is a finite set. Now $\phi\xi(\lambda)
\leftrightarrow \cD_\lambda = \cD \otimes k(\lambda)$ by definition and
$\cD_\lambda \subset X \otimes k(\lambda)= \P^3$ is defined by
$\cJ(\lambda):=\cJ \otimes k(\lambda)$.

Let be $\xi= \sigma(1)\xi \leftrightarrow \cI (1) = \bigcap q_i \cap R$ a
reduced primary decomposition, $q_i$ primary to a $p_i$ as in
eq.~\eqref{eq:8.2} with multiplicity $\mu_i$, and $R$ the punctual part. It
follows
\[
  | \cC_1 | = \bigcup | V(p_i)| \dcup \Set{P_i}\,,
\]
where $\Set{P_i}$ is a finite set of isolated points in $|V(R)|$, which
therefore are among the $P_i$ of~\eqref{eq:8.3}. It follows that
\[
| \cC_\lambda | = \bigcup | V(\sigma(\lambda)p_i)| \dcup
\Set{\sigma(\lambda)P_i}\,.
\]
Because of $| D_\lambda | \simeq | \cC_\lambda |$ and $h(\xi(\lambda)) =
h(\phi\xi(\lambda))$ (cf.~Thm.~\ref{thm:7.1}) it follows that
\[
| \cD_\lambda | = \bigcup | V(\sigma(\lambda)p_i)| \dcup
\Set{\phi_{\xi(\lambda)}\sigma(\lambda)P_i}\,,
\]
where the $\phi_{\xi(\lambda)}(\sigma(\lambda)P_i)$ again are different
isolated points (as $| \cD_\lambda | \simeq | \cC_\lambda |$).

Suppose there is a $P_i$, which really occurs, e.g. $P_1$. Then
$\cC^*_1:=\Set{(\xi(\lambda), \sigma(\lambda)P_1}^- \subset \CC$ is a curve
with $L^*$-component $1\cdot L^*$. Then $\cD^*_1 = \psi(\cC^*_1)
=\Set{(\phi\xi(\lambda), \phi_{\xi(\lambda)}(\sigma(\lambda)P_1))}^- \subset
\CC$ has the $L^*$-component $1\cdot L^*$, too. Hence
$\kappa(\cD^*_1)=\Set{\phi_{\xi(\lambda)}(\sigma(\lambda)P_1)}^-$ is a line
$L_1\subset X$.  In any case one has
\[
  |\cD \otimes \cO_T | = \bigcup_\lambda | \cD_\lambda| = \bigcup_i
  \bigcup_\lambda  | V(\sigma(\lambda)p_i)| \cup \ell_1 \cup\dots\cup \ell_m \cup E\,, 
\]
where $\lambda$ runs through $T$, $\ell_i \subset X$ are lines minus
finitely many points, $E$ finite set of points (possibly there are no such
lines and $E = \emptyset$). 

If one takes $\lambda$ as a variable, then $\sigma(\lambda)p_i=: \mathbf{p}_i$
is a graded prime ideal in $S \otimes A$ and the set of its closed points
$|V(\mathbf{p}_i)| \subset | X \times T|$ is equal to $\bigcup_\lambda  |
V(\sigma(\lambda)p_i)|$. It follows that 
\begin{equation}
  \label{eq:8.9}
  | \cD \otimes \cO_T| = \bigcup_i |V(\mathbf{p}_i)| \cup \ell_1 \cup\dots\cup \ell_m \cup E\,.
\end{equation}
Comparing eq.~\eqref{eq:8.8} and eq.~\eqref{eq:8.9}, it follows that
$\Set{\fp_i} = \Set{\mathbf{p}_i}$, hence 
\begin{equation}
  \label{eq:8.10}
  (S \otimes A/\fp_i) \otimes k(\lambda) = S/\sigma(\lambda)p_i\,.
\end{equation}
Because of $h(\phi(\xi(\lambda))) = h(\xi(\lambda))$ the prime ideal
$\sigma(\lambda)p_i$ occurs in the filtration of $\cO_X/\cI (\lambda)$ as
many times as in the filtration of $\cO_X/\cJ (\lambda)$ and $r=s$.

Put $I =\bigoplus H^0(X,\cI(n))$ and $J= \bigoplus H^0(X,\cJ(n))$. The
essential point is: Although the $f_i$ and $g_i$ and their degrees
in~\eqref{eq:8.4}\ \&~\eqref{eq:8.5} respectively ~\eqref{eq:8.6}\
\&~\eqref{eq:8.7} do not agree, one obtains (with the abbreviation $S = S
\otimes A$):
\begin{equation}
 \label{eq:8.11}
\begin{multlined}
  (\det(S/I)_{n-1})^{-1} \otimes \det(S/I)_n =  \\
  \Big[ \bigotimes^r_1( \det(S/\sigma(\lambda)p_i)_{n-d_i-1})^{-1} 
  \otimes \det(S/\sigma(\lambda)p_i)_{n-d_i} \Big] \otimes \Big\{
  \bigotimes_i S_1/\sigma(\lambda)L_i\Big\}\,,
\end{multlined}
\end{equation}
where $L_i \subset k[x,y,z,t]_1$ is a $3$-dimensional subspace and $P_i
=(L_i)$ the generated ideal.
\begin{equation}
  \label{eq:8.12}
\begin{multlined}
  (\det(S/J)_{n-1})^{-1} \otimes \det(S/J)_n = \\
  \Big[ \bigotimes^r_1( \det(S/\sigma(\lambda)p_i)_{n-e_i-1})^{-1} 
  \otimes \det(S/\sigma(\lambda)p_i)_{n-e_i} \Big] \otimes \Big\{
  \bigotimes^s_1 S_1/\cL_i\Big\}\,,
\end{multlined}
\end{equation}
where $\cL_i \subset A[x,y,z,t]_1$ is a rank $3$-subbundle and $\cP_i =
(\cL_i)$. Now $(S/\sigma(\lambda)p_i)_n$ is globally generated on $T$ by
the monomials in $S_n$, hence $\det(S/\sigma(\lambda)p_i)_n$ can be extended to
a uniquely determined line bundle on $\P^1_k$. The same is true for
$S_1/\cL_i$ and $S_1/\sigma(\lambda)L_i$. The extensions of $[ \otimes
\cdots]$ in eq.~\eqref{eq:8.11} and eq.~\eqref{eq:8.12} to line bundles on
$\P^1_k$ are denoted by $A_n$ resp.~$\cA_n$.  The extensions of $\{ \otimes
\cdots\}$ to line bundles on $\P^1_k$ are denoted by $B$
resp.~$\cB$. Hence
\begin{align}
  \label{eq:8.13}
  \bigl(\det(S/I)_{n-1}\bigr)^{-1} \otimes \det(S/I)_n & = A_n \otimes B\,,
  \\
  \bigl(\det(S/J)_{n-1}\bigr)^{-1} \otimes \det(S/J)_n & = \cA_n \otimes \cB\,.
\end{align}
We now write $U = \Spec A$ and $T = \P^1_k$.  Let $\lambda_0 \in U$,
$\zeta_0:=\phi \sigma(\lambda_0)\xi$ and $\cD_0= \Set{
  \sigma(\lambda\lambda^{-1}_0)\zeta_0}^{-1}$. Then instead of
eq.~\eqref{eq:8.11} one gets:
\begin{equation}
  \label{eq:8.15}
  \bigl(\det(S/J(\lambda_0))_{n-1}\bigr)^{-1} \otimes \det(S/J(\lambda_0))_n
   = \cA_n \otimes B(\lambda_0)\,,
\end{equation}
where
\[
   B(\lambda_0):= \bigotimes^s_1S_1 \otimes A /\sigma(\lambda
   \lambda^{-1}_0)\cL_i(\lambda_0)\quad \text{ and } \cL_i(\lambda_0)=\cL_i
   \otimes k(\lambda_0)\,.
\]
Let $\cP_i(\lambda)$ be the prime ideal generated by $\cL_i(\lambda)=\cL_i
\otimes k(\lambda)$; let $P_i(\lambda)$ be the prime ideal generated by
$\sigma(\lambda \lambda^{-1}_0)\cL_i(\lambda_0)$. Then $\cP_i(\lambda)$ is
a closed point on $\cD_\lambda$ and $P_i(\lambda)$ is a closed point on
$\sigma(\lambda \lambda^{-1}_0)\cD_{\lambda_0}$.

\textsc{Case 1}. $P_i(\lambda)$ is independent of $\lambda \Leftrightarrow
(\cL_i \otimes k(\lambda_0)) = \cP_i(\lambda_0)$ is fixed under the
$\G_m$-operation $\sigma(\lambda)$. \\

\textsc{Case 2}. $P_i(\lambda)$ depends on $\lambda$. Then $P_i(\lambda)$
moves on a line and the intersection number (of the extension) of $S_1
\otimes A/ \sigma(\lambda \lambda^{-1}_0) \cL_i(\lambda_0)$ with $T = \P^1$
is equal to $1$.

\textbf{Assumption $A(0)$:} \index{assumption! $A(0)$}
Until the end of Section~\ref{sec:8.6} the curve
$C \leftrightarrow \xi \in \HH(k)$ has no isolated point.

\textbf{N.B.} Hence $\cC_\lambda \leftrightarrow \sigma(\lambda)\xi$ and
$\cD_\lambda \leftrightarrow \phi\sigma(\lambda)\xi$ have no isolated
points, for all $\lambda \in T = \P^1_k$. 

Suppose that $\cP_i(\lambda)$ really occurs, but is independent of $\lambda$,
i.e.\ $\cP_i(\lambda) = \cP_0$ for all $\lambda \in U$.  As $|\cC_\lambda| =
|\cD_\lambda|$ for all $\lambda \in T=\P^1$ by Theorem~\ref{thm:7.1}, it
follows that $\cP_0 \in \cC_\lambda$ for all $\lambda \in U$, hence for all
$\lambda \in T = \P^1$. But then $\cP_0 = P_0 = (0:0:0:1)$ or $\cP_0 \in
V(t)$.  From this one deduces:
\begin{conclusion}
  \label{conc:8.1}
  Let be $T = \P^1_k$. Then $(B(\lambda_0)\cdot T) \leq (\cB \cdot T)$ and
  equality if and only if, for each index $i$, one of the following cases
  occurs:
  \begin{enumerate}[1.]
  \item $\Set{ \cP_i(\lambda) | \lambda \in T}$ consists of one and the same
    point either equal to $P_0 = (0:0:0:1)$ or lying on $E=V(t)$.
  \item $\Set{ P_i(\lambda)}^-$ is a line in $X$ and $\Set{
      \cP_i(\lambda)}^-$ is a line in $X$. \hfill $\qed$
  \end{enumerate}
\end{conclusion}

\section{Additional assumption}
\label{sec:8.3}

Let be $T=\P^1_k$, $U = \Spec A$ as in Section~\ref{sec:8.2}. 

\textbf{Assumption} $A(\lambda_0)$: \index{assumption!$A(\lambda_0)$}
$\lambda_0 \in U = \Spec A$ and in
Conclusion~\ref{conc:8.1} one has equality.

\subsection{}
 \label{sec:8.3.1}

Suppose \An and \Aln are fulfilled. Because of $|\cC_\lambda| \simeq
|\cD_\lambda|$, for all $\lambda \in k^*$, the curve $\cD_\lambda$ has no
isolated points. The assumption \Aln implies that either $\cP_i(\lambda)$ is
a single point, independent of $\lambda$, or $\cP_i(\lambda)$ moves on a
line $\ell_i$.  We consider this last case. Now $\cP_i(\lambda) \in
|\cD_\lambda|$ and $|\cC_\lambda| = |\cD_\lambda|$ for all $\lambda \in U$
(cf.~Thm.~\ref{thm:7.1}). It follows that $\cP_i(\lambda)$ moves on a line
$\ell_i$, which lies on the cylinder over $\pi(\cC_1) = \pi(\cD_1)$, where
$\pi$ is the projection from $P_0$ onto $E = V(t)$, defined by
$\sigma(\lambda)$. Hence $\ell_i$ is a line through the point $P_0$. 

Now by assumption (c.f.~Section~\ref{sec:8.1}) $P_0 \centernot\in C$, hence
a line through $P_0$ intersects the curve $C$ in at most finitely many
points. Let be $\ell_i \cap |\cC_1| = \Set{ R_\nu }$, $R_\nu \in X(k)$
distinct from each other. Because of $\sigma(\lambda)\cC_1 = \cC_\lambda$
one gets $\ell_i \cap |\cC_\lambda| = \Set{ \sigma(\lambda)R_\nu }$; as 
$|\cC_\lambda| = |\cD_\lambda|$ it follows that $\ell_i \cap |\cD_\lambda| =
\Set{ \sigma(\lambda)R_\nu }$ for all $\lambda \in U$.

Suppose $\cP_i(\lambda_0) = \sigma(\lambda_0)R_j$ and $I:=\Set{\lambda \in U
|\; |\lambda-\lambda_0| < \varepsilon}$. Choose $\varepsilon \in \R$ so that 
$\Set{ \sigma(\lambda)R_\nu| \lambda \in I } \cap \Set{
  \sigma(\lambda)R_\mu| \lambda \in I }= \emptyset$ for all $\mu \neq \nu$.
As $\cP_j(\lambda)$ continuously depends on $\lambda$, from
$\cP_i(\lambda_0) = \sigma(\lambda_0)R_j$ it follows that $\cP_i(\lambda) =
\sigma(\lambda)R_j$ for all $\lambda \in I$, hence
\begin{equation}
  \label{eq:8.16}
  \cP_i(\lambda) = \sigma(\lambda\lambda^{-1}_0)\cP_i(\lambda_0) \quad
  \text{ for all } \lambda \in U\,.
\end{equation}

\begin{conclusion} 
  \label{conc:8.2}
 If \An and \Aln are fulfilled:
  \begin{enumerate}[(a)]
  \item $\cL_i(\lambda):=\cL_i \otimes k(\lambda) =
    \sigma(\lambda\lambda^{-1}_0)\cL_i(\lambda_0)$ for all $i$ and all
    $\lambda \in U$.
  \item If $\cB:=\bigotimes^s_1 S_1 \otimes T/\cL_i$, $T= \P^1_k$, then
    $\cB(\lambda):= \cB \otimes k(\lambda) = \bigotimes^s_1
    S_1/\sigma(\lambda\lambda^{-1}_0)\cL_i(\lambda_0)$ for all $\lambda \in
    T$.\\
    Here $\sigma(0)\cL_i(\lambda_0):= \lim\limits_{\lambda \to
      0}\sigma(\lambda)\cL_i(\lambda_0)$ and
    $\sigma(\infty)\cL_i(\lambda_0):= \lim\limits_{\lambda \to
      \infty}\sigma(\lambda)\cL_i(\lambda_0)$.  \hfill $\qed$
  \end{enumerate}
\end{conclusion}

\section{The morphisms $\alpha$, $\beta$, $\gamma$}
\label{sec:8.4}

\subsection{}
\label{sec:8.4.1}

Let be $T = \P^1_k - \Set{ 0, \infty}$ and $\alpha:\P^1 \to \HH$,
$\beta:\P^1 \to \HH$, defined as in~\ref{sec:8.2.1} resp.~\ref{sec:8.2.2} by
$\lambda \mapsto \xi(\lambda) = \sigma(\lambda)\xi$ resp.~$\beta = \phi
\circ \alpha$. If $\lambda_0 \in T$ (not necessarily
$\lambda_0 \in U$), then $\gamma: \P^1 \to \HH$ is defined by $\lambda \mapsto
\sigma(\lambda\lambda_0) \phi(\sigma(\lambda_0)\xi)$. The injectivity of
$\alpha$ follows from the assumption in Section~\ref{sec:8.1}. If 
$\sigma(\lambda\lambda^{-1}_0) \phi(\sigma(\lambda_0)\xi) =
\sigma(\mu\lambda^{-1}_0) \phi(\sigma(\lambda_0)\xi)$, applying $h$ and
using Theorem~\ref{thm:7.1} gives $\sigma(\lambda)h(\xi)=
\sigma(\mu)h(\xi)$, hence $\lambda = \mu$.
It follows that $\alpha, \beta, \gamma$ are injective.

As we had put
$\zeta_0 = \phi \sigma(\lambda_0) \xi$ and $\cD_0 = \Set{
  \sigma(\lambda\lambda^{-1}_0)\zeta_0}^-$, the argumentation also shows:
\[
  \deg(h|\cC) = \deg(h|\cD) = \deg(h|\cD_0) = 1 \quad \text{ and }\quad
 h(\cC) = h(\cD) = h(\cD_0)\,.
\]
If 
\[
  \cD_0 \sim q_2(\cD_0)\cdot C_2 + q_1(\cD_0)\cdot C_1 + q_0(\cD_0)\cdot C_0\,,
\]
then 
\[
  [h(\cD_0] =  q_2(\cD_0)\cdot [h(C_2] =  [h(\cC)] = q_2(\cC)[h(C_2)]\,,
\]
and ditto with $\cD$ instead of $\cD_0$. Finally we can interpret
eq.~\eqref{eq:8.13}--eq.~\eqref{eq:8.15} by means of $\alpha, \beta, \gamma$
and we get:
\begin{conclusion}
  \label{conc:8.3}
Even if \An or \Aln is not assumed, one has for every $\lambda_0 \in T =
\P^1_k - \{0, \infty\}$:
\begin{enumerate}[(a)]
\item $\alpha|T$, $\beta|T$, $\gamma|T$ are injective,
\item $q_2(\cC) = q_2(\cD) = q_2(\cD_0)$,
\item $\cA_n \otimes B = \alpha^*(\cM^{-1}_{n-1}\otimes \cM_n)$; 
$\cA_n \otimes \cB = \beta^*(\cM^{-1}_{n-1}\otimes \cM_n)$ and  $\cA_n
\otimes B(\lambda_0) = \gamma^*(\cM^{-1}_{n-1}\otimes \cM_n)$\,.
 \hfill $\qed$
\end{enumerate}
\end{conclusion}
\vfill
\subsection{}
\label{sec:8.4.2}

Let be $T = \P^1_k$. Then Conclusion~\ref{conc:8.1}  gives 
\begin{align*}
  & (\cA_n \cdot T) + (B(\lambda_0)\cdot T) = (\cA_n \otimes
  B(\lambda_0)\cdot
  T) = (\gamma^*\cM^{-1}_{n-1}\otimes \cM_n \cdot T) \\
  = {} & \deg(\gamma)\cdot (\cM^{-1}_{n-1}\otimes \cM_n\cdot\cD_0) =
  q_2(\cD_0)(n-d +1) + q_1(\cD_0) \\
  \leq {} & (\cA_n \cdot T) + (\cB \cdot T)= (\cA_n \otimes \cB \cdot T)=
  (\beta^*\cM^{-1}_{n-1}\otimes
  \cM_n \cdot T) \\
  = {} & \deg(\beta)\cdot (\cM^{-1}_{n-1}\otimes \cM_n \cdot \beta(T)) =
  (\cM^{-1}_{n-1}\otimes \cM_n\cdot\cD)\\
  = {} & q_2(\cD)(n-d+1) +q_1(\cD)\,.
\end{align*}
We sum up:
\begin{lemma}
  \begin{enumerate}[(a)]
  \item Without assuming \An or \Aln, for each $\lambda_0 \in T =
\P^1_k - \{0, \infty\}$ one has: \\
$\alpha|T$, $\beta|T$, $\gamma|T$ are injective, $q_2(\cC) = q_2(\cD)
=q_2(\cD_0)$ and $q_1(\cD_0) \leq q_1(\cD)$.
\item If \An is supposed and $T:=\P^1_k$, the following statements are
  equivalent:
  \begin{enumerate}[(i)]
  \item \Aln is fulfilled, i.e.\ $(B(\lambda{_0})\cdot{ T}) = (\cB\cdot T)$.
  \item $q_1(\cD_0) = q_1(\cD)$.
  \item The line bundles $\cB = \bigotimes^s_1 S_1 \otimes T / \cL_i$ and 
$B(\lambda_0) = \bigotimes^s_1 S_1 \otimes T /
\sigma(\lambda\lambda{^{-1}_0})  \cL_i(\lambda{_0})$ on $T$ are indentical,
i.e.\\
 $\cL_i(\lambda) = \cL_i\otimes k(\lambda) =
\sigma(\lambda\lambda{^{-1}_0})  \cL_i(\lambda{_0})$ for all $\lambda\in T$
and all $i$.
  \end{enumerate}
\item If $n \geq d$, denote by $f$ the tautological morphism, which is defined
  by $\cM^{-1}_{n-1}\otimes \cM_n$. \\
 If \An is fulfilled and \Aln is fulfilled by an element $\lambda_0 \in U$,
 then $f(\cD) = f(\cD_0)$.
  \end{enumerate}
  \label{lem:8.1}
\end{lemma}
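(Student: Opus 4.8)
\textbf{Plan for the proof of Lemma~\ref{lem:8.1}.}

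The statement has three parts and I would organize the proof to match. For part (a), the injectivity of $\alpha|T$, $\beta|T$, $\gamma|T$ and the equalities $q_2(\cC)=q_2(\cD)=q_2(\cD_0)$ as well as the inequality $q_1(\cD_0)\leq q_1(\cD)$ are exactly what was assembled in Conclusion~\ref{conc:8.3} together with the chain of (in)equalities in Section~\ref{sec:8.4.2}; so I would simply collect those: injectivity uses the triviality of the $\G_m$-isotropy of $h(\xi)$ (the ``adapted'' hypothesis) and Theorem~\ref{thm:7.1} (which forces $h\circ\sigma(\lambda)=\sigma(\lambda)\circ h$ on the relevant points), while the $q_i$ identities come from $\deg(h|\cC)=\deg(h|\cD)=\deg(h|\cD_0)=1$ and $h(\cC)=h(\cD)=h(\cD_0)$, pairing with the class $[h(C_2)]$. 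The inequality $q_1(\cD_0)\leq q_1(\cD)$ then follows from Conclusion~\ref{conc:8.1}, $(B(\lambda_0)\cdot T)\leq(\cB\cdot T)$, combined with Conclusion~\ref{conc:8.3}(c), which identifies $\cA_n\otimes B(\lambda_0)=\gamma^*(\cM^{-1}_{n-1}\otimes\cM_n)$ and $\cA_n\otimes\cB=\beta^*(\cM^{-1}_{n-1}\otimes\cM_n)$, and with the computation of both intersection numbers as $q_2(n-d+1)+q_1$.

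For part (b), assuming \An, I would prove the cycle (i)$\Rightarrow$(ii)$\Rightarrow$(iii)$\Rightarrow$(i). The implication (i)$\Rightarrow$(ii) is immediate from the computation in Section~\ref{sec:8.4.2}: equality $(B(\lambda_0)\cdot T)=(\cB\cdot T)$ forces equality throughout the displayed chain, hence $q_1(\cD_0)=q_1(\cD)$ (using $q_2(\cD_0)=q_2(\cD)$ from part (a)). Conversely (ii)$\Rightarrow$(i) reverses the same chain. The step (i)$\Rightarrow$(iii) is Conclusion~\ref{conc:8.2}: under \An and \Aln, each $\cP_i(\lambda)$ is either a fixed point (lying on $P_0$ or on $V(t)$) or sweeps out a line through $P_0$, and in the moving case the continuity argument of Section~\ref{sec:8.3.1} — tracking $\cP_i(\lambda)$ against the finite set $\ell_i\cap|\cC_\lambda|=\{\sigma(\lambda)R_\nu\}$ and using $|\cC_\lambda|=|\cD_\lambda|$ from Theorem~\ref{thm:7.1} — yields $\cP_i(\lambda)=\sigma(\lambda\lambda_0^{-1})\cP_i(\lambda_0)$ for all $\lambda$, i.e. $\cL_i(\lambda)=\sigma(\lambda\lambda_0^{-1})\cL_i(\lambda_0)$; this is exactly (iii). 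Finally (iii)$\Rightarrow$(i) is the observation that (iii) makes $\cB$ and $B(\lambda_0)$ the same line bundle on $T$ by definition of $B(\lambda_0)$, so their degrees coincide.

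For part (c): from part (b), \An together with \Aln for some $\lambda_0\in U$ gives $\cB=B(\lambda_0)$ as line bundles on $T=\P^1_k$, hence $\cA_n\otimes\cB=\cA_n\otimes B(\lambda_0)$; by Conclusion~\ref{conc:8.3}(c) this reads $\beta^*(\cM^{-1}_{n-1}\otimes\cM_n)\simeq\gamma^*(\cM^{-1}_{n-1}\otimes\cM_n)$, i.e. $\cD$ and $\cD_0$ have the same image line bundle under the morphism $f$ defined by $\cM^{-1}_{n-1}\otimes\cM_n$. Since $f|\cD$ and $f|\cD_0$ are morphisms from rational curves whose associated line bundles have equal degree, and $\cD$ and $\cD_0$ share the closed point $\zeta_0=\phi(\sigma(\lambda_0)\xi)$ (so $f(\cD)$ and $f(\cD_0)$ meet), one concludes $f(\cD)=f(\cD_0)$ as subsets of $\P$; here I would be careful to phrase this as an equality of the (reduced) image curves, using that $\deg(\beta)=\deg(\gamma)=1$. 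The main obstacle I anticipate is making the continuity argument in (i)$\Rightarrow$(iii) fully rigorous over the base $T$ rather than over individual closed fibers — i.e. ensuring that ``$\cP_i(\lambda)$ depends continuously on $\lambda$'' is genuinely a statement about the subbundle $\cL_i\subset S_1\otimes A$ and its specializations, and that the local identification near $\lambda_0$ propagates to all of $U$ and then extends across $0$ and $\infty$ by taking limits as in Conclusion~\ref{conc:8.2}(b); the rest is bookkeeping with the intersection-number chain already set up in Section~\ref{sec:8.4.2}.
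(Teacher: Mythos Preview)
Your treatment of parts (a) and (b) matches the paper's approach: (a) is a summary of Conclusion~\ref{conc:8.3} together with the inequality chain displayed in Section~\ref{sec:8.4.2}, and for (b) the equivalence (i)$\Leftrightarrow$(ii) comes from that same chain, (i)$\Rightarrow$(iii) is Conclusion~\ref{conc:8.2}, and (iii)$\Rightarrow$(i) is trivial.

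Your argument for (c), however, has a genuine gap. From (iii) you only extract that $\beta^*(\cM^{-1}_{n-1}\otimes\cM_n)\simeq\gamma^*(\cM^{-1}_{n-1}\otimes\cM_n)$ as abstract line bundles on $T\simeq\P^1$, and then try to conclude $f(\cD)=f(\cD_0)$ from ``same degree plus a shared point $\zeta_0$''. That inference fails: two distinct lines in $\P^2$ through a common point are both degree-$1$ images of $\P^1$ sharing that point. The paper instead uses the full strength of (iii)/Conclusion~\ref{conc:8.2}: one has the \emph{fibrewise} identity $\cA_n\otimes\cB\otimes k(\lambda)=\cA_n\otimes B(\lambda_0)\otimes k(\lambda)$ for each $\lambda$, and via the filtration formulas~\eqref{eq:8.12} and~\eqref{eq:8.15} this is precisely the data that pins down $f(\beta(\lambda))$ and $f(\gamma(\lambda))$ as points of the target projective space (compare how $(\det\cE_{n-1})^{-1}\otimes\det\cE_n=\bigotimes(P_1/L_i)$ is computed in Section~\ref{sec:3.2}). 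Hence $f(\phi\sigma(\lambda)\xi)=f(\sigma(\lambda\lambda_0^{-1})\zeta_0)$ for all $\lambda$ in an open subset of $T$, which gives $f(\cD)=f(\cD_0)$ directly. In short, (iii) yields pointwise equality of $f\circ\beta$ and $f\circ\gamma$ on a dense open set, not merely isomorphism of pullback bundles; your degree-plus-common-point step is both unnecessary and insufficient.
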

\begin{proof}
  (a) has just been stated before, and in Part (b) the equivalence of (i)
  and (ii) follows from the preceding computation. If we assume (i), then
  (iii) follows from Conclusion~\ref{conc:8.2} and (iii) $\Rightarrow$ (i)
  is trivial. As to Part (c), as $\cA_n \otimes \cB \otimes k(\lambda) =
  \cA_n \otimes B(\lambda_0) \otimes k(\lambda)$ by
  Conclusion~\ref{conc:8.2}, one has $f(\phi \sigma(\lambda)\xi) =
  f(\sigma(\lambda\lambda^{-1}_0)\zeta_0)$ for all $\lambda$ in an open
  subset of $T$, hence $(c)$ follows.
\end{proof}

We still suppose \An and \Aln. By Lemma~\ref{lem:8.1}c), for $\lambda \in
k^*$ there is a $\mu \in \P^1$ and for $\mu \in k^*$ there is a $\lambda \in
\P^1$ such that 
\begin{equation}
  \label{eq:8.17}
  f(\phi \sigma(\lambda)\xi) = f(\sigma(\mu) \phi \sigma(\lambda_0)\xi)
\end{equation}
and hence $h(\phi \sigma(\lambda)\xi) = h(\sigma(\mu) \phi
\sigma(\lambda_0)\xi)$. We show that then $\mu$ (resp.~$\lambda$) is in
$k^*$, too: If $\mu =0$ or $\mu = \infty$, then $\zeta_{0/\infty}:=
\sigma(\mu)\phi(\sigma(\lambda_0)\xi)$ would be fixed by $\G_m$, hence
$h(\zeta_{0/\infty})$ would be fixed by $\G_m$, as $h$ is $\G_m$-equivariant.
It would follow that $h(\phi \sigma(\lambda)\xi) = h(\sigma(\lambda)\xi) =
\sigma(\lambda) h(\xi)$ is invariant under $\G_m$, and hence $h(\xi)$ would
be invariant under $\G_m$, which is not the case by assumption
(cf.~Section~\ref{sec:8.1}). On the other hand, if one starts with $\mu \in
k^*$ and supposes $\lambda =0$ or $\lambda = \infty$, then
\begin{align*}
  & h(\xi_{0/\infty}) = h(\sigma(\lambda)\xi_{0,\infty}) = \sigma(\lambda)
  h(\xi_{0/\infty}) = \sigma(\lambda) h(\sigma(\mu) \phi
  \sigma(\lambda_0)\xi)  \\
  = {} & \sigma(\lambda\mu) h(\phi\sigma(\lambda_0)\xi) = \sigma(\lambda\mu)
  h(\sigma(\lambda_0)\xi) = \sigma(\lambda\mu\lambda_0) h(\xi) \quad \text{
    for all } \lambda \in k^*\,.
\end{align*}
Then $h(\xi)$ again would  be $\G_m$-fixed. We get
\begin{conclusion}
  \label{conc:8.4}
  If in eq.~\eqref{eq:8.17} $\lambda\in k^*$ (resp.~$\mu\in k^*$), then $\mu\in k^*$ (resp.~$\lambda\in k^*$).
 \hfill $\qed$
\end{conclusion}

We draw further consequences: As $h$ is equivariant with respect to $\G_m$
and $\phi$ leaves invariant the fibers of $h$ (cf.~Thm.~\ref{thm:7.1}), one
has $h(\phi \sigma(\lambda)\xi) = h(\sigma(\lambda)\xi) = \sigma(\lambda)
h(\xi)$ and $h(\sigma(\mu) \phi \sigma(\lambda_0)\xi) =
\sigma(\mu\lambda_0)h(\xi)$, hence $\lambda = \mu \lambda_0$, and
eq.~\eqref{eq:8.17} can be written as
\begin{equation}
  \label{eq:8.18}
    f(\phi \sigma(\lambda)\xi) = f(\sigma(\lambda \lambda^{-1}_0)\phi
    \sigma(\lambda_0) \xi) \quad \text{ for all } \lambda \in \P^1\,.
\end{equation}
If $\lambda =1$, one gets $f(\phi(\xi))=  f(\sigma(\lambda^{-1}_0) \phi
\sigma(\lambda_0)\xi)$, hence $f(\sigma(\lambda)\phi(\xi)) =
 f(\sigma(\lambda \lambda^{-1}_0) \phi\sigma(\lambda_0)\xi)$ for all
 $\lambda$.

\begin{conclusion}
  \label{conc:8.5}
Assume that \An and \Aln\ are fulfilled. Put $\cE := \Set{ \sigma(\lambda)
  \phi(\xi)}^-$. Then
\begin{enumerate}[(a)]
\item $f(\sigma(\lambda)\phi(\xi)) = f(\phi \sigma(\lambda)\xi)$
for all $\lambda \in \P^1$,
\item $f | \oring{\cD}$ and $f | \oring{\cE}$ is injective,
\item $q_i(\phi(\xi)) = q_i(\xi)$ for $i=1, 2$.
\end{enumerate}
\end{conclusion}
\begin{proof}
\begin{enumerate}[(a)]
\item follows from eq.~\eqref{eq:8.18} and the foregoing  computation.
\item follows by the same argumentation as in Section~\ref{sec:8.4.1}, as
  the isotropy of $h(\xi) = h(\phi(\xi))$ is trivial by assumption.
\item follows from Lemma~\ref{lem:8.1}a, if $i=2$.\\
 Write 
\[
\cC \sim q_2 C_2 + q_1 C_1 + q_0 C_0 \sim \cD \quad \text{ and } \quad  
\cE \sim p_2 C_2 + p_1 C_1 + p_0 C_0\,.
\]
Let be $g: \HH \to \P$ the morphism
defined by $\cL = \cM^{-1}_{n-1} \otimes \cM_n$, $n \geq d$. By
Corollary~\ref{cor:3.2} the restrictions $g | \oring{\cD}$ and $g|
\oring{\cE}$ are injective, too.\\ 
Hence
\begin{align*}
     &   q_2(\cL \cdot C_2)  + q_1(\cL \cdot C_1)  + q_0(\cL \cdot C_0) =
     (\cL \cdot \cC) \\
= {} &   (\cL \cdot \cD) = (f^* \cO_\P(1) \cdot \cD) = (\cO_\P(1) \cdot
f(\cD)) = (\cO_\P(1) \cdot f(\cE))\\
 = {} &  (\cL \cdot \cE) = p_2(\cL \cdot C_2) + p_1(\cL \cdot C_1) + p_0(\cL \cdot C_0)\,. 
\end{align*}
It follows that $q_2(n-d+1)+ q_1 = p_2(n-d+1) + p_1$, hence $q_1=p_1$.
\qedhere
\end{enumerate}
\end{proof}
\section{Eliminating the assumption \Aln}
\label{sec:8.5}

We now consider the case that \An is fulfilled but \Aln is not fulfilled for
any $\lambda_0 \in U$.

Let be $V:=\beta(U)$; this is an open non-empty subset of $\cD$ and from
Lemma~\ref{lem:8.1} it follows that $q_1(\zeta) < q_1(\cD)$ for all $\zeta
\in V$. \\
Suppose there is $\eta_0 \in \oring{\cD} = \cD - \Set{ \phi(\xi_0),
  \phi(\xi_\infty)}$ such that $q_1(\cD) \leq q_1(\eta_0)$. As $q_2(\eta) =
q_2(\eta_0)$ for all $\eta \in \oring{\cD}$ by Lemma~\ref{lem:8.1}, from
Conclusion~\ref{conc:F.2} in Appendix~\ref{cha:F} it follows that there is
an open neighborhood $V_0$ of $\eta_0$ in $\cD$ such that $q_1(\cD) \leq
q_1(\eta_0) \leq q_1(\eta)$ for all $\eta\in V_0$.  As $V \cap V_0 \neq
\emptyset$, this gives a contradiction.

It follows that $q_1(\zeta) < q_1(\cD)$ for all $\zeta \in \oring\cD$. Now
from $q_1(\cD) = q_1(\cC) = q_1(\xi)$ follows that 
\begin{equation}
  \label{eq:8.19}
   q_1(\zeta) < q_1(\xi) \quad \text{ for all } \zeta \in \oring\cD\,.
\end{equation}
\begin{conclusion}
  \label{conc:8.6}
  Assume \An is fulfilled, but if $\lambda_0 \in U$, then \Aln\  is not
  fulfilled. Then $q_1(\phi(\xi)) < q_1(\xi)$.  \hfill $\qed$
\end{conclusion}

From Conclusion~\ref{conc:8.5} and Conclusion~\ref{conc:8.6} one obtains: 
\begin{conclusion}
  \label{conc:8.7}
  Assume \An.  Then either $q_1(\phi(\xi))= q_1(\xi)$ or $q_1(\phi(\xi)) <
  q_1(\xi)$.  \hfill $\qed$
\end{conclusion}

Let $\xi \leftrightarrow C$ and $\phi(\xi) \leftrightarrow D$. As $C(k)$ and
$D(k)$ are isomorphic, $D$ has no isolated points. As $h(\phi(\xi)) =
h(\xi)$ by Theorem~\ref{thm:7.1}, $t$ is general for $D$ and $\phi(\xi)$
fulfills the assumptions of Section~\ref{sec:8.1}. Hence from
Conclusion~\ref{conc:8.7}  applied to $\phi^{-1}$ and $\phi(\xi)$ instead of
$\phi$ and $\xi$ it follows that $q_1(\phi^{-1}\phi(\xi)) = q_1(\phi(\xi))$
or $q_1(\phi^{-1}\phi(\xi)) < q_1(\phi(\xi))$ i.e.\ $q_1(\xi) =
q_1(\phi(\xi))$ or $q_1(\xi) < q_1(\phi(\xi))$. It follows that 
$q_1(\phi(\xi)) = q_1(\xi)$.
\begin{lemma}
  \label{lem:8.2}
 Assume \An. Then one has:
 \begin{enumerate}[(a)]
 \item $q_i(\phi(\xi))= q_i(\xi)$ if $i=1,2$.
 \item $f(\sigma(\lambda)\phi(\xi)) = f(\phi\sigma(\lambda)\xi)$ for all
   $\lambda \in \P^1$.
 \end{enumerate}
\end{lemma}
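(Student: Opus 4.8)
\textbf{Plan of proof for Lemma~\ref{lem:8.2}.}
The statement (a) has in fact already been extracted, through the preceding Conclusions, for $i=2$ (this is Conclusion~\ref{conc:8.3}(b), which holds without \An or \Aln) and for $i=1$ almost entirely: Conclusion~\ref{conc:8.7} says that under \An one has either $q_1(\phi(\xi))=q_1(\xi)$ or $q_1(\phi(\xi))<q_1(\xi)$. So the plan is first to observe that $\phi(\xi)\leftrightarrow D$ again satisfies the hypotheses of Section~\ref{sec:8.1}: since $C(k)\simeq D(k)$ under $\psi$, the curve $D$ has no isolated points (so \An holds for $\phi(\xi)$), and $h(\phi(\xi))=h(\xi)$ by Theorem~\ref{thm:7.1} guarantees that $t$ is still general for $D$ and that $h(\phi(\xi))$ has trivial $\G_m$-isotropy; moreover $\phi^{-1}\in N$ since $N$ is a subgroup. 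Therefore Conclusion~\ref{conc:8.7}, applied with $\phi^{-1}$ in place of $\phi$ and $\phi(\xi)$ in place of $\xi$, gives $q_1(\xi)=q_1(\phi^{-1}\phi(\xi))\le q_1(\phi(\xi))$. Combined with the inequality already available in the other direction, this forces $q_1(\phi(\xi))=q_1(\xi)$, proving (a) for $i=1$.

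For part (b), the natural approach is a case split according to whether \Aln\ is fulfilled for some $\lambda_0\in U$ or not. If there is $\lambda_0\in U$ with \Aln, then Conclusion~\ref{conc:8.5}(a) directly gives $f(\sigma(\lambda)\phi(\xi))=f(\phi\sigma(\lambda)\xi)$ for all $\lambda\in\P^1$, which is exactly (b). If instead \Aln\ fails for every $\lambda_0\in U$, then Conclusion~\ref{conc:8.6} yields $q_1(\phi(\xi))<q_1(\xi)$ — but this contradicts part (a), which we have just proved. Hence the second case cannot occur, and (b) holds unconditionally under \An. In short, part (a) is the tool that eliminates the bad case of \Aln, and part (b) then reduces to Conclusion~\ref{conc:8.5}(a).

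The step I expect to be the only real subtlety is the symmetry argument in (a): one must be careful that \emph{all three} standing hypotheses on an "adapted/\An" point transfer from $\xi$ to $\phi(\xi)$ — the absence of isolated points transfers because $\psi$ induces an isomorphism $C(k)\simeq D(k)$, and the genericity of $t$ for $D$ together with the triviality of the $\G_m$-isotropy of $h(\phi(\xi))$ both transfer through the identity $h(\phi(\xi))=h(\xi)$ of Theorem~\ref{thm:7.1}. Once that transfer is in place, the rest is the formal "$\le$ in both directions'' argument together with an appeal to the already-established Conclusions~\ref{conc:8.5}, \ref{conc:8.6}, \ref{conc:8.7}. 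I would write it out essentially as: prove the hypothesis-transfer, deduce (a) for $i=1$ from Conclusion~\ref{conc:8.7} applied to $\phi^{-1}$, quote Conclusion~\ref{conc:8.3}(b) for $i=2$, and finally derive (b) by the case split above, with the failing case ruled out by (a).
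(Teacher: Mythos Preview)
Your proposal is correct and follows essentially the same route as the paper: part (a) for $i=1$ is obtained by applying Conclusion~\ref{conc:8.7} to $\phi^{-1}$ and $\phi(\xi)$ after checking that $\phi(\xi)$ again satisfies the standing hypotheses (via $|C|\simeq|D|$ and $h(\phi(\xi))=h(\xi)$ from Theorem~\ref{thm:7.1}), while $i=2$ is already available from Lemma~\ref{lem:8.1}(a) with $\lambda_0=1$; part (b) is then the case split you describe, with the failing case ruled out by (a) through Conclusion~\ref{conc:8.6}. The only cosmetic difference is that the paper places the symmetry argument for (a) in the paragraph immediately preceding the lemma and then says ``(a) has just been proved,'' but the content is identical to what you wrote.
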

\begin{proof}
  (a) has just been proved. If there were no $\lambda_0 \in U$ such that
  \Aln is true, then Conclusion~\ref{conc:8.6} gives $q_1(\phi(\xi)) <
  q_1(\xi)$, contradicting (a). Then (b) follows from
  Conclusion~\ref{conc:8.5} .
\end{proof}
\section{The restriction morphism}
\label{sec:8.6}
\index{restriction morphism}
 It is defined by $r: U(t) \to H^d = \Hilb^d(\P^2)$, $\cI \mapsto \cI' =
 \cI + t \cO_X(-1) / t\cO_X(-1)$.
\subsection{}
\label{sec:8.6.1}

Let $\xi \in \HH(k)$ be adapted. We assume \An.  As $\xi_0 \in U(t)$ and
$\phi(\xi_0)\equiv \xi_0$ by Proposition~\ref{prop:6.1}, it follows that
$\phi\sigma(\lambda)\xi \in U(t)$  for almost all $\lambda \in k$,
including $\lambda=0$. By Lemma~\ref{lem:8.2} $f(\sigma(\lambda)\phi(\xi)) =
f(\phi\sigma(\lambda)\xi)$, hence from Lemma~\ref{lem:3.1} it follows that 
\[
  r(\phi(\xi)) =   r(\sigma(\lambda)\phi(\xi)) = r(\phi\sigma(\lambda)\xi) 
 \xrightarrow[\lambda \to 0]{} r(\phi(\xi_0)) = r(\xi_0) = r(\xi)\,.
\] 
\begin{conclusion}
  \label{conc:8.8}
 If $\xi \in \HH(k)$ is adapted, \An is fulfilled, $\phi \in N$, $\xi
 \leftrightarrow \cI$, $\phi(\xi) \leftrightarrow \cJ$, then $\cI' = \cJ'$.
 \hfill $\qed$
\end{conclusion}

\subsection{}
\label{sec:8.6.2}

Let $\xi \leftrightarrow C$ be as before. As $P_0 = (0:0:0:1) \centernot\in C$,
the ideal $(x,y,z)$ is not associated to $\cI$. Put $R = k[x,y,z]$ and let
be $L$ the set of $\ell \in R_1$ such that $\ell$ is not in any associated
prime of $\cI$. Then $L$ is Zariski-open in $R_1$.  Fix $\ell \in L$ and
define $u_\alpha: x\mapsto x,\; y\mapsto y, \; z\mapsto z, \; t\mapsto t -
\alpha \ell$.  Then for almost all $\alpha \in k$, including $\alpha =0$,
$t+\alpha \ell$ is general for $\xi$, i.e.~$t+\alpha \ell$ is not in any
associated prime of $\cI$, i.e.~one has $u_\alpha(\xi)\in U(t)$. As $P_0$ is
fixed by $u_\alpha$, one has $P_0 \centernot\in u_\alpha(C)$. Finally from
Corollary~\ref{cor:A.2} in Appendix~\ref{cha:A} it follows that
$h(u_\alpha(\xi))$ has trivial $\G_m$-isotropy, hence $u_\alpha(\xi)$ is
adapted for all $\alpha \in A$, where $A$ is a set, which depends on $\xi$
and $\ell$ and is equal to $k$ minus finitely many elements. If $\alpha \in
A$, then by Conclusion~\ref{conc:8.8} one has $r(u_\alpha(\xi)) = r(\phi
u_\alpha(\xi))$ for all $\phi \in N$.

\subsection{}
\label{sec:8.6.3}

In Section~\ref{sec:8.1} we started from an \emph{arbitrary} $\eta\in
\HH(k)$, took a suitable $g\in G = \GL(4,k)$ to get an adapted $\xi =
g(\eta)$. It follows that 
\[
r(u_\alpha g(\eta)) = r(\phi u_\alpha g(\eta)) = 
r(u_\alpha g g^{-1} u^{-1}_\alpha \phi u_\alpha g(\eta)) \quad \text{ for all } \phi\in
N\,,
\] 
hence
\[
  r(u_\alpha g(\eta)) = r(u_\alpha g \phi (\eta))
\]
for all $\phi \in N$. If $\eta \leftrightarrow \cI$, $\phi(\eta)
\leftrightarrow \cJ$, this equation can be written as 
\[
   u_\alpha g(\cI) \equiv u_\alpha g(\cJ) \mod t\,,
\]
which is equivalent to 
\[
  \cI \equiv \cJ \mod g^{-1}u^{-1}_\alpha(t)\,.
\]

Now $\Set{ g^{-1}u^{-1}_\alpha(t)}$ is a Zariski-dense set of linear forms
in $\P(S_1)$, and one obtains:

\begin{conclusion}
  \label{conc:8.9}
  Let $\xi\in \HH(k)$ be any point such that the curve $C \leftrightarrow
  \xi$ has no isolated points. If $\phi$ is any normed automorphism and $\xi
  \leftrightarrow \cI$ and $\phi(\xi) \leftrightarrow \cJ$ are the ideals
  corresponding to $\xi$ resp.~$\phi(\xi)$, then $\cI + \ell \cO_X(-1) /
  \ell \cO_X(-1) = \cJ + \ell \cO_X(-1) / \ell \cO_X(-1)$ for all linear
  forms $\ell$ in a Zariski-dense subset of $S_1$.  \hfill $\qed$
\end{conclusion}
\textbf{N.B.} In this conclusion, there is no assumption that $\xi$ is
adapted, so in the rest of this Chapter~\ref{cha:8} $\xi$ is not assumed to
be adapted.

\subsection{}
\label{sec:8.6.4}

 We need a simple general lemma and first have to introduce some
 notations. Let be $S = k[x,y,z,t]$. We say a statement is true for
 Zariski-many linear forms $\ell \in S_1$, if there is a set $L \subset S_1$,
 which is dense in $S_1 \simeq \A^4$ in the Zariski-topology, such that the
 statement is true for all $\ell \in L$.

 Let be $X = \Proj S$ and $\cI \subset \cO_X$ an ideal, and $\ell \in S_1 -
 0$. We write $\cI \in U(\ell)$, if $\ell$ is a non-zero divisor of
 $\cO_X/\cI$, or equivalently, if $\ell$ does not lie in an associated prime
 ideal of $\cO_X/ \cI$. We write $\cI' = \cI + \ell \cO_X(-1)/ \ell
 \cO_X(-1)$ only if $\cI \in U(\ell)$. If $\cI \subset \cO_X$ is an ideal,
 which defines a curve $C \subset X$, then one can write $\cI = \cN \cap
 \cR$, where $\cN$ is a $\CM$-ideal and $\cR$ is the punctual part. We write
 $\cN = \CM(\cI)$.

\begin{lemma}
  \label{lem:8.3}
  Let be $\cI, \cJ \subset \cO_X$ two ideals, which define curves in $X$. If
  $\cI' = \cJ'$ for Zariski-many $\ell \in S_1$, then the $\CM$-parts of
  $\cI $ and $\cJ$ are equal.
\end{lemma}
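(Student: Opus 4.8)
The strategy is to recover the CM-part of an ideal from sufficiently general hyperplane sections, using the fact that a generic hyperplane section of the punctual part is empty. Write $\cI = \cN \cap \cR$ and $\cJ = \cN_1 \cap \cR_1$, where $\cN = \CM(\cI)$, $\cN_1 = \CM(\cJ)$, $\cR$ and $\cR_1$ are the punctual parts. First I would observe that there is a Zariski-dense set $L_0 \subset S_1$ of linear forms $\ell$ which are simultaneously non-zero divisors on $\cO_X/\cI$, $\cO_X/\cJ$, $\cO_X/\cN$, $\cO_X/\cN_1$, and which avoid the (finitely many) associated primes of $\cR$ and of $\cR_1$, i.e.~$V(\ell) \cap \supp(\cO_X/\cR) = \emptyset = V(\ell) \cap \supp(\cO_X/\cR_1)$ (the supports of the punctual parts being zero-dimensional). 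Intersecting $L_0$ with the dense set of $\ell$ for which $\cI' = \cJ'$ gives again a dense set $L \subset S_1$, and it suffices to work with $\ell \in L$.

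\textbf{Key steps.} Fix $\ell \in L$. For such $\ell$ one has $\cR + \ell\cO_X(-1)/\ell\cO_X(-1) = \cO_{\P^2}$ (the image of the punctual part is the unit ideal, since $V(\ell)$ meets its support in the empty scheme), and similarly for $\cR_1$; hence $\cI' = \cN' := \cN + \ell\cO_X(-1)/\ell\cO_X(-1)$ and $\cJ' = \cN_1'$, because restriction modulo $\ell$ is compatible with finite intersections of ideals that are saturated with respect to $\ell$. By hypothesis $\cI' = \cJ'$, so $\cN' = \cN_1'$ as ideals in $\cO_{\P^2} = \Proj(S/\ell S(-1))$. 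Next I would pass to Hilbert polynomials: since $\ell$ is a non-zero divisor on both $\cO_X/\cN$ and $\cO_X/\cN_1$, the sequence $0 \to (\cO_X/\cN)(-1) \xrightarrow{\ell} \cO_X/\cN \to \cO_{\P^2}/\cN' \to 0$ is exact, so the Hilbert polynomial of $\cO_{\P^2}/\cN'$ is the first difference of that of $\cO_X/\cN$; the same holds for $\cN_1$. From $\cN' = \cN_1'$ it follows that $\cO_X/\cN$ and $\cO_X/\cN_1$ have the same Hilbert polynomial, i.e.~$\cN$ and $\cN_1$ define curves of the same degree.

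\textbf{Concluding and the main obstacle.} To finish I would show $\cN = \cN_1$. Both are CM-ideals (no embedded or isolated points), hence saturated and unmixed of pure dimension one, and they are both sandwiched: $\cI \subset \cN$, $\cJ \subset \cN_1$, and after restriction $\cN' = \cN_1'$. One argument: for a general $\ell$, an unmixed one-dimensional $\cN$ is determined by its restriction together with its degree — more precisely, $\cN = \{ f \in \cO_X : f \bmod \ell \in \cN' \text{ for all } \ell \text{ in a dense set} \}$, because the CM condition forces $\cN$ to equal the intersection of the preimages of $\cN'$ over varying general $\ell$ (a section of $\cO_X/\cN$ vanishing on enough general hyperplane sections of a pure one-dimensional CM scheme must vanish). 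Applying the same description to $\cN_1$ and using $\cN' = \cN_1'$ on the common dense set $L$ yields $\cN = \cN_1$. I expect the genuinely delicate point to be this last step: justifying cleanly that a CM-ideal is recovered from its general hyperplane restrictions, rather than merely that it has the same Hilbert polynomial — one must rule out the a priori possibility that $\cN$ and $\cN_1$ differ in some ``invisible'' way not seen modulo a single general $\ell$. The cleanest route is probably to use that $\cN \subset \cN_1$ would follow from $\cN_1' \subset \cN'$ plus flatness/regularity (for $\ell$ general, $H^0(\cN(n)) \hookrightarrow H^0(\cN'(n))$ with cokernel $H^0(\cN(n-1))$, so $H^0(\cN(n)) = H^0(\cN(n-1)) \oplus (\text{lift of } H^0(\cN'(n)))$ once $n$ is large, giving an inductive identification of $H^0(\cN(n))$ with $H^0(\cN_1(n))$ once the two agree in one degree), and the two agree in large degree because their Hilbert polynomials coincide and $\cN' = \cN_1'$. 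This reduces everything to the bookkeeping already used throughout Section~\ref{sec:1.5}, so it should go through, but it is the place where care is needed.
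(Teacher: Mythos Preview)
Your reduction to the CM-parts is correct: for $\ell$ avoiding the finite support of the punctual parts one indeed has $\cI' = \cN'$ and $\cJ' = \cN_1'$, so $\cN' = \cN_1'$ for Zariski-many $\ell$. However, two points go wrong after that.

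First, the Hilbert-polynomial claim is false as stated. From $\cN' = \cN_1'$ you only get that the \emph{first differences} of the Hilbert polynomials of $\cO_X/\cN$ and $\cO_X/\cN_1$ agree, i.e.\ the two polynomials differ by a constant; you cannot yet conclude they are equal. Second, your ``recovery'' argument is not complete. The exact sequence is $0 \to H^0(\cN(n-1)) \xrightarrow{\ell} H^0(\cN(n)) \to H^0(\cN'(n))$, not the other way; the inductive step you want (upward) would need a base case $H^0(\cN(n_0)) = H^0(\cN_1(n_0))$, and you justify that base case only via the Hilbert-polynomial equality you do not have.

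The paper closes the gap by a maximality argument that sidesteps this comparison entirely. It sets $M = \{\cK \subset \cO_X : \cK' = \cI' = \cJ' \text{ for Zariski-many } \ell\}$; since $(\cK_1 + \cK_2)' = \cK_1' + \cK_2'$, the set $M$ is stable under sums and has a unique maximal element $\cM \supset \cI + \cJ$. This $\cM$ is CM (otherwise replacing $\cM$ by its CM-part enlarges it within $M$), and the Hilbert-polynomial argument gives that $\cM/\cI$ and $\cM/\cJ$ are artinian, whence $\cM = \CM(\cI) = \CM(\cJ)$.

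Your route can be salvaged with one clean observation that is really the same idea: set $\cK := \cN + \cN_1$. Then $\cK' = \cN'$ for Zariski-many $\ell$, so $\cK/\cN$ has constant Hilbert polynomial, i.e.\ is artinian. But $\cN$ is CM of pure dimension one, so $\cO_X/\cN$ has depth $\ge 1$ along its support and admits no nonzero artinian submodule; hence $\cK = \cN$, i.e.\ $\cN_1 \subset \cN$. By symmetry $\cN = \cN_1$. This replaces your inductive $H^0$ argument and does not require knowing the Hilbert polynomials agree beforehand.
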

\begin{proof}
Put $M = \Set{ \cK \subset \cO_X | \cI' = \cJ' = \cK' \text{ for Zariski-many }
 \ell \in S_1 }$. Let $\cM \in M$ be a maximal element. Then $\cI + \cJ
\subset \cM$ and $\cM$ is a $\CM$-ideal. The sequence
\[
 0 \longrightarrow (\cO_X/\cM)(-1) \stackrel{\cdot \ell}{\longrightarrow}
   \cO_X/\cM \longrightarrow \cO_Y/\cM' \longrightarrow 0
\]
where $Y:=\Proj(S/\ell S(-1))$, is exact for Zariski-many $\ell$, and $\cM' =
\cI'$. If $P(n)= dn -g +1$ and $p(n)= \delta n -\gamma +1$ are the Hilbert
polynomials of $\cO_X / \cI$ respectively of $\cO_X / \cM$, then from
$p(n)-p(n-1) = \delta = d$ if $n \gg 0$ it follows that $P(n)-p(n) =c$ is a
constant $\geq 0$, and $\cM / \cI$ has the Hilbert polynomial $c$. Thus $\cM
/ \cI$ is artinian and we can write $\cI = \cM \cap \cR$, $\cR$ the punctual
part. In the same way we get $\cJ = \cM \cap \cS$, $\cS$ the punctual part.
\end{proof}

\subsection{}
\label{sec:8.6.5}

We apply Lemma~\ref{lem:8.3} to the situation of Conclusion~\ref{conc:8.9}
and we get:
\begin{lemma}
  \label{lem:8.4}
  Let $\xi\in \HH(k)$ be a point such that the curve $C \leftrightarrow \xi$
  has no isolated points.  Let be $\phi \in N$ and $\cI \leftrightarrow \xi$
  and $\cJ \leftrightarrow \phi(\xi)$.  Then the $\CM$-parts of $\cI $ and
  $\cJ$ are equal.  \hfill $\qed$
\end{lemma}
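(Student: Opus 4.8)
The plan is to deduce this lemma directly by combining Conclusion~\ref{conc:8.9} with Lemma~\ref{lem:8.3}; no new ideas are required, since by this point both the geometric input and the commutative-algebra bookkeeping have already been isolated. So the argument is just a two-step reduction.

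First I would check that the hypotheses line up. The assumption that $C \leftrightarrow \xi$ has no isolated points is exactly the standing hypothesis of Conclusion~\ref{conc:8.9}. Moreover $\phi$ induces an automorphism $\psi$ of the universal curve $\CC$, hence an isomorphism between the sets of closed points of $C \leftrightarrow \xi$ and of $D \leftrightarrow \phi(\xi)$; consequently $D$ has no isolated points either, and both $\cI$ and $\cJ$ define curves in $X = \Proj S$ in the sense demanded by Lemma~\ref{lem:8.3}. Then I would apply Conclusion~\ref{conc:8.9} to produce a Zariski-dense subset $L \subset S_1$ with $\cI' = \cI + \ell\cO_X(-1)/\ell\cO_X(-1) = \cJ + \ell\cO_X(-1)/\ell\cO_X(-1) = \cJ'$ for every $\ell \in L$, and feed this straight into Lemma~\ref{lem:8.3} to obtain $\CM(\cI) = \CM(\cJ)$, which is the assertion.

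The main point is that there is no remaining obstacle: the substance was carried by Conclusion~\ref{conc:8.9} — whose proof rests on the reduction to adapted points in Sections~\ref{sec:8.1}--\ref{sec:8.6.2}, on $h(\phi(\xi)) = h(\xi)$ (Theorem~\ref{thm:7.1}), on $\phi(\xi_0) \equiv \xi_0$ (Proposition~\ref{prop:6.1}), and on the complexity and $f$-compatibility statements of Lemma~\ref{lem:8.2} — together with Lemma~\ref{lem:8.3}, which is a Hilbert-polynomial count. The only thing I would take care to verify is the (harmless) compatibility of the two notions of ``Zariski-many $\ell \in S_1$'' used in Conclusion~\ref{conc:8.9} and in Lemma~\ref{lem:8.3}: both refer to density in $S_1 \cong \A^4_k$ for the Zariski topology, so the dense set furnished by the former meets every nonempty open set relevant to the latter, and the combination is legitimate.
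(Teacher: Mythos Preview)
Your proposal is correct and matches the paper's approach exactly: the paper states that Lemma~\ref{lem:8.4} follows by applying Lemma~\ref{lem:8.3} to the situation of Conclusion~\ref{conc:8.9}, which is precisely what you do. Your additional checks (that $D \leftrightarrow \phi(\xi)$ also has no isolated points, and that the two uses of ``Zariski-many'' are compatible) are welcome clarifications that the paper leaves implicit.
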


\begin{proposition}
    \label{prop:8.1}
    If $\xi \in \HH(k)$ and $\cI \leftrightarrow \xi$ is a $\CM$-ideal, then
    $\phi(\xi)=\xi$ for all normed automorphisms $\phi$ of $\HH$.
\end{proposition}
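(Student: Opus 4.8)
The statement says that a normed automorphism $\phi$ fixes every point $\xi\in\HH(k)$ whose ideal $\cI$ is a $\CM$-ideal. The plan is to exploit that a $\CM$-ideal has \emph{no} punctual part, so that $\cI$ equals its own $\CM$-part, and then to combine the two pieces of information already assembled in this chapter: first, that $\phi$ preserves $\CM$-parts (Lemma~\ref{lem:8.4}, applicable since a $\CM$-ideal has no isolated points); second, that $\phi$ preserves the Hilbert--Chow image (Theorem~\ref{thm:7.1}). Concretely, write $\phi(\xi)\leftrightarrow\cJ$ and decompose $\cJ=\cN\cap\cR$ into its $\CM$-part $\cN$ and punctual part $\cR$. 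By Lemma~\ref{lem:8.4} the $\CM$-part of $\cJ$ equals the $\CM$-part of $\cI$, which is $\cI$ itself; hence $\cN=\cI$ and $\cJ=\cI\cap\cR\subset\cI$.

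First I would argue that $\cJ$ and $\cI$ have the same Hilbert polynomial $P$ (both correspond to points of $\HH$), so the inclusion $\cJ\subset\cI$ forces $\cI/\cJ$ to have Hilbert polynomial $0$, i.e.~$\cI/\cJ$ is a module of finite length $e\ge0$ supported at the finitely many points of $V(\cR)$ not already on $V(\cI)$. Equivalently, $\langle\cJ\rangle$ (the cycle associated via the Hilbert--Chow construction, i.e.~$\CM$-part plus the $0$-cycle coming from the embedded/isolated part) has underlying curve $V(\cI)$ together with the punctual contribution of $\cR$. Then I would invoke Theorem~\ref{thm:7.1}: $h(\phi(\xi))=h(\xi)$. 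Since $h(\xi)=\langle V(\cI)\rangle$ is just the cycle of the pure curve $V(\cI)$ with its multiplicities (no punctual data enters the Hilbert--Chow image in dimension $1$), and $h(\phi(\xi))$ must be the same cycle, the multiplicities of the $1$-dimensional components of $\cJ$ agree with those of $\cI$. This pins down that $\cR$ contributes no length: the Hilbert polynomial count $P(n)=\HP(\cO_X/\cI)=\HP(\cO_X/\cN)=\HP(\cO_X/\cJ)+($length of $\cR$-part$)$ already shows $e=0$, so $\cR=\cO_X$ and $\cJ=\cI$.

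The cleaner way to run the last step, avoiding any subtlety about whether $\cR$ could introduce embedded points lying on $V(\cI)$: from $\cN=\CM(\cJ)=\CM(\cI)=\cI$ we get $\cJ\subset\cI$, and the exact sequence
\[
0\longrightarrow \cI/\cJ\longrightarrow \cO_X/\cJ\longrightarrow \cO_X/\cI\longrightarrow 0
\]
gives $\HP(\cI/\cJ)=P(n)-P(n)=0$, so $\cI/\cJ$ is a finite-length sheaf. But $\cJ=\cI\cap\cR$ with $\cR$ the punctual part of $\cJ$, and $\cI$ is its own $\CM$-part, so $\cI$ has depth $\ge 2$ along every closed point; hence $\Hom(k(P),\cO_X/\cJ)$ captures exactly the associated primes of $\cR$, and $\cI/\cJ\cong\cI/(\cI\cap\cR)\hookrightarrow \cO_X/\cR$ has the same length as $\cR$ contributes. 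Length $0$ forces $\cR=\cO_X$, whence $\cJ=\cI$, i.e.~$\phi(\xi)=\xi$.

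\textbf{Main obstacle.} The genuinely substantive input is Lemma~\ref{lem:8.4} (equality of $\CM$-parts under $\phi$), which rests on the whole machinery of Sections~\ref{sec:8.2}--\ref{sec:8.6} together with Theorem~\ref{thm:7.1}; granted those, the present proof is essentially a bookkeeping argument on Hilbert polynomials. The only place to be careful is the last paragraph: one must make sure that ``the $\CM$-part of $\cJ$ equals $\cI$ and $\cJ$ has the same Hilbert polynomial as $\cI$'' really does force the punctual part of $\cJ$ to be trivial, rather than merely empty as a set — this is exactly the finite-length computation above, and it is clean because a $\CM$-ideal has no length to spare. I expect no hidden difficulty beyond correctly citing Lemma~\ref{lem:8.4} and Theorem~\ref{thm:7.1}.
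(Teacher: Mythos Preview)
Your proof is correct and follows the paper's argument: Lemma~\ref{lem:8.4} gives $\CM(\cJ)=\cI$, hence $\cJ=\cI\cap\cR\subset\cI$, and equality of Hilbert polynomials forces $\cJ=\cI$. The appeal to Theorem~\ref{thm:7.1} and the depth discussion are superfluous---once you have $\cJ\subset\cI$ with the same Hilbert polynomial, equality is immediate---but the core is identical to the paper's (two-line) proof.
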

\begin{proof}
  From Lemma~\ref{lem:8.4} it follows that $\phi(\xi)\leftrightarrow \cJ =
  \cI \cap \cR$, where $\cR$ is the punctual part of $\cJ$. As the Hilbert
  polynomials of $\cI$ and $\cJ$ are equal, it follows that $\cI = \cJ$.
\end{proof}
\begin{remark*}
In Corollary~\ref{cor:8.1} there is a more general formulation of
Proposition~\ref{prop:8.1}.
\end{remark*}
\section{Eliminating the assumption \An}
 \label{sec:8.7}

\subsection{First step}
\label{sec:8.7.1}
 The isolated points are simple points. \\
This means we can write $\xi \leftrightarrow \cI = \cK_0 \cap \cR$ where the
curve $C_0$ defined by $\cK_0 \subset \cO_X$ has no isolated points and the
subscheme of $X$ defined by $\cR$ consists of $s$ simple points $P_i$ not on
$C_0$. Choose a fixed point $P_0\in C$ and put $P_i(\lambda) = P_0 +
\lambda(P_i - P_0)$. If $P_0$ is general enough, then $\cI(\lambda):=\cK_0
\bigcap^s_1 P_i(\lambda)$ defines a curve $\cC_\lambda \subset X$ with
Hilbert polynomial $P(n)$ for all $\lambda \in U$, where $U \subset
T:=\P^1_k$ is open and non-empty. Then $\lambda \mapsto \cI(\lambda)$
defines a map $U \to \HH$, which uniquely extends to a map $\alpha:T \to
\HH$, which is injective on $U$. Denote $\xi(\lambda)= \alpha(\lambda)$ and
$\cC = \Set{ \xi(\lambda)| \lambda \in T}$. Now $\cO_X/\cI(\lambda) = \cO_X
/\cK_0 \bigoplus^s_1 \cO_X/P_i(\lambda)$ for all $\lambda \in U$ and
$L_i=\Set{\cO_X/P_i(\lambda)| \lambda \in T }$ is a line in $X$. It follows
that 
\begin{equation}
  \label{eq:8.20}
  (\cM_n \cdot \cC) = s\cdot n\;.
\end{equation}
Let be $\beta = \phi \circ \alpha$, $\phi\xi(\lambda) \leftrightarrow
\cJ(\lambda)$ and $\phi(\cC) = \cD = \Set{\cD_\lambda}$, where
$\cD_\lambda\subset X$ is defined by $\cJ(\lambda)$. Let $\psi$ be the
automorphism of the universal curve $\CC$, which is induced by $\phi$. As $\psi$ induces an isomorphism  
$|\cC_\lambda| \simeq |\cD_\lambda|$, if $\lambda\in U$ then one has 
\[
  \cJ (\lambda) = \cK (\lambda) \bigcap^s_1 \cP_i(\lambda)\,,
\]
where the $\cP_i(\lambda)$ are $s$ distinct points not on $V(\cK
(\lambda))$. Here $\cK \subset \cO_{X\times U}$ is an ideal such that $\cL
:= \cO_{X\times U}/\cK$ is flat over $U$, hence has a unique extension all
over $T$, which we denote by the same letter. $\cP_i \subset \cO_{X\times
  U}$ is an ideal such that $\cL_i = \cO_{X\times U}/\cP_i$ is flat over
$U$, hence has a unique extension all over $T$, which we again denote by the
same letter.

Now $h(\xi) = h(\xi(\lambda)) = h(\phi(\xi(\lambda)))$(cf.~Theorem~\ref{thm:7.1}), hence
$V(\cK(\lambda)) = V(\cK_0) = C_0$ for all $\lambda \in T$.
If one puts $\cC^* = \Set{ (\xi(\lambda), P_1(\lambda)) | \lambda \in T}
\subset \CC$, then $[\cC^*]$ has the component $1\cdot L^*$, hence the same
is true for $\psi(\cC^*)= \Set{ (\phi\xi(\lambda), \cP_1(\lambda)) | \lambda
  \in T}$. The usual argumentation shows that $\cP_1(\lambda)$ moves on a
line in $X$. If $\cF$ is the structure sheaf of $\cD$, one has $\cF
\otimes \cO_U = (\cL \bigoplus^s_1 \cL_i) \bigotimes \cO_U$. It follows that
$\cL$ has the Hilbert polynomial $P(n)-s$ and $\cL_i$ has the Hilbert
polynomial $1$, hence $\cP_i(\lambda)$ is a simple point moving on a line,
which we denote by $\cL_i$, too.

Let be $p:X \times T \to T$ the projection. Then 
\[
  p_* \cF(n) \otimes \cO_U = p_* \cL(n) \otimes \cO_U \bigoplus_i p_*\cL_i(n) \otimes \cO_U\,,
\]
hence
\begin{equation}
  \label{eq:8.21}
 \dot\bigwedge p_* \cF(n) \otimes \cO_U = \dot\bigwedge p_*\cL(n) \otimes \cO_U \bigotimes_i p_*\cL_i(n) \otimes \cO_U\,.
\end{equation}
As $p_*\cF(n)$, $p_*\cL(n)$, $p_*\cL_i(n)$ are globally generated by the
monomials in $S_n$, if $n \gg 0$, the extensions of the single factors in
eq.~\eqref{eq:8.21} are uniquely determined line bundles, and
eq.~\eqref{eq:8.21}  holds true, if $U$ is replaced by $T$. It follows that 
\begin{align*}
 &   (\cM_n \cdot \cD) = \deg(\beta) (\cM_n \cdot \cD) = (\beta^*\cM_n \cdot
  T) \\
  = {} & \bigl(\dot\bigwedge p_* \cF(n)\cdot T\bigr) = 
\bigl(\dot\bigwedge p_* \cL(n)\cdot T\bigr) + \sum (p_* \cL_i(n)\cdot T)\;.
\end{align*}
$\cL_i$ is a line in $X$, hence $(p_* \cL_i(n)\cdot T) \geq n$. As
$\cC \sim \cD$, one has $(\cM_n \cdot \cC) =(\cM_n \cdot \cD)$. And then
from eq.~\eqref{eq:8.20} one deduces that $(p_* \cL_i(n)\cdot T)=n$ and
$\bigl(\dot\bigwedge p_* \cL(n)\cdot T\bigr) = 0$. It follows that $\cK
\subset \cO_X$ is a fixed ideal such that $\cO_X / \cK$ has the Hilbert
polynomial $P(n)-s$. If $\cJ(\lambda) \subset \cO_X$ is the ideal, which
defines $\cD_\lambda$, then $\cJ(\lambda) \subset \cK$ for all $\lambda \in
T$. If $\cM := \CM(\cK)$ is the $\CM$-part of $\cK$, then $\cJ(\lambda)
\subset \cM$, hence $\CM(\cJ(\lambda)) = \cM$ for all $\lambda \in T$, hence
$\cJ_0 = \lim_{\lambda \to 0} \cJ(\lambda)$ also has the $\CM$-part $\cM$.

Put $\cN := \CM(\cI) = \CM(\cK_0)$. As $\cI (\lambda) \subset \cN$ for all
$\lambda \in U$, one has $\cI_0:=\lim_{\lambda \to 0} \cI(\lambda) \subset
\cN$ and hence $\CM(\cI_0)= \cN$. Now $\xi_0 = \lim_{\lambda \to 0}
\xi(\lambda) \leftrightarrow \cI_0$ corresponds to a curve without isolated
points, and as $\phi\xi(\lambda) \to \phi(\xi_0) \leftrightarrow \cJ_0$, by
Lemma~\ref{lem:8.4} it follows that $\CM(\cI_0) = \CM(\cJ_0)$. 

\begin{conclusion}
  \label{conc:8.10}
 Assume that $\xi \in \HH(k)$ corresponds to an ideal $\cI$ such that the 
isolated points of $V(\cI)$ are simple. Then $\phi(\xi)$ corresponds to an
ideal $\cJ$ such that $V(\cJ)$ has the same number of isolated points and 
$\CM(\cI) = \CM(\cJ)$.  \hfill $\qed$
\end{conclusion}

\subsection{Second step}
\label{sec:8.7.2}

Let $\cN \subset \cO_X$ be a $\CM$-ideal. Suppose that for the Hilbert
polynomial $p(n)$ of $\cO_X / \cN$ one has $P(n)-p(n)= s > 0$ is a fixed
number. Suppose that $t$ is not a zero-divisor of $\cO_X / \cN$. Let $\G_m$
operate by $\sigma(\lambda): x\mapsto x,\; y\mapsto y, \; z\mapsto z, \;
t\mapsto \lambda t$ and put $\cN' = \cN + t\cO_X(-1) / t\cO_X(-1)$. Then
$\cN_0:= \lim\limits_{\lambda \to 0} \sigma(\lambda) \cN = (\cN')^* \cap
\cR_0$ and $\cR_0$ is primary to $\cP_0=(x,y,z)$ (see Lemma~\ref{lem:G.3}).

Let $P_1,\dots,P_s \in X- \bigl[\bigcup_{\lambda \in k^*} V(\sigma(\lambda)\cN)
\cup V(\cN_0) \cup V(t)\bigr]$ be distinct closed points and put $\cR = \bigcap
^s_1 P_i$ (we identify the points with the corresponding ideals, as usual).

Put $\cI = \cN \cap \cR$ and $\cI(\lambda)=(\sigma(\lambda)\cN) \cap \cR
\leftrightarrow \xi(\lambda)$. This is a closed point of $\HH$ for all
$\lambda \in k^*$.  Then from Conclusion~\ref{conc:8.10} one gets
$\phi\xi(\lambda) \leftrightarrow \sigma(\lambda)\cN \cap \cS_\lambda$ for
all $\lambda$, where $\cS_\lambda \subset \cO_X$ has colength $s$ and
$V(\cS_\lambda)$ consists of $s$ distinct points not on
$V(\sigma(\lambda)\cN)$. Let $P\in V(\cR)$ be a fixed point, let $\cC^* =
\Set{(\xi(\lambda),P)}^-$. Then $\psi(\cC^*)
=\Set{(\phi\xi(\lambda),\phi_{\xi(\lambda)}(P))}^-$ and $[\psi(\cC^*)]$ has
no $L^*$-component, as $[\cC^*]$ has no $L^*$-component. Now $\psi$ induces
an isomorphism $|V(\sigma(\lambda)\cN \cap \cR)| \simeq
|V(\sigma(\lambda)\cN \cap \cS_\lambda)|$ and hence $\phi_{\xi(\lambda)}(P)
\in V(\cS_\lambda)$ is independent of $\lambda$. As $\phi_{\xi(\lambda)}$
induces an isomorphism, $\phi_{\xi(\lambda)}(P_i) =
\phi_{\xi(\lambda)}(P_j)$ implies $P_i = P_j$. It follows that $\cS_\lambda
= \cS$ for all $\lambda$, where $\cS = P'_1 \cap\dots\cap P'_s$ and the
$P'_i$ are distinct and not in $V(\sigma(\lambda)\cN)$. Suppose there is a
number $1 \leq r \leq s$ such that $P'_i \in V(\cN_0)$, if $1 \leq i \leq r$
and $P'_i \centernot\in V(\cN_0)$, if $r<i$.

Put $\cS_1 = \bigcap^r_1 P'_i$, $\cS_2 = \bigcap^s_{r+1} P'_i$, and take $p \in I:=\prod^r_1 P'_i$.  It follows that
\begin{align*}
   & p\cdot \sigma(\lambda) \cN \subset (\sigma(\lambda) \cN) \cap \cS_1
   \subset \sigma(\lambda) \cN \\
\Rightarrow  {} & \lim_{\lambda \to 0} p\cdot \sigma(\lambda) \cN 
 \subset \lim_{\lambda \to 0}[(\sigma(\lambda) \cN) \cap \cS_1]
\subset  \lim_{\lambda \to 0} \sigma(\lambda) \cN \\
\Rightarrow  {} & p\cdot \cN_0 \subset  \cL := \lim_{\lambda \to 0} 
[(\sigma(\lambda) \cN) \cap \cS_1] \subset \cN_0 \\
\Rightarrow  {} & V(p\cdot \cN_0) \supset  V(\cL) \supset V(\cN_0) \quad
\text{ for all }  p \in I \\
\Rightarrow  {} & V(I\cdot \cN_0) \supset  V(\cL) \supset V(\cN_0)\,.
\end{align*}
As $V(I\cdot \cN_0) =  V(I) \cup V(\cN_0) = V(\cN_0)$,
one has $V(\cL) = V(\cN_0)$. Now
\[
  \phi(\xi(\lambda)) \leftrightarrow \cJ (\lambda) = \sigma(\lambda) \cN
  \cap \cS_1 \cap \cS_2
\]
and $V(\sigma(\lambda) \cN \cap \cS_1) \cap V(\cS_2)= \emptyset$ for 
all $\lambda$. Clearly one has 
\[
  \cI_0 \leftrightarrow \xi_0 := \lim_{\lambda \to 0} \xi(\lambda)
  \leftrightarrow \cN_0 \cap \cR\,,
\]
hence 
$\phi(\xi_0) = \lim_{\lambda \to 0} \phi(\xi(\lambda)) \leftrightarrow \cJ_0
 := \lim_{\lambda \to 0} \cJ(\lambda) = \cL \cap \cS_2$,
as $V(\cL) \cap V(\cS_2) = \emptyset$.

On the other hand, $\xi_0
\leftrightarrow \cN_0 \cap \cR = (\cN')^* \cap \cR_0 \cap \cR$, and
from Lemma~\ref{lem:6.3} and Lemma~\ref{lem:6.4}  it follows that that
$\phi(\xi_0) \equiv \xi_0$. But this implies that $|V(\cI_0)| = |V(\cN_0)|
\dcup |V(\cR)|$ is equal to 
 $|V(\cJ_0)| = |V(\cL)| \dcup |V(\cS_2)| =  |V(\cN_0)| \dcup |V(\cS_2)|$
from which $s=s-r$ follows, contradiction.

It follows that $V(\cN_0) \cap V(\cS) = \emptyset$, hence $\cJ_0 =
\lim_{\lambda \to 0}(\sigma(\lambda) \cN \cap \cS) = \cN_0 \cap \cS$.  Now
from $|V(\cI_0)| = | V(\cJ_0)|$ follows $\cS = \cR$, hence $\phi\xi(\lambda)
\leftrightarrow \sigma(\lambda)\cN \cap \cR$ for all $\lambda \in k^*$.

\begin{conclusion}
  \label{conc:8.11}
 Assume $\xi\in \HH(k)$ corresponds to an ideal $\cI = \cN \cap P_1
 \cap\dots\cap P_s$, where $\cN$  is a Cohen-Macaulay ideal, $t$ is not a
 zero-divisor of $\cO_X/\cN$ and $P_i \in X$ are distinct closed points not
 in $\bigl[\bigcup_{\lambda \in k^*} V(\sigma(\lambda)\cN)
\cup V(\cN_0) \cup V(t)\bigr]$,
$\cN_0:=\lim\limits_{\lambda\to 0}\sigma(\lambda)\cN$. Then for each normed
automorphism $\phi$ one has $\phi(\xi)=\xi$.  \hfill $\qed$
\end{conclusion}

\section{The result}
  \label{sec:8.8}

\begin{theorem}
  \label{thm:8.1}
 Let $ k =\C $ be the ground field, $\HH = \Hilb^P(\P_k^3)$, $P(n) = dn -g +1$, $d\geq 6$ and $g \leq
 g(d)$. Let $f$ be the morphism $\HH \to \P$ defined by
 $\cM^{-1}_{n-1}\otimes \cM_n$ for any $n\geq d$. If $\xi \in \HH(k)$, then
 for every normed automorphism $\phi$ of $\HH$ one has $f(\phi(\xi)) =
 f(\xi)$. 
\end{theorem}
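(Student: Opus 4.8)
The goal is to show that $f(\phi(\xi)) = f(\xi)$ for \emph{every} $\xi \in \HH(k)$, i.e.\ to remove, step by step, all the special hypotheses imposed in Chapter~\ref{cha:8}. The strategy is a descending chain of reductions, each one matching a conclusion already established in Sections~\ref{sec:8.2}--\ref{sec:8.7}, namely: (i) the case of a $\CM$-ideal (Proposition~\ref{prop:8.1}); (ii) a $\CM$-ideal plus finitely many simple isolated points in sufficiently general position (Conclusion~\ref{conc:8.11}); (iii) a $\CM$-ideal plus arbitrary simple isolated points (degenerating the general points in (ii) to the given ones); (iv) the general case. The essential input that lets us move from ``$\phi$ fixes the point'' to ``$f\circ\phi = f$'' is Proposition~\ref{prop:3.1}: $f(\xi_1) = f(\xi_2)$ iff $\xi_1,\xi_2$ have the same $\CM$-part and the same $0$-cycle $\langle\cJ/\cI\rangle$. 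So the real content is to control the $\CM$-part of $\phi(\xi)$ (which Lemma~\ref{lem:8.4} already does, via the restriction morphism of Section~\ref{sec:8.6}) and the multiplicities of the punctual part.

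\textbf{Step 1: reduce to $\xi$ adapted.} As explained in Section~\ref{sec:8.1}, for an arbitrary $\eta \in \HH(k)$ one picks $g \in G = \GL(4;k)$ so that $\xi := g(\eta)$ is adapted (i.e.\ $\xi \in U(t)$, $P_0 \notin C_\xi$, $h(\xi)$ has trivial $\G_m$-isotropy). Since $g^{-1}Ng = N$ and $f$ is tautological, Lemma~\ref{lem:6.1}(i) gives $f(\phi(\eta)) = f(\eta) \iff f(g\phi g^{-1}(g\eta)) = f(g\eta)$, so it suffices to prove the statement for adapted points and all $\phi \in N$. I would then invoke Lemma~\ref{lem:8.4}: for any $\xi$ whose associated curve has no isolated points, $\CM(\cI) = \CM(\cJ)$ where $\cI \leftrightarrow \xi$, $\cJ \leftrightarrow \phi(\xi)$; combined with equality of Hilbert polynomials this forces $\cI = \cJ$ when $\cI$ is itself $\CM$, which is Proposition~\ref{prop:8.1}. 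This settles Assumption $A(0)$ together with the additional requirement of no isolated points.

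\textbf{Step 2: add isolated points.} Next I would treat the situation where $\cI = \cN \cap P_1 \cap \dots \cap P_s$ with $\cN$ a $\CM$-ideal, $t$ a non-zero-divisor on $\cO_X/\cN$, and the $P_i$ distinct closed points avoiding $\bigcup_{\lambda}V(\sigma(\lambda)\cN) \cup V(\cN_0) \cup V(t)$: this is precisely Conclusion~\ref{conc:8.11}, proved by the $\sigma(\lambda)$-degeneration argument of Section~\ref{sec:8.7.2} (the key point being that $[\cC^*]$ has no $L^*$-component, so neither does $\psi(\cC^*)$, pinning down $\phi_{\xi(\lambda)}(P_i)$). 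To pass from here to arbitrary simple isolated points, I would degenerate: given $\cI = \cK_0 \cap P_1 \cap \dots \cap P_s$ with the $P_i$ simple but in arbitrary position, choose auxiliary general points $R_i$ and set $P_i(\lambda) = P_0 + \lambda(R_i - P_0)$ through a fixed $P_0 \in C_{\xi}$, as in Section~\ref{sec:8.7.1}. For almost all $\lambda$ the ideal $\cK_0 \cap \bigcap P_i(\lambda)$ defines a point $\xi(\lambda)$ of the previous type, and Conclusion~\ref{conc:8.10} together with the line-bundle extension argument over $T = \P^1$ (equations \eqref{eq:8.20}, \eqref{eq:8.21}) shows that $\CM(\cJ(\lambda))$ is constant and $\phi(\xi(\lambda)) \leftrightarrow \sigma$-family of punctual parts moving on lines; taking $\lambda \to 0$ and using continuity of $\phi$ and $f$, plus Lemma~\ref{lem:8.4} applied to $\xi_0$ (which has no isolated points), yields $f(\phi(\xi)) = f(\xi)$.

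\textbf{Step 3: the general case and the main obstacle.} For a completely general $\xi \leftrightarrow \cI = \cN \cap \cR$ with $\cR$ an arbitrary punctual part (possibly embedded or higher-multiplicity), I would again degenerate the punctual part to a union of simple points: write $\cN/\cN\cap Q_i$ of length $\mu_i$, pick simple points $P_i^{\,j}$ in the open set supplied by Step~2, and form $P_i^{\,j}(\lambda) = P_i + \lambda(P_i^{\,j} - P_i)$, so that $\xi(\lambda) \leftrightarrow \cN \cap \bigcap P_i^{\,j}(\lambda)$ has only simple isolated points for almost all $\lambda$. By Step~2, $f(\phi(\xi(\lambda))) = f(\xi(\lambda))$ for such $\lambda$; the specialization $\lambda \to 0$ sends $\xi(\lambda)$ to a point $\xi_0 \leftrightarrow \cN \cap R_1 \cap \dots$ with $R_i$ being $P_i$-primary of length $\mu_i$, so by Proposition~\ref{prop:3.1} $f(\xi) = f(\xi_0)$; similarly $f(\phi(\xi)) = f(\phi(\xi_0))$ by Lemma~\ref{lem:6.1}(ii) applied after noting $\phi\xi(\lambda) \to \phi(\xi_0)$; and continuity of $f$ and $\phi$ gives $f(\phi(\xi_0)) = f(\xi_0)$, hence $f(\phi(\xi)) = f(\xi)$. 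The main obstacle in all of this is not any single step but making the degeneration in Step~3 legitimate: one must check that the chosen general points can simultaneously avoid all the finitely many ``bad'' loci (associated primes of all the $\sigma(\lambda)\cN$, the limit ideal $\cN_0$, the hyperplane $V(t)$) and still specialize correctly, and that the Hilbert polynomial stays equal to $P(n)$ along the family so that $\xi(\lambda) \in \HH(k)$ — this is where Assumption $A(0)$ was originally needed and where one has to be most careful. Once these genericity bookkeeping points are verified, the theorem follows by assembling the three steps.
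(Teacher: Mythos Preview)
Your approach is correct and follows the same logical path as the paper. The difference is purely organizational: your Step~3 degeneration argument is already packaged as Lemma~\ref{lem:6.4} (proved back in Chapter~\ref{cha:6}), and Conclusion~\ref{conc:8.11} supplies exactly the open set $U \subset X - V(\cN)$ that Lemma~\ref{lem:6.4} demands as its hypothesis. So the paper's proof is two lines: ($1^\circ$) for $\xi \in U(t)$ with $\cI = \cN \cap \cR$, Conclusion~\ref{conc:8.11} shows that $\cN$ satisfies the hypothesis of Lemma~\ref{lem:6.4}, hence $\phi(\xi) \equiv \xi$; ($2^\circ$) a general $\xi$ reduces to $U(t)$ via $g \in \GL(4;k)$ and $g^{-1}Ng = N$. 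What you flag as the ``main obstacle'' (the genericity bookkeeping in the degeneration) is precisely what Lemma~\ref{lem:6.4} already handles, and your intermediate Step~2 passage from generic to arbitrary \emph{simple} isolated points is unnecessary, since Lemma~\ref{lem:6.4} goes directly from an open set of simple points to an arbitrary punctual part. Note also that the final proof only needs $\xi \in U(t)$, not the full ``adapted'' condition---that was used upstream to establish Conclusion~\ref{conc:8.11}, not in the assembly.
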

\begin{proof}
  $1^\circ$ Suppose $\xi\in U(t)$. If $\xi \leftrightarrow \cI = \cN \cap
  \cR$, $\cN$ is the $\CM$-part, $\cR$ the punctual part. Then from
  Conclusion~\ref{conc:8.11} it follows that $\cN$ fulfills the assumption
  of Lemma~\ref{lem:6.4}, hence $\phi(\xi)\equiv\xi$. \\
  $2^\circ$ Suppose $\xi\in \HH(k)$ arbitrary. Take a $g \in \GL(4,k)$ such
  that $g(\xi) \in U(t)$, hence $\phi g(\xi) \equiv g(\xi)$ for all $\phi\in
  N$. By Lemma~\ref{lem:6.1}  one has $g^{-1}\phi g (\xi) \equiv \xi$ for
  all $\phi \in N$, hence $\phi(\xi)\equiv \xi$ for all $\phi \in N$.
\end{proof}

\begin{remark*}
  (Notations and assumptions as before.) Theorem~\ref{thm:8.1} says that
  each normed automorphism $\phi$ leaves invariant the reduced fibers of
  each tautological morphism. Then from Proposition~\ref{prop:3.1} and
  Corollary~\ref{cor:3.2} one obtains the following formulation of
  Theorem~\ref{thm:8.1}:
\noindent
{\itshape 
 Let $\xi \in \HH(k)$ correspond to the ideal $\cI = \cJ \cap \cR$, where
 $\cJ$ is the Cohen-Macaulay part and $\cR = \bigcap Q_i$ is the punctual part
 such that the $Q_i$ are primary to ideals $P_i$, which correspond to
 different closed points of $\P^3_k$. Then $\phi(\xi)$ corresponds to the
 ideal $\cJ \cap \cR'$, where $\cR' = \bigcap Q'_i$, the $Q'_i$ are
 $P_i$-primary and $\length(\cJ/\cJ \cap Q_i) = \length(\cJ/\cJ \cap Q'_i)$
 for all $i$.}
\end{remark*}
\begin{corollary}
  \label{cor:8.1}
 Assume as before $k=\C$ is the ground field, $d\geq 6$ and $g\leq (d-2)^2/4$. Let $\HH $ be the Hilbert scheme, 
which parametrizes the curves with degree $d$ and genus $g$ in the projective space $\P^3_k$ . Let $\HHCM$,
 respectively $\HHcm$, be the open, non-empty subscheme of $\HH$, whose closed points correspond
to curves without embedded or isolated points, respectively to curves without embedded points.
Then the restriction of a k- automorphism of $\HH$ to $\HHCM$, respectively to $\HHcm$, is induced by
a linear transformation of $\P_k^3$, which is uniquely determined by the automorphism.
\end{corollary}
\begin{proof}                                           
With the same notations as before let $\xi\in\HH(k)$ correspond to the ideal $\cI\subset\cO_{\P^3}$, which does \emph{not}
define a pure curve. Then there is an ideal $\cJ\subset\cO_{\P^3}$ such
that $\cI\subset\cJ$ has finite colength, hence there is such an ideal with
Hilbert polynomial $q(T) = Q(T) + 1$. Let $\FF$ be the Flag-Hilbert scheme
as in the proof of Conclusion 2.7, now with $ q(T)$ instead of $Q^*(T)$.
One defines $\HHCM $ as the complement of the image of the projection $\pi$ from $\FF$
 to $\HH$. Then Theorem 8.1 implies that $\HHCM(k)$ is pointwise invariant under each
normed automorphism of $\HH$, and hence the same is true for $\HHCM$. As for the non-emptiness 
of $\HHCM$, this follows from a theorem of Hartshorne~\cite{H2}. 
The condition c) on p.\,3 of this paper is fulfilled for $d \geq 6$, and thus $\HHCM $ is not empty. 
 As to $\HHcm$, let $ U_0 $ be the set of $\xi\in\HH $ such that $\dim(f^{-1}f(\xi))= 0 $. 
This is an open and nonempty subset of $\HH$  (see R.\,Vakil, FOAG, thm.11.4.2 ), and from Aux- Lemma 3.3 it follows that $U_0(k)$
 is the set of points $\xi\in\HH(k)$ such that the fiber $f^{-1}(f(\xi))$ consist only of the point $\xi$. Theorem 8.1
 says that each $\phi\in N$ leaves the fibers of $f$ fixed, hence $U_0(k)$ is pointwise fixed by $N$, and the same is true for $U_0$.
If I understand correctly, the answer of J.\,Starr to the mathoverflow question: "`Being Cohen-Macaulay open in Hilbert scheme?"'
Aug. 2, 2016 shows that $\HHcm$ is open in $\HH$, too.
\end{proof}

\begin{remark*}
Probably $U_0$ is equal to $\HHcm $, but I can not prove this. 
\end{remark*}
\appendix

\chapter{Linear projections and $\G_m$-actions}
\label{cha:A}

\section{Description of the linear projection}
\label{sec:A.1}

Let $k$ be an algebraically closed field, $S = k[x,y,z,t]$ and $X= \Proj(S)=
\P^3$. Each maximal graded prime ideal $\cP \neq S_+$ of $S$ corresponds to
a point $P \in \P^3$, which is denoted by $\cP \leftrightarrow P$. If $\cP =
(\ell_1, \ell_2, \ell_3)$, $\ell_i \in S_1$, $1 \leq i \leq 3$, linearly
independent, then $\cP \leftrightarrow P = (p_0: \cdots : p_3)$ is the point in
$X(k)$ such that $\ell_i(P)=0$, $1 \leq i \leq 3$. We often identify $\cP$
and $P$, i.e.~we also denote with $\cP$ (respectively with $P$) the
corresponding point (respectively the corresponding prime ideal).

Let $\pi = (P,E)$ be the linear projection from the point $\cP
\leftrightarrow P$ onto the plane $E = V(\ell)$, where $\ell \in S_1$  is a
linear form. We want to describe, how $\pi$ can be defined by a $\G_m$-action
on $X$.

If $g \in G:= \Aut_k(S_1)$, then $g$ acts as an automorphism of $S$ on $X$
and the action on $X(k)$ is defined by $g(p_0: \cdots : p_3) = (p_0: \cdots
: p_3)M(g^{-1})$, where $M(-)$ denotes the corresponding matrix with respect
to the $k$-basis $\{x,y,z,t\}$ of $S_1$.

As $P \centernot\in E$, one has $\langle \ell_1,\ell_2,\ell_3,\ell \rangle =
S_1$ and we take any $g\in G$ such that $g(\cP) = \cP_0 := (x,y,z)
\leftrightarrow (0:0:0:1)=:P_0$ and get $g(\ell) = t$. If
$\sigma(\lambda):x\mapsto x$, $y\mapsto y$, $z\mapsto z$, $t \mapsto \lambda
t$, $\lambda\in k^*$, is the ``usual'' $\G_m$-action on $S$, and if $g$ is
the linear transformation just mentioned, we put $\tau(\lambda):= g^{-1}\circ
\sigma(\lambda) \circ g$. Then one has the following simple

\begin{lemma}
  \label{lem:A.1}
  If $Q \in \P^3(k) - \{ P \}$, then $\pi(Q) = \lim\limits_{\lambda \to \infty}
  \tau(\lambda) (Q)$\,.
\end{lemma}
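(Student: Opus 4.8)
The statement is elementary linear algebra once the coordinates are set up, so the plan is to reduce everything to the ``usual'' $\G_m$-action by the transformation $g$ and then compute the limit directly. First I would fix $g\in G=\Aut_k(S_1)$ with $g(\cP)=\cP_0=(x,y,z)$ and hence $g(\ell)=t$, as in the paragraph preceding the statement; recall $\tau(\lambda)=g^{-1}\circ\sigma(\lambda)\circ g$. Since both sides of the asserted equality are transported by $g$ — namely $g(\pi(Q))$ is the projection from $g(P)=P_0$ onto $g(E)=V(t)$ applied to $g(Q)$, and $g\bigl(\lim_{\lambda\to\infty}\tau(\lambda)(Q)\bigr)=\lim_{\lambda\to\infty}\sigma(\lambda)(g(Q))$ because $g$ is a homeomorphism — it suffices to prove the lemma in the special case $P=P_0$, $E=V(t)$, $\pi$ the projection from $P_0$ onto $V(t)$, and $\sigma$ the usual action.

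In this normalized situation I would argue with affine coordinates. Write $Q=(q_0:q_1:q_2:q_3)$ with $Q\neq P_0$, so $(q_0,q_1,q_2)\neq(0,0,0)$. By the description of the $G$-action on points, $\sigma(\lambda)(Q)=(q_0:q_1:q_2:q_3)M(\sigma(\lambda)^{-1})=(q_0:q_1:q_2:\lambda^{-1}q_3)$. As $\lambda\to\infty$ the last coordinate $\lambda^{-1}q_3\to 0$, and since the first three coordinates are not all zero the point $(q_0:q_1:q_2:\lambda^{-1}q_3)$ is well-defined throughout and converges in $\P^3(k)$ to $(q_0:q_1:q_2:0)$. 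On the other hand the projection from $P_0=(0:0:0:1)$ onto $V(t)$ sends $Q=(q_0:q_1:q_2:q_3)$ to the point where the line through $P_0$ and $Q$ meets $V(t)$; that line is $\{(q_0:q_1:q_2:s)\mid s\in k\}\cup\{P_0\}$ (using $Q\neq P_0$), and it meets $t=0$ precisely in $(q_0:q_1:q_2:0)$. Hence $\pi(Q)=(q_0:q_1:q_2:0)=\lim_{\lambda\to\infty}\sigma(\lambda)(Q)$, which is the normalized claim.

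Finally I would transport back: applying $g^{-1}$ to the normalized identity and using $g^{-1}\sigma(\lambda)g=\tau(\lambda)$, $g^{-1}(P_0)=P$, $g^{-1}(V(t))=E$ gives $\pi(Q)=\lim_{\lambda\to\infty}\tau(\lambda)(Q)$ for the original $P$, $E$, $Q$, as desired. There is no real obstacle here; the only point that needs a word of care is that the limit is literally a limit of a morphism $\G_m\to\P^3$ extended to $\P^1$ (equivalently, that the point $(q_0:q_1:q_2:\lambda^{-1}q_3)$ never degenerates for $\lambda\in k^*\cup\{\infty\}$), which is exactly the hypothesis $Q\neq P$ after normalization; I would state this explicitly so the computation of $\lim_{\lambda\to\infty}$ in $\P^3$ is justified.
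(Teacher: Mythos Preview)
Your proof is correct and follows essentially the same approach as the paper: reduce via $g$ to the normalized situation $(P_0,V(t),\sigma)$ and identify both the projection and the limit there. The paper carries this out slightly more geometrically, splitting into the cases $Q\in E$ and $Q\notin E$ and showing in the second case that the closure $\{\sigma(\lambda)g(Q)\}^-$ is the line through $P_0$ and $g(Q)$; your uniform coordinate computation $(q_0:q_1:q_2:\lambda^{-1}q_3)\to(q_0:q_1:q_2:0)$ handles both cases at once and is a bit more direct.
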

\begin{proof}
  If $\ell = (P,Q)$ is the line connecting $P$ and $Q$, then the intersection
  $R = \ell \cap E$ is equal to $\pi(Q)$, hence $g(\ell) = (g(P),g(Q))$ and
  $g(R) = g(\ell) \cap V(t)$. \\

\textsc{Case 1}.  $Q \in E$. Then $g(Q) \in V(t)$ and $g^{-1}
\sigma(\lambda) g(Q) = g^{-1}g(Q) = Q$ for all $\lambda \in k^*$. \\

\textsc{Case 2}.  $Q \centernot\in E$. Then $g(Q)\centernot\in V(t)$ and if
one assumes $g(Q)=P_0 = (0: 0: 0: 1) = g(P)$, then $Q = P$ follows,
contradiction. It follows that $g(Q)$ is not invariant under
$\sigma(\lambda)$, hence $h:=\Set{\sigma(\lambda)g(Q) | \lambda \in k^*}^-$
is a line in $X$, which connects $\sigma(1)g(Q) = g(Q)$ and $\lim_{\lambda\to
  0} \sigma(\lambda)g(Q)$. As $\sigma(\lambda)g(Q) =
g(Q)M(\sigma(\lambda^{-1})) \to P_0 = (0:0:0:1)$ if $\lambda \to 0$, $h$ is
the line through $g(Q)$ and $g(P)$, i.e.~$h=g(\ell)$. It follows that
$\Set{g^{-1}\sigma(\lambda)g(Q) | \lambda \in k^*}^- = \ell$ and $g(R) =
g(\ell) \cap g(E) = \Set{\sigma(\lambda) g(Q)}^- \cap V(t)$. If one assumes
$\sigma(\lambda) g(Q) \in V(t)$ with $\lambda \in k^*$, then $g(Q) \in V(t)$
and $Q \in E$ would follow. As we have already noted above
$\sigma(\lambda)g(Q) \to P_0 \centernot\in V(t)$ if $\lambda \to 0$, it
follows that $g(R) = \lim\limits_{\lambda\to \infty} \sigma(\lambda)g(Q)$,
which implies $R =\lim\limits_{\lambda \to \infty} \tau(\lambda)Q$.
\end{proof}

\section{Notations}
\label{sec:A.2}
 Let the curve $C \subset X$ be defined by the ideal $\cI \subset \cO_X$. We
 say that the linear form $\ell \in S_1$ is general for $C$ (or $\cI$, or
 $\cO_X/\cI$), if the sequence 
\begin{equation}
\label{eq:seq}
 0 \longrightarrow (\cO_X /\cI)(-1)  \stackrel{\mu}{\longrightarrow} \cO_X /\cI   \longrightarrow \cO_{X'} /\cI'  \longrightarrow 0
\end{equation}
is exact. Here $\mu$ is the multiplication by $\ell$, $S' = S/ \ell S(-1)$,
$X' = \Proj S' \simeq \P^2_k$, $\cI' = \cI  + \ell \cO_X(-1)/ \ell \cO_X(-1)$
is an ideal on $X'$.  An equivalent condition is that $\ell \centernot\in
\bigcup \cP_i$, where $\cP_i$ are the associated prime ideals of $\cI$
(i.e.~ associated prime ideals of $\cO_X/\cI$), which may have the dimension
$0$ or $1$. It follows that there are Zariski-many linear forms, which are
general for $C$. And the same is true, if one simultaneously considers
finitely many such curves. 

The sequence~\eqref{eq:seq} will occur quite often and we denote $\cI'$ the
restriction ideal with respect to the hyperplane section $V(\ell)$ (or with
respect to the canonical restriction morphism $r:S \to S'$ etc.).

\section{Varying the plane of projection}
\label{sec:A.3}

If we replace $V(t)$ by $V(t-\alpha \ell)$, where $\ell = ax +by +cz$ and
hold the point $P_0 = (0:0:0:1)$ fixed, according to~\ref{sec:A.1}, the
projection $\pi_\alpha$ from $P_0$ to $E_\alpha = V(t-\alpha \ell)$ is
defined by the $\G_m$-operation $\tau(\lambda)= u^{-1} \circ \sigma(\lambda)
\circ u$, where $u: x\mapsto x$, $y\mapsto y$, $z\mapsto z$, $t \mapsto
t+\alpha \ell$.  If $P = (p_0:p_1:p_2:p_3)$, a simple computation gives
$\tau(\lambda)P = (p_0:p_1:p_2:\gamma a p_0 + \gamma b p_1 + \gamma c p_2 +
\lambda^{-1}p_3)$, where $\gamma:= \alpha(1- \lambda^{-1})$, hence
$\pi_\alpha(P) = (p_0:p_1:p_2: \alpha (a p_0 + b p_1 + c p_2))$. If $a p_0 +
b p_1 + c p_2\neq 0$, i.e.~if $\pi_\alpha(P) \centernot\in V(t) \cap V(t +
\alpha \ell)$, then the points $\pi_\alpha(P)$ all lie on the line
connecting $(p_0:p_1:p_2:0 )$ and $(0:0:0:1)$.

\begin{lemma}
  \label{lem:A.2}
  If $t$ is general for the curve $C$, then $t + \alpha \ell$ is general for
  $C$ for almost all $\alpha \in k$, and the cylinders over $\pi_\alpha(C)$
  perpendicular to $V(t)$ are equal. \hfill $\qed$
\end{lemma}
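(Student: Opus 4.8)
The statement to prove is Lemma A.2: if $t$ is general for the curve $C$, then $t+\alpha\ell$ is general for $C$ for almost all $\alpha\in k$, and the cylinders over $\pi_\alpha(C)$ perpendicular to $V(t)$ are all equal.

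\medskip

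The plan is to treat the two assertions separately, since they are essentially independent. For the first assertion, I would recall from Section~\ref{sec:A.2} that a linear form $h\in S_1$ is general for $C$ precisely when $h\notin\bigcup_i \cP_i$, where the $\cP_i$ are the finitely many associated primes of $\cI$ (of dimension $0$ or $1$). For each $i$, the set $\Set{\alpha\in k \mid t+\alpha\ell\in\cP_i}$ is the zero set of a linear (in $\alpha$) condition on $\A^1_k$, hence is either all of $k$ or finite. It is all of $k$ only if both $t\in\cP_i$ and $\ell\in\cP_i$; but $t\in\cP_i$ is excluded by the hypothesis that $t$ is general for $C$. Therefore each bad set is finite, and since there are finitely many $\cP_i$, the union is finite; so $t+\alpha\ell$ is general for $C$ for almost all $\alpha$. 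This is routine.

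\medskip

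For the second assertion I would use the explicit computation already carried out in Section~\ref{sec:A.3}: with $\ell = ax+by+cz$ and $P=(p_0:p_1:p_2:p_3)$, one has $\pi_\alpha(P) = (p_0:p_1:p_2:\alpha(ap_0+bp_1+cp_2))$ whenever $ap_0+bp_1+cp_2\neq 0$, and in that case $\pi_\alpha(P)$ lies on the line $L_P$ through $(p_0:p_1:p_2:0)$ and $P_0=(0:0:0:1)$ — a line which is independent of $\alpha$, and which is exactly the fiber of the perpendicular projection $X\dashrightarrow V(t)$ over the point $(p_0:p_1:p_2:0)\in V(t)$. Hence for $P$ ranging over the open dense subset of $C$ where $ap_0+bp_1+cp_2\neq 0$ (this is $C$ minus its intersection with the plane $V(\ell)$; if $C\subset V(\ell)$ one argues that then $\ell\in$ some minimal prime of $\cI$, so $t+\alpha\ell$ fails to be general — but we are in the case where it is general, so $C\not\subset V(\ell)$), the point $\pi_\alpha(P)$ and the point $\pi_{\alpha'}(P)$ lie on the same line $L_P$, which is the perpendicular fiber over $\bar P := (p_0:p_1:p_2:0)$. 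Consequently the cylinder over $\pi_\alpha(C)$ perpendicular to $V(t)$ is the union $\bigcup_{P\in C'}L_P$ (closure), which equals the cylinder over the image of $C$ under the perpendicular projection onto $V(t)$, and this description does not involve $\alpha$. Taking Zariski closures, all the cylinders over $\pi_\alpha(C)$ coincide.

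\medskip

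The main obstacle, such as it is, is purely bookkeeping about degenerate configurations: one must make sure that the locus $ap_0+bp_1+cp_2=0$ on $C$ is a proper closed subset (equivalently $C\not\subset V(\ell)$), which follows from $t+\alpha\ell$ being general — if $C$ lay in $V(\ell)$ then $\ell$ would lie in a one-dimensional associated prime $\cP_i$ of $\cI$ and, since also $t\notin\cP_i$, we could still have $t+\alpha\ell\notin\cP_i$; so one instead notes directly that for $C\subset V(\ell)$ the projection $\pi_\alpha$ fixes $C$ pointwise and the cylinder over $\pi_\alpha(C)=C$ is $C$ itself for every $\alpha$, so the conclusion holds trivially in that case too. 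Apart from isolating these cases, the argument is a short verification built entirely on Lemma~\ref{lem:A.1} and the formula in Section~\ref{sec:A.3}; no new idea is required.
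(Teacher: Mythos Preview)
Your proposal is correct; the paper states Lemma~\ref{lem:A.2} without proof (the $\qed$ is appended directly to the statement), so your argument is exactly the routine verification the author had in mind. One minor simplification: the formula $\pi_\alpha(P)=(p_0:p_1:p_2:\alpha(ap_0+bp_1+cp_2))$ from Section~\ref{sec:A.3} is valid for every $P\neq P_0$, so when $ap_0+bp_1+cp_2=0$ one simply gets $\pi_\alpha(P)=(p_0:p_1:p_2:0)$ for all $\alpha$, and the perpendicular line through it is still $L_P$; hence the separate case analysis for $C\subset V(\ell)$ is unnecessary.
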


\section{Auxiliary lemmas}
\label{sec:A.4}

Let $I \subset P = k[x,y,z,t]$ be a saturated graded ideal, i.e.~$I_n =
H^0(X,\tilde{I}(n))$, $X = \P^3_k$. Suppose that $(x,y,z)$ is not associated
to $I$. Let be $S= k[x,y,z]$ and $\ell \in S_1$ a non-zero divisor of
$P/I$. Suppose that the following condition is fulfilled: $ f\in I_d
\Rightarrow \ell \partial f / \partial t \in I_d$.

\begin{auxlemma}
  \label{auxlem:A.1}
  If $f = f^0 + t f^1 + \cdots + t^d f^d \in I_d$, $f^i \in S_{d-i}
  \Rightarrow f^0 \in I_d$.
\end{auxlemma}
\begin{proof}
  Write $f = f^0 + t^m f^m + \cdots + t^d f^d$, where $m \geq
  1$. $\Rightarrow \ell \partial f /\partial t = m \ell t^{m-1} f^m + \cdots
  + d \ell t^{d-1} f^d \in I_d$  $\Rightarrow  \partial f /\partial t \in
  I_{d-1}$ $\Rightarrow g:=f - \tfrac{1}{m} \cdot t \cdot \partial f
  /\partial t = f^0 + t^{m+1} g^{m+1} + \cdots + t^d g^d \in I_d$, and by an
  induction argument, $f^0 \in I_d$ follows.
\end{proof}

\begin{auxlemma}
  \label{auxlem:A.2}
  $I_d$ is generated by forms of the shape $f^i t^{d-i}$ with $f^i \in S_i$.
\end{auxlemma}
\begin{proof}
  If $f = f^0 + t^m f^m + \cdots + t^d f^d \in I_d$, then by
  Aux-Lemma~\ref{auxlem:A.1}, $f^0 = 0$ and $m \geq 1$, without
  restriction. Then one has 
\[
   g:= f - \tfrac{1}{m} \cdot t \partial f/\partial t =\frac{1}{m}
   \sum^d_{m+1} (m-i)t^i f^i \in I_d\,.
\]
Now we use an induction argument and may suppose that $t^i f^i \in I_d$, if
$i\geq m+1$. But then $t^m f^m \in I_d$, too.
\end{proof}

\section{Isotropy groups of linear projections}
\label{sec:A.5}

Let be $P=k[x,y,z,t]$, $S=k[x,y,z]$, $X=\P^3$, $\cI \subset \cO_X$ an ideal
such that the ideal $(x,y,z)$ is not associated to the saturated ideal $I =
\bigoplus_{n\geq 0} H^0(X,\cI (n))$. Choose any number $d \geq \reg(\cI)$
and any $\ell \in S_1$ such that $\ell$ is not a zero-divisor of $P/I$. Let
$\G_m$ operate by $\sigma(\lambda):x\mapsto x$, $y\mapsto y$, $z\mapsto z$,
$t \mapsto \lambda t$.
Suppose $\cI$ is not $\G_m$-invariant. Let $m = \dim I_d$ and $I_d
\leftrightarrow \xi \in W := \Grass_m(P_d)$. Let $U \subset{ U(4;k)}$ be the
subgroup of linear transformations $u_\alpha: x\mapsto x$, $y\mapsto y$,
$z\mapsto z$, $t \mapsto t +\alpha \ell$, $\alpha \in k$, ($\ell$ is fixed!).

\begin{lemma}
  \label{lem:A.3}
  For nearly all $\alpha \in k$ the isotropy group of $u_\alpha(\xi)$ in
  $\G_m$ is trivial, that is $\sigma(\lambda)u_\alpha(\xi) =
  \sigma(\mu)u_\alpha(\xi) \Rightarrow \lambda = \mu$.
\end{lemma}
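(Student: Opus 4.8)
The statement to be proved is Lemma~\ref{lem:A.3}: for all but finitely many $\alpha\in k$, the $\G_m$-isotropy of $u_\alpha(\xi)$ is trivial, where $\xi\leftrightarrow I_d\in W=\Grass_m(P_d)$ and $\cI$ is \emph{not} $\G_m$-invariant. The plan is to translate the isotropy condition into a condition on the $t$-grading of the ideal $u_\alpha(I)$ in the variable $t$, and then to show that the ``bad'' set of $\alpha$ — those for which $u_\alpha(I)$ remains $\G_m$-invariant, equivalently homogeneous in $t$ after the coordinate change — is a proper closed subset of $\A^1_k$, hence finite.

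\textbf{Step 1: reduce to $t$-homogeneity.} First I would record that $\sigma(\lambda)\eta=\sigma(\mu)\eta$ for some $\eta\leftrightarrow J_d$ iff $\sigma(\lambda\mu^{-1})$ fixes $J_d$, so the isotropy of $u_\alpha(\xi)$ is nontrivial iff there is $\lambda\neq 1$ with $\sigma(\lambda)\bigl(u_\alpha(I)_d\bigr)=u_\alpha(I)_d$. Since $\sigma(\lambda)$ acts on the $t$-graded pieces $P_d=\bigoplus_{i} t^{i}S_{d-i}$ by the character $\lambda^{i}$ on $t^iS_{d-i}$, a subspace $V\subset P_d$ is fixed by some $\sigma(\lambda)$ with $\lambda$ not a root of unity of small order — and in characteristic $0$, fixed by \emph{any} $\lambda\neq 1$, or even by infinitely many $\lambda$ — precisely when $V$ is a direct sum of its intersections with the $t$-homogeneous layers $t^iS_{d-i}$. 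So I want to show: for all but finitely many $\alpha$, the subspace $u_\alpha(I)_d\subset P_d$ is \emph{not} such a direct sum, equivalently $u_\alpha(I)$ is not $t$-homogeneous in degree $d$.

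\textbf{Step 2: the bad locus is closed and proper.} Consider the family over $\A^1_k=\Spec k[\alpha]$ given by $\alpha\mapsto u_\alpha(\xi)\in W$; this is a morphism (it is the orbit map of the one-parameter unipotent subgroup $U$ acting on $W$). The condition ``$u_\alpha(I)_d$ is a direct sum of its intersections with the layers $t^iS_{d-i}$'' is a closed condition on $W$: it says that the composite $u_\alpha(I)_d\hookrightarrow P_d\twoheadrightarrow P_d/\bigoplus(\text{layers}\cap u_\alpha(I)_d)$ vanishes, and the dimensions $\dim(u_\alpha(I)_d\cap t^iS_{d-i})$ are upper semicontinuous, so on the stratum where they are maximal this is cut out by vanishing of minors — in any case it defines a closed subset $B\subset W$, and $B^\alpha:=\{\alpha\mid u_\alpha(\xi)\in B\}$ is closed in $\A^1_k$. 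It remains to see $B^\alpha\neq\A^1_k$. Suppose it were all of $\A^1$; then $u_\alpha(I)_d$ would be $t$-homogeneous for every $\alpha\in k$, in particular for $\alpha=0$ (giving $I_d$ itself $t$-homogeneous) and for all $\alpha$. I would then use the Auxiliary Lemmas~\ref{auxlem:A.1} and~\ref{auxlem:A.2} of Section~\ref{sec:A.4}: $t$-homogeneity of $u_\alpha(I)_d$ for all $\alpha$ forces the differential condition $\ell\,\partial f/\partial t\in I_d$ for $f\in I_d$ (differentiate the one-parameter family $u_\alpha$ at $\alpha=0$, noting $\partial/\partial\alpha\,u_\alpha = \ell\,\partial/\partial t$ up to sign), hence by Aux-Lemma~\ref{auxlem:A.2} $I_d$ is generated by the monomials-in-$t$ forms $f^it^{d-i}$, $f^i\in S_i$. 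Since $d\geq\reg(\cI)$, the ideal $\cI$ is $d$-regular, so $I$ is generated in degrees $\leq d$ by $P_1 I_{d-1}$; pushing the $t$-layered structure up from degree $d$ to all higher degrees and down using saturation (here one uses that $(x,y,z)$ is not associated to $I$, so the $t$-homogeneous pieces assemble into an honest $\G_m$-invariant ideal), one concludes $\cI$ itself is $\G_m$-invariant, contradicting the hypothesis. Therefore $B^\alpha$ is a proper closed subset of $\A^1_k$, hence finite, and for $\alpha\notin B^\alpha$ the isotropy of $u_\alpha(\xi)$ is trivial.

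\textbf{Expected main obstacle.} The delicate point is Step 2's last move — propagating $t$-homogeneity from the single degree $d$ to the whole saturated ideal and concluding $\G_m$-invariance of $\cI$. This is exactly what the two Auxiliary Lemmas in Section~\ref{sec:A.4} are designed to handle, but I would need to check carefully that the hypothesis $d\geq\reg(\cI)$ together with ``$(x,y,z)$ not associated to $I$'' really lets me invoke them (the Aux-Lemmas are stated for the degree-$d$ piece under the differential hypothesis $\ell\,\partial f/\partial t\in I_d$, and I must verify that global $t$-homogeneity of the $u_\alpha$-family in degree $d$ is equivalent to that hypothesis — a short computation with $u_\alpha(f)=f(x,y,z,t+\alpha\ell)$ expanded in powers of $\alpha$). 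The measure-theoretic/scheme-theoretic bookkeeping (semicontinuity of layer-intersection dimensions, that the bad locus is genuinely Zariski-closed and not just constructible) is routine but should be stated cleanly so that ``nearly all $\alpha$'' really means ``all but finitely many.''
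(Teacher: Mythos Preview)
There is a genuine gap in Step~1. You assert that a subspace $V\subset P_d$ has nontrivial $\G_m$-isotropy if and only if it is $t$-homogeneous (a direct sum of its intersections with the layers $t^iS_{d-i}$). This is false: the isotropy of any point of $W$ in $\G_m$ is a finite cyclic group $E_n=\{\varepsilon:\varepsilon^n=1\}$ with $n\le d$ (this is the content of the reference \cite[Hilfssatz~5]{T2} used in the paper), and being fixed by $\sigma(\varepsilon)$ for a primitive $n$-th root of unity only forces $V$ to decompose according to the residues of $i\bmod n$, not into the individual layers. So your bad locus $B=W^{\G_m}$ is strictly smaller than the set of subspaces with nontrivial isotropy, and showing $\{\alpha:u_\alpha(\xi)\in W^{\G_m}\}$ is finite does not prove the lemma. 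Your parenthetical ``in characteristic~$0$, fixed by any $\lambda\neq1$'' does not help: here ``any'' would have to mean ``every'' for the equivalence to hold, whereas the hypothesis only gives you \emph{some} $\lambda\neq1$, necessarily a root of unity of order $\le d$.

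The paper closes this gap with a group-theoretic step you are missing. One first replaces $B$ by the finite union $\bigcup_{1<n\le d}W^{E_n}$; if infinitely many $\alpha$ land in this union, irreducibility of $\A^1$ forces all $\alpha$ into a single $W^{E_n}$. The key computation is then that the conjugates $g(\lambda,\alpha):=u_\alpha^{-1}\sigma(\lambda)u_\alpha$ (all lying in the isotropy $G$ of $\xi$) compose as $g(\mu,\beta)\circ g(\lambda,\alpha):t\mapsto \lambda\mu t+[\lambda(1-\mu)\beta+(1-\lambda)\alpha]\ell$; choosing $\lambda\in E_n\setminus\{1\}$, $\mu=\lambda^{-1}$, and letting $\alpha,\beta$ vary shows the whole unipotent group $U=\{u_\alpha\}$ lies in $G$. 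Only \emph{then} does one obtain the differential condition $\ell\,\partial f/\partial t\in I_d$ and can invoke Aux-Lemma~\ref{auxlem:A.2}. Your differentiation idea points in the right direction but cannot be applied until this bootstrap from $E_n$-invariance to $U$-invariance has been carried out.
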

\begin{proof}
 Let $G$ be the isotropy group of $\xi$ in $\GL(4;k)$. If $u = u_\alpha \in
 U$ let be  $T(\alpha) = \Set{ \lambda \in k^* |
   \sigma(\lambda)u_\alpha(\xi) = u_\alpha(\xi)}$.  From the proof
 of~\cite[Hilfssatz 5, pp. 8]{T2} it follows that $T(\alpha) = E_n :=
 \Set{\varepsilon \in \C | \varepsilon^n =1}$, where $n\geq 1$ depends on
 $\alpha$, but $n\leq d$.

 \textbf{Assumption:} {\it For infinitely many $\alpha \in k$ the isotropy
   group $T(\alpha)$ is not trivial.}

 Then there are infinitely many $\alpha$ such that $T(\alpha)=E_n=: E$,
 where now $n> 1$ is independent of these $\alpha$. It follows that
 $u_\alpha(\xi)$ lies in $W^E$. As this fixed point scheme is closed in $W$,
 it follows that $u_\alpha(\xi)$ is fixed by $E$ for all $\alpha \in k$. But
 from $\sigma(\lambda)u_\alpha(\xi)= u_\alpha(\xi)$ it follows that
 $g(\lambda, \alpha):= u^{-1}_\alpha \sigma(\lambda) u_\alpha \in G$ for all
 $\alpha \in k$, all $\lambda \in E$. Now $g(\mu,\beta) \circ g(\lambda,
 \alpha)$ leaves $x,y,z$ invariant and maps $t$ to $\lambda \mu t + [
 \lambda(1-\mu)\beta +(1-\lambda)\alpha]\ell$. If \emph{not} $\lambda=\mu
 =1$, it follows that the transformation $x\mapsto x$, $y\mapsto y$,
 $z\mapsto z$, $t \mapsto \lambda \mu t + \alpha \ell$ is in $G$ for all
 $\alpha \in k$. If one chooses $\lambda \neq 1$, $\mu = \lambda^{-1}$, it
 follows that $U < G$. But then $I_d$ is invariant under $f \mapsto \ell
 \cdot \partial f/\partial t$, $f \in I_d$~\cite[proof of Hilfssatz 1, p.
 142]{T2}. Now $\ell$ is a $\NNT$ of $P/I$ by assumption, and the
 Aux-Lemma~\ref{auxlem:A.2} shows that $I_d$ is $\G_m$-invariant. As $d\geq
 \reg(\cI)$, it follows that $\cI$ is fixed by $\G_m$, contradiction.
 Hence the intermediate assumption is not possible, i.e.~for nearly all
 $\alpha \in k$, $T(\alpha) = \{ 1 \}$. \hfill
\end{proof}

\begin{corollary}
  \label{cor:A.1}
  Let $C \subset X $ be a curve not containing the point $P_0 = (0:0:0:1)$,
  such that $t$ is general for $C$. Let $\pi$ be the projection from $P_0$
  onto $E = V(t)$ defined by the $\G_m$-action $\sigma(\lambda):x\mapsto x$,
  $y\mapsto y$, $z\mapsto z$, $t \mapsto \lambda t$. Let $C_\red =
  \bigcup^r_1 Z_i$ be the decomposition into irreducible components,
  i.e.~either $Z_i$ is a reduced and irreducible curve or $Z_i$ is a single
  point not lying on any other $Z_j$. Let $\ell \in S_1$ be a linear form
  such that no $Z_i$ is contained in $V(\ell)$, i.e.~$\ell \centernot\in
  \fp_i$, where $\fp_i$ is the prime ideal such that $Z_i = V(\fp_i)$, $1
  \leq i \leq r$. Put $\cJ : = \bigcap \fp_i $ and let $u_\alpha$ be
  the linear transformation $x\mapsto x$, $y\mapsto y$, $z\mapsto z$, $t
  \mapsto t + \alpha \ell$. Then for nearly all $\alpha \in k$ one has:
  $\sigma(\lambda)u_\alpha(\cJ) = \sigma(\mu)u_\alpha(\cJ) \Rightarrow
  \lambda = \mu$. 
\mbox{} \hfill $\qed$
\end{corollary}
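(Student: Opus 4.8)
\textbf{Proof proposal for Corollary~\ref{cor:A.1}.}
The plan is to deduce this from Lemma~\ref{lem:A.3}, applied to the ideal $\cI$ whose associated saturated ideal is $I = \bigoplus_{n\geq 0} H^0(X,\cJ(n))$, where $\cJ = \bigcap_{i=1}^r \fp_i$. First I would check that the hypotheses of Lemma~\ref{lem:A.3} are met. Since each $\fp_i$ is either the prime ideal of an irreducible curve $Z_i$ or a maximal graded prime (a point), and since $P_0=(0:0:0:1) \notin C$, no $\fp_i$ can equal $(x,y,z)$; hence $(x,y,z)$ is not among the associated primes of $\cJ$, i.e.\ not associated to $I$. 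Next, because $\ell \in S_1$ lies in no $\fp_i$ by hypothesis, $\ell$ is a non–zero-divisor of $P/I$. Finally, $\cJ$ is not $\G_m$-invariant under $\sigma(\lambda)$: otherwise each $\fp_i$ would be $\G_m$-stable, forcing each $Z_i$ to lie in the plane $V(t)$ or to be a $\G_m$-fixed point; but $t$ is general for $C$, so for a one-dimensional $Z_i$ the intersection $V(t)\cap Z_i$ is a finite set of $\deg(Z_i)$ points — in particular $Z_i \not\subset V(t)$ — and a $\G_m$-fixed point would have to lie on the axis $V(x,y,z)$ or at $P_0$, contradicting $P_0 \notin C$ and the generality of $t$ (which keeps the finitely many isolated $Z_i$ off the coordinate locus). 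So $\cJ$ is genuinely not $\G_m$-invariant.

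With these verifications in hand, choose any $d \geq \reg(\cJ)$; then $I_d$ determines a point $\xi \in W = \Grass_m(P_d)$, $m=\dim_k I_d$, and Lemma~\ref{lem:A.3} applies verbatim: for all but finitely many $\alpha \in k$ the $\G_m$-isotropy of $u_\alpha(\xi)$ is trivial. It remains only to translate this back from $W$ to the Hilbert scheme. The point is that for $d \geq \reg(\cJ)$ the assignment $\cJ \mapsto H^0(X,\cJ(d)) = I_d$ is injective and $\GL(4;k)$-equivariant (this is the Gotzmann/Grothendieck persistence and the $d$-regularity used repeatedly in Chapter~\ref{cha:1}, e.g.\ in the discussion of the morphism $\gamma$ in Section~\ref{sec:3.4}), and it commutes with $\sigma(\lambda)$ and $u_\alpha$. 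Therefore $\sigma(\lambda)u_\alpha(\cJ) = \sigma(\mu)u_\alpha(\cJ)$ holds if and only if $\sigma(\lambda)u_\alpha(\xi) = \sigma(\mu)u_\alpha(\xi)$, and Lemma~\ref{lem:A.3} gives $\lambda = \mu$ for nearly all $\alpha$.

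I do not expect a serious obstacle here: the corollary is essentially a repackaging of Lemma~\ref{lem:A.3} for the specific ideal $\cJ$. The only point requiring a little care — and the one I would write out most explicitly — is the argument that $\cJ$ fails to be $\G_m$-invariant, since this is where the geometric hypotheses ``$t$ general for $C$'' and ``$P_0 \notin C$'' actually enter; once that is established, the rest is the formal equivalence between points of $\Hilb$ and their degree-$d$ Hilbert points for $d$ beyond the regularity. One should also note, as a sanity check, that the ``nearly all $\alpha$'' in Lemma~\ref{lem:A.3} already allows us to avoid the finitely many $\alpha$ for which $u_\alpha$ fails to keep $\ell$ a non–zero-divisor or violates some genericity, so the exceptional set in the corollary is still just a finite subset of $k$.
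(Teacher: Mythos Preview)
Your proposal is correct and is precisely the paper's approach: Corollary~\ref{cor:A.1} carries a $\qed$ with no proof, since it follows at once from Lemma~\ref{lem:A.3} applied to $\cJ=\bigcap\fp_i$, after checking (as you do) that $(x,y,z)$ is not associated to $\cJ$, that $\ell$ is a non-zero-divisor on $P/I$, and that $\cJ$ is not $\G_m$-invariant. One small correction in your case analysis: the $\G_m$-fixed locus is $\{P_0\}\cup V(t)$ (not ``$V(x,y,z)$'', which is just $\{P_0\}$), and a $\G_m$-invariant irreducible curve not contained in $V(t)$ must be a line through $P_0$ --- but this possibility is already excluded by $P_0\notin C$, so your conclusion stands.
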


Let $\xi \in \HH(k)$ correspond to a curve $C \subset X$, which is defined
by an ideal $\cI \subset \cO_X$. Suppose that $\xi \in U(t)$ and $P_0 =
(0:0:0:1) \centernot\in C$. It follows that $(x,y,z)$ is not among the 
associated primes of $\cI$, which we denote by $\fp_i$. Hence the set
$L:=\Set{ \ell \in S_1 | \ell \centernot\in \bigcup \fp_i}$ is non-empty and
Zariski-open in $\A^3$. Let $\sigma(\lambda)$ be the usual $\G_m$-operation
and $u_\alpha$ be the transformation $x\mapsto x$, $y\mapsto y$, $z\mapsto
z$, $t \mapsto t + \alpha \ell$, if $\ell \in L$ is fixed.
\begin{corollary}
  \label{cor:A.2}
  For almost all $\alpha \in k$ one has $\sigma(\lambda)h(u_\alpha(\xi)) =
  h(u_\alpha(\xi)) \Rightarrow \lambda = 1$\,.
\end{corollary}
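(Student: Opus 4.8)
The plan is to reduce Corollary~\ref{cor:A.2} to Lemma~\ref{lem:A.3} by passing from the Hilbert--Chow image $h(u_\alpha(\xi))$ back to information about the ideal $\cI$ and the curve $C$. The key observation is that $h$ is $\G_m$-equivariant (indeed $\PGL(3;k)$-equivariant, see Section~\ref{sec:3.5.3}), so $\sigma(\lambda)h(u_\alpha(\xi)) = h(\sigma(\lambda)u_\alpha(\xi))$; hence the stabilizer in $\G_m$ of $h(u_\alpha(\xi))$ contains the stabilizer of $u_\alpha(\xi)$, and it suffices to control the former. The difficulty is the reverse containment: a priori $h$ could collapse a nontrivial $\G_m$-orbit of $u_\alpha(\xi)$ to a point, so triviality of the $\G_m$-isotropy of $u_\alpha(\xi)$ (which Lemma~\ref{lem:A.3} gives for almost all $\alpha$) does not immediately transfer to $h(u_\alpha(\xi))$.

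To get around this, first I would reduce to the $1$-dimensional part of $C$. Write $\xi \leftrightarrow \cI$ with saturated ideal $I$, and let $C_\red = \bigcup_1^r Z_i$, with $Z_i = V(\fp_i)$ the $1$-dimensional irreducible components together with the isolated points (the latter contributing nothing to $h$). Since $P_0 \notin C$, the ideal $(x,y,z)$ is not among the associated primes of $\cI$, so for $\ell$ in the nonempty Zariski-open set $L \subset S_1$ of linear forms avoiding all associated primes, $u_\alpha(\xi) \in U(t)$ for almost all $\alpha$. The cycle $h(\xi)$ records only the $\fp_i$ with their multiplicities, and $h(u_\alpha(\xi))$ records the $u_\alpha(\fp_i)$. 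Now $\sigma(\lambda)h(u_\alpha(\xi)) = h(u_\alpha(\xi))$ forces $\sigma(\lambda)$ to permute the finite set of components $\{u_\alpha(\fp_i)\}$ with multiplicities; refining $\lambda$ to a power we may assume $\sigma(\lambda)u_\alpha(\fp_i) = u_\alpha(\fp_i)$ for each $i$, i.e. $\sigma(\lambda)$ fixes the ideal $\cJ := \bigcap \fp_i$ after applying $u_\alpha$. This is exactly the situation of Corollary~\ref{cor:A.1}.

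So the argument runs: apply Corollary~\ref{cor:A.1} to the curve $C$ (with the chosen $\ell \in L$), obtaining that for almost all $\alpha \in k$ the relation $\sigma(\lambda)u_\alpha(\cJ) = \sigma(\mu)u_\alpha(\cJ)$ forces $\lambda = \mu$; in particular the $\G_m$-stabilizer of $u_\alpha(\cJ)$ is trivial. Combining with the equivariance of $h$ and the component-permutation reduction above, $\sigma(\lambda)h(u_\alpha(\xi)) = h(u_\alpha(\xi))$ implies $\sigma(\lambda^n)$ fixes $u_\alpha(\cJ)$ for a suitable $n \le \max_i(\deg Z_i)$ bounded independently of $\alpha$, hence $\lambda^n = 1$; but then, since $h(u_\alpha(\xi))$ together with the fixed multiplicities already pins down $\cJ$ up to this finite ambiguity, one argues as in the proof of Lemma~\ref{lem:A.3} (using that $\sigma(\varepsilon)$ for a primitive $n$-th root $\varepsilon$ would have to fix $\cJ$ for all $\alpha$, eventually forcing $\cI$ or at least $\cJ$ to be $\G_m$-invariant via Aux-Lemma~\ref{auxlem:A.2}) to exclude $n > 1$. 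The main obstacle is precisely this last exclusion step: ruling out that $h$ identifies a genuine $\mu_n$-orbit, which requires transporting the "$\cI$ not $\G_m$-invariant" hypothesis through the passage $C \rightsquigarrow \cJ$ and through $h$; the bounded-exponent trick (the stabilizer is always a group of roots of unity of order $\le d$) plus the fixed-point-scheme-is-closed argument of Lemma~\ref{lem:A.3} is what makes it work, since it converts "infinitely many bad $\alpha$" into "all $\alpha$ bad", which then contradicts non-invariance of $\cJ$ (and hence of $\cI$, as $\cJ = \CM(\cI)_\red$-type data).
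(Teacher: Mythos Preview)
Your approach is essentially the paper's, but you introduce an unnecessary detour that creates the very ``main obstacle'' you then struggle to close.

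The paper's proof is three lines. From $h(\sigma(\lambda)u_\alpha(\xi)) = h(u_\alpha(\xi))$ and equivariance one gets $\sum \nu_i \langle \sigma(\lambda)u_\alpha(\fp_i)\rangle = \sum \nu_i \langle u_\alpha(\fp_i)\rangle$, so $\sigma(\lambda)$ permutes the set $\{u_\alpha(\fp_i)\}$. The step you missed: a permutation of the primes already fixes their intersection, so $\sigma(\lambda)u_\alpha(\cJ) = u_\alpha(\cJ)$ for $\cJ = \bigcap \fp_i$ \emph{without} replacing $\lambda$ by a power. Corollary~\ref{cor:A.1} then gives $\lambda = 1$ directly for almost all $\alpha$, and you are done.

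By instead passing to $\lambda^n$ to fix each $\fp_i$ individually, you only get $\lambda^n = 1$ and must then exclude nontrivial roots of unity via a second bounded-exponent/closed-fixed-locus argument. This is not wrong (though your bound $n \le \max_i \deg Z_i$ should be $n \le r$, the number of components, since $n$ is the order of a cyclic permutation), but it duplicates work already packaged inside Corollary~\ref{cor:A.1}. The single observation ``permutation of factors fixes the intersection'' collapses your two-stage argument into one application of Corollary~\ref{cor:A.1}.
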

\begin{proof}
  Let $\fp_i$ be the associated primes of $\cI$ such that $\dim V(\fp_i)
  =1$, let $\nu_i$ be their multiplicity in $\cI$. Then 
\[
  h(\sigma(\lambda)u_\alpha(\xi)) = \sum\nu_i \langle
  \sigma(\lambda)u_\alpha(\fp_i)\rangle = h(u_\alpha(\xi)) = \sum \nu_i 
\langle u_\alpha(\fp_i)\rangle
\]
shows that $\sigma(\lambda)$ is a permutation of $\{ u_\alpha(\fp_i)\}$.
If $\cJ :=\bigcap \fp_i$, then it follows that $\sigma(\lambda)u_\alpha(\cJ)
= u_\alpha(\cJ)$.  Now $\cJ$ fulfills the
assumptions of Corollary~\ref{cor:A.1}, hence for   almost all $\alpha \in
k$ it follows that $\lambda =1$.
\end{proof}


\chapter{A linear algebra lemma}
\label{cha:B}

\begin{lemma}
  \label{lem:B1}
 Let $S=k[X_0,\dots,X_r]$ and $\psi$ be a $k$-linear endomorphism  of $S_d$
 ($d\geq 1$ is a fixed integer), such that $\psi(\ell \cdot S_{d-1}) \subset
 \ell S_{d-1}$ for all $\ell \in S_1$. Then there is a fixed element $\alpha
 \in k$ such that $\psi(f) = \alpha \cdot f$ for all $f\in S_d$. $\psi$ is
 not the zero-map iff $\alpha \neq 0$.
\end{lemma}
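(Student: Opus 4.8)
The statement is a rigidity result: a linear endomorphism of $S_d$ that maps every ``hyperplane worth of degree-$d$ forms'' $\ell S_{d-1}$ into itself must be a scalar. My plan is to exploit the hypothesis for many linear forms $\ell$ simultaneously and use the fact that monomials of degree $d$ can be reached from several directions. First I would fix the standard monomial basis of $S_d$ and, for a variable $X_i$, note that $\psi(X_i S_{d-1}) \subset X_i S_{d-1}$; this already forces $\psi$ to respect the subspace of forms divisible by $X_i$. Intersecting over all $i$ from $0$ to $r$ gives $\psi\big(\bigcap_i X_i S_{d-1}\big) \subset \bigcap_i X_i S_{d-1}$, but that intersection is zero when $d \le r$ and otherwise is $(X_0\cdots X_r)S_{d-r-1}$, so that alone is not enough; the real information comes from using \emph{generic} linear forms, not just the coordinate ones.

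The key step is to analyze the action of $\psi$ on a single monomial, say $m = X_0^{a_0}\cdots X_r^{a_r}$ with $a_0 \ge 1$. Since $m \in X_i S_{d-1}$ for every $i$ with $a_i \ge 1$, we get $\psi(m) \in \bigcap_{i : a_i \ge 1} X_i S_{d-1}$, i.e. $\psi(m)$ is divisible by $\prod_{i : a_i\ge 1} X_i = \mathrm{rad}(m)$. Now I would upgrade this using non-coordinate linear forms: for a generic $\ell \in S_1$, every degree-$d$ form lies in $\ell S_{d-1}$ precisely when it is divisible by $\ell$, so I instead consider forms of the type $\ell^d$ and more usefully the subspaces $\ell S_{d-1}$ for $\ell$ ranging over a Zariski-dense set. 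The cleanest route: pick a monomial $m$ and a variable $X_j$ with $a_j = 0$ (if $d < r+1$ such $j$ exists for every monomial, and if $d \ge r+1$ one handles the few ``fully supported'' monomials separately). Writing $m$ itself is not divisible by $X_j$, but I can express the constraint via linear forms $\ell = X_j + c X_i$: since $m = X_i \cdot m'$ we can write $m$ in terms of the basis adapted to $\ell$, conclude $\psi(m)$ satisfies a divisibility/congruence condition modulo $\ell$, and let $c$ vary to pin down the coefficient of $m$ in $\psi(m)$ and kill all off-monomial terms. This is exactly the mechanism used in the body of the paper (the Vandermonde/characteristic-zero argument in Section~1.3); here $\Char k = 0$ (or at least $k$ infinite) ensures enough values of $c$.

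Carrying this out, I expect to show that for each monomial $m$, $\psi(m) = \alpha_m m$ for some scalar $\alpha_m$, i.e. $\psi$ is diagonal in the monomial basis. The final step is to show all $\alpha_m$ are equal: take two monomials $m, m'$ differing by swapping one unit of exponent between $X_i$ and $X_j$, so that $m + m'$ or more precisely a suitable combination lies in $\ell S_{d-1}$ for $\ell = X_i - X_j$ after adjusting; applying $\psi$ and using invariance of $\ell S_{d-1}$ forces $\alpha_m = \alpha_{m'}$. Since the monomial graph where edges are such single-exponent transfers is connected (any two degree-$d$ monomials in $r+1$ variables are joined by a chain of such moves), all $\alpha_m$ coincide with a common value $\alpha$, giving $\psi = \alpha\cdot\mathrm{id}$. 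The last sentence of the lemma is then immediate: $\psi$ is the zero map iff $\alpha = 0$. The main obstacle I anticipate is the bookkeeping in the ``change of basis modulo a generic $\ell$'' step — making precise that divisibility by $X_i$ plus a one-parameter family of congruences modulo $X_j + cX_i$ forces $\psi(m)$ to be a scalar multiple of $m$ — and cleanly dispatching the finitely many fully-supported monomials in the range $d \ge r+1$; everything else is routine.
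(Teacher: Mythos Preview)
Your Step~4 (adjacent monomials $m = X_i n$, $m' = X_j n$ give $m - m' \in (X_i-X_j)S_{d-1}$, hence $\alpha_m = \alpha_{m'}$; the exponent-swap graph is connected) is correct. The gap is Step~3. The family $\ell = X_j + cX_i$ applied to $m = X_i m'$ gives $\psi(X_j m') + c\,\psi(m) \in \ell S_{d-1}$, but this \emph{couples} $\psi(m)$ to $\psi(X_j m')$ rather than isolating $\psi(m)$. Take $r=1$, $d=3$, $m = X_0^3$: the radical constraint gives $\psi(X_0^3) = aX_0^3 + bX_0^2X_1 + eX_0X_1^2$, and letting $t$ run over the whole family $X_1 + tX_0$ forces only $e = 0$ together with the relations $a=p$, $b=q$ tying $a,b$ to the coefficients of $\psi(X_0^2X_1) = pX_0^2X_1 + qX_0X_1^2$. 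To kill $b$ one needs a \emph{second} family, e.g.\ $X_0 + sX_1$ applied to $X_0^2X_1$ --- which is fully supported, so your ``pick $j$ with $a_j=0$'' recipe does not even apply to it. Thus diagonality is not a per-monomial consequence of one family; it requires solving a coupled system across all monomials, with the fully-supported monomials entangled rather than handled separately, and you have not indicated how to organize that system.

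The paper avoids diagonalization entirely. Its key observation is that for $d$ pairwise linearly independent linear forms $\ell_1,\dots,\ell_d$ one has $\bigcap_i \ell_i S_{d-1} = k\cdot \ell_1\cdots\ell_d$, so every such product is automatically an eigenvector of $\psi$. Deforming via $L_i = \ell_i + \lambda_i h_i$ and using that $\psi$ has only finitely many eigenvalues, the eigenvalue is a constant $\alpha$ on a Zariski-open set of $\lambda$; expanding $\psi(L_1\cdots L_d) = \alpha\,L_1\cdots L_d$ as a polynomial in the $\lambda_i$ and extracting the coefficient of $\lambda_1\cdots\lambda_d$ gives $\psi(h_1\cdots h_d) = \alpha\,h_1\cdots h_d$ for \emph{arbitrary} $h_i \in S_1$, hence for all monomials at once.
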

\begin{proof}
  $1^\circ$ Let $\ell_1,\dots, \ell_d \in S_1 - (0)$ be relatively prime to
  each other.\\ Then 
\[
\psi(\bigcap^d_1 \ell_i S_{d-1}) \subset \bigcap^d_1
  \psi(\ell_i S_{d-1}) \subset \bigcap^d_1
  \ell_i S_{d-1} \subset \ell_1 \cdots \ell_d \cdot k\,.
\]
We get: 

\textsc{Conclusion 1}. 
If  $\ell_1,\dots, \ell_d \in S_d$ are relatively prime to 
  each other,\\ then $\psi(\ell_1 \cdots \ell_d) = \alpha \cdot \ell_1
  \cdots \ell_d$, where $\alpha \in k$ possibly depends on
  $\ell_1,\dots,\ell_d$. 

$2^\circ$ Let $V$ be an $m$-dimensional vector space, let be $\ell_i \in V$,
$1 \leq i \leq n$, such that $\ell_i$ and $\ell_j$ are linearly independent
for $i \neq j$. Let $h_i \in V$, $1 \leq i \leq n$ be any vectors. Put
$L_i:=\ell_i + \lambda_i h_i$, $1 \leq i \leq n$. Then the set 
\[
U:= \Set{
  \lambda = (\lambda_1, \dots, \lambda_n) \in k^n | L_i \text{ and } L_j
  \text{ are linearly independent for all } i\neq j}
\]
is non-empty and Zariski-open in $k^n$. To prove this, let $e_1, \dots,e_m$
be a basis of $V$ and write $\ell_i = \sum^m_{\nu=1}a_{i\nu}e_\nu$, $h_i =
\sum^m_{\nu=1}b_{i\nu}e_\nu$, $a_{i\nu}, b_{i\nu} \in k$. Then $L_i$ and
$L_j$ are linearly independent 
\[
\iff 
  D(i,j,\nu,\mu, \lambda_i,\lambda_j):=
 \det \begin{pmatrix}
   a_{i\nu}+ \lambda_i b_{i\nu} & a_{i\mu}+ \lambda_i b_{i\mu} \\
   a_{j\nu}+ \lambda_j b_{j\nu} & a_{j\mu}+ \lambda_j b_{j\mu}
 \end{pmatrix} = 0 \quad 
\text{ for all $\nu$ and $\mu$ }.
\]
Put $D(i,j,\nu,\mu):= \Set{ \lambda \in k^n | D(i,j,\nu,\mu,
  \lambda_i,\lambda_j) \neq 0}$, $D(i,j):= \bigcup\limits_{\nu \neq \mu}
 D(i,j,\nu,\mu)$.\\
 As  $\lambda = (0,\dots,0) \in D(i,j)$, we
get: 

\textsc{Conclusion 2}.
 \label{conc:B.2}
  $U = \bigcap_{i\neq j} D_{ij} \neq \emptyset$. 

$3^\circ$ Now take $\ell_i\in S_1$, $1 \leq i \leq d$, relatively prime to
each other and $h_i \in S_1 - (0)$, $1 \leq i \leq  d$, arbitrary. Then by
Conclusion~1 
and  Conclusion~2
one has $\psi(L_1 \cdots L_d) = \alpha(\lambda) L_1 \cdots L_d$, with $L_i =
\ell_i + \lambda_i h_i$, for Zariski-many $\lambda \in k^n$. As $\psi \neq
0$ has only finitely many eigenvalues it follows
\[
   \psi(L_1 \cdots L_d) = \alpha L_1 \cdots L_d\,,
\]
$\alpha \in k$ independent of $\lambda$, if $\lambda$ is in an open subset
$\Lambda \neq \emptyset $ of $k^n$. It follows that 
\[
  \sum_{(i),(j)} \psi(\ell_{i_1}\cdots \ell_{i_r} \cdot h_{j_1} \cdots
  h_{j_s}) \cdot \lambda_{j_1} \cdots \lambda_{j_s} = 
    \sum_{(i),(j)} \alpha \ell_{i_1}\cdots \ell_{i_r} \cdot h_{j_1} \cdots
  h_{j_s}\cdot \lambda_{j_1} \cdots \lambda_{j_s}\,,
\]
where $(i)$ runs over all sequences $1 \leq i_1 < \cdots < i_r \leq d $, for
all $0 \leq r \leq  d$ and $(j)$ runs over all the complementary sequences,
$r+s=d$. As this is to hold for all $\lambda \in \Lambda$, one deduces $\psi(\ell_{i_1}\cdots \ell_{i_r} \cdot h_{j_1} \cdots
  h_{j_s}) = \alpha(\ell_{i_1}\cdots \ell_{i_r} \cdot h_{j_1} \cdots
  h_{j_s})$. Choosing  $r=0$ one obtains $\psi(h_1 \cdots h_d) = \alpha h_1
  \cdots h_d$ for arbitrary $h_i \in S_1$. It follows that $\psi(m) = \alpha
  m$ for all monomials $m \in S_d$ and the lemma is proved.
\end{proof}

\chapter{Some special schemes}
\label{cha:C}

\section{The scheme $\cH$}
\label{sec:C.1}

We write $S=k[X_0,\dots, X_3]$ and fix the Hilbert polynomial $q(n) =
\tbinom{n-1+3}{3} + \tbinom{n-d+2}{2}$, $d\geq 3$ an integer and $\cH =
\Hilb_q(X)$ the Hilbert scheme, which parametrizes the ideal sheaves on $X =
\P^3_k$ with Hilbert polynomial $q$.

\begin{lemma}
 \label{lem:C.1}
  If $Y/k$ is a scheme, $\cH(Y)$ consists of the ideals $\cI \subset
  \cO_{X\times Y}$, which are generated by a subbundle $\cL_1 \subset S_1
  \otimes \cO_Y$ and and by a subbundle $\overline{\cF_d} \subset S_d
  \otimes \cO_Y /\cL_1 \cdot S_{d-1}$, each of rank $1$.
\end{lemma}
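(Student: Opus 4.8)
The plan is to prove the two inclusions of functors. For the implication "$\Leftarrow$", suppose we are given rank-$1$ subbundles $\cL_1\subset S_1\otimes\cO_Y$ and $\overline{\cF_d}\subset S_d\otimes\cO_Y/\cL_1\cdot S_{d-1}$, and let $\cI\subset\cO_{X\times Y}$ be the ideal they generate (i.e.\ the preimage in $\cO_{X\times Y}$ of the subsheaf of $\cO_{X\times Y}/(\cL_1)$ generated in degree $d$ by $\overline{\cF_d}$, which is a globally well-defined ideal). Over a small affine $U\subset Y$ on which both subbundles are free, say generated by $\ell$ and by a lift $f$ of a generator of $\overline{\cF_d}$, the subbundle hypothesis says exactly that $f\otimes k(y)\notin(\ell\otimes k(y))$ for every $y\in U$, so $\ell\otimes k(y),\,f\otimes k(y)$ is a regular sequence on $\P^3\otimes k(y)$; hence the Koszul complex
\[
0\longrightarrow\cO_X(-d-1)\longrightarrow\cO_X(-1)\oplus\cO_X(-d)\longrightarrow\cO_X\longrightarrow\cO_X/\cI\longrightarrow 0
\]
is exact on each fibre, therefore exact, and splitting it into short exact sequences of $Y$-flat sheaves and comparing $\mathrm{Tor}_1(-,k(y))$ shows $\cO_{X\times Y}/\cI$ is flat over $Y$. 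Since flatness and the Hilbert polynomial may be checked locally on $Y$, and a fibrewise Riemann--Roch computation with the Koszul complex gives $\chi(\cI\otimes k(y))=q$, we conclude $\cI\in\cH(Y)$.

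For "$\Rightarrow$", let $\cI\in\cH(Y)$, so $\cF:=\cO_{X\times Y}/\cI$ is flat over $Y$ and each fibre $\cI\otimes k(y)$ has Hilbert polynomial $q$; by Macaulay's classification (Section~\ref{sec:1.1}) such an ideal on $\P^3$ is the saturated ideal of a plane curve of degree $d$, i.e.\ of the form $(\ell_y,f_y)$ with $\ell_y\in S_1$ and $f_y\in S_d$ not lying in $(\ell_y)$, so that $h^0(\cI(1)\otimes k(y))=1$ for all $y$. By the base-change theorems \cite{M2}, $\cL_1:=\pi_*\cI(1)$ is then locally free of rank $1$, its formation commutes with base change, and as $\cL_1\otimes k(y)=H^0(\cI(1)\otimes k(y))\hookrightarrow S_1\otimes k(y)$ it is a subbundle of $S_1\otimes\cO_Y$. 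Because $\ell_y$ is a nonzerodivisor on each fibre, the map $\pi^*\cL_1\otimes\cO_X(-1)\to\cO_X$ is fibrewise injective, identifying the ideal $(\cL_1)$ with $\pi^*\cL_1\otimes\cO_X(-1)$; hence $\cO_{X\times Y}/(\cL_1)$ is a relative $\P^2$ over $Y$ and, using $R^1\pi_*\cO_X(m)=0$ for all $m$, $\pi_*\bigl((\cO_{X\times Y}/(\cL_1))(d)\bigr)=S_d\otimes\cO_Y/\cL_1\cdot S_{d-1}$. A diagram chase with $0\to(\cL_1)\to\cI\to\cJ\to0$ and $0\to\cI\to\cO_X\to\cF\to0$, again comparing $\mathrm{Tor}_1(-,k(y))$, shows $\cJ:=\cI/(\cL_1)$ is flat over $Y$, with fibre $\cJ\otimes k(y)=(f_y)\cong\cO_{\P^2}(-d)$, so that $h^0(\cJ(d)\otimes k(y))=1$.

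Applying the base-change theorems once more, $\overline{\cF_d}:=\pi_*\cJ(d)$ is locally free of rank $1$ and injects fibrewise into $\bigl(S_d\otimes\cO_Y/\cL_1\cdot S_{d-1}\bigr)\otimes k(y)$, hence is a rank-$1$ subbundle of $S_d\otimes\cO_Y/\cL_1\cdot S_{d-1}$. Finally the evaluation map $\pi^*\overline{\cF_d}\otimes(\cO_{X\times Y}/(\cL_1))(-d)\to\cJ$ is on each fibre the isomorphism $\cO_{\P^2}(-d)\xrightarrow{\;\cdot f_y\;}(f_y)$, so it is surjective (Nakayama on stalks), i.e.\ $\cJ$ is generated in degree $d$ by $\overline{\cF_d}$; taking the preimage in $\cO_{X\times Y}$ shows $\cI$ is generated by the pair $(\cL_1,\overline{\cF_d})$. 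This establishes the asserted description of $\cH(Y)$.

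The genuine work is the flatness bookkeeping for $\cJ=\cI/(\cL_1)$ and the identification $\pi_*\bigl((\cO/(\cL_1))(d)\bigr)=S_d\otimes\cO_Y/\cL_1\cdot S_{d-1}$, together with the input that $h^0(\cI(1)\otimes k(y))$ and $h^0(\cJ(d)\otimes k(y))$ are constant in $y$ --- this last point resting on the fibrewise plane-curve structure, i.e.\ on Macaulay's theorem and the regularity bound \cite[Lemma 2.9]{G78}. Everything else is a formal consequence of cohomology and base change on $\P^3$.
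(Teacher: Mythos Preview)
Your proof is correct and takes a genuinely different route from the paper's. The paper does not go directly to degree $1$: it first shows that $\pi_*\cI(d-2)\subset S_{d-2}\otimes\cO_Y$ is a subbundle (by computing $h^1(\cI(d-2)\otimes k(y))=0$ via an Euler-characteristic argument), then invokes Gotzmann's persistence result \cite[Korollar~3.8]{G78} to conclude that the ideal generated by $\cL_{d-2}$ is flat with Hilbert polynomial $\binom{n-1+3}{3}$, and finally uses $1$-regularity of that ideal to descend to the rank-$1$ subbundle $\cL_1\subset S_1\otimes\cO_Y$. Your argument instead uses the fibrewise Koszul resolution directly: the vanishing of intermediate cohomology of line bundles on $\P^3$ gives $H^i(\cI(1)\otimes k(y))=0$ for $i>0$, so base change applies immediately in degree $1$, and the quotient $\cJ=\cI/(\cL_1)$ is handled by the same mechanism on the relative $\P^2$. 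This is more elementary and more transparent for this particular Hilbert polynomial; the paper's route through persistence is heavier machinery but is the template that generalises to other Macaulay polynomials.

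Two small remarks. First, your appeal to ``Macaulay's classification (Section~\ref{sec:1.1})'' for the fibrewise form $(\ell_y,f_y)$ is not quite the right citation: Section~\ref{sec:1.1} only records the shape of $Q$, not that every ideal with that $Q$ is a complete intersection of type $(1,d)$; the paper uses \cite[Prop.~2]{G89} for this. Second, when you write ``$h^0(\cI(1)\otimes k(y))=1$ for all $y$, \emph{hence} $\pi_*\cI(1)$ is locally free of rank $1$'', constancy of $h^0$ alone is not enough over a non-reduced base; you should make explicit that the fibrewise Koszul resolution also gives $H^i(\cI(1)\otimes k(y))=0$ for $i>0$, which is what actually feeds into the base-change theorem. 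You clearly have this in hand from the $\Leftarrow$ paragraph, so it is an expositional point rather than a gap.
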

\begin{proof}
  $\cI \otimes k(y)$ is $d$-regular, $y \in Y$~\cite[Lemma 2.9]{G78} and we
  put $\cF := \cO_{X\times Y}/\cI$. From standard results on cohomology and
  flatness~\cite[Lect. 11, 14]{M2} it follows that
  $\cL_{d-1}:=\pi_*\cI(d-1) \subset S_{d-1} \otimes \cO_Y$ is a subbundle of
  rank $q(d-1) = \binom{d-2+3}{3}$ and
\[
   H^0(X \otimes k(y), \cI(d-1)\otimes k(y)) \simeq \cL_{d-1} \otimes k(y)
\]
for all $y\in Y$.  Now from~\cite[Prop. 2]{G89} it follows that $\cI \otimes
K = (\ell,f)$, $K:=\overline{k(y)}, \ell \in S_1 \otimes K$, $f \in S_d
\otimes K / \ell S_{d-1} \otimes K$. It follows that $H^0(\cI(d-2)\otimes K)
= \ell S_{d-3} \otimes  K$. As
\[
h^0(\cI(d-2) \otimes  K) - h^1(\cI(d-2) \otimes  K) =
 q(d-2) = \tbinom{d-2-1+3}{3} + \tbinom{d-2-d+2}{2} = \tbinom{d-2-1+3}{3}\,,
\]
it follows that $h^1(\cI(d-2)\otimes K) = 0$ and therefore 
$h^1(\cI(d-2)\otimes k(y)) = (0)$ for all $y \in Y$. Let $\pi:
X \times Y \to Y$ be the projection. Then from~\cite[Chap. III,
Thm. 12.11]{H} it follows that $R^1 \pi_*\cI(d-2)\otimes k(y) =(0)$, $y\in
Y$, and therefore $R^1 \pi_* \cI(d-2) =(0)$.

Now $\cI \otimes k(y)$ defines a curve in $\P^3 \otimes k(y)$ and the same
argumentation as in Chapter~\ref{cha:1}, proof of Lemma~\ref{lem:1.1} shows that $\cF
\otimes k(y)$ is $(d-1)$-regular and therefore $H^1(\cF(d-2)\otimes k(y))
=(0)$. Then from~\cite[Lecture 7, Corollary 1]{M2} it follows that
$\pi_*\cF(d-2)\otimes k(y) \simeq H^0(\cF(d-2)\otimes k(y))$. Then (loc.\
cit., Corollary 2) gives that $\pi_*\cF(d-2)$ is locally free of rank 
$\tbinom{d-2+3}{3}-q(d-2)$.
From the exact sequence 
\[
 0 \longrightarrow \pi_* \cI(d-2) \longrightarrow  S_{d-2}\otimes \cO_Y \longrightarrow \pi_* \cF(d-2)  \longrightarrow 0\,
\]
it follows that $\cL_{d-2} := \pi_* \cI(d-2)$ is a subbundle of rank $q(d-2)
= \tbinom{d-2-1+3}{3} + \tbinom{d-2-d+2}{2} =
\tbinom{d-1-2+3}{3}$. From~\cite[Korollar 3.8]{G78} it follows that
$\cL_{d-2}$ generates an ideal $\cL \subset \cO_{X \times Y}$ with
Hilbert polynomial $\tbinom{n-1+3}{3}$ such that $\cO_{X \times Y}/\cL$ is
flat over $Y$.

From the $1$-regularity of $\cL$ it follows that $\cL$ is generated by a
subbundle $\cL_1 \subset S_1 \otimes \cO_Y$ of rank $1$.  If $U = \Spec(A)
\subset Y$ is sufficiently small, one can make an $A$-linear transformation
such that $\cL_1 \otimes A = X_0 \cdot A$ and one can write $H^0(\P^3
\otimes A, \cI(d)) = X_0 \cdot S_{d-1} \otimes A \oplus f\cdot A$, $f \in
R_d \otimes A$, $R = k[X_1,X_2, X_3]$. It follows that $H^0(\P^3 \otimes A,
\cI(n)) = X_0 S_{n-1} \otimes A \oplus f \cdot R_{n-d} \otimes A$ is a
subbundle of $S_n \otimes A$ of rank $q(n)$.
\end{proof}

\section{The scheme $\fX$}
\label{sec:C.2}

We first describe a general situation. Let be $S = k[x_0,\dots,x_r]$, $S_{(i)} =
k[x_0,\dots,\hat{x_i},\dots,x_r]$, $X = \P(S_1)$, $D_i:=\Set{\ell = a_0 x_0
  +\cdots + a_r x_r \in S_1 | a_i \neq 0 }$, $H_i:=
\Hilb^c(\Proj S_{(i)} )$, $\fX_i := D_i \times_k H_i$, $\phi_{ij}: \fX_i \to
\fX_j$, $i \neq j$, defined by the automorphism $x_i \mapsto x_j$, $x_j
\mapsto x_i$, and $x_k \mapsto x_k$, $k \centernot\in \{ i,j\}$.  If one
puts $U_{ij}:= D_i \times H_i \cap D_j \times H_i = D_i \cap D_j \times
H_i$, then one sees that the $\fX_i$ glue together to a scheme $\fX$ with
the following property: 

Let $U = \Spec A $ be sufficiently small and let $\ell\in S_1 \otimes A $ 
generate a direct summand of $ S_1 \otimes A $, $\overline{S \otimes A}:= S
\otimes A / \ell S(-1) \otimes A$, $\overline{X} = \Proj \overline{S \otimes
  A} \simeq \P^{r-1}\otimes A$. Then $\fX(A)$ is the set of pairs $(\ell,
\cK)$, where $\cK \subset \cO_{\bar X}$ is an ideal such that
$\cO_{\overline{X}}/\cK$ is flat over $U$ with Hilbert polynomial $c$. If
$\pi: \fX \to X$ is defined by $(\ell, \cK) \mapsto \langle \ell \rangle$,
then the fibers of $\pi$ are isomorphic to $\Hilb^c(\P^{r-1})$.
 
\begin{lemma}
  \label{lem:C.2}
  If $r=3$, then $\fX$ is smooth over $X = \P(S_1)$ with fibers isomorphic
  to $\Hilb^c(\P^2)$. \hfill $\qed$
\end{lemma}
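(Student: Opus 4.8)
The plan is to deduce this from the general gluing construction described just above, exactly as in Lemma~\ref{lem:C.1} for $\cH$. I would first verify that the local pieces $\fX_i = D_i \times_k H_i$, with $H_i = \Hilb^c(\Proj S_i)$, are smooth over $D_i$: this reduces to the classical fact that $\Hilb^c(\P^2)$ is smooth (of dimension $2c$), since $\fX_i \to D_i$ is just the constant projective bundle $D_i \times H_i \to D_i$ and $D_i$ is an open subscheme of the affine space $S_1$. Smoothness of $\Hilb^c(\P^2)$ is Fogarty's theorem (the punctual Hilbert scheme of a smooth surface is smooth), which I may cite. The gluing maps $\phi_{ij}$ are isomorphisms induced by coordinate permutations, hence identify the open subschemes $U_{ij} \subset \fX_i$ and $U_{ji} \subset \fX_j$ compatibly over $X$, so the $\fX_i$ patch to a scheme $\fX$ together with a morphism $\pi:\fX \to X$ whose restriction over $D_i$ is $\fX_i \to D_i$.

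Next I would check that $\pi$ is well-defined and has the stated fibers. The key point is that smoothness is local on the source and on the target, so smoothness of $\pi$ follows from smoothness of each $\fX_i \to D_i$ once we know the $D_i$ cover $X$ and the $\fX_i$ cover $\fX$ (both clear, since $\{D_i\}$ is the standard affine cover of $\P(S_1)$). For the fiber over a point $\langle\ell\rangle \in X$, after choosing $i$ with $\ell \in D_i$ one identifies $\Proj(S/\ell S(-1))$ with $\Proj S_i \simeq \P^2$ via the linear isomorphism, and the fiber of $\fX_i \to D_i$ over $\langle\ell\rangle$ is literally $H_i = \Hilb^c(\P^2)$; one checks this identification is independent of the choice of $i$ using the transition maps $\phi_{ij}$.

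The functorial description $\fX(A) = \{(\ell,\cK)\}$ stated in the excerpt is what makes the gluing canonical: on the overlap $U_{ij}$ a pair $(\ell,\cK)$ with $\ell \in D_i \cap D_j$ gives the same ideal $\cK \subset \cO_{\bar X}$ whether viewed through the $i$-th or $j$-th chart, so the glued object genuinely represents (or at least carries) the functor of pairs. I would spell this out just enough to see that $\pi$ is the morphism $(\ell,\cK)\mapsto\langle\ell\rangle$ and that it is projective (each $\fX_i \to D_i$ is projective, being a base change of $\Hilb^c(\P^2)$, and projectivity glues along the $\phi_{ij}$ since these respect the structure morphisms).

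I expect the main obstacle to be purely bookkeeping: verifying that the transition isomorphisms $\phi_{ij}$ satisfy the cocycle condition on triple overlaps $U_{ij}\cap U_{ik}$ and that they commute with the projections to $X$, so that the gluing is legitimate and $\pi$ descends. This is where one must be careful that the coordinate swaps $x_i \leftrightarrow x_j$ induce compatible identifications of the various $\Hilb^c(\Proj S_i)$; it is routine but is the only place where something could go wrong. Once the gluing is established, smoothness over $X$ and the identification of the fibers are immediate from the local description, so the bulk of the argument is the construction, not any geometric input beyond the smoothness of $\Hilb^c(\P^2)$.
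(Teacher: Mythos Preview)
Your proposal is correct and follows the same approach as the paper: the lemma is stated there with an immediate \qed, meaning it is taken as a direct consequence of the gluing construction of $\fX$ described just before it, together with Fogarty's theorem that $\Hilb^c(\P^2)$ is smooth. Your write-up simply unpacks that construction (local triviality over the $D_i$, cocycle condition for the $\phi_{ij}$, identification of fibers), which is exactly what the paper leaves implicit.
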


\section{The schemes $\cG, X, Y, Z$}
\label{sec:C.3}

\subsection{}
\label{sec:C.3.1}

Let be $S = k[x,y,z,t]$ and $\cH$ the Hilbert scheme $\Hilb_q(\P^3)$, $q(n)
= \tbinom{n-1+3}{3} + \tbinom{n-d+2}{2}$, $d\geq 3$,  as in
Section~\ref{sec:A.1}. The projection $\kappa: \cH \to X = \P(S_1)$,
defined by $(\ell, f) \mapsto \langle \ell \rangle$, makes $\cH$ a
projective bundle over $X$, hence $\cF := \cH \times_X \fX$ is projective and
smooth over $X$.

As usual $\HH = \HH_Q$, $Q(n) = \tbinom{n-1+3}{3} +\tbinom{n-a+2}{2}+
\tbinom{n-b+1}{1}$, $a= d+1$.  Let be $c:=b-a+1$. The morphism $\gamma: \cF
\to \HH$ is defined by mapping $[ (\ell, f), (\ell, \cK))]\in \cF(A)$ to
$(\ell, f \cdot \cK) \in \HH(A)$. We show that $\gamma(A)$ is injective:
$(\ell_1, f_1 \cdot \cK_1) = (\ell_2, f_2 \cdot \cK_2) \Rightarrow \ell_1 A =
\ell_2 A$ and $f_1 \cK_1 = f_2 \cK_2$ in $H_p(A)$. Here $H_p \simeq
\Hilb_p(\P^2_k)$, $p(n) = \tbinom{n-a+2}{2} + \tbinom{n-b+1}{1}$, if $\Spec(A)$
 is sufficiently small, such that without restriction $\ell_1 = \ell_2 = 
 ax + by + cz + t$, $\P^2 = \Proj(R)$, $R = k[x,y,z]$. Now by a result
of Fogarty~\cite[Theorem 1.4, p. 514]{F2}, $H_p \xrightarrow{\sim} \P(R_d)
\times_k \Hilb^c(\P^2)$, where $a=d+1$ and $c = b-a+1$. It
follows that $\gamma(A)$ is injective, i.e.~$\gamma$ is a monomorphism. Now
$\cF/X$ is projective, hence $\cF$ projective over $\Spec(k)$, hence
$\gamma$ projective. It follows that $\gamma$ is a closed immersion of $\cF$
into $\HH$ and we identify $\cF$ with the corresponding closed subscheme
$\cG \subset \HH$.  Thus $\cG(A)$ is the set of ideals $(\ell, f\cdot \cK)
\in \HH(A)$, where $\ell \cdot A \subset S_1 \otimes A$ respectively $f
\cdot A \subset S_d \otimes A / \ell S_{d-1} \otimes A$ are $1$-subbundles
and $\cK \subset \cO_Y$, $Y:= \Proj(S \otimes A/ \ell S(-1)\otimes A)$, is
an ideal such that $\cO_Y / \cK$  is flat over $A$ with Hilbert polynomial
$c$ (where $\Spec(A)$ is sufficiently small). 

As $\cF /k$ is smooth, it follows that $\cG /k$ also is smooth. As $\dim_k
\Hilb^c(\P^2)=2c$, it follows that 
\[
  \dim_k \cG = \tbinom{d+2}{2} +2(b-a) +4\,.
\]

\subsection{}
\label{sec:C.3.2}

For a moment we write $R = k[x,y,z]$. The same argumentation as in
Section~\ref{sec:C.1} shows that there is a closed subscheme $F \subset
\Hilb^c(\P^2)$ such that 
\[
  F(A) = \Set{ (h,g) | h \in R_1 \otimes A \text{ and } g \in R_c \otimes
    A/h\cdot R_{c-1}\otimes A \text{ generate $1$-subbundles} }\,.
\]
It follows that there is a closed subscheme $Z \subset \fX$ such that 
\[
  Z(A) = \Set{ (\ell,h,g) | 
    \begin{aligned}
      & \ell \in S_1 \otimes A,\, h \in S_1 \otimes A/ \ell \cdot A,\\
      & g \in S_c \otimes A/(\ell,h)\cdot S_{c-1}\otimes A \text{ generate
        $1$-subbundles}
    \end{aligned}
 }\,.
\]
$Y = \Flag(1,2,S_1)$ is the scheme such that 
\[
  Y(A) = \Set{ (\ell,h) | \ell \in S_1 \otimes A,\, h \in S_1 \otimes A/
    \ell \cdot A \text{ generate $1$-subbundles} }\,.
\]
$p:Y \to X = \P(S_1)$ defined by $(\ell, h) \mapsto \langle \ell \rangle$
makes $Y$ a projective bundle over $X$. The same holds true for the
projection $q:Z \to Y$ defined by $(\ell, h, g)  \mapsto (\ell,h)$.

\section{The scheme $H_m$}
\label{sec:C.4}

The notations are as before. From~\ref{sec:C.2} follows that $\cH \times_X Z$
is a closed subscheme of $\cH \times_X \fX$. Its image under $\gamma$ is
denoted by $H_m$. It follows that $H_m \xrightarrow{\sim} \cH \times_X Z$ is
a closed subscheme of $\cG$, which is smooth over $k$, such that 
\[
 H_m(A) = \Set{ (\ell,\, f(h,g)) |
    \begin{aligned}
 &  \ell \cdot A \subset S_1 \otimes A; \, f
   \cdot A \subset S_d \otimes A / \ell S_{d-1} \otimes A; \,  h \cdot A \subset S_1 \otimes A / \ell \cdot A; \,\\
&  g \cdot A \subset S_c \otimes A /
 \langle \ell,h \rangle S_{c-1} \otimes A \text{ are $1$-subbundles}
\end{aligned}
}.
\]
It follows from this description that $\dim_k H_m = \tbinom{d+2}{2} + (b-a)
+5$\,.
\section{Ideals with maximal regularity}
\label{sec:C.5}

Let be $P = k[x,y,z,t]$, $R = k[y,z,t]$, $X = \Proj(P)$, $Q(n) =
\tbinom{n-1+3}{3} + \tbinom{n-a+2}{2} +  \tbinom{n-b+1}{1}$, $a<b$.

\begin{auxlemma}
  \label{auxlem:C.1}
  If $\cI \subset \cO_X$ has the Hilbert polynomial $Q(n)$, $\reg(\cI) =b$
  and $\cI$ is fixed by the Borel group $B = B(4;k)$, then $\cI$ is equal to
  the lexicographic ideal $(x,y^a,y^{a-1}z^{b-a+1})$ with Hilbert polynomial
  $Q$.
\end{auxlemma}
\begin{proof}
  Let be $I_n = H^0(X,\cI(n))$, $I = \oplus I_n$. Let be $J \subset P$ the
  ideal generated by $I_{b-1}$, i.e.~$J_n = I_n$, if $n< b$, and $J_n = P_1
  J_{n-1}$, if $n\geq b$. Put $q(n) = \tbinom{n-1+3}{3}+
  \tbinom{n-a+2}{2}$. Then $Q(b-1) = q(b-1)$ and $Q(b) = q(b)+1$. Let be
  $\cJ = \tilde{J}$. Then $J$ is $d$-regular with $d \leq b-1$. This means,
  $\cJ$ is $d$-regular and $J_n = H^0(\cJ(n))$, if $n \geq d$
  (see~\cite[Prop. 2.6]{Green}). \\

\textsc{Case 1}:
$P_1 J_{b-1} = I_b$. Then $\reg(I)<b$, contradiction (see~\cite[Thm. 2.27]{Green}). \\

\textsc{Case 2}:
$P_1 J_{b-1} \subset I_b$ is a strict inclusion. Then $\dim P_1 J_{b-1} \leq
Q(b)-1 = q(b)$ and hence $\dim J_n = q(n)$ for all $n \geq b-1$
(cf.~\cite[Korollar 3.8]{G78}). From the special form of the Hilbert
polynomial $q(n)$ one deduces that $\cJ = (\ell,f)$, $\ell \in P_1 - (0)$
and $f \in P/\ell P(-1)$ of degree $a$ (for example, see~\cite[Abschnitt
2.8]{G82}). Because of the $B$-invariance of $\cJ$ it follows that $\cJ =
(x,y^a)$. Hence on can write $I_b=xP_{b-1} \oplus y^a R_{b-a} \oplus f\cdot
k$, where $f\in R$ is a monomial of degree $b$. Because of the
$B$-invariance of $I_b$ and $J_b$ it follows that $z \partial f/\partial t
\in J_{b-1}$, hence $\partial f/\partial t =0$ (see~\cite[Hilfssatz
1, p. 142]{T2}). Therefore one can write $f=y^iz^j$, where $i \leq a-1$,
$i+j=b$. If $i\leq a-2$, then it follows that $y \partial f/\partial z = j
y^{i+1}z^{j-1}\in J_b$ (cf.~loc.~cit.), which is not possible. Hence one has
$f = y^{a-1}z^{b-a+1}$, i.e.~$\cI$ is the lexicographical ideal.
\end{proof}

Now let be $\cI \subset \cO_X$ any ideal with Hilbert polynomial $Q(n)$ and
$\reg(\cI) = b$. Let be $I_n = H^0(\cI(n))$ and $I = \bigoplus I_n$. By
applying a suitable $g \in \Aut_k(P_1)$, one can achieve that
$\inn(g(I_n))$ is invariant under $B$, hence without restriction one can suppose
that $\inn(I_n)$ is invariant under $B$, for all $n\geq 0$. Let be $M =
\bigoplus \inn(I_n)$ and $\cM = \tilde{M}$. Then $\reg(\cI) =
\reg(\cM)$~\cite[Thm. 2.27]{Green}. Hence $\reg(\cM)=b$ is maximal and $\cM$
is equal to the lexicographical ideal, by the Aux-Lemma~\ref{auxlem:C.1}. But
then $h^0(\cI(1))=1$, and without restriction $x \in H^0(\cI(1))$. Then one
can write $\cI = x \cO_X(-1) \oplus \cL$, $\cL \subset \cO_Y$, $Y =
\Proj(R)$ and the Hilbert polynomial of $\cL$ is equal to
$\tbinom{n-a+1}{2}+\tbinom{n-b+1}{1}$. It follows that $\cL = f \cdot \cK$,
$\cK \subset \cO_Y$ has the Hilbert polynomial
$\tbinom{n-1+2}{2}+\tbinom{n-c+1}{1}$, $c = b-a+1$, $f \in R_d$, $d=a-1$
(cf.~\cite[Abschnitt 2.8]{G82}). Now $h^0(\cI(n)) = h^0(\cM(n))$ (see
\cite{Green} or \cite[Remark 2, p. 543]{G88}).  Hence $h^0(\cI(n))
=\tbinom{n-1+3}{3}$, if $n<a$; $h^0(\cI(n)) =\tbinom{n-1+3}{3}+
\tbinom{n-a+2}{2} $, if $a\leq n \leq b-1$; $h^0(\cI(n)) = Q(n)$, if $b \leq
n$. It follows that $h^0(\cL(n)) =0$, if $n<a$; $h^0(\cL(n))
=\tbinom{n-a+2}{2}$, if $a\leq n \leq b-1$; $h^0(\cL(n)) =
\tbinom{n-a+1}{2}+\tbinom{n-b+1}{1}$, if $b \leq n$. If one puts $c =
b-a+1$, one sees that $h^0(\cK (n)) = \tbinom{n-1+2}{2}$, if $0 \leq n \leq
c-1 $ and $h^0(\cK(n)) = \tbinom{n-1+2}{2} +\tbinom{n-c+1}{1}$, if $c \leq
n$. It follows that $\cK = (h, g)$, where $h$ is a linear form in $R$ and $g
\in R/hR(-1)$ is a form of degree $c$. We get:

\begin{proposition}
  \label{prop:C.1}
  If $\cI \subset \cO_X$ is an ideal with Hilbert polynomial $Q(n)$ and
  $\reg(\cI) =b$, then $\cI = (\ell, f(h,g))$, $\ell  \in P$ a linear form, $f
  \in P/\ell P(-1)$ a form of degree $d= a-1$, $h \in P / \ell P(-1)$ a
  linear form and $g \in  P/(\ell, h) P(-1)$ a form of degree $b- a+1$.
\hfill $\qed$
\end{proposition}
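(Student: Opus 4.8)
The plan is to run the generic-initial-ideal argument already sketched in the discussion preceding Aux-Lemma~\ref{auxlem:C.1}, now applied to an \emph{arbitrary} ideal $\cI$ of maximal regularity, and then to peel off the ideal one linear layer at a time using the known classification of saturated ideals in $\P^2$ with the relevant Hilbert polynomials.

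First I would reduce to the Borel-fixed case. Put $I_n = H^0(X,\cI(n))$ and $I = \bigoplus I_n$. After a generic $g \in \Aut_k(P_1)$ the initial ideal $M = \bigoplus \inn(g(I_n))$ is $B(4;k)$-invariant (Galligo), so without loss of generality $\inn(I_n)$ is Borel-invariant for all $n$. Since $\reg(\tilde M) = \reg(\cI) = b$ (Bayer--Stillman, cf.~\cite[Thm. 2.27]{Green}) and $\tilde M$ has Hilbert polynomial $Q$, Aux-Lemma~\ref{auxlem:C.1} forces $\tilde M$ to be the lexicographic ideal $(x,y^a,y^{a-1}z^{b-a+1})$. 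In particular $h^0(\cI(n)) = h^0(\tilde M(n))$ for all $n$ (Macaulay, cf.~\cite{Green}), so $h^0(\cI(1)) = 1$.

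Next I would extract the linear form. Choose $0 \neq \ell \in H^0(\cI(1))$; after a further linear change assume $\ell = x$. Writing $S_n = xS_{n-1} \oplus R_n$ with $R = k[y,z,t]$ and using $x \in \cI$, every section of $\cI(n)$ decomposes as $xg + r$ with $r \in H^0(\cI(n)) \cap R_n$, so $\cI = x\cO_X(-1) \oplus \cL$, where $\cL \subset \cO_Y$, $Y = \Proj R \simeq \P^2$, is the ideal with $H^0(\cL(n)) = H^0(\cI(n)) \cap R_n$. Subtracting the now-known values $h^0(\cI(n)) = h^0(\tilde M(n))$ from $\binom{n-1+3}{3}$ yields the Hilbert function of $\cL$: it vanishes for $n < a$, equals $\binom{n-a+2}{2}$ for $a \le n \le b-1$, and equals $\binom{n-a+1}{2} + \binom{n-b+1}{1}$ for $n \ge b$, i.e.\ it has Hilbert polynomial $\binom{n-a+1}{2} + \binom{n-b+1}{1}$.

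Finally I would invoke the structure theory of saturated ideals in $\cO_{\P^2}$ with ``small'' Hilbert polynomials twice. The shape of the Hilbert function of $\cL$ is precisely that of ``a degree-$d$ form times a one-dimensional-support ideal'' (cf.~\cite[Abschnitt 2.8]{G82}), forcing $\cL = f \cdot \cK$ with $f \in R_d$, $d = a-1$, and $\cK \subset \cO_Y$ saturated with Hilbert function $\binom{n+1}{2}$ for $0 \le n \le c-1$ and $\binom{n+1}{2} + \binom{n-c+1}{1}$ for $n \ge c$, where $c = b-a+1$; by the same lemma $\cK = (h,g)$ with $h$ a linear form and $g$ of degree $c$ modulo $h$. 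Translating back to $P$ gives $\cI = (\ell, f(h,g))$ with $\ell, f, h, g$ of the asserted degrees. I expect the main obstacle to be making this last step rigorous: one has to verify that the computed Hilbert functions genuinely pin down the factorizations $\cL = f\cdot\cK$ and $\cK = (h,g)$ — that no other saturated ideal realizes them — which is exactly what the cited classification results supply, while everything else is routine bookkeeping with binomial coefficients.
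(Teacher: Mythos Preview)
Your proposal is correct and follows essentially the same route as the paper: pass to the generic initial ideal, invoke Aux-Lemma~\ref{auxlem:C.1} to identify it as the lex ideal and read off the Hilbert function, extract the linear form $\ell$, then use the structure results of~\cite[Abschnitt 2.8]{G82} twice to factor $\cL = f\cdot\cK$ and $\cK = (h,g)$. The only cosmetic difference is that the paper records the Hilbert-function equality $h^0(\cI(n)) = h^0(\cM(n))$ after first writing $\cL = f\cdot\cK$ (using the Hilbert \emph{polynomial}) rather than before, but the logic is the same.
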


\begin{corollary}
    \label{cor:C.1}
 Let $\HH_Q$ be the Hilbert scheme, which parametrizes the ideals $\cI \subset
 \cO_X$ with Hilbert polynomial $Q(n)$ as above. The following statements
 are equivalent:
 \begin{enumerate}[(i)]
 \item $\xi \in H_m(k)$;
 \item The ideal $\cI \leftrightarrow \xi \in \HH_Q(k)$ has maximal
   regularity $b$.
 \item The ideal $\cI \leftrightarrow \xi \in \HH_Q(k)$ has maximal Hilbert
   function.
 \end{enumerate}
 \hfill $\qed$
\end{corollary}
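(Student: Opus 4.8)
The statement to prove is Corollary~\ref{cor:C.1}, the equivalence of three conditions on a point $\xi \in \HH_Q(k)$: membership in $H_m(k)$, maximal regularity $b$ of the corresponding ideal, and maximal Hilbert function. The plan is to assemble this from the three results that immediately precede it in the appendix, namely Aux-Lemma~\ref{auxlem:C.1}, Proposition~\ref{prop:C.1}, and the description of $H_m$ from Section~\ref{sec:C.4}; the corollary is essentially a bookkeeping restatement, so the proof should be short.

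First I would prove (ii)$\Rightarrow$(i). Suppose $\cI \leftrightarrow \xi$ has $\reg(\cI) = b$. Proposition~\ref{prop:C.1} then gives $\cI = (\ell, f(h,g))$ with $\ell$ a linear form, $f \in P/\ell P(-1)$ of degree $d = a-1$, $h$ a linear form in $P/\ell P(-1)$, and $g \in P/(\ell,h)P(-1)$ of degree $b-a+1 = c$. But that is exactly the description of a $k$-point of $H_m$ given in Section~\ref{sec:C.4} (the set $H_m(k)$ consists of ideals $(\ell, f(h,g))$ with $\ell, f, h, g$ of precisely these degrees), so $\xi \in H_m(k)$. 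For (i)$\Rightarrow$(ii), I would run the argument the other way: if $\xi \in H_m(k)$, then $\cI = (\ell, f(h,g))$, and a direct computation of $h^0(\cI(n))$ (or, more cleanly, reduction mod $\ell$ to a curve in $\P^2$ and then mod $h$) shows $\cI$ is not $(b-1)$-regular while it is $b$-regular; since every ideal with Hilbert polynomial $Q$ is $b$-regular (cited as \cite[Lemma 2.9]{G78} in Section~\ref{sec:1.1}), this forces $\reg(\cI) = b$, the maximal possible value.

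For the equivalence with (iii), the key input is the discussion in the proof of Proposition~\ref{prop:C.1}: one replaces $\cI$ by $g(\cI)$ for a suitable $g \in \Aut_k(P_1)$ so that the initial ideal $\cM = \bigoplus \inn(I_n)$ is Borel-fixed, uses $\reg(\cI) = \reg(\cM)$ from \cite[Thm. 2.27]{Green}, and uses $h^0(\cI(n)) = h^0(\cM(n))$. Aux-Lemma~\ref{auxlem:C.1} identifies the unique Borel-fixed ideal with Hilbert polynomial $Q$ and regularity $b$ as the lexicographic ideal. For (ii)$\Rightarrow$(iii): if $\reg(\cI) = b$, then $\cM$ has regularity $b$ and hence equals the lexicographic ideal, which has the (provably) maximal Hilbert function among ideals with Hilbert polynomial $Q$; since $h^0(\cI(n)) = h^0(\cM(n))$, $\cI$ has maximal Hilbert function. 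For (iii)$\Rightarrow$(ii): if $\cI$ has maximal Hilbert function, then so does $\cM$; by the classification of Borel-fixed ideals (or by the lex-segment property, Macaulay's theorem), the only Borel-fixed ideal with Hilbert polynomial $Q$ and maximal Hilbert function is the lex ideal, whose regularity is $b$, so $\reg(\cM) = \reg(\cI) = b$.

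The main obstacle I anticipate is making the ``maximal Hilbert function'' step fully rigorous: one must cite or establish that among all saturated ideals with Hilbert polynomial $Q$, the lexicographic ideal simultaneously maximizes $h^0(\cI(n))$ for every $n$, and that this maximizer is characterized by having regularity $b$. This is classical (Macaulay, Gotzmann persistence, and the behavior of lex-segment ideals), and the paper already leans on \cite{Green} and on the behavior of initial ideals, so the clean route is to phrase (iii) as ``$h^0(\cI(n))$ equals the lex bound for all $n$'' and invoke these. Everything else is either a direct appeal to Proposition~\ref{prop:C.1} and the definition of $H_m$, or the short cohomological computation that an ideal of the form $(\ell, f(h,g))$ with those degrees fails $(b-1)$-regularity; I would not grind through that computation in detail but indicate the reduction-mod-$\ell$-then-mod-$h$ argument.
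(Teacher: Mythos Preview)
Your proposal is correct and assembles exactly the right ingredients; the paper gives no proof beyond the \qed, and your outline is the natural reading.

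One simplification removes the ``main obstacle'' you flag. For (iii)$\Rightarrow$(i) you go through the generic initial ideal and then need the separate claim that a Borel-fixed saturated ideal with the lex Hilbert function must itself be lex. That claim is true, but you can bypass it entirely: reread the second half of the proof of Proposition~\ref{prop:C.1}. After the sentence ``$\cM$ is equal to the lexicographical ideal'', the only input used is $h^0(\cI(n)) = h^0(\cM(n))$, i.e.\ that $\cI$ has the lex Hilbert function. From that alone the argument extracts $h^0(\cI(1))=1$, hence a linear form $\ell$; then Fogarty's splitting $\cL = f\cdot\cK$; then the explicit Hilbert function of $\cK$ forces $\cK=(h,g)$. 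So (iii)$\Rightarrow$(i) is already contained verbatim in that proof, with no detour through uniqueness of Borel-fixed ideals. Your (i)$\Rightarrow$(ii) via the minimal generator $fg$ in degree $b$ is the right complement: since $g\notin (h)$ in $P/(\ell,h)P(-1)$, the element $fg$ is not in $(\ell,fh)$, so $\cI$ cannot be generated in degree $\le b-1$, forcing $\reg(\cI)=b$.
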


\section{The first Chow group of $\cG$}
\label{sec:C.6}

We write $ R = k[y,z,t]$, $S = k[x,y,z,t]$ and we let $\G_m$ operate by
$\sigma(\lambda):x\mapsto \lambda^{g^3} x$, $y\mapsto \lambda^{g^2}y$,
$z\mapsto \lambda^g z$, $t \mapsto t$, where $g$ is a sufficiently great
natural number.

Let be $c \geq 3$, $H^c = \Hilb^c( \Proj R)$. According to~\cite{E-S} one
has: 

There is exactly one $0$-dimensional cell of the B-B-decomposition of $H^c$,
\index{Bialynicki-Birula decomposition},
which belongs to a monomial ideal $\cK_0$. There are exactly two
$1$-dimensional cells, which we denote $W_1$ and $W_2$. 

It is not difficult to see that there are four $1$-dimensional cells in  the
B-B-decomposition of $\cG$, namely:
\begin{alignat*}{2}
  Z_1 & = \Set{ (x,y^d \cdot \cK) | \cK \in W_1}\,, &\qquad Z_2 & = \Set{ (x,y^d
    \cdot \cK) | \cK \in W_2}\,, \\
  Z_3 & = \Set{ (x,y^{d-1}(\alpha y +z) \cdot \cK_0)}^-\,, & \qquad Z_4 & = \Set{
    (\alpha x + y, x^d \cdot \cL_0)}^-\,, 
\end{alignat*}
where now $\cL_0$ is the monomial ideal, which defines the $0$-dimensional
cell in $\Hilb^c( \Proj k[x,z,t])$.

\begin{corollary}
    \label{cor:C.2}
 $A_1(\cG)$ is freely generated (over $\Z$) by $[Z_1], \dots, [Z_4]$.
 \hfill $\qed$
\end{corollary}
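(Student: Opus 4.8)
The plan is to deduce Corollary~\ref{cor:C.2} from the Bialynicki--Birula theory applied to the $\G_m$-action just described, essentially mimicking the argument that gives Theorem~\ref{thm:I} and Theorem~\ref{thm:II} in Section~\ref{sec:1.1} for $\HH$ itself. First I would record that $\cG$ is smooth and projective over $k$ (established in Section~\ref{sec:C.3.1}), so that the chosen $\G_m$-action on $S=k[x,y,z,t]$ with weights $(g^3,g^2,g,0)$ — for $g\gg 0$, chosen so that the induced action on the relevant Hilbert schemes of points is ``generic'' in the sense of \cite{E-S} — has isolated fixed points and induces a Bialynicki--Birula decomposition of $\cG$ into locally closed affine cells. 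The key structural input is that the cells of dimension $\le 1$ on $\cG$ can be read off from those on the factors $\cH$ (equivalently $\P(S_d/\ell S_{d-1})$-bundle over $X$) and on $\fX\supset$ the fiber $\Hilb^c(\P^2)$, via the isomorphism $\cG\xrightarrow{\sim}\cH\times_X\fX$ of Section~\ref{sec:C.3.1}.

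The main computation is the enumeration of the one-dimensional cells. On $\Hilb^c(\Proj R)$, $R=k[y,z,t]$, the result of \cite{E-S} gives exactly one $0$-cell (a monomial ideal $\cK_0$) and exactly two $1$-cells $W_1,W_2$; on the ambient $\cG$ one then combines this with the two ``transversal'' directions coming from the $\P(S_d/\ell S_{d-1})$ and $\P(S_1)$ directions. This produces the four curves $Z_1,\dots,Z_4$ written in Section~\ref{sec:C.6}: $Z_1,Z_2$ move the $\Hilb^c$-factor along $W_1$ resp.\ $W_2$ while keeping $(x,y^d\cdot-)$ fixed; $Z_3$ moves the form $f$ from $y^d$ to $y^{d-1}(\alpha y+z)$ while keeping $\cK_0$; and $Z_4$ moves the linear form $\ell$ from $x$ to $\alpha x+y$ while keeping $f=x^d$ and the monomial ideal $\cL_0\subset k[x,z,t]$. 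One must check that these are indeed all the $\le 1$-dimensional cells — i.e.\ that no $1$-dimensional cell of $\cG$ fails to be of one of these three types — which is where the genericity of the weights $g\gg 0$ is used, guaranteeing that the fixed locus is finite and that the cell dimensions add up correctly across the fiber product.

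Given the cell structure, the conclusion is formal: by the Bialynicki--Birula theorem for a smooth projective variety with an algebraic $\G_m$-action (the same tool invoked in \cite{T1} and \cite{G88} to compute $A_1(\HH)$), the classes of the closures of the cells of dimension $\le i$ generate $A_i$, and when there are no cells in intermediate dimensions the classes of the $1$-cell closures form a free $\Z$-basis of $A_1(\cG)$. Since the one-dimensional cells are exactly $Z_1,\dots,Z_4$ (with the unique $0$-cell accounting for $A_0$), we get $A_1(\cG)\cong\Z^4$ freely generated by $[Z_1],\dots,[Z_4]$. The same argument, applied to the sub-configurations, simultaneously yields the corresponding statements for $H_m$, $\cH$, $Y$, $Z$ used elsewhere (e.g.\ in Proposition~\ref{prop:C.4}).

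The hard part will be the bookkeeping in the cell enumeration: one has to be sure that the $\le 1$-dimensional cells of $\cG\simeq\cH\times_X\fX$ are precisely the ``product'' cells obtained by taking a $\le 1$-cell in one factor and a $0$-cell in the other, and that the description in \cite{E-S} of the low-dimensional part of the B--B stratification of $\Hilb^c(\P^2)$ transports correctly to the relative setting $\fX\to X$ with the chosen weights. Once the monomial $0$-cell and the two $1$-cells $W_1,W_2$ on $\Hilb^c(\P^2)$ are pinned down, identifying $Z_1,\dots,Z_4$ and verifying their linear independence in $A_1(\cG)$ (for instance by pairing against the line bundles $\cM_n$ or $\cL_i$, as is done throughout Chapter~\ref{cha:1}) is routine.
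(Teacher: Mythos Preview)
Your proposal is correct and follows essentially the same approach as the paper: the paper sets up the $\G_m$-action with weights $(g^3,g^2,g,0)$, invokes \cite{E-S} for the $0$- and $1$-cells of $\Hilb^c(\Proj R)$, lists the resulting four $1$-dimensional B--B cells $Z_1,\dots,Z_4$ of $\cG$, and then records Corollary~\ref{cor:C.2} as an immediate consequence. Your discussion of the bookkeeping in the cell enumeration across the fiber product $\cG\simeq\cH\times_X\fX$ is exactly the content the paper summarizes with ``It is not difficult to see\dots''.
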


\section{Geometry of $H_m$}
\label{sec:C.7}

We write $S = k[x,y,z,t]$ and we let $\G_m$ operate on $S$ as in
Section~\ref{sec:C.6}. Now it is not difficult to see:

\begin{proposition}
  \label{prop:C.2}
The $1$-dimensional cells of the B-B-decomposition of $H_m$ are:
\begin{alignat*}{2}
  Z_0 & = \Set{ (x, y^d(\alpha y +z, z^c))}^-\,, &\qquad Z_1 & = \Set{
    (x, y^d(y, z^{c-1}(\alpha z +t))) }^-\,, \\
  Z_2 & = \Set{ (x, y^{d-1}(\alpha y +z)(y, z^c))}^-\,, & \qquad Z_3 & = \Set{
    (\alpha x + y, x^d(x, z^c))}^-\,.
\end{alignat*}  
\hfill $\qed$
\end{proposition}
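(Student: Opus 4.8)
The plan is to identify the Bialynicki--Birula (B-B) decomposition of $H_m$ for the $\G_m$-action $\sigma$ fixed in \S\ref{sec:C.6}, using that $H_m$ is smooth and projective (\S\ref{sec:C.4}) and that, by the constructions of \S\ref{sec:C.3}--\S\ref{sec:C.4}, it is an iterated projective bundle over $\P^3$. Since $\sigma(\lambda)\colon x\mapsto\lambda^{g^3}x,\ y\mapsto\lambda^{g^2}y,\ z\mapsto\lambda^{g}z,\ t\mapsto t$ has, for $g$ sufficiently large, pairwise distinct weights $ig^3+jg^2+kg$ on the monomials $x^iy^jz^kt^l$ of any bounded degree, the induced action on $H_m$ has only finitely many fixed points; in characteristic $0$ these are exactly the points $\xi\leftrightarrow(\ell,f(h,g))$ for which $\ell,f,h,g$ are monomials in the relevant residue rings. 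By the theorem of Bialynicki--Birula, $H_m=\coprod_p C_p$ with $C_p=\Set{\xi:\lim_{\lambda\to0}\sigma(\lambda)\xi=p}\cong\A^{m_p}$, where $m_p$ is the number of strictly positive $\sigma$-weights on $T_pH_m$; so the task is to list the fixed points with $m_p=1$ together with the closures $\overline{C_p}$.

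Next I would exploit the tower
\[
  H_m\;\cong\;\cH\times_X Z\;\xrightarrow{\;p_2\;}\;Z\;\xrightarrow{\;q\;}\;Y\;\xrightarrow{\;p\;}\;X=\P(S_1),
\]
in which $p_2$ is a $\P^N$-bundle with $N=\tbinom{d+2}{2}-1$ (the pullback of $\kappa\colon\cH\to X$ of Lemma~\ref{lem:C.1}), $q\colon Z\to Y$ is a $\P^c$-bundle, $p\colon Y=\Flag(1,2,S_1)\to X$ is a $\P^2$-bundle (\S\ref{sec:C.3.2}), and $X=\P^3$; the action $\sigma$ is compatible with all four maps. At a fixed point $p$ the tangent space splits $\G_m$-equivariantly as the direct sum of the (pulled-back) relative tangent spaces of the three bundle maps and of $T_pX$, because each sequence $0\to T_{E/B}\to T_E\to f^*T_B\to 0$ splits over a torus. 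Hence $m_p$ is additive over the four layers, and equals the sum of the dimensions of the induced B-B cells of $p$ inside the fiber $\P^r$ of each layer. Since a projective space with a generic $\G_m$-action has exactly one cell of each dimension $0,\dots,r$, it follows that $m_p=1$ precisely when $p$ lies at the $0$-cell (the sink, i.e.\ the fixed point of maximal $\sigma$-weight) of three layers and at the $1$-cell (the fixed point of second-largest weight) of the fourth. This already shows there are exactly four $1$-dimensional cells, one per layer, and reduces the problem to naming, in each fiber $\P(V)$, the top- and second-from-top weight monomials.

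Then I would run the four computations. For $X=\P(S_1)$ the weights are $g^3>g^2>g>0$ on $x,y,z,t$: sink $\langle x\rangle$, $1$-cell $\Set{\langle\alpha x+y\rangle}^-$; over $\ell=\alpha x+y$ the other sinks are $h=x$, $g=z^c$, $f=x^d$, producing $\Set{(\alpha x+y,\,x^d(x,z^c))}^-$. For the $\P^2$-layer $\P(S_1/\ell k)$ over $\ell=x$: sink $h=y$, $1$-cell $h=\alpha y+z$, and with the sinks $f=y^d$ in $\P(k[y,z,t]_d)$ and $g=z^c$ in $\P(k[z,t]_c)$ (using $z^c\equiv(-\alpha)^cy^c$ along the cell) this is $\Set{(x,\,y^d(\alpha y+z,\,z^c))}^-$. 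For the $\P^c$-layer $\P(k[z,t]_c)$: sink $g=z^c$, $1$-cell $g=z^{c-1}(\alpha z+t)$, giving $\Set{(x,\,y^d(y,\,z^{c-1}(\alpha z+t)))}^-$. For the $\P^N$-layer $\P(k[y,z,t]_d)$: sink $f=y^d$, $1$-cell $f=y^{d-1}(\alpha y+z)$, giving $\Set{(x,\,y^{d-1}(\alpha y+z)(y,\,z^c))}^-$. These are exactly $Z_3,Z_0,Z_1,Z_2$.

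The genuinely delicate points, and hence the main obstacle, are two pieces of bookkeeping: (i) verifying that the $\sigma$-weights inside a fiber, which depend on the base fixed point because the residue rings $S/\ell$ and $S/(\ell,h)$ vary, single out the monomials named above; and (ii) checking that moving along a $1$-cell yields the displayed family of ideals and that its closure $\Set{\cdots}^-$ in $H_m$ — a flat limit, not the degenerate naive limit of the generators — is the fixed point predicted by the B-B theory. Both are routine but need care. Everything else (smoothness, the tower, additivity of the cell dimension, finiteness of the fixed locus for $g\gg0$) is cited from \S\ref{sec:C.3}--\S\ref{sec:C.4} or standard; and, unlike the parallel computation for $\cG$ in \S\ref{sec:C.6}, no input from \cite{E-S} is required here since every fiber of the tower is an honest projective space.
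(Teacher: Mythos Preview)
Your argument is correct and complete; the paper itself offers no proof of this proposition beyond ``it is not difficult to see'' and a \qed, so there is nothing to compare your approach against. The tower-of-projective-bundles viewpoint you use is exactly the right way to make the assertion precise: additivity of the positive-weight count on $T_pH_m$ over the four layers $X$, $Y/X$, $Z/Y$, $\cH/X$ immediately gives four $1$-cells, and your identification of each closure is accurate.

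One small slip in the $Z_0$ paragraph: you write ``$g=z^c$ in $\P(k[z,t]_c)$'', but at the fixed point $(\ell,h)=(x,z)$ the fibre of $q\colon Z\to Y$ is $\P(S_c/(x,z)S_{c-1})=\P(k[y,t]_c)$, and the sink there is $g=y^c$, not $z^c$. Your parenthetical ``using $z^c\equiv(-\alpha)^cy^c$ along the cell'' shows you already see this; it is precisely why the two descriptions $\Set{(x,y^d(\alpha y+z,y^c))}^-$ and $\Set{(x,y^d(\alpha y+z,z^c))}^-$ coincide (the paper records the same identity after equation~\eqref{eq:C.5}). Stating the sink as $y^c\in k[y,t]_c$ and then noting the rewriting would make the exposition cleaner.
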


\begin{remark*}
 $Z_i$ is equal to the tautological cycle $C_i$, $1 \leq i \leq 3$.  Besides
 $(\cM_n \cdot Z_0) = \rho$ (cf.~equations~\eqref{eq:C.4},~\eqref{eq:C.5})
 below). Finally one has $[C_3] = \beta[C_1] + \gamma[C_0]$ in $A_1(\HH)$,
 where $\beta= \tbinom{a-1}{2}$ and $\gamma = (b-a)\tbinom{a}{2} +
 \tbinom{a+1}{3}$ (see~\cite[Hilfssatz 1, p. 50]{T3}).
\end{remark*}

The projection $p: H_m \to X = \P(S_1)$ is defined by $(\ell, f(g,h))
\mapsto \langle \ell \rangle$. Let be $\cL_3:=p^*(\cO_X(1))$. As $p(Z_i)$ is
one single point, $0 \leq i \leq 2$, one has $(\cL_3\cdot Z_i)=0$, $0 \leq i
\leq 2$. As $p|Z_3$ is injective and $p(Z_3)\simeq \P^1 \subset X$, one has 
$(\cL_3\cdot Z_3)=1$ and one obtains:

\begin{lemma}
  \label{lem:C.3}
   If one puts $\cF_0 = \cL_0 \otimes \cL^{-\gamma}_3$, $\cF_1 = \cL_1
   \otimes \cL^{-\beta}_3$, $\cF_2 = \cL_2$, $\cF_3 = \cL_3$, where $\cL_0,
   \cL_1, \cL_2$ are the line bundles  as in Section~\ref{sec:1.3.3}, then
   one gets the following intersection numbers:
\[
   \bordermatrix{
        & Z_0  & Z_1 & Z_2 & Z_3 \cr 
  \cF_0 & \rho & 0   & 0   &  0  \cr 
  \cF_1 &  0   & 1   & 0   &  0  \cr 
  \cF_2 &  0   & 0   & 1   &  0  \cr 
  \cF_3 &  0   & 0   & 0   &  1  \cr 
}
\]
\hfill $\qed$
\end{lemma}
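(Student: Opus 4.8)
The plan is to reduce the whole table to intersection data that is already available, so that essentially no new computation is needed. First I would record the compatibility point: since each $Z_j$ ($0\le j\le 3$) is a curve in $H_m$ and $\cL_0,\cL_1,\cL_2$ are line bundles on $\HH$, the number $(\cL_i\cdot Z_j)$ is just the degree of the restriction $\cL_i|_{Z_j}$ and may be evaluated in $A_1(\HH)$ via the closed immersion $H_m\hookrightarrow\HH$. Then I would invoke the Remark above: in $A_1(\HH)$ one has $[Z_1]=[C_1]$, $[Z_2]=[C_2]$ and $[Z_3]=[C_3]=\beta[C_1]+\gamma[C_0]$, while $(\cM_n\cdot Z_0)=\rho$; since $(\cM_n\cdot C_0)=1$ by Lemma~\ref{lem:1.5}, this last fact is equivalent to $[Z_0]=\rho\,[C_0]$. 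Combined with $(\cL_i\cdot C_m)=\delta_{im}$ for $i,m\in\{0,1,2\}$ (Conclusion~\ref{concl:1.4}, with $C_0=E$), these identities take care of every entry that involves only $\cL_0,\cL_1,\cL_2$.

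Next I would dispose of the $\cF_3=\cL_3$ row: this is exactly the observation already made just before the statement, namely that $p(Z_j)$ is a single point for $0\le j\le 2$, hence $(\cL_3\cdot Z_j)=0$ there, whereas $p|Z_3$ is an isomorphism onto a line in $X=\P(S_1)$, so $(\cL_3\cdot Z_3)=1$. That also provides the only facts about $\cL_3$ needed for the other rows. The remaining bookkeeping is then routine: for $\cF_2=\cL_2$ the $Z_1,Z_2,Z_3$ columns read off as $0,1,0$ from the cycle identities, and the $Z_0$ entry is $0$ either from $[Z_0]=\rho[C_0]$ and $(\cL_2\cdot C_0)=0$ or directly because the $\cM$-exponents in $\cL_2$ sum to $1-2+1=0$. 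For $\cF_1=\cL_1\otimes\cL_3^{-\beta}$ and $\cF_0=\cL_0\otimes\cL_3^{-\gamma}$, the $Z_0,Z_1,Z_2$ entries agree with those of $\cL_1$ resp.\ $\cL_0$ because $(\cL_3\cdot Z_j)=0$ for $j\le 2$; the $Z_0$ entries come out $0$ resp.\ $\rho$ because the $\cM$-exponents sum to $0$ resp.\ $1$; and the $Z_3$ entries vanish precisely because $(\cL_1\cdot C_3)=\beta$ and $(\cL_0\cdot C_3)=\gamma$ (from $[C_3]=\beta[C_1]+\gamma[C_0]$ and $(\cL_i\cdot C_m)=\delta_{im}$) are cancelled by the $-\beta$, $-\gamma$ twists by $\cL_3$, which has $(\cL_3\cdot Z_3)=1$.

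I do not expect a genuine obstacle. The only inputs that are not pure formal manipulation are the identification of the $Z_i$ with the tautological cycles and the value $(\cM_n\cdot Z_0)=\rho$, and both are already supplied by the explicit Bialynicki--Birula cell descriptions of $H_m$ (Proposition~\ref{prop:C.2}) and the computations cited in the Remark. The one point that deserves a little care — and is in fact the whole reason the twisted bundles $\cF_0,\cF_1$ are defined the way they are — is that the twisting exponents must be taken to be exactly the coefficients $\beta,\gamma$ occurring in $[C_3]=\beta[C_1]+\gamma[C_0]$, since that is precisely what forces the off-diagonal entries of the $Z_3$ column to vanish while leaving the diagonal entry intact.
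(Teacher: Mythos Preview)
Your proposal is correct and matches the paper's intended argument exactly: the lemma is stated with a bare $\qed$, meaning it is meant to follow immediately from the preceding Remark (which supplies $Z_i=C_i$ for $1\le i\le 3$, $(\cM_n\cdot Z_0)=\rho$, and $[C_3]=\beta[C_1]+\gamma[C_0]$), the formula $(\cL_i\cdot C_j)=\delta_{ij}$ from Conclusion~\ref{concl:1.4}, and the values $(\cL_3\cdot Z_j)$ computed in the paragraph just before the lemma. Your write-up simply makes this bookkeeping explicit, and your observation that $[Z_0]=\rho\,[C_0]$ in $A_1(\HH)$ (equivalently, that the $\cM$-exponent sums in $\cL_0,\cL_1,\cL_2$ are $1,0,0$) is the right way to handle the $Z_0$ column.
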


\begin{proposition}
  \label{prop:C.3}
  Numerical equivalence $=$ rational equivalence on $H_m$.
\end{proposition}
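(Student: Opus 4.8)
The plan is to run the same argument as for Theorem~\ref{thm:1.1}, now using the cell structure of $H_m$ in place of the curve bases of Theorems I--III. Recall that $H_m$ is smooth and projective over $k$ (Section~\ref{sec:C.4}) and carries the $\G_m$-action $\sigma(\lambda)$ fixed in Section~\ref{sec:C.7}, whose Bialynicki--Birula decomposition into locally closed affine cells was computed there. By the standard fact that a variety with such a cellular decomposition has its Chow groups freely generated by the closures of the cells (a $k$-dimensional cell contributing to $A_k$), together with the list of $1$-dimensional cells in Proposition~\ref{prop:C.2}, the group $A_1(H_m)$ is the free $\Q$-vector space on $[Z_0],[Z_1],[Z_2],[Z_3]$.

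I would then prove the nontrivial inclusion, that a numerically trivial $1$-cycle is rationally trivial (the converse being automatic). So let $Z\in A_1(H_m)$ satisfy $(\cL\cdot Z)=0$ for all $\cL\in\Pic(H_m)$, and expand $Z=q_0[Z_0]+q_1[Z_1]+q_2[Z_2]+q_3[Z_3]$ with $q_i\in\Q$. Intersecting $Z$ with the four line bundles $\cF_0,\cF_1,\cF_2,\cF_3$ of Lemma~\ref{lem:C.3} and inserting the diagonal intersection table displayed there yields $\rho\,q_0=0$ and $q_1=q_2=q_3=0$; since $\rho=\binom{b-a+1}{2}>0$ whenever $b>a$, this gives $q_0=0$ as well, so $[Z]=0$ in $A_1(H_m)$ and the two equivalence relations agree.

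The one point needing a separate remark is the boundary case $b=a$ (equivalently $c=b-a+1=1$, $\rho=0$), which under the running hypotheses only occurs for $\HH=H_{3,0}$. There the ``curve'' $Z_0=\{(x,y^d(\alpha y+z,z^c))\}^-$ degenerates: for $\alpha\neq 0$ one has $(\alpha y+z,z)=(y,z)$, so $Z_0$ collapses to the single point $(x,y^{d+1},y^dz)$, and $H_m$ has only the three $1$-dimensional cells $Z_1,Z_2,Z_3$. In that case $A_1(H_m)\cong\Q^3$ and pairing against $\cF_1,\cF_2,\cF_3$ alone already forces $q_1=q_2=q_3=0$. Thus in every case the restriction of the intersection pairing to the cell basis $\{[Z_i]\}$ against $\{\cF_i\}$ is perfect, which is exactly the assertion. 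I do not expect a substantial obstacle here: everything rests on Proposition~\ref{prop:C.2} (the explicit cells) and Lemma~\ref{lem:C.3} (the diagonal intersection table), both already in hand; the only thing to watch is precisely the degenerate case $\rho=0$, where one must not list $Z_0$ among the cells.
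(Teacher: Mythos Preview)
Your argument is exactly the one the paper intends: Proposition~\ref{prop:C.2} gives the cell basis of $A_1(H_m)$ and Lemma~\ref{lem:C.3} supplies a dual set of line bundles, so numerical triviality forces all coefficients to vanish. Your explicit treatment of the degenerate case $\rho=0$ (i.e.\ $b=a$, which under the standing hypotheses occurs only for $H_{3,0}$) is a useful addition that the paper's one-line proof leaves implicit.
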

\begin{proof}
  This follows from Proposition~\ref{prop:C.2} and Lemma~\ref{lem:C.3}.
\end{proof}

\begin{lemma}
  \label{lem:C.4}
 Suppose that $d\geq 3$ and $g \leq g(d)$. Then $\Pic(\HH)/ \Pic^\tau(\HH)$
 is generated by $\cM_n$, $\cM_{n+1}$, $\cM_{n+2}$, if $n\geq b$ is any
 natural number.
\end{lemma}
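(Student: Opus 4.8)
The plan is to identify $\Pic(\HH)/\Pic^\tau(\HH)$ with the N\'eron--Severi group and then invoke Theorem~\ref{thm:II}. The statement is considerably weaker than Theorem~\ref{thm:II} (it only asks for a generating set, not a free basis), so once the identification is in place nothing further is needed.

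First I would recall, as explained in Section~\ref{sec:1.4}, that $\HH$ is simply connected and the argumentation of \cite[Section 4.2]{T2} yields $\Pic^\tau(\HH) = \Pic^0(\HH)$; alternatively, this equality is already forced by Theorem~\ref{thm:II} itself, since that result asserts $\NS(\HH) = \Pic(\HH)/\Pic^0(\HH)$ is free abelian of rank three, hence torsion--free, while the torsion subgroup of $\NS(\HH)$ is by definition $\Pic^\tau(\HH)/\Pic^0(\HH)$. Either way one gets $\Pic(\HH)/\Pic^\tau(\HH) = \Pic(\HH)/\Pic^0(\HH) = \NS(\HH)$.

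Next, by Theorem~\ref{thm:II} the classes of $\cM_m$, $\cM_{m+1}$, $\cM_{m+2}$ freely generate $\NS(\HH)$ over $\Z$ for every integer $m \geq b-1 = d(d-1)/2 - g$. Since the hypothesis $n \geq b$ entails $n \geq b-1$, applying this with $m = n$ shows that $\cM_n$, $\cM_{n+1}$, $\cM_{n+2}$ generate $\Pic(\HH)/\Pic^\tau(\HH) = \NS(\HH)$, which is the assertion. (Theorem~\ref{thm:1.1}, ``rational equivalence $=$ numerical equivalence'', and Theorem~\ref{thm:II} are available under the standing assumptions $d \geq 3$, $g \leq g(d)$, equivalently $a \geq 4$ and $b \geq (a^2-1)/4$.)

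There is essentially no genuine obstacle: the only point deserving a word of care is the passage from $\Pic^\tau(\HH)$ to $\Pic^0(\HH)$, and should one wish to avoid leaning on the simple-connectedness argument of Section~\ref{sec:1.4}, it can simply be extracted from the torsion--freeness of $\NS(\HH)$ already contained in Theorem~\ref{thm:II}. All the substantive content --- that three consecutive tautological line bundles span the N\'eron--Severi group --- is Theorem~\ref{thm:II}, which we are entitled to take for granted here.
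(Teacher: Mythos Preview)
Your proof is correct but takes a different route from the paper. You invoke Theorem~II (from the prior papers \cite{T4}, \cite{T5}) directly: since $\NS(\HH)$ is freely generated by $\cM_n,\cM_{n+1},\cM_{n+2}$ for every $n\geq b-1$, and since $\Pic^\tau(\HH)=\Pic^0(\HH)$ (either via simple-connectedness as in Section~\ref{sec:1.4} or, as you note, from the torsion-freeness of $\NS(\HH)$ implicit in Theorem~II), the statement is an immediate consequence. The paper instead argues directly with the intersection pairing: given $\cL\in\Pic(\HH)$ it writes down the linear system $(\cL\otimes\cM_n^u\otimes\cM_{n+1}^v\otimes\cM_{n+2}^w\cdot C_i)=0$ for $i=0,1,2$, checks that the coefficient matrix (built from the formulas of Lemma~\ref{lem:1.5}) has determinant~$1$, solves for $u,v,w\in\Z$, and then uses Theorem~\ref{thm:1.2} to conclude that the resulting bundle lies in $\Pic^\tau(\HH)$. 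Your approach is shorter; the paper's proof has the advantage of being self-contained within this article, relying only on Theorem~\ref{thm:1.2} and the intersection formulas proved here rather than citing Theorem~II as a black box --- in effect it re-derives the relevant content of Theorem~II via an explicit determinant computation.
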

\begin{proof}
  Let be $\cL \in  \Pic(\HH)$ and $\cN = \cL \otimes  M^u_n \otimes
  M^v_{n+1} \otimes M^w_{n+2}$. One has to solve the equations $(\cN \cdot
  C_i)=0$, $0 \leq i \leq 2$, i.e.
  \begin{align*}
     u +v +w & = - (\cL \cdot C_0)\\ 
  u(n-b+1) +v(n+1-b+1) +w(n+2-b +1) &= - (\cL\cdot C_1) \\
u \bigl[ \tbinom{n-a+2}{2} +(n-b+1) \bigr] + v \bigl[ \tbinom{n+1-a+2}{2}
+(n+1-b+1) \bigr]  & \\
+ w \bigl[ \tbinom{n+2-a+2}{2} +(n+2-b+1) \bigr]  & = - (\cL\cdot C_2)\,.
  \end{align*}
As the determinant of the matrix formed by the coefficients is equal to $1$,
there is a solution with $u,v,w \in \Z$. Now from $(\cN \cdot C_i)=0$ and
Theorem~\ref{thm:1.2} it follows that $(\cN \cdot C)=0$ for all curves $C
\subset \HH$, hence $\cN \in \Pic^\tau(\HH)$.
\end{proof}

\begin{corollary}
  \label{cor:C.3}
  Let be $Z \in A_1(H_m)$. If $(\cL_3 \cdot Z)=0$ and $(\cM_n \cdot Z)=0$
  for all $n \gg 0$, then $Z =0$. 
\end{corollary}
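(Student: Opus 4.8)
The plan is to work in the basis of $A_1(H_m)$ furnished by the one–dimensional cells of the Bialynicki–Birula decomposition. By Proposition~\ref{prop:C.2} (and the structure of that decomposition, as already used in Section~\ref{sec:4.2.1}), $A_1(H_m)$ is freely generated by the classes $[Z_0],[Z_1],[Z_2],[Z_3]$, so one writes $Z = q_0[Z_0]+q_1[Z_1]+q_2[Z_2]+q_3[Z_3]$ with $q_i\in\Q$. The whole proof then reduces to solving, step by step, a triangular linear system for the $q_i$ by pairing $Z$ with suitable line bundles.

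First I would pair with $\cL_3$. By Lemma~\ref{lem:C.3} one has $\cL_3=\cF_3$ and $(\cF_3\cdot Z_i)=\delta_{i3}$, so $0=(\cL_3\cdot Z)=q_3$; hence $Z=q_0[Z_0]+q_1[Z_1]+q_2[Z_2]$. Next I would pair with $\cM_n$ for $n\gg 0$. Using the Remark following Proposition~\ref{prop:C.2}, which identifies $Z_i=C_i$ for $1\le i\le 3$ and records $(\cM_n\cdot Z_0)=\rho$, together with Lemma~\ref{lem:1.5}, one gets
\[
  0=(\cM_n\cdot Z)=q_0\,\rho+q_1(n-b+1)+q_2\left[\tbinom{n-a+2}{2}+(n-b+1)\right]
\]
for all $n\gg 0$. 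The right–hand side is a polynomial in $n$ of degree $\le 2$ vanishing for infinitely many $n$, hence identically zero. Comparing the coefficient of $n^2$ forces $q_2=0$, then the coefficient of $n$ forces $q_1=0$, and finally the constant term gives $q_0\rho=0$. Since $\rho=\tbinom{b-a+1}{2}\neq 0$ under the standing hypotheses $d\ge 3$, $g\le g(d)$ (and in the degenerate case $\rho=0$, i.e.\ $b=a$, $c=1$, the cycle $Z_0$ collapses to a point and the generator $[Z_0]$ simply does not occur — alternatively this is the already settled case $\HH=H_{3,0}$ of Corollary~\ref{cor:1.5}), we conclude $q_0=0$, and therefore $Z=0$.

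There is no serious obstacle here: the essential content is just that the three functions $n\mapsto 1$, $n\mapsto n-b+1$, $n\mapsto\tbinom{n-a+2}{2}+(n-b+1)$ are linearly independent, so no nonzero combination of $[Z_0],[Z_1],[Z_2]$ can have vanishing intersection with every $\cM_n$. The one place that needs care is assembling the correct intersection-number inputs ($\cL_3=\cF_3$ from Lemma~\ref{lem:C.3}; $(\cM_n\cdot Z_i)$ from the Remark after Proposition~\ref{prop:C.2} and Lemma~\ref{lem:1.5}) and handling the $\rho=0$ bookkeeping. As an alternative to the explicit basis computation, one could instead observe that $\cL_3$ together with the $\cM_n$ for $n\gg 0$ span $\NS(H_m)\otimes\Q$ — since $\cL_0,\cL_1,\cL_2$ are numerically combinations of $\cL_3$ and the $\cM_n$, as visible from the formulas of Section~\ref{sec:1.3.3} and Lemma~\ref{lem:1.5} — and then invoke Proposition~\ref{prop:C.3} (numerical equivalence $=$ rational equivalence on $H_m$) to conclude $Z=0$ from $(\cL\cdot Z)=0$ for all $\cL\in\Pic(H_m)$.
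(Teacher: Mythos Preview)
Your proof is correct. Your primary argument --- expanding $(\cM_n\cdot Z)$ explicitly as a polynomial in $n$ and reading off $q_2,q_1,q_0$ from its coefficients --- is a slightly more direct variant of the paper's route: the paper first invokes Lemma~\ref{lem:C.4} to convert $(\cM_n\cdot Z)=0$ into $(\cL_i\cdot Z)=0$ for $i=0,1,2$, passes to the $\cF_i$, and then applies the intersection matrix of Lemma~\ref{lem:C.3}; your ``alternative'' paragraph is exactly this argument. Both approaches land on the same triangular system, and you are in fact more careful than the paper in flagging the $\rho=0$ edge case (which under the standing hypotheses occurs only for $H_{3,0}$).
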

\begin{proof}
  Write $\cL_i = M^u_n \otimes M^v_{n+1} \otimes M^w_{n+2} \otimes \cN$, $\cN
  \in \Pic^\tau(\HH)$. Then $(\cL_i \cdot Z)=0$, $0 \leq i \leq 2$, and
  $(\cL_3 \cdot Z)=0$ by assumption. It follows that $(\cF_i \cdot Z) =0$,
  $0 \leq i \leq 3$.  Writing $[Z] = \sum q_i[Z_i]$
  (cf.~Proposition~\ref{prop:C.2}), then from Lemma~\ref{lem:C.3} it follows
  that $q_i=0$. 
\end{proof}

\subsection*{Computation of $A^+_1(H_m)$}
 
 It is easy to see that $H_m$ has only one fixed point under $U(4;k)$,
 namely the lexicographic point. It follows that $A^+_1(H_m)$ is generated
 by combinatorial cycles of type $i$, i.e.~by cycles of the form
 $\overline{\G_a \cdot \xi_i}$, where $\xi_i \in H_m(k)$ is invariant under
 $T(4;k)$ and the subgroup $G_i \subset U(4;k)$ (see
 Appendix~\ref{cha:H}). If $\xi \leftrightarrow (\ell, f(h,g)) \in H_m(k)$
 is fixed by $T(4;k)$, then all forms are monomials. 
, if $\xi$ is
 fixed by $G_i$, then $\ell =x$, if $i=1,2$ and $\ell =x$ or $\ell =y$, if
 $i=3$.

\fbox{$i=1$}
$\xi \leftrightarrow \cI = (x,f(h,g)$ monomial and invariant under $G_1 =
\left\{
\left(\begin{smallmatrix} 1&*&*& *\\ 0&1&*&*\\ 0&0&1&0\\ 0&0&0&1
  \end{smallmatrix}\right)\right\}$ $\Rightarrow f \in k[y,z,t]_d$
$G_1$-invariant modulo $x$ $\Rightarrow f= y^d$. $h$ monomial and
$G_1$-invariant modulo $x$ $\Rightarrow h = y \Rightarrow g \in k[z,t]_c$
monomial and $G_1$-invariant modulo $(x,y)$ $\Rightarrow g = z^\nu t^\mu$,
$\nu + \mu =c$.

If $i=1$, $\G_a$ operates by $\psi^1_\alpha:x\mapsto x$, $y\mapsto y$,
$z\mapsto z$, $t \mapsto \alpha z + t$.

Let be $\cI_\alpha = \psi^1_\alpha(\cI)$. Then
$
   H^0(\cI_\alpha(n)) = x S_{n-1} \oplus y^a k[y,z,t]_{n-a} \oplus y^{a-1}
   z^\nu(\alpha z +t)^\mu k[z,t]_{n-b}.
$
$\Rightarrow \alphadeg\dot\bigwedge H^0(\cI_\alpha(n)) = \mu(n-b+1)
\Rightarrow$
\begin{equation}
  \label{eq:C.1}
  ( \cM_n \cdot C) = \mu (\cM_n \cdot C_1) \,.
\end{equation}

\fbox{$i=3$}

\textsc{Subcase 1}:
$\ell =x$. Then $f$ is a monomial, which is invariant under $G_3 = 
\left\{
\left(\begin{smallmatrix} 1&0&*&*\\ 0&1&*&*\\ 0&0&1&*\\ 0&0&0&1
  \end{smallmatrix}\right)\right\}$ modulo $x \Rightarrow f = y^d$. $h$ is a
monomial and $G_3$-invariant modulo $x \Rightarrow h=y$ and $g =
z^\nu t^\mu$, $\nu+\mu=c$, $G_3$-invariant modulo $(x,y) \Rightarrow g = z^c
\Rightarrow \xi$ is the lexicographical point and does not generate a cycle,
at all.

\textsc{Subcase 2}:
$\ell =y$. Then $f \in k[x,z,t]_d$ is a monomial and $G_3$-invariant modulo
$y \Rightarrow f=x^d$. $h$ is a monomial in $k[x,z,t]_1$ and $G_3$-invariant
modulo $y \Rightarrow h = x$ and $g=z^\nu t^\mu$, $\nu + \mu =c$,
$G_3$-invariant modulo $(x,y) \Rightarrow g = z^c \Rightarrow \xi
\leftrightarrow (y,x^d(x,z^c)) \Rightarrow C = \overline{\G_a \cdot \xi} =
\Set{ (\alpha x+y, x^d(x,z^c))}^- = C_3$.

\fbox{$i=2$} $\xi \leftrightarrow (x,f(h,g))$ monomial and invariant under
$G_2 = \left\{
\left(\begin{smallmatrix} 1&*&*& *\\ 0&1&0&*\\ 0&0&1&*\\ 0&0&0&1
  \end{smallmatrix}\right)\right\}$ $\Rightarrow f = y^\nu z^\mu$, $\nu+\mu
=d$. $h$ monomial and $G_2$-invariant modulo $x$. There are two
possibilities:

\textsc{Subcase 1}:
$h=y$. Then $g = z^\nu t^\mu$, $\nu+\mu=c$, $g$ $G_2$-invariant modulo
$(x,y) \Rightarrow g = z^c \Rightarrow \xi \leftrightarrow \cI =(x,y^\nu z^\mu(y,z^c))$. 

\textsc{Subcase 2}: $h=z$. Then $g = y^\nu t^\mu$, $\nu+\mu=c$, $g$
invariant under $G_2$ modulo $(x,z) \Rightarrow g = y^c$
and $\xi \leftrightarrow \cI =(x,y^\nu z^\mu(z,y^c))$. 

If $i=2$, then $\G_a$ operates by $\psi^2_\alpha:x\mapsto x$, $y\mapsto y$,
$z\mapsto \alpha y +z$, $t \mapsto t$.

Let be $\cI_\alpha = \psi^2_\alpha(\cI)$ and $\cI'_\alpha$ the restriction
with respect to $t$.

\textsc{Subcase 1}: $H^0(\cI'_\alpha(n)) = x k[x,y,z]_{n-1}\oplus
y^\nu(\alpha y +z)^\mu (y,z^c)_{n-d} = x k[x,y,z]_{n-1}\oplus y^\nu(\alpha y
+z)^\mu k[y,z]_{n-d}$, if $n\geq b$, because $(y,z^c)_{n-d} = k[y,z]_{n-d}$,
if $n-d \geq c$\\
 $\Rightarrow \alphadeg\dot\bigwedge H^0(\cI'_\alpha(n)) =
\mu(n-d+1)$ if $n\geq b$.  The sequence
\begin{equation}
  \label{eq:C.2}
 0 \longrightarrow H^0(\cI_\alpha(n-1)) \longrightarrow H^0(\cI_\alpha(n)) \longrightarrow H^0(\cI'_\alpha(n)) \longrightarrow 0
\end{equation}
is exact, if $n\geq b$, hence
\[
\alphadeg\dot\bigwedge H^0(\cI_\alpha(n)) = 
\alphadeg\dot\bigwedge H^0(\cI_\alpha(b-1)) + \sum^n_{i=b}\mu(i-d+1)\,.
\]
Now $H^0(\cI(b-1)) = xS_{b-2} \oplus y^{\nu+1}z^\mu k[y,z,t]_{b-a-1}$, hence
$\alphadeg\dot\bigwedge H^0(\cI_\alpha(b-1)) = \mu \cdot \rho$. Now
$\sum^n_{i=b} (i-d+1) = \tbinom{n-a+3}{2}- \tbinom{b-a+2}{2}$ and one
checks that 
\[
\tbinom{n-a+3}{2}- \tbinom{b-a+2}{2} + \tbinom{b-a+1}{2} =
\tbinom{n-a+2}{2}+(n-b+1)\,.
\]  
It follows that
\begin{equation}
  \label{eq:C.3}
  (\cM_n \cdot C) = \mu (\cM_n \cdot C_2)\,.
\end{equation}

\textsc{Subcase 2}:
$H^0(\cI'_\alpha(n)) = x k[x,y,z]_n\oplus y^\nu z^\mu k[y,z]_{n-d}$, if
$n\geq b$. From the sequence~\eqref{eq:C.2} it follows that 
\[
\alphadeg\dot\bigwedge H^0(\cI_\alpha(n)) = 
\alphadeg\dot\bigwedge H^0(\cI_\alpha(b-1)) + \sum^n_{i=b}\mu(i-d+1)\,.
\]
Now $H^0(\cI(b-1)) = xS_{b-2} \oplus y^\nu (\alpha y + z)^{\mu+1} k[y,z,t]_{b-a-1}$, hence
$\alphadeg\dot\bigwedge H^0(\cI_\alpha(b-1)) = (\mu +1 )\cdot \rho$ and
\begin{equation}
  \label{eq:C.4}
  (\cM_n \cdot C) = \mu (\cM_n \cdot C_2) + \rho\,.
\end{equation}

If $\mu =0$ one gets 
  $C = \Set{(x,y^d(\alpha y +z,y^c))}^- = \Set{(x,y^d(\alpha y +z,z^c))}^-$,
  i.e.
\begin{equation}
  \label{eq:C.5}
  C = Z_0\,.
\end{equation}
If $i=1$ or $i=2$, then $p(C) = 1$ point, hence $(\cL_3\cdot C)=0$ in
these cases. From Corollary~\ref{cor:C.3} and the equations~\eqref{eq:C.1},
\eqref{eq:C.3} and \eqref{eq:C.4} it follows that that $[C] = \mu [C_1]$, if
$i=1$, and $[C] = \mu [C_2]$ or $[C] = \mu [C_2] + [Z_0]$, if $i=2$.
We have proved

\begin{proposition}
  \label{prop:C.4}
 $A^+_1(H_m)$ is freely generated by the cycle classes of $Z_0 =                                                                                                                  \Set{
   (x,y^d(\alpha y + z, y^c)}^-$ and $C_i$, $1\leq i \leq 3$.
  \hfill $\qed$
\end{proposition}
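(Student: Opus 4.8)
The plan is to read the statement off from the classification of the $T(4;k)\cdot G_i$-fixed points of $H_m$ just carried out, together with the intersection numbers \eqref{eq:C.1}, \eqref{eq:C.3}, \eqref{eq:C.4}, \eqref{eq:C.5} and the nondegenerate pairing matrix of Lemma~\ref{lem:C.3}. First I would record the reduction already used: since the lexicographic point is the unique $U(4;k)$-fixed point of $H_m$, the standard argument (as in Section~\ref{sec:1.3.2}, via \cite[Korollar 1, p. 8]{T1}) shows that $A^+_1(H_m)$ is generated over $\N$ by combinatorial cycles $C=\overline{\G_a\cdot\xi}$ with $\xi\in H_m(k)$ invariant under $T(4;k)\cdot G_i$ for some $i\in\{1,2,3\}$; it then suffices to check that each such $[C]$ lies in $\N[Z_0]+\N[C_1]+\N[C_2]+\N[C_3]$.

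Second, I would go through the three types. In each case the projection $p:H_m\to X$ contracts the cycle produced to a point, so $(\cL_3\cdot C)=0$, and likewise $(\cL_3\cdot Z_0)=(\cL_3\cdot C_1)=(\cL_3\cdot C_2)=(\cL_3\cdot C_3)=0$; hence Corollary~\ref{cor:C.3} reduces the identification of $[C]$ to matching the polynomial $(\cM_n\cdot C)$ for $n\gg0$. By \eqref{eq:C.1} this gives $[C]=\mu[C_1]$ when $i=1$; for $i=3$ one obtains only $C_3$ (the remaining subcase $\ell=x$ being the $\G_a$-fixed lexicographic point, hence no curve); and for $i=2$ one gets $[C]=\mu[C_2]$ in the subcase $h=y$ by \eqref{eq:C.3}, while in the subcase $h=z$, \eqref{eq:C.4} combined with $(\cM_n\cdot Z_0)=\rho$ gives $(\cM_n\cdot C)=\mu(\cM_n\cdot C_2)+(\cM_n\cdot Z_0)$, hence $[C]=\mu[C_2]+[Z_0]$ (with $C=Z_0$ when $\mu=0$, consistent with \eqref{eq:C.5}), $\mu\in\N$ throughout. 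This yields $A^+_1(H_m)\subseteq\Q_{\ge0}[Z_0]+\Q_{\ge0}[C_1]+\Q_{\ge0}[C_2]+\Q_{\ge0}[C_3]$, and the reverse inclusion is immediate since $Z_0,C_1,C_2,C_3$ are effective.

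Finally, for the word \emph{freely} I would check $\Q$-independence of these four classes. By Lemma~\ref{lem:C.3} the matrix of intersection numbers $\bigl((\cF_i\cdot Z_j)\bigr)=\operatorname{diag}(\rho,1,1,1)$ is invertible, because $\rho=(b-a)(b-a+1)/2>0$ (we have $a<b$ throughout Appendix~\ref{cha:C}), so from $\sum q_i[Z_i]=0$ and the identifications $Z_1=C_1$, $Z_2=C_2$, $Z_3=C_3$ one gets all $q_i=0$; equivalently $A_1(H_m)$ is free of rank $4$ on the four $1$-cells of Proposition~\ref{prop:C.2} (as in Corollary~\ref{cor:C.2}), with $\Q$-basis $[Z_0],[C_1],[C_2],[C_3]$, and the cone $A^+_1(H_m)$ is the positive cone on this basis. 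I expect the only genuine work to be bookkeeping in the second paragraph: confirming that the type-$i$ lists are exhaustive (every $T(4;k)\cdot G_i$-fixed point of $H_m$, not just a representative) and that each degenerate case $\mu=0$ is matched with $Z_0$ rather than producing a spurious ray; all the numerical inputs are already at hand in \eqref{eq:C.1}--\eqref{eq:C.5} and Lemma~\ref{lem:C.3}, so no further computation is needed.
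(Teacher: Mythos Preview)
Your proof is essentially the paper's own argument, and the logic is sound. One slip: you assert that ``in each case the projection $p:H_m\to X$ contracts the cycle produced to a point'' and hence $(\cL_3\cdot C_3)=0$, but for $i=3$ the cycle is $C_3=\Set{(\alpha x+y,x^d(x,z^c))}^-$, whose image under $p$ is the line $\Set{\langle\alpha x+y\rangle}^-\subset X$, so $(\cL_3\cdot C_3)=1$ (this is exactly the entry $(\cF_3\cdot Z_3)=1$ in Lemma~\ref{lem:C.3}). The paper is careful to say $(\cL_3\cdot C)=0$ only for $i=1,2$. Fortunately your argument does not actually use $(\cL_3\cdot C_3)=0$: for type~3 you correctly identify the cycle as $C_3$ directly, and Corollary~\ref{cor:C.3} is only invoked for types~1 and~2, where $(\cL_3\cdot C)=0$ does hold. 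With that correction the proof goes through.
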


\chapter{The Hilbert scheme of points in $\P^2$}
\label{cha:D}

 \section{Tautological line bundles}
 \label{sec:D.1}

 The ground field is $k=\C$. The group $T=T(3;k)$ of diagonal matrices, the
 upper unitriangular group $U = U(3;k)$ and the Borel group $B = T \cdot U$
 all act on $S = k[x,y,z]$ and therefore on the Hilbert scheme $H^d =
 \Hilb^d(\P^2)$, which parametrizes subschemes  of $\P^2$ of length $d$. (We
 always assume $d\geq 3$.) 

 If $A$ is a $k$-algebra, an element of $H^d(A)$ is a closed subscheme $Z$ of
 $\P^2 \otimes A$, flat over $A$, such that $Z \otimes k(p)$ has the Hilbert
 polynomial $P(n) =d$, for all points $p \in \Spec A$. If $Z = V(\cI)$,
 i.e.~if $Z$ is defined by the ideal $\cI \subset \cO_{\P^2 \otimes A}$,
 then $\cI \otimes k(p)$ has the Hilbert polynomial $Q(n) = \binom{n-1+2}{2}
 + \binom{n-d+1}{1}$, and therefore $\reg(\cI \otimes k(p)) \leq d$~\cite[p.
 65]{G78}. From standard results on the cohomology of coherent
 sheaves~\cite[lecture 7]{M2} it follows that that $H^0(\P^2 \otimes A,
 \cI(n)) \subset S_n \otimes A$ is a subbundle of rank $Q(n)$, and the
 formation of $H^0(-)$ commutes with base extensions $A \to A'$, if $n \geq
 d-1$\,.  Thus $\cI \mapsto H^0(\cI (n))$ defines a morphism $H^d \to
 \Grass_{Q(n)}(S_n)$, if $n\geq d-1$\,. From $d$-regularity it follows that
 $S_{n-d}H^0(\cI (d)) = H^0(\cI (n))$, $n\geq d$, which implies that this
 ``Hilbert-Grassmann'' morphism is a closed immersion, if $n\geq
 d$~\cite[Lectures 7 and 14]{M2}.

In the following diagram, $\cZ$ is the universal subscheme of length $d$.
\[
 \xymatrix{\cZ \ar@{^{(}->}[rr]  & & H^d\times_k \P^2_k \ar[dl]_\pi 
                        \ar[dr]^\kappa &  \\
                                         & H^d &  & \P^2_k}
\]
Set $\cF (n) = \cO_\cZ \otimes \kappa^*\cO_{\P^2}(n)$. Then $\cF_n:= \pi_*
\cF(n)$ is locally free of rank $d$, for all $n$, and $\cF_n$ is globally
generated for $n \geq d-1$\,. Therefore the tautological line bundles
$\cM_n := \bigwedge^d \cF_n$ are globally generated for $n\geq d-1$\,. For
$n\geq d$, the line bundle $\cM_n$ is very ample, because it defines the
``Hilbert--Pl\"ucker'' embedding $H^d \to \P^N$, the composition of the 
Hilbert--Grassmann embedding with the ``Grassmann--Pl\"ucker'' embedding
$\Grass_{Q(n)}(S_n) \to \P^N$.

\section{Combinatorial and algebraic cycles on $H^d$}
\label{sec:D.2}

 A weak version of a theorem of Hirschowitz gives: \\
The cone of effective $1$-cycles 
\[
  A^+_1(H^d) = \Set{ \sum q_i [C_i] | q_i \in \Q, q_i \geq 0, C_i \subset
    H^d\; 1\text{-prime cycle}} 
\]
is generated by $B$-invariant $1$-prime cycles ($=$ closed, reduced,
irreducible curves in $H^d$).

This is true for $\Z$-coefficients, too, but in the following we will always
take $\Q$-coefficients.

Now, if $C$ is such a $B$-invariant $1$-prime cycle, the following cases can
occur: \\

\textbf{Either:} $C$ is a so called combinatorial cycle, i.e.~$C=
\overline{\G_a \cdot\xi}$, $\xi \in H^d(k)$ is
fixed by $T$ and by $G_1 = \Set{\left(\begin{smallmatrix} 1&*&* \\ 0&1&0\\
      0&0&1 \end{smallmatrix}\right) }$ or $G_2 =
\Set{\left(\begin{smallmatrix} 1&0&* \\ 0&1&*\\ 0&0&1
    \end{smallmatrix}\right) }$; $\G_a$ operates on $S$ via the
automorphisms $\psi^1_\alpha:x\mapsto x$, $y\mapsto y$, $z\mapsto \alpha y +
z$ and $\psi^2_\alpha:x\mapsto x$, $y\mapsto \alpha x + y$, $z\mapsto z$ and
respectively. \\
\textbf{Or:} $C$ is an ``algebraic cycle'', i.e.~ $C = \overline{\G_m \cdot
  \xi}$, $\xi \in H^d(k)$ is fixed by $U$, and $\G_m$ operates on $S$ via the
automorphism $\sigma(\lambda): x\mapsto x$, $y\mapsto y$, $z\mapsto \lambda
z$.
 
\begin{remark}
  \label{rem:D.1}
  A somewhat more detailed description of $B$-invariant $1$-prime cycles in
  the case of $n \geq 3$ variables is given in~\cite{T1}.
\end{remark}

\section{Intersection numbers and basis cycles}
\label{sec:D.3}

Let $C = \overline{\G_a \cdot \xi}$, $\xi \leftrightarrow \cI$, be a
combinatorial cycle. As $\psi: \A^1 \to H^d$, defined by $\alpha \mapsto
\psi_\alpha(\xi)$, is injective, the intersection number can be computed by
the following formula (cf.~\cite[1.3]{T1};~\cite[4.1]{T2};~\cite[Anhang
2]{T3}):
\[
   (\cM_n \cdot C) = \alphadeg \bigwedge^{Q(n)}\psi_\alpha(H^0(\cI (n))), \quad
   n\geq d\,.
\]
Here $\alphadeg(-)$ denotes the highest power with which the parameter
$\alpha$ appears in the bracket. (Take a monomial basis $m_1,\dots,m_q$ of
$H^0(\cI(n))$, replace $z$  by $\alpha y + z$, and express
$\psi_\alpha(m_1), \dots,\psi_\alpha(m_q)$ as a linear combination in a
monomial basis of $S_n$. The coefficients are polynomials in $k[\alpha]$.)

In the case of an algebraic cycle, as $\sigma: \A^1-\{0\} \to H^d$ defined
by $\xi \mapsto \sigma(\lambda)\xi$ need not to be injective, in order to
compute $(\cM_n \cdot C)$, one has to take a ``reduced-$\lambda$-degree'' as
defined in~\cite[equation (2) on p. 9]{T2}.

We start the computation with the cycle $E = \Set{ (x^2, xy, y^{d-1} +
  \alpha xz^{d-2}) | \alpha \in k}^-$.\\
If $\xi \leftrightarrow \cI = (x^2, xy, y^{d-1} + xz^{d-2})$, one
sees that
\[
  H^0(\sigma(\lambda)\cI (n)) =
  x^2 S_{n-2} \oplus xyk[y,z]_{n-2} \oplus y^d k[y,z]_{n-d} \oplus \langle
(y^{d-1} + \lambda^{d-2} xz^{d-2}) z^{n-d+1} \rangle. 
\]
Therefore, the reduced-$\lambda$-degree of $\bigwedge^{Q(n)}
H^0(\sigma(\lambda)\cI (n))$ is equal to $1$\,.

Now we consider $F = \Set{ (x, y^{d-1}(\alpha y +z)) | \alpha \in k}^-$.  
If $\xi \leftrightarrow \cI = (x,y^{d-1}z)$, then 
\[
H^0(\psi^1_\alpha\cI (n)) = xS_{n-1} \oplus y^{d-1}(\alpha y +z)
k[y,z]_{n-d}\,, 
\]
and therefore
\[
 \bigwedge^{Q(n)}H^0(\psi^1_\alpha\cI (n)) = \bigwedge^p x S_{n-1} \otimes 
 \bigwedge^q y^{d-1}(\alpha y +z) k[y,z]_{n-d}\,,
\]
where $p:=\binom{n-1+2}{2}$  and $q:=n-d+1$. We get:
\begin{equation}
  \label{eq:D.1}
  (\cM_n \cdot E) = 1, \quad (\cM_n \cdot F) = n-d +1, \quad
   n\geq d-1\,.
\end{equation}

\section{Intersection numbers of combinatorial  cycles}
\label{sec:D.4}

Let $C$ be a combinatorial cycle of type $1$, i.e.~$C = \overline{\G_a \cdot
\xi}$, and $\xi \in H^d(k)$ invariant under $T$ and $G_1$. Writing $S =
k[y,z,t]$, an analogous argumentation as in the proof of
Conclusion~\ref{concl:1.1} in Chapter~\ref{cha:1} (see Fig.~\ref{fig:1.1})
gives:
\begin{equation}
  \label{eq:D.2}
  (\cM_n \cdot C) = a(n-d+1) +b\,,
\end{equation}
where $a,b \in \N$ are independent of $n \geq d$. 

In the case of a combinatorial cycle of type $2$, i.e.~$C = \overline{\G_a
  \cdot \xi}$, and $\xi \in H^d(k)$ invariant under $T$ and $G_2$, one can
argue as in the proof of Conclusion~\ref{concl:1.3} in Chapter~\ref{cha:1}
(see Fig.~\ref{fig:1.6}) and one obtains the formula
\begin{equation}
  \label{eq:D.3}
  (\cM_n \cdot C) = c\,,
\end{equation}
where $c\in \N$ is independent of $n \geq d$. 

\section{Intersection numbers of algebraic  cycles}
\label{sec:D.5}

We start with a general lemma, which is interesting for itself, possibly.

Set $S= k[x_1,\dots,x_r,t]$, $R = k[x_1,\dots,x_r]$. $\G_m$ operates on $S$
via $\sigma(\lambda): x_i \mapsto x_i$, $1 \leq i \leq r$, and $t \mapsto
\lambda t$, $\lambda \in k^*$. Let $\HH$ be the Hilbert scheme of ideals
$\cI \subset \cO_{\P^r}$ with Hilbert polynomial $Q$, i.e.~ $\HH =
\Hilb^P(\P^r)$, $P(n) = \binom{n+r}{r} - Q(n)$ the complementary Hilbert
polynomial of the subscheme $V(\cI) \subset \P^r$. We suppose $\HH \neq
\emptyset$. Then the ideals $\cI \subset \cO_{\P^r}$ with Hilbert polynomial
$Q$, such that $t$ is a non-zero divisor of $\cO_{\P^r}/\cI$, form an open
non-empty subset $U(t)$ of $\HH$.

If $K/k$ is a field extension and $\cI \in \HH(K)$, then the limit ideals
$\cI_{0/\infty} = \lim_{\lambda \to 0/\infty} \sigma(\lambda)\cI$ are in
$\HH(K)$, and if $\cI \in U(t)$, then $\cI_0$ is in $U(t)$
again~\cite[Lemma 4]{G88}. If $Q'(n):= Q(n) -Q(n-1)$ and $\cI':=\cI + t
\cO_{\P^r}(-1)/t \cO_{\P^r}(-1)$, then $\cI'$ can be considered as a sheaf
of ideals on $\P^{r-1}$, whose Hilbert polynomial is equal to $Q'$.

\begin{lemma}
  \label{lem:D.1}
  Let $\cI \in U(t)$ and suppose that $\cI_\infty$ is in $U(t)$, too
  (this condition is fulfilled, e.g., if $\cI$ is invariant under
  $U(r+1,k)$). Then for all integers $d \geq \max(
  \reg(\cI_0),\reg(\cI_\infty))$, one has $\dim H^0(\cI (d)) \cap R_d =
  Q'(d)$.
\end{lemma}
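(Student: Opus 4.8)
The plan is to exploit the $\G_m$-flatness of the degenerations $\cI \rightsquigarrow \cI_0$ and $\cI \rightsquigarrow \cI_\infty$ together with the semicontinuity of the dimension of the intersection $H^0(\cI(d)) \cap R_d$. First I would observe that since $d \geq \reg(\cI_0)$ and $d \geq \reg(\cI_\infty)$, both limit ideals $\cI_0$ and $\cI_\infty$ are $d$-regular, and since regularity is upper-semicontinuous in flat families over $\P^1$ (after possibly shrinking, or via the standard fact that the locus where $\reg \leq d$ is open), the ideal $\cI$ itself and all ideals $\sigma(\lambda)\cI$, $\lambda \in k^*$, are $d$-regular as well. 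Hence $\dim H^0(\cI(n)) = Q(n)$ for all $n \geq d-1$ with no jumping, and likewise for $\cI_0$, $\cI_\infty$, $\sigma(\lambda)\cI$.

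Next I would set up the family: let $\alpha: \P^1 \to \HH$ be the morphism extending $\lambda \mapsto \sigma(\lambda)\cI$, giving a flat family $\fZ \subset \P^r \times \P^1$ with ideal sheaf $\widetilde{\cI}$ on $\P^r \times \P^1$, and consider $p_*\widetilde{\cI}(d)$ where $p$ is the projection to $\P^1$. By the uniform $d$-regularity just established, $p_*\widetilde{\cI}(d)$ is a subbundle of $S_d \otimes \cO_{\P^1}$ of rank $Q(d)$, and its fiber at $\lambda$ is $H^0(\sigma(\lambda)\cI(d))$ (resp. $H^0(\cI_{0/\infty}(d))$ at $0, \infty$). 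The multiplication-by-$t$ map gives a morphism of bundles $p_*\widetilde{\cI}(d-1) \otimes \langle t \rangle \to p_*\widetilde{\cI}(d)$; its cokernel restricted to each fiber is $H^0(\sigma(\lambda)\cI'(d))$ viewed inside $S_d/tS_{d-1} \cong R_d$, and by $d$-regularity of $\cI'_\lambda$ (which again follows from that of $\cI_\lambda$) this cokernel has constant rank $Q'(d)$. Therefore $H^0(\sigma(\lambda)\cI(d)) \cap R_d = \ker\big(H^0(\sigma(\lambda)\cI(d)) \to R_d\big)$ has dimension exactly $Q(d) - Q'(d)$ for \emph{every} $\lambda$ where the sequence $0 \to \cO(-1) \xrightarrow{t} \cO \to \cO' \to 0$ is exact on $\sigma(\lambda)\cI$, i.e. for all $\lambda$ with $\sigma(\lambda)\cI \in U(t)$.

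Now the key point: by hypothesis $\cI \in U(t)$, so $\dim H^0(\cI(d)) \cap R_d = Q(d) - Q'(d)$. But I want the statement for $\cI$ itself, which is already in $U(t)$ — so actually the computation above, applied at $\lambda = 1$, directly gives $\dim H^0(\cI(d)) \cap R_d = Q(d) - Q'(d) = Q'(d)$... wait, one must check that $Q(d) - Q'(d) = Q'(d)$ is \emph{not} what is claimed; rather the claim is this dimension equals $Q'(d)$. Here I would use that $H^0(\cI(d)) \cap R_d$ is the degree-$d$ part of the ideal $(\cI : t^\infty) \cap R$-type object, and relate its dimension to $H^0(\cI_\infty(d))$: the point of bringing in $\cI_\infty$ is that $\lim_{\lambda \to \infty}\sigma(\lambda) H^0(\cI(d))$ contains $H^0(\cI(d)) \cap R_d$ as the subspace of ``$t$-degree-zero'' vectors surviving the limit, and comparing dimensions with $H^0(\cI_\infty(d))$ (which by $d$-regularity has dimension $Q(d)$ and decomposes compatibly with the $t$-grading) pins down $\dim H^0(\cI(d)) \cap R_d = Q(d) - Q'(d)$, and then a direct check from $Q(n) = \binom{n-1+r}{r} + \cdots$ shows this difference equals $Q'(d)$ in the regularity range. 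The main obstacle will be this last bookkeeping step — correctly identifying $H^0(\cI(d)) \cap R_d$ with the appropriate graded piece of the limit ideal and verifying the numerical identity $Q(d) - Q'(d) = Q'(d)$ holds precisely because $d$ is at least the regularity of both limits (so that no correction terms from higher cohomology intervene); the geometry via $\cI_\infty \in U(t)$ is exactly what rules out the degenerate behaviour where the intersection would be larger.
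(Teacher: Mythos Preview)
There is a genuine gap. You identify $H^0(\cI(d)) \cap R_d$ with $\ker\bigl(H^0(\cI(d)) \to R_d\bigr)$, but these are different subspaces of $M := H^0(\cI(d)) \subset S_d$: the first consists of elements of $M$ already lying in $R_d$ (no $t$-terms at all), while the second is $M \cap tS_{d-1} = tH^0(\cI(d-1))$. Your cokernel computation correctly shows $\dim(M \cap tS_{d-1}) = Q(d-1) = Q(d) - Q'(d)$, but this is not the object in the lemma. You then attempt to close the gap by asserting the identity $Q(d) - Q'(d) = Q'(d)$, i.e.\ $Q(d) = 2Q(d-1)$; this is simply false. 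For instance when $r = 2$ and $Q(n) = \binom{n+2}{2} - d$ (the case of $\Hilb^d(\P^2)$), one has $Q(d-1) = \binom{d}{2}$ while $Q'(d) = d+1$, and $\binom{d}{2} = d+1$ has no integer solution.

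The paper's argument avoids the family machinery and works directly with a basis of $M$ adapted to the $t$-weight filtration: write $g_i = t^{e_i} g_i^0 + t^{e_i-1} g_i^1 + \cdots$, $1 \leq i \leq Q(d)$, with the leading coefficients $g_i^0 \in R_{d-e_i}$ linearly independent. Then $\lim_{\lambda\to\infty}\sigma(\lambda)M = \langle t^{e_i} g_i^0 \rangle$, which by $d \geq \reg(\cI_\infty)$ equals $H^0(\cI_\infty(d))$. The hypothesis $\cI_\infty \in U(t)$ now enters: it gives $H^0(\cI_\infty(d-1)) = \langle t^{e_i-1}g_i^0 : e_i \geq 1\rangle$, hence $Q(d-1) = \#\{i : e_i \geq 1\}$ and $\#\{i : e_i = 0\} = Q'(d)$. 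Since $e_i = 0$ forces $g_i = g_i^0 \in R_d$, this yields the \emph{lower} bound $\dim(M \cap R_d) \geq Q'(d)$. The upper bound is the easy direction (and your setup does contain it, though you do not extract it): the restriction $M \cap R_d \hookrightarrow H^0(\cI'(d))$ is injective and $h^0(\cI'(d)) = Q'(d)$ since $\reg(\cI') \leq \reg(\cI) \leq d$. So the role of $\cI_\infty \in U(t)$ is to supply the lower bound via a counting argument on the $t$-exponents $e_i$, not a numerical coincidence between $Q(d-1)$ and $Q'(d)$.
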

\begin{proof}
There is a basis of $M:= H^0(\cI (d))$ of the form $g_i = t^{e_i}g^0_i +
t^{e_i-1}g^1_i + \cdots$ with $0 \leq e_1 \leq \cdots \leq e_m$, $m:= Q(d)$,
$g^j_i \in R$, and $g^0_i \in R_{d-e_i}$, $1 \leq i \leq m$, linear
independent. Then $M_\infty := \lim_{\lambda \to \infty}\sigma(\lambda)M =
\langle \Set{ t^{e_i}g^0_i | 1\leq i \leq m}\rangle$ (limit in
$\Grass_M(S_d)$) has dimension $m$.  As, by assumption, $d\geq
\reg(\cI_\infty)$, one has $Q(d) = h^0(\cI_\infty(d))$, and therefore
$M_\infty = H^0(\cI_\infty(d))$. As $t$ is a non-zero divisor of $S/
\bigoplus_{n \geq 0} H^0(\cI(n))$ by assumption, this implies
\[
H^0(\cI_\infty(n)) = \langle \Set{ t^{e_i-(d-n)}g^0_i | e_i \geq
  d-n}\rangle\,.
\]
Especially, for $n = d-1$, one obtains
\[
H^0(\cI_\infty(d-1)) = \langle \Set{ t^{e_i-1}g^0_i | e_i \geq
  1}\rangle\,,
\]
therefore $Q(d-1) = h^0(\cI_\infty(d-1)) = \# \Set{ i | e_i \geq 1}$. It
follows that $Q'(d) = Q(d) -Q(d-1) = \# \Set{ i | e_i =0}$. Thus $M \cap R_d
 \supset  \langle \Set{ g^0_i | e_i = 0 }\rangle$ has a dimension $\geq
 Q'(d)$. It cannot be greater than $Q'(d)$, as the canonical restriction
 mapping ``reduction modulo $t$'' $ M = H^0(\cI(d)) \mapsto
 H^0(\cI'(d))$ is injective on $M \cap R_d$ and, because of $\reg(\cI
') \leq \reg(\cI)$, one has $ h^0(\cI'(d)) = Q'(d)$.
\end{proof}

\begin{remark}
  \label{rem:D.2}
    There is a partial inverse of the lemma. Suppose $\cI \in U(t)$ and
    $\cI_\infty \in U(t)$, and let $d \geq \reg(\cI')$ be any integer. If
    $\dim( H^0(\cI (d)) \cap R_d) \geq Q'(d)$, then $d \geq \max(
  \reg(\cI_0),\reg(\cI_\infty))$.
\end{remark}

Now, let $C = \overline{\G_m \cdot \xi}$ be an algebraic cycle, $\xi
\leftrightarrow \cI$ $U(3;k)$-invariant, $\sigma: \G_m \to \Aut(S)$ defined
by $\sigma(\lambda): x\mapsto x$, $y\mapsto y$, $z\mapsto \lambda z$.  As
$\cI \in U(z)$ and $U(3;k)$ is normalized by $\G_m$, $\cI_\infty$ is
fixed by $U(3;k)$, thus $\cI_\infty \in U(z)$. Obviously, for $n \geq d$,
there is an inclusion
\[
 z^{n-d} H^0(\cI (d)) \bigoplus^n_{\nu = d+1} [ H^0(\cI (\nu)) \cap R_\nu]
 \cdot z^{n -\nu} \subseteq H^0(\cI (n))\,.
\]
As $\reg(\cI) \leq d$ for all $\cI \in H^d$, the lemma gives equality, at
once. As $\G_m$ operates trivially on $H^0(\cI (\nu)) \cap R_\nu$, all
polynomials, which appear in $\bigwedge^{Q(n)} \sigma(\lambda) H^0(\cI (n))$
have a constant $\lambda\text{-degree} \leq Q(d)\cdot d$, essentially:
\begin{equation}
  \label{eq:D.4}
  (\cM_n \cdot C) = c \in \N, \quad \text{independent of } n\geq d\,.
\end{equation}

\section{The cone of effective $1$-cycles of $H^d$}
\label{sec:D.6}

 We need the following results:

 \begin{itemize}
 \item $H^d$ is smooth of dimension $2d$, and $\Pic(H^d) = \Z^2$ \quad
   (Fogarty).
 \item $A_n(H^d)\simeq H^\mathrm{BM}_{2n}(H^d)$ \quad (Ellingsrud--Str\o{}mme).
 \end{itemize}

Using Poincar\'e duality, one gets $A_1(H^d) \simeq \Q^2$.

Now $[E]$ and $[F]$ are linear independent: If $q_1[E] + q_2[F] =0$ in
$A_1(H^d)$, formula~\eqref{eq:D.1} gives $q_1+q_2(n-d+1) =0$, $n\geq d$,
thus $q_1 = q_2 =0$\,. It follows that $[E]$ and $[F]$ generate $A_1(H^d)$,
and the same argumentation shows $A^\tau_1(H^d) =(0)$. One deduces the following

\begin{lemma}
  \label{lem:D.2}
  \begin{enumerate}
  \item The cycles $[E]$ and $[F]$ form a basis of $A_1(H^d)$.
  \item Numerical and rational equivalence coincide on $H^d$.
  \end{enumerate}
\end{lemma}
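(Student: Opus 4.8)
The statement to be proved, Lemma~\ref{lem:D.2}, has two parts: (1) $[E]$ and $[F]$ form a basis of $A_1(H^d)$, and (2) numerical equivalence agrees with rational equivalence on $H^d$. Both parts will follow quickly from the facts assembled in Sections~\ref{sec:D.1}--\ref{sec:D.6}, so the write-up is essentially a matter of assembling those pieces in the right order rather than proving anything genuinely new. The two external inputs I will invoke are Fogarty's result that $H^d$ is smooth of dimension $2d$ with $\Pic(H^d)\simeq\Z^2$, and the Ellingsrud--Str\o{}mme isomorphism $A_n(H^d)\simeq H^{\mathrm{BM}}_{2n}(H^d)$, both quoted at the start of Section~\ref{sec:D.6}.

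\textbf{Step 1: $A_1(H^d)$ has rank $2$.} I would start from $A_n(H^d)\simeq H^{\mathrm{BM}}_{2n}(H^d)$. Since $H^d$ is smooth and projective of (complex) dimension $2d$, Poincar\'e duality gives $H^{\mathrm{BM}}_{2}(H^d)\simeq H^{4d-2}(H^d)$, which pairs nondegenerately with $H^{\mathrm{BM}}_{4d-2}(H^d)\simeq A_{2d-1}(H^d)\simeq \NS(H^d)\otimes\Q$ via the intersection pairing; by Fogarty, $\NS(H^d)$ has rank $2$ (it equals $\Pic(H^d)=\Z^2$ since $H^d$ is simply connected, or one simply cites that $\Pic(H^d)\simeq\Z^2$ and the divisor classes are $\cM_d,\cM_{d+1}$, which are independent by the intersection formulas). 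Hence $A_1(H^d)\otimes\Q\simeq\Q^2$. Throughout I take $\Q$-coefficients, as agreed in Section~\ref{sec:D.2}.

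\textbf{Step 2: $[E]$ and $[F]$ are a basis, and $A_1^\tau=0$.} Suppose $q_1[E]+q_2[F]=0$ in $A_1(H^d)$. Intersecting with $\cM_n$ for $n\geq d$ and using the formulas $(\cM_n\cdot E)=1$, $(\cM_n\cdot F)=n-d+1$ from~\eqref{eq:D.1}, we get $q_1+q_2(n-d+1)=0$ for all $n\geq d$; taking two distinct values of $n$ forces $q_1=q_2=0$. So $[E],[F]$ are linearly independent in the rank-$2$ group $A_1(H^d)\otimes\Q$, hence form a $\Q$-basis. The same computation shows that a class $Z=q_1[E]+q_2[F]$ with $(\cM_n\cdot Z)=0$ for all $n$ must be zero, i.e.\ $A_1^\tau(H^d)=0$: numerical equivalence to zero implies rational equivalence to zero. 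Since $A_1(H^d)$ is torsion-free of rank $2$ (it is a quotient of a homology group of a smooth projective variety, and more directly it is spanned by the $\Q$-independent $[E],[F]$), the vanishing of $A_1^\tau$ gives part (2): two $1$-cycles with the same intersection numbers against all line bundles differ by a $\tau$-trivial class, hence are rationally equivalent. (Here I use that $\Pic(H^d)$ already generates enough line bundles to detect numerical triviality, which the $\cM_n$ do by the argument above.)

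\textbf{Main obstacle.} There is no serious obstacle internal to this lemma; the weight is carried by the cited results of Fogarty and Ellingsrud--Str\o{}mme and by the already-established intersection formulas~\eqref{eq:D.1}. The only point requiring a little care is the passage from ``$[E],[F]$ span $A_1\otimes\Q$'' to a genuine integral-basis statement, and whether the lemma is even meant integrally; since the paper fixes $\Q$-coefficients for Chow groups of this kind (cf.\ Section~\ref{sec:D.2}), I will phrase part (1) as ``$\Q$-basis of $A_1(H^d)$'' and not claim more, so the argument in Steps 1--2 is complete as stated. I would also remark, for later use, that combining this with the Hirschowitz-type description in Section~\ref{sec:D.2} and the intersection formulas~\eqref{eq:D.2}--\eqref{eq:D.4} one can pin down $A_1^+(H^d)$ as the cone generated by $[E]$ and $[F]$, but that is a separate statement from the one being proved here.
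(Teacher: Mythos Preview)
Your proposal is correct and follows essentially the same approach as the paper: both invoke Fogarty's $\Pic(H^d)\simeq\Z^2$ and the Ellingsrud--Str\o{}mme cycle map together with Poincar\'e duality to get $A_1(H^d)\simeq\Q^2$, then use the intersection formulas~\eqref{eq:D.1} to verify that $[E],[F]$ are linearly independent and that $A_1^\tau(H^d)=(0)$. Your write-up is slightly more detailed than the paper's, but the argument is the same.
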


Using this lemma and the formulae~\eqref{eq:D.1}--\eqref{eq:D.4}, one
immediately obtains: 

\begin{proposition}
  \label{prop:D.1}
 The cone of effective $1$-cycles $A^+_1(H^d)$ is spanned by $[E]$ and $[F]$.  \hfill \qed
\end{proposition}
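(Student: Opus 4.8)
The plan is to deduce Proposition~\ref{prop:D.1} from the weak version of Hirschowitz's theorem quoted in Section~\ref{sec:D.2}, the intersection number formulas of Sections~\ref{sec:D.4}--\ref{sec:D.5}, and the basis statement Lemma~\ref{lem:D.2}. One inclusion is immediate: $E$ and $F$ are closures of one-parameter orbits, hence reduced irreducible curves in $H^d$, so $[E],[F]\in A^+_1(H^d)$ and the cone $\Q_{\ge0}[E]+\Q_{\ge0}[F]$ they span is contained in $A^+_1(H^d)$. So I would only have to establish the reverse inclusion.

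For that, by the cited theorem it suffices to show that every $B$-invariant $1$-prime cycle $C\subset H^d$ satisfies $[C]=q_1[E]+q_2[F]$ with $q_1,q_2\ge 0$. As recalled in Section~\ref{sec:D.2}, such a $C$ is either a combinatorial cycle of type $1$, a combinatorial cycle of type $2$, or an algebraic cycle. Since $[E],[F]$ form a $\Q$-basis of $A_1(H^d)$ (Lemma~\ref{lem:D.2}), I would write $[C]=q_1[E]+q_2[F]$ with $q_1,q_2\in\Q$ and intersect with the line bundles $\cM_n$; using~\eqref{eq:D.1} this gives
\[
(\cM_n\cdot C)=q_1+q_2(n-d+1),\qquad n\ge d .
\]
Now I read off $q_1,q_2$ from the shape of the left-hand side. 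If $C$ is a combinatorial cycle of type $1$, then~\eqref{eq:D.2} says $(\cM_n\cdot C)=a(n-d+1)+b$ with $a,b\in\N$, so $q_2=a$ and $q_1=b$ are natural numbers. If $C$ is a combinatorial cycle of type $2$ or an algebraic cycle, then~\eqref{eq:D.3} resp.~\eqref{eq:D.4} gives $(\cM_n\cdot C)=c$, a natural number independent of $n$, so $q_2=0$ and $q_1=c\in\N$. (That evaluating at $n=d$ and $n=d+1$ already pins down $q_1,q_2$ uniquely uses that numerical equivalence equals rational equivalence on $H^d$, again Lemma~\ref{lem:D.2}, together with the fact that the $2\times2$ matrix of pairings of $\cM_d,\cM_{d+1}$ with $E,F$, namely $\bigl(\begin{smallmatrix}1&1\\1&2\end{smallmatrix}\bigr)$, is invertible.) In every case $q_1,q_2\ge0$, so $C$ lies in the cone spanned by $[E]$ and $[F]$, and summing over the Hirschowitz generators yields $A^+_1(H^d)\subseteq\Q_{\ge0}[E]+\Q_{\ge0}[F]$.

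Since all of the substantive inputs are already available, the argument is essentially bookkeeping; the only places requiring care — and they have already been dealt with in the quoted sections — are (i) the exhaustiveness of the trichotomy "type $1$ / type $2$ / algebraic'' for $B$-invariant $1$-prime cycles, and (ii) the non-negativity (in fact integrality) of the coefficients in~\eqref{eq:D.2}--\eqref{eq:D.4}, which rests on the injectivity of the relevant orbit maps $\alpha\mapsto\psi_\alpha(\xi)$ (resp. $\lambda\mapsto\sigma(\lambda)\xi$), so that the displayed $\alpha$-degree (resp. reduced $\lambda$-degree) genuinely computes the intersection number. With those in hand there is no remaining obstacle.
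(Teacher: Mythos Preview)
Your proposal is correct and follows exactly the approach the paper intends: the paper's proof is the one-line remark ``Using this lemma and the formulae~\eqref{eq:D.1}--\eqref{eq:D.4}, one immediately obtains'' Proposition~\ref{prop:D.1}, and you have simply spelled out the bookkeeping behind that sentence. The trichotomy from Section~\ref{sec:D.2}, the basis statement of Lemma~\ref{lem:D.2}, and the comparison of the intersection-number formulas are precisely the ingredients the paper is invoking.
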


\section{The ample cone of $H$}
\label{sec:D.7}

 If $\cL \in \Pic(H^d)$ and $(\cL\cdot E) = \nu$, $(\cL \cdot F) = \mu$, we
 set $\cM :=\cM^{\nu -\mu}_{d-1} \otimes \cM^\mu_d$. Then $(\cL \otimes
 \cM^{-1} \cdot E) = (\cL \otimes  \cM^{-1} \cdot F) = 0$, thus $\cL \otimes
 \cM^{-1} \in \Pic^\tau(H^d)$. But $\Pic(H^d) \simeq \Z^2$, therefore
 $\Pic^\tau(H^d)= \Pic^0(H^d) =(0)$, and we have:

 \begin{lemma}
   \label{lem:D.3}
  $\Pic(H^d)$ is generated by $\cM_{d-1}$ and $\cM_d$.
 \hfill \qed
 \end{lemma}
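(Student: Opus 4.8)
The plan is to reduce the statement to the intersection-number table~\eqref{eq:D.1} together with the fact, recalled in Section~\ref{sec:D.6}, that $\Pic(H^d)\simeq\Z^2$. First I would take an arbitrary $\cL\in\Pic(H^d)$ and record the two integers $\nu:=(\cL\cdot E)$ and $\mu:=(\cL\cdot F)$; these are genuine integers because $E$ and $F$ are honest curves on $H^d$. The candidate decomposition is $\cM:=\cM_{d-1}^{\,\nu-\mu}\otimes\cM_d^{\,\mu}$, and the choice of exponents is dictated precisely by~\eqref{eq:D.1}: since $(\cM_{d-1}\cdot E)=(\cM_d\cdot E)=1$ while $(\cM_{d-1}\cdot F)=0$ and $(\cM_d\cdot F)=1$, one computes $(\cM\cdot E)=(\nu-\mu)\cdot 1+\mu\cdot 1=\nu$ and $(\cM\cdot F)=(\nu-\mu)\cdot 0+\mu\cdot 1=\mu$.

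Next I would set $\cL':=\cL\otimes\cM^{-1}$, so that by construction $(\cL'\cdot E)=(\cL'\cdot F)=0$. By Proposition~\ref{prop:D.1} the classes $[E]$ and $[F]$ span the cone $A^+_1(H^d)$, and by Lemma~\ref{lem:D.2} they in fact form a basis of $A_1(H^d)$; hence $(\cL'\cdot Z)=0$ for every $1$-cycle $Z$, i.e.\ $\cL'$ is numerically trivial, $\cL'\in\Pic^\tau(H^d)$.

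Finally I would invoke $\Pic(H^d)\simeq\Z^2$. This group is torsion-free, and its subgroup $\Pic^0(H^d)$, being divisible, must be trivial; therefore $\Pic^\tau(H^d)$, an extension of the finite group $\Pic^\tau/\Pic^0$ by $\Pic^0=0$, is a finite subgroup of $\Z^2$ and hence also trivial. Consequently $\cL'$ is the trivial bundle, i.e.\ $\cL=\cM_{d-1}^{\,\nu-\mu}\otimes\cM_d^{\,\mu}$, which exhibits $\cL$ as a $\Z$-combination of $\cM_{d-1}$ and $\cM_d$; since $\cL$ was arbitrary, these two line bundles generate $\Pic(H^d)$.

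I do not expect a genuine obstacle here: the whole argument is a formal consequence of~\eqref{eq:D.1}, Proposition~\ref{prop:D.1}/Lemma~\ref{lem:D.2}, and Fogarty's computation $\Pic(H^d)\simeq\Z^2$. The only point deserving care is the vanishing $\Pic^\tau(H^d)=\Pic^0(H^d)=0$: one should be explicit that the isomorphism $\Pic\simeq\Z^2$ forces both the abelian-variety part and the torsion part of $\NS(H^d)$ to disappear. Alternatively, one can bypass $\Pic^\tau$ altogether by observing that the intersection pairing $\Pic(H^d)\times A_1(H^d)\to\Z$ is perfect --- with respect to $\{\cM_{d-1},\cM_d\}$ and $\{[E],[F]\}$ its matrix is $\left(\begin{smallmatrix}1&0\\1&1\end{smallmatrix}\right)$, which is unimodular --- so that $\cL$ is uniquely determined by the pair $(\nu,\mu)$ and must coincide with $\cM$.
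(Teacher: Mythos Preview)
Your proposal is correct and follows essentially the same argument as the paper: take $\cM=\cM_{d-1}^{\nu-\mu}\otimes\cM_d^{\mu}$ with $\nu=(\cL\cdot E)$ and $\mu=(\cL\cdot F)$, observe that $\cL\otimes\cM^{-1}\in\Pic^\tau(H^d)$, and conclude from $\Pic(H^d)\simeq\Z^2$ that $\Pic^\tau(H^d)=\Pic^0(H^d)=(0)$. Your treatment is in fact a bit more explicit than the paper's, and the alternative via the unimodular intersection matrix is a nice bonus.
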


 We first determine the pseudoample cone of $H^d$. Let $\cL \in \Pic(H^d)$
 be such that $(\cL \cdot z) \geq 0$, for all $z \in A^+_1(H^d)$. Writing
 $\cL = \cM^{\nu_1}_{d-1} \otimes \cM^{\nu_2}_d$ and using
 Proposition~\ref{prop:D.1}, we see that this is equivalent to $\nu_1 +
 \nu_2 \geq 0$ and $\nu_2 \geq 0$\,. By Kleiman's theorem, the ample cone is
 the interior of the pseudoample cone, hence we get:

 \begin{theorem}
   \label{thm:D.1}
  The ample cone of $\Hilb^d(\P^2)$ is generated by $\cL_1 = \cM_{d-1}$ and
  $\cL_2 = \cM^{-1}_{d-1} \otimes \cM_d$.  \hfill \qed
 \end{theorem}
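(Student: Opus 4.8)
The plan is to translate the statement into a duality between $\Pic(H^d)$ and the cone of curves, and then to invoke Kleiman's criterion. First I would recall from Lemma~\ref{lem:D.3} that $\Pic(H^d)$ is the free abelian group on $\cM_{d-1}$ and $\cM_d$, and observe that $\cL_1=\cM_{d-1}$, $\cL_2=\cM^{-1}_{d-1}\otimes\cM_d$ is again a $\Z$-basis, since the change of basis matrix $\left(\begin{smallmatrix}1&0\\-1&1\end{smallmatrix}\right)$ is unimodular; equivalently $\cM_{d-1}=\cL_1$ and $\cM_d=\cL_1\otimes\cL_2$. So every line bundle can be written uniquely as $\cL=\cL_1^{a}\otimes\cL_2^{b}$ with $a,b\in\Z$.

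Next I would compute the pairing of $\cL_1,\cL_2$ against the two generating curve classes $[E]$ and $[F]$ of $A^+_1(H^d)$, using the intersection formulae~\eqref{eq:D.1}. One gets $(\cL_1\cdot E)=(\cM_{d-1}\cdot E)=1$, $(\cL_1\cdot F)=(\cM_{d-1}\cdot F)=(d-1)-d+1=0$, while $(\cL_2\cdot E)=(\cM_d\cdot E)-(\cM_{d-1}\cdot E)=1-1=0$ and $(\cL_2\cdot F)=(\cM_d\cdot F)-(\cM_{d-1}\cdot F)=1-0=1$. Hence $\{\cL_1,\cL_2\}$ is the basis of $\Pic(H^d)\otimes\Q$ dual to $\{[E],[F]\}$, and for $\cL=\cL_1^{a}\otimes\cL_2^{b}$ we have $(\cL\cdot E)=a$ and $(\cL\cdot F)=b$.

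Now by Proposition~\ref{prop:D.1} the cone $A^+_1(H^d)$ is the closed cone spanned by $[E]$ and $[F]$, so $\cL$ pairs nonnegatively with every effective $1$-cycle if and only if $a\ge 0$ and $b\ge 0$; thus the nef (pseudoample) cone is exactly $\{\cL_1^{a}\otimes\cL_2^{b}:a,b\ge 0\}$, generated by $\cL_1$ and $\cL_2$. Since $H^d=\Hilb^d(\P^2)$ is projective and smooth (Fogarty), Kleiman's theorem identifies the ample cone with the interior of the nef cone, i.e.\ with $\{a>0,\ b>0\}$, so its extremal rays are precisely those of $\cL_1$ and $\cL_2$. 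I do not expect a serious obstacle here: the real content is already packaged in Proposition~\ref{prop:D.1} (the determination of the cone of curves) and Lemma~\ref{lem:D.3} (the rank-two Picard group); the only points requiring care are that $A^+_1(H^d)$ is closed — automatic since it is finitely generated — and the precise (open-cone) meaning of ``generated by'', which I would state as ``$\cL_1$ and $\cL_2$ span the two boundary rays of the ample cone''.
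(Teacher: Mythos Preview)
Your proposal is correct and follows essentially the same route as the paper: both arguments combine Lemma~\ref{lem:D.3} (rank-two Picard group), Proposition~\ref{prop:D.1} (the cone of curves is spanned by $[E]$ and $[F]$), the intersection formulae~\eqref{eq:D.1}, and Kleiman's criterion. The only cosmetic difference is that the paper works in the basis $\cM_{d-1},\cM_d$ and arrives at the nef conditions $\nu_1+\nu_2\ge 0$, $\nu_2\ge 0$, whereas you pass first to the basis $\cL_1,\cL_2$ so that the pairing with $E,F$ becomes the identity matrix and the nef cone reads $a\ge 0$, $b\ge 0$; these are the same cone.
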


\section{Globally generated line bundles on $H^d$}
\label{sec:D.8}

We have already noted that $\cM_{d-1}$ is globally generated and the same is
true for $\cM^{-1}_{d-1} \otimes \cM_d$ (see Section~\ref{sec:1.5.2},
Lemma~\ref{lem:1.2}).

\begin{proposition}
  \label{prop:D.2}
 $\cL_1$ and $\cL_2$ are globally generated.
\end{proposition}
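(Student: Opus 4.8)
The plan is to recycle the two constructions that already appeared in Chapter~1, Section~\ref{sec:1.5.2}, now in the two-variable setting $S=k[x,y,z]$, $X=\P^2$, with the flat family $\cZ/H^d$ playing the role of the universal curve. So the proof has two independent halves, one for $\cL_1=\cM_{d-1}$ and one for $\cL_2=\cM^{-1}_{d-1}\otimes\cM_d$.

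First I would treat $\cL_2$. Here nothing new is needed: the argument of Lemma~\ref{lem:1.2} is formulated over an arbitrary noetherian base scheme $Y/k$ carrying a flat family of subschemes of $\P^N$, and one only has to check that its hypotheses hold for $Y=H^d$, $N=2$. Concretely, one takes $\cF=\cO_{\cZ}$, notes that $\cF\otimes k(p)$ is $d$-regular for every $p\in H^d$ (as recalled in Section~\ref{sec:D.1} via \cite{G78}), so that $\cF_n=\pi_*\cF(n)$ is locally free of rank $d$ for $n\geq d-1$ and the formation commutes with base change. Then one covers $H^d$ by the open sets $U(\ell)=\Set{p\mid \ell\text{ is a non-zero divisor on }\cF\otimes k(p)}$ for linear forms $\ell=\alpha x+\beta y+z$, exactly as in Aux-Lemma~\ref{auxlem:3}, and for each such $\ell$ one builds the homomorphism $\phi_\ell$ from $\bigwedge^{p}\cF_{n-1}\otimes\bigwedge^{d}\cS_n$ to $\bigwedge^{p+d}\cF_n$ which is surjective over $U(\ell)$, with $p=\colength$-type bookkeeping replaced by rank $d$ throughout. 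Summing the $\phi_{\ell_i}$ over a finite subcover and twisting by $(\bigwedge^{p}\cF_{n-1})^{-1}$ yields, for $n=d-1$, a surjection $E\otimes\cO_{H^d}\to \cM^{-1}_{d-2}\otimes\cM_{d-1}$... wait — here I need to be careful with the index: in two variables $\cL_2=\cM^{-1}_{d-1}\otimes\cM_d$, so one applies the construction with $n=d$, using that $\cF$ is $d$-regular (the analogue of ``$\cF$ is $(a-2)$-regular'' with $a-2$ replaced by $d-1$), and the construction gives a globally generated $\cM^{-1}_{d-1}\otimes\cM_d=\cL_2$.

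For $\cL_1=\cM_{d-1}$ the cleanest route is the same machine applied ``from the bottom'': $\cM^{-1}_{n-1}\otimes\cM_n$ is globally generated for $n\geq d-1$ by the paragraph above (the analogue of Lemma~\ref{lem:1.2}), in particular for $n=d-1$ the bundle $\cM^{-1}_{d-2}\otimes\cM_{d-1}$ is globally generated. It then remains to see that $\cM_{d-1}$ itself is globally generated. But this is essentially immediate from Section~\ref{sec:D.1}: for $n\geq d-1$ the sheaf $\cF_n=\pi_*\cF(n)$ is globally generated (it is a quotient of the trivial bundle $S_n\otimes\cO_{H^d}$ via the exact sequence $0\to\pi_*\cI(n)\to S_n\otimes\cO_{H^d}\to\cF_n\to 0$, which is valid and stays exact on fibres because $\reg(\cI\otimes k(p))\le d$), hence its top exterior power $\cM_n=\bigwedge^d\cF_n$ is globally generated as a quotient of $\bigwedge^d(S_n\otimes\cO_{H^d})=\bigl(\bigwedge^d S_n\bigr)\otimes\cO_{H^d}$. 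Taking $n=d-1$ gives that $\cL_1=\cM_{d-1}$ is globally generated.

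I expect the only real point requiring care — the ``main obstacle'', though it is more bookkeeping than genuine difficulty — to be the index shift in transplanting the Chapter~1 construction: in the curve case the critical regularity bound was $\reg(\cF)\le a-2$ and the construction produced global generation of $\cM^{-1}_{n-1}\otimes\cM_n$ for $n\ge a-2$, whereas here $\reg(\cF\otimes k(p))\le d-1$ (since the ideal is $d$-regular and the quotient picks up one less), so one must verify that the exact sequence $0\to\cF_{n-1}\xrightarrow{\ell}\cF_n\to\cF'_n\to0$ and the local-freeness of $\cF'_n$ hold already for $n\ge d-1$, and that $\cF'_n$ has rank $1$ (the length of a hyperplane section of a length-$d$ scheme in $\P^2$, which is not $d$ but rather the degree of $\cI'$ — one should instead note that $\cF'_n=\pi_*\bigl(\cF(n)/\ell\cF(n-1)\bigr)$ is supported in relative dimension $0$ and has the appropriate constant rank, so the determinant constructions go through verbatim). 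Once the indices are pinned down, both halves are formal consequences of Lemma~\ref{lem:1.1}, Aux-Lemma~\ref{auxlem:3} and Lemma~\ref{lem:1.2} with $\P^3$ replaced by $\P^2$, and the proposition follows.
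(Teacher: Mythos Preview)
Your approach is essentially the paper's: for $\cL_1=\cM_{d-1}$ it just cites Section~\ref{sec:D.1} (the surjection $S_{d-1}\otimes\cO_{H^d}\twoheadrightarrow\cF_{d-1}$, valid because $\cI$ is $d$-regular, makes $\cM_{d-1}=\bigwedge^d\cF_{d-1}$ a quotient of a trivial bundle), and for $\cL_2=\cM^{-1}_{d-1}\otimes\cM_d$ it refers to the construction of Lemma~\ref{lem:1.2}. Your detour through $\cM^{-1}_{d-2}\otimes\cM_{d-1}$ in the $\cL_1$ half is unnecessary, as you yourself notice.

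There is, however, a genuine muddle in your discussion of $\cF'_n$. In the $\Hilb^d(\P^2)$ setting the Hilbert polynomial is the \emph{constant} $d$, so $P(n)-P(n-1)=0$ and $\cF'_n=\cF_n/\ell\cF_{n-1}$ has rank \emph{zero} on $U(\ell)$, not rank $1$ or ``the degree of $\cI'$''. Concretely: when $\ell$ misses the support of the length-$d$ scheme, multiplication by $\ell$ is an isomorphism $\cF_{n-1}\xrightarrow{\sim}\cF_n$ on $U(\ell)$. The $\phi_\ell$ map therefore degenerates to $\bigwedge^d(\cdot\ell):\cM_{n-1}\to\cM_n$, which is an isomorphism on $U(\ell)$; summing over a finite cover and twisting by $\cM_{n-1}^{-1}$ gives a surjection from a trivial bundle onto $\cM^{-1}_{n-1}\otimes\cM_n$ for \emph{all} $n$, with no regularity hypothesis needed on that side. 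So the construction is actually simpler here than in the curve case, not harder; once you set the rank of $\cF'_n$ to $0$ your argument goes through cleanly.
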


\begin{remark}
    \label{rem:D.3}
  If $\cL$ is any line bundle on $H^d$, we can write $\cL = \cL^{\nu_1}_1
  \otimes \cL^{\nu_2}_2$. Now, if $\cL$ is globally generated, then $\nu_1,
  \nu_2\in \N$.
\end{remark}

As the vertex of the ample cone $\cL_1 \otimes \cL_2 = \cM_d$ is very
ample, by Proposition~\ref{prop:D.2} this implies:

\begin{proposition}
  \label{prop:D.3}
  Every ample line bundle on $\Hilb^d(\P^2)$ is very ample.
\end{proposition}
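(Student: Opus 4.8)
The plan is to reduce the statement to two facts already established: that $\cM_d = \cL_1 \otimes \cL_2$ is very ample, and that $\cL_1$ and $\cL_2$ are globally generated (Proposition~\ref{prop:D.2}). First I would recall from Lemma~\ref{lem:D.3} that $\Pic(H^d) \cong \Z^2$ is freely generated by $\cL_1$ and $\cL_2$, and from Theorem~\ref{thm:D.1} together with Kleiman's criterion that the ample cone is the interior of the nef cone $\R_{\geq 0}[\cL_1] + \R_{\geq 0}[\cL_2]$. Hence every ample line bundle $\cL$ on $H^d$ can be written $\cL = \cL_1^a \otimes \cL_2^b$ with integers $a \geq 1$ and $b \geq 1$.

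Next, using $\cL_1 \otimes \cL_2 = \cM_{d-1} \otimes (\cM_{d-1}^{-1} \otimes \cM_d) = \cM_d$, I would rewrite
\[
\cL = \cM_d \otimes \bigl(\cL_1^{a-1} \otimes \cL_2^{b-1}\bigr),
\]
where $a-1 \geq 0$ and $b-1 \geq 0$, so the second factor is a genuine (possibly trivial) tensor product of nonnegative powers of $\cL_1$ and $\cL_2$. The factor $\cM_d$ is very ample: as noted in Section~\ref{sec:D.1} it induces the Hilbert--Pl\"ucker embedding $H^d \hookrightarrow \P^N$, which is a closed immersion for $n \geq d$. The factor $\cL_1^{a-1} \otimes \cL_2^{b-1}$ is globally generated, since $\cL_1$ and $\cL_2$ are globally generated by Proposition~\ref{prop:D.2} and any tensor product of globally generated invertible sheaves is again globally generated (products of the given generating sections do the job), while the trivial sheaf is of course globally generated.

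Finally I would invoke the standard fact that on any scheme the tensor product of a very ample invertible sheaf with a globally generated (base-point-free) invertible sheaf is again very ample (cf.\ \cite[Chap.\ II, Ex.\ 7.5]{H}). Applying this to the two factors above shows $\cL$ is very ample, which completes the argument. The only point that needs a little care is the very first one --- that ampleness forces both exponents $a,b$ to be \emph{strictly} positive rather than merely nonnegative --- and this is exactly Kleiman's theorem identifying the ample cone with the interior of the nef cone; everything after that is formal.
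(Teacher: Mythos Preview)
Your proof is correct and follows exactly the approach the paper takes: the paper's entire argument is the single sentence preceding the proposition, namely that since the vertex $\cL_1\otimes\cL_2=\cM_d$ of the ample cone is very ample and $\cL_1,\cL_2$ are globally generated (Proposition~\ref{prop:D.2}), every ample line bundle is very ample. You have simply spelled out the details of that sentence.
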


\begin{remark}
   \label{rem:D.4}
  If one computes the $\alpha$-degree of the ideal $\cI_\alpha$
  corresponding to a ``general'' point of the cycle $\beta_\ell$
  (resp.~$\beta_d$) defined in \cite[(1.1) and (1.2)]{LQZ}, then one obtains
  $(\cM_n \cdot \beta_\ell) =n$, $(\cM_n \cdot \beta_d) =1$, therefore
  $[\beta_d] = [E]$, $[\beta_\ell -(d-1)\beta_d] = [F]$.
\end{remark}

\begin{remark}
   \label{rem:D.5}
   The basic result $A_1(H^d)\simeq \Q^2$ was deduced from Fogarty's result
   $\Pic(H^d) \simeq \Z^2$. Of course, one could have used the method
   of~\cite{E-S} by counting the number of $1$-dimensional cells in a
   Bialynicki--Birula decomposition of $H^d$. Finally, one could have used
   Iarrobino's result $\Pic(H^d)\otimes \Q \simeq \Q^2$, too, which is
   earlier than Fogarty's result (cf.~\cite[p.~821]{I} and~\cite[p.~660]{F3}).
\end{remark}

\section{The action of $\Aut(H^d)$ on $A_1(H^d)$}
\label{sec:D.9}

\subsection{}
\label{sec:D.9.1}

 Let $\cH \subset H^d$ be the closed subscheme parametrizing the ideals with
 maximal regularity, Then $\cH(k)  = \Set{ (\ell, f) | \ell \in S_1 - (0),
   \, f \in [S/\ell S(-1)]_d - (0)}$, and we show that $\cH$ has a natural
 scheme structure: Let be $A$ a $k$-algebra and $\ell \in S_1 \otimes A$ a
 form, which generates a $1$-subbundle, and $f \in [ S \otimes  A /\ell
 S(-1)\otimes A]_d$ a form, which generates a $1$-subbundle. Let $\Spec A$
 be so small that, without restriction, $\ell = ax +by +z$, $a,b \in
 A$. Then $S \otimes  A /\ell  S(-1)\otimes A \simeq R:= A[x,y]$. 

Let be $I = (\ell,f) \subset S \otimes A$ the ideal, which is generated by
$\ell$ and $f$.  We show that $I_n  = \ell S_{n-1} \otimes A \oplus f \cdot
R_{n-d}$ is a subbundle of $S_n \otimes A$ of rank $Q(n) = \binom{n-1+2}{2}
+ \binom{n-d+1}{1}$. As one can suppose $f \in R_d$, it suffices to show
that $f R_{n-d} \subset R_n$ is a subbundle of rank $n-d+1$\,. If $y \in
\Spec A$, one has to show that the canonical homomorphism $f \cdot R_{n-d}
\otimes k(y) \to R_n \otimes k(y)$ is injective. Let be $g \in R_{n-d}$ such
that $\bar{f} \cdot \bar{g} =0$ in $R \otimes k(y)$. But as $f \cdot A
\subset R_d$ is a $1$-subbundle, $\bar{f} \neq 0$ and hence
$\bar{g}=0$\,. It follows that $(\ell, f) \mapsto \langle \ell \rangle$
makes $\cH \to \P^2$ a projective bundle.

\subsection{}
\label{sec:D.9.2}

Let $f: H^d \to \P$ be the morphism, which is defined by the globally
generated line bundle $\cM_{d-1}$. Then $(\cM_{d-1} \cdot F) =0$\,. Let $C
\subset H^d$ be a curve such that $C \sim \nu F$. Then $0 = (\cM_{d-1} \cdot
C) = \deg(f| C) \cdot \bigl(\cO_\P(1)\cdot f(C)\bigr)$, hence $f(C)$ is a
point. As the restriction of $f$ to $H^d - \cH$ is an isomorphism, one
obtains:

 \begin{corollary}
   \label{cor:D.1}
  Let $C \subset H^d$ be a curve such that $[C] = \nu [F]$, where $\nu \in
  \N -\{0\}$. Then $C \subset \cH$.
 \hfill $\qed$
 \end{corollary}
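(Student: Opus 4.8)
The plan is to exploit the morphism $f\colon H^d\to\P$ defined by the globally generated line bundle $\cM_{d-1}$ (Proposition~\ref{prop:D.2}), for which $f^*\cO_\P(1)\simeq\cM_{d-1}$, together with two ingredients: that $\cM_{d-1}$ has degree zero on the basis cycle $F$, and that the restriction of $f$ to $H^d-\cH$ is an isomorphism onto its image. First I would record the numerical input. Formula~\eqref{eq:D.1} gives $(\cM_n\cdot F)=n-d+1$ for $n\ge d-1$, hence $(\cM_{d-1}\cdot F)=0$. Since numerical equivalence coincides with rational equivalence on $H^d$ (Lemma~\ref{lem:D.2}) and the intersection number of a line bundle with a curve depends only on the rational equivalence class of that curve, a curve $C$ with $[C]=\nu[F]$ satisfies $(\cM_{d-1}\cdot C)=\nu\,(\cM_{d-1}\cdot F)=0$.

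Next, by the projection formula, $(\cM_{d-1}\cdot C)=(f^*\cO_\P(1)\cdot C)=\deg(f|_C)\cdot(\cO_\P(1)\cdot f(C))$. If $f(C)$ were one-dimensional, both factors on the right would be strictly positive ($\cO_\P(1)$ being ample and $f|_C$ generically finite onto its image), contradicting the vanishing; so $f(C)$ must be a single closed point. This is exactly the computation indicated just before the statement.

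It then remains to use that $f$ is a closed immersion on the open locus $U:=H^d-\cH$, i.e.\ on the set of ideals $\cI$ with $\reg(\cI)\le d-1$. I would establish this by running the argument of Section~\ref{sec:D.1} with $d-1$ in place of $d$, but restricted to $U$: for $\cI\in U$ the $(d-1)$-regularity gives $S_{n-d+1}H^0(\cI(d-1))=H^0(\cI(n))$ for $n\ge d-1$, so $\cI$ is recovered from $H^0(\cI(d-1))$, and over $U$ the formation of $\pi_*\cI(d-1)$ commutes with base change and is locally free of rank $Q(d-1)$ (using $h^1(\cI'(d-2))=0$, which holds on $U$ since $\reg(\cI')\le\reg(\cI)\le d-1$, by the cohomology-and-flatness results of \cite{M2} quoted in Section~\ref{sec:D.1}); hence the Hilbert--Grassmann morphism $\cI\mapsto H^0(\cI(d-1))$ is a closed immersion of $U$ into $\Grass_{Q(d-1)}(S_{d-1})$, and $f|_U$ is its composition with the Pl\"ucker embedding, so $f|_U$ is a closed immersion. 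Finally, assume for contradiction that $C\not\subset\cH$. Then $C\cap U$ is a dense open subscheme of $C$, in particular a curve, and $f$ maps it isomorphically onto the one-dimensional set $f(C\cap U)\subseteq f(C)$; but $f(C)$ is a point, a contradiction. Hence $C\subset\cH$. The main obstacle is precisely the middle step: transporting the closed-immersion statement of Section~\ref{sec:D.1}, which is proved only for truncation degrees $\ge d$, down to degree $d-1$ over the open locus $U$ where it remains valid; once that is in place the rest is the projection formula and Lemma~\ref{lem:D.2}.
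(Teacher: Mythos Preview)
Your proposal is correct and follows essentially the same approach as the paper: use the morphism $f$ defined by $\cM_{d-1}$, compute $(\cM_{d-1}\cdot C)=0$ via the projection formula to conclude $f(C)$ is a point, and then invoke that $f$ restricted to $H^d-\cH$ is an isomorphism onto its image. In fact you supply more justification than the paper does for this last step (the paper simply asserts it); one minor slip is your appeal to $h^1(\cI'(d-2))=0$, where the prime is out of place in the $\P^2$ setting---you mean $h^1(\cI(d-2))=0$, and indeed Section~\ref{sec:D.1} already records that $\pi_*\cI(n)$ is a subbundle commuting with base change for all $n\ge d-1$ on the whole of $H^d$, so only the injectivity of $\cI\mapsto H^0(\cI(d-1))$ on $U$ needs the regularity bound.
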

\subsection{}
\label{sec:D.9.3}

 From Proposition~\ref{prop:D.1} it follows that $\phi \in \Aut(H^d)$
 permutes the cycles classes $[E]$ and $[F]$. We show that $\phi$ leaves
 invariant these classes, and we suppose that $[\phi{(E)}] = [F]$.

 Let $M$ be the set of subscheme of $\P^2$ of the shape $\Set{
   P_0,\dots,P_{d-2}}$, where $P_0$ is a point of multiplicity $2$, and the
 points $P_1,\dots,P_{d-2}$ are different points of multiplicity $1$\,. If
 one fixes  $P_1,\dots,P_{d-2}$, then $M$ contains the set
 $D:=\Quot^2(\cO_{\P^2}/ P^2_0)$, where $P_0 =(x,y)$, without
 restriction. If one puts $\cL_\alpha:= (\alpha x +y, x^2)$ and $\cI_\alpha
 := P_1 \cap \cdots \cap P_{d-2} \cap \cL_\alpha$, then $D \simeq \Set{
   \cI_\alpha | \alpha \in k}^-$ and one has (see~\cite[p. 49]{T3}):
\[
  \alphadeg \dot\bigwedge H^0(\cI_\alpha(n)) = \alphadeg \dot\bigwedge
  H^0(\cL_\alpha(n)) = 1\,,
\]
for all $n \geq d$, hence $[D] = [E]$ by Lemma~\ref{lem:D.2}. It follows
that $[\phi(D)] = [\phi(E)] = [F]$, and by Corollary~\ref{cor:D.1} one
obtains $\phi(D) \subset \cH$. Now $\dim M = 2 \cdot (d-2) +1$ and $\dim \cH
= d+2$\,. Thus from $\phi(M) \subset \cH$ it follows that $d \leq 5$ and
hence we get:

\begin{proposition}
  \label{prop:D.4}
  If $d\geq 6$, then $\Aut(H^d)$ trivially acts on $A_1(H^d)$.
 \hfill $\qed$
\end{proposition}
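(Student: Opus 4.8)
The plan is to show that the action $\phi_*$ on $A_1(H^d)$ of an arbitrary $\phi\in\Aut(H^d)$ fixes the basis classes $[E]$ and $[F]$. First I would note that $\phi_*$ maps the cone $A^+_1(H^d)$ bijectively onto itself, so by Proposition~\ref{prop:D.1} it permutes the two extremal rays spanned by $[E]$ and $[F]$; since $[E]$ and $[F]$ are the primitive effective generators of those rays, $\phi_*$ permutes the set $\{[E],[F]\}$. If $\phi_*$ fixes this pair it is already the identity on $A_1(H^d)=\Q[E]\oplus\Q[F]$, so it suffices to rule out the swapping case $\phi_*[E]=[F]$, $\phi_*[F]=[E]$ under the hypothesis $d\geq6$ (more precisely, I only need that in the swapping case $\phi_*[E]$ is a positive integral multiple $\nu[F]$ of $[F]$).

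So assume $\phi_*[E]=\nu[F]$ with $\nu\geq1$. Let $M\subset H^d$ be the locus of length-$d$ subschemes of $\P^2$ of the form $\{P_0,P_1,\dots,P_{d-2}\}$, where $P_0$ is a point of multiplicity $2$ and $P_1,\dots,P_{d-2}$ are distinct simple points whose supports avoid that of $P_0$. Fixing $P_0$ (say $P_0\leftrightarrow(x,y)$) and $P_1,\dots,P_{d-2}$, the scheme $D:=\Quot^2(\cO_{\P^2}/P_0^2)$ is a curve in $H^d$, namely $D=\Set{\cI_\alpha \mid \alpha\in k}^-$ with $\cL_\alpha=(\alpha x+y,x^2)$ and $\cI_\alpha=P_1\cap\dots\cap P_{d-2}\cap\cL_\alpha$. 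Letting the $d-2$ simple points (and the location of $P_0$) vary, the curves $D$ sweep out a dense subset of $M$, so $\dim M\geq 2(d-2)+1$.

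Next I would pin down $[D]$. Since all $\alpha$-dependence of $\cI_\alpha$ comes from $\cL_\alpha$, one has $\alphadeg\dot\bigwedge H^0(\cI_\alpha(n))=\alphadeg\dot\bigwedge H^0(\cL_\alpha(n))=1$ for every $n\geq d$ (cf.~\cite[p.~49]{T3}), hence $(\cM_n\cdot D)=1$ for all $n\geq d$. Comparing with $(\cM_n\cdot E)=1$ and $(\cM_n\cdot F)=n-d+1$ from equation~\eqref{eq:D.1}, and using that numerical equivalence coincides with rational equivalence on $H^d$ (Lemma~\ref{lem:D.2}(2)), a class $a[E]+b[F]$ whose intersection number with $\cM_n$ equals $1$ for all $n\geq d$ must have $b=0$ and $a=1$; therefore $[D]=[E]$ in $A_1(H^d)$. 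Consequently $[\phi(D)]=\phi_*[E]=\nu[F]$ with $\nu\in\N$, $\nu\geq1$, so Corollary~\ref{cor:D.1} gives $\phi(D)\subset\cH$. As the points vary, the curves $\phi(D)$ all lie in the closed subscheme $\cH$ and their union is dense in $\phi(M)$; since $\phi$ is a homeomorphism this yields $\phi(M)\subseteq\cH$, whence $\dim\cH\geq\dim\phi(M)=\dim M\geq 2(d-2)+1$. By Section~\ref{sec:D.9.1} the projection $\cH\to\P^2$ is a projective bundle whose fibres have dimension $d$, so $\dim\cH=d+2$, and we get $d+2\geq 2(d-2)+1$, i.e.~$d\leq5$, contradicting $d\geq6$. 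Hence $\phi_*$ fixes $[E]$ and $[F]$ and therefore acts trivially on $A_1(H^d)$.

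The routine portions are the dimension bookkeeping and the $\alpha$-degree computation for $D$. The step I expect to require the most care is the transition from the single slice $D$ to all of $M$ — this is what the "sweep out and take closures" argument achieves, using that $\cH$ is closed — together with the clean identification $[D]=[E]$, where Lemma~\ref{lem:D.2}(2) is exactly what is needed to upgrade equality of intersection numbers to equality of rational-equivalence classes so that Corollary~\ref{cor:D.1} becomes applicable.
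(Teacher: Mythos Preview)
Your proposal is correct and follows essentially the same route as the paper: reduce to the swap case, exhibit the family $M$ of ``double point plus $d-2$ simple points'', identify each slice $D$ with $[E]$ via the $\alpha$-degree computation and Lemma~\ref{lem:D.2}, push into $\cH$ via Corollary~\ref{cor:D.1}, and derive the dimension inequality $d+2\geq 2(d-2)+1$. Your presentation is in fact slightly more careful than the paper's in two places: you spell out the closure argument passing from $\phi(D)\subset\cH$ to $\phi(M)\subset\cH$, and you write $\dim M\geq 2(d-2)+1$ rather than equality (the paper's phrasing of $M$ is a bit ambiguous about whether the support of $P_0$ is fixed).
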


\begin{corollary}
  \label{cor:D.2}
 Each $\phi \in \Aut(H^d)$ leaves $\cH$ invariant.
\end{corollary}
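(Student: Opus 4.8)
The plan is to deduce the invariance of $\cH$ under $\Aut(H^d)$ directly from Proposition~\ref{prop:D.4} together with Corollary~\ref{cor:D.1}, for $d\geq 6$. First I would recall that $\cH$ is the closed subscheme of $H^d$ parametrizing ideals of maximal regularity, and that its complement $H^d-\cH$ is exactly the locus where the globally generated line bundle $\cM_{d-1}$ defines an isomorphism onto its image (Section~\ref{sec:D.9.2}). I would fix an arbitrary $\phi\in\Aut(H^d)$ and aim to show $\phi(\cH)=\cH$; since $\phi^{-1}$ is also an automorphism, it suffices to show $\phi(\cH)\subseteq\cH$.

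The key step is to characterize $\cH$ by a curve-theoretic property preserved under $\phi$. Through every point $\xi\in\cH(k)$ there passes a curve $C\subset\cH$ with $[C]=\nu[F]$ for some $\nu\geq 1$: concretely, if $\xi\leftrightarrow(\ell,f(\,\cdot\,))$ one can vary the lower-dimensional data (as in the construction of the cycles $Z_i$ on $H_m$ in Appendix~\ref{cha:C}, or more simply by taking a pencil of the relevant forms inside the fiber of $\cH\to\P^2$) to obtain a $\P^1\subset\cH$ whose class computes out to a multiple of $[F]$ via the $\alpha$-degree formula of Section~\ref{sec:D.3}. By Proposition~\ref{prop:D.4}, $\phi_*$ acts trivially on $A_1(H^d)$, so $[\phi(C)]=[C]=\nu[F]$, and then Corollary~\ref{cor:D.1} gives $\phi(C)\subset\cH$. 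In particular $\phi(\xi)\in\phi(C)\subset\cH$. Since $\xi\in\cH(k)$ was arbitrary and $\cH$ is closed, this shows $\phi(\cH)\subseteq\cH$, and symmetrically $\phi^{-1}(\cH)\subseteq\cH$, whence $\phi(\cH)=\cH$ as schemes.

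The main obstacle is the first half of the middle step: producing, through each point of $\cH$, a genuine curve lying inside $\cH$ whose rational equivalence class is a positive multiple of $[F]$ (and not, say, a multiple of $[E]$ or a mixed class). One must check two things: that the varying family stays inside $\cH$ — i.e.\ that moving the data $f$ (or the auxiliary linear form) keeps the ideal at maximal regularity — and that the resulting intersection numbers $(\cM_n\cdot C)$ are linear in $n$ with vanishing $(\cM_{d-1}\cdot C)$, forcing $[C]\in\N\cdot[F]$ by Lemma~\ref{lem:D.2} and the formulas of Section~\ref{sec:D.3}. Both are routine $\alpha$-degree computations analogous to those already carried out for $H_m$ in Appendix~\ref{cha:C}, so no new idea is needed; the only care required is to ensure the pencil is nonconstant so that $C$ is actually a curve and not a point. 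Once this is in place, the invariance of $\cH$ follows formally, exactly as the invariance of $H_m$ and $\cG$ under $\Aut(\HH)$ was deduced in Corollary~\ref{cor:4.1}.
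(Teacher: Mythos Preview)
Your proposal is correct and follows essentially the same approach as the paper: through each $\xi=(\ell,f)\in\cH(k)$ the paper takes the pencil $C=\{(\ell,\alpha f+\beta g)\mid(\alpha:\beta)\in\P^1\}$ for some $g$ linearly independent of $f$ modulo $\ell S_{d-1}$, observes $[C]=[F]$ (via the argument of Proposition~\ref{prop:2.2}), and then applies Proposition~\ref{prop:D.4} and Corollary~\ref{cor:D.1} exactly as you outline. Your identification of the only nontrivial step—producing the curve of class a multiple of $[F]$ through each point—is accurate, and the paper resolves it with precisely the pencil you suggest.
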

\begin{proof}
   Let be $(\ell, f) \leftrightarrow \xi \in \cH(k)$.  If $g \in S_d /\ell
   S_{d-1}$ is linearly independent of $f$, then $C := \Set{ (\ell, \alpha f
     + \beta g)}^- \subset \cH$ and a similar argumentation as in the proof
   of Proposition~\ref{prop:2.2} in Chapter~\ref{cha:2} shows that $[C] =
   [F]$. It follows that  $ [\phi(C)] = [C] = [F]$, hence $C \subset \cH$ by
   Corollary~\ref{cor:D.1}. 
\end{proof}

\subsection{}
\label{sec:D.9.4}

If $\G_m$ operates by $\sigma(\lambda): x \mapsto \lambda^{g^2}x$, $y
\mapsto \lambda^g y$, $z \mapsto z$, then the two cycles $F= \Set{ (x,y^{d-1}(\alpha y
  +z))}^-$ and $G:=\Set{(\alpha x +y,x^d)}^-$ are the $1$-dimensional cells
of the B-B-decomposition of $\cH$. There is only $1$ fixed point under the
action of $U(3,k)$, namely the point corresponding to $(x,y^d)$. Hence there
are no algebraic cycles. The same argumentation as in Section~\ref{sec:C.7}
shows that $A^+_1(\cH)$ is freely generated by $[F]$ and $[G]$. If $\phi \in
\Aut(H^d)$, then by Corollary~\ref{cor:D.1} $\phi_*$ permutes these cycle
classes. If $[\phi(F)] = [G]$ in $A_1(\cH)$ then from
Proposition~\ref{prop:D.4} it follows that $[F] = [G]$ in $A_1(H^d)$. In
order to show that this is not possible, we compute the intersection numbers
with $\cM_n$:

Let be $\cI_\alpha = (\alpha x +y, x^d)$. From the exact sequence 
\[
0 \longrightarrow \cI_\alpha(n-1) \stackrel{\cdot z}{\longrightarrow}
\cI_\alpha(n-1) \longrightarrow \cI'_\alpha(n-1) \longrightarrow 0\,,
\]
it follows that 
\[
 0 \longrightarrow H^0(\cI_\alpha(n-1)) \longrightarrow H^0(\cI_\alpha(n)) \longrightarrow k[x,y]_n \longrightarrow 0
\]
is exact for all $n\geq d$. It follows that 
\[
  \alphadeg \dot\bigwedge H^0(\cI_\alpha(n)) = \alphadeg \dot\bigwedge
   (\alpha x +y) S_{d-2} = \tbinom{d}{2}\,,
\]
hence $(\cM_n \cdot C) = \binom{d}{2}$. Comparing with~\eqref{eq:D.1}  shows
that $[F] = [G]$ is not possible.

\begin{corollary}
  \label{cor:D.3}
  Each $\phi \in \Aut(H^d)$ operates as the identity on $A_1(\cH)$.
  \hfill $\qed$
\end{corollary}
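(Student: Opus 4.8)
The plan is to exploit the Bialynicki--Birula picture of $\cH$ together with the already-established triviality of $\Aut(H^d)$ on $A_1(H^d)$ (Proposition~\ref{prop:D.4}, where the hypothesis $d\geq 6$ is used). First I would record the two facts set up just before the statement: by Corollary~\ref{cor:D.2} every $\phi\in\Aut(H^d)$ leaves $\cH$ invariant, so it restricts to an automorphism $\phi|_\cH$ of $\cH$; and, for the $\G_m$-action $\sigma(\lambda)\colon x\mapsto\lambda^{g^2}x,\ y\mapsto\lambda^g y,\ z\mapsto z$, the scheme $\cH$ has exactly the two one-dimensional cells $F$ and $G$ and a single $U(3,k)$-fixed point $(x,y^d)$, so that (as in Section~\ref{sec:C.7}) there are no ``algebraic'' cells, numerical equivalence agrees with rational equivalence on $\cH$, and $A^+_1(\cH)$ is freely generated by $[F]$ and $[G]$; in particular $\{[F],[G]\}$ is a $\Q$-basis of $A_1(\cH)$. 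Since both $\phi|_\cH$ and its inverse carry effective curves to effective curves, $\phi_*$ is a linear automorphism of $A_1(\cH)$ mapping the simplicial cone $\Q_{\geq 0}[F]+\Q_{\geq 0}[G]$ onto itself, hence it permutes the two extremal rays $\Q_{>0}[F]$ and $\Q_{>0}[G]$.

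This leaves two cases, and the work is to rule out the ``swap'' case and to pin the scalars in the ``fixed'' case; for this I would push forward along $\iota\colon\cH\hookrightarrow H^d$. From $\phi\circ\iota=\iota\circ\phi|_\cH$ and functoriality of proper pushforward, $\iota_*\phi_*[C]=\phi_*\iota_*[C]$ for $C=F,G$, where on the right $\phi_*$ is the pushforward of the automorphism $\phi$ of $H^d$, which by Proposition~\ref{prop:D.4} is the identity on $A_1(H^d)$; hence $\iota_*\phi_*[F]=\iota_*[F]$ and likewise for $G$. By the projection formula $(\cM_n\cdot\iota_*[F])=(\cM_n\cdot F)=n-d+1$ from~\eqref{eq:D.1}, which is nonzero for $n\geq d$, so $\iota_*[F]\neq 0$, and similarly $\iota_*[G]\neq 0$. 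If $\phi_*[F]=c[G]$ with $c>0$ (the swap), then $\iota_*[F]=c\,\iota_*[G]$ in $A_1(H^d)$, so intersecting with $\cM_n$ gives $n-d+1=c\,(\cM_n\cdot G)$ for all $n\geq d$; but a short computation via the exact sequences $0\to H^0(\cI_\alpha(n-1))\to H^0(\cI_\alpha(n))\to k[x,y]_n\to 0$ for $\cI_\alpha=(\alpha x+y,x^d)$ yields $(\cM_n\cdot G)=\alphadeg\bigwedge(\alpha x+y)S_{d-2}=\tbinom{d}{2}$, independent of $n$ --- a contradiction. In the remaining case $\phi_*[F]=c[F]$ and $\phi_*[G]=c'[G]$ with $c,c'>0$, and $\iota_*\phi_*[F]=\iota_*[F]$ forces $(c-1)\iota_*[F]=0$, hence $c=1$ since $\iota_*[F]\neq 0$; likewise $c'=1$. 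Thus $\phi_*$ fixes the basis $[F],[G]$, i.e.\ acts as the identity on $A_1(\cH)$.

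The substantive inputs are all cited rather than reproved: Proposition~\ref{prop:D.4} (the one place $d\geq 6$ is needed, through the dimension count $\dim M=2(d-2)+1\leq\dim\cH=d+2$), the Bialynicki--Birula structure of $\cH$ (two one-dimensional cells, unique $U(3,k)$-fixed point), and the Section~\ref{sec:C.7}-type argument giving numerical $=$ rational equivalence on $\cH$ together with the freely generated cone. Granting these, the remaining content is the bookkeeping above plus the elementary value $(\cM_n\cdot G)=\tbinom{d}{2}$, and the only point that needs genuine care --- the ``main obstacle'', modest as it is --- is matching up $\phi_*$ on $A_1(\cH)$ with $\phi_*$ on $A_1(H^d)$ through $\iota_*$, so that the triviality statement of Proposition~\ref{prop:D.4} can actually be applied to eliminate the swap case.
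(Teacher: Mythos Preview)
Your argument is correct and follows essentially the same route as the paper: both identify $[F]$ and $[G]$ as the generators of $A^+_1(\cH)$ via the B--B decomposition, rule out the swap $[F]\leftrightarrow[G]$ by pushing forward to $A_1(H^d)$ (where $\phi_*=\id$ by Proposition~\ref{prop:D.4}) and comparing $(\cM_n\cdot F)=n-d+1$ with $(\cM_n\cdot G)=\tbinom{d}{2}$. The only difference is cosmetic: the paper uses that $A^+_1(\cH)$ is freely generated over $\Z_{\geq 0}$ by $[F],[G]$, so an automorphism of the integral lattice preserving the cone must permute these two classes on the nose (no scalars); you instead permute only the extremal rays and then use $\iota_*$ together with $\iota_*[F],\iota_*[G]\neq 0$ to force the scalars to be $1$. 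Your version is slightly more explicit but otherwise the same proof.
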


As in the general situation of Section~\ref{sec:5.1} and
Section~\ref{sec:5.2}, one deduces:

\begin{corollary}
  \label{cor:D.4}
   If $\phi \in \Aut(H^d)$, then $\phi | \cH$ is induced by a linear
   transformation $\gamma \in \Aut_k(\P^2) = \PGL(2;k)$.
  \hfill $\qed$
\end{corollary}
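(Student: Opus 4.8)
The plan is to run, for $\cH\subset H^d$, the argument of Chapter~\ref{cha:5}, Sections~\ref{sec:5.1}--\ref{sec:5.2}: first descend $\phi|_{\cH}$ to a linear automorphism of the base of the projective bundle $p\colon\cH\to\P(S_1)$ of Section~\ref{sec:D.9.1}, and then kill the remaining relative automorphism exactly as in Proposition~\ref{prop:5.1}. By Corollary~\ref{cor:D.2}, $\cH$ is $\phi$-invariant, so $\phi|_{\cH}\in\Aut_k(\cH)$. Since $\phi_*$ acts as the identity on $A_1(H^d)$ (Proposition~\ref{prop:D.4}) and numerical equivalence agrees with rational equivalence on $H^d$ (Lemma~\ref{lem:D.2}, part~(2)), $\phi^*$ is the identity on $\Pic(H^d)$; in particular $(\phi|_{\cH})^*(\cM_{d-1}|_{\cH})\cong\cM_{d-1}|_{\cH}$.

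Next I would identify this line bundle. In $\Pic(\cH)=p^*\Pic(\P(S_1))\oplus\Z\,\cO_{\cH}(1)$ one has $\cM_{d-1}|_{\cH}\cong p^*\cO_{\P(S_1)}\!\bigl(\tbinom{d}{2}\bigr)$: the line $F=\Set{(x,y^{d-1}(\alpha y+z))|\alpha\in k}^-$ is contained in a fibre of $p$, so from $(p^*\cO(1)\cdot F)=0$, $(\cO_{\cH}(1)\cdot F)=1$ and $(\cM_{d-1}\cdot F)=0$ (see~\eqref{eq:D.1}) the $\cO_{\cH}(1)$-component vanishes, while $(\cM_{d-1}\cdot G)=\tbinom{d}{2}$ for $G=\Set{(\alpha x+y,x^d)|\alpha\in k}^-$ (the computation at the end of Section~\ref{sec:D.9.4}) together with $p|_G$ being an isomorphism onto a line in $\P(S_1)$ pins the twist. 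By the projection formula $H^0(\cH,\cM_{d-1}|_{\cH})=H^0(\P(S_1),\cO(\tbinom{d}{2}))$, so the morphism attached to the complete linear system $|\cM_{d-1}|_{\cH}|$ is $v\circ p$, with $v$ the $\tbinom{d}{2}$-fold Veronese embedding of $\P(S_1)$. As $\phi|_{\cH}$ preserves $\cM_{d-1}|_{\cH}$ it acts projective-linearly on the ambient space, giving $\bar\phi$ with $v\circ p\circ(\phi|_{\cH})=\bar\phi\circ v\circ p$; since $\bar\phi$ carries $v(\P(S_1))$ to itself it restricts to an automorphism $\gamma$ of $\P(S_1)$ with $p\circ(\phi|_{\cH})=\gamma\circ p$. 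Reading $\gamma$ as a linear automorphism of $\P^2$, it acts on $H^d$, preserves $\cH$, and acts on $\cH$ over $\gamma$; hence $\phi':=\gamma^{-1}\circ(\phi|_{\cH})\in\Aut_{\P(S_1)}(\cH)$, and it remains to show $\phi'=\id$.

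This last step is Proposition~\ref{prop:5.1} with $\P^3$ replaced by $\P^2$. Set, as in Section~\ref{sec:5.2} but over $X=\P(S_1)=\P^2$: $\cE=S_d\otimes\cO_X$, $\cF=\cL\otimes_kS_{d-1}\subset\cE$ with $\cL\subset S_1\otimes\cO_X$ the universal line subbundle, $\cG=\cE/\cF$; then $\cH=\P(\cG)$, a bundle of fibre $\P^d$. From the commutative triangle over $X$ one gets $(\phi')^*\cO_{\P(\cG)}(1)\cong\cO_{\P(\cG)}(1)\otimes p^*\cO_X(m)$ (the exponent $+1$ being forced by restriction to a fibre $\cong\P^d$), hence $\cG\cong\cG\otimes\cO_X(\nu m)$ for all $\nu$, hence $m=0$ (otherwise $\cG$ would have constant Hilbert polynomial, impossible as it is locally free of rank $d+1$ on $\P^2$); so $\phi'$ is induced by an $\cO_X$-automorphism of $\cG$. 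Using $\cExt^1(\cE,\cF)=0$, $\cHom_X(\cE,\cF)\cong\cO_X(-1)\otimes_kk^n$ (whence $H^0(\P^2,\cO(-1))=H^1(\P^2,\cO(-1))=0$) and $\Gamma(\cO_X)=k$, the natural map $\Hom_k(S_d,S_d)=\Hom(\cE,\cE)\to\Hom(\cE,\cG)$ is an isomorphism; composing the quotient $\cE\to\cG$ with that automorphism of $\cG$ therefore comes from a $k$-linear $\psi\colon S_d\to S_d$ which at each $\langle\ell\rangle\in\P(S_1)$ induces an automorphism of $\cG_{\langle\ell\rangle}=S_d/\ell S_{d-1}$, i.e.\ $\psi(\ell S_{d-1})\subseteq\ell S_{d-1}$ for all $\ell\in S_1$. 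By the Lemma in Appendix~\ref{cha:B}, $\psi$ is a nonzero scalar, so the automorphism of $\cG$ is scalar and $\phi'=\id$; thus $\phi|_{\cH}=\gamma|_{\cH}$ is induced by the linear transformation $\gamma$ of $\P^2$.

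The main obstacle is essentially bookkeeping: re-running Proposition~\ref{prop:5.1} over $\P^2$, which is routine since every cohomological input ($\cExt^1(\cE,\cF)=0$, the vanishing of $H^0$ and $H^1$ of $\cO_{\P^2}(-1)$, $\Gamma(\cO_X)=k$) survives unchanged and the Appendix~\ref{cha:B} lemma is already stated for an arbitrary polynomial ring. The one point needing care is the identification $\cM_{d-1}|_{\cH}\cong p^*\cO(\tbinom{d}{2})$ together with its consequence that the complete linear system $|\cM_{d-1}|_{\cH}|$ recovers the bundle projection $p$ up to a Veronese — this is what makes the descent to $\gamma$ possible, and it is also where one must be sure the degree computations of Sections~\ref{sec:D.9.4} and~\eqref{eq:D.1} remain valid at $n=d-1$ (they do, since for the cycles $F$ and $G$ the relevant $\alpha$-degree has already stabilized in degree $d-1$).
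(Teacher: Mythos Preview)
Your argument is correct. The second half --- showing that a relative automorphism of the projective bundle $\cH=\P(\cG)\to\P(S_1)$ is trivial --- is exactly the paper's Proposition~\ref{prop:5.1} rerun with one fewer variable, which is precisely what the paper means by ``as in \ldots\ Section~\ref{sec:5.2}''.

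For the first half (descending $\phi|_{\cH}$ to some $\gamma\in\Aut_k(\P(S_1))$) you and the paper part ways. The paper's ``as in Section~\ref{sec:5.1}'' points to the curve-based descent Lemma~\ref{lem:5.1}: using that $\phi_*$ is the identity on $A_1(\cH)$ (Corollary~\ref{cor:D.3}), any curve in a fibre of $p$ is sent by $\phi|_{\cH}$ to a curve whose $p$-image is a point, whence $\phi|_{\cH}$ descends to $X=\P(S_1)$. You instead identify $\cM_{d-1}|_{\cH}\cong p^*\cO_{\P(S_1)}\!\bigl(\tbinom{d}{2}\bigr)$ via the intersection numbers with $F$ and $G$, and then recover $p$ (up to a Veronese) from the complete linear system, so that $\phi|_{\cH}$, which preserves this line bundle, descends. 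Your route is more explicit and avoids invoking Corollary~\ref{cor:D.3} separately (you only need $\phi^*$ trivial on $\Pic(H^d)$), at the cost of the small extra computation pinning down the twist; the paper's route packages the descent into the reusable Lemma~\ref{lem:5.1}. Either way one lands on $\phi'=\gamma^{-1}\circ(\phi|_{\cH})\in\Aut_{\P(S_1)}(\cH)$ and finishes as above.
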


Now as in Section~\ref{sec:5.4}, if one replaces $\phi$ by $\gamma^{-1}
\circ \phi$, one obtains a \emph{normed} automorphism of $H^d$, i.e.~an
automorphism, whose restriction to $\cH$ is the identity.

\section{Computation of $\Aut(H^d)$}
\label{sec:D.10}

\subsection{Preliminary remarks}
\label{sec:D.10.1}

Let $\cZ  \subset H^d \times \P^2$  be the universal subscheme and $\pi: \cZ
\to H^d$ the projection. If $\xi \in H^d(k)$, then the fiber $\pi^{-1}(\xi)
\subset \P^2$ is denoted by $Z_\xi$. $H^{(d)}$ is the open subscheme of
$H^d$ such that $H^{(d)} = \Set{ \xi \in H^d(k) | \# Z_\xi = d}$. If $\phi
\in \Aut(H^d)$, then one has a cartesian diagram 
\[
    \xymatrix{
      H^d \times \P^2 \ar[r]^{\phi'} \ar[d]_\pi & H^d \times \P^2 \ar[d]^\pi \\
      H^d \ar[r]^\phi  & H^d }
\]
and $\phi'$ induces an isomorphism of $\cZ$, which we denote by $\psi$. 

If $\xi\in H^{(d)}(k)$, then $Z_\xi$ consists of $d$ different closed points
$P_1,\dots,P_d \in \P^2$. Conversely, $d$ different, closed points of $\P^2$
define a point $\xi \in H^{(d)}(k)$ and we express this connection by
writing $\xi \leftrightarrow \cZ_\xi = P_1 \dcup \cdots \dcup P_d$. 

If $\xi \in H^{(d)}(k)$ and $\zeta = \phi(\xi)$, then $\psi$ induces an
isomorphism $\cZ_\xi \to \cZ_\zeta$, which we denote by 
\[
   P \mapsto \phi_\xi(P) \quad \text{or} \quad \phi(P_1 \dcup \cdots \dcup
   P_d) = \phi_\xi(P_1) \dcup \cdots \dcup \phi_\xi(P_d)\,.
\]
If $\xi \in H^{(d)}(k)$ and $\phi(\xi) = \xi$, then $\psi$ induces a
permutation of the points $P_1,\dots,P_d$.

\subsection{}
\label{sec:D.10.2} In the following $\phi \in Aut(H^d)$ is normed, i.e.~the
 restriction $\phi | \cH $ is the identity.

\begin{theorem}
 \label{thm:D.2}
  $\Aut_k(\Hilb^d(\P^2)) = \PGL(2;k)$, if $d\geq 6$\,.
\end{theorem}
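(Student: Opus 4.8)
The plan is to show that every $\phi\in\Aut_k(H^d)$ is induced by an element of $\PGL(2;k)$, which gives the inclusion $\Aut_k(H^d)\supseteq\PGL(2;k)$ trivially and the reverse inclusion as the content. By Corollary~\ref{cor:D.4} we may, after composing $\phi$ with a suitable $\gamma^{-1}\in\PGL(2;k)$, assume $\phi$ is \emph{normed}, i.e.\ $\phi|\cH=\id$; it then suffices to prove that a normed automorphism is the identity on all of $H^d$. Recall $\cH(k)=\Set{(\ell,f)\mid \ell\in S_1-(0),\ f\in[S/\ell S(-1)]_d-(0)}$ parametrizes the ideals of maximal regularity, i.e.\ ideals of the form $\ell\cdot S(-1)+f\cdot k[\text{2 variables}](-d)$: geometrically these are the length-$d$ subschemes supported on a single line $V(\ell)$ and cut out there by a degree-$d$ form. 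In particular $\cH$ contains every $\xi\leftrightarrow P_1\dcup\cdots\dcup P_d$ with all $P_i$ distinct and collinear.

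First I would leverage the rigidity of collinear configurations. For a normed $\phi$ and a reduced collinear configuration $\xi\leftrightarrow P_1\dcup\cdots\dcup P_d$ we have $\phi(\xi)=\xi$, hence (looking at the universal subscheme, as in Section~\ref{sec:D.10.1}) $\psi$ induces a permutation of $\Set{P_1,\dots,P_d}$. Varying the configuration inside a fixed line $L\cong\P^1$ and using that $d\geq 6>2$, a monodromy/continuity argument forces this permutation to be trivial for a generic configuration, so $\phi_\xi(P_i)=P_i$ for all $i$; then I would propagate this: given \emph{any} point $P\in\P^2$ and a line $L$ through it, $P$ lies in some reduced collinear length-$d$ subscheme on $L$, and the automorphism $\psi$ of $\cZ$ restricted to that fiber is the identity, so the ``point map'' $P\mapsto\phi_\xi(P)$ fixes $P$. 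The aim is to conclude that for $\xi\in H^{(d)}(k)$ with $Z_\xi=\Set{P_1,\dots,P_d}$ in general position, $\phi$ sends $\xi$ to the configuration $\Set{\phi_\xi(P_1),\dots,\phi_\xi(P_d)}$ with each $\phi_\xi(P_i)$ pinned down by the collinear subconfigurations it sits in — a single point $P$ lies on $\binom{d-1}{1}$-many lines spanned by pairs from $Z_\xi$, and comparing the fixed-point information along two distinct such lines forces $\phi_\xi(P_i)=P_i$. Hence $\phi$ fixes a dense open subset $H^{(d)}$ of $H^d$ pointwise.

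Next I would upgrade ``$\phi=\id$ on a dense open set'' to ``$\phi=\id$''. Since $H^d$ is a variety (irreducible, reduced, separated) and $\phi$ is a morphism agreeing with $\id$ on the dense subset $H^{(d)}$, we get $\phi=\id$ as morphisms. This shows: every normed $\phi$ is the identity, so every $\phi\in\Aut_k(H^d)$ equals the $\gamma\in\PGL(2;k)$ produced by Corollary~\ref{cor:D.4}. It remains to observe that the homomorphism $\PGL(2;k)\to\Aut_k(H^d)$ is injective: a nontrivial $\gamma$ moves some point of $\P^2$, hence moves some reduced length-$1$ (or length-$d$) subscheme, so acts nontrivially on $H^d$. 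Combining, $\Aut_k(H^d)=\PGL(2;k)$.

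The main obstacle I expect is the rigidity step: proving that a normed $\phi$ actually fixes every reduced collinear configuration \emph{and} that the induced point-map is trivial, then bootstrapping to general position. The subtle points are (i) that $\phi|\cH=\id$ only directly controls configurations lying on a \emph{single} line, and extracting pointwise control on $\P^2$ requires choosing enough lines through a given point while keeping the total configuration of length exactly $d$ and in sufficiently general position for the Hilbert--Chow/universal-subscheme machinery to apply; (ii) ruling out a global permutation of the $d$ points — here the hypothesis $d\geq 6$ is presumably what makes the relevant incidence variety of configurations connected enough for a monodromy argument, paralleling how $d\geq 6$ was used to kill the spurious case $[\phi(E)]=[F]$ in Proposition~\ref{prop:D.4} and Corollary~\ref{cor:D.2}. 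I would handle (i) by a careful choice of auxiliary points and a dimension count, and (ii) by exhibiting explicit one-parameter families of collinear configurations degenerating to non-reduced points in $\cH$, forcing compatibility of $\psi$ across fibers.
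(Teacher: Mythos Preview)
Your overall strategy matches the paper's: reduce to normed $\phi$ via Corollary~\ref{cor:D.4}, show $\phi=\id$ on the dense open $H^{(d)}$, then conclude by separatedness. The gap is in the bridge from ``$\phi|\cH=\id$'' to ``$\phi$ fixes a general reduced $\xi$''.

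The issue is that the point map $\phi_\xi:Z_\xi\to Z_{\phi(\xi)}$ depends on $\xi$. Knowing $\phi_\zeta=\id$ for every collinear $\zeta\in\cH$ tells you nothing direct about $\phi_\xi$ for a general-position $\xi$: when you write ``$P$ lies in some reduced collinear length-$d$ subscheme on $L$'', that subscheme is a \emph{different} point $\zeta\in H^d$, and the automorphism $\psi$ of $\cZ$ acts on the fibre over $\zeta$, not over $\xi$. Your sentence ``the point map $P\mapsto\phi_\xi(P)$ fixes $P$'' conflates $\phi_\zeta$ with $\phi_\xi$. Since $\dim\cH=d+2$ while $\dim H^d=2d$, the locus $\cH$ has codimension $d-2\geq 4$, so triviality of $\psi$ over $\cH$ cannot propagate to $H^{(d)}$ by any openness/connectedness argument alone; you need an explicit mechanism linking a general $\xi$ to $\cH$.

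The paper supplies exactly this mechanism: for a general $\xi=P_1\dcup\cdots\dcup P_d$, pick a line $g$ through $P_d$ onto which the $P_i$ project to distinct points, and let the associated $\G_m$-action $\tau(\lambda)$ move $\xi$ so that $\xi_\infty\in\cH$. Set $\cC=\{\tau(\lambda)\xi\}^-$ and $\cD=\phi(\cC)$. Proposition~\ref{prop:D.4} gives $[\cC]=[\cD]$ in $A_1(H^d)$, and a direct calculation yields $(\cM_n\cdot\cC)=(d-1)\,n$ because exactly $d-1$ of the $P_i$ move on lines while $P_d$ stays fixed. Decomposing the structure sheaf of $\cD$ over the open part of $\P^1$ as $\bigoplus_1^d\cL_i$ with $\cL_i\otimes k(\lambda)$ supported at $\cP_i(\lambda)=\phi_{\xi(\lambda)}(\tau(\lambda)P_i)$, the same intersection number forces at least one $\cP_i(\lambda)$ to be constant in $\lambda$. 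Evaluating at $\lambda=\infty$ and using $\phi|\cH=\id$ places that constant on $g$; varying $g$ through $P_d$ then forces it to equal $P_d$. Repeating with the other $P_j$ shows $\phi_\xi$ permutes $\{P_1,\dots,P_d\}$, hence $\phi(\xi)=\xi$. Your proposal lacks this degeneration-plus-intersection-number step, and the ``lines through pairs'' idea cannot replace it.
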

\begin{proof}
  Let be $\xi \leftrightarrow P_1 \dcup \cdots \dcup P_d \in H^{(d)}(k)$ and
  let $g$ be a line through $P_d$ such that the projection of
  $P_1,\dots,P_d$ onto $g$ gives different points $P'_1,\dots,P'_d=P_d$. The
  projection is defined by a suitable $\G_m$-action $\tau(\lambda)$ such
  that $\tau(1)\xi = \xi$ and $\xi_\infty :=\lim\limits_{\lambda \to
    \infty}\tau(\lambda)\xi = P'_1 \cup \cdots \cup P'_d$ (see
  Appendix~\ref{cha:A}).

 Let be $T = \P^1_k$, $U = T- \{ 0 ,\infty \}$ and $\alpha: U  \to H^{(d)}$
 the morphism defined by $\lambda \mapsto \tau(\lambda)(\xi)
 =:\xi(\lambda)$. Then $\alpha$ has a uniquely determined extension to a
 morphism $T \to H^d$, which still is denoted by $\alpha$. Put $\beta:= \phi
 \circ \alpha$. Then $\deg(\alpha | T) = \deg(\beta |T) =1$, as $\alpha | U$
 is injective if $g$ is chosen general enough.  The image of $\alpha$ is
 a curve $\cC \subset H^d$ such that $\cC_\lambda \leftrightarrow
 \xi(\lambda)$ for all $\lambda \in T$. Then $\cD = \phi(\cC)$ is the image
 of $\beta$ and $\cD_\lambda \leftrightarrow \phi\xi(\lambda)$.

Let $\psi$ be the induced automorphism of the universal subscheme $\cZ
\hookrightarrow X \times H^d$, $X:= \P^2_k$. $\cC$ can be taken as a
closed subscheme of $X \times T$, which is flat over $T$.  Let $\cF$ be the
structure sheaf of $\cC$. Now $U = \Spec A$, $A = k[\lambda,1/\lambda] =
k[\lambda]_\lambda$ and $\cF \otimes_T \cO_U = \bigoplus^d_1 L_i$, $L_i$
flat over $U$ with Hilbert polynomial $1$, and $L_i \otimes k(\lambda) =
\cO_X / P_i(\lambda)$, where $P_i(\lambda):= \tau(\lambda)P_i$ corresponds
to a closed point in $X$.

Let $p:X \times T \to T$ be the projection. Then $p_* \cF(n) \otimes \cO_U 
= \bigoplus^d_1 p_* L_i(n) \otimes \cO_U$, hence 
\begin{equation}
  \label{eq:D.5}
  \dot\bigwedge  p_* \cF(n) \otimes \cO_U = \bigotimes^d_1 p_* L_i(n) 
  \otimes \cO_U\,.
\end{equation}
As $p_* \cF(n)$ and $p_* L_i(n)$ are globally generated by the monomials in
$S_n$ if $n \gg 0$, all the line bundles, which occur in eq.~\eqref{eq:D.5} have
uniquely determined extensions all over $T$, which are denoted by the same
letters, i.e.~\eqref{eq:D.5} holds true if $U$ is replaced by $T$. It
follows that 
\begin{align*}
  \bigl(\dot\bigwedge p_* \cF(n)\cdot T \bigr) & = (\alpha^* \cM_n \cdot T)
  = \deg(\alpha)
  (\cM_n \cdot \cC) \\
  & = \sum^d_1 (p_* L_i(n) \cdot T)\,.
\end{align*}
As $L_i$ is a line in $X$, if $1\leq i \leq d-1$, one has $(p_* L_i(n) \cdot
T)=n $, if $1\leq i \leq d-1$, hence
\begin{equation}
  \label{eq:D.6}
  (\cM_n \cdot \cC) = (d-1) \cdot n\,.
\end{equation}
If $\cG$ is the structure sheaf of the subscheme $\cD \subset X \times T$,
then one again has 
\[
   \cG  \otimes_T \cO_U = \bigoplus^d_1 \cL_i \,.
\]
Here $\cL_i$ is flat over $U$ with Hilbert polynomial $1$ and has the form
$\cL_i = \cO_{X\times U}/ \cP_i$ and $\cP_i(\lambda):= \cP_i \otimes k(\lambda)
\leftrightarrow \phi_{\xi(\lambda)}(\tau(\lambda)P_i)$, for all $\lambda \in
U$. This again implies
\[
  \dot\bigwedge  p_* \cG(n) \otimes \cO_U = \bigotimes^d_1 p_*\cL_i(n) 
  \otimes \cO_U\,.
\]
If one again denotes the extension of $\cL_i \otimes \cO_U$ to a module,
which is flat over $T$, with Hilbert polynomial $1$, by the letter $\cL_i$,
then one obtains
\[
\dot\bigwedge  p_* \cG(n)  = \bigotimes^d_1 p_* \cL_i(n) 
\]
and one deduces:
\begin{equation}
  \label{eq:D.7}
    (\cM_n \cdot \cD) = \sum^d_1 (p_* \cL_i(n) \cdot T)\,.
\end{equation}
If $\cP_i(\lambda) \in X$  does not depend on $\lambda$, then $\cL_i$ is a
constant sheaf, hence $(p_* \cL_i(n) \cdot T) =0$. If $\cP_i(\lambda)$
depends on $\lambda$, then $\lambda \mapsto \cP_i(\lambda)$ defines a
morphism $U \to X$, which has a unique extension $T \to X$, and its image is
a curve of degree $d_i \geq 1$\,. It follows that either $(p_* \cL_i(n)
\cdot T) =0$ or $(p_* \cL_i(n) \cdot T) = d_i n + c_i$. As $[\cC] = [\cD]$
by Proposition~\ref{prop:D.4}, one has $(\cM_n \cdot \cC) = (\cM_n \cdot
\cD)$, i.e.~$(d-1)\cdot n = \sum^d_1 d_i n +c_i$. It follows that there is
at least one index $i$ such that $\cP_i(\lambda)$ is independent of
$i$. Hence there is an index $i$ such that
$\phi_{\xi(\lambda)}(\tau(\lambda)P_i)$ is independent of $\lambda \in
U$. It follows that $\phi_\xi(P_i) = \phi_{\xi(\lambda)}(\tau(\lambda)P_i)$
for all $\lambda \in U$, hence for all $\lambda \in T$. Now $\xi_\infty =
\lim_{\lambda \to \infty} \tau(\lambda)\xi \leftrightarrow
\Set{P'_1,\dots,P'_d }$ is a closed point in $\cH(k)$ and $\phi | \cH =
\id$, as $\phi$ is normed, hence $\phi_\xi(P_i) \in \Set{P'_1,\dots,P'_d }
\subset g$. If one substitutes the line $g$ by a line $h$, such that $P_d
\in h$ and the projections of $P_1,\dots,P_d$ onto $h$ again give distinct
points, the same argumentation shows $\phi_\xi(P_j) \in h$, for an index
$j$. From this it follows that there is an index $1\leq i \leq d$ such that
$\phi_\xi(P_i)$ is in the intersection of infinitely many such lines. It
follows that $\phi_\xi(P_i) = P_d$. The same argumentation with $P_{d-1}$
instead of $P_d$ shows that $\phi_\xi(P_j) = P_{d-1}$, etc. 

We conclude that $\phi_\xi(P_1) \dcup \cdots \dcup \phi_\xi(P_d)$ is a
permutation of $P_1 \dcup \cdots \dcup P_d$, i.e.~we have $\phi(\xi)= \xi$.
But as the closure of $H^{(d)}$ is equal to $H^d$, the theorem follows.
 \end{proof}

\chapter{Filtration of the structure sheaf of a curve}
\label{cha:E}
\begin{auxlemma}
   \label{auxlem:E.1}
	Let be $k$ an algebraically closed field, $S = k[x_0,\cdots, x_r]$, $Y/k$ an integral scheme and
	$\cM $ a coherent module on $\P^r\times_k Y$, which is flat over $Y$ with constant 
	Hilbert polynomial $s\geq 1$. Then for each sufficiently small open set $ U =\Spec A\subset Y$
  there is a filtration $(0) = M^0\subset\cdots\subset M^s $ of $ \cM\otimes\cO_U$ such that 
	$ M^i/M^{i-1}\simeq(S/\fp_i)(-d_i)$, where  $\fp_i\in \Proj(S\otimes A) $ is a prime ideal, which is
	generated by a subbundle $L_i\subset S_1\otimes A $ of rank $r$, and the isomorphism is defined by
	multiplication with $ f_i\in S\otimes A$ of degree $d_i$.
	\end {auxlemma}
 \begin{proof}
This is a simple variant of ~\cite[Prop.7.4, p.50]{H}. We replace $ A$ by a suitable localization $A_f$, which
is denoted $A$ again and writing $\fp$ instead of $\fp_i$, one obtains $ M^i/M^{i-1}\simeq S\otimes A/\fp $ is
flat over $A$ with constant Hilbert polynomial $c$. Let  $ K:= A_0$ be the quotient field of $A$. Then $(S\otimes A/\fp)\otimes K)
\simeq S\otimes_k K /\fp\otimes_A K$  has the Hilbert polynomial c, hence the dimension of the support of $\cO_X \otimes K $, 
$X := \Proj(S\otimes_k A/\fp)$, has the dimension $ 0$. But then $X\otimes_A K$ is an artinian scheme, which is connected,
as $ X$ is connected. It follows that $X\otimes_A K$ consists of one single closed point $\fp\otimes_A K \in X\otimes_A K$.
After tensorizing with an algebraic closure $ K^{-}$ of $K$, one obtains that $X\otimes_A K^{-}$ consists of the closed point
$\fp\otimes_A K^{-}$, and $ X\otimes_A K^{-} =\Proj(S\otimes_A K^{-}/\fp\otimes_A K^{-})$ has the Hilbert polynomial $c$. 
As $ \fp\otimes_A K^{-}$ is maximal in $ S\otimes_k K^{-}$, it follows that $c = 1$. As the Hilbert polynomials of
$X\otimes_A K^{-}$, $X\otimes_A K$ and $X$ are equal, it follows that $c = 1$ and the Hilbert polynomial of $\fp$ is equal
to $\binom{n-1+r}{r}+\cdots +\binom{n-1+1}{1}$, hence $\fp$ is 1-regular and $\fp_1$ is generated by a subbundle $L$ of 
$S_1\otimes_k A$ of rank $r$.
\hfill $\qedhere$
\end{proof}
\begin{lemma}
  \label{lem:E.1}
  Let $Y/k$ be an integral scheme,
$C \subset \P^3 \times_k Y$ a curve, which is flat over $Y$ with Hilbert
polynomial $P(n) = dn -g +1$\,. There exists an open set $U = \Spec A
\subset Y$ such that the following conditions are fulfilled: 
\begin{enumerate}[$1^\circ$]
\item 
If $S = A[x,y,z,t]$, there is a finitely generated graded
  $S$-algebra $M$ such that $\tilde{M}$ is the structure sheaf of the
  subscheme $C \times_Y U \subset \P^3 \times U$.
\item 
There is a filtration $(0) = M^0 \subset \cdots \subset M^r =M$
  such that $M^i/M^{i-1} \simeq (S/\fp_i)(-d_i)$ is flat over $A$, $\fp_i
  \subset S$ is a graded prime ideal, and the isomorphism is defined by
  multiplication with a form $f_i \in S_{d_i}$.
\item 
  For each $\fp_i$ two cases can occur: $\fp_i$ is a minimal prime of $M$ and
   $\Proj(S/\fp_i)$ is a curve, flat over $A$. 
   OR: $\fp_i$ is generated by a subbundle $L\subset S_1$ of
  rank $3$\,.
\end{enumerate}
\end{lemma}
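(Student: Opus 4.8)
The statement to prove is Lemma~\ref{lem:E.1}: given an integral base scheme $Y$ and a flat family of curves $C \subset \P^3 \times_k Y$ with Hilbert polynomial $P(n)=dn-g+1$, after passing to a small affine open $U = \Spec A$ one can write the structure sheaf of $C\times_Y U$ as $\tilde M$ for a finitely generated graded $S$-algebra $M$ ($S = A[x,y,z,t]$) and equip $M$ with a finite filtration whose quotients are twists of $S/\fp_i$, each flat over $A$, realized by multiplication by a form, with each $\fp_i$ either a minimal prime cutting out a curve flat over $A$ or generated by a rank-$3$ subbundle of $S_1$ (i.e.\ corresponding to a point). This is the relative, flat-over-base version of Hartshorne~\cite[Prop.~7.4, p.~50]{H}.

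The plan is to build the filtration inductively by peeling off associated primes, exactly as in the absolute case, but controlling flatness over $A$ by generic flatness plus shrinking $U$ finitely many times. First I would shrink $Y$ to an affine open so that $C$ is cut out by a graded ideal $I \subset S$ with $M := S/I$ finitely generated; since $C$ is flat over $Y$ with Hilbert polynomial $P$, after a further shrink the graded pieces $M_n$ are free $A$-modules of rank $P(n)$ for $n \gg 0$, and the formation of $M_n = H^0$ commutes with base change there (the $b$-regularity statements and \cite[Lectures~11,14]{M2} used repeatedly in Chapter~1 give exactly this). Next, working over the fraction field $K$ of $A$, take a standard Hartshorne-type filtration of $M \otimes_A K$ with quotients $(S_K/\fP_j)(-d_j)$, the $\fP_j$ being associated primes of $M\otimes_A K$; clear denominators so that the defining forms $f_j$ and the prime ideals descend to $S$, and set $\fp_j := \fP_j \cap S$ (or rather the ideal generated by the descended generators). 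Each $M^i/M^{i-1}$ is then, after inverting finitely many elements of $A$ — which is the permissible ``shrink $U$'' move — isomorphic to $(S/\fp_i)(-d_i)$ with the isomorphism given by multiplication by $f_i$, and by generic flatness (shrinking once more) each such quotient is flat over $A$. Since the construction is a finite chain, only finitely many shrinks of $U$ are needed, and $U$ stays nonempty because $Y$ is integral.

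For the trichotomy in $3^\circ$: each $\fp_i$ is a graded prime of $S = A[x,y,z,t]$ lying over a point of $\Spec A$ generically, i.e.\ a graded prime of $S_K$; since $M\otimes_A K$ is the coordinate ring of a curve in $\P^3_K$, its associated primes $\fP_j$ are either the minimal primes (each defining a $1$-dimensional component, hence, after shrinking so the component is flat over $A$, a curve flat over $A$ — Case 1) or embedded primes corresponding to closed points of $\P^3_K$, which are generated by a $3$-dimensional $K$-subspace of $(S_1)_K$; clearing denominators and shrinking so the generators span a rank-$3$ direct summand of $S_1$ gives Case 2. The ``flat over $A$'' clause in each case is again generic flatness applied finitely many times. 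I would also invoke Aux-Lemma~\ref{auxlem:E.1} as a sanity check/tool for the Case~2 (point) quotients: a quotient with constant Hilbert polynomial $s$ becomes, on a small open, $\bigoplus^s_1 \cO(-d)\otimes\cO_U$, which is certainly flat; more importantly the same kind of argument (choosing a general linear form that is a nonzerodivisor fiberwise, using \cite[Lemma~1,2]{G88}) is what guarantees that after shrinking the isomorphisms $M^i/M^{i-1}\simeq (S/\fp_i)(-d_i)$ survive tensoring with every residue field $k(y)$, which is how one upgrades ``isomorphism over $K$'' to ``isomorphism over $A$'' for a suitable $A$.

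The main obstacle is bookkeeping the finitely many shrinks and making sure each step is legitimate: one must pass from the filtration over $K$ to a filtration over $A$ with \emph{prime} (not merely primary or radical-looking) $\fp_i\subset S$, with the structural form $f_i$ genuinely inducing an isomorphism $(S/\fp_i)(-d_i)\xrightarrow{\sim} M^i/M^{i-1}$ of $A$-flat modules — not just generically. The cleanest way is: do everything over $K$ first (where Hartshorne's proposition applies verbatim), record the finite data $(\fP_j, f_j, d_j)$, then choose $U=\Spec A$ small enough that (a) all $f_j$ lie in $S$, (b) the ideals generated by chosen generators of $\fP_j\cap S$ are prime in $S$ with the right fibers, (c) all the subquotient modules and the prime quotients $S/\fp_i$ are flat over $A$ and their formation commutes with base change, and (d) for Case~2 the generators span a rank-$3$ subbundle. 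Each of (a)--(d) is an open condition on $\Spec A$ (generic flatness, openness of the locus where a finitely presented module is flat, openness of the subbundle condition), and $Y$ integral forces the intersection to be nonempty. No single step is deep; the care is entirely in the order and the finiteness of the shrinking.
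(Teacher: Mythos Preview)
Your overall strategy matches the paper's: invoke the filtration of \cite[Prop.~7.4, p.~50]{H}, then use generic flatness and finitely many shrinks of $U$ to make each quotient flat over $A$. The paper is even terser than you about this part.

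Where you diverge is in Case~2, and there is a genuine gap. You pass to the fraction field $K=k(Y)$, filter $M\otimes_A K$, and assert that the primes corresponding to closed points of $\P^3_K$ are ``generated by a $3$-dimensional $K$-subspace of $(S_1)_K$.'' That holds only for $K$-\emph{rational} points. Since $K$ is a function field (in the paper's application $K=\C(\lambda)$, cf.\ Section~\ref{sec:8.2.2}), a closed point of $\P^3_K$ can have residue field a nontrivial finite extension $L/K$; its graded prime is then not generated by linear forms --- for instance $(y,\,z,\,t^2-u\,x^2)\subset \C(u)[x,y,z,t]$ --- and the quotient has constant Hilbert polynomial $[L:K]>1$. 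No shrinking of $U$ repairs this, because localizing $A$ does not change $K$. So your descent does not land in Case~2 as stated.

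The paper's route is different precisely here. It filters $M$ over $S=A[x,y,z,t]$ directly, shrinks so each $S/\fp_i$ is flat over $A$ with Hilbert polynomial either linear (Case~1) or a constant $s$. When $s>1$ it uses Aux-Lemma~\ref{auxlem:E.1} to \emph{refine} that step into $s$ successive subquotients, each flat over a further localization $A_f$ with Hilbert polynomial exactly~$1$; only then does the $1$-regularity computation in the last paragraph of the paper's proof (via \cite[Lecture~14]{M2} and \cite[2.9]{G78}) force the annihilator to be generated by a rank-$3$ subbundle of $S_1$. You cite Aux-Lemma~\ref{auxlem:E.1} only as a ``sanity check,'' but in the paper it is the load-bearing step for Case~2: the reduction from constant Hilbert polynomial $s$ to Hilbert polynomial $1$ is exactly what your over-$K$ shortcut tried to skip.
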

\begin{proof}
  The existence of such a filtration is shown in (loc.cit.). Applying the
  Generic-flatness-Lemma, one sees that either $S/\fp_i$ has a Hilbert
  polynomial of the form $an+b$, which is the first case, or $S/\fp_i$ has
  a constant Hilbert polynomial $s$, in which case the assertion follows from
	the auxiliary lemma.
\end{proof}

\chapter{Lower semicontinuity of the complexity}
\label{cha:F}

If $M = x^\alpha y^\beta z^\gamma t^\delta \in S = k[x,y,z,t]$, then
$T(M):=\delta$. $\GG = \Grass_m(S_d)$ parametrizes the $m$-dimensional
subspaces of $S_d$. Let $e_1,\dots,e_n$, $n=\binom{d+3}{3}$, be the
monomials in $S_d$ in any order. If $V \in \GG(k)$ and $f_i = \sum^n_{j=1}
a_{ij}e_j$, $1 \leq j \leq m$, is a basis of $V$, then $f_1 \wedge \cdots
\wedge f_m = \sum P_{(j)}e_{(j)}$, where $e_{(j)} = e_{j_1} \wedge \cdots
\wedge e_{j_m}$ and $P_{(j)} = \det \begin{pmatrix}
  a_{1 j_1}& \cdots & a_{1 j_m} \\
  &        &    \\
  a_{m j_1}& \cdots & a_{m j_m}
      \end{pmatrix}$ is the Pl\"ucker--coordinate belonging to the
multi-index $(j) =(j_1,\dots,j_m)$, where $1 \leq j_1 < \dots < j_m \leq
n$. 

Let $\G_m$ act on $S$ by $\sigma(\lambda): x \mapsto x$, $y \mapsto
y$, $z \mapsto z$, $t \mapsto \lambda t$.  Because of 
\[
   \sigma(\lambda) f_i  = \sum_j a_{ij}\lambda^{T(e_j)}e_j
\]
it follows that
\begin{equation}
  \label{eq:F.1}
    \sigma(\lambda) f_1 \wedge \cdots \wedge \sigma(\lambda) f_m 
   = \sum_{(j)} P_{(j)}\lambda^{T(e_{(j)})}e_{(j)}\,,
\end{equation}
where $T(e_{(j)}) := T(e_{j_1}) + \cdots + T(e_{j_m})$.

Let be $N:= \# \Set{ (j) \text{ multi-index as above}}-1$\,. The
Pl\"ucker-embedding  $p:\GG \to \P^N$ is defined by $V \mapsto \bigwedge^m
V$, that means, it is defined by $V \mapsto \Set{
  \text{Pl\"ucker--coordinates of } V}/\sim$, and $\sim$ is defined by
multiplication with elements in $k^*$. It follows that $\G_m$ acts in an
equivariant way on $\GG$ and $\P^N$ with respect to $p$.

Let $V \leftrightarrow \xi \in \GG(k)$ and $C(\xi):=
\Set{\sigma(\lambda)p(\xi) | \lambda \in k^*} = \Set{p(\sigma(\lambda)\xi) |
\lambda \in k^*}$. From~\eqref{eq:F.1} it follows that 
\[
   C(\xi) = \Set{ P_{(j)}\lambda^{T(e_{(j)})} | \lambda \in k^*}/\sim\,.
\]
\textsc{Case 1}: $\lambda \mapsto \sigma(\lambda)V$ is injective, $\lambda\in
k^*$. \\
Then from the argumentation in the proofs of~\cite[Bemerkung 2 and 3, p. 11]{T1} follows that for the closure
$\overline{C(\xi)}\subset \P^N$ one has 
\begin{equation}
  \label{eq:F.2}
  \deg \overline{C(\xi)} = \max_{(j)} T(e_{(j)}) - \min_{(j)} T(e_{(j)})\;,
\end{equation}
where the maximum and the minimum refers to such multi-indices with
$P_{(j)}\neq 0$, and the $P_{(j)}$ are the Pl\"ucker-coordinates of $V$.\\
\textsc{Case 2}: $\lambda \mapsto \sigma(\lambda)V$ is not injective. \\
In the proof of~\cite[Hilfssatz 5, pp. 8]{T2} it had been shown that this is
equivalent with the following  statements $1^\circ$--$3^\circ$: \\
There is an integer $\ell > 0$ and a basis $f_1,\dots,f_m$ of $V$, such that
\begin{enumerate}[$1^\circ$]
\item $f_i = t^{d_i} \cdot \sum^{n_i}_{\nu=0} f^\nu_i t^{\ell \nu}$, $d_i$
  chosen maximal,  $0 \leq d_1 \leq \dots \leq d_m$, $f^\nu_i \in k[x,y,z]$ of
  degree $d-(d_i+\ell \nu)$, for all $i$ and $0 \leq \nu \leq n_i$.
\item The map $\G_m/\mu_\ell \to \P^N$ defined by $V \to \bigwedge^m
  \sigma(\lambda)V$ is injective, $\mu_\ell = \Set{ \varepsilon \in\C |
    \varepsilon^\ell =1}$\,.
\item 
  \begin{equation}
    \label{eq:F.3}
    \deg C(\xi) = \tfrac{1}{\ell} \cdot \Set{ \text{right-hand side
        of}~\eqref{eq:F.2}}\,.
  \end{equation}
\end{enumerate}
Note that in~\eqref{eq:F.2} $T(e_{(j)}) = D + \ell \cdot n(j)$, with $D =
d_1 + \cdots + d_m$ and $n(j)\in \N$ depending on $(j)$. If conversely it is
supposed that $T(e_{(j)})$ has this form, then one gets 
\[
  \sigma(\lambda)V \xmapsto{p} \left( \Set{ P_{(j)}\lambda^{D + \ell \cdot n(j)} | P_{(j)} \neq 0} \right)/\sim \;  = \;  
\left( \Set{ P_{(j)} | P_{(j)} \neq 0 }\right)/\sim
\]
for all $\lambda \in \mu_\ell$.  As $p$ is a closed immersion it follows
that $\sigma(\lambda)V = V$ if $\lambda \in \mu_\ell$. 

The Pl\"ucker--coordinates of $\xi$ and the number $\ell$ depend on $\xi$,
we therefore write $P_{(j)}(\xi)$ and $\ell (\xi)$. It is clear that there
is an open neighborhood $U = U(\xi)$ of $\xi$ in $\GG$, such that 
\begin{equation}
  \label{eq:F.4}
   P_{(j)}(\xi) \neq 0 \Rightarrow P_{(j)}(\zeta) \neq 0 \text{ if } \zeta
   \in U(\xi)\,.
\end{equation}
Suppose that $\zeta \in U(\xi)$.
Then we conclude:
\begin{align*}
  & \lambda \in \mu_{\ell(\zeta)} \Rightarrow \sigma(\lambda)\zeta = \zeta \\
\Rightarrow {} & \left( \Set{ \lambda^{T(e_{(j)})} P_{(j)}(\zeta) | P_{(j)}(\zeta) \neq 0 }\right)/\sim \; = \; \left( \Set{ P_{(j)}(\zeta) | P_{(j)}(\zeta) \neq 0 }\right)/\sim \\
\Rightarrow {} & \lambda^{T(e_{(j)})} = c \in k^* \text{ for all $(j)$ such that } P_{(j)}(\zeta) \neq 0\,.
\end{align*}
Because of~\eqref{eq:F.4} it follows that $\lambda^{T(e_{(j)})} = c$ for all $(j)$ such that $P_{(j)}(\xi)\neq 0$\,. 
\begin{align*}
  \Rightarrow {} &
  \left( \Set{ \lambda^{T(e_{(j)})} P_{(j)}(\xi) | P_{(j)}(\xi) \neq 0 }\right)/\sim \; = \; \left( \Set{ P_{(j)}(\xi) | P_{(j)}(\xi) \neq 0 }\right)/\sim \\
  \Rightarrow {} & \sigma(\lambda)\xi = \xi \Rightarrow \lambda \in
  \mu_{\ell(\xi)} \Rightarrow \mu_{\ell(\zeta)} \subset \mu_{\ell(\xi)}
  \Rightarrow \ell(\zeta) \text{ divides } \ell(\xi)\,.
\end{align*}
One gets:
\begin{equation}
  \label{eq:F.5}
   \ell(\zeta) \leq \ell(\xi) \quad \text{ for all }
 \zeta \in U(\xi)\,.
\end{equation}
Because of~\eqref{eq:F.4} one has for all $\zeta \in U(\xi)$:
\[
   \max_{(j)} \Set{ T(e_{(j)}) | P_{(j)}(\xi) \neq 0 }
 \leq \max_{(j)} \Set{ T(e_{(j)}) | P_{(j)}(\zeta) \neq 0 }
\]
and
\[
   \min_{(j)} \Set{ T(e_{(j)}) | P_{(j)}(\zeta) \neq 0 }
 \leq \min_{(j)} \Set{ T(e_{(j)}) | P_{(j)}(\xi) \neq 0 }\,.
\]
Then from~\eqref{eq:F.3}, \eqref{eq:F.4} and~\eqref{eq:F.5} we get: 
\begin{conclusion}
  \label{conc:F.1}
  For each $\xi \in \GG(k)$ there is an open neighborhood $U \neq
  \emptyset$ in $\GG$ such that $\deg C(\xi) \leq \deg C(\zeta)$ for all
  closed points $\zeta \in U$.  \hfill $\qed$
\end{conclusion}

We now embed $\HH =\HH_Q$ into $\Grass^{P(n)}(S_n)$ by means of $\cF_n =
\pi_* \cF(n)$ and then by means of $\cM_n = \dot\bigwedge \cF_n$ into a
projective space $\P^{N(n)}$. We recall that $P(n) = \binom{n+3}{3} -Q(n)$,
$n$ is a sufficiently large number, e.g.~$n\geq b$, and that $g \leq g(d)
=(d-2)^2/4$ is supposed.

If $\xi \in \HH(k)$, then by Theorem~\ref{thm:1.2} in Chapter~\ref{cha:1} we
have the rational equivalence
\[
  \Set{ \sigma(\lambda)\xi | \lambda \in k^* }^- =: \bar{C}(\xi) \sim
  q_2(\xi)C_2 + q_1(\xi)C_1  +q_0(\xi)C_0\,.
\]
Here the natural numbers $q_2(\xi)$ and $q_1(\xi)$ are called the
\emph{complexity} of $\xi$ with regard to $C_2$ respectively to $C_1$. 

Now $\deg \bar{C}(\xi) = (\cM_n \cdot \bar{C}(\xi))$ and
Conclusion~\ref{conc:F.1} shows that  
\[
(\cM_n \cdot \bar{C}(\zeta)) \geq
(\cM_n \cdot \bar{C}(\xi)) \qquad \text{for all } \zeta \in U(\xi)\,,
\]
hence
\begin{align*}
  &  q_2(\zeta)\bigl[ \tbinom{n-a+2}{2} +(n-b+1)\bigr] + q_1(\zeta)(n-b+1) +
  q_0(\zeta) \\
\geq {} & q_2(\xi)\bigl[ \tbinom{n-a+2}{2} +(n-b+1)\bigr] +
  q_1(\xi)(n-b+1) + q_0(\xi)
\end{align*}
for all $n\gg 0$. We get:
\begin{conclusion}
  \label{conc:F.2}
  For each $\xi \in \HH(k)$ there is an open neighborhood $U(\xi)$ of $\xi$ in $\HH$ such that for each $\zeta \in U(\xi) \cap \HH(k)$ one has: \\
  Either \quad $q_2(\zeta) > q_2(\xi)$ \quad or \quad $q_2(\zeta) =
  q_2(\xi)$ and $q_1(\zeta) \geq q_1(\xi)$.  \hfill $\qed$
\end{conclusion}

\chapter{The graded Hilbert scheme}
\label{cha:G}

Let be $S = k[x_1,\dots,x_r,t]$ the polynomial ring in $r+1$ variables, $X =
\Proj S$, $\HH = \Hilb^P(X)$ the Hilbert scheme, which parametrizes the
quotients $\cO_X/\cI$ with Hilbert polynomial $P(n)$, i.e.~the ideals $\cI
\subset \cO_X $ with Hilbert polynomial $Q(n) = \binom{n+r}{r} -P(n)$.

Let be $\fX = X \times_k \HH$, $\cI \subset \cO_\fX$ the universal ideal
sheaf with Hilbert polynomial $Q(n)$, $\cF = \cO_\fX/\cI$. If $\ell$ is any
linear form, then $U(\ell) = \Set{ y\in \HH | \ell \text{ non-zero divisor
    of } \cF \otimes k(y)}$ is open and non-empty in $\HH$
(see~\cite[Section 1]{G88}).

Let $\G_m$ act on $S$ by $\sigma(\lambda): x_i \mapsto x_i$, $1\leq i \leq
r$, $t \mapsto  \lambda t$.

\section{Limit points}
\label{sec:G.1}

\begin{lemma}
  \label{lem:G.1}
  Let be $\cI \leftrightarrow \xi \in \HH(K)$, and $\cI_0 \leftrightarrow
  \xi_0 := \lim\limits_{\lambda \to 0} \sigma(\lambda)\xi$, where $K/k$ is a
  field extension. Then one has:
  \begin{enumerate}[(i)]
  \item $\xi_0$ is $\G_m$-invariant.
  \item $\xi_0 \in U(t) \iff \xi \in U(t)$.
  \item If $\xi \in U(t)$, then the Hilbert functions of $\cI$ and $\cI_0$
    are equal.
  \end{enumerate}
\end{lemma}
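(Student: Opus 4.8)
The plan is to treat the three assertions in turn, using the properness of $\HH$ and the fact that $\sigma(\lambda)$ acts on coordinates only by rescaling $t$. Since $\HH$ is projective, the orbit morphism $\G_m\to\HH$, $\lambda\mapsto\sigma(\lambda)\xi$, extends uniquely to $f\colon\A^1\to\HH$ with $f(0)=\xi_0$. For fixed $\mu\in k^*$ the morphism $\lambda\mapsto\sigma(\mu)\sigma(\lambda)\xi=\sigma(\mu\lambda)\xi$ equals $f\circ m_\mu$, where $m_\mu\colon\A^1\to\A^1$ is multiplication by $\mu$; evaluating at $0$ (using that $\sigma(\mu)$ is a continuous automorphism of $\HH$ and $m_\mu(0)=0$) gives $\sigma(\mu)\xi_0=\xi_0$. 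As $\mu$ is arbitrary, $\xi_0$ is $\G_m$-fixed, which is (i).

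For (ii) I would first dispose of the easy direction. The set $U(t)$ is $\G_m$-invariant, since $t$ is a non-zero divisor on $\cF\otimes k(y)$ iff it is on $\cF\otimes k(\sigma(\lambda)\cdot y)$ (the latter differs only by the scalar $\lambda$). Hence $f^{-1}(U(t))$ is an open $\G_m$-stable subset of $\A^1$; if $\xi\notin U(t)$ then $f^{-1}(U(t))\cap\G_m=\emptyset$, so $f^{-1}(U(t))\subseteq\{0\}$, and being open it is empty, i.e. $\xi_0\notin U(t)$. This gives $\xi_0\in U(t)\Rightarrow\xi\in U(t)$. The substantive converse, $\xi\in U(t)\Rightarrow\xi_0\in U(t)$, is $[\text{G88, Lemma 4}]$; the underlying idea is that $\sigma(\lambda)$ fixes $x_1,\dots,x_r$, hence acts as the identity on the hyperplane $V(t)\cong\P^{r-1}$, so the scheme-theoretic sections $V(\sigma(\lambda)\cI)\cap V(t)=\sigma(\lambda)\bigl(V(\cI)\cap V(t)\bigr)=V(\cI')$ are all equal for $\lambda\in k^*$. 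Thus the family of hyperplane sections over $\G_m$ is constant and extends to the constant family over $\A^1$; combined with the flatness of $\cF\otimes\cO_{U(t)}$ modulo $t$ this forces $t$ to remain a non-zero divisor on $\cO_X/\cI_0$ and $(\cI_0)'=\cI'$, i.e. $\xi_0\in U(t)$ and $r(\xi_0)=r(\xi)$.

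For (iii) I would compare Hilbert functions degreewise; write $h_\eta(n)=\dim_k(S/J_\eta)_n$ for $\eta\leftrightarrow J_\eta$. Fix $n_0$ a regularity bound for $P$ (depending only on $P$), so that $\HH$ embeds $\G_m$-equivariantly in $\Grass_{Q(n)}(S_n)$ via $\cI\mapsto I_n:=H^0(\cI(n))$ for all $n\ge n_0$; then $(\cI_0)_n=\lim_{\lambda\to0}\sigma(\lambda)I_n$ still lies in $\Grass_{Q(n)}(S_n)$, so for $n\ge n_0$ one has $\dim_k(\cI_0)_n=\dim_k I_n$ and hence $h_{\xi_0}(n)=h_\xi(n)=P(n)$. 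For $n<n_0$: on one side, upper semicontinuity of $h^0$ in the flat family $f^*\cF$ over $\A^1$ gives $h_{\xi_0}(n)\ge h_\xi(n)$ (the fibres over $\G_m$ are all isomorphic via $\sigma$); on the other side, letting $\mathrm{in}(I)=\langle\mathrm{in}(f):f\in I\rangle$ denote the initial ideal for the $t$-degree weight, one has $\widetilde{\mathrm{in}(I)}=\widetilde{\cI_0}$ (they agree in degrees $\ge n_0$), so $(\cI_0)_n=H^0\bigl(\widetilde{\mathrm{in}(I)}(n)\bigr)\supseteq\mathrm{in}(I)_n$, while $\dim_k\mathrm{in}(I)_n=\dim_k I_n$ for every $n$ because in each degree the weight degeneration is a flat family of subspaces; hence $h_{\xi_0}(n)\le h_\xi(n)$. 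The two inequalities give equality for all $n$. (Using $\xi\in U(t)$, the same conclusion can instead be read off from $(\cI_0)'=\cI'$ together with $t$ being a non-zero divisor on $\cO_X/\cI_0$, which determines both Hilbert functions from that of $r(\xi)$.)

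The main obstacle is the substantive half of (ii): showing the limit cannot acquire an associated prime containing $t$. Heuristically this is forced because the hyperplane section $V(\cI)\cap V(t)$ is literally frozen by the $\G_m$-action, so no component or embedded point can escape into $V(t)$; but making this rigorous requires controlling the scheme structure of $\cI_0$ (a priori the flat limit of the sections need not coincide with $\cF_0\cap V(t)$), and this is precisely the point that I would import from $[\text{G88}]$ rather than reprove.
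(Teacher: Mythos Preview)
Your treatment of (i) and of (ii) is fine and matches the paper: both of you defer the substantive implication $\xi\in U(t)\Rightarrow\xi_0\in U(t)$ (and $(\cI_0)'=\cI'$) to \cite[Lemma 4]{G88}, and the easy direction of (ii) plus (i) are standard.

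Your primary argument for (iii), however, has a genuine gap. You set $h_\eta(n)=\dim_k(S/J_\eta)_n$ and then invoke upper semicontinuity of $h^0$ in the flat family $f^*\cF$ to get $h_{\xi_0}(n)\ge h_\xi(n)$. But $h^0(\cF_\eta(n))\neq\dim_k(S/J_\eta)_n$ in general: from $0\to\cI(n)\to\cO(n)\to\cF(n)\to 0$ one has $h^0(\cF_\eta(n))=\dim_k(S/J_\eta)_n+h^1(\cI_\eta(n))$, so the jump in $h^0(\cF(n))$ at $\lambda=0$ could come entirely from $h^1$. If instead you apply semicontinuity to the flat sheaf $\cI$, you get $h^0(\cI_0(n))\ge h^0(\cI(n))$, i.e.\ $h_{\xi_0}(n)\le h_\xi(n)$ --- the \emph{same} inequality your initial-ideal argument already gives. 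Your two ``opposite'' inequalities collapse to one, and the proof does not close. This is also why your argument cannot be right as stated: it never uses $\xi\in U(t)$, yet without that hypothesis (iii) is false (the saturation of $\inn(I)$ can strictly enlarge it in low degrees).

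Your parenthetical alternative is the correct route and is essentially what the paper does. Once (ii) gives that $t$ is a non-zero divisor on both $S/I$ and $S/I_0$ and that $I'=(I_0)'$, the exact sequences $0\to(S/I)_{n-1}\xrightarrow{t}(S/I)_n\to(R/I')_n\to 0$ (and the same for $I_0$) force $\dim(S/I)_n=\sum_{m\le n}\dim(R/I')_m=\dim(S/I_0)_n$ for all $n$. The paper's phrasing via the $t$-adic basis $F_i=t^{d_i}f_i^0+\cdots$ from \cite{G88} is just an explicit way of seeing that, under $\xi\in U(t)$, the degreewise initial spaces $\inn(I_n)$ already form a saturated ideal (since $t\cdot g\in\inn(I)\Rightarrow g\in\inn(I)$ follows from $t$ being a NZD on $S/I$), so $H^0(\cI_0(n))=\inn(I)_n$ for every $n$, not only for $n\gg 0$.
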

\begin{proof}
  A small modification of the proof of~\cite[Lemma 4]{G88}:\\ Write $F_i =
  t^{d_i} f^0_i + t^{d_i+1} f^1_i + \cdots$, $1\leq i \leq p := Q(d)$, $d_1
  \leq d_2 \leq \cdots$, and $f_i:=t^{-d_i}F_i$. By linearly combining the
  $F_i$, one can achieve that the $f_i$ are linearly independent and the
  proof goes through with $t$ instead of the variable $X_0$.
\end{proof}

\section{The restriction morphism}
\label{sec:G.2}

Let be $R = k[x_1,\dots,x_r]$, $Y = \Proj R$, $P'(n) = P(n)- P(n-1),$ $\HH' =
\Hilb^{P'}(Y)$.

\begin{lemma}
   \label{lem:G.2}
   Let $T/k$ be a scheme, $T \to U(t)$ a morphism and $\cI \in \HH(T)$ the
   corresponding ideal. Then $\cI':=\cI + t\cO_{X\times T}(-1) / t
   \cO_{X\times T}(-1)$ is an element of $\HH'(T)$ and $\cI \mapsto \cI'$
   defines a morphism $r:U(t) \to \HH'$.
\end{lemma}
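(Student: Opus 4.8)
We must show that $\cI':=\cI + t\cO_{X\times T}(-1)/t\cO_{X\times T}(-1)$ is a well-defined $T$-flat family of ideals on $Y\times T$ with Hilbert polynomial $P'$, and that the assignment $\cI\mapsto\cI'$ glues to a morphism of schemes $r\colon U(t)\to\HH'$. Everything is driven by the hypothesis that $T$ maps into the open locus $U(t)$, i.e.\ that $t$ is a non-zero divisor on $\cF\otimes k(y)$ for every closed point $y$ of $T$.

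\textbf{Approach.} The plan is to deduce flatness of $\cI'$ (equivalently of $\cF':=\cO_{X\times T}/(\cI+t\cO(-1))$) over $T$ from the fibrewise non-zero-divisor hypothesis via the local criterion of flatness, exactly as Aux-Lemma~\ref{auxlem:1}(b) and Aux-Lemma~\ref{auxlem:3} do in the body of the paper, and then to read off the Hilbert polynomial. First I would reduce to the case $T=\Spec A$ affine and, by quasi-compactness, then to a neighbourhood of a single point. The hypothesis $T\to U(t)$ means precisely that for all $y\in T$ the sequence
\[
0\longrightarrow \cF(-1)\otimes k(y)\stackrel{\cdot t}{\longrightarrow}\cF\otimes k(y)\longrightarrow \cF'\otimes k(y)\longrightarrow 0
\]
is exact. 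Since $\cF$ is $T$-flat, a standard argument (the one invoked in the proof of Aux-Lemma~\ref{auxlem:3}, using the local flatness criterion / base-change for $\pi_*$) shows that $0\to\cF(-1)\otimes\cO_T\stackrel{\cdot t}{\to}\cF\otimes\cO_T\to\cF'\otimes\cO_T\to0$ is exact and $\cF'$ is flat over $T$. Flatness of $\cF'$ forces flatness of the ideal $\cI'=\ker(\cO_{Y\times T}\to\cF')$ over $T$ as well. The same exact sequence, evaluated fibrewise, gives $P_{\cF'\otimes k(y)}(n)=P(n)-P(n-1)=P'(n)$ for every $y$, so $\cI'$ has constant Hilbert polynomial $P'$; hence $\cI'\in\HH'(T)$ by the universal property of the Hilbert scheme.

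\textbf{From families to a morphism.} Once $\cI\mapsto\cI'$ is seen to send $\HH(T)$-points lying in $U(t)$ to $\HH'(T)$-points, I would check functoriality in $T$: for a map $T'\to T$ the formation of $\cI+t\cO(-1)$ commutes with base change because $\cF'$ is flat (so tensoring the exact sequence with $\cO_{T'}$ stays exact), hence $(\cI')_{T'}=(\cI_{T'})'$. This says exactly that $\cI\mapsto\cI'$ is a natural transformation $h_{U(t)}\to h_{\HH'}$ of functors on $k$-schemes, and by Yoneda it is represented by a unique morphism $r\colon U(t)\to\HH'$. Concretely one can apply the construction to the universal ideal $\cI^{\mathrm{univ}}$ restricted to $U(t)\times X$, producing an element of $\HH'(U(t))$, which is the morphism $r$.

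\textbf{Main obstacle.} The only genuinely delicate point is the passage from the fibrewise non-zero-divisor condition to the exactness of the multiplication sequence over the whole base $\cO_T$ together with $T$-flatness of $\cF'$; this is precisely where the local flatness criterion must be applied carefully (and where one may need to shrink $T$ to an affine neighbourhood so that $\cF$ and $\cF'$ are presented by finitely generated graded modules). The book has already done this computation twice — in Aux-Lemma~\ref{auxlem:1}(b) for the abstract $m$-regular setting and in the proof of Aux-Lemma~\ref{auxlem:3}(a),(b) for the curve case — so I would simply cite that argument rather than redo it. Verifying the Hilbert polynomial and base-change compatibility is then routine bookkeeping.
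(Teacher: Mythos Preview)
Your argument is correct and is precisely the standard one: fibrewise injectivity of $\cdot t$ on the flat sheaf $\cF$ forces the multiplication sequence to be exact with flat cokernel via the local flatness criterion, from which the Hilbert polynomial and functoriality follow at once. The paper's own proof is nothing more than the one-line citation ``the same argumentation as in \cite[Section 1]{G88}'', and what you have written is exactly that argumentation (indeed you correctly observe that Aux-Lemma~\ref{auxlem:1}(b) and Aux-Lemma~\ref{auxlem:3} in this paper already carry it out in the curve case).
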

\begin{proof}
  The same argumentation as in~\cite[Section 1]{G88}.
\end{proof}

\section{The case of space curves}
\label{sec:G.3}

  We now write $S = k[x,y,z,t]$, 
$X =
\Proj S$, $\HH = \Hilb^P(X)$, $P(n) =dn -g +1$, $R = k[x,y,z]$, $Y =\Proj
R$. If $\cI \subset \cO_Y$ is an ideal, then $\cI^* \subset \cO_X$ is the
ideal, which is generated by $\cI$, i.e.
\[
    H^0(X,\cI^*(n)) = \bigoplus^n_{i=0} t^{n-i}H^0(Y,\cI (i )) \quad \text{for all
    } n\,.
\]

Let now be $\cI \leftrightarrow \xi \in H(k) \cap U(t)$, $\cI'
\leftrightarrow r(\xi) \in \Hilb^d(Y)$ and $\cI_0 \leftrightarrow \xi_0 :=
\lim_{\lambda \to 0} \sigma(\lambda)\xi$. Here, and in the following, $\G_m$
operates by $\sigma(\lambda): x \mapsto x$, $y \mapsto y$, $z \mapsto z$,
$t \mapsto \lambda t$.
\begin{lemma}
  \label{lem:G.3}
 $\cI_0 =(\cI')^* \cap \cR$, $(\cI')^*$ is the $\CM$-part of $\cI_0$ and $\cR$ is $(x,y,z)$-primary.
\end{lemma}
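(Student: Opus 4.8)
The plan is to analyze the limit ideal $\cI_0 = \lim_{\lambda\to 0}\sigma(\lambda)\cI$ explicitly through its graded pieces. First I would use Lemma~\ref{lem:G.1}: since $\xi\in U(t)$, the limit point $\xi_0$ lies in $U(t)$ as well, so $t$ is a non-zero divisor of $\cO_X/\cI_0$; moreover $\cI_0$ is $\G_m$-invariant. Invariance under $\sigma(\lambda)$ means that each graded piece $H^0(X,\cI_0(n))$ decomposes as a direct sum of $t$-homogeneous layers: writing $H^0(X,\cI_0(n)) = \bigoplus_{i=0}^n t^{n-i} U_i$ with $U_i\subset R_i$, and the relation $S_1\cdot H^0(\cI_0(n))\subset H^0(\cI_0(n+1))$ forces $R_1 U_i\subset U_{i+1}$. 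The key computational input is that, because $t$ is a non-zero divisor, the restriction map $r$ of Lemma~\ref{lem:G.2} sends $\xi_0$ to $r(\xi_0)$, and since $r$ is $\G_m$-equivariant (the $\G_m$-action is trivial on $Y=\Proj R$) and $r(\sigma(\lambda)\xi) = r(\xi)$ for all $\lambda\in k^*$, we get $r(\xi_0) = \lim_{\lambda\to 0} r(\sigma(\lambda)\xi) = r(\xi)$, i.e. $U_i = H^0(Y,\cI'(i))$ for $i\geq \reg(\cI')$, where $\cI'\leftrightarrow r(\xi)$.

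Next I would identify $(\cI')^*$ inside $\cI_0$. By definition $(\cI')^*$ has $n$-th graded piece $\bigoplus_{i=0}^n t^{n-i} H^0(Y,\cI'(i))$. Since $U_i\subseteq H^0(Y,\cI'(i))$ for all $i$ (the $U_i$ form an ascending-by-degree system inside the saturated ideal $\cI'$ once $R_1 U_i\subset U_{i+1}$ and $U_i = H^0(\cI'(i))$ for large $i$ — actually one needs $U_i \subseteq H^0(\cI'(i))$, which holds because multiplying up eventually lands in $H^0(\cI'(n))$ and $\cO_Y/\cI'$ has no sections in high degree beyond rank $d$), it follows that $\cI_0\subseteq (\cI')^*$. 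Then I would compute Hilbert polynomials: $\cI^*$ has Hilbert polynomial $Q^*(n) = \binom{n-1+3}{3}+\binom{n-d+2}{2}$ (the polynomial of $\cO_X/(\cI')^*$ being $dn + \text{const}$ since $\cI'$ has colength $d$ in $Y$), while $\cI_0$ has Hilbert polynomial $Q(n) = dn - g + 1$ of the curve. The difference $Q^*(n) - Q(n)$ is a constant $c \geq 0$ for $n\gg 0$, so $(\cI')^*/\cI_0$ is a finite-length module supported at finitely many points of $X$.

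The heart of the argument — and the step I expect to be the main obstacle — is showing that this finite-length quotient is concentrated at the single point $P_0 = (0:0:0:1)$, equivalently that $\cR := \cI_0 : (\cI')^{*\infty}$ (the punctual part of $\cI_0$ relative to the decomposition) is primary to $\fp_0 = (x,y,z)$. Here I would use the $\G_m$-invariance of $\cI_0$ decisively: any associated prime of $\cO_X/\cI_0$ that is a closed point must be fixed by $\sigma(\lambda)$, and the only closed point of $\P^3$ fixed by $x\mapsto x, y\mapsto y, z\mapsto z, t\mapsto \lambda t$ is $P_0 = (0:0:0:1)$. Since the $1$-dimensional part of $V(\cI_0)$ is exactly $V((\cI')^*)$ (which contains no embedded or isolated point, being a CM-ideal — one must check $(\cI')^*$ is indeed CM, which follows because $\cI'$ as a sheaf on $\P^2$ has, by construction of Lemma~\ref{lem:G.2} and $d$-regularity, the shape guaranteeing $(\cI')^*$ is saturated with the right Hilbert polynomial of a Cohen–Macaulay curve, i.e. it is the cone over a scheme in $\P^2$ hence has pure dimension $1$), the only remaining associated primes of $\cI_0$ beyond the minimal primes of $(\cI')^*$ are isolated points, all forced to equal $P_0$. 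Collecting the primary components: $\cI_0 = (\cI')^* \cap \cR$ with $\cR$ being $\fp_0$-primary, which is exactly the assertion. I would then write up the CM-part claim carefully, as that and the fixed-point analysis are where the real content sits; the Hilbert polynomial bookkeeping and the layer decomposition are routine.
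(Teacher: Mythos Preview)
Your approach mirrors the paper's closely: layer decomposition from $\G_m$-invariance, continuity of $r$ to identify $(\cI_0)' = \cI'$, the inclusion $\cI_0 \subset (\cI')^*$, and then an analysis of $0$-dimensional associated primes. However, the step you flag as ``the heart of the argument'' contains a genuine error. You assert that ``the only closed point of $\P^3$ fixed by $x\mapsto x,\ y\mapsto y,\ z\mapsto z,\ t\mapsto \lambda t$ is $P_0=(0{:}0{:}0{:}1)$''. This is false: every closed point of the hyperplane $V(t)$ is fixed as well. So $\G_m$-invariance of $\cI_0$ by itself does not force an associated closed point to equal $P_0$; you must separately rule out points on $V(t)$.

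The paper closes this gap using a fact you already recorded but did not deploy: $\xi_0\in U(t)$ from Lemma~\ref{lem:G.1} means $t$ is a non-zero divisor of $\cO_X/\cI_0$, so no associated prime of $\cO_X/\cI_0$ can contain $t$, eliminating all points of $V(t)$. The paper runs the same two-case analysis to establish that $(\cI')^*$ is CM: ruling out embedded primes on $V(t)$ uses the $t$-layered form of $(\cI')^*$, and ruling out an embedded prime at the apex $P_0=(x,y,z)$ uses that $\cI'$ is saturated on $\P^2$ (so $(x,y,z)\cdot f^0\subset H^0(\cI'(n+1))$ forces $f^0\in H^0(\cI'(n))$). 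Your ``cone over a scheme in $\P^2$, hence pure dimension $1$'' is the right intuition but does not by itself exclude an embedded point at the apex. Finally, note that $\cI_0\subset(\cI')^*$ with finite-length quotient supported at $P_0$ does not immediately say $(\cI')^*$ \emph{is} the CM-part of $\cI_0$; the paper argues this by writing $\cI_0=\cN\cap\cR$ with $\cN$ CM, observing $\cN'=\cI'=((\cI')^*)'$, and concluding that the CM ideals $\cN\subset(\cI')^*$ differ by something of finite length, hence coincide. You should make that last identification explicit.
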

\begin{proof}
  $1^\circ$ As $\cI_0$ is $\G_m$-invariant, one has 
\[
H^0(\cI_0(n)) =
  \bigoplus^n_{i=0}t^{n-i}U_i, \quad  H^0(\cI_0 (n+1))
  =\bigoplus^n_{i=0}t^{n+1-i}V_i\,,
\]
$U_i \subset V_i \subset R_i$ vector spaces. As $\cI_0 \in U(t)$ by
Lemma~\ref{lem:G.1}, one has $H^0(\cI_0(n)) = \Set{ f \in S_n | t \cdot f
  \in H^0(\cI_0(n+1))}$, hence $U_i = V_i$, if $0 \leq i \leq n$, and $R_1
V_n \subset R_1 H^0(\cI_0 (n)) \subset H^0(\cI_0(n+1))$, i.e.~$R_1 V_n
\subset V_{n+1}$ for all $n$. It follows that there is a sequence of vector
spaces $U_i \subset R_i$ such that $H^0(\cI_0(n)) = \bigoplus^n_{i=0}
t^{n-i} U_i$ and $R_1 U_n \subset U_{n+1}$ for all $n$.

  $2^\circ$ As $r$ is continuous, from $r(\sigma(\lambda)\xi) = r(\xi)$ for
  all $\lambda \in k^*$ it follows that that $r(\xi_0) =r(\xi)$,
  i.e.~$(\cI_0)' = \cI'$. From the exact sequence
\[
 0 \longrightarrow \cI_0(n-1) \stackrel{t}{\longrightarrow} \cI_0(n)
\longrightarrow \cI'(n) \longrightarrow 0
\]
it follows that the canonical map 
\[
  H^0(\cI_0(n))/tH^0(\cI_0(n-1)) \rightarrowtail H^0(\cI'(n))
\]
is an isomorphism, if $n \gg 0$\,.  Hence one has $U_n \subset H^0(\cI'(n))$
for all $n$ and $U_n = H^0(\cI'(n))$ if $n \gg 0$\,. It follows that $\cI_0
\subset \cJ := (\cI')^*$.

$3^\circ$ Next we want to show that $\cJ$ is $\CM$, and we assume that $\cP$
is an associated prime of $\cO_X/\cJ$, which corresponds to a closed point of
$X$. Then $\cP$ is $\G_m$-invariant.

\textsc{Case 1}: $\cP =(\ell_1,\ell_2,t)$, where $\ell_1$, $\ell_2$ are
linear forms in $R$. But as $t$ is not a zero-divisor of $\cO_X/\cJ$, this
is a contradiction.

\textsc{Case 2}:
  $\cP = (x,y,z)$. Let be $\cP = \Ann(f)$, i.e.~$f\in S$ such that $\cP
  \cdot f \subset \cJ$.  Write $f = t^e \cdot g$. If $e > 0$, it follows
  that that $\cP = \Ann(g)$. Hence one can suppose $e =0$ and $f = f^0 +
  tf^1 + \cdots$, $f^0 \in R_n$. It follows that $\cP \cdot f^0 \in H^0(\cI
  (n+1))$, hence $f^0 \in H^0(\cJ(n))$. By an induction argument on gets $f
  \in \cJ$, contradiction.

  $4^\circ$ Let $\cP$ be an associated prime of $\cO_X/\cI_0$, which
  corresponds to a closed point of $X$. The same argumentation as in
  $3^\circ$ shows that $\cP =(x,y,z)$. Hence one can write $\cI_0 = \cN \cap
  \cR$, where $\cN$ is the $\CM$-part and $\cR$ is $(x,y,z)$-primary. Now
  one has $\cI_0 = \cI_0 \cap \cJ = \cN \cap \cJ \cap \cR$, hence $\cN \cap
  \cJ = \cN \subset \cJ$ and besides $\cJ' = \cI' = \cI'_0 = \cN'$. It
  follows that $\cJ/\cN$ has finite length. If one assumes that this is not
  equal to zero, there is a prime ideal $\cP$, which corresponds to a closed
  point of $X$ and is an associated prime of $\cJ /\cN$, hence an
  associated prime of $\cO_X /\cN$, contradiction. It follows that $\cJ
  =\cN$.
\end{proof}

\section{The graded Hilbert scheme}
\label{sec:G.4}
 At the moment, we go back to the general situation as in~\ref{sec:G.1}
 and~\ref{sec:G.2}. In~\cite[Abschnitt 2]{G93} it is shown:
 \begin{enumerate}[(i)]
 \item If $\HH^{\G_m}$ denotes the fixed point scheme, then
   $G:=\HH^{\G_m}\cap U(t)$ is a closed subscheme of $\HH$\, (!).
 \item Let $\phi$ be a numerical function, i.e.~a map $\phi:\N \to \N$, such
   that $\phi(n)= Q(n)$ if $n\gg 0$\,. Put $\phi'(n) = \phi(n)
   -\phi(n-1)$. If $A$ is a $k$-algebra, let be $G_\phi(A)$ the set of all
   subbundles $V_n \subset R_n \otimes A$ of rank $\phi'(n)$ such that
   $R_1V_n \subset V_{n+1}$ for all $n \in \N$. Then $G_\phi$ is
   (represented by) a closed subscheme of $\HH$, and $G$ is the disjoint
   union of those $G_\phi$, which are non-empty.The closed immersion $G_\phi
   \to \HH$ is defined by $(V_0,V_1,\dots) \mapsto \cJ$, where $\cJ \subset
   \cO_{X \otimes A}$ is the ideal generated by the $V_n$, i.e.~$H^0(X
   \otimes A, \cJ (n)) = \bigsum^n_{i=0} t^{n-i}V_i$ for all $n$. $G_\phi$ is
   called \emph{graded} Hilbert scheme.
 \item Let $\HH_\phi$ be the locally closed subset of $\HH$ of all ideals $\cJ
   \subset \HH(K)$ with Hilbert function $\phi$, for all field extensions
   $K/k$. We take $\HH_\phi$ as a subscheme of $\HH$ with the \emph{reduced}
   scheme structure. Then one has $(G_\phi)_\red \subset \HH_\phi$.
 \item If $\xi \in \HH_\phi(K) \cap U(t)$, then Lemma~\ref{lem:G.1} shows that
   $\xi_0:=\lim\limits_{\lambda \to 0} \sigma(\lambda)\xi \in G_\phi(K)$,
   and $\xi \mapsto \xi_0$ defines a morphism $\HH_\phi\cap U(t) \to
   (G_\phi)_\red$.
 \end{enumerate}

 The statements (i)--(iii), whose proof is easy, are used in
 Section~\ref{sec:2.1} in the cases $r=2$ and $r=3$. The statement (iv)
 requires some work  ~\cite[Prop.2, p.20]{G93}, but is needed only in the proof of
 Proposition~\ref{prop:2.3}, which is not used in later chapters.

\section{Very general linear forms}
\label{sec:G.5}

\index{very general linear forms}
\subsection{}
\label{sec:G.5.1}

\begin{auxlemma}
  \label{auxlem:G.1}
  Let be $S = k[x,y,z]$, $Y = \Proj S$, $P \in Y$ a closed point, $Q$ a
  graded ideal in $S$, which is primary to $P$. Then $\HP(\cO_Y/Q)$ is equal
  to the length $r$ of the localization $(S/Q)_{(P)}$ over $S_{(p)}$.
\end{auxlemma}
\begin{proof}
   By~\cite[Prop. 7.4, p. 50]{H} there is a filtration $0 = M^0 \subset
   \cdots \subset M^r = S/Q$ such that $M^i/M^{i-1} \simeq f_i \cdot
   (S/P)(-d_i)$, and $\HP(S/P) =1$\,.
\end{proof}

\subsection{}
\label{sec:G.5.2}
  Let now be $P = k[x,y,z,t]$, $S = k[x,y,z] = P/tP(-1)$, $\fp \subset P$ a
  graded prime ideal such that $V(\fp) \subset X = \Proj P$ is a curve of
  degree $d$.  Let be $I \subset P$ a graded ideal, which is $\fp$-primary
  of multiplicity $\mu$. By (loc.~ cit.) there is a filtration
  \begin{equation}
    \label{eq:G.1}
    0 = M^0 \subset \cdots \subset M^r = [P/I]^{\sim}
  \end{equation}
  such that
\begin{equation}
    \label{eq:G.2}
  M^i/M^{i-1} \simeq f_i \cdot [(P/\fp)(-d_i)]^{\sim}
\end{equation}
  for $\mu$ indices and for the remaining indices
  \begin{equation}
    \label{eq:G.3}
 M^i/M^{i-1} \simeq g_i \cdot [(P/\cP_i)(-e_i)]^{\sim}\,,
\end{equation}
$\cP_i\in X$ a closed point, which is contained in the support of $P/I =
V(\fp)$.  We choose a linear form $\ell \in P$ such that $V(\fp) \cap V(\ell
) = \{ P_1, \dots, P_d\}$, $P_i$ distinct points and  $\ell \centernot\in
\cup \cP_i$. Applying a suitable linear transformation, one can assume $\ell
= t$. One can write $M^i = I^i/I$, where $I^i$ is a graded ideal, $I^0=I$
and $I^r=P$. We denote the images of the canonical morphism $P \to S$ by ${}'$
and from~\eqref{eq:G.1} we get a filtration
  \begin{equation}
    \label{eq:G.1prime}
  \tag{$\ref{eq:G.1}'$}
  0 = (M^0)' \subset \cdots \subset (M^r)' = [S/I']^{\sim}\,,
\end{equation}
where 
\[
(M^i)' = [(I^i)'/I']^{\sim},\quad (I^i)' = I^i + tP(-1)/tP(-1), \quad I' = I +
tP(-1)/tP(-1)\,.
\]
As $[P/\cP_i + tP(-1)]^{\sim} = 0$, from~\eqref{eq:G.3} it follows that one can
write~\eqref{eq:G.1prime} as
  \begin{equation}
    \label{eq:G.4}
0 = N^0 \subset \cdots \subset N^\mu = [S/I']^{\sim}.
\end{equation}
Because of~\eqref{eq:G.2} one has surjective homomorphisms 
  \begin{equation}
    \label{eq:G.5}
 [(P/\fp + t P(-1))(-d_i)]^{\sim} \to N^i/N^{i-1}\,.
\end{equation}
But $P/\fp + t P(-1) = \bigoplus^d_1 S/P_i$ and the localization
of~\eqref{eq:G.4} and~\eqref{eq:G.5} at the point $P_1$, for example, gives a
filtration
  \begin{equation}
    \label{eq:G.6}
0 = N^0_{(P_1)} \subset \cdots \subset N^\mu_{(P_1)} = S_{(P_1)}/I'_{(P_1)}
\end{equation}
and surjective homomorphisms
  \begin{equation}
    \label{eq:G.7}
 [(S_{(P_1)}/(P_1)_{(P_1)}](-d_i) \to N^i_{(P_1)}/N^{i-1}_{(P_1)}\,.
\end{equation}
Now the left hand side of~\eqref{eq:G.7} is a field, hence either
$N^i_{(P_1)}/N^{i-1}_{(P_1)}$ is equal to zero or has the length $1$ over
$S_{(P_1)}$.

\begin{conclusion}
 \label{conc:G.1}
   The multiplicity $\mu_i$ of $(S/I')_{(P_1)}$ over $S_{(P_1)}$ is $\leq \mu$.
 \hfill $\qed$
\end{conclusion}

Put $\cI = \tilde{I}$, $\cI' = \cI + t\cO_X(-1)/t\cO_X(-1)$, $Y = \Proj S$.
As the Hilbert polynomial of $P/\fp$ has the form $dn +a$,
from~\eqref{eq:G.1} and~\eqref{eq:G.2} follows that $\HP(P/I) = d \mu n +b$.
From the exact sequence
\[
 0 \longrightarrow (\cO_X/\cI)(-1)  \stackrel{t}{\longrightarrow} \cO_X/\cI
 \longrightarrow \cO_Y/ \cI'   \longrightarrow 0
\]
we get:
\begin{conclusion}
 \label{conc:G.2}
 $\HP(\cO_Y /\cI') = d\mu$.
 \hfill $\qed$
\end{conclusion}

From the Aux-Lemma~\ref{auxlem:G.1}  and Conclusion~\ref{conc:G.1} and
Conclusion~\ref{conc:G.2}  it follows that $\mu_i = \mu$ for all $1 \leq i
\leq d$. From this one deduces:
\begin{lemma}
  \label{lem:G.4}
  Let $C \subset X = \P^3_k$ be a curve. Write $C = C_1 \cup \dots \cup C_r
  \cup \{ \text{points} \}$, where the $C_i$ are the different irreducible
  components of dimension $1$, of degree $d_i$ and multiplicity $\mu_i$, and
  $\{ \text{points} \}$ denotes the $0$-dimensional components, embedded  or
  isolated. Then for Zariski-many linear forms $\ell \in k[x,y,z,t]$ one has
  \begin{enumerate}[(a)]
  \item $C_i \cap C_j \cap V(\ell) = \emptyset$, if $i\neq j$.
  \item $\{ \text{points} \} \cap V(\ell) = \emptyset$.
  \item $C_i \cap V(\ell) = \{ P_{i1}, \dots, P_{id_i}\}$, where the points
    $P_{ij}$ are different from each other and the multiplicity of $P_{ij}$
    in $C_i \cap V(\ell)$ is equal to $\mu_i$. \hfill $\qed$
  \end{enumerate}
\end{lemma}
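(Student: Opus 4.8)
The plan is to deduce the lemma by assembling the local computation of Section~\ref{sec:G.5.2} with two elementary facts about generic hyperplane sections, each of which is a non-empty Zariski-open condition on $\ell$ in the affine space $k[x,y,z,t]_1 \cong \A^4_k$; since a finite intersection of non-empty Zariski-open subsets of an irreducible variety is again non-empty and open, this yields a Zariski-dense set of good linear forms. First I would fix a reduced primary decomposition of the saturated ideal of $C$, say $\cI = \bigcap_i Q_i \cap \cR$, where $Q_i$ is $\fp_i$-primary with $V(\fp_i) = C_i$ the reduced irreducible curve of degree $d_i$ occurring in $C$ with multiplicity $\mu_i$, and $\cR$ is the punctual part, supported on the finite set $\{\text{points}\}$.

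For (a) and (b): for $i \neq j$ the intersection $C_i \cap C_j$ is a proper closed subset of the irreducible curve $C_i$, hence finite, and $\{\text{points}\}$ is finite; for a closed point $P \in \P^3$ the forms $\ell$ with $P \in V(\ell)$ constitute a hyperplane in $k[x,y,z,t]_1$, so requiring $V(\ell)$ to avoid the (finitely many) points of $\bigcup_{i \neq j}(C_i \cap C_j)$ and of $\{\text{points}\}$ is a non-empty open condition. This gives (a) and (b) directly, and once the points $P_{ij}$ in (c) are produced it also forces $P_{ij} \notin C - C_i$ and $P_{ij} \notin \{\text{points}\}$, so that locally around each $P_{ij}$ the ideal $\cI$ coincides with $Q_i$ alone.

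For (c): the distinctness assertion is the statement that a general hyperplane meets the irreducible curve $V(\fp_i) \subset \P^3$ in $d_i$ distinct points. Over $k = \C$ the forms $\ell$ for which $V(\ell)$ passes through a singular point of $V(\fp_i)$, or is tangent to $V(\fp_i)$ at a smooth point, lie in a proper Zariski-closed cone of $k[x,y,z,t]_1$ (generic smoothness in characteristic $0$ applied to the smooth locus of $V(\fp_i)$, together with finitely many linear conditions at the singular points); for $\ell$ outside this cone and not in $\fp_i$ the intersection $V(\ell) \cap V(\fp_i)$ is transverse at smooth points, hence consists of exactly $d_i$ distinct points $P_{i1},\dots,P_{id_i}$. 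For the multiplicity assertion I would apply verbatim the analysis of Section~\ref{sec:G.5.2} to $I := Q_i$, $\fp := \fp_i$, $d := d_i$, $\mu := \mu_i$: choosing $\ell$ in addition so that $\ell$ lies in none of the finitely many point-primes $\cP_\nu$ occurring in a Hartshorne filtration \cite[Prop.~7.4, p.~50]{H} of $\cO_X/Q_i$ (again a non-empty open condition), Conclusion~\ref{conc:G.2} gives $\HP(\cO_Y/Q_i') = d_i\mu_i$ with $Q_i' = Q_i + \ell\cO_X(-1)/\ell\cO_X(-1)$, while Conclusion~\ref{conc:G.1} bounds each localized multiplicity $\mu_{ij} := \length(\cO_Y/Q_i')_{(P_{ij})}$ by $\mu_i$; since the $P_{ij}$ are the only points in the support, Aux-Lemma~\ref{auxlem:G.1} and additivity of the Hilbert polynomial give $\sum_j \mu_{ij} = d_i\mu_i$, whence $\mu_{ij} = \mu_i$ for every $j$. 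Because, by (a) and (b), the local ring of $C \cap V(\ell)$ at $P_{ij}$ sees only $Q_i$, this is precisely the multiplicity of $P_{ij}$ in $C_i \cap V(\ell)$.

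Intersecting over all $i$ (and all pairs $i \neq j$) the finitely many non-empty Zariski-open conditions above produces a non-empty Zariski-open, hence dense, set of linear forms $\ell$ for which (a), (b), (c) simultaneously hold. The main obstacle is not conceptual but one of bookkeeping: one must check that the generic-transversality input of the third step and the filtration data of Section~\ref{sec:G.5.2} really are open conditions in the single affine space $k[x,y,z,t]_1$, and that the passage from the global ideal $\cI$ to the single primary ideal $Q_i$ when localizing at a point $P_{ij}$ is legitimate — which is exactly what conditions (a) and (b) secure.
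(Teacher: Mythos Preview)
Your proposal is correct and follows essentially the same approach as the paper: the paper's proof of the lemma is precisely the content of Section~\ref{sec:G.5.2}, culminating in the sentence ``From the Aux-Lemma~\ref{auxlem:G.1} and Conclusion~\ref{conc:G.1} and Conclusion~\ref{conc:G.2} it follows that $\mu_i = \mu$ for all $1 \leq i \leq d$,'' applied to each primary component $Q_i$ separately. You have spelled out parts (a), (b), and the distinctness in (c) more explicitly than the paper does (the paper simply asserts one can choose $\ell$ with $V(\fp)\cap V(\ell)$ consisting of $d$ distinct points and $\ell\notin\bigcup\cP_i$), but the essential argument for the multiplicity statement is identical.
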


\chapter{Notations and explanations}
 \label{cha:H}

\section{Notations}
\label{sec:H.1}

The ground field is $\C$; all schemes are of finite type over $\C$; $k$
denotes an extension field of $\C$.
 $P=k[x,y,z,t], S=k[x,y,z],
R=k[x,y]$ are the graded polynomial rings.\\
$T=T(4;k)$ group of diagonal matrices\\
$\Delta=U(4;k)$ unitriangular group\\
$B=B(4;k)$ Borel group of upper triangular matrices\\
$T(\rho)$ subgroup of $T(3;k)$ or of $T(4;k)$ defined as follows:
If $\rho =(\rho_0,\rho_1,\rho_2) \in \Z^3$, $\rho_0 +\rho_1 + \rho_2 =0$,
then $T(\rho) = \Set{ (\lambda_0,\lambda_1,\lambda_2) \in (k^*)^3 |
   \lambda_0^{\rho_0}\lambda_1^{\rho_1}\lambda_2^{\rho_2}=1}$, and
 analogously in the case $r=3$\,.  \\
$\Gamma=\left\{ \left(\begin{smallmatrix}
    1&*&*&*\\
    0&1&*&*\\
    0&0&1&*\\
    0&0&0&1
 \end{smallmatrix}\right)
\right\} <U(4;k) $ \\
$G_1,G_2,G_3$ subgroups of $U(4;k)$ (see below).\\
$\NNT$ = abbreviation for non-zero divisor \\
$\sim$ = abbreviation for rational equivalent \\
$\cI\subset\cO_{\P^3}$ is a $ CM $-ideal \index{CM- ideal}, if the curve in $\P^3$,
 which is defined by $\cI$, has no embedded or isolated points, i.e. is a "pure" curve.-
Generally "curve" means a 1-dimensional (mostly closed) subscheme of something.\\
Cohen-Macaulay part, respectively punctual part of an ideal $\cJ$ - see page iii.\\
$\HH=H_{d,g}$ Hilbert scheme of curves in $\P^3$ with degree $d\ge 1$ and 
genus $g$, i.e. $\HH=\Hilb^P(\P^3_k)$, where $P(T)=dT-g+1$. \\
$Q(T)=\binom{T+3}{3}-P(T)$ complementary Hilbert polynomial.\\
$\HH_Q=$ Hilbert scheme of ideals $\cI\subset\cO_{\P^3}$ with
Hilbert polynomial $Q(T)$, i.e.~$\HH=H_{d,g}=\HH_Q$.\\
$\pi$ and $\kappa$ denote the projections from $\HH\times_k\P^3$ to $\HH$ resp. $\P^3$.\\
$\HH_Q\neq \emptyset$ if and only if
$Q(T)=\binom{T-1+3}{3}+\binom{T-a+2}{2}$ or
$Q(T)=\binom{T-1+3}{3}+\binom{T-a+2}{2}+\binom{T-b+1}{1}$, where $a$ and $b$
are natural numbers $1\le a\le b$. The first case is equivalent with $d=a$
and $g=(d-1)(d-2)/2$, i.e., equivalent with the case of plane curves.    \\
If $\xi_1,\xi_2\in\HH(k)$, then we write $\xi_1\equiv\xi_2$ iff $f(\xi_1)= f(\xi_2)$,
where $f$ is a tautological morphism.

We consider only the case $g<(d-1)(d-2)/2$. In this case we have the
relations $d=a-1$ and $g=(a^2-3a+4)/2-b$. 

$\GG=\Grass_m(P_d)$ Grassmann scheme of $m$-dimensional subspaces of
$P_d$.

Let $\varphi:\N\to\N$ be a function with the following
properties: There is an ideal $\cI\subset\cO_{\P^2}$ of finite
colength with Hilbert function $h(n)=h^0(\cI(n))$, such that
$0\le\varphi(n)\le h(n)$ for all $n\in\N$ and $\varphi(n)=h(n)$ for
 $n$ large enough, e.g.~$n\ge d :=\colength(\cI)$.
On the category of $k$-schemes a functor is defined by:
\[
G_\phi(\Spec A)=\Set{(U_0,\dots,U_d)|
  \begin{aligned}
    & U_n\subset S_n\otimes A \text{ subbundle of rank } \varphi(n) \\
    & \text{ such that } S_1 U_{n-1}\subset U_n, 1\le n\le d
  \end{aligned}
}
\]
$G_{\varphi}$ is a closed subscheme of a suitable product of Grassmann
schemes; it is called \emph{graded Hilbert scheme}.

To each ideal $\cJ\subset\cO_{\P^3_k}$ with Hilbert polynomial $Q$
corresponds a point $\xi\in\HH(k)$, which we denote by
$\xi\leftrightarrow\cJ$.

$h(\cJ)$ denotes the Hilbert function of $\cJ$, that means
$h(\cJ)(n)=\dim_kH^0(\cJ(n)), n\in\N$.

If $\varphi$ is the Hilbert function of an ideal in $\cO_{\P_k^2}$ 
such that $\phi(n) = \tbinom{n+2}{2} - d $ for all sufficiently great
 natural numbers $n$, then 
\[
H_\varphi : = \Set{ \cI \subset \cO_{\P_k^2} | h^0(\cI(n)) = \varphi(n),
  n\in \N }
\]
is a locally closed subset of $\Hilb^d(\P^2)$, which we regard to have the
induced reduced scheme structure. $H^{(d)} \subset \Hilb^d(\P^2)$ is the open 
subscheme of points $\xi\leftrightarrow Z\subset \P^2$ such that $Z$ consists
of $d$ points in $\P^2$.

If $G$ is a subgroup of $GL(4;k)$, then $ \HH^G$ denotes the
fixed-point scheme, which is to have the induced reduced scheme structure. 
The same convention is to be valid for all fixed-point subschemes of 
$H^d = \Hilb^d(\P^2)$. 

If $ C\hookrightarrow \HH $ is a curve, then by means of the
Grothendieck-Pl\"ucker embedding $\HH \rightarrow \P^N$ we can regard $C$ as
a curve in a projective space, whose Hilbert polynomial has the form
$\deg(C)\cdot T + c$. Here $\deg(C)$ is defined as follows:

If $\cI$ is the universal sheaf of ideals on $X = \HH \times
\P_k^3$, then $\cF : = \cO_X /\cI$ is the structure
sheaf of the universal curve $\CC$ over $\HH $, and the direct
image $\pi_*\cF(n)$ is locally free on $\HH$ of rank $P(n)$ for
all $n \geq b$. The line bundles $\cM_n : =
\dot\bigwedge\pi_* \cF(n)$ are called the tautological line bundles
on $\HH$, which are very ample and thus define the Grothendieck--Pl\"ucker
embeddings in suitable projective spaces.  Here $\dot\wedge$ is to denote
the highest exterior power.  Then $\deg(C)$ is the intersection number
$\deg(\cM_n|C) : = (\cM_n\cdot C)$. (If $C$ is a so called tautological or
basis cycle one can compute this intersection number directly,
see~\cite[Section 4.1]{T2}.)

After these more or less conventional notations, we introduce some notations
concerning monomial ideals.

If $ \cJ \subset \cO_{\P^3}$ is $T$-invariant, then $H^0(\P_k^3;\cJ(d))
\subset \cO_{\P^3}$ is generated by monomials. To each monomial $
x^{d-(a+b+c)}y^{a}z^{b}t^{c}$ in $ H^0(\cJ(d))$ we associate the cube $
[a,a+1]\times [b,b+1]\times [c,c+1]$ in a $y$-$z$-$t$-coordinate system, and
the union of these cubes gives a so called pyramid, \index{pyramid}, which is
denoted by $E(\cJ(d))$. Usually we assume that $\cJ$ is invariant under
$\Delta$ or $\Gamma$. Then we can write $H^0(\cJ(d)) =
\bigoplus\limits^d_{n=0}t^{d-n}U_n $, where $ U_n \subset S_n $ are
subspaces such that $ S_1\cdot U_n \subset U_{n+1}, 0\leq n\leq d-1 $, which
we call the layers of the pyramid. (In \cite{T1}--\cite{T4} we made
extensive use of this concept, but here it occurs only once in
Section~\ref{sec:1.3}

A graded ideal $I \subset S = k[x_0,\dots,x_r]$ is \emph{Borel normed},
\index{Borel normed} if $\inn(I) = \bigoplus_{n\geq 0} \inn (I_n)$ is
invariant under $B(r+1;k)$.  To each graded ideal $I \subset S$ there is a
non-empty, open set $U \subset \GL(r+1;k)$ such that $g(I)$ is Borel normed
for all $g \in U$.

If $H$ is a Hilbert scheme of ideals in $\cO_{\P^r}$ and if $b \in H(k)$ is
fixed by $B(r+1;k)$, then $W_H(b) \subset H$ is the subscheme of all ideals
$\cI \subset \cO_{\P^r}$ such that the initial ideal $\inn(\cI)
\leftrightarrow b$. (For more details, see~\cite[Section 2]{G88}.)

\pagebreak
\section{Explanations}
\label{sec:H.2}

In \cite{T1}--\cite{T4} it was tried to describe the first Chow group
$A_1(\HH)$, where we always take rational coefficients, and we write
$A_1(\HH)$ instead of $A_1(\HH)\mathop{\otimes}_{\Z}\Q$. The starting point
is the following consideration: If the Borel group $B=B(4;k)$ operates on
$\HH=\HH_Q$ in the obvious way, then one can deform each 1-cycle on $\HH$ in
a 1-cycle, whose prime components are $B$-invariant, irreducible, reduced
and closed curves on $\HH$. It follows that $A_1(\HH)$ is generated by such
$B$-invariant 1-prime cycles on $\HH$. This is a partial statement of a
theorem of Hirschowitz, which can be applied to any projective scheme with
a $B$-action (see ~\cite{Hi}). 

 Now from [T1, Section 1.1] it follows that such a $B$-invariant 
 $1$-prime cycle (i.e.~closed, irreducible and
 reduced curve) $C$ on $\HH$ can be formally described as follows: Either
 each point of $C$ is invariant under $\Delta:=U(4;k)$, or one has
 $C=\overline{\G_a^i\cdot\eta}$, where $\eta$ is a closed point of $\HH$,
 which is invariant under $T=T(4;k)$ and the group $G_i, i\in\{ 1,2,3\}$.
 Here $\G_a^i$ is the group $\G_a$, acting by
  \begin{alignat*}{3}
    \psi_{\alpha}^1 :x\mapsto x,\quad y &\mapsto y, & \quad z &\mapsto z,\quad & 
    t& \mapsto\alpha z+t. \\
    \psi_{\alpha}^2 :x\mapsto x,\quad y & \mapsto y,\quad & z& \mapsto\alpha
    y+z,\quad&  t&\mapsto t, \\
    \psi_{\alpha}^3 :x\mapsto x,\quad y & \mapsto\alpha x+ y,\quad & z& \mapsto z,\quad     & t& \mapsto t,
  \end{alignat*}
 respectively, on $P=k[x,y,z,t]$, and $G_i$ is the subgroup of $\Delta$, which is complementary to $\G_a^i$, that means, one defines
 \[
 G_1:=\left\{ \left( \begin{array}{cccc}
       1&*&*&*\\0&1&*&*\\0&0&1&0\\0&0&0&1\end{array}\right)\right\},\quad
 G_2:=\left\{ \left( \begin{array}{cccc}
       1&*&*&*\\0&1&0&*\\0&0&1&*\\0&0&0&1\end{array}\right)\right\},\quad
 G_3:=\left\{ \left( \begin{array}{cccc}
       1&0&*&*\\0&1&*&*\\0&0&1&*\\0&0&0&1\end{array}\right)\right\}.
 \]
 If $C$ has this form, then $C$ is called a combinatorial cycle of type $i$,
 where $i\in \{ 1,2,3\}$. $\cA(\HH):=\Im(A_1(\HH^{\Delta})\to A_1(\HH))$ is
 called the ``algebraic part'' and $\overline{A}_1(\HH):=A_1(\HH)/\cA(\HH)$
 is called the ``combinatorial part'' of the first Chow group of $\HH$. Here
 $\HH^{\Delta}$ denotes the fixed point scheme, which, just as all other
 fixed point schemes that will occur later on, is supposed to have the
 induced reduced scheme structure.\\
 This convention is valid also for the Hilbert scheme
 $H^d:=\Hilb^d(\mP^2_{\C})$.

 In order to formulate the results of \cite{T1}--\cite{T5}, one has to
 introduce the following ``tautological'' 1-cycles on $\HH$:
\index{tautological cycle}
 \begin{align*}
   C_1&=\Set{(x,y^a,y^{a-1}z^{b-a}(\alpha z+t))|\alpha\in k }^-\\
   C_2&=\Set{(x,y^{a-1}(\alpha y+z),y^{a-2}z^{b-a+1}(\alpha y+z))|\alpha\in k}^-\\
   C_3&=\Set{(x^a,\alpha x+y,x^{a-1}z^{b-a+1})|\alpha\in k}^-\\
   D&=\Set{(x^2,xy,y^{a-1},z^{b-2a+4}(y^{a-2}+\alpha xz^{a-3}))|\alpha\in k}^-\\
   E&=\Set{(x^2,xy,xz,y^a,y^{a-1}z^{b-a+1},xt^{b-2}+\alpha
   y^{a-1}z^{b-a})|\alpha\in k}^-
 \end{align*}

Then the final results are (cf.~\cite[Thm. 15.1 and 16.1]{T5}):\\

Suppose that $d\geq 5$ and $g < \tbinom{d-1}{2}$, i.e.~$g$ is not
maximal. Put $g(d):= (d-2)^2/4$ and $\gamma(d):= \tbinom{d-2}{2}$.

\begin{theorem*}[{\cite[p. 123]{T5}}]
  \begin{enumerate}[(i)]
  \item If $g>\gamma(d)$, then $A_1(H_{d,g})$ is freely generated by
    $E,C_1,C_2,C_3$.
  \item If $g(d)<g\leq\gamma(d)$, then $A_1(H_{d,g})$ is freely generated by
    $E,D,C_2,C_3$.
  \item If $g\leq g(d)$, then $A_1(H_{d,g})$ is freely generated by
    $E,D,C_2$.
   \end{enumerate}
\end{theorem*}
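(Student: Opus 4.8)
The plan is to run the Borel-degeneration / intersection-number machine of Chapter~1 in all three genus ranges simultaneously. First I would invoke Hirschowitz's theorem to reduce to $B$-invariant $1$-prime cycles: every class of $A_1(H_{d,g})$ is a (in fact non-negative) $\Q$-combination of classes $[C]$ with $C\subset H_{d,g}$ a closed, reduced, irreducible, $B(4;k)$-invariant curve, cf.\ \cite[Lemma~1, p.~6]{T1}. By the classification of such curves in \cite{T1}, $C$ is either an algebraic cycle $\overline{\G_m\cdot\xi}$ with $\xi$ invariant under $U(4;k)\cdot T(\rho)$, or a combinatorial cycle of type $i\in\{1,2,3\}$, namely $C=\overline{\G_a^i\cdot\xi}$ with $\xi$ invariant under $T(4;k)\cdot G_i$.

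Second, for each such $C$ I would compute the function $n\mapsto(\cM_n\cdot C)$ by the determinantal recipe of Section~\ref{sec:1.3} (take a monomial basis of $H^0(\cJ(n))$, apply $\psi_\alpha^i$ resp.\ $\sigma(\lambda)$, read off the $\alpha$- resp.\ reduced $\lambda$-degree of the top exterior power). Exploiting the convexity of the $y$-layers of the pyramid, as in Conclusions~\ref{concl:1.1}--\ref{concl:1.3} and Propositions~\ref{prop:1.1},~\ref{prop:1.2}, one isolates the key fact: this function always lies in the span of
\[
  f_0(n)=1,\quad f_1(n)=n-b+1,\quad f_2(n)=\tbinom{n-a+2}{2}+(n-b+1),\quad f_3(n)=\tbinom{n-a+3}{2},
\]
with non-negative integer coordinates, that $f_0,f_1,f_2$ are the intersection functions of $E,C_1,C_2$ (eq.~\eqref{eq:1.13}), that $f_3$ is realized by $C_3$, and that $D$ is handled through $[D]=(d-1)[E]+[C_1]$ (eq.~\eqref{eq:1.1}). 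Hence $A_1(H_{d,g})$ is spanned by $\{E,C_1,C_2,C_3,D\}$ and $\dim_\Q A_1(H_{d,g})\le 4$.

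Third is the case split. For $g\le g(d)$ one has rational $=$ numerical equivalence (Theorem~\ref{thm:1.1}), so the numerical relations $[D]=(d-1)[E]+[C_1]$ and $[C_3]=\beta[C_1]+\gamma_3[E]$ (the latter with $\beta,\gamma_3$ as in the Remark of Appendix~\ref{cha:C}) become honest rational relations; combined with the rank-$3$ statement for $\NS(H_{d,g})$ (Theorem~II) and simple connectedness ($\Pic^\tau=\Pic^0$) this yields part~(iii), which is precisely Theorem~I with $C_0=E$. For $g>g(d)$ the rank of $A_1$ jumps to $4$, so exactly one of those two numerical relations must cease to hold rationally, and one must (a) prove $\dim_\Q A_1(H_{d,g})=4$ --- e.g.\ by counting the $1$-dimensional cells of a Bialynicki--Birula decomposition as in \cite{E-S}, or from Fogarty's $\Pic$ computation --- and (b) identify the fourth generator: for $g(d)<g\le\gamma(d)$ it is $D$ that joins $E,C_2,C_3$, while for $g>\gamma(d)$ it is $C_1$ that joins $E,C_2,C_3$. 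In each range, linear independence of the claimed quadruple (resp.\ triple) is then a determinant of a matrix of intersection numbers $(\cM_{n_j}\cdot C_k)$, a routine computation from eq.~\eqref{eq:1.13}; the integral ``freely generated'' refinement follows from $\NS$ being generated over $\Z$ together with $\Pic^\tau=\Pic^0$.

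The main obstacle is step three, specifically showing that $g(d)=(d-2)^2/4$ and $\gamma(d)=\tbinom{d-2}{2}$ are the \emph{exact} thresholds: one must exhibit, for $g>g(d)$, a family of curves whose class is not rationally proportional to its numerical shadow (forcing $D$ to be rationally independent of $E,C_1$), and for $g>\gamma(d)$ a further such family --- equivalently, one must show that beyond Castelnuovo's bound $g\le(d-2)^2/4$ for smooth curves the structure of $A_1$ genuinely changes. This is essentially the content of \cite[Chap.~17, p.~129]{T5} and requires a careful study of the relevant degenerating families of space curves. Everything else --- the reduction to $B$-invariant cycles, the determinantal intersection-number computations, and the final linear-algebra verification --- is mechanical given the machinery of Chapter~1 and Appendices~\ref{cha:C},~\ref{cha:D}.
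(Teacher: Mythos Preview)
This theorem is not proved in the present paper at all: it is merely quoted from \cite[Thm.~15.1 and 16.1; p.~123]{T5} in Appendix~\ref{cha:H} as background. Part~(iii) is Theorem~I of Section~\ref{sec:1.1}, which is likewise imported from \cite{T4,T5} (``If $d\geq 5$ is uneven or $d\geq 8$ is even, these are the results of \cite[S\"atze 2--5]{T4}, and the remaining cases are treated in \cite[p.~119--127]{T5}''). So there is no ``paper's own proof'' to compare against; the proof lives in the series \cite{T1}--\cite{T5}.

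That said, your outline has a genuine circularity and a gap. In step three you invoke Theorem~\ref{thm:1.1} (rational $=$ numerical for $g\le g(d)$) to deduce part~(iii); but in Section~\ref{sec:1.2} Theorem~\ref{thm:1.1} is proved \emph{from} Theorem~I, which \emph{is} part~(iii). More seriously, your step two does not yield ``$A_1(H_{d,g})$ is spanned by $\{E,C_1,C_2,C_3,D\}$'': knowing that $(\cM_n\cdot C)$ lies in the span of $f_0,\dots,f_3$ only tells you that $C$ is \emph{numerically} a combination of $E,C_1,C_2,C_3$. To get a \emph{rational} equivalence one must actually move each $B$-invariant prime cycle to a combination of basis cycles by explicit deformations --- this is precisely the long case-by-case work carried out in \cite{T1}--\cite{T5}, not a corollary of the intersection-number bookkeeping. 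Note also that Conclusions~\ref{concl:1.1}--\ref{concl:1.3} and Propositions~\ref{prop:1.1},~\ref{prop:1.2} in this paper are proved under the standing hypothesis $g\le g(d)$ of Section~\ref{sec:1.1}, so you cannot cite them for ranges~(i) and~(ii). Finally, the switch from $D$ to $C_1$ at $g=\gamma(d)$ is more concrete than you suggest: the exponent $b-2a+4$ in the definition of $D$ is nonnegative exactly when $g\le\gamma(d)=\binom{d-2}{2}$, so above that threshold $D$ simply does not exist.
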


\begin{theorem*}[{\cite[p. 127]{T5}}]
  \begin{enumerate}[(i)]
  \item If $g\le 0$, then $A_1(H_{3,g})$ is freely generated by $[E], [D],
    [C_2]$.
  \item $A_1(H_{4,2})\simeq\Q^4$ and if $g\le 1$, then $A_1(H_{4,g})$ is
    freely generated by $[E],[D],[C_2]$.
  \end{enumerate}
\end{theorem*}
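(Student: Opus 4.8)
The plan is to compute $A_1(H_{d,g})\otimes\Q$ for the two remaining low-degree cases $d=3$ ($g\le0$) and $d=4$ ($g\le2$) by exactly the three-step scheme that works for $d\ge5$: reduce to $B(4;k)$-invariant prime cycles, classify them and compute their classes, then separate the candidate basis by intersection numbers with the tautological bundles $\cM_n$. Throughout, the Macaulay data is $a=d+1$ and $b=(a^2-3a+4)/2-g$, so $(d,g)=(3,g)$ gives $(a,b)=(4,4-g)$ with $b\ge4$ since $g\le0$, while $(d,g)=(4,g)$ gives $(a,b)=(5,7-g)$ with $b\ge5$, and $b\ge6$ once $g\le1$.

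\textbf{Step 1 (generation).} By the reduction recalled in Appendix~\ref{cha:H} (a theorem of Hirschowitz, applied via \cite{T1}), $A_1(H_{d,g})\otimes\Q$ is spanned by classes of $B(4;k)$-invariant integral curves $C$, and each such $C$ is either a combinatorial cycle $\overline{\G_a^i\cdot\eta}$ of type $i\in\{1,2,3\}$ with $\eta$ fixed by $T(4;k)\cdot G_i$, or an algebraic cycle contained in $\HH^\Delta$, $\Delta=U(4;k)$. First I would enumerate, for each relevant value of $b$, the finitely many $T(4;k)\cdot G_i$-fixed monomial ideals with Hilbert polynomial $Q$ and describe $\HH^\Delta$ explicitly; in these degrees this is short pyramid bookkeeping in the sense of Appendix~\ref{cha:H}.

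\textbf{Step 2 (classes of the cycles).} For a combinatorial cycle $C$ I would compute $(\cM_n\cdot C)$ by the $\alpha$-degree formula, just as in Conclusions~\ref{concl:1.1}--\ref{concl:1.3} and Aux-Lemma~\ref{auxlem:0}; the outcome in every case is $(\cM_n\cdot C)=q_0+q_1(n-b+1)+q_2\big[\tbinom{n-a+2}{2}+(n-b+1)\big]$ with $q_i\in\N$, hence $[C]=q_0[C_0]+q_1[C_1]+q_2[C_2]$. For algebraic cycles I would run the argument of Section~\ref{sec:1.3.2} (reduce to $\rho_3=0$, restrict modulo $t$ to $\P^2$, and use that in the extreme case the restricted ideal is $B(3;k)$-invariant) to obtain $[C]=q_0[C_0]+q_1[C_1]$, $q_i\in\N$. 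Using $[D]=(a-2)[C_0]+[C_1]$ from \eqref{eq:1.1} and $[C_3]=\beta[C_1]+\gamma[C_0]$ with $\beta=\tbinom{a-1}{2}$, $\gamma=(b-a)\tbinom{a}{2}+\tbinom{a+1}{3}$ from Appendix~\ref{cha:C}, I would rewrite these as non-negative combinations of $[E]=[C_0]$, $[D]$, $[C_2]$. So $A_1^+(H_{d,g})$, and therefore $A_1(H_{d,g})\otimes\Q$, is generated by $[E],[D],[C_2]$ whenever the relation $[C_3]=\beta[C_1]+\gamma[C_0]$ is available.

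\textbf{Step 3 (independence and the exceptional case $(4,2)$).} Independence of $[E],[D],[C_2]$ follows because, by Lemma~\ref{lem:1.5}, $(\cM_n\cdot E)=1$, $(\cM_n\cdot C_1)=n-b+1$, $(\cM_n\cdot C_2)=\tbinom{n-a+2}{2}+(n-b+1)$ are linearly independent as functions of $n$, hence so are $E$, $D=(a-2)E+C_1$, $C_2$; with Step~2 this gives $A_1(H_{d,g})\otimes\Q=\Q^3$ freely generated by $[E],[D],[C_2]$ for $(3,g)$, $g\le0$, and for $(4,g)$, $g\le1$ (here $g\le g(d)$, so Theorem~\ref{thm:1.1}, numerical $=$ rational equivalence, is in force and the relation for $[C_3]$ holds). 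The case $(4,2)$ is genuinely different: now $g=2>g(4)=1$, Theorem~\ref{thm:1.1} is not available, and $[C_3]$ need no longer lie in $\Q\langle[C_0],[C_1]\rangle$; so $C_3$ must be retained in the generating set (Steps~1--2 then yield non-negative combinations of $E,C_1,C_2,C_3$), and one proves $[C_3]$ is numerically independent of $[E],[C_1],[C_2]$ by a direct $\alpha$-degree computation of $(\cM_n\cdot C_3)$ (the one behind the Remark in Appendix~\ref{cha:C}), giving $A_1(H_{4,2})\otimes\Q=\Q^4$.

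\textbf{Main obstacle.} The real work — and the reason these cases are relegated to \cite{T5} — is Step~1: showing that the classification of $B(4;k)$-invariant prime cycles is genuinely complete in such low degree, where the dimension-count and genericity arguments of Chapters~1--2 (which repeatedly assume $d\ge5$, sometimes $d\ge8$) are unavailable and $H_{d,g}$ can be reducible or non-reduced (for instance $H_{3,-1}$, which parametrizes a conic disjoint from a line). One must inspect each component of $\HH^\Delta$ by hand and verify that every algebraic cycle on it still has class in $\N[C_0]+\N[C_1]$, and that no exotic combinatorial configuration produces a class outside the span; this case-by-case check absorbs most of the effort. A secondary subtlety is the breakdown of ``rational $=$ numerical equivalence'' at $(4,2)$, which forces the independence of $[C_3]$ to be established numerically rather than quoted from Theorem~\ref{thm:1.1}.
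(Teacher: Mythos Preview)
The paper does not prove this theorem; it is quoted from \cite[p.~127]{T5} as prior input (subsumed under ``Theorem~I'' in Section~\ref{sec:1.1} for the range $g\le g(d)$), and the paper's own results --- beginning with Theorem~\ref{thm:1.1} --- are built on top of it. So there is no in-paper proof to compare against.

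Your proposal, however, contains a genuine circularity. In Step~2 you pass from $(\cM_n\cdot C)=q_0+q_1(n-b+1)+q_2\bigl[\tbinom{n-a+2}{2}+(n-b+1)\bigr]$ to $[C]=q_0[C_0]+q_1[C_1]+q_2[C_2]$ in $A_1$; this inference is exactly ``numerical $=$ rational'', i.e.\ Theorem~\ref{thm:1.1}, which you also invoke explicitly in Step~3. But the proof of Theorem~\ref{thm:1.1} in Section~\ref{sec:1.2} begins ``According to Theorem~I one can write $Z=q_0[C_0]+q_1[D]+q_2[C_2]$'' --- it already presupposes that $A_1(\HH)\otimes\Q$ is generated by these three classes, which is precisely the statement you are trying to establish. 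The same loop recurs each time you cite Conclusions~\ref{concl:1.1}--\ref{concl:1.3}, since each of those ends by appealing to Theorem~\ref{thm:1.1}. As written, your argument determines only $A_1$ modulo numerical equivalence. To close the gap one must either bound $\dim_\Q A_1(H_{d,g})\le 3$ (resp.\ $\le4$ at $(4,2)$) independently --- for instance by a Bia\l{}ynicki--Birula cell count, as is done for $\Hilb^d(\P^2)$ in Appendix~\ref{cha:D} --- or, as the citations in Section~\ref{sec:1.3.2} to \cite[Prop.~1, p.~25; Prop.~3, p.~58]{T2} and \cite[Prop.~1, p.~6]{T4} indicate is the route taken in \cite{T1}--\cite{T5}, construct for every $B$-invariant prime cycle an explicit rational equivalence to a combination of the tautological cycles, not merely a numerical coincidence. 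Your ``Main obstacle'' paragraph correctly identifies that the honest work lies in the low-degree enumeration; what is missing is that each such cycle must be handled at the level of $A_1$ itself.
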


\backmatter

Gerd Gotzmann, Isselstrasse 34, 48431 Rheine, Germany,
g.gotzmann@t-online.de 
\printindex 
\end{document}